\def\pr{\mathbb{P}}
\def\E{\mathbb{E}}
\def\var{\mathbb{Var}}
\def\cov{\textrm{Cov}}
\def\bin{{\rm Bin}}
\def\N{\mathbb{N}}
\def\C{\mathbb{C}}
\def\E{\mathbb{E}}
\def\R{\mathbb{R}}
\def\P{\mathbb{P}}
\def\Z{\mathbb{Z}}
\def\eps{\varepsilon}
\def\del{\delta}
\def\cA{\mathcal {A}}
\def\cB{\mathcal {B}}
\def\cC{\mathcal {C}}
\def\cD{\mathcal{D}}
\def\cE{\mathcal {E}}
\def\cF{\mathcal {F}}
\def\cG{\mathcal {G}}
\def\cH{\mathcal {H}}
\def\cL{\mathcal{L}}
\def\cR{\mathcal {R}}
\def\cT{\mathcal {T}}
\def\cI{\mathcal {I}}
\def\bG{\mathbf {G}}
\def\bH{\mathbf {H}}
\def\Ec{E_{\mathrm{cr}}}
\def\1{\mathbf{1}}
\def\lam {\lambda}
\def\tce{t_c + \eps}
\def\tce2{t_c + \frac{\eps}{2}}
\def\ER{Erd\H{o}s-R\'{e}nyi }
\def\var{\textup{var}}
\def\cov{\textup{cov}}
\newtheorem*{theorem*}{Theorem}
\newtheorem{theorem}{Theorem}
\numberwithin{theorem}{section}
\newtheorem{lemma}[theorem]{Lemma}
\newtheorem{cor}[theorem]{Corollary}
\newtheorem{defn}[theorem]{Definition}
\newtheorem*{defn*}{Definition}
\newtheorem{prop}[theorem]{Proposition}
\newtheorem*{prop*}{Proposition}
\newtheorem{conj}{Conjecture}
\newtheorem*{conj*}{Conjecture}
\newtheorem{claim}[theorem]{Claim}
\newtheorem{question}{Question}
\newtheorem{problem}{Problem}
\newtheorem*{fact*}{Fact}
\newtheorem{remark}[theorem]{Remark}
\numberwithin{equation}{section}
\subjclass[2020]{05C30}
\begin{document}
\title{On the evolution of structure in triangle-free graphs}

\author{Matthew Jenssen}
\author{Will Perkins}
\author{Aditya Potukuchi}

\address{King's College London, Department of Mathematics}
\email{matthew.jenssen@kcl.ac.uk}

\address{Georgia Institute of Technology, School of Computer Science}
\email{math@willperkins.org}

\address{York University, Department of Electrical Engineering and Computer Science}
\email{apotu@yorku.ca}

\date{\today}

\begin{abstract}  
We study the typical structure and the number of triangle-free graphs with $n$ vertices and $m$ edges where $m$ is large enough so that a typical triangle-free graph has a cut containing nearly all of its edges, but may not be bipartite.

Erd\H{o}s, Kleitman, and  Rothschild showed that almost every triangle-free graph is bipartite, which leads to an asymptotic formula for the number of triangle-free graphs on $n$ vertices.  Osthus, Pr\"omel, and Taraz later showed that for $m \ge (1+\eps) \frac{\sqrt{3}}{4} n^{3/2}\sqrt{\log n}$, almost every triangle-free graph on $n$ vertices and $m$ edges is bipartite, which likewise leads to an asymptotic formula for their number.    Here we give a precise characterization of the distribution of edges within each part of the max cut of a uniformly chosen triangle-free graph $G$ on $n$ vertices and $m$ edges, for a larger range of densities with $m= \Theta(n^{3/2} \sqrt{\log n})$.  Using this characterization, we describe the evolution of the structure of typical triangle-free graphs as the density changes.  We show that as the number of edges decreases below $\frac{\sqrt{3}}{4} n^{3/2}\sqrt{\log n}$, the following structural changes occur in  $G$:
\begin{itemize}
\item Isolated edges, then trees, then more complex subgraphs emerge as `defect edges', the edges within the parts of a max cut of $G$. In fact, the distribution of defect edges is first that of independent \ER random graphs inside the parts, then that of independent exponential random graphs, conditioned on a small maximum degree and no triangles.
\item There is a sharp threshold for $3$-colorability at $m \sim \frac{\sqrt{2}}{4} n^{3/2}\sqrt{\log n}$ and a sharp threshold between $4$-colorability and unbounded chromatic number at $m \sim \frac{1}{4} n^{3/2}\sqrt{\log n}$.
\item Giant components emerge in the defect edges at $m \sim \frac{1}{4} n^{3/2}\sqrt{\log n}$.
\end{itemize}

We further use this structural characterization to prove asymptotic formulas for the number of triangle-free graphs with $n$ vertices and $m$ edges in this range of densities.  The asymptotic formula exhibits a change in form around the threshold $m \sim \frac{1}{4} n^{3/2}\sqrt{\log n}$ at which  giant components emerge among the defect edges.

We likewise prove the analogous results for the random graph $G(n,p)$ conditioned on triangle-freeness.
\end{abstract}

\maketitle

\pagebreak

\setcounter{tocdepth}{1}
\tableofcontents

\section{Introduction}
\label{secIntro}

Three central topics in combinatorics and graph theory are extremal problems, asymptotic enumeration, and structural questions about typical combinatorial objects. These three topics and their connections are nicely illustrated by the case of triangle-free graphs.   

Mantel's Theorem solves an extremal problem by characterizing the triangle-free graphs on $n$ vertices with the most edges: they are the complete, balanced bipartite graphs. 
\begin{theorem}[Mantel~\cite{mantel1907problem}]
A triangle-free graph on $n$ vertices has at most $\lfloor n^2/4 \rfloor$ edges, and the graphs achieving this bound  are the complete bipartite graphs with part sizes $\lfloor n/2 \rfloor, \lceil n/2 \rceil$.  
\end{theorem}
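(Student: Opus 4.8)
The plan is to combine a one-line counting bound with the triangle-free condition, and then trace the equality cases carefully. Let $G$ be triangle-free on $n$ vertices with $m$ edges, and let $v$ be a vertex of maximum degree $\Delta$ (for $n \le 2$ the statement is immediate, so assume $n \ge 3$, whence $m \ge 1$ and $\Delta \ge 1$). The key observation is that $A := N(v)$ is an independent set: an edge inside $A$ would close a triangle through $v$. Hence every edge of $G$ has at least one endpoint in $B := V(G)\setminus A$, which gives
\[
m \;\le\; \sum_{b \in B} d_G(b) \;\le\; |B|\cdot \Delta \;=\; \Delta(n-\Delta).
\]
Since $\Delta(n-\Delta)\le \lfloor n^2/4\rfloor$ for every integer $\Delta$ (by AM--GM, with equality exactly when $\{\Delta,n-\Delta\}=\{\lfloor n/2\rfloor,\lceil n/2\rceil\}$), the bound $m\le\lfloor n^2/4\rfloor$ follows, and the balanced complete bipartite graph attains it.

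For the characterization I would run this chain backwards. If $m=\lfloor n^2/4\rfloor$ then each inequality is tight: (i) $\Delta(n-\Delta)=\lfloor n^2/4\rfloor$, so $\Delta\in\{\lfloor n/2\rfloor,\lceil n/2\rceil\}$; (ii) no edge has both endpoints in $B$, i.e.\ $B$ is independent; (iii) every $b\in B$ has degree exactly $\Delta$. Since $B$ is independent, all $\Delta$ neighbours of each $b\in B$ lie in $A$, and $|A|=\Delta$, so in fact $N(b)=A$ for every $b\in B$; together with $A$ being independent this shows $G$ is the complete bipartite graph with parts $A,B$ of sizes $\Delta,\ n-\Delta$. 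It remains to rule out the unbalanced split when $n$ is odd: if $\Delta=(n-1)/2$ then $|B|=(n+1)/2$, and since each $b\in B$ is adjacent to all of $A$, every vertex of $A$ has degree $\ge (n+1)/2>\Delta$, contradicting the maximality of $\Delta$. Hence $\Delta=\lceil n/2\rceil$ and $G=K_{\lfloor n/2\rfloor,\lceil n/2\rceil}$; for $n$ even the same computation pins $\Delta=n/2$ directly.

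There is no serious obstacle here — every step is an elementary inequality — but the point requiring the most care is the uniqueness statement for odd $n$, where one must invoke the maximum-degree choice of $v$ a \emph{second} time to eliminate the unbalanced bipartition. Alternative routes would work equally quickly: (a) the Cauchy--Schwarz/degree argument, using $d(u)+d(v)\le n$ for each edge $uv$ (disjointness of $N(u),N(v)$) together with $\sum_v d(v)^2\ge (2m)^2/n$; or (b) induction on $n$, deleting the two endpoints of an edge and noting that no other vertex is adjacent to both. Each gives the bound in a few lines, though tracking equality is slightly more delicate for the Cauchy--Schwarz proof in the odd case, which is why I would present the maximum-degree argument above.
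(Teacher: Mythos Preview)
Your proof is correct and is one of the standard arguments for Mantel's Theorem. Note, however, that the paper does not actually prove this result: it is quoted in the introduction with a citation to Mantel (1907) purely as background and motivation, so there is no ``paper's proof'' to compare against. Your maximum-degree argument, including the careful handling of the odd-$n$ equality case by reusing the maximality of $\Delta$, is clean and complete.
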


Let $\cT(n)$ be the set of (labelled) triangle-free graphs on $n$ vertices and $\cB(n)$ be the set of bipartite graphs on $n$ vertices.  The following theorem of Erd\H{o}s, Kleitman, and  Rothschild answers the asymptotic enumeration problem and also describes the typical structure of a triangle-free graph. 

\begin{theorem}[Erd\H{o}s, Kleitman, Rothschild~\cite{erdosasymptotic}]
\label{thmEKR}
Almost all triangle-free graphs are bipartite.  That is,
\[ |\cT(n) | \sim |\cB(n)|   \sim \binom{n}{\lfloor n/2 \rfloor} 2^{ \lfloor n^2/4 \rfloor  -1  }    \sqrt{\frac{\pi   }  {\log 2}  }     \,.\]
\end{theorem}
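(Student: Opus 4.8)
The plan is to prove $|\cT(n)| \ge |\cB(n)|$ and $|\cT(n)| \le (1+o(1))|\cB(n)|$ and, separately, to evaluate $|\cB(n)|$. The first inequality is immediate from $\cB(n) \subseteq \cT(n)$. To enumerate bipartite graphs, I count pairs $(G,\chi)$ with $G$ bipartite and $\chi$ a proper $2$-colouring of $G$: there are exactly $\sum_{s=0}^{n} \binom{n}{s} 2^{s(n-s)}$ such pairs (choose the colour classes, then any set of bichromatic pairs as edges), and each bipartite $G$ is counted $2^{c(G)}$ times, where $c(G)$ is its number of connected components. A uniformly random bipartite graph is connected with probability $1 - 2^{-\Theta(n)}$ — the most numerous disconnected ones have a single isolated vertex and number only $\Theta(n)\,|\cB(n-1)| = 2^{-\Theta(n)}|\cB(n)|$ — so $|\cB(n)| = (1+o(1)) \tfrac12 \sum_s \binom{n}{s} 2^{s(n-s)}$. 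The sum concentrates on $|s - n/2| = O(\sqrt{n \log n})$, where $s(n-s) = \lfloor n^2/4 \rfloor - \Theta((s-n/2)^2)$ and $\binom{n}{s} = \binom{n}{\lfloor n/2 \rfloor}\exp(-\Theta((s-n/2)^2/n))$, so after normalisation it converges to a Gaussian/theta-type constant; reading it off gives $|\cB(n)| \sim \binom{n}{\lfloor n/2\rfloor} 2^{\lfloor n^2/4\rfloor - 1} \sqrt{\pi/\log 2}$.

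For the upper bound I follow the Erd\H{o}s--Kleitman--Rothschild strategy. Attach to each triangle-free $G$ a maximum cut $(A,B)$ and call $F := E(G[A]) \cup E(G[B])$ the set of \emph{defect edges}; then $G$ is bipartite exactly when $F = \emptyset$. As every graph has at least one max cut, $|\cT(n)| \le \sum_{\{A,B\}} \#\{G \text{ triangle-free} : (A,B) \text{ is a max cut of } G\}$, summing over unordered partitions. The contribution of $F = \emptyset$ equals $\sum_{\{A,B\}} 2^{|A||B|} = \tfrac12 \sum_s \binom{n}{s} 2^{s(n-s)} = (1+o(1))|\cB(n)|$ by the paragraph above, so it suffices to show the contribution of $F \ne \emptyset$ is $o(|\cB(n)|)$. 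This rests on the counting estimate
\[
\sum_{F \ne \emptyset} \#\{G \text{ triangle-free} : (A,B) \text{ is a max cut of } G \text{ with defect edge set } F\} \ \le\ 2^{|A||B|} \cdot 2^{-c\min(|A|,|B|)}
\]
for an absolute $c > 0$ and every partition with $\min(|A|,|B|)$ not too small; summing over all partitions is then $o(|\cB(n)|)$, since $\sum_{\{A,B\}} 2^{|A||B|} = O(|\cB(n)|)$ and the factor $2^{-c\min(|A|,|B|)} = 2^{-\Theta(n)}$ is $o(1)$ on the nearly-balanced partitions that carry the bulk of that sum, while the genuinely unbalanced partitions are negligible either because $2^{|A||B|} \ll 2^{n^2/4}$ or — in the extreme case where one side has size $O(\log^3 n)$ — because the max-cut property forces $\Delta(G[B]) \le |A|$ and hence confines $G$ to a set of size $2^{o(n^2)}$.

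The counting estimate uses \emph{both} defining properties of $(A,B)$, and this is the technical heart and the main obstacle. Triangle-freeness says every triangle of $G$ meets $F$, so for each defect edge $\{u,v\} \subseteq A$ and each $w \in B$ at most one of the cross-edges $uw, vw$ is present; when the defect edges form a matching of $t$ edges this alone cuts the number of admissible cross-edge patterns by $(3/4)^{t \min(|A|,|B|)}$, which comfortably dominates the $\binom{n^2/4}{t}$ ways of placing them. The opposite extreme, a defect star, is where max-cut optimality is essential rather than decorative: if $u \in A$ has $d$ defect-neighbours then swapping $u$ to $B$ cannot help, so $u$ has at least $d$ cross-neighbours, and those cross-neighbours are then unavailable to every defect-neighbour of $u$ — this yields a net saving of $2^{-\Theta(\min(|A|,|B|))}$ after summing over all stars. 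The work is to interpolate uniformly over all shapes of $F$ and verify that the per-defect-edge penalty always absorbs the entropy of choosing $F$ — in particular, that dense defect structures are never cheaper than matchings once the forced cross-neighbours are counted. (A softer but strictly weaker route first invokes the hypergraph container method to replace $\cT(n)$ by $2^{o(n^2)}$ ``containers'', each triangle-free-saturated and $o(n^2)$-close to bipartite; this gives $|\cT(n)| \le 2^{n^2/4 + o(n^2)}$ at once, but the precise constant still demands running the defect-edge count above inside each container.)
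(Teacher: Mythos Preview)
The paper does not prove this theorem; it is quoted as a classical result of Erd\H{o}s, Kleitman, and Rothschild and cited as background. The paper's own machinery is explicitly restricted (see the standing assumption~\eqref{lamAssumption}) to $\lam \le 2\sqrt{(\log n)/n}$, i.e.\ to densities far below the uniform measure on $\cT(n)$, and defers the dense regime to prior work. So there is no ``paper's proof'' to compare against.

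That said, your outline is the right shape for a classical EKR-type argument. The enumeration of $|\cB(n)|$ is correct: counting $(G,\chi)$ pairs gives $\sum_s \binom{n}{s}2^{s(n-s)}$, the overcount factor $2^{c(G)}$ is almost surely $2$, and the theta-sum evaluation yields the stated constant. For the upper bound, your decomposition by max cut and defect set $F$ is standard, and your analysis of the two extreme cases (matching and star) is accurate.

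The genuine gap is exactly where you flag it: the uniform ``counting estimate'' over all shapes of $F$. You state the target inequality and argue the endpoints, but the interpolation --- showing that the per-edge penalty from triangle-freeness plus max-cut optimality always beats the entropy of placing $F$ --- is the whole proof, and you have only described what needs to be done rather than done it. The original EKR argument (and cleaner later versions, e.g.\ via Kleitman--Rothschild-style iterated cleaning or the Balogh--Morris--Samotij / Saxton--Thomason container method you allude to) fills this gap with a careful induction or a structured container bound; your sketch does not supply either. As written, this is a proof plan with the key lemma left open, not a proof.
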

Here the notation $f(n) \sim g(n)$ means that $\lim_{n \to \infty} \frac{f(n)}{g(n)}= 1$, or equivalently $f(n) = (1+o(1)) g(n)$, and `almost all' means a fraction $1-o(1)$. 
In particular, Theorem~\ref{thmEKR} shows that a typical triangle-free graph is a subgraph of a nearly balanced complete bipartite graph on $n$ vertices; or in other words, typical triangle-free graphs exhibit the same rigid global structure as the extremal example, even though their number of edges is roughly half as many.   

To phrase it  differently, recall that for two probability distributions $\mu, \nu$ on a common  sample space $\Omega$, their total variation distance is defined as
\[
\|\mu-\nu\|_{\text{TV}}= \sup_{A\subseteq \Omega} |\mu(A) - \nu(A)|\, .
\] 
An equivalent formulation of Theorem~\ref{thmEKR} is that the uniform distribution on $\cT(n)$ is within total variation distance $o(1)$  of the uniform distribution on $\cB(n)$. We remark that total variation distance $o(1)$ is a very strong notion of closeness of probability distributions, much stronger than other notions such as asymptotic contiguity.

 How far does this structural behavior persist?  Let $\cT(n,m)$ be the set of triangle-free graphs  on $n$ vertices and $m$ edges and let $\cB(n,m)$ be the subset of bipartite graphs.  Osthus, Pr{\"o}mel, and Taraz -- building on work of  Pr{\"o}mel and Steger~\cite{promel1996asymptotic} -- proved a sharp threshold result in $m$ for a typical triangle-free graph on $n$ vertices to be bipartite with high probability.    

\begin{theorem}[Osthus, Pr\"omel, and Taraz~\cite{osthus2003densities}]
\label{thmOst}
For every $\eps >0$, 
\begin{enumerate}
\item If $m  \ge (1+\eps) \frac{\sqrt{3}}{4} n^{3/2}\sqrt{\log n}$, then almost every graph in $\cT(n,m)$ is bipartite; that is,
\[ |  \cT(n,m) | \sim |  \cB(n,m) | \, . \]
\item If $n/2 \le m \le (1-\eps) \frac{\sqrt{3}}{4} n^{3/2}\sqrt{\log n}$, then almost every graph in $\cT(n,m)$ is not bipartite; that is,
\[ |  \cB(n,m) | =  o \left(  |  \cT(n,m) |  \right) \, . \]
\end{enumerate}
\end{theorem}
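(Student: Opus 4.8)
The plan is to classify triangle-free graphs by the number of edges lying inside the two sides of an optimal cut --- their \emph{defect edges} --- and to compare, for each $j$, the number of graphs with $j$ defect edges to $|\cB(n,m)|$. Write $N := \lfloor n^2/4\rfloor$ and $p := m/N$, and let $\cT_j(n,m)$ be the triangle-free graphs whose optimal cut $(A,B)$ carries exactly $j$ defect edges, so that $\cT_0(n,m) = \cB(n,m)$. The heuristic locating the threshold is the ``cost'' of a single defect edge $uv$ with $u,v\in A$: triangle-freeness forbids, for each of the $\approx n/2$ vertices $w\in B$, the simultaneous presence of the crossing edges $uw$ and $vw$; for a choice of the remaining $\approx m$ crossing edges close to a uniformly random $m$-subset of the $N$ crossing slots, these $\approx n/2$ constraints act on \emph{disjoint} pairs of slots, so by inclusion--exclusion the chance of satisfying all of them is $(1-p^2)^{(1+o(1))n/2} = \exp\bigl(-(1+o(1))\,8m^2/n^3\bigr)$. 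Weighing this against the $(1+o(1))\tfrac{n^2}{4}$ choices of defect slot and the factor $\binom{N}{m-1}/\binom{N}{m}\approx p$ gained from carrying one fewer crossing edge yields
\[
\frac{|\cT_1(n,m)|}{|\cB(n,m)|} \;=\; (1+o(1))\,m\,\exp(-8m^2/n^3).
\]
The equation $m\,e^{-8m^2/n^3}=1$ reads $m^2 = \tfrac18 n^3\log m \sim \tfrac{3}{16}\,n^3\log n$ (since $\log m\sim\tfrac32\log n$), i.e.\ $m\sim\tfrac{\sqrt3}{4}n^{3/2}\sqrt{\log n}$; and at $m=(1\pm\eps)\tfrac{\sqrt3}{4}n^{3/2}\sqrt{\log n}$ one has $8m^2/n^3\sim(1\pm2\eps)\log m$, so the multiplier equals $n^{\mp3\eps+o(1)}$ --- polynomially small above the threshold, polynomially large below it --- which is exactly the claimed dichotomy.

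Making this rigorous requires first a \emph{sparse stability} lemma: every triangle-free $G$ on $n$ vertices with $m\gg n^{3/2}$ edges admits a partition $V=A\cup B$ with $e(G[A])+e(G[B])=o(m)$ and $|A|,|B|=\tfrac n2+o(n)$. Since $m=o(n^2)$ the dense Erd\H{o}s--Simonovits stability theorem does not apply; instead one can use a sparse regularity lemma at density $p=m/\binom n2\to0$ (a triangle of $p$-dense $p$-regular pairs in the reduced graph would, via a sparse removal statement, force $\omega(1)$ triangles in $G$, so the reduced graph is triangle-free, Mantel's theorem forces it into bipartite shape, and one transfers a near-bipartite partition back, the $o(m)$ error absorbing the edges in sparse or irregular pairs), or --- more quantitatively --- the hypergraph container method for the $3$-uniform hypergraph of triangles on $E(K_n)$, yielding $\exp\bigl(O(n^{3/2}\log n)\bigr)$ container graphs, each with $o(n^3)$ triangles and hence within $o(n^2)$ edges of a complete bipartite graph, every triangle-free graph being a subgraph of one of them.

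Given stability one counts the near-bipartite graphs \emph{directly} rather than through the containers (whose number, though only $|\cB(n,m)|^{o(1)}$, is far too coarse to detect the threshold). For each $j$, assemble $\cT_j(n,m)$ by choosing an ordered partition $(A,B)$, choosing the $j$ defect slots inside the parts, and then choosing the $m-j$ crossing edges: the only triangles to avoid are ``a defect pair together with a common neighbour on the far side'' (and, for $j\ge3$, triangles among the defect edges, a negligible fraction), so the number of valid crossing-edge sets is $\binom{|A||B|}{m-j}$ times $(1-p^2)^{(1+o(1))|B|j}$ by the same inclusion--exclusion over disjoint pairs --- and, crucially, this estimate is uniform over $j=o(m)$ and over all but a negligible family of defect placements. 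Each triangle-free near-bipartite graph is produced with a bounded multiplicity (twice its number of optimal bipartitions, which is $2$ for all but a negligible family, since for $m\gg n\log n$ --- automatic here --- the relevant bipartite skeleton is connected), and this multiplicity is common to all the $\cT_j$, so $|\cT_j(n,m)|\le|\cB(n,m)|\bigl(C\,m\,e^{-8m^2/n^3}\bigr)^{j}$ for an absolute constant $C$ and all $j=o(m)$, while $|\cT_1(n,m)|=(1+o(1))\,m\,e^{-8m^2/n^3}\,|\cB(n,m)|$. For $m\ge(1+\eps)\tfrac{\sqrt3}{4}n^{3/2}\sqrt{\log n}$ the geometric series $\sum_j|\cT_j(n,m)|$ is dominated by $j=0$, so $|\cT(n,m)|=(1+o(1))|\cB(n,m)|$, which is part (1). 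For $\tfrac n2\le m\le(1-\eps)\tfrac{\sqrt3}{4}n^{3/2}\sqrt{\log n}$: when $m\gg n\log n$ already $|\cT_1(n,m)|=n^{\Theta(\eps)}|\cB(n,m)|$ with almost all of these graphs non-bipartite (one intra-part edge kills the then-unique bipartition); and for the remaining range $\tfrac n2\le m=O(n\log n)$ a Poisson cycle-count shows $G(n,m)$ conditioned on triangle-freeness still contains an odd cycle of length $\ge5$ with probability $1-o(1)$ (the expected number of such cycles already diverges at $m=n/2$, asymptotically independently of triangle-freeness), so in all cases $|\cB(n,m)|=o(|\cT(n,m)|)$, which is part (2).

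The main obstacle is the sparse stability lemma in this density window --- whether via sparse regularity (forcing the accompanying sparse removal statement) or via a sufficiently careful container argument. A secondary difficulty is pinning the per-defect-edge multiplier $m\,e^{-8m^2/n^3}$ down to within a $(1+o(1))$ factor: one must control the multiplicity and the size balance of the optimal bipartition, discard the negligible contribution of clustered defect placements, and track the lower-order corrections to $\binom{N}{m}$, since the whole phenomenon sits in the exponent $8m^2/n^3$ and a constant-factor slip there would shift the threshold constant. Carrying all of this out precisely --- and describing how the defect structure evolves past the threshold --- is exactly the programme of the present paper.
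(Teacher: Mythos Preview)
This theorem is quoted in the paper as a result of Osthus, Pr\"omel and Taraz and is not proved here; there is no ``paper's own proof'' to compare against. Your sketch is essentially the strategy of the original Osthus--Pr\"omel--Taraz argument: a sparse stability input (which is precisely \L uczak's theorem, stated here as Theorem~\ref{thm:luczak} and used as a black box both in the present paper and in the original), followed by a direct count of near-bipartite triangle-free graphs stratified by the number~$j$ of defect edges, with the per-defect-edge multiplier identified as $m\,e^{-(1+o(1))8m^2/n^3}$. Your heuristic and threshold computation are correct, and the gaps you flag---tightening the multiplier to $(1+o(1))$, handling overlapping constraints from defect edges sharing endpoints, controlling the cut balance and multiplicity---are exactly the technical work of the original paper.

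Two remarks. First, the present paper \emph{does} re-derive part~(1) and the upper portion of part~(2) (the range $m \ge \frac{1+\eps}{4}n^{3/2}\sqrt{\log n}$) as a corollary of the much finer Theorem~\ref{thmDistToBipartite}, but by a genuinely different route: rather than counting defect configurations directly, it identifies the defect distribution with an Erd\H{o}s--R\'enyi law $G(A,q_0)\times G(B,q_0)$ via the cluster expansion of the hard-core partition function $Z_{S\boxempty T}(\lam)$, and the threshold then falls out of $q_0 n^2/4 \asymp 1$. Your counting approach is more elementary and self-contained for this particular statement; the paper's approach buys the full structural picture below the threshold. Second, your argument for the bottom of the range in part~(2), $n/2 \le m = O(n\log n)$, is the weakest point: at $m \sim n/2$ the expected number of $k$-cycles for each fixed $k$ is $\Theta(1)$, not divergent, and non-bipartiteness whp comes either from summing over odd $k$ or from the giant-component structure once $m - n/2 \gg n^{2/3}$; the critical window $m = n/2 + O(n^{2/3})$ needs its own treatment and your one-line Poisson appeal does not cover it.
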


One can again rephrase this result in terms of total variation distance: part (1) states that the uniform distributions on $\cT(n,m)$ and $\cB(n,m)$ respectively are within  total variation distance $o(1)$ whereas (2) states that these distributions are asymptotically singular: they have total variation distance $1-o(1)$.  In other words, the rigid  structural property of a typical triangle-free graph being bipartite persists, as the edge density is lowered, until $m \approx  \frac{\sqrt{3}}{4} n^{3/2}\sqrt{\log n}$, and thus in this range of densities the asymptotic enumeration and typical structure problems reduce to the much simpler problem of understanding bipartite graphs.

Far enough below $n^{3/2}$ edges, the asymptotic enumeration problem has also been solved through  entirely different methods.  When $m \le n^{3/2-\eps}$, the asymptotics of $|  \cT(n,m) |$ have been determined in a series of papers~\cite{erdos1960evolution,janson1987uczak,promel1996counting,wormald1996perturbation,stark2018probability,mousset2020probability}.  The first step was the result of Erd\H{o}s and R\'{e}nyi showing that with $m = \Theta(n)$ the distribution of the  number of triangles in the random graph $G(n,m)$ is asymptotically Poisson, and thus the proportion of all graphs on $n$ vertices with $m$ edges that are triangle-free is $\sim \exp (- \mu )$, where $\mu$ is the expected number of triangles in $G(n,m)$.  Using what is now known as `Janson's Inequality', Janson, \L uczak and Ruci\'nski~\cite{janson1987uczak}  then showed that for $m =o (n^{6/5})$, the Poisson behavior persists and  the probability in $G(n,m)$ of seeing no triangles is still asymptotic to $\exp(-\mu)$.   This approach was pushed further, to $m\le n^{3/2-\eps}$ for any fixed $\eps>0$ by Wormald~\cite{wormald1996perturbation} and Stark and Wormald~\cite{stark2018probability} (see also~\cite{mousset2020probability}), and here the asymptotic formula for the probability of triangle-freeness is the exponential of a sum whose number of terms grows as $\eps$ gets smaller.     Unlike  in Theorems~\ref{thmEKR} and~\ref{thmOst}, the asymptotics in this regime are not driven by a rigid global structure like bipartiteness, but rather by a lack of global structure.

Adopting the terminology of statistical physics, we call the dense regime, in which typical triangle-free graphs align with a bipartition and have all (or nearly all) their edges in  a max cut, the \textit{ordered regime}; and the sparse regime, in which graphs lack this global structure, the \textit{disordered regime}.  Below we describe how  intuition and tools from the study of order--disorder phase transitions in statistical physics are useful in studying triangle-free graphs.

Our goal in this paper is to  understand the number and typical structure of triangle-free graphs in the intermediate range of densities not covered by the two sets of results described above.  In particular,  we will delve further into the ordered regime, and solve these problems for a range of edge densities  at which typical triangle-free graphs are not bipartite but are still very structured: they have a unique max cut $(A,B)$ with only a small number of `defect edges' within $A$ and $B$. We further characterize precisely the distribution of the number and structure of these defect edges.  

In particular, we will prove the following asymptotic enumeration and structural results.
\begin{itemize}
\item We give an asymptotic formula for $|\cT(n,m)|$ when $m \ge (1-\eps) \frac{1}{4} n^{3/2} \sqrt{\log n}$ for constant but suitably small $\eps$ (Theorems~\ref{thmNumberM} and \ref{thmNumberMsuper}).

\item We determine the precise structure of a uniformly random graph from $\cT(n,m)$ in this regime.  Almost all such graphs have a  unique max cut $(A,B)$ with almost all edges crossing the cut.  The distribution of defect edges inside $A$ and $B$ is as follows.
\begin{itemize}
\item  when $m \ge  (1+\eps)\frac{1}{4}  n^{3/2} \sqrt{\log n}$ the graphs inside $A$ and $B$ are independent \ER random graphs with  edge probability $q(m,n)$ which we determine; 
\item when $m$ is smaller, the distribution of defect edges are independent copies of a conditioned exponential random graph  with parameter values  we determine.
\end{itemize}

\item As corollaries, we determine  sharp thresholds and scaling windows for several structural properties:
\begin{itemize}
\item we determine the limiting distribution of the smallest number of edges one needs to remove to make a typical graph in $\cT(n,m)$ bipartite  and identify  the scaling window for a random triangle-free graph in $\cT(n,m)$ to be bipartite   (Theorem~\ref{thmDistToBipartite}).
\item we identify the sharp threshold for  a random triangle-free graph in $\cT(n,m)$ to be $3$-colorable (Theorem~\ref{thm3color}).
\item we identify the sharp threshold for the property of a random triangle-free graph in $\cT(n,m)$ to be $4$-colorable (Theorem~\ref{thm4color}).
\end{itemize}

\item We likewise prove analogues of all the results above for the \ER random graph $G(n,p)$ conditioned on being triangle-free; for example,  we determine the first-order asymptotics of the probability of being triangle-free when $p \ge (1-\eps) \sqrt{\frac{\log n } {n}}$, and characterize the typical structure of graphs drawn from this conditional distribution.
\end{itemize}

To prove these results we  use intuition from the  study of order--disorder phase transitions in statistical physics; we use tools such as the cluster expansion as well as develop  new techniques to work with cumulant generating functions in certain exponential random graph models.  We expect these techniques to be widely applicable to other combinatorial enumeration problems.

\subsection{Main results}
\label{secIntroMainResults}

In the ordered regime, almost every triangle-free graph $G$ has a unique max cut, whose partition we will denote by $(A,B)$, and this max cut contains almost all of the edges of $G$.  We will denote by $G_A, G_B$ the subgraphs induced by $A$ and $B$ respectively, and let   $S \subseteq \binom{A}{2}$ and $T \subseteq \binom{B}{2}$ denote the \textit{defect edges}, so that $G_A = (A,S)$ and $G_B = (B,T)$.    The \textit{crossing edges} $\Ec$ of $G$ are those with one endpoint in $A$ and the other in $B$.   

To describe typical structure, we will determine the distribution of the max cut $(A,B)$ (in particular the distribution of their respective sizes), the distribution of the defect edges $S$ and $T$ given $(A,B)$, and the distribution of $\Ec$ given $S$ and $T$ to high enough accuracy and in a simple enough form that we can do explicit calculations of asymptotics.  We say a few words about each of these distributions in reverse order.

Conditioned on $(A,B)$ and $S,T$, the distribution of $\Ec$ is essentially that of a uniformly random subset of $m - |S| -|T|$ edges from $A \times B$ conditioned on the event that these edges form no triangles with the edges from $S,T$.   Equivalently, it is a uniformly random independent set of size   $m - |S| -|T|$ from the graph $S \boxempty T$,   the Cartesian product of the graphs $(A, S)$, $(B, T)$;  that is, the graph with vertex set $V(S \boxempty T) = A \times B$ and edge set $$E(S \boxempty T) = \{ \{(a,b), (a,b')\}: \{b,b'\}\in T  \} \cup \{ \{(a,b), (a',b)\}: \{a,a'\}\in S  \} \,.$$  We will show below that this random independent set model is very nicely behaved: it is `subcritical' in the sense that we can write an explicit asymptotic formula for the number of such independent sets using the cluster expansion--one of the main tools from statistical physics we use in this work.

Deriving the distribution of the defect edges $S, T$ conditioned on the cut $(A,B)$  is at the heart of this paper.  We will show that the distribution of $S,T$ is asymptotically that of independent copies of an exponential random graph conditioned on a  maximum degree bound and triangle-freeness.  The parameters of this  random graph depend on the  edge density of the triangle-free graph $G$.

Finally, the distribution of the cut $(A,B)$ will follow fairly easily from an understanding of the other distributions. In particular, we show that the imbalance $|A|- \lfloor n/2 \rfloor$ follows a discrete Gaussian distribution. 

We state our results in three different regimes, corresponding to distinct behavior of the distribution of the defect edges. We highlight that as $m$ decreases, the typical number of defect edges in a sample from $\cT(n,m)$ \emph{increases}, i.e., the graph becomes less bipartite.

\begin{itemize}
\item The \textbf{subcritical defect regime}, $m \ge \left (1+\eps \right) \frac{1}{4} n^{3/2} \sqrt{\log n}$, for $\eps>0$ fixed.  We will see that in this regime, whp all defect edges are in small components, and  (up to $o(1)$ total variation distance)  the distribution of defect edges within $A$ and within $B$ is that of independent  \ER random graphs.

\item The \textbf{supercritical defect regime}, $ m \le (1-\eps) \frac{1}{4} n^{3/2} \sqrt{\log n}$, for $\eps\in(0,1/14]$ fixed.  In this regime,  the defect edges form connected graphs on both $A$ and $B$ and the distribution of defect edges is given by a conditioned exponential random graph with weights that are a function of the number of edges and paths of length $2$ in the graph.
\item The \textbf{critical defect regime}, $(1-\eps) \frac{1}{4} n^{3/2} \sqrt{\log n}\le m \le (1+\eps) \frac{1}{4} n^{3/2} \sqrt{\log n}$ for $\eps = o(1)$.  In this critical regime,  giant components emerge among the defect edges in $A$ and $B$, the chromatic number becomes unbounded, and the number of paths of length $2$ becomes significant in the distribution of defect edges. 
\end{itemize}
A visual overview of the structural changes (leaving out the critical regime) is shown in Figure~\ref{fig:timeline}. Note that the upper bound of $1/14$ on $\eps$ is due to technical limitations of our methods and not a significant qualitative change in the problem.  We discus potential extensions of the range of densities treated in Section~\ref{secOutlook} below. 

\begin{figure}
    \centering
    \includegraphics[width=1.0\linewidth]{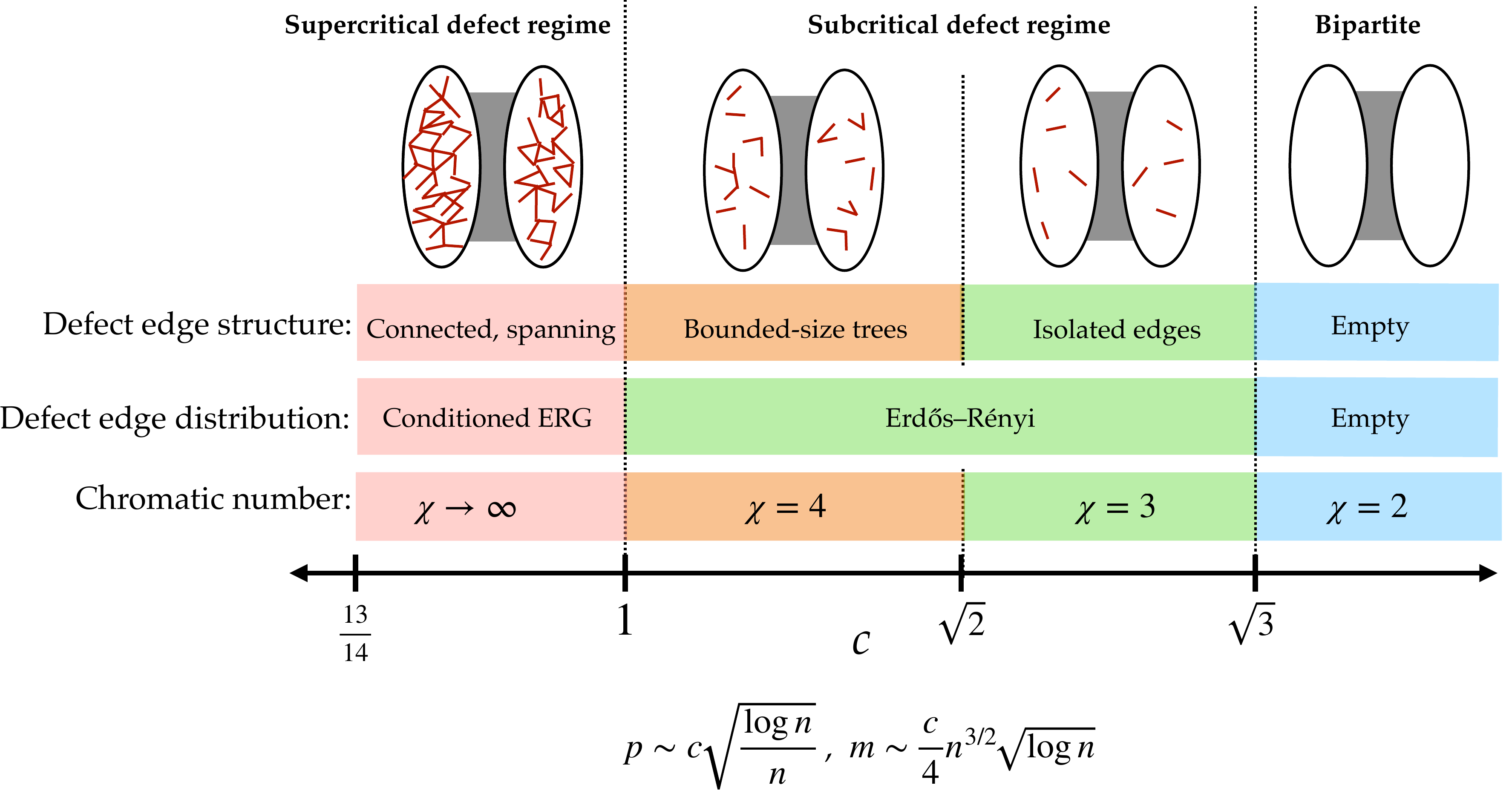}
    \caption{Summary of structural changes as the edge density changes.}
    \label{fig:timeline}
\end{figure}

\subsubsection{Subcritical defect regime}
\label{secIntroSubCritical}

To state precise results, we first define some parameters. Let 
\begin{align}
\label{eqlam0def}
\lam_0:=\frac{4m}{n^2}\, ,
\end{align}
and 
\begin{align}\label{eqpdef}
\lam:= \lam_0+\lam_0^2 + (\lam_0^2n-1)\lam_0e^{-\lam_0^2n/2}\, .
\end{align}
Note that in our setting $\lam \sim \lam_0$, but the lower order corrections will be consequential.

Define $q_0=q_0(n,m)$  so that
\begin{align}\label{eqqdef}
\frac{q_0}{1-q_0} =  \lam e^{-\lam^2n/2}\, .
\end{align}
The parameter $q_0$ will represent the approximate density of defect edges. When $m \sim \frac{c}{4} \cdot n^{3/2} \sqrt{\log n}$ we have $q_0 = n^{- \frac{1}{2} - \frac{c^2}{2} + o(1)}$.

We write $G(V,q)$ to denote the \ER random graph on a vertex set $V$ with edge probability $q$.  We also define a distribution on partitions of $[n]=\{1,\ldots,n\}$. 
\begin{defn}\label{defPartitionDist}
Given $\lam>0$, let $\xi_\lam$ denote the following discrete Gaussian distribution on $\Z$:  
\begin{align*}
\xi_\lam(t)\propto \left(1+\lam\right)^{-t^2}\, .
\end{align*}
We define a probability distribution $\theta_\lam$ on partitions $(A,B)$ of $[n]$ as follows:
\begin{enumerate}
\item Sample $t\in \Z$ according to $\xi_\lam$.  Let $t' = \min \{ t, \lceil n/2 \rceil \}$.
\item Sample a partition $(A,B)$ of $[n]$ with $|A| = \lfloor n/2\rfloor+t', |B|=\lceil n/2\rceil -t'$ uniformly at random. 
\end{enumerate}
\end{defn}

\begin{defn}\label{defstrongbalance}
Call a partition $(A,B)$ of $[n]$ \emph{strongly balanced} if $\big ||A|-|B| \big |\leq 10(n \log n)^{1/4}$. Let $\Pi_{\textup{strong}}$ denote the set of all strongly balanced partitions of $[n]$.
\end{defn}

In what follows `whp' stands for `with high probability', meaning with probability $1-o(1)$ as $n \to \infty$.
\begin{theorem}\label{thmSubcriticalfixedEdgeDist}
\label{ThmStructure}
Fix $\eps>0$.  Suppose $m \ge \frac{1+\eps}{4} n^{3/2} \sqrt{\log n}$ and let $\lam$ be as in~\eqref{eqpdef}.  
Choose $G$ from the uniform distribution on $\cT(n,m)$.  Then
\begin{enumerate}
\item Whp $G$ has a unique, strongly balanced max cut $(A,B)$ of size  $m-o(m)$.  The cut $(A,B)$ is distributed according to $\theta_\lam$ up to $o(1)$ total variation distance. 
\item Whp over the random max cut  $(A,B)$,  the distribution of the subgraphs $G_A$ and $ G_B$ induced by $A$ and $B$ respectively is that of independent samples from $G(A,q_0)$ and $G(B,q_0)$ up to $o(1)$ total variation distance. 
\end{enumerate}
\end{theorem}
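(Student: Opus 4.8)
The plan is to analyze a uniformly random $G\in\cT(n,m)$ through the decomposition $G\leftrightarrow(A,B,S,T,\Ec)$, where $(A,B)$ is a max cut, $S=E(G)\cap\binom A2$ and $T=E(G)\cap\binom B2$ are the defect edges, and $\Ec=E(G)\cap(A\times B)$ is the crossing edge set. A triangle of $G$ either lies entirely inside $S$ or inside $T$, or uses one edge of $S\cup T$ and two crossing edges; equivalently $G$ is triangle-free iff $S,T$ are triangle-free and $\Ec$ is an independent set in the Cartesian product $S\boxempty T$. Writing $N(A,B,S,T)$ for the number of independent sets of size $m-|S|-|T|$ in $S\boxempty T$, the number of $G\in\cT(n,m)$ with unique max cut $(A,B)$ and defect pair $(S,T)$ is $N(A,B,S,T)$ minus the completions for which $(A,B)$ fails to be the unique max cut. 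The whole theorem then reduces to (i) showing that this correction, together with the contribution of all graphs lacking a unique max cut, is $o(|\cT(n,m)|)$; and (ii) evaluating $N(A,B,S,T)$ precisely and summing it first over $(S,T)$ and then over $(A,B)$.

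For the reductions in (i), and to pass to ``good'' configurations, I would lower bound $|\cT(n,m)|=\Omega\big(\binom n{\lfloor n/2\rfloor}\binom{\lfloor n/2\rfloor\lceil n/2\rceil}{m}\big)$ via the bipartite construction, and combine a crude upper bound on $N(A,B,S,T)$ valid for all $S,T$ (e.g.\ through a maximal matching of $S\boxempty T$) with a container/stability argument in the spirit of Theorem~\ref{thmOst} to show that all but an $o(1)$-fraction of $\cT(n,m)$ consists of graphs whose max cut is unique, lies in $\Pi_{\textup{strong}}$, and has size $m-o(m)$, and whose defect pair $(S,T)$ is \emph{good}: sparse, triangle-free, and of maximum degree below a slowly growing threshold. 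The numerical input is that the effective per-defect-edge weight is $q_0=n^{-1/2-c^2/2+o(1)}$ with $c\ge 1+\eps>1$, so whp the number of defect edges is $O(n^{3/2-c^2/2}\log n)=o(m)$, and the smallness of $q_0$ also makes the contribution of configurations with too many defect edges, a defect triangle, or a high-degree defect vertex negligible.

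The analytic heart, which I expect to be the main obstacle, is evaluating $N(A,B,S,T)$ for good inputs. Here $S\boxempty T$ has $N:=|A||B|\sim n^2/4$ vertices, about $|B||S|+|A||T|\sim\tfrac n2(|S|+|T|)$ edges, and maximum degree $\Delta(S)+\Delta(T)$, and the relevant fugacity for an independent set of size $m-|S|-|T|$ is of order $\lam_0=4m/n^2=\Theta(\sqrt{\log n/n})$; since fugacity times maximum degree is $o(1)$ on good inputs, the independent-set (hard-core) model on $S\boxempty T$ is subcritical and its cluster expansion converges. Carrying it out, the binomial prefactor contributes $\binom N{m-|S|-|T|}=\binom Nm\big(\tfrac m{N-m}\big)^{|S|+|T|}(1+o(1))^{|S|+|T|}$ with $\tfrac m{N-m}$ equal to the parameter $\lam$ of~\eqref{eqpdef} to the needed order (this is where the lower-order corrections defining $\lam$ enter), the single-edge (Janson) term contributes $e^{-\mu_e}$ with $\mu_e=e(S\boxempty T)\binom{m-|S|-|T|}2/\binom N2\sim\tfrac{\lam^2 n}2(|S|+|T|)$, and the cherry and higher clusters contribute a factor $e^{\phi(S,T)}$ governed by $|S|$, $|T|$, and the numbers $P_2(S),P_2(T)$ of $2$-edge paths in $S$ and $T$; the upshot is
\[
N(A,B,S,T)=\binom{|A||B|}{m}\Big(\tfrac{q_0}{1-q_0}\Big)^{|S|+|T|}e^{\phi(S,T)}e^{o(1)},
\]
with $q_0$ as in~\eqref{eqqdef}. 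The difficulty is establishing convergence of the expansion uniformly over all good $(S,T)$ and identifying enough of its terms to pin down $\lam$, $q_0$, and the form of $\phi$.

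Finally, summing up. The displayed formula says that, conditioned on a good cut $(A,B)$, the law of $(S,T)$ is product-Bernoulli$(q_0)$ (after rewriting $\big(\tfrac{q_0}{1-q_0}\big)^{|S|+|T|}$ as the product-Bernoulli weight times the $(S,T)$-independent factor $(1-q_0)^{-\binom{|A|}2-\binom{|B|}2}$) tilted by $e^{\phi(S,T)}$; although $e^\phi$ need not be close to $1$ pointwise, one checks that $\phi$ varies by only $o(1)$ over the typical fluctuations of a product-Bernoulli$(q_0)$ sample precisely because $c>1$ --- this is exactly why the subcritical regime ends at $m\sim\tfrac14 n^{3/2}\sqrt{\log n}$ --- so the tilt is negligible in total variation. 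Since $G(A,q_0)\times G(B,q_0)$ is whp triangle-free and of bounded maximum degree, the earlier restriction to good $(S,T)$ is likewise vacuous up to $o(1)$, and part~(2) follows. Summing over $(S,T)$ gives $\sum_{S,T}N(A,B,S,T)\propto\binom{|A||B|}m(1-q_0)^{-\binom{|A|}2-\binom{|B|}2}$ up to a factor constant across $\Pi_{\textup{strong}}$; writing $|A|=\lfloor n/2\rfloor+t$ and expanding $\binom{|A||B|}m$ and the exponent, the $t$-dependence is $(1+\lam)^{-t^2}$ up to a multiplicative $e^{o(t^2)}$ and an overall $t$-independent constant, while the mass outside $\Pi_{\textup{strong}}$ is $o(1)$ by a discrete-Gaussian tail bound; hence $(A,B)\sim\theta_\lam$ up to $o(1)$ total variation, which is part~(1).
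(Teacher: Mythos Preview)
Your outline is essentially correct and aligns with the paper's strategy: both approaches rest on the observation that, given $(A,B,S,T)$, the crossing edges form an independent set in $S\boxempty T$, and both exploit convergence of the cluster expansion on $S\boxempty T$ (which is where the max-degree restriction on defects is crucial) to extract the per-defect-edge weight $q_0/(1-q_0)=\lam e^{-\lam^2 n/2}$, then argue that the residual tilt is negligible in total variation and that the cut follows a discrete Gaussian.

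The main organizational difference is that the paper does not work directly in the fixed-$m$ model. It first analyzes the Gibbs measure $\mu_\lam$ (i.e.\ $G(n,p)$ conditioned on triangle-freeness), proving $\mu_\lam\approx\mu_{\lam,1}$ via the chain $\mu_\lam\to\mu_{\cL,\lam}\to\mu_{\textup{weak},\lam}\to\mu_{\textup{mod},\lam}\to\mu_{\textup{strong},\lam}$, and only at the end transfers to $\cT(n,m)$ by tuning $\lam=\lam(m)$ and applying a local CLT for the hard-core model on $S\boxempty T$ (Proposition~\ref{thmhcLCLT}). Your proposal stays in fixed $m$ throughout and asks directly for the count $N(A,B,S,T)$ of independent sets of a prescribed size; this is cleaner conceptually but the step you write as ``the binomial prefactor contributes $\binom N{m-|S|-|T|}$ times cluster corrections'' \emph{is} an LCLT in disguise --- cluster expansion gives $Z_{S\boxempty T}(\lam)$, not $i_k(S\boxempty T)$, and extracting a single coefficient with multiplicative error $e^{o(1)}$ uniformly over good $(S,T)$ requires exactly the local CLT the paper proves. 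So the two routes coincide once this is made explicit.

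One further point: your reduction (i) to ``good'' configurations via ``a container/stability argument in the spirit of Theorem~\ref{thmOst}'' understates the work involved. The paper needs not just a large cut but the sharper conclusion that the defect graph has max degree $O(\alpha/\lam)$ (Proposition~\ref{propGroundStateRefinedAlt}), which it obtains by an adaptation of the Balogh--Morris--Samotij--Warnke argument on top of {\L}uczak's theorem; this degree bound is precisely what puts the hard-core model on $S\boxempty T$ in the subcritical regime. A generic container bound would not deliver this.
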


Alternatively, we can rephrase Theorem~\ref{thmSubcriticalfixedEdgeDist} in terms of an algorithm that generates an approximately uniform sample from $\cT(n,m)$.   We describe the process below and denote the resulting distribution on $\cT(n,m)$ by $\mu_{m,1}$.

\begin{algorithm}[ht]
  \caption{The distribution $\mu_{m,1}$\label{algMum1}}
\begin{enumerate}[leftmargin=*]
\item Choose a random partition $(A,B)$ according to $\theta_\lam$. 
\item Choose  defect edges $S \subseteq \binom{A}{2}$, $T\subseteq \binom{B}{2}$ according to independent realizations of $G(A,q_0)$ and $G(B,q_0)$ respectively. If $S\cup T$ contains a triangle or if $|S| + |T| >m$, output an arbitrary graph $G_0 \in \cT(n,m)$. Otherwise proceed to the next step. 
\item Choose crossing edges $\Ec \subset A \times B$ uniformly from all subsets of size $m - |S| - |T|$ so that $S \cup T \cup \Ec$ contains no triangles.
\item Output $S \cup T \cup \Ec$.
\end{enumerate}
\end{algorithm}

\begin{theorem}
\label{thmGnmSubRestated}
Fix $\eps>0$ and suppose $m \ge \frac{1+\eps}{4} n^{3/2} \sqrt{\log n}$.  The distribution $\mu_{m,1}$ is at total variation distance  $o(1)$ to the uniform distribution on $\cT(n,m)$.
\end{theorem}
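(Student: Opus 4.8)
The plan is to derive Theorem~\ref{thmGnmSubRestated} from Theorem~\ref{thmSubcriticalfixedEdgeDist}, translating the structural description there into the three steps of Algorithm~\ref{algMum1}; two discrepancies must be controlled: the rejection events built into step~2, and the gap between the ``uniform valid crossing edges'' chosen in step~3 and the true conditional law of the crossing edges under the uniform measure. Write $\nu$ for the uniform distribution on $\cT(n,m)$ and let $\cG\subseteq\cT(n,m)$ be the event that $G$ has a unique, strongly balanced max cut $(A,B)$ with $|\Ec|=m-o(m)$. By Theorem~\ref{thmSubcriticalfixedEdgeDist}(1) we have $\nu(\cG^c)=o(1)$, so it suffices to show that the conditioned measure $\nu(\,\cdot\mid\cG)$ is within $o(1)$ total variation distance of the output of Algorithm~\ref{algMum1}.

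First I would dispose of the rejection step. From the defining relation \eqref{eqqdef} and $m\ge\frac{1+\eps}{4}n^{3/2}\sqrt{\log n}$ one checks $q_0\le n^{-1-\eps+o(1)}$ throughout this regime, so if $S,T$ are independent copies of $G(A,q_0)$ and $G(B,q_0)$ on parts of size $\tfrac{n}{2}(1+o(1))$, the expected number of triangles in $S\cup T$ is $O(n^3q_0^3)=O(n^{-3\eps+o(1)})=o(1)$, while $|S|+|T|$ is binomial with mean $O(n^{1-\eps+o(1)})=o(m)$. Hence, by Markov's and Chernoff's inequalities, the probability of rejection in step~2 is $o(1)$, and with probability $1-o(1)$ the algorithm produces only $|S|+|T|=o(m)$ defect edges. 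Consequently, up to $o(1)$ in total variation, step~2's rejection may be replaced by conditioning $S,T$ on being triangle-free with $|S|+|T|\le m$; write $\widetilde\mu$ for the resulting idealized distribution, so $\|\mu_{m,1}-\widetilde\mu\|_{\text{TV}}=o(1)$ and it remains to show $\|\nu(\,\cdot\mid\cG)-\widetilde\mu\|_{\text{TV}}=o(1)$, which I would do by building a coupling layer by layer.

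By Theorem~\ref{thmSubcriticalfixedEdgeDist}(1) the max cut of $G\sim\nu$ is within $o(1)$ of $\theta_\lam$, and this persists after conditioning on the probability-$(1-o(1))$ event $\cG$; since step~1 of the algorithm samples $(A,B)\sim\theta_\lam$, the two partitions can be coupled to agree with probability $1-o(1)$, at a value $(A,B)=a$ that is moreover ``good'' for a $1-o(1)$ fraction of $\theta_\lam$. Conditioned on such an $a$, Theorem~\ref{thmSubcriticalfixedEdgeDist}(2) --- together with a Markov-type argument to pass from the joint to the conditional statement, and the fact that independent $G(A,q_0)$, $G(B,q_0)$ are $o(1)$-close to their own conditioning on triangle-freeness and $|S|+|T|\le m$ --- shows that the law of $(G_A,G_B)$ under $\nu(\,\cdot\mid\cG,\,(A,B)=a)$ is $o(1)$-close to the law of $(S,T)$ under $\widetilde\mu(\,\cdot\mid(A,B)=a)$; couple these to agree with probability $1-o(1)$, at a typical value $(S,T)=(s,t)$ with $|s|+|t|=o(m)$.

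The last layer is the crux, and I expect it to be the main obstacle. Conditioned on $(A,B)=a$ and $(G_A,G_B)=(s,t)$ typical, the crossing edges under $\widetilde\mu$ are uniform over $\mathrm{Valid}(a,s,t):=\{\Ec\subseteq A\times B:\ |\Ec|=m-|s|-|t|,\ s\cup t\cup\Ec\text{ triangle-free}\}$ --- exactly step~3 --- whereas under $\nu(\,\cdot\mid\cG)$ they are uniform over the sub-collection of those $\Ec$ for which $(A,B)$ is moreover the \emph{unique strongly balanced max cut} of $s\cup t\cup\Ec$. These two uniform laws are $o(1)$-close precisely when, for typical $(a,s,t)$, a uniformly random $\Ec\in\mathrm{Valid}(a,s,t)$ makes $(A,B)$ the unique strongly balanced max cut with probability $1-o(1)$: the rigidity of the near-maximal bipartition. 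I would establish this by combining the ``subcritical'' description of a uniformly random such $\Ec$ (which, via the cluster expansion, behaves essentially like the bipartite binomial random graph on $A\times B$ of density $\Theta(\sqrt{\log n/n})$) with a degree-comparison argument: with such an $\Ec$, every vertex has crossing-degree $\Theta(\sqrt{n\log n})$ but defect-degree $o(\sqrt{n\log n})$ whp, so moving any nonempty set $U$ of vertices across the cut changes the cut size by $e(U,A\setminus U)-e(U,B)<0$, which both forces $(A,B)$ to be the max cut and rules out any competing max cut (the case $U$ comprising almost a whole part being immediate since the cut already has size $m-o(m)$). This rigidity is in any case obtained en route to proving Theorem~\ref{thmSubcriticalfixedEdgeDist}. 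Granting it, couple $\Ec$ to agree with probability $1-o(1)$; then the three coupling layers give $G=s\cup t\cup\Ec=G'$ with probability $1-o(1)$, so $\|\nu(\,\cdot\mid\cG)-\widetilde\mu\|_{\text{TV}}=o(1)$, and combining this with $\nu(\cG^c)=o(1)$ and $\|\mu_{m,1}-\widetilde\mu\|_{\text{TV}}=o(1)$ completes the proof.
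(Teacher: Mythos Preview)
Your argument invokes Theorem~\ref{thmSubcriticalfixedEdgeDist} as a black box, but in the paper's logic this is circular: Theorems~\ref{thmSubcriticalfixedEdgeDist} and~\ref{thmGnmSubRestated} are proved together in Section~\ref{secFixedM}, and the paper explicitly establishes Theorem~\ref{thmGnmSubRestated} first and then reads off Theorem~\ref{thmSubcriticalfixedEdgeDist} as a consequence (``First we prove Theorem~\ref{thmGnmSubRestated} (and therefore also Theorem~\ref{thmSubcriticalfixedEdgeDist})''). So Theorem~\ref{thmSubcriticalfixedEdgeDist} is not available to you as an independent input.

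The paper's route is instead through the intermediate measure $\mu_{\textup{strong},m}$ of Algorithm~\ref{algMuStrongm}, which shares the three-step structure of $\mu_{m,1}$ but samples the partition with weights $Z_{A,B}(\lam)$ and the defects from $\nu_{A,B,\lam}$. Theorem~\ref{thmstatesum} gives $\|\mu_m-\mu_{\textup{strong},m}\|_{TV}=o(1)$ directly from the partition-function asymptotics and a local CLT (Lemma~\ref{lemLCLTcross}, Lemma~\ref{lemhatmuGapprox}), with no appeal to Theorem~\ref{thmSubcriticalfixedEdgeDist}. One then shows $\|\mu_{m,1}-\mu_{\textup{strong},m}\|_{TV}=o(1)$ by comparing the two algorithms layer by layer (via Claim~\ref{claimTVconditioned}): the partition laws agree by~\eqref{eqPpi0}--\eqref{eqPpi1}, the defect laws agree by $\|\nu_{A,B,\lam}-\nu_{\bm q}\|_{TV}=o(1)$ (Lemma~\ref{lemZABapprox1}), and step~3 is literally identical in both algorithms --- so no rigidity argument is needed at this stage at all.

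Your layer-by-layer coupling and your identification of rigidity as the crux are both correct ideas, and they do appear in the paper --- but one level deeper, inside the proof of Theorem~\ref{thmstatesum}. The rigidity statement you want is Lemma~\ref{lemCaptureModfixed}, and the paper proves it exactly along the lines you sketch: establish expansion for the hard-core crossing edges (Lemmas~\ref{lemExpanderwhp}--\ref{lemExpansionCor}) and then transfer to the fixed-size model by dividing by the point probability from the local CLT. Your direct degree-comparison sketch omits this LCLT transfer, which is what lets one pass from ``whp under the hard-core model on $S\boxempty T$'' to ``whp under a uniform independent set of size $m-|S|-|T|$''. In short: your decomposition is the right one, but it must be hung on Theorem~\ref{thmstatesum} and $\mu_{\textup{strong},m}$ rather than on Theorem~\ref{thmSubcriticalfixedEdgeDist}.
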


From Theorem~\ref{ThmStructure} or Theorem~\ref{thmGnmSubRestated}, one can immediately deduce essentially any  desired structural information about the defect graphs in the relevant regime from an understanding of the structure of a very sparse \ER random graph.  Using this and an understanding of the independent set model that generates the crossing edges,  we can describe the evolution of the structure of a uniformly random $G$ from $\cT(n,m)$ as $m$ decreases from $\frac{\sqrt{3}}{4} n^{3/2}  \sqrt{\log n}$ to  $\frac{1}{4} n^{3/2}  \sqrt{\log n}$.  At a high level, as the overall edge density decreases within the ordered regime, the density of the defect edges and the complexity of the structure of typical graphs increase.    From~\eqref{eqqdef}, we  see that  as $m$ drops from $\frac{\sqrt{3}}{4} n^{3/2}  \sqrt{\log n}$ to  $\frac{1}{4} n^{3/2}  \sqrt{\log n}$, $q_0$ increases from around $n^{-2}$ to $n^{-1}$; this gives some intuition for the constant $\frac{\sqrt{3}}{4}$ in Theorem~\ref{thmOst} and hints at more complex structure emerging when the leading constant passes $\frac{1}{4}$.

Now we describe some of the structural changes precisely.  Osthus, Pr{\"o}mel, and Taraz (in~\cite{osthus2003densities}) show that  $\frac{\sqrt{3}}{4} n^{3/2}  \sqrt{\log n}$ is the sharp threshold for $G$ sampled uniformly from $\cT(n,m)$ to be bipartite; that is, for the defect graphs to be empty.  We refine this by characterizing the distribution of the  distance from bipartiteness.  This also gives the precise scaling window for the property of $G$ being bipartite.

\begin{theorem}
\label{thmDistToBipartite}
Fix $\eps>0$ and suppose $m \ge \frac{1+\eps}{4} n^{3/2} \sqrt{\log n}$.  Suppose $G$ is drawn uniformly  from $\cT(n,m)$, and let $X(G)$ be the minimum number of edges whose removal makes $G$ bipartite.     Let $\hat X \sim \text{Bin}(\lfloor n^2/4 \rfloor, q_0)$ where $q_0$ is as in~\eqref{eqqdef}.  Then  $\| X(G)- \hat X\|_{TV} \to 0$ as $n \to \infty$. 

In particular, if $m =  \frac{ \sqrt{3+ \frac{\log \log n}{\log n} - \frac{t}{\log n}  }}{ 4    } n^{3/2} \sqrt{\log n}$ for $t \in \R$, then
\[ \lim_{n\to \infty} \P ( G \in \cB(n,m) ) =  \exp \left(  - \frac{\sqrt{3}}{4} e^{t/2} \right ) \,.   \]
\end{theorem}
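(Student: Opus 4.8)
The plan is to read the theorem off from the structural description in Theorem~\ref{ThmStructure} (equivalently Theorem~\ref{thmGnmSubRestated}), followed by a short parameter computation. The first ingredient is the elementary identity $X(G) = e(G) - \mathrm{mc}(G)$ valid for every graph: deleting the non-crossing edges of a maximum cut makes $G$ bipartite, and no smaller deletion set works since every surviving edge must cross the resulting bipartition. Hence on the event (which holds whp by Theorem~\ref{ThmStructure}(1)) that $G$ has a unique max cut $(A,B)$ with defect edges $S\subseteq\binom A2$, $T\subseteq\binom B2$, we have $X(G)=|S|+|T|$; the complementary event has probability $o(1)$ and so changes no total-variation statement by more than $o(1)$.

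Next I would transfer the distribution. By Theorem~\ref{ThmStructure}, $(A,B)\sim\theta_\lam$ up to $o(1)$ total variation, and conditionally on $(A,B)$ the pair $(G_A,G_B)$ is within $o(1)$ total variation of $(G(A,q_0),G(B,q_0))$; so $X(G)$ is within $o(1)$ of $|E(G(A,q_0))|+|E(G(B,q_0))|$ with $(A,B)\sim\theta_\lam$, which conditionally on $(A,B)$ is exactly $\text{Bin}(N(A,B),q_0)$ with $N(A,B):=\binom{|A|}2+\binom{|B|}2=\binom n2-|A||B|$. Writing $|A|=\lfloor n/2\rfloor+t$, $|B|=\lceil n/2\rceil-t$ gives $N(A,B)=\lfloor n^2/4\rfloor+O(n+t^2)$, and since $\theta_\lam$ is supported up to $o(1)$ on strongly balanced partitions (indeed this is part of Theorem~\ref{ThmStructure}(1)), whp $|N(A,B)-\lfloor n^2/4\rfloor|=O(n)$. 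For $m\ge\frac{1+\eps}4 n^{3/2}\sqrt{\log n}$ one has $q_0=n^{-1-\eps+o(1)}$ (and smaller for larger $m$), so coupling two binomials with the same parameter $q_0$ yields $\|\text{Bin}(N(A,B),q_0)-\text{Bin}(\lfloor n^2/4\rfloor,q_0)\|_{TV}\le |N(A,B)-\lfloor n^2/4\rfloor|\,q_0=O(nq_0)=o(1)$. Chaining these estimates through the triangle inequality for total variation gives $\|X(G)-\hat X\|_{TV}\to 0$.

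For the asymptotic formula, note $X(G)=0$ exactly when $G$ is bipartite, so $\P(G\in\cB(n,m))=\P(X(G)=0)=\P(\hat X=0)+o(1)$. For the stated $m$, set $c=c(n)$ with $c^2=3+\frac{\log\log n}{\log n}-\frac t{\log n}$, so $\lam_0=4m/n^2=c\,n^{-1/2}(\log n)^{1/2}$; by \eqref{eqpdef} the corrections to $\lam_0$ are $o(\lam_0)$ and $\lam^2 n=c^2\log n+o(1)$, so \eqref{eqqdef} gives $q_0=(1+o(1))\lam_0 e^{-\lam^2 n/2}=(1+o(1))\,c\,(\log n)^{1/2}n^{-1/2}\cdot n^{-c^2/2}$. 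The identity $n^{-c^2/2}=n^{-3/2}(\log n)^{-1/2}e^{t/2}$ — which is exactly why the threshold is written with the $\frac{\log\log n}{\log n}$ term, so the $(\log n)^{1/2}$ factors cancel — together with $c\to\sqrt3$ yields $q_0=(1+o(1))\sqrt3\,n^{-2}e^{t/2}$. Hence $\E\hat X=\lfloor n^2/4\rfloor q_0\to\frac{\sqrt3}4 e^{t/2}$, and since $\hat X$ is $\text{Bin}(N,q_0)$ with $N\to\infty$ and $Nq_0\to\frac{\sqrt3}4 e^{t/2}$, it converges in distribution to $\po\!\left(\frac{\sqrt3}4 e^{t/2}\right)$; in particular $\P(\hat X=0)\to\exp\!\left(-\frac{\sqrt3}4 e^{t/2}\right)$.

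The only step requiring genuine care is the last one: correctly tracking the lower-order corrections built into $\lam$ in \eqref{eqpdef} and into $q_0$ via \eqref{eqqdef}, so that the polylogarithmic factors cancel precisely and $\E\hat X$ converges to the exact constant $\frac{\sqrt3}4 e^{t/2}$ rather than something off by a $\log$-power. The first two steps are routine consequences of Theorem~\ref{ThmStructure}; the mild points are that the atom of $\hat X$ at $0$ encodes bipartiteness, and that the $\Theta(n)$ uncertainty in the number of available defect-edge slots (from the fluctuation of $|A|,|B|$ and from floor effects) is harmless precisely because $q_0$ is polynomially small in $n$.
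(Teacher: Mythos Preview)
Your proof is correct and follows essentially the same approach as the paper: identify $X(G)=|S|+|T|$ via the unique max-cut property from Theorem~\ref{ThmStructure}, transfer to independent Erd\H{o}s--R\'enyi defect graphs, and couple the resulting binomial to $\text{Bin}(\lfloor n^2/4\rfloor,q_0)$ using $q_0=o(n^{-1})$; the parameter computation for the scaling window is likewise the same. (A trivial cosmetic point: the exponent of $q_0$ at the boundary $c=1+\eps$ is $-1-\eps-\eps^2/2$ rather than $-1-\eps$, but this is irrelevant to the argument.)
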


In their survey on random triangle-free graphs~\cite{promel2001random}, Pr{\"o}mel and Taraz  ask: what is the typical chromatic number of a graph $G$ drawn uniformly from $\cT(n,m)$ in the regime where $G$ is not bipartite whp?      
Our next result identifies a sharp transition from $3$-colorability to $4$-colorability at $m \sim \frac{\sqrt{2}}{4} n^{3/2} \sqrt{\log n}$.

\begin{theorem}
\label{thm3color} 
Let $\eps>0$ be fixed and let $G$ be drawn uniformly from $\cT(n,m)$. Then
\begin{itemize}
\item If $ (1+\eps)\frac{\sqrt{2}}{4} n^{3/2} \sqrt{\log n} \le m  \le  (1-\eps)\frac{\sqrt{3}}{4} n^{3/2} \sqrt{\log n}$ then $\chi(G) = 3 $ whp.
\item If $ (1+\eps) \frac{1}{4} n^{3/2} \sqrt{\log n} \le m  \le (1-\eps)\frac{\sqrt{2}}{4} n^{3/2} \sqrt{\log n}$ then $\chi(G) = 4 $ whp.
\end{itemize}
\end{theorem}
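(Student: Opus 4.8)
The plan is to read off both statements from the structural description of $\cT(n,m)$ in the subcritical defect regime. Both ranges of $m$ lie in that regime, so by Theorem~\ref{thmGnmSubRestated} we may replace the uniform measure on $\cT(n,m)$ by $\mu_{m,1}$ up to $o(1)$ total variation distance; it thus suffices to prove each claim for a sample from $\mu_{m,1}$, i.e.\ for a strongly balanced partition $(A,B)$ together with independent defect graphs $G_A\sim G(A,q_0)$, $G_B\sim G(B,q_0)$ and a uniformly random triangle-free completion on the crossing edges. Write $m=\tfrac{c}{4}n^{3/2}\sqrt{\log n}$, so that $c$ lies in a compact subset of $(\sqrt2,\sqrt3)$ in the first regime and of $(1,\sqrt2)$ in the second, and recall $q_0=n^{-1/2-c^2/2+o(1)}$. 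I would first record the following whp facts. (i) $\Delta(G_A),\Delta(G_B)=O(1)$ and $|S|,|T|=n^{3/2-c^2/2+o(1)}$, which is $n^{1/2+\Omega(\eps)}$ when $c\le\sqrt2(1-\eps)$ and $n^{1/2-\Omega(\eps)}$ when $c\ge\sqrt2(1+\eps)$; this is a routine first/second moment computation for the sparse \ER graphs $G(\cdot,q_0)$. (ii) In the first regime $G_A$ and $G_B$ are matchings, since the expected number of components of size $\ge3$ (dominated by the number of paths of length $2$) is $n^{2-c^2+o(1)}=o(1)$ once $c>\sqrt2$. (iii) (Crossing-graph expansion.) Whp every $Y\subseteq A$, and every $Y\subseteq B$, with $|Y|\ge n^{1/2}/\log n$ has common non-neighbourhood of size at most $2n^{1/2}$ in the opposite part; this reduces by monotonicity to $|Y|=\lceil n^{1/2}/\log n\rceil$, where it follows from a Chernoff bound and a union bound over the $\binom{n/2}{|Y|}$ choices of $Y$, using that the crossing edges behave like an \ER graph of density $\lam_0=\Theta(\sqrt{\log n/n})$. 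Transferring (iii) to the genuine triangle-free-completion law of the crossing edges should come from the analysis of the independent-set model generating them, referenced above.

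For the \emph{lower bounds}: $\chi(G)\ge3$ is immediate, since $G$ is non-bipartite whp because $m\le(1-\eps)\tfrac{\sqrt3}{4}n^{3/2}\sqrt{\log n}$ (Theorem~\ref{thmOst}(2), or Theorem~\ref{thmDistToBipartite}). The nontrivial bound is $\chi(G)\ge4$ in the second regime. Suppose $\chi(G)=3$ with colour classes $V_1,V_2,V_3$. For each $i$, $V_i\cap A$ spans no crossing edge to $V_i\cap B$, so (iii) forces $\min(|V_i\cap A|,|V_i\cap B|)\le2n^{1/2}$; call $V_i$ an $A$-class if $|V_i\cap B|\le2n^{1/2}$ and a $B$-class otherwise (each class is at least one of the two). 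By pigeonhole two classes share a type; say $V_1,V_2$ are $A$-classes, so $|V_1\cap B|,|V_2\cap B|\le2n^{1/2}$. Since $V_3\cap B=B\setminus((V_1\cup V_2)\cap B)$ is independent in $G_B$, the set $(V_1\cap B)\cup(V_2\cap B)$ is a vertex cover of $G_B$, hence has size at least $|T|/\Delta(G_B)=n^{1/2+\Omega(\eps)}$ by (i) — contradicting $|V_1\cap B|+|V_2\cap B|\le4n^{1/2}$. (If instead two classes are $B$-classes, run the symmetric argument with $G_A$ and $|S|$.) This is exactly where the constant $\tfrac{\sqrt2}{4}$ enters: the contradiction needs $|S|,|T|\gg n^{1/2}$, i.e.\ $c<\sqrt2$.

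For the \emph{upper bounds}: $\chi(G)\le4$ in the second regime is easy, since for $c>1$ the defect graphs have average degree $(n/2)q_0=o(1)$ and hence are forests whp, so $\chi(G_A),\chi(G_B)\le2$; colouring $G_A$ and $G_B$ with disjoint palettes makes every crossing edge bichromatic, so $\chi(G)\le\chi(G_A)+\chi(G_B)\le4$. The delicate case is $\chi(G)\le3$ in the first regime. It suffices to find an independent set $I$ of $G$ meeting every defect edge: then $G-I$ is bipartite (only crossing edges survive) and colouring $I$ with a third colour gives a proper $3$-colouring. By (ii) the defect edges form matchings, so constructing $I$ amounts to picking one endpoint from each defect edge with no chosen $A$-vertex joined by a crossing edge to a chosen $B$-vertex. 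I would model this as a $2$-SAT instance with one Boolean variable per defect edge and a NAND-clause for each crossing edge between two defect-edge endpoints. Its primal graph $\Gamma$ (vertices the $|S|+|T|$ defect edges) is whp a forest, because the expected number of conflicting pairs of defect edges is $O(\lam_0|S|\,|T|)$ and $\lam_0|S|=n^{-\Omega(\eps)+o(1)}=o(1)$, so $\Gamma$ has average degree $o(1)$. A short computation, again using $c>\sqrt2$, shows that whp no edge of $\Gamma$ carries all four possible NAND-clauses and no defect-edge endpoint is over-constrained — a fixed vertex of $A$ sees both endpoints of some defect edge of $G_B$ with probability $O(\lam_0^2|T|)=n^{-1/2-\Omega(\eps)}$, which is $o(1)$ after union bounding over the $O(|S|)$ such vertices, and symmetrically with $S,T$ exchanged. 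A $2$-SAT instance whose primal graph is a forest and which avoids these two obstructions is satisfiable (root each tree of $\Gamma$ and propagate endpoint choices from the root), yielding $I$ and hence $\chi(G)\le3$.

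I expect the main obstacle to be this last step — exhibiting the independent set $I$ hitting every defect edge, equivalently showing the endpoint-selection $2$-SAT is satisfiable whp — since it is the one place where the interaction between the defect graphs and the crossing edges must be controlled rather than the two defect graphs separately, and it is where the constant $\tfrac{\sqrt2}{4}$ is used on the colourability side. A secondary technical point is establishing fact (iii) for the genuine triangle-free-completion distribution of the crossing edges rather than for a plain \ER graph; this should follow from the independent-set model analysis already present in the paper, but must be invoked with some care.
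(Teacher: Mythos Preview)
Your approach is essentially the paper's: the paper proves the $G(n,p)$ analogue (Theorem~\ref{thmGnpColoring}) by the same green-vertex/2-SAT tree-propagation on the auxiliary defect-edge graph for $\chi\le 3$ and an expansion argument for $\chi\ge 4$, then transfers to $\cT(n,m)$ via Lemma~\ref{lemCaptureModfixed}. Two small remarks: your expansion fact (iii) is stated with a sharper threshold than what the paper actually establishes (Definition~\ref{defABExpand} and Lemma~\ref{lemCaptureModfixed} give no crossing edge only when both sides are below $10\lambda n=\Theta(\sqrt{n\log n})$, but this weaker bound still makes your vertex-cover contradiction go through since $|T|/\Delta(G_B)=n^{1/2+\Omega(\eps)}\gg\lambda n$); and your ``over-constrained endpoint'' and ``all four NAND clauses'' obstructions are in fact \emph{deterministically} excluded by triangle-freeness of the output (if $u_1$ is joined to both endpoints $v_1,v_2$ of a $T$-edge then $u_1v_1v_2$ is a triangle), so only the no-cycles condition on $\Gamma$ genuinely needs a probabilistic argument --- which the paper handles by stochastic domination of $\Gamma$ by an Erd\H{o}s--R\'enyi graph with edge probability $4\lambda$.
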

In Theorem~\ref{thm4color} below, we show that the chromatic number becomes unbounded when $m$ is just below  $\frac{1}{4} n^{3/2} \sqrt{\log n}$ (the supercritical defect regime).

Finally, we can use Theorem~\ref{ThmStructure} to asymptotically enumerate triangle-free graphs in this range of densities. 

\begin{theorem}
\label{thmNumberM}
Fix  $\eps>0$ and suppose $m \ge (1+\eps)\frac{1}{4} n^{3/2} \sqrt{\log n}$. Then
\begin{equation}
\label{eqAsymptoticsM}
| \cT(n,m) | \sim 
\frac{1}{\sqrt{2}\lam^{m+1}n }\binom{n}{\lfloor n/2\rfloor}(1+\lam)^{n^2/4}\exp\left\{\lam e^{-\lam^2n/2+\lam^3n} \frac{n^2}{4} + \lambda^5e^{-\lambda^2 n}\frac{n^4}{8} \right\}\, ,
\end{equation}
where $\lam=\lam(n,m)$ is as in~\eqref{eqpdef}.
\end{theorem}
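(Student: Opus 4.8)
The plan is to convert the structural description in Theorem~\ref{ThmStructure} into an exact asymptotic count by estimating, to first order, the three nested sums in the natural decomposition of a triangle-free graph into its max cut, its defect edges, and its crossing edges. Since Theorem~\ref{ThmStructure} (equivalently Theorem~\ref{thmGnmSubRestated}) shows that a $1-o(1)$ fraction of $\cT(n,m)$ consists of graphs whose \emph{unique} max cut $(A,B)$ is strongly balanced and whose defect graphs $S\subseteq\binom A2$, $T\subseteq\binom B2$ are triangle-free with $|S|+|T|$ below a suitable threshold $k_0$, I would first write
\[
|\cT(n,m)| = (1+o(1))\cdot\tfrac12\sum_{(A,B)}\sum_{(S,T)} i_{m-|S|-|T|}\bigl(S\boxempty T\bigr),
\]
where the outer sum is over ordered strongly balanced partitions (the $\tfrac12$ correcting for the ordering of $(A,B)$), the inner sum is over triangle-free $S,T$ with $|S|+|T|\le k_0$, and $i_k(H)$ denotes the number of independent sets of size $k$ in $H$. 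This uses the identification, noted in the introduction, of valid crossing-edge sets with independent sets of the Cartesian product $S\boxempty T$, together with the fact---already part of the structural analysis---that configurations producing a graph with a second near-maximal cut (or otherwise failing the structural conclusions) are negligible even after the weighting by $i_{m-|S|-|T|}(S\boxempty T)$.

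The first main estimate concerns $i_k(S\boxempty T)$ for $k=m-O(k_0)$. Because $S,T$ are sparse, the maximum degree of $S\boxempty T$ is $n^{o(1)}$, so the hard-core model on it at activity $z$ near $\lam$ lies deep in the subcritical Koteck\'y--Preiss regime and the cluster expansion for the logarithm of the independence polynomial converges; keeping terms to the order that is relevant at the scale $m=\Theta(n^{3/2}\sqrt{\log n})$ gives
\[
\sum_k i_k(S\boxempty T)\,z^k = (1+z)^{|A||B|}\exp\left\{-\frac{z^2}{(1+z)^2}\,e(S\boxempty T) + O\!\left(z^3\,p_2(S\boxempty T)\right) + \cdots\right\},
\]
where $e(\cdot)$ and $p_2(\cdot)$ count edges and paths of length $2$, $e(S\boxempty T)=|B|e(S)+|A|e(T)$, and $p_2(S\boxempty T)$ is similarly expressed through edges and cherries of $S$ and $T$, including a cross term proportional to $e(S)e(T)$. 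Extracting the coefficient of $z^{m-|S|-|T|}$ by a saddle-point / local central limit argument produces a factor $z_\star^{-(m-|S|-|T|)}$ and a Gaussian normalization $(1+o(1))/\sqrt{2\pi\sigma^2}$ with $\sigma^2=(1+o(1))m$; the leading saddle equation $\tfrac{|A||B|z_\star}{1+z_\star}=m$ gives $z_\star=\lam_0/(1-\lam_0)=\lam_0+\lam_0^2+\cdots$, and incorporating the defect corrections from the $\exp\{\cdots\}$ factor (after summing over $S,T$) shifts the saddle to precisely the $\lam$ of~\eqref{eqpdef}, including the term $(\lam_0^2n-1)\lam_0e^{-\lam_0^2n/2}$.

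Next I would carry out the sum over the defect graphs. At $z\approx\lam$ the effective weight of a single defect edge is $\lam$ (its activity) times the cluster correction $\exp\bigl(-\tfrac{z^2}{(1+z)^2}|B|\bigr)\approx e^{-\lam^2n/2}$, i.e.\ $\lam e^{-\lam^2n/2}$ up to higher-order corrections, while cherries and $S$--$T$ cross terms carry weights of order $\lam^3$. Consequently $\sum_{(S,T)}(\cdots)$ is, up to the conditioning on triangle-freeness and the degree bound (each costing only a $1+o(1)$ factor), the partition function of the conditioned exponential random graph governing the defect distribution in Theorem~\ref{ThmStructure} in the subcritical regime, where it is essentially an \ER random graph; evaluating it produces $\exp\bigl\{\lam e^{-\lam^2n/2+\lam^3n}\tfrac{n^2}{4}\bigr\}$ from the edge factors (using $\binom{|A|}2+\binom{|B|}2=\tfrac{n^2}{4}+O(n)$) and $\exp\bigl\{\lam^5e^{-\lam^2n}\tfrac{n^4}{8}\bigr\}$ from the cherry and cross-term factors (using the typical edge and cherry counts of $G(A,q_0)$, $G(B,q_0)$ and $q_0\approx\lam e^{-\lam^2n/2}$). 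Finally, summing over strongly balanced partitions: there are $\binom{n}{\lfloor n/2\rfloor+t}=\binom{n}{\lfloor n/2\rfloor}e^{-2t^2/n+o(1)}$ of them with $|A|=\lfloor n/2\rfloor+t$ (uniformly for $|t|$ in the strong-balance window), while $(1+\lam)^{|A||B|}=(1+\lam)^{n^2/4}(1+\lam)^{-t^2}$, so the $t$-sum is a theta function $\sum_t e^{-(2/n+\log(1+\lam))t^2}\sim\sqrt{\pi/\lam}$, using $\lam\gg1/n$. Multiplying $z_\star^{-m}=(1+o(1))\lam^{-m}$, the $\tfrac12\binom{n}{\lfloor n/2\rfloor}(1+\lam)^{n^2/4}$, the two exponential factors, the Gaussian normalization $(1+o(1))/\sqrt{2\pi m}$, and the theta constant $\sqrt{\pi/\lam}$, and using $m=(1+o(1))\tfrac{n^2}{4}\lam$ to combine $\tfrac{1}{\sqrt{2\pi m}}\cdot\sqrt{\tfrac{\pi}{\lam}}$, yields the prefactor $\tfrac{1}{\sqrt2\,\lam^{m+1}n}$ and hence~\eqref{eqAsymptoticsM}.

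The main obstacle I expect is precision bookkeeping. Because the claim is an exact asymptotic, every multiplicative factor must be controlled to $1+o(1)$, which requires: (i) pinning down exactly which cluster-expansion contributions survive at the scale $m=\Theta(n^{3/2}\sqrt{\log n})$---in particular isolating the single surviving second-order term responsible for $\lam^5e^{-\lam^2n}\tfrac{n^4}{8}$ and showing that all longer clusters and all other pairings contribute $o(1)$ to the exponent; and (ii) carrying out the coefficient extraction, the discrete-Gaussian sum over the cut imbalance, and the sum over defect graphs in a coordinated way, since $\lam$ is defined implicitly through all three simultaneously, so the saddle point and all the Gaussian normalizing constants must be tracked coherently across the three layers. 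By contrast, the reduction to strongly balanced cuts with sparse triangle-free defect graphs, and the negligibility of graphs with a second near-maximal cut, are already delivered to the required accuracy by Theorem~\ref{ThmStructure}, so those are not where the work lies.
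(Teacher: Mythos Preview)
Your proposal is correct and follows essentially the same route as the paper: the paper proves the identity $|\cT(n,m)|\sim \frac{Z(\lam)}{\lam^m}\cdot\mu_\lam(\{|G|=m\})$ by applying a local CLT to the hard-core model on $S\boxempty T$ (your saddle-point extraction of $i_{m-|S|-|T|}$), having already computed $Z(\lam)$ via cluster expansion, the defect sum, and the discrete-Gaussian sum over cut imbalances exactly as you describe. The only organisational difference is that the paper first aggregates everything into $Z(\lam)$ (Lemma~\ref{lemZapproxER}) and then performs one coefficient extraction at the end (Theorem~\ref{thmstatesum}), whereas you interleave the extraction with the summation; the ingredients, the saddle-point choice of $\lam$ (Lemma~\ref{lemlamshift}), and the identification of the $\lam^5 e^{-\lam^2 n}n^4/8$ term with the $P_2$ contribution are the same.
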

We note that an asymptotic formula for $|\cB(n,m)| $ is straightforward to compute (see, e.g.,~\cite[Theorem 4]{osthus2003densities}), and so Theorem~\ref{thmNumberM} can be used to give an asymptotic formula for the probability that a uniformly chosen $G\in\cT(n,m)$ is bipartite.

\subsubsection{Supercritical defect regime}\label{subsecerg}

Next, postponing discussion of the critical regime to Section~\ref{subsecCrit},  we characterize the typical structure of triangle-free graphs at lower densities, when the defect edges are denser.  In contrast to the subcritical defect regime, the distribution of the defect edges will not be that of  \ER random graphs on  $A$ and $B$.  Instead,  the distribution of defect edges in $A$ and in $B$ will be asymptotically identical  to independent exponential random graphs with energy functions depending on the count of edges and copies of $P_2$ (the path on $2$ edges), conditioned  on triangle-freeness and on a bound on the max degree. An exponential random graph is a log linear probability distribution on the set of  graphs on $n$ vertices, where the log of the probability mass function is an energy function that is a  linear combination of subgraph counts of the graph   \cite{frank1986markov,wasserman1996logit,robins2007introduction,bhamidi2008mixing,chatterjee2013estimating,radin2013phase}.

Let $|G|$ denote the number of edges of a graph $G$ and $P_2(G)$ the number of copies of $P_2$ (as a subgraph) in $G$.
Then given parameters $q \in (0,1)$, $\psi \in \R$ and a vertex set $V\subseteq [n]$, let $G(V,q,\psi)$ denote the random graph on $V$ with distribution 
\begin{align}\label{eqGVqpsiDef}
  \nu_{q,\psi} (G) \propto \left(  \frac{q}{1-q} \right)^{|G|}  e^{\psi P_2(G)} \, ,
  \end{align}
 \emph{conditioned} on the event that $\Delta(G) \le 50\max\{qn, \log n\}$ and $G$ is triangle-free. {In what follows $q$ will be roughly $n^{-c}$ for $c\in(1/2, 2]$ and $\psi$ will be roughly $n^{-1/2}$. Since $\psi$ is positive, this gives a boost to graphs with more copies of $P_2$, and since $\psi$ is small one might think the boost would be mild.  With the conditioning on the max degree the boost is indeed mild; the average degree remains $\sim q n$ (as it would be with $\psi=0$).  Without conditioning on the max degree  the average degree would jump significantly (by a factor polynomial in $n$) on account of the preference for $P_2$'s. Thus the conditioning gives $\nu_{q,\psi}$ distinct properties from those of exponential random graphs considered in the literature, and it is an essential component of our results and techniques. }

We now define the specific parameters $q=q(n,m), \psi=\psi(n,m)$ that arise in the defect distribution.
First we let $\lam$ be as in ~\eqref{eqpdef} and let
\begin{align}\label{eqq0q1Def}
 \frac{q_1}{1-q_1}:=\lam e^{-\lam^2n/2+\lam^3n-7\lam^4n/4}\, .
\end{align}
Next, let
 \begin{align}\label{eqMuRhoDef}
\mu= \binom{n/2}{2}q_1e^{\lam^3n^2q_0}, 
\end{align}
and 
\begin{align}\label{eqq2Def}
\frac{q_2}{1-q_2} := \frac{q_1}{1-q_1}e^{4\lam^3\mu}\, .
\end{align}

Finally define
\begin{align}\label{eqPsiDef}
\psi:= \lam^3n/2\, .
\end{align}
\begin{theorem}
\label{thmP2ERG}
Fix $\eps \in (0, 1/14]$.  Suppose  $m \ge  (1-\eps) \frac{1}{4} n^{3/2} \sqrt{\log n}$. Choose $G$ from the uniform distribution on $\cT(n,m)$.  Then
\begin{enumerate}
\item Whp $G$ has a unique, strongly balanced max cut $(A,B)$ of size  $m-o(m)$.  The cut $(A,B)$ is distributed according to $\theta_\lam$ up to $o(1)$ total variation distance. 

\item  Whp over the random cut $(A,B)$, the distribution of the subgraphs $G_A$ and $ G_B$  are independent samples from $G(A,q_2, \psi)$ and $G(B,q_2,\psi)$ respectively up to $o(1)$ total variation distance. 
\end{enumerate}  
\end{theorem}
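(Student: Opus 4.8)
The plan is to derive Theorem~\ref{thmP2ERG} from an exact decomposition of the uniform measure on $\cT(n,m)$ along the (whp unique) max cut, followed by an asymptotic evaluation of two partition functions: one for the crossing edges, treated by the cluster expansion, and one for the defect edges, recognised as an exponential random graph. Every $G\in\cT(n,m)$ is encoded by $(A,B;S,T,\Ec)$ where $(A,B)$ is its max cut, $S\subseteq\binom A2$ and $T\subseteq\binom B2$ are the defect edges, and $\Ec\subseteq A\times B$ the crossing edges; for fixed $(A,B)$ and fixed triangle-free $S,T$ the number of admissible $\Ec$ is exactly $N(S,T)$, the number of independent sets of size $m-|S|-|T|$ in $S\boxempty T$. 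A structural (container-type) input --- that whp $G$ has a unique, strongly balanced max cut of size $m-o(m)$ with at most $n^{1+o(1)}$ defect edges and maximum defect degree below the threshold in~\eqref{eqGVqpsiDef} --- lets us restrict, up to a $(1+o(1))$ factor, to near-balanced $(A,B)$ and to triangle-free $S,T$ with $\Delta\le 50\max\{q_2n,\log n\}$, for which ``$(A,B)$ is the max cut'' holds for all but an $o(1)$-fraction of the weight. Thus
\[
|\cT(n,m)| = (1+o(1))\sum_{(A,B)}\ \sum_{S,T}N(S,T)\, ,
\]
and both assertions of the theorem will be read off from the relative weights $N(S,T)$ and from their sum over $S,T$.

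Next I evaluate $N(S,T)$ by the cluster expansion for the hard-core model on $S\boxempty T$. This graph has $|A||B|\sim n^2/4$ vertices and maximum degree $O(n^{\eps}\sqrt{\log n})=o(n)$, and one counts independent sets of size $k=m-|S|-|T|$ with $k/|A||B|\sim\lam_0\to0$; the expansion converges, and truncated after the edge and $P_2$ terms (the remaining terms going into lower-order corrections) gives
\[
N(S,T)=\binom{|A||B|}{k}\exp\Big(-\lam_0^2\,|S\boxempty T| + \lam_0^3\,P_2(S\boxempty T) + (\text{lower-order terms})\Big)\, .
\]
Using $|S\boxempty T| = |B|\,|S|+|A|\,|T|$, $P_2(S\boxempty T) = |B|P_2(S)+|A|P_2(T)+4|S||T|$ and $\binom{|A||B|}{k}\approx\binom{|A||B|}{m}\lam_0^{|S|+|T|}$, and replacing $\lam_0$ by the corrected parameter $\lam$ of~\eqref{eqpdef} (which absorbs several of the lower-order corrections, producing the intermediate parameter $q_1$ of~\eqref{eqq0q1Def}), this rearranges into
\[
N(S,T) = C(n,m)\cdot\Big(\tfrac{q_1}{1-q_1}\Big)^{|S|+|T|} e^{\psi\,(P_2(S)+P_2(T))}\, e^{4\lam^3|S||T|}\cdot(1+o(1))\, ,
\]
with $\psi$ as in~\eqref{eqPsiDef}; the only term coupling $S$ to $T$ is $e^{4\lam^3|S||T|}$.

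To decouple, write $|S|=\bar s+(|S|-\bar s)$ and $|T|=\bar t+(|T|-\bar t)$, with $\bar s,\bar t$ the means of $|S|,|T|$ under the conditioned exponential random graph. The bilinear term splits into a constant, two terms linear in $|S|$ or $|T|$ --- which amount to a further shift of the edge parameter, exactly the passage $q_1\to q_2$ recorded in~\eqref{eqq2Def} via the quantity $\mu$ of~\eqref{eqMuRhoDef} --- and a residual $4\lam^3(|S|-\bar s)(|T|-\bar t)$, which is $o(1)$ whp provided $\lam^3\,\var(|S|)=o(1)$; here the conditioning on $\Delta\le 50\max\{q_2n,\log n\}$ is essential, being what forces $|S|$ (and $P_2(S)$) to have near-Poissonian fluctuations rather than the polynomially larger ones the $P_2$-reward would otherwise cause. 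After this step $\sum_{S,T}N(S,T) = C(n,m)\,Z(A)\,Z(B)\,(1+o(1))$, where $Z(V)$ is the partition function of $G(V,q_2,\psi)$ of~\eqref{eqGVqpsiDef}; hence, conditioned on $(A,B)$, the law of $(S,T)$ is within $o(1)$ total variation of $G(A,q_2,\psi)\otimes G(B,q_2,\psi)$, which is part (2). For part (1), $C(n,m)Z(A)Z(B)$ depends on $(A,B)$ only through $|A|,|B|$: the factor $\binom{|A||B|}{m}$ contributes $\approx(1-\lam_0)^{t^2}\approx(1+\lam)^{-t^2}$ with $t=|A|-\lfloor n/2\rfloor$, while the defect partition functions and the count of partitions of each size add only lower-order corrections to this Gaussian exponent; summing over $t$ then shows $(A,B)$ is within $o(1)$ total variation of $\theta_\lam$ and in particular strongly balanced. (The uniqueness and the size $m-o(m)$ of the max cut are part of the structural input.)

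The main obstacle is the exponential random graph analysis underpinning the third paragraph. Unlike in the subcritical regime, here $q_2$ is above the giant-component threshold and $G(V,q_2,\psi)$ is ``moderately dense'' (average degree $n^{\Theta(\eps)}$), so the cluster expansion is \emph{not} available for it; one must instead (i) compute $\log Z(V,q_2,\psi)$ to within $o(1)$, which requires an expansion of the cumulant generating function $\log\E\,e^{\psi P_2(G)}$ for $G\sim G(V,q_2)$ that remains valid under the max-degree conditioning and the triangle-free conditioning (the latter being a genuinely rare event in this density range), and (ii) establish the fluctuation bounds $\lam^3\var_{\nu_{q_2,\psi}}(|S|)=o(1)$ and the corresponding control of $P_2(S)$, again leaning on the degree truncation. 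This is precisely where the paper's new techniques for cumulant generating functions of conditioned exponential random graphs are needed, and the restriction $\eps\le 1/14$ should be exactly the point up to which these estimates, together with the truncated cluster expansion errors, remain $o(1)$.
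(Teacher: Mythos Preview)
Your proposal is essentially correct and hits all the key ideas of the paper's proof: the decomposition by max cut and defect edges, the cluster expansion for $S\boxempty T$ pushed to the $P_2$ term (and beyond, into the corrections defining $q_1$), the centering of $|S|,|T|$ around $\mu_A,\mu_B$ to decouple the $4\lam^3|S||T|$ cross-term and absorb the linear pieces into the shift $q_1\to q_2$, and the recognition that the residual is controlled by a variance bound that genuinely requires the degree conditioning in the definition of $G(V,q_2,\psi)$. You also correctly identify the technical bottleneck as the cumulant analysis of the conditioned exponential random graph, and that $\eps\le 1/14$ is exactly where the truncation errors cease to be $o(1)$.

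The one organizational difference worth flagging is that the paper does \emph{not} work directly with the fixed-size count $N(S,T)$. Instead it first carries out the entire program for the hard-core measure $\mu_\lam$ (equivalently, $G(n,p)$ conditioned on triangle-freeness): it proves $\|\nu_{A,B,\lam}-G(A,q_2,\psi)\times G(B,q_2,\psi)\|_{TV}=o(1)$ and the analogous statement for the partition distribution, and only \emph{then} transfers to $\cT(n,m)$ via a local CLT for $|\mathbf I|$ in the hard-core model on $S\boxempty T$ (Proposition~\ref{thmhcLCLT}), choosing $\lam=\lam(m)$ as in~\eqref{eqpdef} so that $\E|G|=m+o(\sqrt m)$. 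Your formula $N(S,T)\approx\binom{|A||B|}{k}\exp(\cdots)$ is morally the composition of those two steps, but proving it directly would require you to re-derive a fixed-size analogue of the cluster expansion (or equivalently an LCLT) inside the argument. The paper's route is cleaner because it separates the two issues and yields the $G(n,p)$ results of Section~\ref{secERresults} as a byproduct; your route is more direct but hides the LCLT step inside the asymptotics of $N(S,T)$. Either way, the substance---including the KL/Pinsker computation that actually delivers the total variation bound, and the need to push the cluster expansion on $S\boxempty T$ to size-$4$ clusters (Corollary~\ref{corclustersimple2}) to get the $-7\lam^4 n/4$ term in $q_1$---is the same.
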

As mentioned above, the upper bound on $\eps$ (and lower bound on $m$) in our results is due to technical limitations.  For $m \sim c n^{3/2} \sqrt{\log n}$ for smaller $c$, we expect the distributions of $G_A$ and $G_B$ to be exponential random graphs conditioned on triangle-freeness and a bound on the maximum degree, with a probability mass function that depends on more and more subgraphs (beyond just $P_2$) as $c$ decreases.  See further discussion in Section~\ref{secOutlook}.

Like Theorem~\ref{ThmStructure}, Theorem~\ref{thmP2ERG} can be rephrased algorithmically. We do this in Section~\ref{secFixedM} where we introduce the measure $\mu_{m,2}$.
Theorem~\ref{thmP2ERG} provides a very precise description of the distribution of defect edges.   As in the subcritical regime, understanding the distribution to this level of detail will allow us to understand the evolution of the structure of the graph and give precise asymptotics for the number of triangle-free graphs.
\begin{theorem}
\label{thmNumberMsuper}
Fix $\eps \in (0, 1/14]$.  Suppose  $m \ge  (1-\eps) \frac{1}{4} n^{3/2} \sqrt{\log n}$. Then
\begin{align*}
| \cT(n,m) | \sim& 
\frac{1}{\sqrt{2}\lam^{m+1}n }\binom{n}{\lfloor n/2 \rfloor}(1+\lam)^{n^2/4}(1-q_2)^{-n^2/4+n/2}\times\\ 
& \exp\left\{  \frac{1}{64} \lam^6 n^5 q_0^2 - \frac{1}{64}\lam^6 n^6 q_0^3 -\frac{1}{24}n^3q_0^3 + \frac{1}{64}\lam^4n^4q_0^2-\frac{1}{6}\lam^4n^5q_0^3-\frac{1}{2}\lam^4n^4q_0^2 \right\}
\, .
\end{align*}
\end{theorem}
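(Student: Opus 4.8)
The plan is to follow the same overall strategy used to prove Theorem~\ref{thmNumberM}, but now feeding in the more refined structural description from Theorem~\ref{thmP2ERG} in place of Theorem~\ref{ThmStructure}. We write $|\cT(n,m)|$ as a sum over partitions $(A,B)$ of $[n]$ of the number of triangle-free graphs with exactly $m$ edges whose max cut is $(A,B)$; by Theorem~\ref{thmP2ERG} the dominant contribution comes from strongly balanced partitions, and within those the number of defect edges $|S|+|T|$ is concentrated. For a fixed strongly balanced $(A,B)$, we group graphs by their defect sets $S\subseteq\binom{A}{2}$, $T\subseteq\binom{B}{2}$ and count separately (i) the number of valid choices of defect edges, weighted by the partition function of the exponential random graph $\nu_{q_2,\psi}$ on $A$ and on $B$, and (ii) given $S,T$, the number of ways to place the crossing edges $\Ec$, which by the structural theorem is (up to $1+o(1)$) the number of independent sets of size $m-|S|-|T|$ in the Cartesian product graph $S\boxempty T$. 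The latter count is handled by the cluster expansion, which gives an explicit asymptotic formula that is subcritical in the relevant regime; this is the same machinery invoked for Theorem~\ref{thmNumberM}.

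The key new step is evaluating the normalizing constant (partition function) of the conditioned exponential random graph $G(V,q_2,\psi)$ and combining it with the crossing-edge count. Concretely, one wants
\[
Z_V := \sum_{\substack{G \text{ on } V \\ \Delta(G)\le 50\max\{q_2 n,\log n\},\ G\text{ triangle-free}}} \left(\frac{q_2}{1-q_2}\right)^{|G|} e^{\psi P_2(G)}\,,
\]
for $|V|=n/2$, and its asymptotics to sufficient precision. The idea is to expand $e^{\psi P_2(G)}$ and use that under the (essentially) product measure with edge-probability $q_2$, the variable $P_2(G)$ has mean $\sim \binom{n/2}{2}\cdot 2\binom{n/2-2}{1}q_2^2/(\ldots)$ with controlled fluctuations once the max-degree truncation is imposed; a second-moment / Laplace-type analysis (or a direct cumulant computation, as the introduction hints the authors develop) then yields $\log Z_V$ as a polynomial in $n$ and $q_0$ through the relevant order. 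One then substitutes the definitions \eqref{eqq0q1Def}, \eqref{eqMuRhoDef}, \eqref{eqq2Def}, \eqref{eqPsiDef}, expands $\lam$ via \eqref{eqpdef} and $q_0$ via \eqref{eqqdef}, and collects terms; the $(1-q_2)^{-n^2/4+n/2}$ factor is exactly the "all non-defect pairs are non-edges" normalization coming from rewriting $(q_2/(1-q_2))^{|G|}$ as $q_2^{|G|}(1-q_2)^{-|G|}$ and absorbing $(1-q_2)^{\binom{n/2}{2}}$ per part, while the exponential in the displayed formula is the residual contribution of the $e^{\psi P_2}$ boost, the cluster-expansion terms for the crossing edges, the triangle-freeness correction, and the discrete-Gaussian sum over the imbalance $t$ (which, as in \eqref{eqAsymptoticsM}, contributes the $\frac{1}{\sqrt 2 \lam^{m+1} n}\binom{n}{\lfloor n/2\rfloor}$ prefactor together with $(1+\lam)^{n^2/4}$).

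The main obstacle I expect is the bookkeeping of lower-order terms: all the quantities $\lam-\lam_0$, $q_0-q_1$, $q_1-q_2$, and $\psi$ are individually tiny, but the vertex pair counts ($\sim n^2$) and path counts ($\sim n^3$) are large enough that products like $\lam^3 n \cdot P_2$ and $\lam^4 n^4 q_0^2$ survive to leading order in the exponent, so one must track each correction to precisely the right order and no further — keeping $\lam^3 n$, $\lam^4 n$, $n^2 q_0$, $n^3 q_0^2$, etc., while discarding genuinely negligible cross-terms. Getting the exponent in the stated form, in particular verifying that the partition-function asymptotics for $\nu_{q_2,\psi}$ combine with the cluster expansion and the triangle-freeness correction to produce exactly the six terms
\[
\tfrac{1}{64}\lam^6 n^5 q_0^2 - \tfrac{1}{64}\lam^6 n^6 q_0^3 - \tfrac{1}{24}n^3 q_0^3 + \tfrac{1}{64}\lam^4 n^4 q_0^2 - \tfrac{1}{6}\lam^4 n^5 q_0^3 - \tfrac{1}{2}\lam^4 n^4 q_0^2
\]
(with no missing cross-contributions), is the delicate computational heart of the argument. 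A secondary technical point is justifying that the max-degree truncation in the definition of $G(V,q_2,\psi)$ changes $Z_V$ by only a $1+o(1)$ factor and, symmetrically, that it is compatible with the structural theorem — this requires a tail bound showing that under $\nu_{q_2,\psi}$ (and under the true conditional distribution of defect edges) the maximum degree is well below the truncation threshold with probability $1-o(1)$, which should follow from the same concentration estimates used to prove Theorem~\ref{thmP2ERG}.
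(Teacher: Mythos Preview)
Your proposal is correct in spirit and lands on the same computational core as the paper, but the organization differs in one respect worth flagging. The paper does not compute the exponential-random-graph normalizing constant $Z_V$ and the fixed-size independent-set counts $i_{m-|S|-|T|}(S\boxempty T)$ separately and then combine them. Instead it first establishes the asymptotics of the full grand-canonical partition function $Z(\lam)=\sum_{G\in\cT(n)}\lam^{|G|}$ (Lemma~\ref{lemSuperMainLem}, via exactly the cluster-expansion and cumulant bookkeeping you describe, carried out on $Z_{A,B}(\lam)$ in Lemma~\ref{lemmaZABabapproxsuper}), and then extracts $|\cT(n,m)|$ in a single stroke from the identity
\[
|\cT(n,m)|=\frac{Z(\lam)}{\lam^m}\cdot \mu_\lam(\{|G|=m\})
\]
by choosing $\lam=\lam(m)$ as in~\eqref{eqpdef} and applying a local central limit theorem for the hard-core model (Proposition~\ref{thmhcLCLT}, via Theorem~\ref{thmstatesum}) to get $\mu_\lam(\{|G|=m\})\sim (n\sqrt{\pi\lam/2})^{-1}$. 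Your ``number of independent sets of size $m-|S|-|T|$, handled by the cluster expansion'' is precisely where the local CLT must enter: the cluster expansion gives $Z_{S\boxempty T}(\lam)$, not the coefficient $i_k(S\boxempty T)$, and one needs concentration of $|S|+|T|$ (Corollary~\ref{coredgevar}) plus the local CLT to pin down that coefficient. This is the one step you gloss over; otherwise your identification of the six-term exponent as the delicate part, and of the max-degree truncation as negligible, matches the paper exactly.
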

One can check that the formula in Theorem~\ref{thmNumberMsuper} does indeed reduce to that of Theorem~\ref{thmNumberM} when $m \ge (1+\eps)\frac{1}{4} n^{3/2} \sqrt{\log n}$.

We next show that a random triangle-free graph makes a sharp transition from almost surely being $4$-colorable  to having unbounded chromatic number as $m$ decreases past $\frac{1}{4}n^{3/2} \sqrt{\log n}$.   In fact we  prove an upper bound on the independence number below this threshold.  Recall that Theorem~\ref{thm3color} states that for $ (1+\eps) \frac{1}{4} n^{3/2} \sqrt{\log n} \le m  \le (1-\eps)\frac{\sqrt{2}}{4} n^{3/2} \sqrt{\log n}$, $\chi(G) = 4 $ whp.

\begin{theorem}
\label{thm4color}
Fix $\eps \in (0,1/14)$ and let $G$ be drawn uniformly from $\cT(n,m)$.   If 
$$ m  \sim (1-\eps)  \frac{1}{4}n^{3/2} \sqrt{\log n}$$
 then the independence number of $G$ satisfies $\alpha(G)  =o(n)$ whp. In particular, the chromatic number of $G$ satisfies $\chi(G)=\omega(n)$ whp.
\end{theorem}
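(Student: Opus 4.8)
We first observe that the chromatic-number statement is immediate from the bound on the independence number: $\chi(G)\ge n/\alpha(G)$ for every graph $G$ on $n$ vertices, so $\alpha(G)=o(n)$ whp forces $\chi(G)\to\infty$ whp. It therefore suffices to prove $\alpha(G)=o(n)$ whp.

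As $m\sim(1-\eps)\tfrac14 n^{3/2}\sqrt{\log n}$ with $\eps\in(0,1/14)$ fixed, for all large $n$ we lie in the supercritical defect regime, so Theorem~\ref{thmP2ERG} applies: whp $G$ has a strongly balanced max cut $(A,B)$, and, whp over $(A,B)$, the induced graphs $G_A,G_B$ are within $o(1)$ total variation distance of independent samples of $G(A,q_2,\psi)$ and $G(B,q_2,\psi)$. Since any independent set of $G$ restricts to independent sets of $G_A$ and of $G_B$, we have $\alpha(G)\le\alpha(G_A)+\alpha(G_B)$, so it is enough to show that $\alpha(G(V,q_2,\psi))=o(n)$ whp, uniformly over vertex sets $V$ with $|V|=N=\tfrac n2+O\big((n\log n)^{1/4}\big)$. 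From \eqref{eqlam0def}--\eqref{eqPsiDef}, using $\lam\sim\lam_0\sim(1-\eps)n^{-1/2}\sqrt{\log n}$, one computes $q_2=n^{-1+\delta+o(1)}$ where $\delta:=\eps(1-\eps/2)\in(0,1/14)$ is a fixed positive constant, $\psi=n^{-1/2+o(1)}>0$, and that the degree cap in the definition of $G(V,q,\psi)$ equals $D:=50\max\{q_2n,\log n\}=50q_2n=n^{\delta+o(1)}$. We will also use, as established in the proof of Theorem~\ref{thmP2ERG}, that whp $G(V,q_2,\psi)$ has maximum degree $(1+o(1))q_2n$, in particular well below $D$.

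Write $\hat\nu$ for the law of $G(V,q_2,\psi)$ and set $k:=n^{1-\delta/2}=o(n)$; we bound $\P_{\hat\nu}[\alpha\ge k]$ by a union bound over $k$-subsets $S\subseteq V$. Fix $S$, reveal all edges of $G$ outside $\binom S2$ (the \emph{boundary}), and let $\mathcal P_{\mathrm{bad}}\subseteq\binom S2$ be the set of pairs with a common neighbour in the boundary. Conditionally on the boundary, the induced graph $G_S$ on $S$ has law proportional to $(q_2/(1-q_2))^{|G_S|}e^{\psi\,\Delta P_2(G_S)}$ over those $G_S$ keeping $G$ triangle-free and of maximum degree $\le D$, so $\P_{\hat\nu}[S\text{ independent}\mid\text{boundary}]=1/Z_S$ for the corresponding partition function $Z_S$. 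On the event that the boundary has maximum degree $\le D-q_2n/\log n$ (which contains the whp event $\{\Delta(G)\le D-q_2n/\log n\}$), every triangle-free graph $G_S$ on $S$ avoiding $\mathcal P_{\mathrm{bad}}$ and with $\Delta(G_S)\le q_2n/\log n$ is a legitimate term of $Z_S$; dropping $e^{\psi\Delta P_2}\ge1$ gives
\[
Z_S\ \ge\ (1-q_2)^{-\binom k2}\,\P\big[\,G(S,q_2)\ \text{is triangle-free, avoids}\ \mathcal P_{\mathrm{bad}},\ \text{and has}\ \Delta\le q_2n/\log n\,\big].
\]
The three events on the right are decreasing, so by Harris's inequality this probability is at least the product of the three marginals. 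Here $|\mathcal P_{\mathrm{bad}}|\le N\binom D2=n^{1+2\delta+o(1)}=o(k^2)$ and $q_2|\mathcal P_{\mathrm{bad}}|=o(n)$, so the first marginal is $(1-q_2)^{|\mathcal P_{\mathrm{bad}}|}=e^{-o(n)}$; the expected number of triangles of $G(S,q_2)$ is $\binom k3 q_2^3=n^{3\delta/2+o(1)}=o(n)$, so the second is $\ge(1-q_2^3)^{\binom k3}=e^{-o(n)}$; and the third is $1-o(1)$ since $q_2|S|=n^{\delta/2+o(1)}\ll q_2n/\log n$. As $(1-q_2)^{-\binom k2}=e^{(1+o(1))q_2\binom k2}$ and $q_2\binom k2=n^{1+o(1)}$, we obtain $Z_S\ge e^{n^{1+o(1)}}$. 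Therefore $\P_{\hat\nu}[\{S\text{ independent}\}\cap\{\Delta(G)\le D-q_2n/\log n\}]\le e^{-n^{1+o(1)}}$, whence
\[
\P_{\hat\nu}[\alpha\ge k]\ \le\ \P_{\hat\nu}\!\big[\Delta(G)>D-q_2n/\log n\big]+\binom Nk\,e^{-n^{1+o(1)}}\ =\ o(1)+e^{\,n^{1-\delta/2+o(1)}-n^{1+o(1)}}\ =\ o(1).
\]
So $\alpha(G(V,q_2,\psi))<k$ whp, uniformly over the relevant $N$, and combining with the reduction, $\alpha(G)\le\alpha(G_A)+\alpha(G_B)<2k=o(n)$ whp; as noted, $\chi(G)\ge n/\alpha(G)\to\infty$ whp follows.

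The crux is the lower bound on $Z_S$: the $q_2$-reweighting over the $\binom k2=\Theta(n^{2-\delta})$ potential edges inside $S$ contributes a gain $(1-q_2)^{-\binom k2}=e^{n^{1+o(1)}}$, and the obstacle is to verify that the triangle-free and bounded-degree constraints prune this partition function by only a factor $e^{o(n)}$, so that $Z_S$ remains $e^{n^{1+o(1)}}$ and dominates the $e^{n^{1-\delta/2+o(1)}}$ cost of the union bound over $k$-sets. The choice $k=n^{1-\delta/2}$ is what makes $q_2\binom k2=n^{1+o(1)}$ outstrip $k\log n=n^{1-\delta/2+o(1)}$ while keeping $|\mathcal P_{\mathrm{bad}}|$, the expected triangle count $\binom k3q_2^3$, and $q_2^2k^2$ all of the form $n^{1-\Omega(1)}$; this is exactly where the hypothesis $m\gtrsim\tfrac14 n^{3/2}\sqrt{\log n}$ (i.e.\ $\delta<1/14$) enters. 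The only other non-routine input is the degree concentration of $G(V,q_2,\psi)$, cited from the proof of Theorem~\ref{thmP2ERG} and needed so that there is room beneath the cap $D$ to insert the resampled graph $G_S$.
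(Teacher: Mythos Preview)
Your proof is correct, but it takes a noticeably more elaborate route than the paper's. Both arguments first reduce (via Theorem~\ref{thmP2ERG}) to showing $\alpha(G(V,q_2,\psi))=o(n)$ whp and then run a union bound over $k$-subsets $S$. The difference is in how the probability that a fixed $S$ is independent is bounded. The paper (in the proof of Theorem~\ref{thm4colorGnp}, which is then transferred to $\cT(n,m)$ since the statement depends only on the defect edges) uses Lemma~\ref{lemWarmup} directly: for each potential edge $e_i\in\binom{S}{2}$ one has $\P(e_i\in G\mid e_1,\dots,e_{i-1}\notin G)=(1+O(n\Delta\lam^3))q_2\ge q_2/2$, and the chain rule immediately gives $\P[S\text{ independent}]\le(1-q_2/2)^{\binom{|S|}{2}}$; the union bound with $k=n^{1-\eps/4}$ then finishes. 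Your approach instead conditions on the boundary, lower-bounds the conditional partition function $Z_S$ by restricting to ``nice'' $G_S$'s, and invokes Harris--FKG to separate the triangle-free, forbidden-pair, and degree constraints. This works, but is considerably heavier: the warmup lemma already encapsulates the fact that the $P_2$-tilt and the conditioning barely perturb edge probabilities, so there is no need to unpack the partition function by hand.

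One minor remark: your citation ``as established in the proof of Theorem~\ref{thmP2ERG}'' for the degree concentration of $G(V,q_2,\psi)$ is imprecise---that proof does not spell this out. The statement you need (that whp $\Delta(G(V,q_2,\psi))$ is well below the cap $D=50q_2n$) follows from Lemma~\ref{lemDegBdLocal} or from the sandwiching Proposition~\ref{propSandwhich}, either of which you could cite instead.
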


We can in fact say  more about how quickly the chromatic number increases as the edge density of $G$ decreases, but we postpone this to Section~\ref{secSuperCrit}.

\subsubsection{Critical defect regime}\label{subsecCrit}

We now discuss the critical defect regime, with $(1-\eps) n^{3/2} \sqrt{\log n}  /4 \le  m \le (1+\eps) n^{3/2} \sqrt{\log n} /4$ and $\eps =o(1)$.  In this window  giant components emerge among the defect edges in $A$ and $B$, and their distribution begins to depend on the $P_2$ count.  
(Note that the asymptotic enumeration result in this regime is already covered by Theorem~\ref{thmNumberMsuper}).

We  first determine the scaling windows for the emergence of giant components and connectivity among the defect edges. We use the notation $f(n)\ll g(n)$ to denote that $\lim_{n\to\infty}f(n)/g(n)=0$. 
\begin{theorem}
\label{thmGiantComponent} 
Let $G$ be drawn uniformly from $\cT(n,m)$,  conditioned on $G$ having a strongly balanced max cut $(A,B)$.  
\begin{enumerate}
\item If $m$ is such that $q_0 = \frac{2}{n} - \frac{\omega(n)}{n^{4/3}}$, with $1 \ll \omega(n) \ll n^{1/3}$,  then whp the largest connected component  of $G_A$ is of size $\Theta(n^{2/3} \omega^{-2} \log (\omega))$.  
\item If $m$ is such that $q_0 = \frac{2}{n} + \frac{\omega}{n^{4/3}}$, with $ \omega \in \mathbb R$ constant,  then whp the  largest connected component  of $G_A$ is of size  $\Theta(n^{2/3})$.
\item If $m$ is such that $q_0 = \frac{2}{n} + \frac{\omega(n)}{n^{4/3}}$, with $1 \ll \omega(n) \ll n^{1/3}$,  then whp the largest connected component of $G_A$ is of size $(2+o(1))  \cdot \omega \cdot (n/2)^{2/3}$. 
\item If $m$ is such that $q_0 = \frac{c}{n}$ with $c>2$ fixed, then whp the largest connected component of $G_A$ is of size $\Theta(n)$.
\item  If $m$ is such that $q_0= (1+\eps)\frac{2 \log n}{n}$ for $\eps >0$ constant, then whp $G_A$ is connected, while if $m$ is such that $q_0 \le (1-\eps)\frac{2 \log n}{n}$ for $\eps>0$ constant, then whp $G_A$ is not connected.
\end{enumerate}
Moreover, these results also hold for the graph $G_B$.  
\end{theorem}

Note that when $q_0 = \tilde \Theta(n^{-1})$, $m \sim \frac{1}{4} n^{3/2} \sqrt{\log n}$, and so giant defect components and connectivity in the defect graphs emerge in rapid succession at this threshold;  the results of Theorem~\ref{thmGiantComponent} give a  description of the scaling window of this emergence.

\subsection{Methods}
\label{subsecMethods}

Here we give an overview of the proofs of the structural and enumeration results above.  The intuition for the approach comes from both statistical physics and algorithms.  We use and extend tools from~\cite{luczak2000triangle,osthus2003densities,balogh2016typical,jenssen2020homomorphisms,JKP2,jenssen2020independent,jenssen2022independent}, and then develop some new tools here. 

Our starting point is to write a statistical physics partition function for triangle-free graphs:
\begin{equation}
\label{eqZndef}
 Z(\lam) = \sum_{G \in \cT(n)} \lam^{|G|} \,,
 \end{equation}
 along with the corresponding probability distribution on $\cT(n)$, the Gibbs measure
 \begin{equation}
 \label{eqmudef0}
 \mu_{\lam}(G) = \frac{\lam^{|G|}}{Z(\lam)} \,,
 \end{equation}
where $|G|$ is the number of edges of $G$. As we see below in Section~\ref{secERresults}, up to scaling $Z(\lam)$ is exactly the probability that $G(n,p)$ is triangle-free, with $p = \frac{\lam}{1+\lam}$ and $\mu_{\lam}$ is the corresponding conditional probability measure. 

Classical statistical physics is concerned with understanding partition functions like this and their associated Gibbs measures.   Many statistical physics models (such as Ising, Potts, hard-core) on lattices like $\Z^d$ undergo order/disorder phase transitions as the strength of interaction or density of particles increases. For instance,  consider the ferromagnetic Ising model on a torus $(\Z/n\Z)^d$.  In the low-temperature, strong interaction regime typical samples from the model look like small perturbations from either the all $+$ or all $-$ configurations (the \textit{ground states}).  Quantifying the contribution to the partition function $Z$ from all such small perturbations (and understanding their probabilistic properties) is a delicate and difficult task and the subject of a huge amount of work in statistical physics dating all the way back to Peierls' work on the Ising model~\cite{peierls1936ising}.  Some of the powerful methods developed to address this problem include the cluster expansion~\cite{ruelle1963correlation,penrose1963convergence,brydges1984short,kotecky1986cluster,FernandezProcacci} and Pirogov--Sinai theory~\cite{pirogov1975phase,borgs1989unified,laanait1991interfaces,borgs2012tight}.  Recently, these types of tools have been applied outside the context of classical statistical physics, to algorithmic and combinatorial problems of enumeration and sampling~\cite{barvinok2015computing,regts2015zero,patel2016deterministic,barvinok2017combinatorics,HelmuthAlgorithmic2,jenssen2020independent,jenssen2020homomorphisms,jenssen2022independent}.

We can take this perspective on the partition function defined in~\eqref{eqZndef}. The (near) ground states of the triangle-free graph model are the (nearly) balanced complete bipartite graphs on $n$ vertices; this is the content of Mantel's Theorem.  The first step in understanding the `ordered' phase of a statistical physics model is to prove that configurations far from any ground state have a negligible contribution to the partition function.  For classical lattice systems this is often accomplished by the Peierls' argument; in the triangle-free graph setting the result of  {\L}uczak~\cite{luczak2000triangle} (Theorem~\ref{thm:luczak} below) accomplishes this coarse separation of ground states: he proves that almost all triangle-free graphs with $m$ edges have a max cut containing almost all the edges when $m \gg  n^{3/2}$.  More refined estimates at larger densities are given by Osthus, Pr\"omel, and Taraz~\cite{osthus2003densities} which allow them to prove Theorem~\ref{thmOst}.  In particular, their estimates show that when $m \ge (1+\eps) \frac{\sqrt{3}}{4} n^{3/2}\sqrt{\log n}$, a typical triangle-free graph has a max cut $(A,B)$ such that the max degree within $A$ and within $B$ is $0$ (that is, the graph is bipartite).

This marks the starting point of our approach.  We extend the estimates from~\cite{osthus2003densities} (by adapting a more general approach of Balogh, Morris, Samotij, and Warnke~\cite{balogh2016typical} who analyzed the structure of $K_r$-free graphs) to show that when  $\lam \ge C n^{-1/2}$ and $C$ is sufficiently large, a typical sample from $\mu_\lam$ has a max cut $(A,B)$ such that the max degree within $A$ and within $B$ is at most $\alpha/\lam$ for some small constant $\alpha$.  This is a more refined but still coarse structural result, and our proof uses the very coarse result of {\L}uczak as a key ingredient (see Section~\ref{secMorrisOPT}).  Crucially, the degree bound we obtain allows us to apply the cluster expansion to understand the contribution to $Z(\lam)$ from graphs with a given max cut $(A,B)$ and given sets $S,T$ of defect edges within $A$ and $B$ respectively. The conclusion of this analysis is that conditioned on $(A,B)$ the distribution of $S,T$ is that of an exponential random graph conditioned on a max degree bound.

From here we move on to a yet simpler approximation.  Using local probability estimates, we show that with negligible error we can impose a much stricter max degree condition on the exponential random graph, and this in turn allows us to truncate the cluster expansion after a small number of terms, thus significantly simplifying the parameters of the exponential random graph.  How simple we can make this model depends on the density: the lower the density the more complex the model, as more complex structure emerges among the defect edges.

From there, we then need to analyze this exponential random graph model conditioned on a max degree bound.  The conditioning is in fact essential -- without it the model would `blow up' and defects would proliferate, taking us out of the neighborhood of the $(A,B)$ ground state -- and this marks a departure with other exponential random graph models analyzed in probability theory and algorithms~\cite{bhamidi2008mixing,chatterjee2013estimating,radin2013phase}.   The tools we develop to understand this model are based primarily in approximating cumulant generating functions; this allows us to understand both partition functions and approximate probability distributions via Pinsker's Inequality.   The bounding of higher-order cumulants in the conditioned ERG model is the most technically involved part of the proof.  On top of this, we need to prove a version of Janson's Inequality~\cite{janson1988exponential} for the probability of triangle-freeness in the conditioned exponential random graph model (Lemma~\ref{lemJansonERG} below). This hints at a kind of approximate duality in the problem: in the ordered regime ($m \gg n^{3/2}$) the distribution of defect edges $S,T$ has much in common with the distribution of the full set of edges of a triangle-free graph  in the disordered regime $m \ll n^{3/2}$.  We discuss below how extensions of tools like those in~\cite{janson1988exponential,wormald1996perturbation,stark2018probability,mousset2020probability} might be used in the future to extend our results further into the ordered regime.

Nearly all of our work is done in analyzing $Z (\lam)$ and $\mu_{\lam}$, and this analysis leads directly to the results on $G(n,p)$ which we present below in Section~\ref{secERresults}.  To prove our results for $\cT(n,m)$, we transfer the results for $Z (\lam)$ and $\mu_{\lam}$ by using local central limit theorems for hard-core models.  Such results are relatively straightforward to prove when the underlying model has a convergent cluster expansion; this technique  has been used recently in both combinatorics~\cite{jenssen2022independent} and algorithms~\cite{jain2021approximate}.

\subsection{Outlook}
\label{secOutlook}

The ultimate goal in the study initiated in~\cite{erdos1960evolution,erdosasymptotic}, continued in~\cite{janson1988exponential,promel1996asymptotic,wormald1996perturbation,osthus2001almost,promel2001random,osthus2003densities,steger2005evolution,stark2018probability,mousset2020probability}, and pursued here would be to determine the asymptotic number and typical structure of triangle-free graphs at \textit{any} density.  This will require progress on two fronts.   

On the \textit{disordered} side, there are asymptotic formulas for $Z(\lam)$ and $|\cT(n,m)|$ when $\lam\le n^{-1/2 -\eps}$ and $m \le n^{3/2 -\eps}$  for any fixed $\eps>0$~\cite{stark2018probability,mousset2020probability}; these formulas refine Janson's inequality and involve the exponential of a sum of terms, with the number of terms growing as $\eps$ decreases.  It is tempting to believe that there is a convergent infinite series, of which these sums are finite truncations, that give an asymptotic formula for $\lam\le cn^{-1/2}$ and $m \le cn^{3/2}$ for some $c >0$.  Such a result would follow if one could show the cluster expansion for a hypergraph independent set model  (see Section~\ref{secStatPhysFormulation}) converges in a particular range of parameters or if $\log Z(\lam)$ has a different convergent expansion; however this is a difficult question and the hypergraph cluster expansion may not be a convergent series through the full range of densities; see the discussion in~\cite{galvin2022zeroes,zhang2023hypergraph}.

On the \textit{ordered} side, one would need to extend the results of this paper to sparser regimes. 
Theorem~\ref{thmGenDef} below gives a rough structural description of a typical sample from $\mu_\lam$ all the way down to $\lam \ge Cn^{-1/2}$ for some $C>0$. The use of the cluster expansion to measure the contribution of triangle-free graphs with a given max cut and given sets of defect edges also works down to this density (and in fact these tools can be used to give efficient algorithms to approximately sample from $\mu_\lam$~\cite{jenssen2024sampling}).   However,  in order obtain an asymptotic formula for $Z(\lam)$ and $|\cT(n,m)|$ from this, one would have to greatly extend the already very involved computations and estimates of Sections~\ref{secSubCritRegime} and~\ref{secSuperCrit} below used to measure the sum of these contributions over all possible sets of defect edges. The two main technical steps to proving such a result using the methods of this paper would be to find an efficient method for bounding higher cumulants in the conditioned ERG model (it is this task that currently limits our results the most, see Section~\ref{secSuperCrit}) and to prove an analogue of the refinements of Janson's Inequality in~\cite{stark2018probability,mousset2020probability} to infinitely many terms and in the conditioned ERG model. Note the similarity in technical bottleneck to the disordered regime, again reflecting the approximate duality in the problem.

Ultimately, the disordered and ordered regimes must meet to cover all densities, and we have some predictions and questions about how this might happen.  We first conjecture that there is a sharp order--disorder phase transition, marked by a non-analyticity in an `order parameter' defined by the fraction of edges in the max cut of a triangle-free graph (see~\cite{bollobas1984evolution,bollobas2001scaling,borgs2001birth} for the use and discussion of order parameters in combinatorial problems).
\begin{conj}
\label{conjPhaseTransition}
There exists $c^* >0$ and a continuous function $\del: (c^*, \infty) \to (0,1/2]$ so that the following holds.
\begin{enumerate}
\item If $c < c^*$ and $m \sim c n^{3/2}$, then whp a graph $G$ drawn uniformly from $\cT(n,m)$ has a max cut of size $( 1/2 +o(1)) m$.  
\item If $c > c^*$ and $m \sim c n^{3/2}$, then whp a graph $G$ drawn uniformly from $\cT(n,m)$ has a max cut of size $( 1/2 + \del(c)+o(1)) m$. 
\end{enumerate}
\end{conj}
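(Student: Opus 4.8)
The plan is to attack the conjecture through the partition function $Z(\lam)=\sum_{G\in\cT(n)}\lam^{|G|}$ and the Gibbs measure $\mu_\lam$, as in the body of the paper, choosing $\lam\sim\lam_0=4m/n^2\sim 4cn^{-1/2}$ so that a $\mu_\lam$-sample has $(1+o(1))m$ edges, and transferring every whp statement about the max-cut size from $\mu_\lam$ back to the uniform measure on $\cT(n,m)$ via the local central limit theorem for $|G|$ under $\mu_\lam$ (the device the paper uses to pass between $\mu_\lam$ and $\cT(n,m)$). Write $\mathrm{mc}(G)$ for the number of edges in a maximum cut of $G$. The conjecture then amounts to three things: (i) for $c<c^*$, $\mathrm{mc}(G)=(1/2+o(1))m$ whp; (ii) for $c>c^*$, $\mathrm{mc}(G)=(1/2+\del(c)+o(1))m$ whp for some $\del(c)>0$; and (iii) the existence of the crossover $c^*$ and the continuity of $\del$ on $(c^*,\infty)$. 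In both (i) and (ii) the whp statement should come not from generic concentration but directly from the structural and enumerative control described below, which already pins $\mathrm{mc}(G)$ on a $1-o(1)$ fraction of samples.

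\textbf{Ordered side ($c>c^*$).} This should follow by pushing the machinery of the present paper to lower densities. First extend the rough structural statement of Theorem~\ref{thmGenDef} — via the Balogh--Morris--Samotij--Warnke container argument together with {\L}uczak's theorem, as in Section~\ref{secMorrisOPT} — from $\lam\ge Cn^{-1/2}$ down to every $\lam\ge 4(c^*+\eps)n^{-1/2}$, so that a typical $\mu_\lam$-sample has a near-balanced cut $(A,B)$ with a bounded-degree defect graph inside the parts. Then run the cluster expansion for the contribution to $Z(\lam)$ from a fixed $(A,B)$ and fixed defect sets $S,T$ exactly as in Sections~\ref{secSubCritRegime} and~\ref{secSuperCrit}, but only to the accuracy needed to determine the \emph{leading order} of the typical defect-edge density $q(c)$ and hence of the expected number of defect edges, $\gamma(c)\,m$. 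The max cut then has size $(1-\gamma(c)+o(1))m$, and it remains to check $\gamma(c)<1/2$ strictly for every $c>c^*$, i.e.\ $\del(c):=1/2-\gamma(c)>0$. Continuity of $\del$ should come from the explicit dependence of $q(c)$ on $c$ through equations of the form~\eqref{eqqdef}, together with monotonicity of $\gamma$ in $c$ (more edges overall, sparser defects).

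\textbf{Disordered side ($c<c^*$) and the location of $c^*$.} Let $Z_\eta(\lam)$ be the restriction of the sum defining $Z(\lam)$ to those $G\in\cT(n)$ having \emph{some} cut containing at least a $(1/2+\eta)$-fraction of the edges. The goal is to show $Z_\eta(\lam)=o(Z(\lam))$ for every fixed $\eta>0$ whenever $\lam\le 4cn^{-1/2}$ with $c<c^*$; this immediately gives (i). Carrying this out requires an asymptotic handle on $Z(\lam)$ in this range, which is exactly the open problem highlighted in Section~\ref{secOutlook}: reformulate $Z(\lam)$ via the hypergraph independent-set model of Section~\ref{secStatPhysFormulation} and prove that its cluster expansion (or another convergent expansion for $\log Z(\lam)$) converges for $\lam$ up to a constant times $n^{-1/2}$. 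One then reads off $c^*$ as the crossover: the unique $c$ at which the ``ordered'' free energy (the contribution to $\tfrac1{n^2}\log Z$ from near-bipartite graphs, computed on the ordered side) equals the ``disordered'' free energy (from the convergent expansion); below it the disordered term dominates, above it the ordered term dominates, and uniqueness of the crossover gives that $c^*$ is a single point with $\del(c^*{+})=0$.

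\textbf{Main obstacle.} The disordered side, and with it the identification of $c^*$, is the genuine difficulty: establishing a convergent expansion for $\log Z(\lam)$ all the way up to a constant times $n^{-1/2}$ is the sparse-side counterpart of the bottleneck the paper meets on the dense side — bounding higher cumulants in the conditioned exponential random graph model and extending Janson-type inequalities to infinitely many terms — and the ``approximate duality'' described in Section~\ref{subsecMethods} suggests the two are of comparable difficulty. Even granting both one-sided analyses, controlling both expansions uniformly in a neighbourhood of the crossover, so as to conclude that it is a single point and that $\del$ extends continuously to $0$ there, is where the real work remains.
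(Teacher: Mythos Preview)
The statement is Conjecture~\ref{conjPhaseTransition}, which the paper does \emph{not} prove; it is posed as an open conjecture in Section~\ref{secOutlook}. There is therefore no proof in the paper to compare against, and your proposal is (as you yourself acknowledge in the ``Main obstacle'' paragraph) a research outline rather than a proof.

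That outline is broadly consonant with the paper's own discussion in Section~\ref{secOutlook}: the disordered side requires a convergent expansion for $\log Z(\lam)$ up to $\lam=\Theta(n^{-1/2})$, which the paper explicitly flags as open, and the ordered side requires pushing the structural and cluster-expansion machinery from $\lam\ge\tfrac{13}{14}\sqrt{\log n/n}$ all the way down to $\lam=\Theta(n^{-1/2})$. Your free-energy-matching heuristic for locating $c^*$ is a natural guess but is not in the paper and would itself need justification (in particular, ruling out that the two expansions agree on an interval rather than at a point).

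One substantive understatement in your plan: you write that on the ordered side one need only run the cluster expansion ``to the accuracy needed to determine the leading order'' of the defect density and then read continuity of $\del$ off ``equations of the form~\eqref{eqqdef}''. But~\eqref{eqqdef} is derived only for $\lam\ge\tfrac{13}{14}\sqrt{\log n/n}$, i.e.\ $m=\Theta(n^{3/2}\sqrt{\log n})$, not $m\sim cn^{3/2}$. At the scale of the conjecture the defect density is $\Theta(1)$ rather than $n^{-1/2-c^2/2+o(1)}$, the defect graphs are dense, and both the conditioned-ERG description and the subcriticality of the hard-core model on $S\boxempty T$ (which underlies the cluster expansion) may fail. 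The paper says exactly this: extending the ordered-side analysis would require one to ``greatly extend the already very involved computations and estimates'' and to bound higher cumulants in the conditioned ERG. So the ordered side at this scale is not a routine extension either; both halves of your plan rest on genuinely open problems.
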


After the first version of this paper, the current authors  showed in~\cite{jenssen2024lower} the existence of a phase transition in the sense of a non-analyticity of the typical max-cut fraction and a non-analyticity in the large deviation rate function; the location, order, and uniqueness of the phase transition remain open.

We also ask about the nature of the phase transition, and whether this order parameter is continuous or discontinuous at the transition point. 
\begin{question}
\label{questPhaseTransitionOrder}
Assuming Conjecture~\ref{conjPhaseTransition}, is the order/disorder phase transition in triangle-free graphs first order or second order?  More precisely, we ask
\begin{enumerate}
\item  Is $\lim_{c \to c^* +}  \del(c)= 0$ (reflecting a second-order phase transition) or $\lim_{c \to c^* +}  \del(c)> 0$ (first-order)? 
\item With $G$ drawn uniformly from $\cT(n,m)$, $m \sim c^* n^{3/2}$, does the random variable $\frac{\textrm{MAX-CUT}(G)}{m}$ converge in probability to $\frac{1}{2}$, a constant random variable different than $\frac{1}{2}$, or to a random variable supported on two distinct values?
\end{enumerate}
\end{question}
Convergence of $\frac{\textrm{MAX-CUT}(G)}{m}$ to a random variable with support of size more than $2$ is not ruled out, but we conjecture this does not occur. 
 In fact, we conjecture it converges to a constant greater than $1/2$, in analogy to the first-order phase transition in the ferromagnetic Potts model~\cite{laanait1991interfaces,duminil2017lectures,helmuth2023finite}, but here the number of ordered ground states (roughly the number of balanced partitions) is exponential in $n$, presumably dominating the single disordered state at the critical point.

Moving beyond triangle-free graphs, the same kind of questions can be asked for many other combinatorial enumeration problems, and the methods introduced here can likely be used to provide some answers.

Mantel's Theorem is one of the first and most central results in extremal graph theory, and a first example of the more general class of Tur{\'a}n-type problems which includes Tur{\'a}n's generalization of Mantel's Theorem to $K_{r+1}$-free graphs~\cite{turan1941external}.  The extensions of Theorems~\ref{thmEKR} and~\ref{thmOst} to the $K_{r+1}$-free case were proved by  Kolaitis, Pr{\"o}mel, and Rothschild~\cite{kolaitis1987k} and Balogh,  Morris,  Samotij, and Warnke~\cite{balogh2016typical} respectively.  In particular it is shown in~\cite{balogh2016typical} that for $m \ge ( \theta_r+ \eps) n^{2 - \frac{2} {r+2 } } (\log n)^{1/ [\binom{r+1}{2} -1]  }$ (for some explicit constant $\theta_r$) almost all $K_{r+1}$-free graphs on $n$ vertices with $m$ edges are $r$-partite, and this immediately yields an asymptotic formula for the number of such graphs.    We ask if variations on the methods employed here can give asymptotic formulas at lower densities.

\begin{problem}
\label{probKrFree}
For $r \ge 3$ fixed and $c< \theta_r$, determine the asymptotic number of $K_{r+1}$-free graphs on $n$ vertices with $m$ edges when $m \sim c n^{2 - \frac{2} {r+2 } } (\log n)^{1/ [\binom{r+1}{2} -1]  } $.
\end{problem}

We note that one major obstacle to directly employing the methods of this paper to the $K_{r+1}$-free problem is that our use of cluster expansion in the triangle-free problem is in understanding independent sets in a certain graph created by fixing the defect edges; for $r \ge 3$, fixing the defect edges yields a hypergraph and one must understand independent sets in this hypergraph. However, as mentioned above convergence of the relevant cluster expansion is not known (and very likely fails).

Eventually one would like to understand the possible order--disorder phase transition in $K_{r+1}$-free graphs as well, and one can pose analogs of Conjecture~\ref{conjPhaseTransition} and Question~\ref{questPhaseTransitionOrder} in this setting. 

More generally, there are a large number of classes of combinatorial objects that can be represented by independent sets in hypergraphs, with hyperedges encoding forbidden substructures.  Tur{\'a}n-type problems fall into this class, along with problems about sum-free sets, $k$-AP-free sets, and Sidon sets in the integers/Abelian groups (see, e.g.,~\cite{balogh2018method} for discussion).  A common phenomenon in this kind of problem is that  substructures of extremal objects account for almost all or a constant fraction of such structures (analogous to Theorem~\ref{thmEKR}). See, e.g.,~\cite{lev2002cameron,green2004cameron,sapozhenko2008cameron,balogh2015independent,saxton2015hypergraph,morris2016number,balogh2017number} for examples.  This can continue to hold for sparser objects (as in Theorem~\ref{thmOst}), for example the results of~\cite{balogh2016typical} for $K_{r+1}$-free graphs and of~\cite{alon2014counting,alon2014refinement} for sum-free sets in Abelian groups and in the integers $[n]$.  What can one say about even sparser objects?

\begin{question}
\label{quesERGuniversal}
Can one characterize  defect distributions for combinatorial enumeration problems in the ordered regime more generally?
\end{question}
In particular one could ask if distributions analogous to conditioned exponential random graphs arise universally in such problems.

\subsection{A note on asymptotic notation}
All asymptotic notation is to be understood with respect to the limit $n \to \infty$. All implicit constants in the asymptotic notation $O, \Omega$ etc. will be absolute constants unless specified otherwise. Moreover, for two functions $f,g:\N\to \R$ we understand $f(n)\leq g(n)$ to mean that the inequality holds for $n$ sufficiently large. We write $f(n)\sim g(n)$ to denote that $\lim_{n \to \infty} f(n)/g(n)=1$.  We write $f(n)\ll g(n)$ to denote that $\lim_{n\to\infty}f(n)/g(n)=0$. We say a sequence of events $A_n$ holds `with high probability' (abbreviated `whp') if $\mathbb P(A_n) = 1-o(1)$.

\section{Results for \ER random graphs}
\label{secERresults}

In this section we state our results for the \ER random graph $G(n,p)$ conditioned on the event that $G$ is triangle-free.  The results mirror those in the Section~\ref{secIntro}, and in fact in our proofs we will first address $G(n,p)$ before translating  the results to the uniform distribution on $\cT(n,m)$ in Section~\ref{secFixedM}. 

To state the results we will make use of the \textit{hard-core model} of a random independent set from a graph.  Given a graph $G$ let $\cI(G)$ denote the set of all independent sets of $G$.  For an activity parameter $\lam\ge0$, the hard-core model $\mu_{G,\lam}$ is the distribution (or Gibbs measure) on $\cI(G)$ given by
\[ \mu_{G,\lam}(I) = \frac{ \lam^{|I|}}{Z_G(\lam)} \, ,\]
where $Z_G(\lam)  = \sum_{I \in \cI(G)} \lam^{|I|}$ is the hard-core partition function or the independence polynomial.

Let $\mathbb{P}_{n,p}$ be the measure associated to the Erd\H{o}s-R\'{e}nyi random graph $G(n,p)$, i.e.,
\[
\mathbb{P}_{n,p}(G) = p^{|G|}(1-p)^{\binom{n}{2}-|G|}\, .
\]

We are interested in determining the asymptotics of $\mathbb{P}_{n,p}(\cT)$, the probability that $G(n,p)$ is triangle-free, and in understanding the conditional measure $\mathbb{P}_{n,p}( \, \cdot \, |\cT)$. 

In fact both this probability and the conditional measure can be represented as a partition function and its associated Gibbs measure. Recall $Z(\lam) $ and $\mu_{\lam}$ from~\eqref{eqZndef},\eqref{eqmudef0}.  In fact  $Z(\lam)$ and $\mu_{\lam}$ define  a hard-core model on a $3$-uniform hypergraph   $\mathcal H_n$ with vertex set $\binom{[n]}{2}$ (representing edges of the complete graph $K_n$ on $n$ vertices) and $3$-uniform hyperedges $\{x,y,z\}$ when the edges $\{x,y,z\}$ form a triangle in $K_n$.  Independent sets (sets of vertices containing no edge) in $\mathcal I(\mathcal H_n)$ are exactly  triangle-free graphs in $\mathcal T(n)$ (see, e.g., the discussion in~\cite{balogh2018method}).  The independence polynomial of  $\mathcal H_n$  is
\begin{align*}
 \sum _{I \in \cI(\mathcal H_n)} \lam^{|I|}  = \sum_{G \in \cT(n)} \lam ^{|G|} = Z(\lam) \, .
\end{align*}

To see the connection to the distribution of $G(n,p)$ conditioned on triangle-freeness, we set $\lam = \frac{p}{1-p}$ and note the identity
\begin{align}\label{eqpviaZ}
\mathbb{P}_{n,p} (\cT) &= \sum_{G \in \cT(n)} p^{|G|} (1-p) ^{\binom{n}{2} -|G|}  
=(1-p)^{\binom{n}{2}} Z(\lam) \,.
\end{align}
Therefore to determine the asymptotics of $\mathbb{P}_{n,p} (\cT)$, it suffices to determine the asymptotics of $Z(\lam)$.  Moreover with  $\lam = \frac{p}{1-p}$,  $\mu_{\lam}$ is identical to the  distribution of $G(n,p)$ conditioned on $\cT$.  As described in Section~\ref{subsecMethods}, our main task is to understand this partition function and its Gibbs measure.

\subsection{Subcritical defect regime}
\label{secGnpSub}

Given $p$, let $\lam = \frac{p}{1-p}$ and define $q_0=q_0(\lam)$ so that 
\begin{equation}
\label{eqqdefGnp}
\frac{q_0}{1-q_0}= \lam e^{-\lam^2n/2} \, . 
\end{equation}
This is the same $q_0$ as in~\eqref{eqqdef} only now defined as a function of $p$ rather than $m$.
 Recall from Definition~\ref{defPartitionDist} that $\theta_\lam$ is a distribution on partitions of $[n]$.

Consider the distribution $\mu_{\lam,1}$ on $\cT(n)$ defined by the following algorithm.

\begin{algorithm}[ht]
  \caption{The distribution $\mu_{\lam,1}$\label{algMulam1}}
\begin{enumerate}[leftmargin=*]
\item Choose $(A,B)$ from the distribution $\theta_\lam$.
\item Choose edges $S \subseteq \binom{A}{2}$ and  $ T \subseteq \binom{B}{2}$ according to independent \ER random graphs on $A$ and $B$ with edge probability $q_0$.  If $S$ or $T$ contains a triangle, output the empty graph. Otherwise proceed to the next step.
\item Given $S,T$, choose $\Ec \subset A \times B$ according to the hard-core model on the graph $S \boxempty T$ at activity $\lam$.
\item Output the graph $G = S \cup T \cup \Ec$.
\end{enumerate}
\end{algorithm}

We have the following analogues of Theorems~\ref{thmGnmSubRestated}-\ref{thmNumberM}.
\begin{theorem}
\label{thmGnpSubCritLDprob}
Fix  $\eps>0$ and suppose $p \ge (1+\eps) \sqrt{\frac{\log n } {n}}$.  Then, with $\lam=p/(1-p)$,
\begin{align}
\mathbb{P}_{n,p}( \cT)
&\sim \frac{1}{2}\sqrt{\frac{\pi}{\lam}} \binom{n}{\lfloor n/2 \rfloor} (1+\lam)^{-n^2/4+n/2} \exp \left\{ \lam e^{-\lam^2n/2+\lam^3n} \frac{n^2}{4} + \lambda^5e^{-\lambda^2 n}\frac{n^4}{8} \right \} \, .
\end{align}
Moreover
\[ \| \mu_{\lam} - \mu_{\lam,1} \| _{TV} = o(1) \,.\]
\end{theorem}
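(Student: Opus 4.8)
The plan is to prove Theorem~\ref{thmGnpSubCritLDprob} by first establishing the structural decomposition of a typical sample from $\mu_\lam$ and then summing the partition function $Z(\lam)$ accordingly. Recall from~\eqref{eqpviaZ} that $\mathbb{P}_{n,p}(\cT) = (1-p)^{\binom{n}{2}} Z(\lam)$, so the main task is asymptotically evaluating $Z(\lam) = \sum_{G \in \cT(n)} \lam^{|G|}$. The first step is to invoke the coarse structural result (the extension of \L{}uczak and of Osthus--Pr\"omel--Taraz via the Balogh--Morris--Samotij--Warnke framework, promised in Section~\ref{subsecMethods}) to restrict attention to graphs $G$ possessing a max cut $(A,B)$ with max degree at most $\alpha/\lam$ inside each part; the contribution of all other graphs is negligible, i.e.\ $o(Z(\lam))$. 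Having localized onto such a cut, I would decompose $Z(\lam)$ as a sum over partitions $(A,B)$ and over defect-edge sets $S \subseteq \binom{A}{2}$, $T \subseteq \binom{B}{2}$, of the quantity $\lam^{|S|+|T|} Z_{S \boxempty T}(\lam)$, where $Z_{S\boxempty T}(\lam)$ is the hard-core partition function (independence polynomial) of the Cartesian product graph $S \boxempty T$ on $A \times B$ — this counts the admissible crossing-edge sets weighted by $\lam$.

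The second step is to evaluate $Z_{S\boxempty T}(\lam)$ via the cluster expansion. Because the max-degree bound makes $S\boxempty T$ sparse enough, the cluster expansion for the hard-core model on $S\boxempty T$ converges, and truncating it after the leading terms gives
\[
\log Z_{S\boxempty T}(\lam) = |A\times B|\log(1+\lam) - (\text{correction terms depending on } |S|,|T|, P_2 \text{ counts}) + o(1)\, .
\]
Substituting this back shows that, conditioned on $(A,B)$, the induced weight on $(S,T)$ is proportional to $\left(\tfrac{q_0}{1-q_0}\right)^{|S|+|T|}$ times lower-order exponential-random-graph factors, which in the subcritical regime $\lam \ge (1+\eps)n^{-1/2}$ collapse to exactly the \ER product measure $G(A,q_0)\times G(B,q_0)$ up to $o(1)$ total variation — this is the content of Theorem~\ref{ThmStructure}'s $G(n,p)$ analogue and simultaneously identifies the distribution $\mu_{\lam,1}$. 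The conditioning on triangle-freeness of $S\cup T$ and the event $|S|+|T|\le m$ (for the fixed-edge version) or simply triangle-freeness (here) contributes only $1+o(1)$ by a Janson-type upper bound, since $q_0$ is so small that $S\cup T$ is whp triangle-free anyway.

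The third step is the explicit computation. Summing $\left(\tfrac{q_0}{1-q_0}\right)^{|S|}$ over $S\subseteq\binom{A}{2}$ gives $(1-q_0)^{-\binom{|A|}{2}}$, and likewise for $T$; combining with $(1+\lam)^{|A||B|}$ from the crossing edges and the cluster-expansion corrections (the $\lam e^{-\lam^2 n/2 + \lam^3 n}\tfrac{n^2}{4}$ and $\lam^5 e^{-\lam^2 n}\tfrac{n^4}{8}$ terms arise precisely from the next cluster-expansion orders and from the $P_2$-count corrections) yields, for a fixed balanced cut of imbalance $t$, a weight $\asymp (1+\lam)^{-t^2}$ times a $t$-independent factor. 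Summing the discrete Gaussian $\sum_t (1+\lam)^{-t^2} \sim \sqrt{\pi/\log(1+\lam)} \sim \sqrt{\pi/\lam}$, and multiplying by the $\binom{n}{\lfloor n/2\rfloor}$ ways to choose the cut and dividing by $2$ for the $(A,B)\leftrightarrow(B,A)$ symmetry, produces the claimed formula after folding in $(1-p)^{\binom{n}{2}}$; note $(1+\lam)^{-n^2/4 + n/2} = (1-p)^{-\binom{n}{2}}(1-p)^{-n/2}\cdots$ requires care to match powers but is routine. Finally, the total-variation statement $\|\mu_\lam - \mu_{\lam,1}\|_{TV} = o(1)$ follows by comparing $\mu_\lam$ restricted to ``good'' cuts (probability $1-o(1)$ by step one) with $\mu_{\lam,1}$ term by term: the cut distribution matches $\theta_\lam$ by the discrete-Gaussian calculation, the defect distribution matches the \ER product by step two, and the crossing-edge distribution is by construction the hard-core model on $S\boxempty T$.

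The main obstacle will be step two: controlling the cluster expansion for $Z_{S\boxempty T}(\lam)$ uniformly over all defect sets $S,T$ satisfying only the coarse max-degree bound $\Delta \le \alpha/\lam$ — this bound is too weak for the cleanest truncation, so one must first use local probability estimates (as flagged in Section~\ref{subsecMethods}) to show that $\mu_\lam$-typically the defect graphs satisfy the much stronger bound $\Delta \le 50\max\{q_0 n, \log n\}$, and only then does the expansion truncate to the advertised number of terms with an $o(1)$ error in the exponent. Keeping all the lower-order corrections (the distinction between $\lam_0$ and $\lam$ in~\eqref{eqpdef}, and between $q_0$ and its refinements) consistent through the computation, so that the final constants $\tfrac{n^2}{4}$, $\tfrac{n^4}{8}$ etc.\ come out exactly right, is where the bulk of the genuine work lies.
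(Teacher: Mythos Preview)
Your proposal is correct and follows essentially the same approach as the paper: the reduction to weakly sparse defect graphs via the \L{}uczak/Balogh--Morris--Samotij--Warnke framework, the bootstrap to the stronger max-degree bound, the cluster-expansion evaluation of $Z_{S\boxempty T}(\lam)$ truncated at the $P_2$ term, the identification of the defect distribution with the Erd\H{o}s--R\'enyi product measure $G(A,q_0)\times G(B,q_0)$, and the discrete-Gaussian sum over cut imbalances are exactly the steps the paper takes (organized there as Propositions~\ref{lemZerothApprox}--\ref{propGroundStateStrong} and Lemmas~\ref{lemmaZABabapprox}--\ref{lemZABapprox1}). You correctly flag the max-degree bootstrap and the careful handling of the $P_2$ contribution (which in the paper is done via a cumulant-generating-function argument, Lemma~\ref{lemtiltedcumulant}, to produce the $\lam^5 e^{-\lam^2 n}n^4/8$ term) as the places where the real work lies; one small slip is that the regime you write as $\lam\ge (1+\eps)n^{-1/2}$ should be $\lam\ge (1+\eps)\sqrt{(\log n)/n}$.
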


\begin{theorem}
\label{thmGnpBipartite}
Fix $\eps>0$ and let $p \ge (1+\eps) \sqrt{ \frac{\log n}{n}} $.  Let $\hat X \sim \text{Bin}(\lfloor n^2/4 \rfloor, q_0)$ where $q_0$ is as in~\eqref{eqqdefGnp}. 
Let $X=X(G)$ be the minimum number of edges whose removal makes $G$ bipartite.  Suppose $G$ is drawn   from $G(n,p)$ conditioned on triangle-freeness.    Then  $\| X- \hat X\|_{TV} = o(1)$. 

In particular, if $p =   \sqrt{3+ \frac{\log \log n}{\log n} - \frac{t}{\log n}  } \sqrt{ \frac{\log n}{n}}$ for $t \in \R$, then
\[ \lim_{n\to \infty} \P_{n,p} ( G \in \cB\mid G \in \cT ) =  \exp \left(  - \frac{\sqrt{3}}{4} e^{t/2} \right ) \,.   \]
\end{theorem}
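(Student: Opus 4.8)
The plan is to deduce Theorem~\ref{thmGnpBipartite} as the $G(n,p)$ analogue of Theorem~\ref{thmDistToBipartite}, running the same argument but with the measure $\mathbb{P}_{n,p}(\cdot \mid \cT) = \mu_\lam$ in place of the uniform measure on $\cT(n,m)$, and with $\lam = p/(1-p)$ playing the role the edge count $m$ played there. The starting point is the structural description of $\mu_\lam$ in the subcritical defect regime, which is exactly the content of Theorem~\ref{thmGnpSubCritLDprob}: when $p \ge (1+\eps)\sqrt{\log n / n}$ (equivalently $\lam \ge (1+\eps')n^{-1/2}\sqrt{\log n}$ for an appropriate $\eps'$), we have $\|\mu_\lam - \mu_{\lam,1}\|_{TV} = o(1)$, where $\mu_{\lam,1}$ is generated by Algorithm~\ref{algMulam1}: first a strongly balanced partition $(A,B)\sim\theta_\lam$, then independent $\ER$ graphs $G(A,q_0), G(B,q_0)$ of defect edges, then crossing edges from the hard-core model on $S\boxempty T$.

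First I would observe that, for a graph $G$ with a max cut $(A,B)$ and defect edges $S, T$, the quantity $X(G)$ — the minimum number of edges whose removal makes $G$ bipartite — is at most $|S|+|T|$, since deleting all defect edges leaves the bipartite graph $\Ec$. The key point is that whp this is an equality: one needs a short lemma (or a citation to the analogous step inside the proof of Theorem~\ref{thmDistToBipartite}) stating that whp under $\mu_{\lam,1}$ the defect graphs $S$ and $T$, being very sparse $\ER$ graphs with $q_0 = n^{-1/2 - c^2/2 + o(1)}$ for the relevant range, are forests (or more generally have no cycles of length that would allow a cheaper bipartization), so that no deletion of fewer than $|S|+|T|$ edges can destroy all odd closed walks; in fact for the stated range $q_0 n^2 \to \infty$ but $q_0 n \to 0$ below the $3$-colourability threshold, so each component of $S\cup T$ is typically a tree and $X(G) = |S|+|T|$ exactly. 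Having established $X(G) = |S| + |T|$ whp under $\mu_{\lam,1}$, and hence (by the total variation bound) whp under $\mu_\lam$ with the right coupling, it remains to identify the distribution of $|S|+|T|$. Under $\mu_{\lam,1}$, conditioned on $(A,B)$, $|S|+|T| \sim \bin\!\big(\binom{|A|}{2} + \binom{|B|}{2}, q_0\big)$; since $(A,B)$ is strongly balanced whp, $\binom{|A|}{2}+\binom{|B|}{2} = \lfloor n^2/4\rfloor + O((n\log n)^{1/2}) = \lfloor n^2/4 \rfloor (1+o(1/(q_0^{-1})))$... more carefully, the discrepancy is $O(\sqrt{n\log n})$ while $q_0^{-1} = n^{1/2+c^2/2+o(1)}$, so the difference in means is $O(\sqrt{n\log n}\cdot q_0) = o(1)$ and one absorbs the fluctuation of $(A,B)$ into the $o(1)$ total variation error, yielding $\|X(G) - \hat X\|_{TV} \to 0$ with $\hat X \sim \bin(\lfloor n^2/4\rfloor, q_0)$.

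For the second, quantitative assertion, I would specialize $p = \sqrt{3 + \tfrac{\log\log n}{\log n} - \tfrac{t}{\log n}}\,\sqrt{\tfrac{\log n}{n}}$, so that $\lam \sim p \sim \sqrt{3}\,n^{-1/2}\sqrt{\log n}$ is still comfortably in the subcritical regime, and compute $q_0$ from~\eqref{eqqdefGnp}. Writing $\lam^2 n/2 = \tfrac12 (3 + \tfrac{\log\log n}{\log n} - \tfrac{t}{\log n})\log n + o(1) = \tfrac32\log n + \tfrac12 \log\log n - \tfrac t2 + o(1)$ (using $\lam = p(1+o(1))$ and checking the correction terms in~\eqref{eqpdef} are negligible at this order), one gets $\lam e^{-\lam^2 n/2} = \sqrt{3}\,n^{-1/2}\sqrt{\log n}\cdot n^{-3/2}(\log n)^{-1/2} e^{t/2}(1+o(1)) = \sqrt{3}\, e^{t/2} n^{-2}(1+o(1))$, hence $q_0 = (1+o(1))\sqrt{3}\, e^{t/2} n^{-2}$. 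Then $\mathbb{E}\hat X = \lfloor n^2/4\rfloor q_0 \to \tfrac{\sqrt 3}{4} e^{t/2}$, and since $\hat X$ is a sum of $\Theta(n^2)$ independent Bernoullis with this bounded mean it converges in distribution to $\poiss(\tfrac{\sqrt3}{4}e^{t/2})$; therefore $\P(\hat X = 0) \to \exp(-\tfrac{\sqrt3}{4}e^{t/2})$, and $\{G \in \cB\} = \{X(G) = 0\}$ gives $\P_{n,p}(G\in\cB \mid G\in\cT) \to \exp(-\tfrac{\sqrt3}{4}e^{t/2})$.

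The main obstacle I expect is not any of the above (which is essentially bookkeeping given Theorem~\ref{thmGnpSubCritLDprob}) but rather the lemma $X(G) = |S|+|T|$ whp — i.e., ruling out the possibility that the \emph{crossing} structure together with a few defect edges admits a cheaper bipartization, or that short odd cycles within $S\cup T$ let one delete fewer edges. Making this rigorous requires controlling the girth/odd-girth and the component structure of the sparse defect graphs and arguing that the unique max cut really is $(A,B)$; both should follow from the $\ER$ description of $(S,T)$ in Theorem~\ref{thmGnpSubCritLDprob} together with standard first-moment bounds on short cycles in $G(n,q_0)$ with $q_0 n \to 0$, and presumably this is exactly the content of whatever lemma underlies Theorem~\ref{thmDistToBipartite}, so one cites it or reruns it verbatim for the $G(n,p)$ measure.
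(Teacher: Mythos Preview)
Your overall structure matches the paper's proof: reduce to $\mu_{\lam,1}$ via Theorem~\ref{thmGnpSubCritLDprob}, argue $X(G)=|S|+|T|$ whp, couple $|S|+|T|$ to $\hat X$, then compute the Poisson limit for the specific $p$. Steps 1, 3 and 4 are essentially as in the paper. (Incidentally the paper proves Theorem~\ref{thmGnpBipartite} first and derives Theorem~\ref{thmDistToBipartite} from it, not the other way round.)

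The one genuine issue is your proposed mechanism for $X(G)=|S|+|T|$. This identity has nothing to do with $S,T$ being forests or with short odd cycles in $G(n,q_0)$: since $X(G)$ always equals $|G|$ minus the size of a maximum cut, the statement $X(G)=|S|+|T|$ is \emph{equivalent} to $(A,B)$ being a max cut of $G$, and that is a property of the \emph{crossing} edges $\Ec$, not of $S,T$. A single defect edge incident to a vertex with no crossing neighbours already defeats any argument based solely on defect sparsity. The paper handles this via Lemmas~\ref{lemExpanderwhp} and~\ref{lemExpansionCor}: the hard-core sample $\Ec$ on $S\boxempty T$ is whp an $(A,B)$-$\lam$-expander, and for any such expander with $\Delta(G_A\cup G_B)\le\alpha/\lam$ the partition $(A,B)$ is the unique max cut. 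So your instinct to cite ``whatever lemma underlies Theorem~\ref{thmDistToBipartite}'' lands on the right result, but that lemma is about expansion of $\Ec$, not about first-moment bounds on cycles in the defect graph.
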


\begin{theorem}
\label{thmGnpColoring}
Fix $\eps>0$. 
\begin{itemize}
\item If $ (\sqrt{2}+\eps) \sqrt{ \frac{\log n} {n}}\le p  \le  (\sqrt{3}-\eps)\sqrt{ \frac{\log n} {n}}$ then $\mathbb{P}_{n,p}[\chi(G) = 3  | \cT] = 1-o(1)$.
\item If $ (1+\eps)\sqrt{ \frac{\log n} {n}} \le p  \le (\sqrt{2}-\eps)\sqrt{ \frac{\log n} {n}}$ then $\mathbb{P}_{n,p}[\chi(G) = 4  | \cT] = 1-o(1)$.
\end{itemize}
\end{theorem}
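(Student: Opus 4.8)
The plan is to run everything through the structural description of $G(n,p)$ conditioned on $\cT$ in the subcritical defect regime. By Theorem~\ref{thmGnpSubCritLDprob}, for $p \ge (1+\eps)\sqrt{\log n/n}$ the conditional law of $G$ is within $o(1)$ total variation distance of the law $\mu_{\lambda,1}$ produced by Algorithm~\ref{algMulam1}, so it suffices to prove the two claims for such a graph. Writing $p = c\sqrt{\log n/n}$, one has $\lambda\sim c\sqrt{\log n/n}$ and $q_0 = n^{-1/2-c^2/2+o(1)}$, so the defect graphs $G_A\sim G(A,q_0)$, $G_B\sim G(B,q_0)$ each have $e_A,e_B = n^{3/2-c^2/2+o(1)}$ edges in expectation, while the crossing edges behave like a $\sim p$-random bipartite graph between $A$ and $B$ (a uniformly random set of $\sim \tfrac14 cn^{3/2}\sqrt{\log n}$ edges, with the few that would complete a triangle with $S\cup T$ forbidden). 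In both regimes $\chi(G)\ge 3$ whp because $G$ is non-bipartite whp: the expected number of defect edges tends to infinity for $c<\sqrt 3$, which is contained in Theorem~\ref{thmGnpBipartite}.

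Turn to the upper bounds. When $c>\sqrt 2$ the expected number of copies of $P_2$ inside $G_A$ is $n^{2-c^2+o(1)}=o(1)$, so whp $G_A,G_B$ are matchings; I $3$-colour $G$ by starting from $A\mapsto 1$, $B\mapsto 2$ and recolouring to $3$ one endpoint of each defect edge of $G_A$ and of $G_B$. The only possible monochromatic edges are colour-$3$ crossing edges joining a recoloured $A$-vertex to a recoloured $B$-vertex; these are avoided by choosing the recoloured endpoints appropriately, since the ``conflict (multi)graph'' on the set of defect edges, with $e\subseteq A$ joined to $f\subseteq B$ whenever a crossing edge joins $V(e)$ to $V(f)$, is a forest whp (the defect endpoints number $n^{1/2-\Omega(1)}$ and $p=n^{-1/2+o(1)}$, so its expected number of cycles is $o(1)$), and on a forest the required transversal can be read off greedily from the leaves inward. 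When $1<c<\sqrt 2$ we instead have $nq_0 = n^{-\Omega(1)}\to 0$, so whp $G_A,G_B$ are forests (expected number of cycles $\sum_{k\ge 3}(nq_0/2)^k/(2k)=o(1)$), hence $\chi(G_A),\chi(G_B)\le 2$, and colouring $A$ with $\{1,2\}$ and $B$ with $\{3,4\}$ is proper because every crossing edge joins $\{1,2\}$ to $\{3,4\}$.

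It remains to show $\chi(G)\ge 4$ whp when $1<c<\sqrt 2$; note $e_A,e_B=\Theta(n^{3/2-c^2/2})$, which is $\gg\sqrt{n\log n}$ precisely because $c<\sqrt 2$. I use that $\chi(G)\le 3$ iff there is an independent set $I$ with $G-I$ bipartite, say with bipartition $(X,Y)$. A first-moment computation shows that whp the crossing graph has no independent set with $\omega(\sqrt{n\log n})$ vertices on both sides, so $(X,Y)$ must agree with $(A,B)$ up to $O(\sqrt{n\log n})$ swapped vertices; hence any defect edge of $G_A$ left uncovered by $I$ has an endpoint among these swapped vertices, and since $\Delta(G_A)=O(1)$ whp, $I\cap A$ covers all but $O(\sqrt{n\log n})=o(e_A)$ defect edges of $G_A$, and similarly for $B$. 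Thus $I\cap A$, $I\cap B$ are near vertex covers of $G_A,G_B$ of size $\Theta(n^{3/2-c^2/2})$, and there are only $2^{O(n^{3/2-c^2/2})}$ choices for such a pair. For a fixed pair, the probability over the crossing edges that $I$ is independent (no crossing edge between $I\cap A$ and $I\cap B$) is at most $e^{-\Theta(p\,n^{3-c^2})}=e^{-\Theta(n^{5/2-c^2}\sqrt{\log n})}$, so a union bound gives $\mathbb{P}_{n,p}[\chi(G)\le 3\mid\cT]\le 2^{O(n^{3/2-c^2/2})}e^{-\Theta(n^{5/2-c^2}\sqrt{\log n})}+o(1)\to 0$, using that $5/2-c^2>3/2-c^2/2$ exactly when $c<\sqrt 2$. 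This is the source of the threshold $\sqrt 2$: it is where the entropy $\asymp e_A$ of choosing the vertex cover balances the cost $p\,e_A^2$ of that cover being independent in the near-random bipartite crossing graph.

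The main obstacle is the reduction in the last paragraph — that an independent set $I$ with $G-I$ bipartite must essentially be a near vertex cover of each defect graph on the corresponding side. Making this rigorous requires a robustness/uniqueness statement for the max cut, ruling out for instance ``lopsided'' bipartitions $(X,Y)$ of $G-I$ in which one side absorbs almost all of $A$; I would obtain this from the structural results behind Theorem~\ref{thmGnpSubCritLDprob} (notably uniqueness of the max cut) together with the first-moment bound on independent sets of the crossing graph. The remaining ingredients — the matching/forest structure of $G_A,G_B$, the conflict-forest step, and the various first-moment estimates — are routine given the value of $q_0$.
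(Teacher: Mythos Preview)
Your upper bounds ($\chi\le 3$ when $c>\sqrt 2$, $\chi\le 4$ when $c>1$) are correct and essentially identical to the paper's: matchings or forests in the defect graphs, plus the conflict-forest argument for the $3$-colouring.

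For $\chi(G)\ge 4$ when $1+\eps<c<\sqrt 2-\eps$, the paper's route is shorter and sidesteps both your union bound and your acknowledged gap. The key is to treat all three colour classes symmetrically and apply the vertex-cover bound on each side. Since $\Delta(G_A)=O_\eps(1)$ whp and $|S|=n^{3/2-c^2/2+o(1)}$, any independent set of $G_A$ omits $\Omega(|S|)$ vertices of $A$; hence in any proper $3$-colouring at least two of the three colours each occupy $\Omega(|S|)\gg\lambda n$ vertices of $A$, and likewise two colours are large on $B$. Pigeonhole (three colours, two sides) then gives one colour with $\gg\lambda n$ vertices on both sides, and the expander condition~\eqref{eqExpansionCond2} (which holds whp by Lemma~\ref{lemExpanderwhp}) supplies a monochromatic crossing edge --- contradiction. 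Your ``lopsided'' case, where $I$ absorbs almost all of $A$, is ruled out by the same vertex-cover bound applied to $I$ itself (its complement in $A$ would be a vertex cover of $G_A$ of size $o(|S|)$), with no appeal to max-cut uniqueness needed.

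Your union bound is therefore unnecessary: once you know some colour class has $\gg\sqrt{n\log n}$ vertices on both $A$ and $B$, the expander property is already a whp deterministic feature of $G$, not a small probability to accumulate over choices of $I$. (The count $2^{O(n^{3/2-c^2/2})}$ is also unjustified as written, since $I\cap A$ may contain arbitrarily many isolated vertices of $G_A$; and the reduction restricting $I$ already uses the expander property of the crossing edges, so one cannot then treat those edges as fresh randomness for the union bound without care.)
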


\subsection{Supercritical defect regime}
\label{secGnpSuper}
Recall the definitions of $q_1, q_2, \psi$ from~\eqref{eqq0q1Def}-\eqref{eqPsiDef} and consider them now as functions of $p$ via $\lam = p/(1-p)$.

Consider the following distribution $\mu_{\lam,2}$ on $\cT(n)$. 
\begin{algorithm}[ht]
  \caption{The distribution $\mu_{\lam,2}$\label{algMulam2}}
\begin{enumerate}[leftmargin=*]
\item Choose $(A,B)$ from the distribution $\theta_\lam$.
\item Sample $S \subseteq \binom{A}{2}$ according to $G(A,q_2,\psi)$ and sample  $ T \subseteq \binom{B}{2}$ according to $G(B,q_2,\psi)$ with $S,T$ independent. 
\item Given $S,T$, choose $\Ec \subset A \times B$ according to the hard-core model on the graph $S \boxempty T$ at activity $\lam$.
\item Output $G = S \cup T \cup \Ec$. 
\end{enumerate}
\end{algorithm}

We have the following analogues of Theorems~\ref{thmP2ERG}-\ref{thm4color}.

\begin{theorem}
\label{thmGnpStructureSuper}
Fix $\eps \in (0, 1/14]$ and suppose $p \ge (1-\eps) \sqrt{ \frac{\log n}{n}}$.  Let $\lam=p/(1-p)$, then
\begin{align}
\mathbb{P}_{n,p}( \cT) \sim & \frac{1}{2}\sqrt{\frac{\pi}{\lam}} \binom{n}{\lfloor n/2 \rfloor}\left[(1+\lam)(1-q_2) \right]^{-n^2/4+n/2}\times\\ 
& \exp\left\{  \frac{1}{64} \lam^6 n^5 q_0^2 - \frac{1}{64}\lam^6 n^6 q_0^3 -\frac{1}{24}n^3q_0^3+ \frac{1}{64}\lam^4n^4q_0^2-\frac{1}{6}\lam^4n^5q_0^3-\frac{1}{2}\lam^4n^4q_0^2 \right\}\, .
\end{align}
Moreover,
\[ \| \mu_{\lam} - \mu_{\lam,2} \| _{TV} = o(1) \,.\]
\end{theorem}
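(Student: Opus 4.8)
The plan is to prove both assertions by analyzing the partition function $Z(\lam)$ of \eqref{eqZndef} and the Gibbs measure $\mu_\lam$ of \eqref{eqmudef0}. By \eqref{eqpviaZ}, $\mathbb{P}_{n,p}(\cT) = (1-p)^{\binom n2}Z(\lam) = (1+\lam)^{-\binom n2}Z(\lam)$ with $\lam = p/(1-p)$, and $\mathbb{P}_{n,p}(\,\cdot\mid\cT) = \mu_\lam$; so it suffices to determine $Z(\lam)$ up to a factor $1+o(1)$ and to show $\|\mu_\lam - \mu_{\lam,2}\|_{TV} = o(1)$. First I would carry out the \emph{coarse structural reduction}: by {\L}uczak's theorem~\cite{luczak2000triangle} and its refinement Theorem~\ref{thmGenDef}, with $\mu_\lam$-probability $1-o(1)$ a triangle-free graph has a unique, strongly balanced max cut $(A,B)$ inside whose parts every degree is at most $\alpha/\lam$ for a small absolute constant $\alpha$; moreover local-probability estimates allow one to further restrict, at multiplicative cost $1+o(1)$ in $Z(\lam)$, to defect graphs meeting the much stricter bound $\Delta\le 50\max\{q_2 n,\log n\}$ appearing in \eqref{eqGVqpsiDef}. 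Grouping triangle-free graphs by their max cut $(A,B)$ and defect-edge sets $S\subseteq\binom A2$, $T\subseteq\binom B2$, and using that the crossing edges are exactly an independent set of the Cartesian product $S\boxempty T$, we get $Z(\lam) = (1+o(1))\sum_{(A,B)}\sum_{S,T}\lam^{|S|+|T|}Z_{S\boxempty T}(\lam)$, the sums over admissible $(A,B)$ and admissible $(S,T)$, with $Z_{S\boxempty T}(\lam)$ the hard-core partition function of $S\boxempty T$ at activity $\lam$.

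Next I would expand $Z_{S\boxempty T}(\lam)$ via the cluster expansion. Bounded degree of $S,T$ makes $S\boxempty T$ bounded-degree, so the expansion converges and $\log Z_{S\boxempty T}(\lam) = |A||B|\log(1+\lam) + \Psi_{A,B}(S,T)$ with $\Psi_{A,B}$ a convergent sum over connected subgraphs of $S\boxempty T$ (which equals $(1+\lam)^{|A||B|}$ when $S=T=\emptyset$). Truncating after boundedly many terms — licensed by the strict degree bound — and separating clusters living in the $S$-directions, the $T$-directions, or mixing them, one reaches a near-factorization $\Psi_{A,B}(S,T) = \Phi_A(S) + \Phi_B(T) + 4\lam^3|S|\,|T| + c(A,B) + (\text{error})$, where $\Phi_A(S)$ depends to the needed precision only on $|S|$ and $P_2(S)$: its $|S|$-linear part combines with $\lam^{|S|}$ into the effective edge activity $\lam e^{-\lam^2 n/2 + \lam^3 n - 7\lam^4 n/4} = q_1/(1-q_1)$ of \eqref{eqq0q1Def}, and its $P_2(S)$-coefficient is the tilt $\psi = \lam^3 n/2$ of \eqref{eqPsiDef}. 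The only $S$--$T$ coupling is the bilinear term $4\lam^3|S|\,|T|$ (from the mixed $2$-paths of $S\boxempty T$); using concentration of $|T|$ (resp.\ $|S|$) about its mean $\mu$ of \eqref{eqMuRhoDef}, one linearizes it to $4\lam^3\mu|S|$, shifting the edge activity further to $q_2/(1-q_2) = (q_1/(1-q_1))e^{4\lam^3\mu}$ of \eqref{eqq2Def}; after this the weight factorizes over $A$ and $B$.

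The inner sum over $S$ is then, up to $1+o(1)$, the normalizing constant of the exponential random graph $\nu_{q_2,\psi}$ of \eqref{eqGVqpsiDef} on $\binom A2$ restricted to triangle-free, degree-bounded graphs, i.e.\ $(1-q_2)^{-\binom{|A|}2}\cdot\mathbb{E}[e^{\psi P_2(G(A,q_2))}]\cdot\mathbb{P}[\text{triangle-free}]\cdot\mathbb{P}[\text{degree bound}]$, and likewise for $T$. Here I would (i) expand $\log\mathbb{E}[e^{\psi P_2}]$ as a cumulant series in $\psi$ and bound the higher cumulants of $P_2$ under the degree-conditioned tilt; (ii) use the Janson-type inequality (Lemma~\ref{lemJansonERG}) to evaluate $\mathbb{P}[\text{triangle-free}]$ as $\exp(-\mathbb{E}[\text{number of triangles}] + \text{refinements})$; and (iii) check that the degree-bound event has probability $1-o(1)$. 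Collecting $(1-q_2)^{-\binom{|A|}2-\binom{|B|}2}$ with these contributions — the second-order remainder of the crossing-edge cluster expansion, the $\psi P_2$ cumulant series, and the Janson correction — yields $(1-q_2)^{-n^2/4 + n/2 + O(t^2)}$, writing $|A| = \lfloor n/2\rfloor + t$, times exactly the exponential bracket $\exp\{\tfrac1{64}\lam^6 n^5 q_0^2 - \cdots\}$ of the theorem. Finally I would sum over the cut: since $|A||B| = n^2/4 - t^2$ we have $(1+\lam)^{|A||B|} = (1+\lam)^{n^2/4}(1+\lam)^{-t^2}$, there are $\binom n{\lfloor n/2\rfloor + t}$ ordered cuts of imbalance $t$ (each graph counted once for its unique bipartition but twice over ordered pairs), and since $\lam\gg 1/n$ the factor $(1+\lam)^{-t^2}$ dominates, so $\sum_t \binom n{\lfloor n/2\rfloor+t}(1+\lam)^{-t^2}\sim\binom n{\lfloor n/2\rfloor}\sqrt{\pi/\lam}$; this gives the prefactor $\tfrac12\sqrt{\pi/\lam}\binom n{\lfloor n/2\rfloor}$, identifies the cut marginal of $\mu_\lam$ with $\theta_\lam$ (Definition~\ref{defPartitionDist}) up to $o(1)$ in TV, and, with $(1+\lam)^{-\binom n2}(1+\lam)^{n^2/4} = (1+\lam)^{-n^2/4 + n/2}$ and the $(1-q_2)^{-n^2/4+n/2}$ above, produces exactly the claimed formula for $\mathbb{P}_{n,p}(\cT)$.

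For the total variation bound, observe that conditioned on $(A,B,S,T)$ the law of the crossing edges under $\mu_\lam$ is \emph{exactly} the hard-core model on $S\boxempty T$, which is step (3) of Algorithm~\ref{algMulam2}; the cut marginal was matched above; and the conditional law of $(S,T)$ given $(A,B)$ is compared to independent copies of $G(A,q_2,\psi)$, $G(B,q_2,\psi)$ via the factorization $\Psi_{A,B}(S,T) = \Phi_A(S) + \Phi_B(T) + 4\lam^3\mu(|S|+|T|) + \mathrm{const} + (\text{error})$ — the pointwise $e^{o(1)}$ error in the densities together with the already-controlled ratio of normalizing constants bounds the relative entropy between the true conditional and the product exponential random graph by $o(1)$, and Pinsker's inequality converts this to $o(1)$ in total variation; summing the small errors over the $o(1)$-exceptional conditioning finishes the proof. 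The heart of the argument — and where the paper's new techniques are needed — is the conditioned exponential random graph analysis of steps (i)--(iii): the degree conditioning on $\nu_{q,\psi}$ is \emph{essential}, since without it the preference for $P_2$'s inflates the average degree by a polynomial factor and pushes the sample off the $(A,B)$ ground state, but it also destroys independence of the edge indicators, so bounding the higher cumulants of $P_2$ to the required precision and proving the Janson-type Lemma~\ref{lemJansonERG} inside this model is where essentially all the technical difficulty lies.
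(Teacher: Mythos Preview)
Your proposal is correct and follows essentially the same route as the paper: reduce via Corollary~\ref{corZmodsimsum} to strongly balanced cuts, cluster-expand $Z_{S\boxempty T}(\lam)$ (Corollary~\ref{corclustersimple2}), center the bilinear $4\lam^3|S||T|$ term about $\mu_A,\mu_B$ to factorize the defect weight (Lemma~\ref{lem:P2MomentSuper}), evaluate the resulting conditioned ERG normalizer via a second-order cumulant expansion of $P_2$ (Claim~\ref{claimERGApartfun}) together with the Janson-type Lemma~\ref{lemJansonERG}, sum over cuts as a discrete Gaussian, and use Pinsker for the total-variation bound. The only point you slightly understate is that the cluster expansion must be taken to order~$\lam^4$ (picking up the $P_3,S_3,C_4$ terms of Lemma~\ref{lemSubgCart} handled in Claim~\ref{claim:P3S3}), and that the cumulant/KL computations rest on the refined subgraph-probability estimates of Claims~\ref{claimprobrefine} and~\ref{claimprobrefine2}, but these are natural elaborations of your outline rather than missing ideas.
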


\begin{theorem}
\label{thm4colorGnp}
Fix $\eps \in (0,1/14]$ and let  
$$ p \sim (1-\eps)\sqrt{\frac{\log n}{n}}\, ,$$
then $\mathbb{P}_{n,p}[\alpha(G) = o(n)  | \cT] = 1-o(1)$ where $\alpha(G)$ denotes 
  the independence number of $G$.
\end{theorem}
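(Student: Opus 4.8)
\medskip
\noindent\textbf{Proof plan.}
(We read ``$\alpha(G)=o(n)$ whp'' as: for every fixed $\eps'>0$, $\P_{n,p}[\alpha(G)\ge\eps' n\mid\cT]\to0$; to avoid a clash of symbols we denote this threshold $\eta$ below.) The plan is to replace $G(n,p)\mid\cT$ by the explicit measure $\mu_{\lam,2}$, reduce to a bound on the independence number of a \emph{single} conditioned exponential random graph, and prove that bound by a first-moment computation.

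Set $\lam=p/(1-p)$. By Theorem~\ref{thmGnpStructureSuper}, $\|\mu_\lam-\mu_{\lam,2}\|_{TV}=o(1)$, so it suffices to show $\alpha(G)=o(n)$ whp for $G\sim\mu_{\lam,2}$. Sample $G$ via Algorithm~\ref{algMulam2} and let $(A,B)$ be the partition produced in Step~1, so that (independently) $G_A=(A,S)\sim G(A,q_2,\psi)$ and $G_B=(B,T)\sim G(B,q_2,\psi)$, and whp $|A|,|B|=(1/2+o(1))n$ since $(A,B)\sim\theta_\lam$. The crossing edges now play no role: for \emph{any} partition of $V(G)$ one has $\alpha(G)\le\alpha(G_A)+\alpha(G_B)$, because if $I$ is independent in $G$ then $I\cap A$ and $I\cap B$ are independent in $G_A$ and $G_B$ respectively. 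So it is enough to prove: if $V\subseteq[n]$, $|V|=\Theta(n)$, and $q=q_2$, $\psi$ are as in~\eqref{eqq2Def} and~\eqref{eqPsiDef}, then $\alpha\big(G(V,q,\psi)\big)=o(|V|)$ whp. The one parameter fact driving this is that, because $\eps>0$, we have $q_2=(1+o(1))q_0$ with $q_0 n\to\infty$ polynomially in $n$, so $q_2 n^2\gg n$.

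To bound $\alpha(G(V,q,\psi))$, fix $\eta>0$ and union-bound over all $W\subseteq V$ with $|W|=\lceil\eta|V|\rceil$ the quantity $\P_{G(V,q,\psi)}[W\text{ independent}]$. Write $\nu_{q,\psi}$ for the \emph{unconditioned} measure of~\eqref{eqGVqpsiDef} and $\cD$ for the conditioning event $\{\Delta(G)\le 50\max\{qn,\log n\}\}\cap\cT$. Then $\P_{G(V,q,\psi)}[W\text{ ind}]=\nu_{q,\psi}[W\text{ ind}\mid\cD]\le\nu_{q,\psi}[W\text{ ind}]/\nu_{q,\psi}[\cD]$. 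For the numerator, note $\nu_{q,\psi}(G)\propto\P_{G(V,q)}(G)\,e^{\psi P_2(G)}$; since $\psi\ge0$ and both the indicator of the event ``$W$ spans an edge'' and $e^{\psi P_2(\cdot)}$ are increasing, the Harris inequality on the product measure $G(V,q)$ gives $\nu_{q,\psi}[W\text{ spans an edge}]\ge\P_{G(V,q)}[W\text{ spans an edge}]$, i.e.\ $\nu_{q,\psi}[W\text{ ind}]\le\P_{G(V,q)}[W\text{ ind}]=(1-q)^{\binom{|W|}{2}}\le\exp\!\big(-\Omega(\eta^2 q n^2)\big)$. For the denominator, I would invoke the partition-function estimates behind Theorems~\ref{thmGnpStructureSuper} and~\ref{thmNumberMsuper}: the normalising constant of the conditioned model satisfies $\nu_{q,\psi}[\cD]\ge\exp\!\big(-o(q n^2)\big)$, as the $P_2$- and max-degree-induced corrections are of strictly smaller order than the dominant factor $(1-q)^{-\binom{|V|}{2}}=\exp(\Theta(q n^2))$. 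Hence $\P_{G(V,q,\psi)}[W\text{ ind}]\le\exp\!\big(-\Omega(\eta^2 q n^2)\big)$, and a union bound over the at most $e^{O(n)}$ choices of $W$ yields $\P\big[\alpha(G(V,q,\psi))\ge\eta|V|\big]\le e^{O(n)-\Omega(\eta^2 q n^2)}\to0$ using $q n^2\gg n$. Since $\eta>0$ was arbitrary, $\alpha(G(V,q,\psi))=o(|V|)$ whp; applying this to $G_A$ and $G_B$ and using $\alpha(G)\le\alpha(G_A)+\alpha(G_B)$ gives $\alpha(G)=o(n)$ whp under $\mu_{\lam,2}$, and Theorem~\ref{thmGnpStructureSuper} transfers the conclusion to $G(n,p)\mid\cT$.

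The only step that is not routine is the lower bound $\nu_{q,\psi}[\cD]\ge\exp(-o(q n^2))$ on the conditioned-ERG normalising constant, and this is where I expect the work to lie if it cannot be quoted directly: re-deriving it means combining the Janson-type inequality for the conditioned model (Lemma~\ref{lemJansonERG}) with the cumulant bounds for $P_2(G)$ developed for the supercritical regime. Reassuringly, only a crude $e^{-o(q n^2)}$ bound is needed, so a lossy version of those estimates suffices.
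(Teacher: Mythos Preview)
Your reduction via Theorem~\ref{thmGnpStructureSuper} and the inequality $\alpha(G)\le\alpha(G_A)+\alpha(G_B)$ match the paper's, and the Harris bound $\nu_{q,\psi}[W\text{ ind}]\le(1-q)^{\binom{|W|}{2}}$ is a nice observation. The gap is the lower bound $\nu_{q,\psi}[\cD]\ge\exp(-o(qn^2))$: it is false. The unconditioned exponential random graph with $\psi>0$ puts almost all of its mass on graphs of much higher degree than the cutoff $50\max\{qn,\log n\}$. Concretely, the single star $K_{1,n-1}$ already has unnormalised weight $r^{n-1}e^{\psi\binom{n-1}{2}}=\exp\bigl(\tilde\Theta(n^{3/2})-O(n\log n)\bigr)=\exp\bigl(\tilde\Theta(n^{3/2})\bigr)$, whereas the total unnormalised weight of $\cD$ is at most $e^{\psi\cdot O(q^2n^3)}(1+r)^{\binom{|V|}{2}}=\exp\bigl(\tilde O(n^{1+\Theta(\eps)})\bigr)$. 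Hence $\nu_{q,\psi}[\cD]\le\exp\bigl(-\tilde\Theta(n^{3/2})\bigr)$, which swamps the numerator $\exp\bigl(-\Omega(\eta^2 qn^2)\bigr)=\exp\bigl(-\tilde O(n^{1+\Theta(\eps)})\bigr)$ and renders the ratio bound useless. This is exactly the phenomenon the paper warns about after~\eqref{eqGVqpsiDef}: without the degree cap the average degree jumps by a polynomial factor, so the conditioning event is extremely rare under~$\nu_{q,\psi}$.

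The paper sidesteps this by never leaving the conditioned model. It reveals the potential edges inside $U$ sequentially and uses Lemma~\ref{lemWarmup} to show that, conditioned on the earlier edges being absent, each edge of $\binom{U}{2}$ is present with probability $(1+o(1))q_2\ge q_2/2$; the point is that Lemma~\ref{lemWarmup} computes edge marginals \emph{inside} $G(V,q_2,\psi)$ directly, so no comparison with the unconditioned normalising constant is needed. This gives $\P(U\text{ ind})\le(1-q_2/2)^{\binom{|U|}{2}}$, after which a union bound with $|U|=n^{1-\eps/4}$ and $q_2\ge n^{-1+\eps/2}$ finishes the argument. Your Harris step can be replaced by exactly this sequential-revealing argument (or by the sandwich of Proposition~\ref{propSandwhich}, which also works entirely within the conditioned model).
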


\subsection{Critical defect regime}
\label{secGnpCrit}

Theorem~\ref{thmGnpStructureSuper}  covers the entire regime $p \ge  (1-\eps) \sqrt{ \frac{\log n}{n}} $, reducing to the results of Theorem~\ref{thmGnpSubCritLDprob}  when $p \ge  (1+\eps) \sqrt{ \frac{\log n}{n}}$.  Here we examine the implications on the structure in the critical regime in which $(1-\eps) \sqrt{ \frac{\log n}{n}} \le p \le  (1+\eps) \sqrt{ \frac{\log n}{n}}$ with $\eps =o(1)$.

We have the following analogue of Theorem~\ref{thmGiantComponent}.  

\begin{theorem}
\label{thmGnpGiant} 
Let $G$ be drawn from $\mu_\lam$ conditioned on $G$ having a strongly balanced max cut $(A,B)$.  
\begin{enumerate}
\item If $\lam$ is such that $q_0 = \frac{2}{n} - \frac{\omega(n)}{n^{4/3}}$, with $1 \ll \omega(n) \ll n^{1/3}$,  then whp the largest connected component  of $G_A$ is of size $\Theta(n^{2/3} \omega^{-2} \log (\omega))$.  
\item If $\lam$ is such that $q_0 = \frac{2}{n} + \frac{\omega}{n^{4/3}}$, with $ \omega \in \mathbb R$  constant,  then whp the  largest connected component  of $G_A$ is of size  $\Theta(n^{2/3})$.
\item If $\lam$ is such that $q_0 = \frac{2}{n} + \frac{\omega(n)}{n^{4/3}}$, with $1 \ll \omega(n) \ll n^{1/3}$,  then whp the largest connected component of $G_A$ is of size $(2+o(1))  \cdot \omega \cdot (n/2)^{2/3}$. 
\item If $\lam$ is such that $q_0 = \frac{c}{n}$ with $c>2$ fixed, then whp the largest connected component of $G_A$ is of size $\Theta(n)$.
\item  If $\lam$ is such that $q_0= (1+\eps)\frac{2 \log n}{n} $ for $\eps>0$ constant, then whp $G_A$ is connected, while if $\lam$ is such that $q_0 \le (1-\eps)\frac{2 \log n}{n} $ for $\eps>0$ constant, then whp $G_A$ is not connected.
\end{enumerate}
Moreover, these results also hold for the graph $G_B$.  
\end{theorem}

\subsection{General defect distribution}
\label{secGnpDefect}

Here we give a rough structural description of $\mu_{\lam}$ for a wider range of parameters: when $\lam \ge \omega/\sqrt{n}$ for some large constant $\omega$.  This rough description will be the starting point for proving the much more detailed results above, but gives some interesting information   on its own. 

The rough description requires a few definitions which will be useful later as well.
\begin{defn}
Given a graph $G$ and a partition $(A,B)$ of $V(G)$, we call the graph $G_A\cup G_B$ the \emph{defect graph} of $G$ (wrt $(A,B)$). In an abuse of terminology we will sometimes refer to the pair $(G_A, G_B)$ as the defect graph. 
\end{defn} 

Let $\alpha=1/(96e^3)$. For a partition $(A,B)$ of $[n]$ let
\begin{align}\label{eqTABwDef}
\cT_{A,B,\lam}^{\textup{w}}=\{G\in \cT(n) : \Delta(G_A\cup G_B)\leq \alpha/ \lam\}\, .
\end{align}
That is, $\cT_{A,B,\lam}^{\textup{w}}$ is the set of graphs $G$ whose defect graph wrt $(A,B)$ has maximum degree at most $\alpha/\lam$. We will eventually restrict our attention to defect graphs with a much stronger degree bound and so the superscript `w' in the notation refers to the fact that the defect graph is `weakly sparse'.  We also introduce notation for the set of weakly sparse defect graphs and the weakly sparse restricted partition function.
\begin{align}\label{eqDABwDef}
 \cD_{A,B,\lam}^\textup{w}&:= \left\{(S,T): S\subseteq\binom{A}{2}, T\subseteq\binom{B}{2}, \Delta(S\cup T)\leq \alpha/\lam, S\cup T \text{ triangle-free} \right\} \\
 &= 
 \{(G_A, G_B): G\in  \cT_{A,B,\lam}^{\textup{w}}\}\, .
\end{align}
\begin{align}\label{ZABwDef}
Z_{A,B}^{\textup{w}}(\lam) = \sum_{ G\in \cT_{A,B,\lam}^{\textup{w}}} \lam^{|G|}\, .
\end{align}

 We also define a set of `weakly balanced' partitions of $[n]$.
 \begin{defn}\label{defWeakBalance}
Call a partition $(A,B)$ of $[n]$ \emph{weakly balanced} if $\big ||A|-|B| \big |\leq n/10$. Let $\Pi$ denote the set of all partitions of $[n]$ and let $\Pi_{\textup{weak}}\subseteq \Pi$ denote the set of all weakly balanced partitions.
\end{defn}

Given these definitions, Algorithm~\ref{algGendefect} defines a distribution $\mu_{\textup{weak},\lam}$ on $\mathcal T(n)$.

\begin{algorithm}[ht]
  \caption{The distribution $\mu_{\textup{weak},\lam}$\label{algGendefect}}
\begin{enumerate}[leftmargin=*]
\item Pick $(A,B)\in \Pi_{\textup{weak}}$ with probability proportional to $Z_{A,B}^{\textup{w}}(\lam)$.
\item Sample $(S,T)\in  \cD_{A,B,\lam}^\textup{w}$ with probability proportional to $\lam^{|S|+|T|}Z_{S\boxempty T}(\lam)$.
\item Select $\Ec\subseteq A\times B$ according to the hard-core model on $S\boxempty T$ at activity $\lam$. 
\item Output $S\cup T\cup \Ec$.
\end{enumerate}
\end{algorithm}

\begin{theorem}\label{thmGenDef}
There exists $\omega>0$ such that if $\lam\geq \omega/\sqrt{n}$ then
\[
\|\mu_\lam - \mu_{\textup{weak},\lam}\|_{TV}=o(1)\, .
\]
Moreover, a graph $G$ drawn according to $\mu_{\textup{weak},\lam}$ has a unique weakly balanced max cut whose defect graph has maximum degree at most $\alpha/\lam$.
\end{theorem}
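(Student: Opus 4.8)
The plan is to split the claim into a \emph{coarse structural} statement and an \emph{exact factorization}, and to locate essentially all of the difficulty in the first. The coarse statement is that, with $\mu_\lam$-probability $1-o(1)$, a graph $G$ has a unique max cut $(A,B)$, that $(A,B)$ is weakly balanced, and that the defect graph $G_A\cup G_B$ has maximum degree at most $\alpha/\lam$ --- and moreover that the $\mu_\lam$-mass of the complement of this good event $\mathcal G$ is exponentially small. The factorization is the elementary observation that, once $(A,B)$ is fixed, the restriction of $\mu_\lam$ to graphs that are weakly sparse with respect to $(A,B)$ decomposes \emph{exactly} into steps (2)--(4) of Algorithm~\ref{algGendefect}.

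\emph{Coarse structure.} Since $\lam\ge\omega/\sqrt n$ forces a $\mu_\lam$-typical graph to have $m=\Theta(\lam n^2)=\Theta(\omega n^{3/2})$ edges, {\L}uczak's theorem~\cite{luczak2000triangle} gives, for $\omega$ large enough (and transferring from fixed $m$ to $\mu_\lam$ in the routine way), that whp $G$ has \emph{some} cut containing a $(1-o(1))$-fraction of its edges. This controls neither the balance nor the defect max degree, so I would upgrade it by running the container/hypergraph-removal method of Balogh--Morris--Samotij--Warnke~\cite{balogh2016typical} (refining Osthus--Pr{\"o}mel--Taraz~\cite{osthus2003densities}), using {\L}uczak's big-cut conclusion as the a priori input the method needs to reach this density. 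The output is that the triangle-free graphs with $m$ edges failing to admit a weakly balanced cut of defect max degree at most $\alpha/\lam$ contribute only an $e^{-\Omega(m)}$ fraction of $Z(\lam)$; the explicit $\alpha=1/(96e^3)$ emerges from optimizing the supersaturation estimate. Finally, uniqueness of the max cut and the fact that no \emph{other} weakly balanced partition makes the defect graph weakly sparse --- so that $N(G):=\#\{(A,B)\in\Pi_{\textup{weak}}:G\in\cT^{\textup{w}}_{A,B,\lam}\}=1$ for $G\in\mathcal G$ --- follow from an edge-expansion argument: for typical $G$ the crossing-edge graph is dense and pseudorandom, so moving any nonempty, not-too-large set of vertices across the cut produces $\gg\alpha/\lam$ new defect edges at some vertex, and with weak balance this forces any competitor partition to coincide with the max cut.

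\emph{Factorization.} Fix a partition $(A,B)$ of $[n]$ and a pair $(S,T)$ with $S\cup T$ triangle-free. Because $A\cup B$ is a vertex bipartition, every triangle of a graph $G$ with $G_A=(A,S)$, $G_B=(B,T)$ uses at least one non-crossing edge; a short case analysis on how many triangle-vertices lie in $A$ shows that $G=S\cup T\cup\Ec$ is triangle-free if and only if $\Ec$, as a subset of $A\times B$, is an independent set of the Cartesian product $S\boxempty T$. Hence
\[
\sum_{G_A=(A,S),\,G_B=(B,T)}\lam^{|G|}=\lam^{|S|+|T|}Z_{S\boxempty T}(\lam),
\]
and summing over $(S,T)\in\cD^{\textup{w}}_{A,B,\lam}$ recovers $Z^{\textup{w}}_{A,B}(\lam)$. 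Therefore the law of $\mu_\lam$ conditioned on $G\in\cT^{\textup{w}}_{A,B,\lam}$ samples the defect pair $(S,T)$ with probability proportional to $\lam^{|S|+|T|}Z_{S\boxempty T}(\lam)$ and then the crossing edges $\Ec$ from the hard-core model on $S\boxempty T$ at activity $\lam$ --- exactly steps (2)--(4) of Algorithm~\ref{algGendefect}.

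\emph{Assembly, and the main obstacle.} On $\mathcal G$ the map $G\mapsto((A,B),S,T,\Ec)$, with $(A,B)$ the unique max cut, is well defined, and by the factorization the push-forward of $\mu_\lam(\,\cdot\mid\mathcal G)$ agrees with the law of Algorithm~\ref{algGendefect} conditioned on landing in $\mathcal G$: the algorithm outputs a given $G$ with probability proportional to $N(G)\lam^{|G|}$, which is proportional to $\lam^{|G|}$ on $\mathcal G$ since $N\equiv1$ there. The two normalizations differ by a factor $1+o(1)$, since $\sum_{(A,B)\in\Pi_{\textup{weak}}}Z^{\textup{w}}_{A,B}(\lam)=\sum_G N(G)\lam^{|G|}$ has lower bound $(1-o(1))Z(\lam)$ (immediate from $N\ge1$ on $\mathcal G$) while its excess over $Z(\lam)$ is at most $2^n$ times the exponentially small mass off $\mathcal G$, hence $o(Z(\lam))$. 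Combining, $\|\mu_\lam-\mu_{\textup{weak},\lam}\|_{TV}=o(1)$, and the ``moreover'' clause follows because a measure $o(1)$-close to $\mu_\lam$ also places $1-o(1)$ of its mass on $\mathcal G$. The main obstacle is the coarse-structure step: pushing the defect max-degree bound down to $\lam\gtrsim n^{-1/2}$ requires a container/removal argument tuned finely enough to extract the constant $\alpha=1/(96e^3)$, and it crucially relies on feeding in {\L}uczak's coarse theorem, as the container machinery does not reach this density unaided.
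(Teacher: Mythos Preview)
Your high-level plan matches the paper's: reduce to near-ground-state graphs via {\L}uczak plus a container argument adapted from~\cite{balogh2016typical}, prove uniqueness of the good partition via an expansion argument for the crossing edges, and use the factorization $\sum_{G_A=S,\,G_B=T}\lam^{|G|}=\lam^{|S|+|T|}Z_{S\boxempty T}(\lam)$ (which is exactly Lemma~\ref{lemSTgraphLemma}) to identify steps (2)--(4) of Algorithm~\ref{algGendefect}. The uniqueness-via-expansion idea is also essentially what the paper does (Lemmas~\ref{lemExpanderwhp} and~\ref{lemExpansionCor}, with the quasirandomness of the hard-core crossing edges supplying the expansion).

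There is, however, a genuine gap in your \emph{assembly} step. You assert that the $\mu_\lam$-mass of $\mathcal G^c$ is $e^{-\Omega(m)}$, and then absorb the overcounting by the crude bound $N(G)\le 2^n$. Neither input is available at this strength. {\L}uczak's theorem gives only an $o(1)$ bound (with no useful rate) for the fraction of $G\in\cT(n,m)$ lacking a large cut, and the container argument---even in the paper's sharpened form (Lemma~\ref{lemOPT})---yields $e^{-\Theta(\sqrt n)}$, not $e^{-\Omega(m)}$, for the contribution of graphs with a large cut but large defect degree. Since $2^n\cdot e^{-\Theta(\sqrt n)}\to\infty$ and $2^n\cdot o(1)$ is uncontrolled, your bound on $\sum_{G\notin\mathcal G}N(G)\lam^{|G|}$ does not close. (Note that $N(G)$ really can be of order $2^n$ for very sparse $G$, so the crude bound cannot simply be improved pointwise.)

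The paper avoids the $2^n$ factor entirely by inserting an intermediate step. First, Proposition~\ref{lemZerothApprox} uses {\L}uczak to pass from $\mu_\lam$ to $\mu_{\mathcal L,\lam}$ with $o(1)$ TV error; here there is \emph{no} overcounting, since $\mathcal L$ is just a subset of $\cT(n)$. Then Proposition~\ref{propGroundStateRefinedAlt} compares $Z(\mathcal L,\lam)$ and $Z_{\textup{weak}}(\lam)$ \emph{partition by partition}: every $G\in\mathcal L$ lies in some $\mathcal L_{A,B}$ by definition, so $Z(\mathcal L,\lam)\le\sum_{(A,B)\in\Pi_{\textup{weak}}}Z(\mathcal L_{A,B},\lam)$, and Lemma~\ref{lemOPT} gives $Z(\mathcal L_{A,B},\lam)=(1+O(e^{-\sqrt n}))Z^{\textup{w}}_{A,B}(\lam)$ for each fixed $(A,B)$. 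The matching lower bound uses the uniqueness lemma to make the sets $\cT^{\textup{w}}_{A,B,\lam}\cap U_{A,B,\lam}$ disjoint across $(A,B)$. Thus the only place a sum over partitions appears, the per-partition error is already $O(e^{-\sqrt n})$, and no global $2^n$ factor enters. If you route your assembly through $\mathcal L$ in this way, your argument goes through.
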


The main content of Theorem~\ref{thmGenDef} is that the defect graph selected at Step 2 of Algorithm~\ref{algGendefect} has small maximum degree which allows us to understand the partition function $Z_{S\boxempty T}(\lam)$  (and hence also the measure at Step 2) in detail via  cluster expansion (see Section~\ref{secTools} for details on cluster expansion).

\section{Proof roadmap}
\label{secStatPhysFormulation}
In this section we provide a roadmap for the proofs to come and for the remainder of the paper.  In Section~\ref{secNotationReference} we collect definitions and notation in one place to serve as a reference sheet for the reader.

Recall that in Section~\ref{secERresults} we reformulated the main problems for $G(n,p)$ in terms of   a statistical physics partition function $Z(\lam)$ and  its associated Gibbs measure $\mu_\lam$.  Similarly, the main problems for $\cT(n,m)$ are about  the coefficient of $\lam^m$ in the polynomial $Z(\lam)$ and the distribution of $\mu_\lam$ conditioned on the event $\{|G| =m \}$.

To compute the asymptotics of $Z(\lam)$  we will make a number of successive  approximations, culminating in an approximation by a sum of partition functions of models that we can analyze.  We now describe this sequence of approximations, giving a roadmap for the rest of the paper.   We will always assume that
\begin{equation}
    \label{lamAssumption}
    \lam \le 2 \sqrt{\frac{\log n}{n}} \quad  \text{and} \quad  m \le \frac{1}{2} n^{3/2} \sqrt{\log n} \, ,
\end{equation} 
since all results for  larger values are covered by~\cite{osthus2001almost}.

\subsection{Reduction to graphs with a dense cut}
We begin with a classical theorem of \L uczak~\cite{luczak2000triangle} which states that almost all triangle-free graphs on $n$ vertices and $m\geq Cn^{3/2}$ edges admit a dense cut. 

\begin{theorem}[\cite{luczak2000triangle}]\label{thm:luczak}
For all $\delta>0$ there exists $C = C(\delta)>0$ so that if $m \geq Cn^{3/2}$, then almost all $G\in\cT(n,m)$ admit a cut of size at least $(1-\delta) m$.
\end{theorem}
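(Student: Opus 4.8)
The plan is to bound the number of ``bad'' graphs in $\cT(n,m)$ — those admitting no cut of size at least $(1-\delta)m$ — and to show this count is $o(|\cT(n,m)|)$. For the denominator I would use only the crude bound $|\cT(n,m)| \ge \binom{\lfloor n^2/4\rfloor}{m}$, coming from $m$-edge subgraphs of a fixed balanced complete bipartite graph, and I would first discard the (negligible) triangle-free graphs with maximum degree $\omega(m/n)$, so that henceforth all graphs have maximum degree $O(m/n)$. One may also assume $m \le n^{3/2}\log n$, since for larger $m$ a plain first-moment bound already suffices; thus the delicate window is $Cn^{3/2} \le m \le n^{3/2}\log n$.

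The main tool is a container-type theorem for triangle-free graphs: a family $\mathcal{C}$ of graphs on $[n]$ such that every $G \in \cT(n,m)$ is a subgraph of some $C \in \mathcal{C}$, and each container $C$ is ``almost extremal'', meaning that either $e(C) \le (\tfrac{1}{4}-\beta)n^2$, or $C$ has a cut $(A_C,B_C)$ with at most $fn^2$ non-crossing edges, where $\beta>0$ is a fixed constant and $f=f(\beta)$ may be taken as small as we like. The second alternative is a stability statement: the container theorem (through the Moon--Moser supersaturation bound, which gives any graph $H$ at least $\tfrac{e(H)(4e(H)-n^2)}{3n}$ triangles) ensures each container spans $o(n^3)$ triangles, and a graph with $\ge(\tfrac{1}{4}-\beta)n^2$ edges and $o(n^3)$ triangles is $f$-close to bipartite by the triangle removal lemma together with the stability form of Mantel's theorem. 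Containers of the first type are harmless: together they contain at most $|\mathcal{C}|\binom{(1/4-\beta)n^2}{m} = |\mathcal{C}|\,e^{-\Omega(m)}\binom{\lfloor n^2/4\rfloor}{m}$ graphs with $m$ edges.

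For a container $C$ of the second type, every bad $G \subseteq C$ must place at least $\delta m$ of its edges inside $A_C$ or inside $B_C$, as otherwise the cut $(A_C,B_C)$ itself already has more than $(1-\delta)m$ crossing edges. Hence the number of bad $m$-edge subgraphs of $C$ is at most $\sum_{j\ge\delta m}\binom{fn^2}{j}\binom{\lfloor n^2/4\rfloor}{m-j}$; comparing the $j$-th summand with $\binom{\lfloor n^2/4\rfloor}{m}$ and using $m=\Theta(n^{3/2})$ bounds the ratio by $(8ef/\delta)^{j}$, so choosing $f < \delta/(16e)$ makes the whole sum at most $2^{1-\delta m}\binom{\lfloor n^2/4\rfloor}{m}$. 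Summing over all of $\mathcal{C}$, the number of bad graphs is at most $|\mathcal{C}|\,e^{-\Omega_\delta(m)}\binom{\lfloor n^2/4\rfloor}{m}$, which is $o(|\cT(n,m)|)$ as soon as $\log|\mathcal{C}| \le c_0 n^{3/2}$ for a constant $c_0$ that is small compared with $\delta C$; at that point taking $C=C(\delta)$ large enough finishes the proof.

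This last requirement is the main obstacle. The off-the-shelf hypergraph container theorem, applied to the triangle hypergraph on $\binom{[n]}{2}$, produces a family of size $\exp(O(n^{3/2}\log n))$, the extra $\log n$ coming from the cost of naming each fingerprint edge — and this is fatal, since the counting gain $2^{-\delta m} = 2^{-\delta C n^{3/2}}$ has no compensating $\log n$. Closing the argument therefore requires a more careful container / Kleitman--Winston online-algorithm argument, tuned to $m$-edge triangle-free graphs with $m=\Theta(n^{3/2})$, that exploits the fact that triangles are \emph{linearly} supersaturated and specifies fingerprints within already-revealed structure, so as to bring $\log|\mathcal{C}|$ down to $O(n^{3/2})$ with an absolute constant. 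This quantitative refinement, in one form or another, is the technical heart of {\L}uczak's original argument~\cite{luczak2000triangle} and of the later sharp enumeration results; it is also the reason the statement is made only for $m \ge C(\delta)n^{3/2}$ rather than for all $m$ well above $n^{3/2}$ uniformly in $\delta$.
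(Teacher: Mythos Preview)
The paper does not prove this theorem: it is quoted from \cite{luczak2000triangle} as a classical result and used as a black box (the paper only needs it, together with the refinement in Theorem~\ref{thm:luczakrefined}, as input to Proposition~\ref{lemZerothApprox}). There is therefore no paper proof to compare your proposal against.

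Assessing your proposal on its own terms: the container outline is a sensible modern strategy, but it is incomplete, and you say so yourself. You correctly observe that the off-the-shelf container theorem yields $\log|\mathcal{C}| = O(n^{3/2}\log n)$, which loses a $\log n$ against the gain $2^{-\Theta(\delta m)}$ when $m = \Theta(n^{3/2})$, and you then defer the removal of this logarithm to an unspecified ``more careful'' Kleitman--Winston-type argument. That missing step is the entire content of the theorem at this density; without it you have reduced the problem to itself. Refined container statements that shave the $\log n$ do exist, but you have not named one or sketched how it goes, so as written the proposal is an outline with its main step missing.

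One correction: you write that this refinement ``is the technical heart of {\L}uczak's original argument''. It is not. {\L}uczak's 2000 proof predates the container method; it proceeds via the Szemer\'edi regularity lemma and stability for Mantel's theorem, building the large cut directly from the reduced graph. The container route you sketch is a genuinely different, later approach.
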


We will  need a slight refinement of  Theorem~\ref{thm:luczak} (Theorem~\ref{thm:luczakrefined} below) which follows from combining~\cite[Proposition 6.1]{balogh2016typical} and~\cite[Claim 6.2]{balogh2016typical}. To state the result we need a  definition.

\begin{defn}\label{def:dominating}
Let $G$ be a graph and $(A,B)$ a partition of its vertex set. We say $(A,B)$ is a \emph{dominating} cut of $G$ if
\[
d_G(v,B)\geq d_G(v,A) \text{ for all } v\in A
\]
and similarly with $A,B$ swapped.
\end{defn}
Recall also the definition of a weakly balanced partition from Definition~\ref{defWeakBalance}.

\begin{theorem}\label{thm:luczakrefined}
For all $\delta>0$ there exists $C = C(\delta)>0$ so that if $m \geq Cn^{3/2}$, then almost all $G\in\cT(n,m)$ admit a dominating, weakly balanced cut of size at least $(1-\delta) m$.
\end{theorem}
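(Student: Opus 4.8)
The statement to prove is Theorem~\ref{thm:luczakrefined}: the upgrade of {\L}uczak's theorem to produce a cut that is simultaneously dense, \emph{dominating}, and \emph{weakly balanced}. The plan is to start from Theorem~\ref{thm:luczak}, which already gives us a cut $(A_0, B_0)$ of size at least $(1-\delta')m$ for a triangle-free $G \in \cT(n,m)$ with $m \ge C'n^{3/2}$, where $\delta'$ is a small parameter to be chosen in terms of $\delta$, and then massage $(A_0,B_0)$ into a cut with the two extra properties while only losing a controlled number of crossing edges. Since the theorem is quoted as following from combining \cite[Proposition 6.1]{balogh2016typical} and \cite[Claim 6.2]{balogh2016typical}, the proof should essentially be an unpacking of those two ingredients, but I would organize it as follows.

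\emph{Step 1: from dense to dominating.} First I would run a local-switching / greedy argument: starting from $(A_0,B_0)$, repeatedly move any vertex $v \in A_0$ with $d_G(v,A_0) > d_G(v,B_0)$ to the other side (and symmetrically). Each such move strictly increases the number of crossing edges, so the process terminates, and the terminal partition $(A_1,B_1)$ is by construction dominating. Since the cut size only went up, $(A_1,B_1)$ still has at least $(1-\delta')m$ crossing edges; in fact a dominating cut is automatically a \emph{max}imal cut under single-vertex moves, which is the key structural feature we want downstream. This step is clean and essentially free.

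\emph{Step 2: from dominating-and-dense to weakly balanced.} The remaining task is to argue $\big||A_1| - |B_1|\big| \le n/10$. Here is where triangle-freeness and the density $m \ge Cn^{3/2}$ enter, and this is the main obstacle. The idea: if the cut were very unbalanced, say $|A_1| \le n/2 - n/20$, then the dominating condition forces every vertex of $B_1$ to have at least as many neighbours in $A_1$ as in $B_1$; combined with triangle-freeness (the neighbourhood of any vertex is an independent set, hence lies across any dominating cut in a constrained way) and a counting of the $\sim m \ge Cn^{3/2}$ edges, one derives that $B_1$ must contain many vertices of degree $\gg \sqrt{n\log n}$ or that $G$ contains a dense bipartite-like piece forcing a triangle — contradicting either the edge count or triangle-freeness for $C$ large. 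Concretely I would bound the number of triangle-free graphs on $n$ vertices with $m$ edges admitting \emph{only} unbalanced dominating dense cuts and show it is $o(|\cT(n,m)|)$, using the container/counting estimates already available from \cite{balogh2016typical} (this is exactly the content of their Claim 6.2). The cleanest route is the union bound over the choice of the unbalanced partition, estimating for each the number of triangle-free graphs with $\ge (1-\delta')m$ edges inside it, and comparing against the known lower bound $|\cT(n,m)| \ge |\cB(n,m)|$.

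\emph{Step 3: bookkeeping.} Finally, I would track the cut size through Steps~1--2: Step~1 does not decrease it, and Step~2 is a \emph{selection} (we restrict to $G$ whose dominating dense cut happens to be weakly balanced, discarding an $o(1)$ fraction), not a modification, so no further edges are lost. Choosing $\delta'$ slightly smaller than $\delta$ and $C = C(\delta)$ large enough to absorb the constants from the counting bound in Step~2 gives the statement. The technically delicate part is purely Step~2 — quantifying ``unbalanced $+$ dominating $+$ triangle-free $+$ dense'' is rare — and for that I would lean directly on the hypergraph-container machinery of \cite{balogh2016typical} rather than reproving it, citing Proposition~6.1 and Claim~6.2 for the precise constants.
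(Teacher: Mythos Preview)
Your proposal is correct and matches the paper, which gives no proof beyond citing \cite[Proposition~6.1 and Claim~6.2]{balogh2016typical}; your Step~1 (iterate to a locally maximal, hence dominating, cut of at least the same size) is the standard extraction, and your decision to defer the balance step to the cited results is exactly what the paper does. One small note: the first heuristic you offer in Step~2 --- that an unbalanced dominating cut forces high-degree vertices or a triangle --- is not the actual mechanism (nothing here deterministically produces a triangle); the counting/union-bound comparison against $|\cB(n,m)|$ that you land on at the end of Step~2 is the right argument, and that is precisely the content of Claim~6.2.
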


We fix a sufficiently small  constant $\delta>0$ (to be specified later) and consider the set 
\begin{align}\label{eqcLdef}\\
{\small
\mathcal{L}(n,\lam):= \left\{G\in \cT(n): G \text{ admits a weakly balanced, dominating cut of size $\geq |G| - 2\delta \lambda n^2$}\right\}\, .}
\end{align}

For a subset $\mathcal{R}\subseteq \cT(n)$ and $\lam>0$ we let
\[
Z(\mathcal{R}, \lam):=  \sum_{G\in \mathcal{R}} \lam^{|G|}\, 
\]
be the restriction of the partition function $Z(\lam)$ to $\mathcal R$. 
We also define  the restricted Gibbs  measure $\mu_{\mathcal{R},\lam}$ on $\mathcal{R}$ by 
\begin{align}\label{eqmuRdef}
\mu_{\mathcal{R},\lam}(G)=\frac{\lam^{|G|}}{Z(\mathcal{R}, \lam)}\, .
\end{align}

Our first step is to approximate $Z(\lam)$ by $ Z(\cL, \lam)$ where $\cL=\cL(n,\lam)$.
The following result is a simple consequence of Theorem~\ref{thm:luczakrefined} which we prove in Section~\ref{secMorrisOPT}.
\begin{prop}\label{lemZerothApprox}
There exists a constant $\omega>0$ such that if $\lam\geq \omega/\sqrt{n}$ then, letting $\cL=\cL(n,\lam)$,
\[
Z(\lam)\sim Z(\cL, \lam)\, ,
\]
and 
\[
\|\mu_{\lam} - \mu_{\cL,\lam}\|_{TV}=o(1)\, .
\]
\end{prop}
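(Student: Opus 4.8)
\textbf{Proof proposal for Proposition~\ref{lemZerothApprox}.}

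The plan is to deduce the two claimed approximations directly from the refined \L uczak bound (Theorem~\ref{thm:luczakrefined}), converting a statement about the uniform measure on $\cT(n,m)$ for each fixed $m$ into a statement about the $\lam$-weighted measure $\mu_\lam$ via a summation over $m$. First I would observe that $\cL = \cL(n,\lam)$ is defined so that a graph outside $\cL$ with $|G|=m$ fails to admit a weakly balanced, dominating cut of size at least $m - 2\delta\lam n^2$; since $\lam \ge \omega/\sqrt{n}$ we have $\lam n^2 \ge \omega n^{3/2}$, so the ``defect allowance'' $2\delta\lam n^2$ is at least a constant multiple of $n^{3/2}$, and in the relevant range $m = \Theta(n^{3/2}\sqrt{\log n})$ (recall the standing assumption~\eqref{lamAssumption}) this allowance is a positive fraction of $m$. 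Hence, for each such $m$, Theorem~\ref{thm:luczakrefined} applies with an appropriate $\delta' = \delta'(\delta)$ and gives $|\cT(n,m)\setminus\cL| = o(|\cT(n,m)|)$; in fact, tracing through the proof in~\cite{balogh2016typical} one gets a quantitative bound $|\cT(n,m)\setminus\cL| \le e^{-cn^{3/2}}|\cT(n,m)|$ or at least $|\cT(n,m)\setminus\cL| \le n^{-\omega(1)}|\cT(n,m)|$, uniformly over $m$ in the range of interest, which is what makes the summation below go through.

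The second step is the summation. Write
\begin{align*}
Z(\lam) - Z(\cL,\lam) = \sum_m \lam^m \big| \cT(n,m)\setminus\cL \big|\, ,
\end{align*}
and similarly $Z(\cL,\lam) = \sum_m \lam^m |\cT(n,m)\cap\cL|$. For values of $m$ outside the window $[\tfrac{\omega}{2}\sqrt{n}\cdot n, \tfrac12 n^{3/2}\sqrt{\log n}]$, either $\lam^m$ times the (crude) total count $|\cT(n,m)| \le \binom{\binom n2}{m}$ is already negligible compared to the dominant term of $Z(\lam)$ (for $m$ too large this follows from the Osthus--Pr\"omel--Taraz analysis / Mantel-type bounds, for $m$ too small from the trivial bound), or else we are in a range covered by~\cite{osthus2001almost} and can discard it. Inside the window, the uniform per-$m$ bound from Step 1 gives $\lam^m|\cT(n,m)\setminus\cL| \le n^{-\omega(1)}\lam^m|\cT(n,m)|$, and summing over the polynomially many relevant values of $m$ yields $Z(\lam) - Z(\cL,\lam) \le n^{-\omega(1)} Z(\lam)$, i.e.\ $Z(\lam)\sim Z(\cL,\lam)$.

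The total variation bound then follows formally: for any event $\cA\subseteq\cT(n)$,
\begin{align*}
\big|\mu_\lam(\cA) - \mu_{\cL,\lam}(\cA)\big| \le \mu_\lam(\cT(n)\setminus\cL) + \Big|\tfrac{1}{Z(\lam)} - \tfrac{1}{Z(\cL,\lam)}\Big| Z(\cL\cap\cA,\lam)\, ,
\end{align*}
and both terms on the right are $o(1)$: the first equals $\big(Z(\lam)-Z(\cL,\lam)\big)/Z(\lam) = o(1)$ by the partition-function estimate, and the second is at most $|Z(\lam)/Z(\cL,\lam) - 1| = o(1)$. Taking the supremum over $\cA$ gives $\|\mu_\lam - \mu_{\cL,\lam}\| = o(1)$.

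The main obstacle is Step 1: Theorem~\ref{thm:luczakrefined} is stated as an $o(1)$-fraction statement for fixed $m$, but to sum over the $\mathrm{poly}(n)$ many values of $m$ in the window we need the error fraction to be $o(1/\mathrm{poly}(n))$ (ideally exponentially small) \emph{uniformly} in $m$. I would handle this by going into the container/counting argument behind~\cite[Proposition 6.1]{balogh2016typical} and~\cite[Claim 6.2]{balogh2016typical}, where the relevant bounds are genuinely of the form $\exp(-\Omega(n^{3/2}))$ once the cut-defect parameter is a constant fraction of $m$; the looseness in the constant $\delta$ versus the Osthus--Pr\"omel--Taraz constant $\delta'$ is exactly the slack we exploit to absorb the polynomial loss from the sum over $m$. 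A secondary (routine) point is checking that contributions from $m$ outside the working window are genuinely negligible against the dominant term of $Z(\lam)$, which we know a posteriori from the main enumeration theorems but can also bound crudely here.
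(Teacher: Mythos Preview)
Your approach is correct and matches the paper's: restrict to a window of edge counts $t$, apply Theorem~\ref{thm:luczakrefined} inside the window, bound the tails crudely, and deduce the total variation statement formally from $Z(\lam)\sim Z(\cL,\lam)$.

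One correction: your ``main obstacle'' is not an obstacle. You do \emph{not} need the per-$m$ error fraction to be $o(1/\mathrm{poly}(n))$; uniform $o(1)$ suffices. If $|\cT(n,t)\setminus\cL|\le\varepsilon(n)\,|\cT(n,t)|$ for every $t$ in the window with a single $\varepsilon(n)=o(1)$, then
\[
\sum_{t\text{ in window}}\lam^{t}\,|\cT(n,t)\setminus\cL|\;\le\;\varepsilon(n)\sum_{t}\lam^{t}\,|\cT(n,t)|\;=\;\varepsilon(n)\,Z(\lam)\,,
\]
and since $\cL(t)\subseteq\cL$ for $t\le 3\lam\binom{n}{2}$ one also has $\sum_{t}\lam^{t}\,|\cL(t)|\le Z(\cL,\lam)$ termwise. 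No polynomial factor is incurred. The ``almost all'' in Theorem~\ref{thm:luczakrefined} should be read as uniform over $m\ge Cn^{3/2}$ (and the underlying bounds from~\cite{balogh2016typical} are in any case exponentially small), so there is no need to dig back into the container argument.

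On the tails, the paper is more explicit than your sketch and avoids any circularity. It first records the crude lower bound $Z(\cL,\lam)\ge(1+\lam)^{\lfloor n^2/4\rfloor}\ge e^{\lam n^2/8}$, and then shows directly via $\binom{\binom{n}{2}}{j}\lam^{j}\le\bigl(e\binom{n}{2}\lam/j\bigr)^{j}$ that both $Z\bigl(\{|G|\le\lam\binom{n}{2}/15\},\lam\bigr)$ and $Z\bigl(\{|G|\ge 3\lam\binom{n}{2}\},\lam\bigr)$ are exponentially small relative to this lower bound. This is cleaner than your appeal to ``the main enumeration theorems'' for the tail cut-off, which would presuppose the result you are proving.
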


\subsection{Clustering by ground states}
We next  approximately partition the set of triangle-free graphs according to  which partition $(A,B)$ of $[n]$ they align best with; that is,  according to their max-cut partition.  This partitioning will only be  approximate because some graphs (e.g., the empty graph) have multiple partitions achieving their max cut, but we will show that the contribution of these graphs to $Z(\lam)$ is negligible.

To view the problem from the statistical physics perspective, we identify a collection of `ground states' and ground-state graphs, whose contribution to $Z(\lam)$ is easy to calculate and for large enough $\lam$ makes up almost all of $Z(\lam)$.  In this case, each partition $(A,B)$ of $[n]$ gives rise to a collection of ground-state graphs consisting of all graphs $G$ that are bipartite with bipartition $(A,B)$.   The contribution of this collection is $(1+\lam)^{|A| \cdot |B|}$ since each of the $|A|\cdot |B|$ possible crossing edges can be included or not included in such a bipartite graph.  It is not hard to show that for $\lam \gg \log n / n$,   graphs obtained in this manner from more than one partition $(A,B)$ have a negligible contribution, and so we can write
\[ Z(\lam) \ge (1+o(1)) \sum_{(A,B)} (1+\lam)^{|A| \cdot |B|} \,. \] 

An easy modification of the proof of Osthus, Pr\"omel, and Taraz in~\cite{osthus2003densities} shows that when $\lam \ge (\sqrt{3}+\eps)\sqrt{\frac{\log n}{n}}$, then this lower bound is tight: $ Z(\lam) = (1+o(1)) \sum_{(A,B)} (1+\lam)^{|A| \cdot |B|}$, and moreover, when  $\lam = (\sqrt{3}- \eps)\sqrt{\frac{\log n}{n}}$, then $ Z(\lam) \gg   \sum_{(A,B)} (1+\lam)^{|A| \cdot |B|}$, and so to understand $Z(\lam)$ at these smaller densities we must take into account graphs that do not arise from any ground state.
On the other hand Proposition~\ref{lemZerothApprox} shows that one only needs to consider graphs that are `close' to a ground state.

This marks our point of departure from the previous literature, and the main contribution of this paper: how to account precisely for these near-ground-state graphs.

We first show that a typical sample from $\mu_{\cL,\lam}$ (and hence also $\mu_{\lam}$) has a unique weakly balanced max cut such that the defect edges form a graph of maximum degree at most $\alpha/\lambda$ where we recall that  $\alpha = 1/(96e^3)$. Recall from~\eqref{ZABwDef} that we let $Z_{A,B}^{\textup{w}}(\lam)=Z( \cT_{A,B,\lam}^{\textup{w}}, \lam)$.

Define
\[
Z_{\textup{weak}}(\lam) = \sum_{(A,B)\in\Pi_{\textup{weak}}} Z_{A,B}^{\textup{w}}(\lam)\, .
\]

Taken in conjunction with Proposition~\ref{lemZerothApprox} the following proposition is a refinement of Theorem~\ref{thmGenDef}.
\begin{prop}
\label{propGroundStateRefinedAlt}
There exists a constant $\omega>0$ such that if $\lam\geq \frac{\omega}{\sqrt{n}}$ then, letting $\cL=\cL(n,\lam)$,
\begin{align}\label{eqGroundStateRefined}
 Z(\cL, \lam) = \left(1+O\left(e^{-\sqrt{n}} \right) \right) Z_{\textup{weak}}(\lam)\, ,
\end{align}
and 
\[
\|\mu_{\cL,\lam} - \mu_{\textup{weak},\lam}\|_{\textup{TV}}= O\left(e^{-\sqrt{n}}\right)\, .
\]
Moreover, whp a graph $G$ drawn according to $\mu_{\textup{weak},\lam}$ has a unique weakly balanced max cut whose defect graph has maximum degree at most $\alpha/\lam$.
\end{prop}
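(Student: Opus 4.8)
The plan is to recognise $\mu_{\textup{weak},\lam}$ as a restricted Gibbs measure ``with multiplicity'' and to reduce the statement to a single first–moment estimate ruling out atypical defect graphs.

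\emph{Bookkeeping and reduction.} Unwinding Algorithm~\ref{algGendefect}, the probability that $\mu_{\textup{weak},\lam}$ outputs $G$ through a fixed compatible cut $(A,B)$ telescopes to $\frac{Z_{A,B}^{\textup{w}}(\lam)}{Z_{\textup{weak}}(\lam)}\cdot\frac{\lam^{|S|+|T|}Z_{S\boxempty T}(\lam)}{Z_{A,B}^{\textup{w}}(\lam)}\cdot\frac{\lam^{|\Ec|}}{Z_{S\boxempty T}(\lam)}=\frac{\lam^{|G|}}{Z_{\textup{weak}}(\lam)}$, so $\mu_{\textup{weak},\lam}(G)=\lam^{|G|}m(G)/Z_{\textup{weak}}(\lam)$ with $m(G):=|\{(A,B)\in\Pi_{\textup{weak}}:\Delta(G_A\cup G_B)\le\alpha/\lam\}|$, and $Z_{\textup{weak}}(\lam)=\sum_G\lam^{|G|}m(G)$. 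Let $\cG$ be the set of triangle–free $G$ with $m(G)=1$ whose unique witnessing cut is a max cut (necessarily dominating); for $\omega$ large such a cut has at most $\alpha n/(2\lam)\le 2\del\lam n^2$ defect edges, so $\cG\subseteq\cL$. Each $G\in\cG$ contributes exactly $\lam^{|G|}$ to both $Z(\cL,\lam)$ and $Z_{\textup{weak}}(\lam)$, so it suffices to prove the two remainder bounds
\[
\text{(a)}\ \ \sum_{G\in\cL\setminus\cG}\lam^{|G|}=O\!\left(e^{-\sqrt n}\right)Z_{\textup{weak}}(\lam),
\qquad
\text{(b)}\ \ \sum_{G:\,m(G)\ge1,\ G\notin\cG}\lam^{|G|}m(G)=O\!\left(e^{-\sqrt n}\right)Z_{\textup{weak}}(\lam).
\]
Granting these, $Z(\cL,\lam)$ and $Z_{\textup{weak}}(\lam)$ are both $(1+O(e^{-\sqrt n}))Z(\cG,\lam)$, which is~\eqref{eqGroundStateRefined}; $\mu_{\cL,\lam}$ and $\mu_{\textup{weak},\lam}$ agree on $\cG$ (both $\propto\lam^{|G|}$ there) and place mass $1-O(e^{-\sqrt n})$ on $\cG$, giving the TV bound; and (b) says a $\mu_{\textup{weak},\lam}$–random graph lies in $\cG$ whp, i.e.\ has a unique weakly balanced max cut with the stated degree bound.

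\emph{The main counting estimate.} Both (a) and (b) reduce to one calculation. Fix $(A,B)\in\Pi_{\textup{weak}}$ and suppose $v\in A$ has a defect neighbourhood $N\subseteq A$ of size $D$. Triangle–freeness forces $N$ independent and $N_{\Ec}(u)\cap N_{\Ec}(v)=\emptyset$ for each $u\in N$; decomposing a valid crossing set $\Ec$ by $R:=N_{\Ec}(v)$ (so $N\times R$ is empty) and discarding the remaining triangle constraints gives $Z_{S\boxempty T}(\lam)\le(1+\lam)^{(|A|-1)|B|}\bigl(1+\lam(1+\lam)^{-D}\bigr)^{|B|}$. Multiplying by the $\lam^{D}\binom{|A|-1}{D}$ accounting for $v$'s defect edges, the contribution relative to the empty–defect baseline $(1+\lam)^{|A||B|}$ is at most $\binom{|A|}{D}\lam^{D}\bigl(1-\tfrac{\lam(1-(1+\lam)^{-D})}{1+\lam}\bigr)^{|B|}$, which for $D>\alpha/\lam$ carries a factor $\exp(-c\lam n)$ with $c=c(\alpha)>0$. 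Since the threshold $\alpha/\lam$ lies far above the typical defect degree $q_0n\approx\lam n e^{-\lam^2n/2}$ (for $\omega$ large, $\lam^2 n e^{-\lam^2 n/2}<\alpha$ throughout the range), summing over $v$, over $D>\alpha/\lam$, and bounding the remaining defect edges generically against $Z_{A,B}^{\textup{w}}(\lam)$ via the analogous per–edge reductions, shows that the $\lam$–weight of graphs whose cut $(A,B)$ carries a defect vertex of degree $>\alpha/\lam$ is $O(e^{-\sqrt n})Z_{A,B}^{\textup{w}}(\lam)$; summing over $(A,B)\in\Pi_{\textup{weak}}$ settles the $m(G)=0$ part of (a). A symmetric large–deviation bound shows that a vertex of total degree $\le 2\alpha/\lam$—whence crossing degree $\ll\lam|B|$, a lower tail for $\textup{Bin}(|B|,\tfrac{\lam}{1+\lam})$—has $\lam$–weight $e^{-\Omega(\lam n)}$, which handles non-dominating witnesses.

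\emph{Uniqueness and stability.} For the rest of (b), if $(A,B)$ and $(A',B')$ are two weakly balanced, dominating cuts of $G$ with defect degree $\le\alpha/\lam$, set $W=A\triangle A'$: applying the domination inequality at each $w\in W$ from both cuts forces $W$ independent and $d_G(w,\cdot)$ balanced between the two sides, hence $d_G(w)\le 2\alpha/\lam$; so $G$ has a vertex of degree $\le 2\alpha/\lam$, an event of $\lam$–weight $e^{-\Omega(\lam n)}$ by the estimate above. A quantitative stability statement for dense cuts in triangle–free graphs (two cuts of size $\ge |G|-O(\del\lam n^2)$ differ in $o(n)$ vertices), combined with the main estimate applied to the—possibly unbalanced—max cut, reduces the remaining configurations (non-dominating, or $m(G)=1$ with a non-max witness, or $m(G)\ge 2$) to the case just treated. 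Assembling these bounds establishes (a) and (b), and hence the proposition.

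\emph{Main obstacle.} The crux is the counting estimate: one must extract a genuine $e^{-\Omega(\sqrt n)}$ saving from a single over- or under-degree defect vertex while simultaneously controlling the entropy of \emph{all the other} defect edges against the actual size of $Z_{A,B}^{\textup{w}}(\lam)$, which itself ranges from $\approx(1+\lam)^{|A||B|}$ when $\lam\approx\sqrt{\log n/n}$ to $e^{\Theta(q_0n^2)}(1+\lam)^{|A||B|}$ when $\lam\approx\omega/\sqrt n$, so that the saving is not swamped. This is the point at which we adapt the container-type counting of Osthus–Pr\"omel–Taraz~\cite{osthus2003densities} and Balogh–Morris–Samotij–Warnke~\cite{balogh2016typical}; the stability input for uniqueness is comparatively routine but must be made quantitative enough to interface with these $\lam$–weighted sums.
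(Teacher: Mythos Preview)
Your bookkeeping and reduction to the two remainder bounds (a) and (b) is correct and is essentially how the paper assembles the pieces in its final subsection. The genuine gap is in your ``main counting estimate'': the single--vertex bound
\[
\binom{|A|}{D}\lam^{D}\left(1-\frac{\lam(1-(1+\lam)^{-D})}{1+\lam}\right)^{|B|}
\]
does \emph{not} carry an $e^{-c\lam n}$ saving uniformly over $D>\alpha/\lam$. Take $D\approx n\lam$: the entropy term is $\approx (e)^{n\lam}=e^{n\lam}$, while the crossing saving saturates at $e^{-\lam|B|(1-o(1))}\approx e^{-\lam n/2}$, so the product is $e^{+\Theta(\lam n)}$ rather than $e^{-\Theta(\lam n)}$. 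Thus summing over $D$ blows up, and there is no way to ``bound the remaining defect edges generically against $Z_{A,B}^{\textup w}$'' while a single high--degree vertex already costs more than it saves. The paper's proof is built precisely to avoid this: it introduces the container data $U(F),X(F),H(F),T(F)$ so that the entropy of the whole defect graph is charged against $\lam^{|T|}Z_{T_\boxempty}$ (summing to $Z_{A,B}^{\textup w}$) and a saving that scales with $|X(F)|$ via cluster expansion; crucially, vertices of degree $\ge\beta\lam n$ (the regime where your bound fails) are handled separately using domination to force a large blocked set $\mathcal B'\subseteq A\times B$ of size $\gtrsim h(D^*)^2$, giving a saving $e^{-\Theta(h\lam^3 n^2)}$ that beats the $e^{2h\lam n}$ entropy. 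Your last paragraph acknowledges that this is where the container counting of \cite{balogh2016typical} enters, but the preceding sketch does not implement any of it, and the displayed single--vertex bound is not salvageable on its own.

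A secondary gap: your uniqueness step asserts that for $w\in A\triangle A'$ one gets $d_G(w)\le 2\alpha/\lam$ from domination plus the two defect--degree bounds. From $w\in A\cap B'$ you obtain $d_G(w,A)\le\alpha/\lam$ and $d_G(w,B')\le\alpha/\lam$, but $A\cup B'=[n]\setminus(A'\cap B)$, so you only conclude $d_G(w)\le 2\alpha/\lam+d_G(w,A'\cap B)$, which is not the claimed bound; and $W$ independent is not forced either. The paper instead proves uniqueness via an expansion property of the crossing edges (its Lemma on $(A,B)$-$\lam$-expanders), showing directly that any other weakly balanced partition must have defect degree $>\alpha/\lam$.
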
 

The proof of Proposition~\ref{propGroundStateRefinedAlt} is a modification of the strategy of~\cite{balogh2016typical} (specialized to triangle-free graphs) and is carried out in Section~\ref{secMorrisOPT}.  We will soon see that the maximum degree bound in~Proposition~\ref{propGroundStateRefinedAlt} is crucial for our approach.

Given Proposition~\ref{propGroundStateRefinedAlt}, our next goal is to understand the partition function $Z_{A,B}^{\textup{w}}(\lam)$ for  any weakly balanced partition $(A,B)$.
 To this end, it will be useful to consider the contribution to $Z_{A,B}^{\textup{w}}(\lam)$ from all $G$ with a fixed defect graph. Indeed, let us fix  edge sets $S\subseteq\binom{A}{2}$, $T\subseteq\binom{B}{2}$.
Recall from Section~\ref{secIntroMainResults} that $S \boxempty T$ denotes the Cartesian product of the graphs $(A, S)$, $(B, T)$.
 Whenever we write  $S \boxempty T$ below, the underlying partition $(A,B)$ will be clear from the context. Moreover, in an abuse of notation we will often identify the edge set $S$ with the graph $(A,S)$.

If we fix $(A,B)$, sample $G$ from $\mu_{\lam}$, and condition on the event that $G_A= S$ and $G_B= T$, then $E(G)\cap (A\times B)$ is distributed according to the hard-core model on the graph $S\boxempty T$ at activity $\lambda$.   This observation  is formalized in the following lemma which motivates the appearance of $Z_{S\boxempty T}(\lam)$ in Step 2 of Algorithm~\ref{algGendefect}. 

\begin{lemma}
\label{lemSTgraphLemma}
Let $(A, B)$ be a partition of $[n]$ and suppose $S\subseteq \binom{A}{2}, T \subseteq \binom{B}{2}$ such that $S\cup T$ is triangle-free.  Let $\cG(S,T)$ be the set of triangle-free graphs $G$ so that $G_A= S$ and $G_B= T$.  Then
\[ \sum_{G \in \cG(S,T)} \lam^{|G|} =  \lam^{|S| + |T|} Z_{S\boxempty T} (\lam) \,,\]
where $Z_{S\boxempty T} (\lam)$ is the hard-core partition function on the graph $S\boxempty T$.
\end{lemma}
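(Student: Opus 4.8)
The statement to prove is Lemma~\ref{lemSTgraphLemma}, which is essentially a bookkeeping identity: given a fixed partition $(A,B)$ and fixed defect edge sets $S \subseteq \binom{A}{2}$, $T \subseteq \binom{B}{2}$, the generating function over all triangle-free graphs $G$ with $G_A = S$ and $G_B = T$ factors as $\lam^{|S|+|T|} Z_{S \boxempty T}(\lam)$.

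\medskip

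\noindent\textbf{Proof plan.} The plan is to set up an explicit bijection between the set $\cG(S,T)$ and the set $\cI(S \boxempty T)$ of independent sets of the Cartesian product graph, under which the edge count transforms in the right way. First I would observe that any $G \in \cG(S,T)$ is determined by its crossing edges $\Ec = E(G) \cap (A \times B)$, since $G_A$ and $G_B$ are fixed to be $S$ and $T$; thus $|G| = |S| + |T| + |\Ec|$, which already explains the factor $\lam^{|S|+|T|}$. So it suffices to show that as $\Ec$ ranges over all subsets of $A \times B$ for which $S \cup T \cup \Ec$ is triangle-free, $\Ec$ ranges exactly over the independent sets of $S \boxempty T$ (identifying a potential crossing edge $\{a,b\}$ with the vertex $(a,b) \in A \times B = V(S \boxempty T)$).

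\medskip

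\noindent The key step is the triangle analysis. A triangle in $S \cup T \cup \Ec$ cannot have all three vertices in $A$ (that would be a triangle in $S$, but $S$ is a subgraph of the triangle-free... wait — actually we must note that $S, T$ themselves are assumed triangle-free, which is implicit since $\cG(S,T) \subseteq \cT(n)$; if $S$ or $T$ contains a triangle then $\cG(S,T) = \emptyset$ and both sides are $0$, and separately $Z_{S\boxempty T}(\lam)$ is still well-defined, so I should handle this degenerate case in one line). Assuming $S, T$ triangle-free, any triangle in $S \cup T \cup \Ec$ must use exactly two crossing edges and one defect edge: either two vertices $a, a' \in A$ with $\{a,a'\} \in S$ and a common neighbor $b \in B$ via crossing edges $\{a,b\}, \{a',b\} \in \Ec$, or the symmetric configuration with $b, b' \in B$, $\{b,b'\} \in T$, and common $a \in A$. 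Translating to the product graph: the first configuration says the vertices $(a,b)$ and $(a',b)$ of $S \boxempty T$ are both in $\Ec$ and are adjacent in $S \boxempty T$ (since $\{a,a'\} \in S$); the second says $(a,b)$ and $(a,b')$ are both in $\Ec$ and adjacent (since $\{b,b'\} \in T$). These are precisely the two types of edges in $E(S \boxempty T)$ as defined in the excerpt. Hence $S \cup T \cup \Ec$ is triangle-free if and only if $\Ec$, viewed as a subset of $A \times B$, is an independent set in $S \boxempty T$.

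\medskip

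\noindent Given this equivalence, the bijection $G \mapsto \Ec$ between $\cG(S,T)$ and $\cI(S \boxempty T)$ satisfies $\lam^{|G|} = \lam^{|S|+|T|} \lam^{|\Ec|}$, so
\[
\sum_{G \in \cG(S,T)} \lam^{|G|} = \lam^{|S|+|T|} \sum_{I \in \cI(S\boxempty T)} \lam^{|I|} = \lam^{|S|+|T|} Z_{S\boxempty T}(\lam)\,,
\]
as claimed. I do not expect any real obstacle here; the only thing requiring care is the precise case-check that triangles in $S \cup T \cup \Ec$ correspond exactly to edges of $S \boxempty T$ (in particular that a triangle cannot use three crossing edges — impossible since $(A,B)$ is a cut and three crossing edges would need a vertex on both sides — and cannot use one crossing edge and two defect edges on the same side — impossible since one endpoint of the crossing edge lies in $A$ and the other in $B$, so the two defect edges would have to straddle the cut, contradiction), together with the degenerate case where $S$ or $T$ is not triangle-free.
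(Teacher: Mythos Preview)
Your approach matches the paper's: establish the bijection $G \mapsto \Ec := E(G) \cap (A \times B)$ between $\cG(S,T)$ and $\cI(S \boxempty T)$, verify that triangle-freeness of $S \cup T \cup \Ec$ is equivalent to $\Ec$ being independent in $S \boxempty T$, and read off the identity. Your triangle case analysis is more explicit than the paper's one-line assertion of the correspondence, but the content is the same.

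One correction on the degenerate case you flag: if $S$ or $T$ contains a triangle then indeed $\cG(S,T) = \emptyset$ and the left side vanishes, but the right side $\lam^{|S|+|T|} Z_{S\boxempty T}(\lam)$ does \emph{not} vanish (the empty set is always an independent set, so $Z_{S\boxempty T}(\lam) \ge 1$). So the identity as literally stated requires $S$ and $T$ to be triangle-free; this is implicit in the paper since the lemma is only ever invoked with $(S,T) \in \cD^{\textup{w}}_{A,B,\lam}$ or a subset thereof. With that hypothesis in place your argument is complete and correct.
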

\begin{proof}
In what follows we identify the vertex $(u,v)\in V(S\boxempty T)=A \times B$ with the edge $\{u,v\}\in \binom{[n]}{2}$.
The proof follows from the observation that if $I$ is an independent set in the graph $S\boxempty T$, then  the graph on $[n]$ with edge set $S \cup T \cup I$ is a triangle-free graph in $\cG(S,T)$, and likewise for any $G \in \cG(S,T)$, $E(G) \cap (A\times B)$ forms an independent set in $S\boxempty T$, giving a  one-to-one correspondence.
\end{proof}

With this observation in hand, we may write the identity
\[
Z_{A,B}^{\textup{w}}(\lam) = \sum_{(S,T)\in \cD_{A,B,\lam}^\textup{w}} \lam^{|S|+|T|} Z_{S\boxempty T} (\lam)\, ,
\]
where we recall from~\eqref{eqDABwDef} that $ \cD_{A,B,\lam}^\textup{w}=\{(G_A, G_B): G\in  \cT_{A,B,\lam}^{\textup{w}}\}$.

We can now see the crucial role of Proposition~\ref{propGroundStateRefinedAlt}. 
Observe that if $(S,T)\in  \cD_{A,B,\lam}^\textup{w}$, then $\Delta(S\cup T)\leq \alpha/\lam$ by definition, and so  $\Delta(S\boxempty T)\leq \Delta(S) + \Delta(T)\leq 2\alpha/\lam$. With the choice $\alpha=1/(96e^3)$,  $\lam\leq 1/(4e\Delta(S\boxempty T))$ and so the hard-core model on $S\boxempty T$ at activity $\lam$ is \emph{subcritical} in a sense to be made precise below in Lemma~\ref{lemClusterTail}. This allows us to understand $Z_{S\boxempty T}(\lam)$ via the cluster expansion and obtain a sequence of refinements of Proposition~\ref{propGroundStateRefinedAlt}. 

\subsection{Reducing to sparser defect graphs}
The first refinement of Proposition~\ref{propGroundStateRefinedAlt} is a strengthening of the sparsity condition on the defect graph. 

Throughout the paper, given a partition $(A,B)$, we let $a,b$ denote $|A|,|B|$ respectively. 
\begin{defn}\label{defsparse} 
Let $(A,B)\in \Pi$,
 $\lam>0$, $q_A/(1-q_A)=\lam e^{-\lam^2b}$, $q_B/(1-q_B)=\lam e^{-\lam^2a}$. Moreover let $q=\max\{q_A, q_B\}$, $\Delta_{A,B,\lam}=50\max\{qn ,\log n\}$ and $K_{A,B,\lam}=50\max\{qn^2, \log n\}$. We call a graph $H\subseteq \binom{A}{2}\cup\binom{B}{2}$ \emph{$\lam$-sparse} if
 \begin{enumerate}
     \item $H$ is triangle-free,
     \item $\Delta(H)\leq \Delta_{A,B,\lam}$,
     \item $\max\{|H_A|, |H_B|\}\leq K_{A,B,\lam}$.
 \end{enumerate}
\end{defn}

\begin{remark}
Throughout the paper, we identify the pair $(S,T)\in 2^{\binom{A}{2}}\times 2^{\binom{B}{2}}$ with the graph $S \cup T$ and use them interchangeably. As such, we call a pair $(S,T)$ $\lam$-sparse if $S\cup T$ is $\lam$-sparse. 
\end{remark}

Let 
\[
\cT_{A,B,\lam}:=\{G\in \cT(n) : (G_A , G_B) \text{ is $\lam$-sparse}\}\, ,
\] 
the set of $G$ whose defect graph wrt $(A,B)$ is $\lam$-sparse, and let
\[
 \cD_{A,B,\lam}:=\{(G_A, G_B): G\in  \cT_{A,B,\lam}\}\, ,
\]
the set of $\lam$-sparse defect graphs. Define also
\[
Z_{A,B}(\lam):= Z(\cT_{A,B,\lam},\lam)= \sum_{G\in \cT_{A,B,\lam}}\lam^{|G|}\, .
\]

We note that for $\lam\geq \frac{\omega}{\sqrt{n}}$ and $(A,B)\in \Pi_{\textup{weak}}$ we have $\Delta_{A,B,\lam}=o_\omega(1)\cdot \alpha/\lam$ and so when $\omega$ is large, the restriction on the sparsity of the defect graph in the definition of $\cT_{A,B,\lam}$ is stronger than that of $\cT_{A,B,\lam}^{\textup{w}}$. The following lemma allows us to refine the approximation of $Z(\cL, \lam)$ in Proposition~\ref{propGroundStateRefinedAlt}.
\begin{prop}
\label{propMaxDegBootstrap}
There exists $\omega>0$ such that if $\lam\geq \frac{\omega}{\sqrt{n}}$, and $(A,B)\in \Pi_{\textup{weak}}$, then
 \begin{align}\label{eqMaxDegBootstrap}
Z_{A,B}(\lam)=\left(1+O(n^2e^{-\Delta/2})\right) Z^{\textup{w}}_{A,B}(\lam)\, ,
\end{align}
where $\Delta=\Delta_{A,B,\lam}$, is as in Definition~\ref{defsparse}.
\end{prop}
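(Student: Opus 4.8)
The plan is to recast the claim as a statement about the restricted Gibbs measure and then show that a typical sample has a $\lam$-sparse defect graph. Write $\mu^{\textup{w}}:=\mu_{\cT_{A,B,\lam}^{\textup{w}},\lam}$. As remarked just above the statement, when $\omega$ is large the sparsity conditions defining $\cT_{A,B,\lam}$ are strictly stronger than those of $\cT_{A,B,\lam}^{\textup{w}}$, so $\cT_{A,B,\lam}\subseteq\cT_{A,B,\lam}^{\textup{w}}$ and hence $Z_{A,B}(\lam)=\mu^{\textup{w}}(\cT_{A,B,\lam})\cdot Z_{A,B}^{\textup{w}}(\lam)$. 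Thus it suffices to prove $\mu^{\textup{w}}(G\notin\cT_{A,B,\lam})=O(n^2e^{-\Delta/2})$ with $\Delta=\Delta_{A,B,\lam}$. The defect graph $G_A\cup G_B$ of such a $G$ is a subgraph of the triangle-free graph $G$, so triangle-freeness (condition (1) of $\lam$-sparsity) is automatic, and I only need to bound $\mu^{\textup{w}}(\Delta(G_A\cup G_B)>\Delta)$ and $\mu^{\textup{w}}(\max\{|G_A|,|G_B|\}>K_{A,B,\lam})$.

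The engine is a subset-probability estimate from the cluster expansion. For any $(S,T)\in\cD_{A,B,\lam}^{\textup{w}}$ we have $\Delta(S\boxempty T)\le\Delta(S)+\Delta(T)\le 2\alpha/\lam$, and with $\alpha=1/(96e^3)$ this places the hard-core model on $S\boxempty T$ comfortably inside the subcritical regime of Lemma~\ref{lemClusterTail}, so the cluster expansion of $\log Z_{S\boxempty T}(\lam)$ converges. Combining this with Lemma~\ref{lemSTgraphLemma}, I would peel the edges of a prescribed set $F$ of potential defect edges off the defect graph one at a time: each peeled edge contributes a factor $\lam$, and adding its image to the Cartesian product (a matching of $|B|$, resp.\ $|A|$, edges) changes $\log Z$ by $-|B|\lam^2\bigl(1+O(\lam\Delta(S\boxempty T))\bigr)$ according to the convergent expansion. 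Given any a priori degree cap $D$ on the rest of the defect graph this yields, for $F\subseteq\binom A2$,
\[
\mu^{\textup{w}}\bigl(F\subseteq E(G)\ \big|\ \text{rest of defect graph}\bigr)\ \le\ \bigl(q\,e^{O(D\lam^3 n)}\bigr)^{|F|},
\]
and symmetrically for $F\subseteq\binom B2$. The point is that the correction factor is $e^{O(\lam D)\cdot\lam^2 n}$, which is close to $1$ once $D$ is small, while the hypothesis $\lam\ge\omega/\sqrt n$ forces $\lam^2 n\ge\omega^2$, so $q/\lam=e^{-\Theta(\lam^2 n)}$ is tiny and the ``cost'' $e^{-\Theta(\lam^2 n)}$ of each defect edge dominates the cluster corrections.

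With this estimate the plan is to bootstrap the sparsity of the defect graph through boundedly many stages. Starting from the cap $D_0=\alpha/\lam$ built into $\cD_{A,B,\lam}^{\textup{w}}$, at stage $i$ one restricts $\mu^{\textup{w}}$ to defect graphs of maximum degree $\le D_i$, applies the subset-probability estimate with cap $D_i$, and combines a union bound over the $\le n$ vertices with the Chernoff-type tail $\binom{n}{D_{i+1}}\bigl(q\,e^{O(D_i\lam^3 n)}\bigr)^{D_{i+1}}$ to conclude that maximum degree $\le D_{i+1}$ holds off an exponentially small event; the same estimate applied to $K'$-subsets of edges controls the edge count. Iterating, the caps decrease until $D_i\lam^3 n=O(1)$ — which is guaranteed once $\omega$ is large — at which point the subset-probability estimate becomes $(O(q))^{|F|}$ with an absolute constant. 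A final union bound over vertices and over $K_{A,B,\lam}$-subsets of edges, using $\Delta_{A,B,\lam}\ge 50qn$ and $K_{A,B,\lam}\ge 50qn^2$, then gives $\mu^{\textup{w}}(\Delta(G_A\cup G_B)>\Delta)=O(n e^{-\Delta/2})$ and $\mu^{\textup{w}}(\max\{|G_A|,|G_B|\}>K_{A,B,\lam})=O(e^{-K_{A,B,\lam}/2})=O(e^{-\Delta/2})$; collecting the geometrically small failure probabilities of the intermediate stages and the contribution of the $B$-side gives the bound $O(n^2 e^{-\Delta/2})$.

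The hardest part will be the bootstrap bookkeeping: one must exhibit a sequence of caps $D_0>D_1>\cdots$ that reaches $\Delta_{A,B,\lam}$ in $O(1)$ steps while keeping every stage's failure probability below $O(n^2 e^{-\Delta/2})$, and this is delicate precisely in the window $\lam=\Theta(n^{-1/2})$, where $\Delta_{A,B,\lam}$ and $\alpha/\lam$ are of the same polynomial order and a single improvement step does not suffice. The cluster-expansion input must also be quantitative — one needs the first-order value of $\log Z_{S\boxempty T}(\lam)$ with error controlled by $\lam\,\Delta(S\boxempty T)$, not merely convergence — so that sparser defect graphs yield genuinely sharper per-edge bounds; establishing this controlled expansion, and verifying it remains valid for every $\lam$-sparse $S\boxempty T$, is where the bulk of the work lies.
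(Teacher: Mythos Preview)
Your reduction to $\mu^{\textup{w}}(G\notin\cT_{A,B,\lam})=O(n^2e^{-\Delta/2})$ and the cluster-expansion input are exactly what the paper uses; the paper packages the latter as the statement that $\nu_{A,B,\lam}^{\textup{w}}$ is a \emph{strongly} $\delta$-local perturbation of the Erd\H{o}s--R\'enyi measure on $\cD_{A,B,\lam}^{\textup{w}}$ with edge densities $q_A,q_B$, where strong locality means $|f(G)-f(G\setminus F)|\le \delta\,|F|\,\Delta(G)$ and $\delta=16e^3 n\lam^3$ (Lemma~\ref{lemfofNuisLocal}). The difference is that the paper avoids your bootstrap.

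The observation you are not exploiting is that one can condition on the event $\{d_{\bG}(v)=j=\Delta(\bG)\}$ rather than merely on the rest of the defect graph under a global cap. On this event the strong-locality bound reads $|f(H\cup J)-f(H)|\le \delta j\cdot j$ (because $\Delta(H\cup J)=j$), in place of your $\delta j\cdot D$ with $D$ the current cap. The resulting correction $e^{\delta j^2}$ is then handled in one pass by a two-case split (Lemma~\ref{lemDegBdLocal}): if $\delta j\le 1/10$ the correction is at most $e^{j/10}$, the effective edge weight is $O(q)$, and the tail $e^{-\Delta/2}$ follows directly from $j\ge\Delta/2$; if $\delta j>1/10$ then on the one hand $j>\Omega(1/(n\lam^3))$, while on the other $\delta j^2\le \delta j\cdot(\alpha/\lam)\le jb\lam^2/2$, so the effective weight is only degraded to $3\lam e^{-b\lam^2/2}$, and for $\omega$ large the lower bound on $j$ already exceeds the corresponding Chernoff threshold. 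These two cases accomplish exactly what your sequence $D_0>D_1>\cdots$ would, so the ``delicate bookkeeping'' you flag disappears. Your bootstrap is viable, but strictly more work for the same conclusion.
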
 

\subsection{Reducing to strongly balanced partitions}
Our next refinement of Proposition~\ref{propGroundStateRefinedAlt} comes from showing that we only need to consider strongly balanced partitions $(A,B)$ (see Definition~\ref{defstrongbalance}) provided $\lam$ is sufficiently large. For this we need an intermediate notion of balancedness. 
\begin{defn}\label{defModBalanced}
We call a partition $(A,B)\in \Pi$ \emph{$\lam$-moderately balanced} if
\[
\big ||A|-|B| \big|\leq M_\lam:=\max\{ne^{-\lam^2n/2}, n^{1/2} \}(\log n)^2  \, , 
\]
and we let $\Pi_{\textup{mod},\lam}$ denote the set of all $\lam$-moderately balanced partitions. 
\end{defn}
Define the measures $\mu_{\textup{mod},\lam}$ and  $\mu_{\textup{strong},\lam}$ on $\cT(n)$ via the following processes.

\begin{algorithm}[ht]
  \caption{The distribution $\mu_{\textup{mod},\lam}$ (resp. $\mu_{\textup{strong},\lam}$) \label{algMuModStrong}}
\begin{enumerate}[leftmargin=*]
\item Pick $(A,B)\in \Pi_{\textup{mod},\lam}$ (resp. $\Pi_{\textup{strong}}$) with probability proportional to $Z_{A,B}(\lam)$.
\item Pick $(S,T)\in\cD_{A,B,\lam}$  with probability proportional to $\lam^{|S|+|T|}Z_{S\boxempty T}(\lam)$.
\item Select $\Ec\subseteq A\times B$ according to the hard-core model on $S\boxempty T$ at activity $\lam$. 
\item Output $S\cup T\cup \Ec$.
\end{enumerate}
\end{algorithm}

Finally let
\[
Z_{\textup{mod}}(\lam) = \sum_{(A,B)\in\Pi_{\textup{mod},\lam}} Z_{A,B}(\lam) \quad\text{and}\quad Z_{\textup{strong}}(\lam) = \sum_{(A,B)\in\Pi_{\textup{strong}}} Z_{A,B}(\lam)\, .
\]
We prove the following two propositions covering overlapping ranges of $\lam$.

\begin{prop}\label{lemZmodsimsum}
Fix $c>0$ and let $\lam\geq c \sqrt{\frac{\log n }{n}}$. Then
\begin{align}\label{eqZsimZmod} 
Z_{\textup{mod}}(\lam)=\left(1 + O\left(n^{-3}\right)\right) Z_{\textup{weak}}(\lam)\, ,
\end{align}
and
\begin{align}\label{eqmusimmumod}
\|\mu_{\textup{mod},\lam} - \mu_{\textup{weak},\lam}\| _{TV}=O\left(n^{-3/2}\right)\, .
\end{align}
\end{prop}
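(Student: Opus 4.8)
The plan is to show that the contribution to $Z_{\textup{weak}}(\lam)$ coming from partitions $(A,B)$ that are weakly balanced but \emph{not} $\lam$-moderately balanced is negligible, i.e.\ of relative order $O(n^{-3})$. Since $Z_{\textup{mod}}(\lam) = \sum_{(A,B)\in\Pi_{\textup{mod},\lam}} Z_{A,B}(\lam)$ and $Z_{\textup{weak}}(\lam) = \sum_{(A,B)\in\Pi_{\textup{weak}}} Z^{\textup{w}}_{A,B}(\lam)$, and since Proposition~\ref{propMaxDegBootstrap} already tells us $Z_{A,B}(\lam) = (1+O(n^2 e^{-\Delta/2})) Z^{\textup{w}}_{A,B}(\lam)$ with $\Delta = \Delta_{A,B,\lam} \ge 50\log n$, the two restricted partition functions agree up to a $1+O(n^{-3})$ factor on any common index set. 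So the real content is the comparison of the index sets. The first step is therefore to estimate $Z_{A,B}(\lam)$ (equivalently $Z^{\textup{w}}_{A,B}(\lam)$) as a function of the imbalance $t = |A| - \lfloor n/2\rfloor$. Using Lemma~\ref{lemSTgraphLemma} we write $Z^{\textup{w}}_{A,B}(\lam) = \sum_{(S,T)\in\cD^{\textup{w}}_{A,B,\lam}} \lam^{|S|+|T|} Z_{S\boxempty T}(\lam)$, and since $S\boxempty T$ has max degree $\le 2\alpha/\lam$ the hard-core model on it is subcritical, so by the cluster expansion (to be set up in Section~\ref{secTools}) $\log Z_{S\boxempty T}(\lam) = |A\times B|\log(1+\lam) - (\text{correction terms})$ where the leading term is $ab\log(1+\lam)$. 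Summing out the defect edges with a crude bound shows $\log Z^{\textup{w}}_{A,B}(\lam) = ab\log(1+\lam) + O(\text{lower order, uniform in }t)$.

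Given this, the dominant dependence on $t$ is through the factor $(1+\lam)^{ab} = (1+\lam)^{(\lfloor n/2\rfloor + t)(\lceil n/2\rceil - t)}$. Writing $ab = \lfloor n^2/4\rfloor - t^2 + O(1)$, this contributes a Gaussian-type weight $(1+\lam)^{-t^2}$, which is exactly the shape of $\xi_\lam$ in Definition~\ref{defPartitionDist}. The second step is to control the number of partitions with a given imbalance $t$: there are $\binom{n}{\lfloor n/2\rfloor + t}$ of them, and the ratio $\binom{n}{\lfloor n/2\rfloor + t}/\binom{n}{\lfloor n/2\rfloor} = \exp(-2t^2/n + O(t^4/n^3 + t/n))$. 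Combining, the total weight of partitions with imbalance $t$ is proportional to $\exp\big(-(\log(1+\lam) + 2/n) t^2 + O(1 + t^4/n^3 + t/n)\big)$; since $\log(1+\lam) \ge \tfrac12\lam \ge \tfrac{c}{2}\sqrt{\log n / n}$, the effective variance of $t$ is $O(\sqrt{n/\log n})$, so $t$ is concentrated at scale $O(n^{1/4}(\log n)^{-1/4})$. The third step is then a tail bound: summing the Gaussian weight over $|t| > M_\lam = \max\{ne^{-\lam^2 n/2}, n^{1/2}\}(\log n)^2 \ge n^{1/2}(\log n)^2$ gives a contribution bounded by $\exp(-\Omega(M_\lam^2 \log(1+\lam))) \le \exp(-\Omega((\log n)^{4} \cdot n \cdot \sqrt{\log n/n})) = \exp(-\Omega((\log n)^{4.5}\sqrt n))$, which is certainly $O(n^{-3})$ times the total. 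This yields~\eqref{eqZsimZmod}. (One must check the lower-order cluster-expansion corrections to $\log Z^{\textup{w}}_{A,B}$ vary slowly enough in $t$ not to disturb this; since they are controlled uniformly by $O(\text{poly}(n) \cdot q)$ with $q = \lam e^{-\Omega(\lam^2 n)} \le \lam$ small, the worst-case multiplicative fluctuation over the relevant range of $t$ is $1+o(1)$, far smaller than the super-polynomial tail savings.)

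For the total variation bound~\eqref{eqmusimmumod}, I would couple $\mu_{\textup{mod},\lam}$ and $\mu_{\textup{weak},\lam}$ by noting that both first sample a partition (with probabilities proportional to $Z_{A,B}(\lam)$, resp.\ $Z^{\textup{w}}_{A,B}(\lam)$, on their respective index sets) and then, \emph{given} the partition, sample $(S,T)$ and $\Ec$ in an identical manner — Step 2 of Algorithm~\ref{algMuModStrong} for $\mu_{\textup{mod},\lam}$ samples $(S,T)\in\cD_{A,B,\lam}$ with probability $\propto \lam^{|S|+|T|}Z_{S\boxempty T}(\lam)$, while Step 2 of Algorithm~\ref{algGendefect} for $\mu_{\textup{weak},\lam}$ samples $(S,T)\in\cD^{\textup{w}}_{A,B,\lam}$ the same way. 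Hence $\|\mu_{\textup{mod},\lam} - \mu_{\textup{weak},\lam}\|_{TV}$ is bounded by the total variation distance between the two induced partition-selection distributions plus (on partitions in $\Pi_{\textup{mod},\lam}$, conditioned on which the partition is chosen) the TV distance between the $\lam$-sparse and weakly-sparse defect-graph laws. The first is $O(n^{-3})$ by~\eqref{eqZsimZmod} together with the fact that $\Pi_{\textup{mod},\lam}\subseteq\Pi_{\textup{weak}}$ and the per-partition ratios are $1+O(n^{-3})$; the second is $O(n^2 e^{-\Delta/2}) = O(n^{-3})$ by Proposition~\ref{propMaxDegBootstrap} applied pointwise. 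Actually squaring-root losses in the standard Gibbs-measure-to-TV passage (e.g.\ via $\|\mu-\nu\|_{TV} \le \sqrt{Z(\mathcal R')/Z(\mathcal R)} - 1$ type estimates, or Pinsker) give $O(n^{-3/2})$ rather than $O(n^{-3})$, which is why the stated bound is $O(n^{-3/2})$; I would run the computation through that lens to land exactly at~\eqref{eqmusimmumod}.

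The main obstacle is the first step: getting a sufficiently uniform handle on $\log Z^{\textup{w}}_{A,B}(\lam)$ as $t$ ranges over all of $\Pi_{\textup{weak}}$, including large $|t|$ up to $n/10$, so that the Gaussian tail estimate is not spoiled by the (a priori $t$-dependent) cluster-expansion corrections or by the interplay of $ab$ with the defect-edge sum. Concretely one needs that the correction $\log Z^{\textup{w}}_{A,B}(\lam) - ab\log(1+\lam)$ is, say, $o(t^2 \log(1+\lam))$ uniformly, or at least that its variation between imbalance $t$ and imbalance $0$ is dominated by the $(1+\lam)^{-t^2}$ decay; this requires the quantitative subcriticality and tail bounds on the cluster expansion (Lemma~\ref{lemClusterTail} and its consequences) to be in hand, which is why this proposition is placed after the cluster-expansion tools are developed. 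Once that uniform estimate is available, the rest is a routine Gaussian tail computation.
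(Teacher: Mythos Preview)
Your plan for \eqref{eqZsimZmod} is essentially the paper's: reduce to $\hat Z_{\textup{weak}}(\lam)=\sum_{(A,B)\in\Pi_{\textup{weak}}}Z_{A,B}(\lam)$ via Proposition~\ref{propMaxDegBootstrap}, extract the leading factor $(1+\lam)^{ab}$ by cluster expansion, and kill the tail in $k=|A|-n/2$ by the Gaussian weight $(1+\lam)^{-k^2}$. However, your assertion that the cluster-expansion corrections produce only a ``$1+o(1)$ multiplicative fluctuation'' is not correct and hides the real work. The estimate one actually gets (cf.\ \eqref{claimZABJansonApp}) is
\[
\frac{Z_{A,B}(\lam)}{(1+\lam)^{ab}} = \exp\!\left\{O(N\Delta^2\lam^3)+\tfrac12 a^2 q_A+\tfrac12 b^2 q_B\right\},
\]
with $N=\min\{n^2,2nK\}$, $K=K_{A,B,\lam}$, $\Delta=\Delta_{A,B,\lam}$; these error exponents are \emph{not} $o(1)$ in general. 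What one must show is that they are $o(\lam k^2)$, so that the $(1+\lam)^{-k^2}$ factor dominates. The paper does this via a case split at $k=1/(\lam^2\log n)$: for larger $k$ one only uses the crude $n^2 q$ bound and checks $n^2 q=o(\lam k^2)$; for $M_\lam\le k<1/(\lam^2\log n)$ one needs $N\Delta^2\lam^3=o(\lam k^2)$, and this is \emph{exactly} why $M_\lam$ carries the branch $ne^{-\lam^2 n/2}(\log n)^2$ (active when $qn>\log n$, i.e.\ for small $c$) rather than just $n^{1/2}(\log n)^2$. Your sketch uses only the $n^{1/2}(\log n)^2$ lower bound and would not close this case.

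For \eqref{eqmusimmumod}, your coupling argument has a structural gap: the output of Algorithms~\ref{algGendefect} and~\ref{algMuModStrong} is a \emph{graph}, not a (partition, graph) pair, and a given $G$ can be captured by several partitions. Concretely $\mu_{\textup{mod},\lam}(G)=\lam^{|G|}c_{\textup{mod},\lam}(G)/Z_{\textup{mod}}(\lam)$ and similarly for $\mu_{\textup{weak},\lam}$, so comparing partition-selection laws is not enough. The paper instead computes $D_{\mathrm{KL}}(\mu_{\textup{mod},\lam}\Vert\mu_{\textup{weak},\lam})$ directly from this formula, uses \eqref{eqZsimZmod} for the partition-function ratio, and invokes Lemma~\ref{lemCaptureMod} (unique capture whp) to bound $\E\log\big(c_{\textup{mod},\lam}(G)/c_{\textup{weak},\lam}(G)\big)$. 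This yields $D_{\mathrm{KL}}=O(n^{-3})$, and Pinsker then gives the stated $O(n^{-3/2})$---so your final guess about the square-root loss is right, but it enters through this KL computation rather than through a coupling.
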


\begin{prop}\label{propGroundStateStrong}
For $\lam\geq \frac{13}{14} \sqrt{\frac{\log n }{n}}$,
\begin{align}\label{eqStrongtoMod}
Z_{\textup{strong}}(\lam)=\left(1 + O\left(n^{-3}\right)\right) Z_{\textup{mod}}(\lam)\, ,
\end{align}
and
\begin{align}\label{eqStrongtoModTV}
\|\mu_{\textup{strong},\lam} - \mu_{\textup{mod},\lam}\| _{TV}=O\left(n^{-3/2}\right)\, .
\end{align}
\end{prop}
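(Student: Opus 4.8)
The plan is to show that $\lam$-moderately balanced partitions which are not strongly balanced contribute only an $O(n^{-3})$ fraction of $Z_{\textup{mod}}(\lam)$, and then to read off~\eqref{eqStrongtoModTV} from the observation that $\mu_{\textup{strong},\lam}$ is precisely $\mu_{\textup{mod},\lam}$ conditioned on the Step~1 partition of Algorithm~\ref{algMuModStrong} lying in $\Pi_{\textup{strong}}$ (the conditional laws given the partition coincide, since Steps 2--4 are identical in both). Given $(A,B)$ write $t=|A|-\lfloor n/2\rfloor$, $a=\lfloor n/2\rfloor+t$, $b=\lceil n/2\rceil-t$, and set $t_0:=10(n\log n)^{1/4}$; recall $M_\lam=\max\{ne^{-\lam^2n/2},n^{1/2}\}(\log n)^2\le n(\log n)^2$ and that we may assume $\lam\le 2\sqrt{\log n/n}$. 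By symmetry $Z_{A,B}(\lam)$ depends only on $a,b$, and summing Lemma~\ref{lemSTgraphLemma} over $\lam$-sparse defect graphs gives the identity $Z_{A,B}(\lam)=(1+\lam)^{ab}e^{h(a,b,\lam)}$ with
\[
e^{h(a,b,\lam)}=\sum_{(S,T)\in\cD_{A,B,\lam}}\frac{\lam^{|S|+|T|}Z_{S\boxempty T}(\lam)}{(1+\lam)^{ab}}.
\]

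The crux is to estimate the $t$-dependence of $h(a,b,\lam)$ for $|t|\le M_\lam$. Every $(S,T)\in\cD_{A,B,\lam}$ satisfies $\Delta(S\boxempty T)\le 2\Delta_{A,B,\lam}$ with $\lam\Delta_{A,B,\lam}=o(1)$, so the hard-core model on $S\boxempty T$ is subcritical and the cluster expansion (Lemma~\ref{lemClusterTail}) yields
\[
\frac{Z_{S\boxempty T}(\lam)}{(1+\lam)^{ab}}=\exp\!\Bigl(-\tfrac{\lam^2}{(1+\lam)^2}\bigl(b|S|+a|T|\bigr)+\text{(lower-order terms)}\Bigr).
\]
Summing this over $\lam$-sparse $(S,T)$ — the triangle-free and maximum-degree restrictions only affecting lower-order terms — gives $h(a,b,\lam)=\binom{a}{2}q_A+\binom{b}{2}q_B+\mathrm{Err}(a,b,\lam)$ with $q_A,q_B$ as in Definition~\ref{defsparse}. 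The key point is that, as a function of $t$, $h$ is essentially symmetric under $t\mapsto -t$ — this is forced by the symmetry of the defect description under swapping $A\leftrightarrow B$, which interchanges $(S,T)$ with $(q_A,q_B)$ — so its Taylor expansion in $t$ has no linear term, and its quadratic coefficient is $o(\lam)$: for $\binom{a}{2}q_A+\binom{b}{2}q_B$ this coefficient is $\asymp q_0\lam^4n^2=o(\lam)$ by $\lam\le 2\sqrt{\log n/n}$, and although $\mathrm{Err}$ itself may be large (it contains terms of order $\lam^3q_0^2n^4$ and $n^3q_0^3$, which need not vanish near the bottom of the range), its $t$-dependence is again symmetric with quadratic coefficient $o(\lam)$ by the same reasoning, while all contributions of order $t^3$ or higher are $o(t^2\log(1+\lam))$ on $|t|\le M_\lam$. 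Consequently
\[
h(a,b,\lam)-h(\lfloor n/2\rfloor,\lceil n/2\rceil,\lam)\le\tfrac{1}{10}t^2\log(1+\lam)\qquad(|t|\le M_\lam).
\]

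Now combine this with $\binom{n}{a}\le\binom{n}{\lfloor n/2\rfloor}$ and $ab\le\lfloor n/2\rfloor\lceil n/2\rceil+|t|-t^2$: for $t_0<|t|\le M_\lam$, using $|t|\le\tfrac{1}{10}t^2$,
\[
\binom{n}{a}(1+\lam)^{ab}e^{h(a,b,\lam)}\le\Bigl[\binom{n}{\lfloor n/2\rfloor}(1+\lam)^{\lfloor n/2\rfloor\lceil n/2\rceil}e^{h(\lfloor n/2\rfloor,\lceil n/2\rceil,\lam)}\Bigr](1+\lam)^{-\tfrac{4}{5}t^2},
\]
and $Z_{\textup{strong}}(\lam)$ is at least the bracketed quantity (which is the $t=0$ contribution). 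There are at most $2M_\lam\le n^2$ relevant values of $t$, and $\log(1+\lam)\ge\tfrac{6}{7}\sqrt{\log n/n}$ by the hypothesis $\lam\ge\tfrac{13}{14}\sqrt{\log n/n}$, so the total contribution of non-strongly-balanced partitions is at most $n^2(1+\lam)^{-\tfrac{4}{5}t_0^2}Z_{\textup{strong}}(\lam)=O(n^{-3})Z_{\textup{strong}}(\lam)$; together with $\Pi_{\textup{strong}}\subseteq\Pi_{\textup{mod},\lam}$ this gives~\eqref{eqStrongtoMod}. For~\eqref{eqStrongtoModTV}, the conditioning observation from the first paragraph shows $\|\mu_{\textup{strong},\lam}-\mu_{\textup{mod},\lam}\|_{TV}$ is, up to a factor $1+o(1)$, the $\mu_{\textup{mod},\lam}$-probability that the chosen partition is not strongly balanced, i.e.\ $(Z_{\textup{mod}}(\lam)-Z_{\textup{strong}}(\lam))/Z_{\textup{mod}}(\lam)=O(n^{-3})=O(n^{-3/2})$.

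The main obstacle is the estimate on $h$ in the second paragraph. A uniform bound on $|h|$ is insufficient because $h$ can tend to infinity with $n$; what one must show is that the \emph{difference} $h(a,b,\lam)-h(\lfloor n/2\rfloor,\lceil n/2\rceil,\lam)$ has no linear term in $t$ (a consequence of the $A\leftrightarrow B$ symmetry of the exponential-random-graph description of the defects) and has quadratic coefficient genuinely $o(\lam)$. This rests on the cluster-expansion control of $Z_{S\boxempty T}(\lam)$ combined with bookkeeping of how the defect parameters $q_A,q_B,\Delta_{A,B,\lam},K_{A,B,\lam}$ and the triangle-free and sparsity constraints change with $t$ — but these all change by factors $1+o(1)$, so only their effect on the quadratic term is relevant, and the density bound $\lam\le 2\sqrt{\log n/n}$ keeps that term well below $\log(1+\lam)$.
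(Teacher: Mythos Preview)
Your plan for \eqref{eqStrongtoMod} is the same as the paper's: compare $Z_{A,B}(\lam)$ to its value at the balanced partition and show that the correction is dominated by the Gaussian factor $(1+\lam)^{-t^2}$. The paper carries this out by invoking Lemma~\ref{lemZABapprox1super}, which gives precisely
\[
\frac{Z_{A,B}(\lam)}{(1+\lam)^{ab}}\sim f(\lam,n)\cdot\exp\bigl\{O(n^2q\lam^4(a-b)^2)\bigr\},
\]
i.e.\ your $h(a,b,\lam)-h(\lfloor n/2\rfloor,\lceil n/2\rceil,\lam)=O(n^2q\lam^4t^2)+o(1)$, and then notes $n^2q\lam^4=o(\lam)$. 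The gap in your sketch is exactly here: the $A\leftrightarrow B$ symmetry kills the linear term in $t$, but it does \emph{not} bound the quadratic coefficient, and your claim that the quadratic coefficient of $\mathrm{Err}$ is $o(\lam)$ ``by the same reasoning'' is unjustified. In the range $\lam\ge\tfrac{13}{14}\sqrt{\log n/n}$ the error $\mathrm{Err}$ contains many large terms (see Lemma~\ref{lemmaZABabapproxsuper}: terms like $\tfrac12\lam^3a^3bq_A''^2$, $-4\lam^3\mu_A\mu_B$, $-\tfrac16a^3q_A^3$, etc.), and verifying that each contributes only $O(n^2q\lam^4t^2)$ to the $t$-dependence is exactly the Appendix~\ref{secAppCalc} computation that underlies Lemma~\ref{lemZABapprox1super}. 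Your final paragraph correctly identifies this as the obstacle but does not overcome it; the ``bookkeeping'' you allude to is substantial.

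Your argument for \eqref{eqStrongtoModTV}, on the other hand, is correct and is a genuine simplification over the paper. Since the two descriptions in Algorithm~\ref{algMuModStrong} differ only in the support of the Step~1 partition, the joint (partition, graph) law underlying $\mu_{\textup{strong},\lam}$ is the one underlying $\mu_{\textup{mod},\lam}$ conditioned on $\{(A,B)\in\Pi_{\textup{strong}}\}$; conditioning on an event of probability $1-p$ moves total variation by exactly $p$ on the joint space and by at most $p$ after marginalising to graphs, giving $\|\mu_{\textup{strong},\lam}-\mu_{\textup{mod},\lam}\|_{TV}\le(Z_{\textup{mod}}(\lam)-Z_{\textup{strong}}(\lam))/Z_{\textup{mod}}(\lam)=O(n^{-3})$, stronger than stated. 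The paper instead repeats the Pinsker/KL argument from the proof of Proposition~\ref{lemZmodsimsum} (replacing Lemma~\ref{lemCaptureMod} by Lemma~\ref{lemCaptureStrong}), which loses a square root and yields only $O(n^{-3/2})$.
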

It will be convenient to record the following immediate corollary of Propositions~\ref{lemZerothApprox},~\ref{propGroundStateRefinedAlt},~\ref{lemZmodsimsum} and~\ref{propGroundStateStrong}.

\begin{cor}\label{corZmodsimsum}
For $\lam\geq \frac{13}{14} \sqrt{\frac{\log n }{n}}$,
\begin{align*}
Z(\lam)\sim Z_{\textup{strong}}(\lam)\, ,
\end{align*}
and
\begin{align*}
\|\mu_{\lam} - \mu_{\textup{strong},\lam}\| _{TV}=o(1)\, .
\end{align*}
\end{cor}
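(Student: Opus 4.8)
The statement is an assembly result: the plan is simply to chain Propositions~\ref{lemZerothApprox},~\ref{propGroundStateRefinedAlt},~\ref{lemZmodsimsum} and~\ref{propGroundStateStrong}, after checking that the single hypothesis $\lam\geq\tfrac{13}{14}\sqrt{(\log n)/n}$ is strong enough to invoke each of them. All of the substance lives in those four propositions; at this level there is only bookkeeping.

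First I would verify that the hypotheses are all met in the stated range. For $n$ large, $\tfrac{13}{14}\sqrt{(\log n)/n}\geq\omega/\sqrt n$ for any fixed constant $\omega$ (since $\sqrt{\log n}\to\infty$), so the conditions ``$\lam\geq\omega/\sqrt n$'' of Propositions~\ref{lemZerothApprox} and~\ref{propGroundStateRefinedAlt} hold for $n$ sufficiently large. Taking $c=\tfrac{13}{14}$ in Proposition~\ref{lemZmodsimsum}, and noting that the hypothesis of Proposition~\ref{propGroundStateStrong} is exactly $\lam\geq\tfrac{13}{14}\sqrt{(\log n)/n}$, all four propositions apply throughout the range.

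Next, for the partition functions I would string the multiplicative estimates together, using the four propositions in turn:
\[
Z(\lam)\sim Z(\cL,\lam)=\bigl(1+O(e^{-\sqrt n})\bigr)Z_{\textup{weak}}(\lam)=\bigl(1+O(n^{-3})\bigr)Z_{\textup{mod}}(\lam)=\bigl(1+O(n^{-3})\bigr)Z_{\textup{strong}}(\lam)\, .
\]
Since every factor appearing is $1+o(1)$, the composition is $1+o(1)$ and hence $Z(\lam)\sim Z_{\textup{strong}}(\lam)$. For the Gibbs measures I would apply the triangle inequality for total variation distance,
\[
\|\mu_\lam-\mu_{\textup{strong},\lam}\|_{TV}\leq\|\mu_\lam-\mu_{\cL,\lam}\|+\|\mu_{\cL,\lam}-\mu_{\textup{weak},\lam}\|_{TV}+\|\mu_{\textup{weak},\lam}-\mu_{\textup{mod},\lam}\|_{TV}+\|\mu_{\textup{mod},\lam}-\mu_{\textup{strong},\lam}\|_{TV}\, ,
\]
and bound the four terms on the right by $o(1)$, $O(e^{-\sqrt n})$, $O(n^{-3/2})$ and $O(n^{-3/2})$ respectively, again via Propositions~\ref{lemZerothApprox},~\ref{propGroundStateRefinedAlt},~\ref{lemZmodsimsum},~\ref{propGroundStateStrong} (using symmetry of the total variation distance). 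The sum is $o(1)$, which is the claim.

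I do not expect any genuine obstacle: the only point deserving a sentence of care is that $\mu_{\cL,\lam}$, $\mu_{\textup{weak},\lam}$, $\mu_{\textup{mod},\lam}$ and $\mu_{\textup{strong},\lam}$ are all probability measures on subsets of $\cT(n)$ defined consistently through~\eqref{eqmuRdef} and Algorithms~\ref{algGendefect} and~\ref{algMuModStrong}, so that the triangle inequality applies verbatim, and that one should record the partition-function chain as an honest product of $1+o(1)$ factors rather than a chain of ``$\sim$'' relations (harmless here, but cleaner to state). The real difficulty of this corollary is entirely absorbed into the four preceding propositions.
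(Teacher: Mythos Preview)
Your proposal is correct and is exactly the argument the paper intends: the corollary is stated in the paper as an ``immediate corollary of Propositions~\ref{lemZerothApprox},~\ref{propGroundStateRefinedAlt},~\ref{lemZmodsimsum} and~\ref{propGroundStateStrong}'' with no further proof given, and your chaining of the partition-function estimates together with the triangle inequality for total variation is precisely the bookkeeping that makes this explicit.
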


Proving these propositions and understanding $Z_{\textup{strong}}(\lam)$ will come down to computing asymptotics of $Z_{A,B}(\lam)$ for $\lam$-moderately balanced $(A,B)$, which we will do with the use of the cluster expansion, described in the next section.

The transfer of these results to results on $\cT(n,m)$ is done in Section~\ref{secFixedM}  using the following identity, valid for any $\lam>0$:
\begin{align*}
|\cT(n,m)| =   \frac{ Z(\lam)}{\lam^m} \cdot \mu_{\lam} ( \{|G| = m \}). 
\end{align*}
To use this to determine the asymptotics of $|\cT(n,m)|$, we will choose a value of $\lam$ so that the mean number of edges in a sample from $\mu_\lam$ is close to $m$.
Computing asymptotics of $\mu_{\lam} ( \{|G| = m \})$ will be made possible again by the fact that the hard-core measures obtained from Proposition~\ref{propGroundStateRefinedAlt}  are subcritical, which allows the use of a local central limit theorem to estimate this probability.

\subsection{Notation and definitions}
\label{secNotationReference}

Here we collect some key notation and definitions in one section which the reader can use as a reference sheet.  

As our argument evolves, we will consider three increasingly strict notions of a balanced  partition of the vertices of a graph $G$.

\begin{defn}\label{defVarBalanced}
We call a partition $(A,B)$ of $[n]$:
\begin{enumerate}
\item \emph{Weakly balanced} if $\big ||A|-|B| \big |\leq n/10$.
\item \emph{$\lam$-moderately balanced} if
\[
\big||A|-|B| \big|\leq M_\lam=\max\left\{ne^{-\lam^2n/2}, n^{1/2} \right\}(\log n)^2  \, ,
\]
\item \emph{Strongly balanced} if $\big ||A|-|B| \big |\leq 10(n \log n)^{1/4}$.
\end{enumerate}
Let $\Pi_\textup{weak}, \Pi_{\textup{mod},\lam}, \Pi_\textup{strong}$ and $\Pi$ denote the set of weakly balanced, $\lam$-moderately balanced, strongly balanced, and all partitions of $[n]$ respectively. 
\end{defn}

Given $(A,B)\in\Pi$, we use the convention $a=|A|, b=|B|$. For $\lam>0$, we define $q_A=q_A(\lam)$ and $q_B=q_B(\lam)$ via
\begin{align*}
\frac{q_A}{1-q_A}= \lam e^{-b\lam^2} \quad\text{and}\quad \frac{q_B}{1-q_B}= \lam e^{-a\lam^2}\, .
\end{align*}
Throughout the paper we let $q=q_{A,B,\lam}=\max \{q_A, q_B\}$. When we use the notation $q$, the partition $(A,B)$ and $\lam$ will be clear from the context.

Recall that for a subset $\mathcal{R}\subseteq \cT(n)$ and $\lam>0$ we let
\[
Z(\mathcal{R}, \lam)=  \sum_{G\in \mathcal{R}} \lam^{|G|}\, ,
\]
and let $\mu_{\mathcal{R},\lam}$ denote the measure on the set $\mathcal{R}$ defined by 
\begin{align}
\mu_{\mathcal{R},\lam}(G)=\frac{\lam^{|G|}}{Z(\mathcal{R}, \lam)}\, .
\end{align}
We denote $Z(\mathcal{T}(n), \lam),  \mu_{\mathcal{T}(n), \lam}$ simply by $Z(\lam), \mu_{\lam}$ respectively. 

Recall that for $(A,B)\in \Pi$, and graph $G$, we let $G_A, G_B$ denote the respective subgraphs of $G$ induced by vertex sets $A$,$B$. Throughout the paper we fix $\alpha=1/(96e^3)$. Let 
\begin{align}\label{eqTABwDefAgain}
\cT_{A,B,\lam}^{\textup{w}}=\{G\in \cT(n) : \Delta(G_A\cup G_B)\leq \alpha/ \lam\}\, ,
\end{align}
and 
\begin{align*}
\cT_{A,B,\lam}=\{G\in \cT(n) : \Delta(G_A\cup G_B)\leq \Delta_{A,B,\lam}, |G_A|, |G_B|\leq K_{A,B,\lam}\}\, ,
\end{align*}
where 
\[
 \Delta_{A,B,\lam}= 50\max\{qn, \log n\} \quad\text{and}\quad K_{A,B,\lam}= 50\max\{qn^2, \log n\}  \,.
\]
We also let
\begin{align}\label{eqDABwDefAgain}
 \cD_{A,B,\lam}:=\{(G_A, G_B): G\in  \cT_{A,B,\lam}\}\quad\text{and}\quad\cD_{A,B,\lam}^{\textup{w}}:=\{(G_A, G_B): G\in  \cT_{A,B,\lam}^{\textup{w}}\}\, ,
\end{align}
and refer to the edges of $G_A\cup G_B$ as the defect edges of $G$ (with respect to $(A,B)$). We will at times abuse notation and identify the pair $(G_A, G_B)$ with the graph $G_A\cup G_B$.

For $G\in \cT$, we let $c_{\textup{weak},\lam}(G)$ denote the number of weakly balanced partitions $(A,B)$ such that $(G_A, G_B)\in \cD^{\textup{w}}_{A,B,\lam}$. We let $c_{\textup{mod},\lam}(G), c_{\textup{strong},\lam}(G)$ denote the number of $\lam$-moderately/ strongly balanced partitions $(A,B)$ such that $(G_A, G_B)\in \cD_{A,B,\lam}$ respectively.

For ease of notation we let 
\begin{align}\label{eqZABdef}
Z_{A,B}(\lam)=Z(\cT_{A,B,\lam}, \lam) \quad{\text{and}}\quad Z_{A,B}^{\text{w}}(\lam)=Z(\cT^{\text{w}}_{A,B,\lam}, \lam)\, ,
\end{align}
and 
\begin{align}\label{eqmuABdef}
\mu_{A,B,\lam}=\mu_{\cT_{A,B,\lam},\lam} \quad{\text{and}}\quad \mu^{\text{w}}_{A,B,\lam}=\mu_{\cT^{\text{w}}_{A,B,\lam},\lam}  \, .
\end{align}

We define
\begin{align}\label{eq:ZweakDef}
Z_{\textup{weak}}(\lam) = \sum_{(A,B)\in\Pi_{\textup{weak}}} Z_{A,B}^{\textup{w}}(\lam)\, ,
\end{align}
and let
\[
Z_{\textup{mod}}(\lam) = \sum_{(A,B)\in\Pi_{\textup{mod},\lam}} Z_{A,B}(\lam) \quad\text{and}\quad Z_{\textup{strong}}(\lam) = \sum_{(A,B)\in\Pi_{\textup{strong}}} Z_{A,B}(\lam)\, .
\]

Recall (from Section~\ref{secIntroMainResults}) that for $S\subseteq \binom{A}{2}$ and $T\subseteq \binom{B}{2}$, we use $S\boxempty T$ to denote the Cartesian product of the graphs $(A,S)$ and $(B,T)$. In particular, $Z_{S\boxempty T}(\lam)  = \sum_{I \in \cI(S\boxempty T)} \lam^{|I|}$ denotes the hard-core partition function of $S\boxempty T$.
 Let $\nu_{A,B,\lam}$ denote the measure on $ \cD_{A,B,\lam}$ given by
\begin{align}\label{eqnuABdef}
\nu_{A,B,\lam}(S,T)=\frac{\lam^{|S|+|T|}Z_{S\boxempty T}(\lam)}{Z_{A,B}(\lam)}\, ,
\end{align}
and let $\nu^{\text{w}}_{A,B,\lam}$ denote the measure on $ \cD^{\text{w}}_{A,B,\lam}$ given by
\begin{align}\label{eqnuABwdef}
\nu^{\text{w}}_{A,B,\lam}(S,T)=\frac{\lam^{|S|+|T|}Z_{S\boxempty T}(\lam)}{Z^{\text{w}}_{A,B}(\lam)}\, .
\end{align}

We note that $\mu_{A,B,\lam}$ can be described as the measure given by the following two-step process:
\begin{enumerate}
\item Sample $(S,T)\in \cD_{A,B,\lam}$ according to $\nu_{A,B,\lam}(S,T)$.
\item Sample $\Ec \subseteq A\times B$ according to the hard-core model on $S\boxempty T$ at activity $\lam$. 
\end{enumerate}

The measure $\nu_{A,B,\lam}$ is therefore the distribution of defect edges in a sample from $\mu_{A,B,\lam}$ and similarly for $\mu^{\text{w}}_{A,B,\lam}, \nu^{\text{w}}_{A,B,\lam}$. If a partition $(A,B)$ of $[n]$ is clear from the context, we use $G_\boxempty$ as shorthand for the Cartesian product $G_A \boxempty G_B$.

As we will soon see, the quantities $q_A, q_B$ will serve as a good approximations to the edge densities within $A$ and $B$ respectively in a sample from $\nu_{A,B,\lam}$.  It will be useful to note that if $\lam= c\sqrt{\frac{\log n}{n}}$ for some $c>0$ and $(A,B)$ is $\lam$-moderately balanced, then 
\[
q_A, q_B =(1+o(1))  \lam e^{-n\lam^2/2} = \tilde \Theta \left(n^{-1/2-c^2/2} \right)\, . 
\] 

As we refine our analysis of the defect edges, we require more refined estimates of these densities. In Section~\ref{secSubCritRegime}, we define $q_A'$ and $q_B'$ via
\begin{align}\label{eqqPrimeDef}
\frac{q_A'}{1-q_A'}:= \lam e^{-\lam^2b+2\lam^3b} \quad\text{and}\quad \frac{q_B'}{1-q_B'}:= \lam e^{-\lam^2a+2\lam^3a}\, ,
\end{align}
while in Section~\ref{secSuperCrit} we require more precision and use the definitions
\begin{align}\label{eqqprimesuperdef}
\frac{q_A'}{1-q_A'}:= \lam e^{-\lam^2b+2\lam^3b-7\lam^4b/2} \quad\text{and}\quad \frac{q_B'}{1-q_B'}:= \lam e^{-\lam^2a+2\lam^3a-7\lam^4b/2}\, .
\end{align}

In Section~\ref{secSuperCrit} we also define the following parameters.
\begin{align}
\mu_A= \binom{a}{2}q'_Ae^{2\lam^3b(aq_A+bq_B)} \text{ and }
\mu_B= \binom{b}{2}q'_Be^{2\lam^3a(aq_A+bq_B)}\, ,
\end{align}
and 
\begin{align}
\frac{q_A''}{1-q_A''}= \frac{q_A'}{1-q_A'}e^{4\mu_B\lam^3} \text{ and }\frac{q_B''}{1-q_B''}= \frac{q_B'}{1-q_B'}e^{4\mu_A\lam^3}\, .
\end{align}

It will be useful to note that $q_A\sim q_A'\sim q_A''$ and similarly for $q_B$.
We note also that $q_0, q_1, q_2, \mu$ defined in the introduction correspond to the parameters $q_A, q_A', q_A'', \mu_A$ in the special case where $a=b=n/2$ (a perfectly balanced partition).

Given graphs $H, G$, we let $H(G)$ denote the number of (not necessarily induced) copies of $H$ in $G$. Given graphs $H,G_1, G_2$ let $H(G_1, G_2)$ denote the number of copies of $H$ in $G_1\cup G_2$ with at least one edge in $G_1$.

For $V\subseteq [n]$ $q\in(0,1)$, we write $G(V,q)$ to denote the \ER random graph on a vertex set $V$ with edge probability $q$. For $\psi \in \R$, we let $G(V,q,\psi)$ denote the random graph on $V$ with distribution 
\begin{align}
  \nu_{q,\psi} (G) \propto \left(  \frac{q}{1-q} \right)^{|G|}  e^{\psi P_2(G)} \, ,
  \end{align}
 \emph{conditioned} on the event that $\Delta(G) \le 50\max\{qn, \log n\}$ and $G$ is triangle-free.

\section{Tools and preliminaries}
\label{secTools}

Some of our main tools for estimating partitions functions will be the cluster expansion and bounds on cumulants in conditioned exponential random graph models.   We begin with some background and basic facts about these tools.

\subsection{Cluster expansion and the hard-core model}\label{subseccluster}

We defined the hard-core model in Section~\ref{secERresults}; the following is a multivariate generalization.  
Let $G$ be a graph and let $\cI(G)$ be the set of all independent sets of $G$. Let $\boldsymbol \lambda : V(G)\to\C$ be an assignment of complex weights to the vertices of $G$. The (multivariate) hard-core model partition function of $G$ is
\[ Z_G(\boldsymbol \lambda) = \sum_{I \in \cI(G)} \prod_{v\in I}\boldsymbol \lambda(v) \, . \]

When $\boldsymbol \lambda(v)=\lambda$ for all $v\in V(G)$ (the univariate case) we write $Z_G(\lambda)$ instead of $Z_G(\boldsymbol \lambda)$.

The cluster expansion is a formal power series for $\log Z_G(\boldsymbol\lam)$; in fact, it is the Taylor series around $\boldsymbol \lam =0$.  Conveniently, the terms of the cluster expansion have a nice combinatorial interpretation (see, e.g.,~\cite{scott2005repulsive,faris2010combinatorics}).   A cluster $\Gamma=(v_1, \ldots, v_k)$ is a tuple of vertices from $G$ such that the induced graph $G[\{v_1, \ldots, v_k\}]$ is connected. We let $\cC(G)$ denote the set of all clusters of $G$. We call $k$ the size of the cluster and denote it by $|\Gamma|$. Given a cluster $\Gamma$, the \emph{incompatibility graph} $H_\Gamma$, is the graph on vertex set $\Gamma$ (considered as a multiset) with an edge between $v_i, v_j$ if either $v_i, v_j$ are adjacent on $G$ or $i \ne j$ and $v_i, v_j$ correspond to the same vertex in $G$.   In particular, by the definition of a cluster, the incompatibility graph $H_{\Gamma}$ is connected.

As a formal power series, the cluster expansion is the infinite series
\begin{align}\label{eqclusterexp}
\log Z_G(\boldsymbol \lam) = \sum_{\Gamma\in \cC(G)} \phi(\Gamma) \prod_{v\in\Gamma}\boldsymbol \lam(v) \,,
\end{align}
where the product is over all coordinates of $\Gamma$ and
\begin{align}
\label{eqUrsell}
\phi(\Gamma) &= \frac{1}{|\Gamma|!} \sum_{\substack{A \subseteq E(H_\Gamma)\\ \text{spanning, connected}}}  (-1)^{|A|} \, .
\end{align}

The cluster expansion converges absolutely if $\boldsymbol\lam$ lies inside a polydisk $D \subset \mathbb C^{V(G)}$ so that $Z_G(\boldsymbol\xi) \ne 0$ for all $\boldsymbol\xi \in D$.  We will need the following lemma which gives a sufficient condition for convergence and  bounds the error in truncating the cluster expansion. In fact, we will require a slightly stronger statement that will allow us to truncate \emph{pinned} cluster expansions, i.e., restrictions of the cluster expansion to clusters that contain a fixed set of vertices. Given a set $\{u_1,\ldots, u_\ell\}$ of vertices of $G$, we write $\{u_1,\ldots, u_\ell\}\subseteq \Gamma$ to mean that each vertex $u_i$ appears in the tuple $\Gamma$.

The next lemma follows from a classical approach to  cluster expansion convergence based on a combinatorial inequality due to Penrose~\cite{penrose1967convergence} (see also~\cite{brydges1984short, FernandezProcacci}). We defer its proof to Appendix~\ref{secPinnedCluster}. For $\boldsymbol\lam: V(G)\to\C$, we let $\lam_{\textup{max}}:=\max_{v\in V(G)}|\boldsymbol \lam(v)|$.

\begin{lemma}\label{lemClusterTail}
Suppose $G$ is a graph on $n$ vertices with maximum degree $\Delta$, and suppose $ \lam_{\textup{max}}\le \frac{1}{4e\Delta}$. Then the cluster expansion converges absolutely. Moreover, for any non-empty vertex set $S\subseteq V(G)$, $k\geq |S|$, and $t\geq 0$,
\[
\left|  \sum_ {\substack{\Gamma: \Gamma\supseteq S, \\ |\Gamma|\geq k}} |\Gamma|^t\phi(\Gamma) \prod_{v\in\Gamma} \boldsymbol{\lam}(v)\right| 
=O_{k,t}\left( \Delta^{k-|S|}\lam_{\textup{max}}^k \right)
\, . 
\]
If $|S|\in\{1,2\}$ then we have the explicit upper bound
  \begin{align}\label{eqpinnedexplicittail}
\left|  \sum_ {\substack{\Gamma: \Gamma\supseteq S, \\ |\Gamma|\geq k}} \phi(\Gamma) \prod_{v\in\Gamma} \boldsymbol{\lam}(v)\right| 
\leq 
(2e)^k\Delta^{k-|S|}\lam_{\textup{max}}^k\, .
\end{align}
\end{lemma}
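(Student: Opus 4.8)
The plan is to derive everything from the Penrose tree-graph inequality, which bounds the Ursell function $\phi(\Gamma)$ in terms of a sum over spanning trees of the incompatibility graph $H_\Gamma$. Concretely, Penrose's inequality gives $|\phi(\Gamma)| \le \frac{1}{|\Gamma|!}\,|\{\text{spanning trees of } H_\Gamma\}|$, and more usefully a weighted version stating that, for any vertex-weight function, the total contribution of clusters can be controlled by a sum over labelled rooted trees where each edge of the tree contributes a factor bounded by the edge-weight of the incompatibility relation. For the hard-core model on $G$, the incompatibility relation between two vertices $u,v\in V(G)$ is that they are equal or adjacent, so each vertex $v$ in a cluster has at most $\Delta+1$ vertices incompatible with it (including itself, handled as a repeated coordinate). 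The standard computation then yields that $\sum_{\Gamma \ni v}|\phi(\Gamma)|\prod_{w\in\Gamma}\lambda_{\textup{max}}$ is bounded by $\lambda_{\textup{max}}$ times a convergent geometric-type series in $(\Delta+1)\lambda_{\textup{max}} \cdot (\text{some constant})$, provided $\lambda_{\textup{max}} \le \frac{1}{4e\Delta}$. I would cite the precise form from \cite{penrose1967convergence, brydges1984short, FernandezProcacci} and defer the bookkeeping to Appendix~\ref{secPinnedCluster} as the statement indicates.

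First I would set up the pinned sum. Fix the vertex set $S$ with $|S| = s$, and consider $\sum_{\Gamma \supseteq S,\ |\Gamma| \ge k} |\Gamma|^t |\phi(\Gamma)| \prod_{v\in\Gamma}\lambda_{\textup{max}}$. Using the tree-graph bound, replace $\phi(\Gamma)$ by the sum over spanning trees of $H_\Gamma$; since $H_\Gamma$ must connect all of $S$, any such tree, when traversed, forces the cluster to ``grow out'' from $S$. The key combinatorial estimate is: the number of connected clusters of size exactly $j$ containing a fixed set $S$ of size $s$, weighted by the number of spanning trees, is at most $j! \cdot C^{j} \Delta^{j-s}$ for an absolute constant $C$ (the $j!$ cancels the $1/|\Gamma|!$ in the definition of $\phi$; the $\Delta^{j-s}$ counts the $j-s$ ``new'' vertices each attached within distance $\Delta+1$ of an existing one; the $C^j$ absorbs tree-counting and ordering factors). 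Multiplying by $\lambda_{\textup{max}}^j$ and the polynomial factor $j^t$, and summing $j \ge k$, the condition $\lambda_{\textup{max}} \le \frac{1}{4e\Delta}$ makes $C\Delta\lambda_{\textup{max}} \le C/(4e)$; choosing the constant bookkeeping so that the effective ratio is, say, $\le 1/2$, the geometric series converges and its tail is $O_{k,t}(\Delta^{k-s}\lambda_{\textup{max}}^k)$. Taking $k = s$ and $S = V(G)$ trivially (or rather $S$ a single vertex) recovers absolute convergence of the full cluster expansion.

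For the explicit bound~\eqref{eqpinnedexplicittail} when $s \in \{1,2\}$, I would redo the same estimate but track constants. With $s=1$ (pinning one vertex), the number of size-$j$ clusters through that vertex, weighted by spanning trees, is at most $j!\,(e\Delta)^{j-1}$ — this is the classical bound, where $e$ appears from a generating-function argument (Cayley-type counting of increasing trees), and one factor of $\Delta$ is saved because the root is fixed. Then $\sum_{j\ge k} (e\Delta)^{j-1}\lambda_{\textup{max}}^j = \lambda_{\textup{max}} \sum_{j \ge k}(e\Delta \lambda_{\textup{max}})^{j-1} \le \lambda_{\textup{max}}\cdot (e\Delta\lambda_{\textup{max}})^{k-1} \cdot 2$ since $e\Delta\lambda_{\textup{max}} \le 1/4 < 1/2$. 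Bounding $2\lambda_{\textup{max}}(e\Delta\lambda_{\textup{max}})^{k-1} \le (2e)^k \Delta^{k-1}\lambda_{\textup{max}}^k$ is then a crude but valid inequality. For $s=2$ one pins two vertices; the spanning tree of $H_\Gamma$ restricted to a path between them and the remaining $j-2$ vertices attached gives the bound $(2e)^k\Delta^{k-2}\lambda_{\textup{max}}^k$ by the same argument with one more $\Delta$ saved.

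The main obstacle is the combinatorial heart of the second step: cleanly bounding the spanning-tree-weighted count of clusters of a given size containing a prescribed vertex set, with constants good enough to yield~\eqref{eqpinnedexplicittail}. This is where the Penrose partition scheme and the careful choice of rooting and ordering conventions matter, and it is precisely the content I would relegate to Appendix~\ref{secPinnedCluster}; the rest is routine summation of a geometric series under the hypothesis $\lambda_{\textup{max}} \le \tfrac{1}{4e\Delta}$.
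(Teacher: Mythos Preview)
Your proposal is correct and follows essentially the same route as the paper: apply the Penrose tree--graph bound to replace $|\phi(\Gamma)|$ by a count of spanning trees of $H_\Gamma$, swap the order of summation to sum over labelled trees $T\in T_k$ first, count clusters compatible with a fixed $T$ and containing $S$ by placing $S$ into the tuple and then filling the remaining $k-|S|$ coordinates along the tree (each with at most $\Delta+1$ choices), invoke Cayley's formula $|T_k|=k^{k-2}$, and finish with a geometric tail sum using $e(\Delta+1)\lam_{\textup{max}}\le 1/2$. The paper's intermediate bound is $\sum_{|\Gamma|=k,\ \Gamma\supseteq S}|\phi(\Gamma)|\le e^k k^{|S|-2}(\Delta+1)^{k-|S|}$, which differs slightly in shape from your stated $(e\Delta)^{j-1}$ for $|S|=1$, but both feed into the same final inequality $(2e)^k\Delta^{k-|S|}\lam_{\textup{max}}^k$ after summing $j\ge k$.
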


Throughout the paper, it will be useful to work with the following slightly modified form of the cluster expansion.
\begin{lemma}\label{lem:ClusterExpMod}
Let $G$ be a graph with $n$ vertices and maximum degree $\Delta$. Then for $\lam\leq \frac{1}{4e\Delta}$,
\begin{align}\label{eqClusterVsEmpty}
\log\left( \frac{Z_G(\lam)}{(1+\lam)^n} \right) =  \sum_{\Gamma\in \cC'(G)} \phi(\Gamma) \lam^{|\Gamma|}\, .
\end{align}
where $\cC'(G)\subseteq \cC(G)$ is the set of \emph{non-constant} clusters (those not of the form $(v,v,\ldots, v)$ for $v\in V(G)$).
\end{lemma}
\begin{proof}
Let $F=(V(G), \emptyset)$ denote the empty graph on the same vertex set as $G$. Note that $Z_F(\lam)=(1+\lam)^n$. 
Note that 
\[
\cC'(G)=\cC(G)\backslash\cC(F)\, ,
\]
since the only clusters of $F$ are precisely those of the form $\Gamma=(v, v,\ldots, v)$ for $v\in V(G)$ and these are also clusters of $G$. 
If $\lam\leq \frac{1}{4e\Delta}$, then by Lemma~\ref{lemClusterTail}, the cluster expansions of $\log Z_G(\lam)$ and $\log Z_F(\lam)$ converge absolutely and 
\begin{align}\label{eqClusterVsEmpty}
\log\left( \frac{Z_G(\lam)}{(1+\lam)^n} \right) =  \sum_{\Gamma\in \cC'(G)} \phi(\Gamma) \lam^{|\Gamma|}\, .
\end{align}
\end{proof}

It will be convenient to think of the cluster expansion of $\log Z_G(\lam)$ as an expansion in terms of subgraph counts in $G$.  By truncating the cluster expansion at clusters of size $3$ we obtain the following corollary for the hard-core partition function of triangle-free graphs. Recall that for a graph $G$, we let $P_2(G)$ denote the number of paths of length $2$ in $G$. 

\begin{cor}\label{corclustersimple}
Let $G$ be a triangle-free graph with $n$ vertices, at most $n'$ non-isolated vertices, and maximum degree $\Delta$. Then for $\lam\leq \frac{1}{4e\Delta}$,
\begin{align}\label{eqNonConstExpand}
\log\left( \frac{Z_G(\lam)}{(1+\lam)^n} \right)= -|G|\lam^2 + \left(P_2(G)+2|G|\right)\lam^3 + O(n'\Delta^3\lam^4)\, .
\end{align}
Moreover, if $\mathbf I$ is a random sample from the hard-core model on $G$ at activity $\lam$, then
\begin{align}\label{eqExpectExpand}
\E |\mathbf I|= \frac{\lam}{1+\lam}n - 2|G|\lam^2 +  3\left(P_2(G)+2|G|\right)\lam^3 + O(n'\Delta^3\lam^4)\, ,
\end{align}
and 
\begin{align}\label{eqVarExpand}
\var |\mathbf I|= \frac{\lam}{(1+\lam)^2}n - 4|G|\lam^2 + O(n'\Delta^2\lam^3)\, .
\end{align}
\end{cor}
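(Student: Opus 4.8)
\textbf{Proof proposal for Corollary~\ref{corclustersimple}.}

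The plan is to apply Lemma~\ref{lemClusterTail} with $S$ a single vertex (or an edge) and use the combinatorial meaning of the low-order clusters. First I would verify the hypothesis: since $G$ is triangle-free with $\Delta(G)\le\Delta$ and $\lam\le\frac{1}{4e\Delta}$, Lemma~\ref{lemClusterTail} applies and the cluster expansion~\eqref{eqclusterexp} for $\log Z_G(\lam)$ converges absolutely. Clusters of size $1$ are single vertices, contributing $\phi(\Gamma)=1$ each, and there are $n$ of them, giving the term $n\lam$; but it is cleaner to work with $\log\big(Z_G(\lam)/(1+\lam)^n\big)$, whose cluster expansion is obtained by subtracting the size-$1$ and all ``single-vertex-repeated'' contributions — equivalently, by the standard identity $\log(1+\lam)=\sum_{k\ge1}\frac{(-1)^{k-1}}{k}\lam^k$ one sees that $\log(1+\lam)^n$ exactly accounts for all clusters whose support in $G$ is a single vertex. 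Hence
\[
\log\!\left(\frac{Z_G(\lam)}{(1+\lam)^n}\right)=\sum_{\substack{\Gamma\in\cC(G)\\ |\mathrm{supp}(\Gamma)|\ge 2}}\phi(\Gamma)\lam^{|\Gamma|}\,.
\]
Next I would enumerate clusters by their smallest size. The minimal clusters with support of size $\ge2$ are adjacent pairs $(u,v)$ traversed as the multiset $\{u,v\}$ with $|\Gamma|=2$: here $H_\Gamma$ is a single edge, $\phi(\Gamma)=\frac{1}{2!}(-1)=-\frac12$, and summing over the $2$ orderings of each of the $|G|$ edges gives $2\cdot|G|\cdot(-\tfrac12)\lam^2=-|G|\lam^2$. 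For $|\Gamma|=3$ there are two families: (a) a repeated vertex together with a neighbour — support size $2$, underlying graph an edge; computing $\phi$ over the relevant connected spanning subgraphs of $H_\Gamma$ and multiplying by the number of such ordered tuples yields a contribution proportional to $|G|\lam^3$; and (b) three distinct vertices forming a connected subgraph of $G$, which since $G$ is triangle-free must be a path $P_2$ (a cherry), contributing (after accounting for the $\phi$ value and the number of orderings) a term proportional to $P_2(G)\lam^3$. Carrying out these finite bookkeeping computations gives the coefficient $(P_2(G)+2|G|)\lam^3$ as stated. Finally, the tail: Lemma~\ref{lemClusterTail} with $S=\{u\}$ and $k=4$ gives $\big|\sum_{\Gamma\ni u,\,|\Gamma|\ge4}\phi(\Gamma)\lam^{|\Gamma|}\big|=O(\Delta^3\lam^4)$; summing over the at most $n'$ non-isolated vertices $u$ (every cluster of size $\ge2$ with support size $\ge2$ touches such a vertex, and dividing by the number of times it is counted only helps) yields the error $O(n'\Delta^3\lam^4)$, establishing~\eqref{eqNonConstExpand}.

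For~\eqref{eqExpectExpand} and~\eqref{eqVarExpand} I would use the standard fact that $\E|\mathbf I|=\lam\frac{d}{d\lam}\log Z_G(\lam)$ and $\var|\mathbf I|=\lam\frac{d}{d\lam}\big(\lam\frac{d}{d\lam}\log Z_G(\lam)\big)$, and differentiate the truncated expansion~\eqref{eqNonConstExpand} term by term. Differentiating $n\log(1+\lam)$ gives $\frac{\lam}{1+\lam}n$; differentiating $-|G|\lam^2$ and applying $\lam\frac{d}{d\lam}$ gives $-2|G|\lam^2$; differentiating $(P_2(G)+2|G|)\lam^3$ gives $3(P_2(G)+2|G|)\lam^3$; and for the variance the $\lam^3$ term contributes at order $O(n'\Delta^3\lam^3)$ which is absorbed into $O(n'\Delta^2\lam^3)$ after noting $\Delta\lam=O(1)$, while the $\lam^2$ term gives $-4|G|\lam^2$ and the $n$ term gives $\frac{\lam}{(1+\lam)^2}n$. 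To justify term-by-term differentiation of the error $O(n'\Delta^3\lam^4)$, I would not differentiate the big-$O$ symbolically but instead apply Lemma~\ref{lemClusterTail} directly to the differentiated series: $\lam\frac{d}{d\lam}\log Z_G(\lam)=\sum_\Gamma|\Gamma|\phi(\Gamma)\lam^{|\Gamma|}$, so the tail beyond size $3$ is controlled by the $t=1$ case of Lemma~\ref{lemClusterTail} (and $t=2$ for the variance), giving $O(n'\Delta^3\lam^4)$ for the mean and a comparable bound, absorbed into $O(n'\Delta^2\lam^3)$, for the variance. This avoids any convergence subtleties.

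\textbf{Main obstacle.} The routine-but-delicate part is the exact bookkeeping of the size-$3$ clusters: one must carefully list the connected multisets $\Gamma=(v_1,v_2,v_3)$, distinguish the cases where two coordinates coincide versus all three are distinct, compute $\phi(\Gamma)$ via~\eqref{eqUrsell} in each case (being careful that $H_\Gamma$ includes an edge between equal coordinates), and count ordered tuples correctly so as not to over- or under-count by factors coming from $|\Gamma|!$ versus automorphisms of the underlying subgraph. Getting the precise constant $+2|G|$ accompanying $P_2(G)$ in the $\lam^3$ coefficient — which comes from the ``edge with a doubled endpoint'' clusters rather than from cherries — is where an error is most likely to creep in, so I would double-check it by comparing against the known low-order expansion of $\log Z_G(\lam)$ for a single edge ($Z=1+2\lam+\lam^2$, where $n=2$, $|G|=1$, $P_2(G)=0$) and for a single path $P_2$ ($Z=1+3\lam+\lam^2$, where $n=3$, $|G|=2$, $P_2(G)=1$), which pin down both coefficients uniquely.
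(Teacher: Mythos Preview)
Your proposal is correct and follows essentially the same approach as the paper: subtract the clusters of the empty graph to pass to $Z_G(\lambda)/(1+\lambda)^n$, enumerate the size-$2$ and size-$3$ clusters explicitly (the paper computes $\phi=-\tfrac12$ for edge clusters, $\phi=\tfrac16$ for $P_2$-clusters, and $\phi=\tfrac13$ for the ``edge with a doubled endpoint'' clusters, yielding exactly your $(P_2(G)+2|G|)\lambda^3$), bound the tail via Lemma~\ref{lemClusterTail} with $S=\{v\}$ summed over non-isolated $v$, and obtain $\E|\mathbf I|$ and $\var|\mathbf I|$ by termwise differentiation justified by the $t=1,2$ cases of the lemma. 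One small slip in your sanity check: for a single edge on two vertices the independence polynomial is $Z=1+2\lambda$, not $1+2\lambda+\lambda^2$; with the correct $Z$ one gets $\log\big((1+2\lambda)/(1+\lambda)^2\big)=-\lambda^2+2\lambda^3+O(\lambda^4)$, which does confirm the coefficients.
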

\begin{proof}
The proof is a routine calculation from the definitions using Lemma~\ref{lemClusterTail} to bound the truncation error.  We include the details as they will be instructive for later calculations.

Let $\cC'=\cC'(G)$ and let $\cC'_k$ denote the set of clusters in $\cC'$ of size $k$. Then 
\(
\cC'_1=\emptyset,
\)
\[
\cC'_2=\{(v_1,v_2): \{v_1,v_2\}\in E\}\, ,
\]
and
\[
\cC'_3=\{(v_1,v_2,v_3): G[\{v_1,v_2,v_3\}]\cong K_2 \text{ or }P_2\}\, ,
\]
(here $K_2$ denotes the complete graph on 2 vertices, i.e., an edge). 

If $\Gamma\in \cC'_2$, then $H_\Gamma \cong K_2$ and so $\phi(\Gamma) = -1/2$. We note that $|\cC'_2|= 2|E|$, accounting for the orderings.

If $\Gamma=(v_1,v_2,v_3)$ such that $G[\{v_1,v_2,v_3\}]\cong P_2$, then $H_\Gamma \cong P_2$ and so $\phi(\Gamma) = 1/6$. The number of such clusters is $6P_2(G)$. 

If $\Gamma=(v_1,v_2,v_3)$ such that $G[\{v_1,v_2,v_3\}]\cong K_2$, then $H_\Gamma$ is isomorphic to a triangle and so $\phi(\Gamma) = 1/3$. The number of such clusters is $6|E|$. We conclude from~\eqref{eqClusterVsEmpty} that
\begin{align}\label{eqClusterTruncate4}
\log\left( \frac{Z_G(\lam)}{(1+\lam)^n} \right)= -|G|\lam^2 + \left(P_2(G)+2|G|\right)\lam^3 +  \sum_{\Gamma\in \cC' : |\Gamma|\geq 4} \phi(\Gamma) \lam^{|\Gamma|}\, .
\end{align}
Next we observe that if $\Gamma\in \cC'$ then $\Gamma$ cannot contain an isolated vertex of $G$. By applying Lemma~\ref{lemClusterTail} with $S=\{v\}$, $k=4$, $t=0$, for each non-isolated vertex $v$ of $G$ and summing the resulting bounds we deduce that
\begin{align}\label{eqTailBound4}
\left|   \sum_{\Gamma\in \cC' : |\Gamma|\geq 4} \phi(\Gamma) \lam^{|\Gamma|}  \right| 
=O\left(n' \Delta^{3}\lam^4 \right)\, .
\end{align}
To prove~\eqref{eqExpectExpand} we note 
\[
\E |\mathbf I|= \sum_{I\in \cI(G)} |I| \frac{\lam^{|I|}}{Z_G(\lam)} =\lam \frac{\partial}{\partial \lam} \log Z_G(\lam)\, .
\]
By Lemma~\ref{lemClusterTail}, the cluster expansion for $\log Z_G(\lam)$ converges uniformly on $[0,1/(4e\Delta)]$, and so we may differentiate termwise,  yielding, by~\eqref{eqClusterTruncate4},
\[
\E |\mathbf I| = n\frac{\lam}{1+\lam}-2|G|\lam^2 + 3\left(P_2(G)+2|G|\right)\lam^3 +  \sum_{\Gamma\in \cC' : |\Gamma|\geq 4} |\Gamma| \phi(\Gamma) \lam^{|\Gamma|}\, .
\]
Statement~\eqref{eqExpectExpand} now follows by bounding the sum on the RHS  exactly as we did for~\eqref{eqTailBound4} (applying Lemma~\ref{lemClusterTail} now with $t=1$).

Statement~\eqref{eqVarExpand} follows similarly from the observation that
\[
\var |\mathbf I| =  \lam \frac{\partial}{\partial \lam} \E |\mathbf I| \, .
 \qedhere \]
\end{proof}

We remark that the ratio $\frac{Z_G(\lam)}{(1+\lam)^n}$ from Corollary~\ref{corclustersimple} is the probability that a subset $S \subseteq V(G)$ is an independent set of $G$, when $S$ is chosen by including each vertex independently with probability $\frac{\lam}{1+\lam}$.

The following quasirandomness condition for the hard-core model will be useful.
\begin{lemma}\label{lemhcquasirandom}
Let $G$ be a graph of maximum degree $\Delta$ and let $U\subseteq V(G)$. Let $\lam \le \frac{1}{ 16 e^2\Delta}$ and let $\mathbf I$ be a random sample from the hard-core model on $G$ at activity $\lam$. Then
\[
\P \left (|\mathbf I \cap U|\geq 5 \lam |U| \right)\leq e^{-\lam|U|}\, ,
\]
and 
\[
\P \left(|\mathbf I \cap U|\leq \lam |U|/10 \right)\leq e^{-\lam|U|/8}\, .
\]
\end{lemma}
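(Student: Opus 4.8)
I would prove the two tail bounds by separate, short arguments; neither needs the cluster expansion, though the lower one could be done that way.

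\emph{Upper tail.} The key elementary fact is that $\P(W\subseteq\mathbf I)\le\lam^{|W|}$ for every $W\subseteq V(G)$: this is trivial if $W$ is not an independent set of $G$, and otherwise $\P(W\subseteq\mathbf I)=\lam^{|W|}Z_{G\setminus N[W]}(\lam)/Z_G(\lam)\le\lam^{|W|}$, since every independent set of $G\setminus N[W]$ is one of $G$ (here $N[W]$ is the closed neighbourhood of $W$). Writing $u=|U|$ and $t=\lceil 5\lam u\rceil$, a union bound over the $t$-element subsets of $U$ then gives
\[
\P\big(|\mathbf I\cap U|\ge 5\lam u\big)=\P\big(|\mathbf I\cap U|\ge t\big)\le\binom{u}{t}\lam^{t}\le\Big(\tfrac{eu\lam}{t}\Big)^{t}\le\Big(\tfrac{e}{5}\Big)^{5\lam u}=e^{-5(\ln 5-1)\lam u}\le e^{-\lam u},
\]
using $t\ge 5\lam u$, $e/5<1$, and $5(\ln 5-1)>1$ (if $t>u$ the binomial coefficient vanishes and there is nothing to prove). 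This argument uses neither the degree hypothesis nor any bound on $\lam$.

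\emph{Lower tail.} Here I would apply a Chernoff bound to $e^{-s|\mathbf I\cap U|}$ with $s=\ln 10$. For $r\in(0,1]$ let $\boldsymbol\mu_r$ be the activity assigning $\lam r$ to every vertex of $U$ and $\lam$ to every other vertex, and write $\E_r$ for expectation in the hard-core model on $G$ with activities $\boldsymbol\mu_r$, so that $\E_1[e^{-s|\mathbf I\cap U|}]=Z_G(\boldsymbol\mu_{e^{-s}})/Z_G(\boldsymbol\mu_1)$. Differentiating in $r$ gives $\tfrac{d}{dr}\log Z_G(\boldsymbol\mu_r)=\tfrac1r\,\E_r|\mathbf I\cap U|$, whence
\[
\log\E_1\big[e^{-s|\mathbf I\cap U|}\big]=-\int_{e^{-s}}^{1}\tfrac1r\,\E_r|\mathbf I\cap U|\,dr.
\]
To bound the integrand from below, fix $v\in U$ and condition on the trace of $\mathbf I$ on $V(G)\setminus\{v\}$: this gives $\P_r(v\in\mathbf I)=\tfrac{\lam r}{1+\lam r}\,\P_r\big(N(v)\cap\mathbf I=\emptyset\big)\ge\tfrac{\lam r}{1+\lam}(1-\Delta\lam)$, since every vertex is occupied with probability at most $\tfrac{\lam}{1+\lam}\le\lam$ and $v$ has at most $\Delta$ neighbours. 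The hypothesis $\lam\le\tfrac{1}{16e^2\Delta}$ makes $\tfrac{1-\Delta\lam}{1+\lam}\ge\tfrac{9}{10}$ (if $\Delta=0$ then $\mathbf I$ is a product measure and the claim follows from standard Chernoff bounds), so $\E_r|\mathbf I\cap U|\ge\tfrac{9}{10}\lam r u$ and hence $\log\E_1[e^{-s|\mathbf I\cap U|}]\le-\tfrac{9}{10}\lam u(1-e^{-s})$. Markov's inequality then yields
\[
\P\big(|\mathbf I\cap U|\le\lam u/10\big)\le e^{s\lam u/10}\,\E_1\big[e^{-s|\mathbf I\cap U|}\big]\le\exp\!\Big(\lam u\big(\tfrac{s}{10}-\tfrac{9}{10}(1-e^{-s})\big)\Big),
\]
and $s=\ln 10$ makes the bracketed quantity about $-0.58$, which is below $-\tfrac18$; thus $\P(|\mathbf I\cap U|\le\lam u/10)\le e^{-\lam u/8}$.

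\emph{Main obstacle.} There is no serious obstacle here — both estimates are short. The only point needing care is in the lower tail: checking that, under $\lam\le\tfrac{1}{16e^2\Delta}$, the elementary lower bound on the occupation probability $\P_r(v\in\mathbf I)$ loses only a constant factor close to $1$, and that the exponent produced by optimizing over $s$ comfortably beats $\tfrac18$. One could instead obtain the lower tail by expanding $\log\big(Z_G(\boldsymbol\mu_{e^{-s}})/Z_G(\lam)\big)$ via the cluster expansion of Lemma~\ref{lemClusterTail}, bounding the contribution of clusters of size at least $2$ by the pinned tail estimate~\eqref{eqpinnedexplicittail}; the interpolation argument above sidesteps the cluster expansion and gives cleaner constants.
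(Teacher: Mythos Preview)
Your proof is correct and takes a genuinely different route from the paper's.

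The paper handles both tails uniformly via the moment-generating function and the cluster expansion: it writes $\E[e^{t|\mathbf I\cap U|}]=Z_G(\boldsymbol\lam_t)/Z_G(\lam)$ with $\boldsymbol\lam_t(v)=\lam e^t$ on $U$ and $\lam$ off $U$, expands $\log(Z_G(\boldsymbol\lam_t)/Z_G(\lam))$ as a sum over clusters meeting $U$, and uses the pinned tail bound of Lemma~\ref{lemClusterTail} to show that the contribution of clusters of size $\ge 2$ is at most $\tfrac14(\max\{e^{2t},1\}+1)\lam|U|$. The upper and lower tails then both follow from Markov, with $t=1$ and $t=-\ln 10$ respectively.

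Your upper-tail argument is strictly more elementary: the inequality $\P(W\subseteq\mathbf I)\le\lam^{|W|}$ together with a union bound uses neither the cluster expansion nor the degree hypothesis, and in fact gives the upper tail for all $\lam>0$. For the lower tail, your interpolation identity $\log\E[e^{-s|\mathbf I\cap U|}]=-\int_{e^{-s}}^1 r^{-1}\E_r|\mathbf I\cap U|\,dr$ combined with the crude occupation lower bound $\P_r(v\in\mathbf I)\ge\tfrac{\lam r}{1+\lam}(1-\Delta\lam)$ also sidesteps the cluster expansion, and the resulting exponent (about $-0.58$) is well below $-1/8$. The paper's approach has the virtue of being uniform across both tails and fitting into its broader cluster-expansion framework; yours is shorter and more self-contained, and makes explicit that the subcriticality hypothesis $\lam\Delta\ll 1$ is needed only for the lower tail.
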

We note that the bounds of Lemma~\ref{lemhcquasirandom} and the range of $\lam$ for which they hold are not optimal, but they will suffice for our purposes. We defer the proof of Lemma~\ref{lemhcquasirandom} to Appendix~\ref{secHCQuasi}.

We will also make use of a \emph{local CLT} (LCLT) for the low-density hard-core model.  We say that a sequence of integer-valued random variables $X_n$ with mean $\mu_n$ and variance $\sigma_n^2$ satisfies a LCLT if for all integers $k$
\[
\P(X_n=k)=\frac{1}{\sqrt{2\pi}\sigma_n} e^{-(k-\mu_n)^2/(2\sigma_n^2)}+o(\sigma_n^{-1})\, ,
\]
as $n\to\infty$.
For graphs of  maximum degree at most $\Delta$, with $\Delta$ constant, the second author, Jain, Sah and Sawhney proved a sharp  LCLT for the hard-core model~\cite{jain2021approximate}.  The following is an analogue for sequences of graphs $G_n$ of maximum degree at most $\Delta_n$, with $\Delta_n \to \infty$.  We do not attempt to optimize  the bound on $\lam$ here.

\begin{prop}
\label{thmhcLCLT}
Let $G_n$ be a sequence of graphs on $n$ vertices of maximum degree at most $\Delta_n $. Let $X_n$ be the size of an independent set drawn from the hard-core model on $G_n$ at activity $\lam_n$.  Suppose $\lam_n \Delta_n \to 0$, $n \lam_n \to \infty$, and $\Delta_n \to \infty$ as $n \to \infty$.   Then $\var(X_n) \sim \lam n$ and $X_n$ obeys a local central limit theorem.  That is, for every integer $k \ge 0$,
\[  \P[X_n =k]  =   \frac{1}{\sqrt{  2 \pi \lam n}} \exp \left( - \frac{(k - \E X_n)^2 }{ 2 \lam n}  \right)   +   o \left(  \frac{1}{\sqrt{\lam n}}  \right)     \, .\]
\end{prop}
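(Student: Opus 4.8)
The plan is to prove Proposition~\ref{thmhcLCLT} by combining a moment/cumulant analysis coming from the cluster expansion with a standard Fourier-analytic argument for local limit theorems of integer-valued random variables. The starting observation is that for $\lam = \lam_n$ with $\lam_n \Delta_n \to 0$ we have $\lam_n \le \frac{1}{4e\Delta_n}$ eventually, so Lemma~\ref{lemClusterTail} applies and the cluster expansion for $\log Z_{G_n}(\lam)$ converges absolutely. More generally, for a complex parameter $z$ with $|z|$ bounded (say $|z| \le 2$) we may analyze $Z_{G_n}(\lam e^{iz \theta})$ via the cluster expansion provided $\lam |e^{iz\theta}| \le \frac{1}{4e\Delta_n}$; this gives us analytic control of the characteristic function $\E e^{i\theta X_n}$ on a window $|\theta| \le \theta_0$ for a suitable small constant $\theta_0$, since $\E e^{i\theta X_n} = Z_{G_n}(\lam e^{i\theta})/Z_{G_n}(\lam)$.

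First I would establish the mean and variance asymptotics. From Corollary~\ref{corclustersimple} (applied with the trivial bound $|G_n| \le n\Delta_n/2$, $P_2(G_n) \le n\Delta_n^2$, $n' \le n$) we get $\var(X_n) = \frac{\lam}{(1+\lam)^2} n - 4|G_n|\lam^2 + O(n\Delta_n^2\lam^3)$. Since $\lam \to 0$, $|G_n|\lam^2 \le \tfrac12 n\Delta_n\lam^2 = o(n\lam)$ because $\Delta_n\lam \to 0$, and similarly $n\Delta_n^2\lam^3 = o(n\lam)$; hence $\var(X_n) \sim \lam n$. The same truncation shows $\E X_n = \frac{\lam}{1+\lam} n + O(n\Delta_n\lam^2) = \lam n(1+o(1))$, and that $n\lam \to \infty$ forces $\sigma_n^2 = \var(X_n) \to \infty$, which is what a nontrivial LCLT requires.

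Next I would control the characteristic function on three ranges of $\theta \in (-\pi, \pi]$. \textbf{Near zero, $|\theta| \le \sigma_n^{-1}\log\sigma_n$ (say):} differentiating the cluster expansion twice in the spirit of the proof of Corollary~\ref{corclustersimple} gives $\log \E e^{i\theta X_n} = i\theta \E X_n - \tfrac{\theta^2}{2}\sigma_n^2 + O(|\theta|^3 \kappa_3)$ where the third cumulant $\kappa_3 = O(n\lam)$ by the same termwise-differentiation-plus-Lemma~\ref{lemClusterTail} bookkeeping (each extra derivative brings down a factor $|\Gamma| = O(1)$ on a convergent sum, so $\kappa_3 \lesssim n\lam$, just like $\var$). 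This yields the Gaussian approximation of $\E e^{i\theta X_n}$ after rescaling, exactly as in the classical proof, and integrating recovers the main Gaussian term in the LCLT. \textbf{Intermediate range, $\sigma_n^{-1}\log\sigma_n \le |\theta| \le \theta_0$:} here I use the cluster expansion to show $\mathrm{Re}\log \E e^{i\theta X_n} \le -c\,\theta^2 \sigma_n^2 \le -c(\log\sigma_n)^2$ for a constant $c>0$; this follows because the quadratic term $-\tfrac{\theta^2}{2}\var(X_n)$ dominates the cubic and higher error terms when $\theta_0$ is small enough and $\lam\Delta_n$ small, so $|\E e^{i\theta X_n}| \le e^{-c(\log \sigma_n)^2}$, which integrated over this range contributes $o(\sigma_n^{-1})$. \textbf{Bounded-away range, $\theta_0 \le |\theta| \le \pi$:} this is the main obstacle. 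The cluster expansion no longer obviously converges (the bound $\lam|e^{i\theta}|\le\frac{1}{4e\Delta_n}$ is not the issue — $|e^{i\theta}|=1$ — but the logarithm's behavior is delicate), so instead I would argue combinatorially that $|Z_{G_n}(\lam e^{i\theta})| \le (1-\delta) Z_{G_n}(\lam)$ for some $\delta = \delta(\theta_0) > 0$. Since $G_n$ has a positive density of edges relative to $n\lam$ only when $\Delta_n$ is bounded below, one has to be careful; the cleanest route is to observe that $X_n$ is a sum over vertices $v$ of $\1[v \in \mathbf I]$, and to find a linear-in-$n\lam$ collection of "nearly independent" such indicators (e.g. by a greedy/alteration argument producing $\Omega(n\lam)$ vertices whose hard-core conditional marginals are bounded away from $0$ and $1$ even after conditioning) so that $|\E e^{i\theta X_n}| \le \prod (1 - c'\theta^2) \le e^{-c'' n\lam\theta^2}$; since $n\lam \to\infty$ this is $\le e^{-c'''n\lam\theta_0^2} = o(\sigma_n^{-1})$. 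Alternatively, and perhaps more robustly, one can transfer the LCLT from the bounded-degree case \cite{jain2021approximate} by a truncation/comparison argument, but I expect the direct Fourier argument with the cluster expansion on $|\theta|\le\theta_0$ plus an anticoncentration-type bound on $|\theta|\ge\theta_0$ to be the most self-contained. Assembling: $\P(X_n = k) = \frac{1}{2\pi}\int_{-\pi}^\pi e^{-i\theta k}\E e^{i\theta X_n}\,d\theta$; the near-zero range gives the Gaussian density $\frac{1}{\sqrt{2\pi\sigma_n^2}}e^{-(k-\E X_n)^2/(2\sigma_n^2)}$ with $\sigma_n^2 \sim \lam n$, and the other two ranges contribute $o(\sigma_n^{-1}) = o((\lam n)^{-1/2})$, completing the proof.
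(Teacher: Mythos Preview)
Your approach is broadly correct and parallels the paper's: cluster expansion for the low-order cumulants, then Fourier inversion. The mean/variance asymptotics and the near-zero and intermediate ranges are handled correctly.

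The difference lies in the bounded-away range $\theta_0 \le |\theta| \le \pi$, which you call ``the main obstacle'' and propose to handle by ad hoc anticoncentration arguments (nearly-independent indicators, or transfer from the bounded-degree result). In fact it is no obstacle: the paper obtains a single bound $|\phi_{X_n}(t)| \le \exp\big({-}(1+o(1))\tfrac{t^2}{5}\lam n\big)$ valid for \emph{all} $t \in [-\pi,\pi]$, directly from the cluster expansion, so no three-range split is needed. You yourself note parenthetically that $|\lam e^{i\theta}| = \lam \le \tfrac{1}{4e\Delta_n}$, so Lemma~\ref{lemClusterTail} applies --- and then dismiss this because ``the logarithm's behavior is delicate''. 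It is not: absolute convergence of the cluster expansion on the closed disc of radius $\lam$ is precisely what guarantees $Z_{G_n}(\lam') \ne 0$ for all $|\lam'| \le \lam$, so the series \emph{is} $\log Z_{G_n}(\lam e^{i\theta})$ with no branch issues. Taking real parts,
\[
\log|\phi_{X_n}(\theta)| = \sum_{\Gamma} \phi(\Gamma)\lam^{|\Gamma|}\bigl(\cos(\theta|\Gamma|)-1\bigr) = n\lam(\cos\theta - 1) + O\!\left(\theta^2 \sum_{|\Gamma|\ge 2}|\Gamma|^2|\phi(\Gamma)|\lam^{|\Gamma|}\right),
\]
and by Lemma~\ref{lemClusterTail} (with $t=2$, $k=2$, summed over singletons $S=\{v\}$) the error is $O(\theta^2 n\Delta\lam^2) = o(\theta^2 n\lam)$. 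Since $\cos\theta - 1 \le -\tfrac{2}{\pi^2}\theta^2$ on $[-\pi,\pi]$, this yields $|\phi_{X_n}(\theta)| \le e^{-c\theta^2 n\lam}$ uniformly in $\theta$, which feeds straight into the standard LCLT Fourier inversion. Your proposed workarounds could be made to work but are unnecessary detours once you realize the cluster expansion already controls the entire circle $|\lam'|=\lam$.
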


We prove Proposition~\ref{thmhcLCLT} in Appendix~\ref{secLCLTproof}.

\subsection{Cumulants and the cumulant generating function}
  Let $X$ be a bounded  random variable.  The \textit{cumulant generating function} of $X$ is
\[
K_X(t):= \log \E e^{tX}\, .
\]
The \textit{cumulants} of $X$ are defined as coefficients of the Taylor series for $K_X(t)$ around $0$:
\begin{align}\label{eqcumulantdef}
\kappa_{k}(X):= \frac{\partial^{k}K_X(t)}{\partial t^{k}}\Bigg|_{t=0}\, .
\end{align}
In particular, $\kappa_1(X) =\E X$ and $\kappa_2(X) = \var(X)$.

\begin{lemma}\label{lemtiltedcumulant}
Let $\mu$ be a  probability measure on a finite set $\Omega$, and let $X: \Omega\to \R$ be a random variable. 
Given $s\in\R$, let $\mu_s$ denote the tilted measure
\[
\mu_s(x)\propto \mu(x)e^{s X(x)} \text{ for $x\in \Omega$}\, .
\]
For $k\in \N$, let $\kappa_k^s(X)$ denote the $k$th cumulant of $X$ with respect to $\mu_s$ so that $\kappa_k(X)=\kappa_k^0(X)$.
Let $t>0$ and $\ell\in \N$. We have
\[
\log \E_\mu\left(e^{t X}\right)= \sum_{k=1}^{\ell-1}\kappa_k(X) \frac{t^k}{k!}  +\kappa^s_\ell(X) \frac{t^\ell}{\ell!}  
\]
for some $s\in[0,t]$.
\end{lemma}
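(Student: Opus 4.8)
This is the standard Taylor-with-remainder identity for cumulant generating functions, phrased so that the remainder term uses a tilted cumulant rather than a derivative of $K_X$ at an intermediate point. The plan is to apply Taylor's theorem with Lagrange remainder to the function $f(t) := \log \E_\mu(e^{tX})$ on the interval $[0,t]$, and then identify the remainder derivative $f^{(\ell)}$ evaluated at the intermediate point $s$ with $\kappa^s_\ell(X)$.

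First I would observe that $f(t) = \log \E_\mu(e^{tX})$ is well-defined and smooth (indeed real-analytic) on all of $\R$: since $\Omega$ is finite, $\E_\mu(e^{tX}) = \sum_{x\in\Omega}\mu(x)e^{tX(x)}$ is a finite positive combination of exponentials, hence positive and infinitely differentiable in $t$. By Taylor's theorem with the Lagrange form of the remainder applied at $0$ with $\ell$ terms, there exists $s\in[0,t]$ with
\[
f(t) = \sum_{k=0}^{\ell-1} f^{(k)}(0)\,\frac{t^k}{k!} + f^{(\ell)}(s)\,\frac{t^\ell}{\ell!}\, .
\]
Since $f(0) = \log 1 = 0$, the $k=0$ term vanishes, so the sum runs effectively from $k=1$ to $\ell-1$.

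Next I would identify the derivatives. By definition $f^{(k)}(0) = \frac{\partial^k}{\partial t^k}\log\E_\mu(e^{tX})\big|_{t=0} = \kappa_k(X)$, which handles the main sum. For the remainder, the key point is that for any $s$, the function $u \mapsto f(s+u) = \log\E_\mu(e^{(s+u)X})$ equals, up to an additive constant, the cumulant generating function of $X$ under the tilted measure $\mu_s$. Indeed,
\[
\log\E_{\mu_s}(e^{uX}) = \log\frac{\sum_x \mu(x)e^{sX(x)}e^{uX(x)}}{\sum_x \mu(x)e^{sX(x)}} = f(s+u) - f(s)\, ,
\]
so $\frac{\partial^\ell}{\partial u^\ell}\log\E_{\mu_s}(e^{uX})\big|_{u=0} = f^{(\ell)}(s)$. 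By the definition of cumulants (applied to the measure $\mu_s$), the left side is exactly $\kappa^s_\ell(X)$. Substituting $f^{(\ell)}(s) = \kappa^s_\ell(X)$ and $f^{(k)}(0) = \kappa_k(X)$ into the Taylor expansion yields the claimed identity, and by convention $\kappa^0_\ell(X) = \kappa_\ell(X)$ is consistent with the $s=0$ case.

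There is no real obstacle here; the only point requiring a little care is confirming that $f$ is smooth on a neighborhood of $[0,t]$ so that Taylor's theorem applies — this is immediate from finiteness of $\Omega$ and positivity of $\E_\mu(e^{tX})$ — and the bookkeeping identification of $f^{(\ell)}(s)$ with the $\ell$-th cumulant of the tilted measure, which is the shift-invariance of cumulants up to the normalization constant noted above.
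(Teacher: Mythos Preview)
Your proposal is correct and follows essentially the same argument as the paper: define $f(t)=\log\E_\mu(e^{tX})$, apply Taylor's theorem with Lagrange remainder at $0$, use $f^{(k)}(0)=\kappa_k(X)$, and identify $f^{(\ell)}(s)=\kappa_\ell^s(X)$ via the identity $\log\E_{\mu_s}(e^{uX})=f(s+u)-f(s)$. Your version is slightly more explicit about smoothness and the vanishing of the $k=0$ term, but the method is identical.
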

\begin{proof}
Let $f(t):=\log \E_\mu\left(e^{t X}\right)$. Since $f$ is $\ell$ times differentiable, Taylor's Theorem (with the Lagrange form of the remainder) shows that there exists $s\in[0,t]$ such that
\[
f(t)=  \sum_{k=1}^{\ell-1}f^{(k)}(0) \frac{t^k}{k!}  +f^{(\ell)}(s) \frac{t^\ell}{\ell!} 
=\sum_{k=1}^{\ell-1}\kappa_k(X) \frac{t^k}{k!}  + f^{(\ell)}(s) \frac{t^\ell}{\ell!}\, ,
\]
where for the second equality we used~\eqref{eqcumulantdef}, the definition of cumulants.
Finally note that the cumulant generating function of $X$ with respect to $\mu_s$ is
\[
\log \E_{\mu_s}\left(e^{tX} \right)= f(t+s)- f(s)  \, ,
\]
and so
\[
\kappa^s_\ell(X) = \frac{\partial ^{\ell}}{\partial t ^{\ell}}\left({f(t+s)}-{f(s)} \right)\Bigg|_{t=0}=  f^{(\ell)}(s) \, . \qedhere
\]
\end{proof}

\subsection{Other probabilistic tools} 

We will use Pinsker's inequality  to bound the total variation distance between two probability measures.
Recall that for discrete probability distributions $\nu, \mu$ defined on the discrete same sample space $\Omega$, the Kullback-Leibler (KL) divergence of $\nu$ from $\mu$ is defined to be
\[
D_\text{KL}(\nu \parallel \mu) = \sum_{x\in\Omega}\nu(x) \log\left(\frac{\nu(x)}{\mu(x)} \right)\, ,
\]
provided $\nu(x)=0$ whenever $\mu(x)=0$, else we define $D_\text{KL}(\nu \parallel \mu) = + \infty$. (Note that we interpret $0/0$ and $0 \log 0$ as $0$.)
Their total variation distance is defined as
\[
\|\mu-\nu\|_{\text{TV}}= \sup_{A\subseteq \Omega} |\mu(A) - \nu(A)|= \frac{1}{2} \sum_{x\in\Omega}|\mu(x)-\nu(x)|\, .
\] 
Pinsker's inequality (see, e.g.,~\cite{csiszar2011information}) allows us to bound the total variation distance between measures in terms of their (KL) divergence which is often more convenient to compute.
\begin{lemma}[Pinsker's inequality]\label{lemPinsker}
If $\mu, \nu$ are two discrete probability distributions on a common sample space $\Omega$, then
\[
\| \nu- \mu \|_{TV}\leq \sqrt{\frac{1}{2}D_\text{KL}(\nu \parallel \mu)}\, .
\]
\end{lemma}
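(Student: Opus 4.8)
The plan is to give the standard two-step proof of Pinsker's inequality: first the scalar (two-point) case by elementary calculus, then a reduction of the general case to it by coarsening.

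\textbf{Step 1: the two-point inequality.} First I would prove that for all $p,q\in[0,1]$,
\[ p\log\frac{p}{q} + (1-p)\log\frac{1-p}{1-q} \;\ge\; 2(p-q)^2\,, \]
with the usual conventions $0\log 0 = 0$ and $x\log(x/0) = +\infty$ for $x>0$. Fixing $q$ and letting $g(p)$ denote the difference of the two sides, one checks $g(q)=0$, $g'(q)=0$, and $g''(p) = \frac{1}{p(1-p)} - 4 \ge 0$ since $p(1-p)\le \frac14$ on $[0,1]$; hence $g$ is convex with its unique critical point at $p=q$, so $g\ge 0$ throughout.

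\textbf{Step 2: reduction by coarsening.} Next I would set $A=\{x\in\Omega: \nu(x)\ge\mu(x)\}$, so that $\|\nu-\mu\|_{TV}=\nu(A)-\mu(A)$, and pass to the two-point distributions $\bar\nu=(\nu(A),1-\nu(A))$ and $\bar\mu=(\mu(A),1-\mu(A))$ on the partition $\{A,\Omega\setminus A\}$. The key inequality is the data-processing (log-sum) bound $D_\text{KL}(\bar\nu\parallel\bar\mu)\le D_\text{KL}(\nu\parallel\mu)$, which I would prove in a line by applying Jensen's inequality to the convex function $t\mapsto t\log t$ separately on $A$ and on $\Omega\setminus A$, giving $\nu(A)\log\frac{\nu(A)}{\mu(A)}\le\sum_{x\in A}\nu(x)\log\frac{\nu(x)}{\mu(x)}$ and likewise on the complement. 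Combining this with Step 1 applied to $p=\nu(A)$, $q=\mu(A)$ gives
\[ 2\|\nu-\mu\|_{TV}^2 \le D_\text{KL}(\bar\nu\parallel\bar\mu) \le D_\text{KL}(\nu\parallel\mu)\,, \]
and taking square roots yields the lemma.

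There is no serious obstacle here — this is a textbook inequality. The only points requiring mild care are the data-processing/log-sum step in Step 2 (ensuring the coarsened divergence is dominated by the full one), and the bookkeeping of the degenerate cases where $\mu(x)=0$ for some $x$ in the support of $\nu$, for which both sides of the claimed inequality are $+\infty$ by convention; both are routine.
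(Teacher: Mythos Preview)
Your proof is correct and follows the standard textbook argument. The paper itself does not prove this lemma; it is stated without proof as a well-known tool in the preliminaries section, so there is no ``paper's own proof'' to compare against. Your two-step approach (binary case via convexity, then reduction by coarsening through the log-sum/data-processing inequality) is exactly the classical derivation and would serve perfectly well if a proof were required.
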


The total variation distance between two discrete random variables $X,Y$, denoted $\|X-Y\|_{\text{TV}}$, is the total variation distance between the law of $X$ and the law of $Y$, i.e., $\|X-Y\|_{\text{TV}}=\frac{1}{2}\sum_{x\in \R}|\P(X=x)-\P(Y=x)|$. We record the following elementary, yet powerful lemma for bounding the total variation distance between random variables (see, e.g.,~\cite{levin2017markov}). 

\begin{lemma}[Coupling inequality]\label{lemCoupling}
If $X,Y$ are random variables with a coupling $(X',Y')$, then
\[
\|X-Y\|_{TV}\leq \P(X'\neq Y')\, .
\]
\end{lemma}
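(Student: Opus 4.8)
The plan is to work directly from the supremum form of the total variation distance, $\|X-Y\|_{TV}=\sup_{A\subseteq\R}|\P(X\in A)-\P(Y\in A)|$, together with the defining property of a coupling: $(X',Y')$ is a pair of random variables on a common probability space with $X'\overset{d}{=}X$ and $Y'\overset{d}{=}Y$. In particular, for every measurable $A\subseteq\R$ we have $\P(X\in A)=\P(X'\in A)$ and $\P(Y\in A)=\P(Y'\in A)$, so it suffices to bound $|\P(X'\in A)-\P(Y'\in A)|$ uniformly in $A$.

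First I would fix $A\subseteq\R$ and decompose each of the two probabilities according to whether $X'=Y'$ or $X'\ne Y'$:
\[
\P(X'\in A)=\P(X'\in A,\, X'=Y')+\P(X'\in A,\, X'\ne Y'),
\]
and likewise for $\P(Y'\in A)$. On the event $\{X'=Y'\}$ the events $\{X'\in A\}$ and $\{Y'\in A\}$ coincide, so the two "$X'=Y'$" terms are equal and cancel upon subtraction, leaving
\[
\P(X'\in A)-\P(Y'\in A)=\P(X'\in A,\, X'\ne Y')-\P(Y'\in A,\, X'\ne Y')\leq \P(X'\ne Y').
\]
Running the same argument with the roles of $X'$ and $Y'$ interchanged (equivalently, applying the bound to the complement $\R\setminus A$) gives $|\P(X'\in A)-\P(Y'\in A)|\leq \P(X'\ne Y')$. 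Taking the supremum over all $A$ then yields $\|X-Y\|_{TV}\leq\P(X'\ne Y')$, as desired.

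There is no real obstacle here: the lemma is a standard fact, and the only point requiring any care is the observation that the contribution of the "good" event $\{X'=Y'\}$ cancels exactly, which is immediate. (If one prefers to argue from the $\ell^1$ formula $\|X-Y\|_{TV}=\tfrac12\sum_x|\P(X=x)-\P(Y=x)|$ instead, the same cancellation on $\{X'=Y'\}$ gives $\sum_x|\P(X'=x)-\P(Y'=x)|\le \sum_x\big(\P(X'=x,X'\ne Y')+\P(Y'=x,X'\ne Y')\big)=2\,\P(X'\ne Y')$, with the identical conclusion.)
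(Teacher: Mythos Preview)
Your proof is correct and is the standard argument; there is nothing to fix. Note that the paper does not actually supply a proof of this lemma --- it merely records it as an elementary, well-known fact --- so there is no ``paper's own proof'' to compare against.
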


Finally we note the following form of Chernoff's inequality (see, e.g.,~\cite[Theorem 4.4]{mitzenmacher2017probability}).
\begin{lemma}\label{lemChernoff}
Let $X_1,\ldots, X_n$ be independent Bernoulli random variables, let $X=\sum_i X_i$ and let $\mu=\E[X]$.
For any $\delta>0$,
\[
\P(X>(1+\delta)\mu) <\left(\frac{e^\delta}{(1+\delta)^{1+\delta}} \right)^{\mu}\, .
\]
In particular, if $1+\delta\geq e^2$, then
$\P(X>(1+\delta)\mu)<e^{-(1+\delta)\mu}$.
\end{lemma}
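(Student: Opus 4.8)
The plan is the standard exponential-moment (Bernstein--Chernoff) argument. First I would fix $t>0$, to be optimized later, and apply Markov's inequality to the nonnegative random variable $e^{tX}$, giving
\[
\P\bigl(X>(1+\delta)\mu\bigr) = \P\bigl(e^{tX} > e^{t(1+\delta)\mu}\bigr) \le e^{-t(1+\delta)\mu}\,\E e^{tX}\, .
\]
Next I would control the moment generating function using independence and the elementary inequality $1+x \le e^{x}$: writing $p_i = \E X_i$ so that $\sum_i p_i = \mu$,
\[
\E e^{tX} = \prod_{i=1}^n \E e^{tX_i} = \prod_{i=1}^n \bigl(1 + p_i(e^{t}-1)\bigr) \le \prod_{i=1}^n e^{p_i(e^{t}-1)} = e^{\mu(e^{t}-1)}\, .
\]
Combining the two displays yields $\P(X>(1+\delta)\mu) \le \exp\bigl(\mu(e^{t}-1-t(1+\delta))\bigr)$ for every $t>0$.

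The second step is to optimize the exponent $g(t) = e^{t}-1-t(1+\delta)$ over $t>0$. Since $g'(t) = e^{t}-(1+\delta)$, the minimizer is $t^\star = \log(1+\delta)$, which is strictly positive precisely because $\delta>0$; substituting $t=t^\star$ gives
\[
\P\bigl(X>(1+\delta)\mu\bigr) \le \exp\bigl(\mu(\delta - (1+\delta)\log(1+\delta))\bigr) = \left(\frac{e^{\delta}}{(1+\delta)^{1+\delta}}\right)^{\!\mu}\, ,
\]
which is the claimed bound. Strictness is automatic: the inequality $1+x \le e^{x}$ is strict for $x\neq0$, so the MGF estimate above is strict as soon as some $p_i\in(0,1)$, while if every $p_i\in\{0,1\}$ then $X$ is a.s.\ constant and the statement is immediate.

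For the ``in particular'' clause I would just estimate the exponent when $1+\delta \ge e^{2}$. Then $\log(1+\delta) \ge 2$, so
\[
\delta - (1+\delta)\log(1+\delta) \le \delta - 2(1+\delta) = -2-\delta \le -(1+\delta)\, ,
\]
and therefore $\P(X>(1+\delta)\mu) < e^{-(1+\delta)\mu}$. There is no real obstacle here beyond bookkeeping — ensuring $t^\star>0$ (which is exactly where the hypothesis $\delta>0$ enters) and tracking the strictness of the inequality; this is a textbook estimate, and indeed it is quoted in the paper from~\cite{mitzenmacher2017probability}.
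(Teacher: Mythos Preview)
Your argument is correct and is exactly the standard exponential-moment derivation of the Chernoff bound; the paper does not supply its own proof but simply cites \cite[Theorem~4.4]{mitzenmacher2017probability}, so there is nothing to compare beyond noting that your proof is the textbook one underlying that reference.
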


\section{Uniqueness of partitions}
\label{secUniquenessPartitions}

The goal of this section is to prove the following lemma, which is a key step towards the uniqueness statement of Proposition~\ref{propGroundStateRefinedAlt}. Recall the definitions of $\cT_{A,B,\lam}^{\textup{w}}, \cD_{A,B,\lam}^{\textup{w}}$ defined at~\eqref{eqTABwDefAgain},~\eqref{eqDABwDefAgain} respectively. It will also be useful to keep in mind the description of the measure $ \mu_{\cR,\lam}$ with $\cR=\cT_{A,B,\lam}^{\textup{w}}$ given after~\eqref{eqnuABwdef}. 

Throughout this section we assume that $\lam \geq \omega/\sqrt{n}$ where $\omega>0$ is a sufficiently large absolute constant.

\begin{lemma}\label{lemWeakUnique}
Let $(A,B)\in \Pi_{\textup{weak}}$ and sample $G$ according to $\mu^{\textup{w}}_{A,B,\lam}$.  With probability at least 
\(
1-e^{-\lam n/25}\, ,
\)
$(A,B)$ is the unique weakly balanced partition satisfying
 \(
\Delta(G_A \cup G_B)\leq \alpha/\lam
\) and is the unique max cut of $G$.
\end{lemma}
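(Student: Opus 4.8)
## Proof Proposal for Lemma~\ref{lemWeakUnique}

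The plan is to show that a sample $G$ from $\mu^{\textup{w}}_{A,B,\lam}$ is, with overwhelming probability, so close to the ``ground state'' bipartite graph with parts $(A,B)$ that no competing weakly balanced partition can do as well on the cut. First I would establish a \emph{quasirandomness} statement for $G$: conditioned on the defect graph $(S,T) \in \cD^{\textup{w}}_{A,B,\lam}$, the crossing edges $\Ec$ are distributed according to the hard-core model on $S \boxempty T$, which has maximum degree at most $2\alpha/\lam$, so $\lam$ is subcritical for this model by the choice $\alpha = 1/(96e^3)$. Applying Lemma~\ref{lemhcquasirandom} with $U$ ranging over the crossing-edge slots incident to any fixed vertex $v$, and taking a union bound over the $n$ vertices, I would conclude that whp every vertex $v \in A$ has $d_G(v,B) \ge \lam b /20$ (say), and symmetrically for $v \in B$. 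I also need control on $|S|+|T|$: since the defect graph is weakly sparse with $\Delta(S\cup T) \le \alpha/\lam$, we have $|S|+|T| \le \alpha n/(2\lam)$ deterministically, but in fact the typical value is far smaller (order $q_0 n^2 \ll n/\lam$), which can again be extracted from the measure $\nu^{\textup{w}}_{A,B,\lam}$ together with a tail bound; for this lemma the crude deterministic bound $|S|+|T| \le \alpha n /(2\lam) = n/(192 e^3 \lam)$ may already suffice.

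Next I would set up the comparison between $(A,B)$ and an arbitrary competing partition $(A',B')$. Write $a_1 = |A \cap A'|$, etc., and let $k$ be the number of vertices that are ``misclassified'' by $(A',B')$ relative to $(A,B)$, i.e. $k = |A \triangle A'|$ counted appropriately, so $k = |A\cap B'| + |B \cap A'|$. Moving a single vertex $v$ from its correct side to the wrong side loses all of its $\approx \lam b/20$ crossing edges from the cut and gains back only its defect-degree, which is $\le \alpha/\lam = 1/(96 e^3 \lam)$; since $\lam b /20 \gg \alpha/\lam$ when $\lam \ge \omega/\sqrt n$ and $\omega$ is large (the former is $\Theta(\lam n) \ge \omega\sqrt n /\,\text{const}$, the latter is $\Theta(1/\lam) \le \sqrt n/(\omega\cdot\text{const})$), each misclassified vertex strictly decreases the cut size by at least, say, $\lam b/40$, even after accounting for interactions among the $k$ moved vertices (these interactions can only remove at most $k\Delta(S\cup T) \le k\alpha/\lam$ further edges from the tally, a lower-order correction, plus the crossing edges among the moved set, which is at most $\binom{k}{2}$ but is dominated when we also use that $G$ has only $m$ edges total). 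Hence any partition with $k \ge 1$ misclassified vertices has cut size strictly less than that of $(A,B)$, and moreover fails the degree condition $\Delta(G_{A'}\cup G_{B'}) \le \alpha/\lam$ since some moved vertex $v$ now has defect-degree $\ge \lam b/20 > \alpha/\lam$ on the wrong side. The only subtlety is partitions that are not weakly balanced among those with small $k$ — but $(A,B)$ is weakly balanced and each moved vertex changes the imbalance by $2$, so a weakly balanced competitor differs from $(A,B)$ by $k \le n/5 + |{|A|-|B|}|$ moved vertices, which is still covered by the per-vertex loss argument.

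For the quantitative probability bound $1 - e^{-\lam n/25}$, the loss comes entirely from the bad event in the quasirandomness step: I would apply the lower-tail bound of Lemma~\ref{lemhcquasirandom}, $\P(|\mathbf I \cap U| \le \lam|U|/10) \le e^{-\lam|U|/8}$, with $U$ the set of roughly $b \ge 2n/5$ crossing slots at a single vertex, giving failure probability $\le e^{-\lam b/8} \le e^{-\lam n/20}$ per vertex; a union bound over $\le n$ vertices gives $\le n e^{-\lam n/20} \le e^{-\lam n/25}$ since $\lam n \ge \omega \sqrt n$ and $\omega$ is large enough to absorb the $\log n$ factor. On this good event, the deterministic argument of the previous paragraph shows $(A,B)$ is the unique weakly balanced partition with $\Delta(G_A \cup G_B)\le \alpha/\lam$ and, since moving any vertex strictly shrinks the cut, also the unique max cut.

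The main obstacle I anticipate is making the per-vertex and multi-vertex cut-loss bookkeeping fully rigorous: one must carefully track that when $k$ vertices are simultaneously moved, the crossing edges \emph{among} the moved set and the defect edges among them are double-counted or redistributed correctly, so that the net change in cut size is genuinely $\le -k\lam b/40$ rather than something that could be overwhelmed by $\binom{k}{2}$-type terms for large $k$. The resolution is that the total number of edges is $m = O(n^{3/2}\sqrt{\log n})$, so the average degree is $O(\sqrt{n\log n}) = O(\lam n)$, and any vertex's degree into the moved set is at most its total degree; combined with the explicit quasirandom lower bound on correct-side crossing degree this forces the loss to dominate. Handling this cleanly — perhaps by first reducing to $k=1$ via a ``local move'' argument showing any non-optimal partition can be improved one vertex at a time — is the delicate point.
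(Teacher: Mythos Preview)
Your overall plan --- use Lemma~\ref{lemhcquasirandom} to get quasirandomness of the crossing edges, then show deterministically that any other weakly balanced partition fails --- is exactly the paper's strategy, and your probability bookkeeping for the per-vertex degree condition is fine. But the deterministic step has a real gap: per-vertex crossing-degree lower bounds alone do not suffice when the competing partition $(A',B')$ differs from $(A,B)$ by a large set. Concretely, write $A'=(A\setminus X)\cup Y$, $B'=(B\setminus Y)\cup X$ with $X\subseteq A$, $Y\subseteq B$. For a moved vertex $v\in X$ you want $d_G(v,B\setminus Y)$ to be large (these are the edges that become defect edges of $(A',B')$), but knowing only $d_G(v,B)\ge \lam b/20$ says nothing about how that degree is split between $Y$ and $B\setminus Y$. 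If $|X|$ and $|Y|$ are of linear size it is entirely consistent with your hypotheses that every vertex of $X$ has all its $B$-neighbours in $Y$, in which case neither your max-cut inequality nor your defect-degree violation follows. Your proposed rescues (bounding by $m=O(n^{3/2}\sqrt{\log n})$, or a one-vertex-at-a-time local-move argument) do not close this: the former controls totals but not the distribution between $Y$ and $B\setminus Y$, and the latter only shows $(A,B)$ is a local maximum, not the global one.

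The paper fixes this by proving, alongside the degree condition, an \emph{expansion} condition: for every $X\subseteq A$ with $|X|\ge \lam n/100$ and $Y'\subseteq B$ with $|Y'|\ge n/6$ one has $|E(X,Y')|\ge \lam|X||Y'|/10$ (and a finer version for $\lam\ge\sqrt{\log n/n}$). This is again Lemma~\ref{lemhcquasirandom}, now with $U=X\times Y'$, and the union bound over all such pairs costs only $2^n e^{-\lam^2 n^2/5000}\le e^{-n}$. The deterministic argument then splits into two cases: if $|A\cap B'|<\lam n/100$ your per-vertex degree bound works directly (pick $v\in B\cap A'$; then $d_G(v,A')\ge d_G(v,A)-|A\cap B'|>\alpha/\lam$), while if all four intersections $A\cap A',A\cap B',B\cap A',B\cap B'$ have size $\ge \lam n/100$, the expansion condition applied to $X=A\cap B'$ and $Y'=B\cap B'$ (one of which has size $\ge n/6$ since $(A,B)$ is weakly balanced) forces $|E(A\cap B',B\cap B')|\ge \lam|A\cap B'||B\cap B'|/10$, giving both a vertex of defect-degree $>\alpha/\lam$ in $G_{B'}$ and, via the decomposition $|E(A,B)|-|E(A',B')|=|E(A\cap A',B\cap A')|+|E(A\cap B',B\cap B')|-|E(A\cap A',A\cap B')|-|E(B\cap A',B\cap B')|$, the strict cut inequality.
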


Lemma~\ref{lemWeakUnique} will be a consequence of Lemma~\ref{lemhcquasirandom}, the quasirandomness statement for the hard-core model. Before we turn to the proof, we begin with a definition. 
 \begin{defn}\label{defABExpand}
 Given a graph $G$ and a partition $(A,B)$, we call $G$ an $(A,B)$-$\lam$-expander if 
\[
\text{$d_G(v, B)\geq \lam n/30$ for all $v\in A$}\, ,
\] 
and 
\begin{align}\label{eqExpansionCond}
X\subseteq A,\,  |X|\geq \lam n/100,\,  Y\subseteq B,\,  |Y|\geq n/6 \implies |E(X,Y)|\geq \lam |X||Y|/10\, ,
\end{align}
and both statements hold also with $A,B$ swapped. Moreover if $\lam \geq \sqrt{\frac{\log n}{n}}$, then we require in addition that 
\begin{align}\label{eqExpansionCond2}
X\subseteq A,\, Y\subseteq B,\,  |X|,|Y|\geq 10\lam n \implies |E(X,Y)|\geq \lam |X||Y|/10\, .
\end{align}
 \end{defn}

\begin{lemma}\label{lemExpanderwhp}
Let $(A,B)\in \Pi_{\textup{weak}}$, and $(S,T)\in \cD_{A,B,\lam}^{\textup{w}}$. Sample $\Ec\subseteq A\times B$ according to the hard-core model on $S\boxempty T$ at activity $\lam$. Let $G$ be the graph $([n], \Ec)$. Then
\[
\P(G \textup{ is an $(A,B)$-$\lam$-expander} )\geq 1-  e^{-\lam n/25}\, .
\]
\end{lemma}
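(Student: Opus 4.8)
The plan is to establish each of the expansion/degree conditions in Definition~\ref{defABExpand} separately and then union bound over them. The key observation is that the crossing edges $\Ec$ are a hard-core sample on $S\boxempty T$, a graph of maximum degree at most $2\alpha/\lam$, so with $\alpha=1/(96e^3)$ we have $\lam \le 1/(16e^2 \Delta(S\boxempty T))$, and Lemma~\ref{lemhcquasirandom} applies to any vertex subset $U$ of $V(S\boxempty T)=A\times B$.

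First I would handle the lower bound on degrees: for a fixed $v\in A$, the set of potential crossing edges at $v$ is $U_v = \{v\}\times B \subseteq A\times B$, which has size $|B|\geq 2n/5$ since $(A,B)\in\Pi_{\textup{weak}}$. Since $d_G(v,B) = |\Ec \cap U_v|$, the second inequality of Lemma~\ref{lemhcquasirandom} gives $\P(d_G(v,B) \le \lam|B|/10) \le e^{-\lam|B|/8} \le e^{-\lam n/20}$, and $\lam|B|/10 \geq \lam n/25$; a union bound over the at most $n$ vertices $v$ (absorbed since $\lam n = \Omega(\sqrt n)$ and we have room in the exponent, using $\omega$ large) controls the failure probability by, say, $\tfrac14 e^{-\lam n/25}$. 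Next, for the edge-expansion condition~\eqref{eqExpansionCond}: for fixed $X\subseteq A$, $Y\subseteq B$ with $|X|\geq \lam n/100$, $|Y|\geq n/6$, the relevant vertex set is $U_{X,Y} = X\times Y$, so $|E(X,Y)| = |\Ec\cap U_{X,Y}|$ and $|U_{X,Y}| = |X||Y|$. The second inequality of Lemma~\ref{lemhcquasirandom} gives $\P(|\Ec\cap U_{X,Y}| \le \lam|X||Y|/10) \le e^{-\lam|X||Y|/8} \le e^{-\lam^2 n^2/(4800)}$ (using the size lower bounds on $X$ and $Y$). Since $\lam^2 n^2 \gg \lam n \cdot n \gg n \gg \log\binom{n}{|X|}\binom{n}{|Y|}$ when $\lam \geq \omega/\sqrt n$ with $\omega$ large, a union bound over all such pairs $(X,Y)$ (there are at most $4^n$ of them) still leaves failure probability at most $\tfrac14 e^{-\lam n/25}$. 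Condition~\eqref{eqExpansionCond2}, required only when $\lam \geq \sqrt{\log n / n}$, is handled identically with $|X|,|Y|\geq 10\lam n$: there $\lam|X||Y|/8 \geq 100\lam^3 n^2/8 \geq 12.5 \lam n \cdot \lam^2 n \gg n$ since $\lam^2 n \geq \log n$, so again the exponent dominates $\log 4^n$, and I get failure probability $\le \tfrac14 e^{-\lam n/25}$. Finally the same bounds hold with the roles of $A$ and $B$ exchanged by symmetry of the argument (the vertex sets $U$ are still subsets of $A\times B$ and Lemma~\ref{lemhcquasirandom} is applied in the same way). Summing the four (or five) failure probabilities gives the claimed bound $e^{-\lam n/25}$.

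The main obstacle, though a mild one, is bookkeeping the constants so that in every case the exponent $\lam|U|/8$ genuinely beats the $n\log 4$ (or $2n\log 2$) cost of the union bound with room to spare for the final clean exponent $\lam n/25$; this is where one uses that $\omega$ is a sufficiently large absolute constant (so that $\lam n \geq \omega\sqrt n$ swamps $n$ in the regime $\lam = \Theta(1/\sqrt n)$) and that $\lam^2 n \geq \log n$ in the regime where~\eqref{eqExpansionCond2} is needed. No single step is deep: the lemma is essentially a packaging of Lemma~\ref{lemhcquasirandom} plus union bounds, with the quantitative slack in the exponents doing all the work.
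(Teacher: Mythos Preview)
Your approach matches the paper's---apply Lemma~\ref{lemhcquasirandom} to the relevant vertex subsets of $A\times B$ and union bound---and the degree condition and condition~\eqref{eqExpansionCond} go through essentially as you describe (once the garbled chain ``$\lam^2 n^2\gg\lam n\cdot n\gg n$'' is replaced by the correct $\lam^2 n^2\ge\omega^2 n$, which beats $n\log 4$ for $\omega$ large).

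There is, however, a genuine gap in your handling of condition~\eqref{eqExpansionCond2}. You assert that the exponent $12.5\,\lam^3 n^2$ dominates $\log 4^n$ because $\lam^2 n\ge\log n$, but this is false: at $\lam=\sqrt{\log n/n}$ one has $\lam^3 n^2=(\log n)^{3/2}\sqrt n=o(n)$, so the crude union bound over all $4^n$ pairs $(X,Y)$ blows up. (Your implicit step ``$\lam n\cdot\lam^2 n\gg n$'' would require $\lam\log n\gg 1$, which fails here.) The paper fixes this by counting more carefully: it union-bounds using $\binom{n}{10\lam n}^2$ pairs, and since $\log\binom{n}{10\lam n}=O(\lam n\log(1/\lam))=O(\lam n\log n)=O(\lam^3 n^2)$ whenever $\lam^2 n\ge\log n$, the estimate $\binom{n}{10\lam n}^2 e^{-12\lam^3 n^2}\le e^{-\lam^3 n^2}$ goes through. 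The fix is easy but essential.
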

\begin{proof}
Note that $\Delta(S\cup T)\leq \alpha/\lam$ so that $\Delta:=\Delta(S\boxempty T)\leq 2\alpha/\lam$.  Fix $v\in A$ and note that 
$d_G(v, B) = |\Ec \cap (\{v\}\times B)|$.
By Lemma~\ref{lemhcquasirandom}\footnote{Noting that $\lam<\frac{1}{4e^2\Delta}$ since $\Delta\leq \frac{2\alpha}{\lam}$ and $\alpha=\frac{1}{96e^3}$.}, we have 
\[
\P(d_G(v, B)  \leq \lam |B|/10 ) \leq e^{-\lam |B|/8} \leq e^{-\lam n/24}\, ,
\]
where we used that $|B|\geq n/3$ (since $(A,B)$ is weakly balanced).
By a union bound over $v\in A$ we have
\begin{align}\label{eqCrossDegCond}
\P(d_G(v, B)  \geq \lam n/30 \text{ for all } v\in A )\geq 1-  ne^{-\lam n/24}\, .
\end{align}
We now turn to the second condition in the definition of an $(A,B)$-$\lam$-expander. 

Fix $X$ and $Y$ as in~\eqref{eqExpansionCond}. Note that
\[
|E(X,Y)|=|\Ec \cap (X\times Y)|\, . 
\]
By Lemma~\ref{lemhcquasirandom}, we then have 
\[
\P(|E(X,Y)|  \leq \lam |X||Y|/10 ) \leq e^{-\lam |X||Y|/8} \leq e^{-\lam^2 n^2/5000}\, .
\]
By a union bound over all choices of $X,Y$ we conclude that 
\begin{align}\label{eqExpCondWhp}
\P(\text{$G$ satisfies~\eqref{eqExpansionCond}})\geq 
1-2^{n} e^{-\lam^2 n^2/5000} \geq 1- e^{-n}\, ,
\end{align}
since $\lam\geq \omega/\sqrt{n}$.
Finally if $\lam \geq \sqrt{\frac{\log n}{n}}$ then we fix $X,Y$ as in~\eqref{eqExpansionCond2}. As above we have 
\[
\P(|E(X,Y)|  \leq \lam |X||Y|/10 ) \leq e^{-\lam |X||Y|/8}\leq e^{-12\lam^3 n^2}
\]
By a union bound over all choices of $X,Y$ we conclude that 
\begin{align}\label{eqExpCondWhp2}
\P(\text{$G$ satisfies~\eqref{eqExpansionCond2}})\geq 
1-\binom{n}{10\lam n}^2 e^{-12\lam^3 n^2}\geq 1-e^{-\lam^3 n^2}\, .
\end{align}
The result follows by combining this with~\eqref{eqExpCondWhp},~\eqref{eqCrossDegCond}, and the analogous statements with $A,B$ swapped.
\end{proof}

Lemma~\ref{lemWeakUnique} follows immediately from Lemma \ref{lemExpanderwhp} and the following consequence of expansion.
\begin{lemma}\label{lemExpansionCor} 
Let $(A,B)\in \Pi_{\textup{weak}}$ and let 
$G$ be an $(A,B)$-$\lam$-expander such that 
\(
\Delta(G_A \cup G_B)\leq \alpha/\lam\, .
\)
If $(A', B')\in \Pi$ such that $(A',B')\neq(A,B)$, then 
\(
\Delta(G_{A'} \cup G_{B'})> \alpha/\lam\, .
\)
Moreover $(A,B)$ is the unique max cut of $G$. 
\end{lemma}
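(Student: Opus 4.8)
The plan is to compare $(A,B)$ with an arbitrary competing partition $(A',B')\neq(A,B)$ by tracking how the defect edges change. Write $C=A\cap B'$ and $D=B\cap A'$ for the two ``moved'' sets; replacing $(A',B')$ by $(B',A')$ if necessary we may assume $|C|+|D|\le n/2$, and since $(A',B')\neq(A,B)$ we have $(C,D)\neq(\emptyset,\emptyset)$. Put $A_1=A\setminus C$ and $B_2=B\setminus D$, so $A'=A_1\cup D$ and $B'=C\cup B_2$. Partitioning the edges of $G$ according to which of the four blocks $A_1,C,D,B_2$ their endpoints fall in yields
\[
\bigl(|E(G_{A'})|+|E(G_{B'})|\bigr)-\bigl(|E(G_A)|+|E(G_B)|\bigr)= e(A_1,D)+e(C,B_2)-e(A_1,C)-e(D,B_2)\, ,
\]
where $e(X,Y)$ is the number of $G$-edges between disjoint sets $X,Y$. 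Since a cut $(X,Y)$ has size $|E(G)|-|E(G_X)|-|E(G_Y)|$, the right-hand side is exactly $\mathrm{cut}(A,B)-\mathrm{cut}(A',B')$. Both conclusions of the lemma will follow once we show this quantity is positive \emph{and}, moreover, that one of the gain terms $e(A_1,D)$, $e(C,B_2)$ is large enough that averaging produces a vertex whose defect-degree with respect to $(A',B')$ exceeds $\alpha/\lambda$ (a vertex of $D$ with many neighbours in $A_1$, or a vertex of $C$ with many neighbours in $B_2$, lies with all those neighbours on one side of $(A',B')$).

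For the loss terms we use only the degree hypothesis: every vertex of $C\subseteq A$ has at most $\alpha/\lambda$ neighbours inside $A$, so $e(A_1,C)\le|C|\alpha/\lambda$, and likewise $e(D,B_2)\le|D|\alpha/\lambda$. For the gain terms we use the expander hypotheses: from the minimum cross-degree bound, $e(A_1,D)\ge|D|\lambda n/30-e(C,D)\ge|D|\lambda n/30-|C||D|$ and symmetrically $e(C,B_2)\ge|C|\lambda n/30-|C||D|$; and from the edge-expansion property, whenever one of $A_1,D$ has size $\ge n/6$ and the other has size $\ge\lambda n/100$ we get the much stronger $e(A_1,D)\ge\lambda|A_1||D|/10$, and similarly for $e(C,B_2)$. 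Since $(A,B)$ is weakly balanced we have $|A|,|B|\ge 9n/20$, so $|A_1|<n/6$ forces $|C|>17n/60$ and then (using $|C|+|D|\le n/2$) $|B_2|>n/6$; thus the edge-expansion property can always be applied to at least one side once the moved sets are not tiny.

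It then remains to run the case analysis. If $|C|,|D|<\lambda n/100$ (the local case), the bounds above combine to
\[
e(A_1,D)+e(C,B_2)-e(A_1,C)-e(D,B_2)\ \ge\ (|C|+|D|)\Bigl(\tfrac{7\lambda n}{300}-\tfrac{\alpha}{\lambda}\Bigr)\ >\ 0
\]
for $\omega$ a sufficiently large absolute constant (using $\lambda^2 n\ge\omega^2$), and the same lower bound on $e(A_1,D)$ — or on $e(C,B_2)$ if $D=\emptyset$ — makes the average of $d_G(\cdot,A_1)$ over $D$ exceed $\alpha/\lambda$, giving the required vertex. Otherwise, WLOG $|D|\ge\lambda n/100$. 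If $|A_1|<n/6$, then $|B_2|>n/6$ and $|C|>17n/60$, so edge-expansion gives $e(C,B_2)\ge\tfrac{17}{3600}\lambda n^2$, which dominates $(|C|+|D|)\alpha/\lambda\le\alpha n/(2\lambda)$ and also produces a vertex of $C$ with more than $\alpha/\lambda$ neighbours in $B_2$. If $|A_1|\ge n/6$, edge-expansion gives $e(A_1,D)\ge\lambda n|D|/60$ (hence a high-degree vertex in $D$), and one closes the sign computation by further splitting on whether $|C|<\lambda n/100$ (then $e(A_1,C)\le\alpha n/100$ is negligible against $e(A_1,D)\ge\tfrac{7}{300}|D|\lambda n$), or $|C|\ge\lambda n/100$ with $|B_2|\ge n/6$ (then $e(C,B_2)\ge\lambda n|C|/60$ and the two gains sum to $(|C|+|D|)(\lambda n/60-\alpha/\lambda)>0$), or $|B_2|<n/6$ (then $|D|>17n/60$ is large enough that $e(A_1,D)$ alone dominates all losses). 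Every branch yields both the strict sign inequality and a vertex of defect-degree $>\alpha/\lambda$; hence $\Delta(G_{A'}\cup G_{B'})>\alpha/\lambda$, and $\mathrm{cut}(A',B')<\mathrm{cut}(A,B)$. As $(A',B')$ was arbitrary, $(A,B)$ is the unique max cut.

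The main obstacle is the branch where the moved sets $C$ and $D$ are \emph{both} of moderate size (anything from $\Theta(\lambda n)$ up to $\Theta(n)$): there the trivial bound $e(C,D)\le|C||D|$ is far too weak, so one is forced to invoke $e(X,Y)\ge\lambda|X||Y|/10$, and this requires first checking that the complementary sets $A_1=A\setminus C$ and $B_2=B\setminus D$ are still large (size $\ge n/6$) — which is exactly what weak-balancedness of $(A,B)$ together with the case split on $|A_1|,|B_2|$ secures. The remainder is a routine (if slightly tedious) bookkeeping of absolute constants confirming that one sufficiently large choice of $\omega$ makes every inequality above hold.
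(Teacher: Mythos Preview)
Your proof is correct and follows essentially the same approach as the paper: a case analysis that combines the minimum cross-degree condition, the edge-expansion property between large sets, and the defect-degree bound $\Delta(G_A\cup G_B)\le\alpha/\lambda$. The organization differs slightly---you use an explicit edge-count identity together with the swap $(A',B')\leftrightarrow(B',A')$ to reduce to $|C|+|D|\le n/2$, whereas the paper first handles the case where one intersection is tiny via a single-vertex argument and then treats the case where all four intersections are of size $\ge\lambda n/100$---but the underlying ideas are the same.
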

\begin{proof}
Suppose that $(A', B')$ is a partition distinct from $(A,B)$. Since the partitions are distinct, either $A \cap B'\neq \emptyset$ or $B\cap A'\neq \emptyset$. 
Assume wlog that $B\cap A'\neq \emptyset$.

Suppose first that $|A\cap B'|<\lam n /100$.
By assumption there exists $v\in B\cap A'$. Since $G$ is an $(A,B)$-$\lam$-expander we then have 
\[
d_G(v, A')\geq d_G(v, A) - d_G(v, A\cap B')\geq \lam n/30 - |A\cap B'|\geq \lam n /50\, .
\]
It follows that $\Delta(G_{A'}\cup G_{B'})\geq  \lam n /50>\alpha/\lambda$. 
 Moreover, we note that
\[
d_G(v, B')= d_G(v, B\cap B') + d_G(v, A\cap B') \leq  \alpha/\lam + |A\cap B'|<\lam n /50\, ,
\]
and so $d_G(v, B')<d_G(v, A')$. In particular $(A',B')$ is not a max cut since $(A'\backslash\{v\}, B'\cup \{v\})$ is a larger cut.  

We may therefore assume that $|A\cap B'|\geq \lam n /100$. 
In particular $A\cap B'\neq\emptyset$, and so by an identical argument, we may assume that $|A'\cap B|\geq \lam n /100$ also. 
By symmetry (swapping the roles of $A$ and $B$) we may also assume that $|A \cap A'|\geq \lam n /100$ and $|B\cap B'| \geq \lam n/100$.

Since $(A,B)$ is weakly balanced we have $|B| \geq n/3$. Suppose wlog that $|B\cap B'|\geq |B \cap A'|$ so that in particular $|B\cap B'|\geq n/6$.
Since $G$ is an $(A,B)$-$\lam$-expander we then have
\begin{align}\label{eqABlamExpand}
|E(A\cap B',B\cap B')|\geq \lam |A\cap B'||B\cap B'|/10\, ,
\end{align}
and so there exists $v\in A\cap B'$ such that $d_G(v, B\cap B')\geq \lam |B\cap B'|/10\geq \lam n /60$. It follows that $\Delta(G_{A'}\cup G_{B'})\geq  \lam n /60>\alpha/\lambda$. 

We conclude by showing that again $(A', B')$ is not a max cut. First note that
{\small
\begin{multline}\label{eqABMaxCut}
 |E(A,B)| -|E(A',B')| =\\
 |E(A\cap A', B\cap A')| + |E(A\cap B', B\cap B')| -|E(A\cap A', A\cap B')| -|E(B\cap A', B\cap B')| \, .
\end{multline}}
Since $\Delta(G_A)\leq \alpha/\lam$, we have
\[
|E(A\cap A', A\cap B')| \leq \frac{\alpha}{\lam}|A\cap B'|\, .
\]
It follows from~\eqref{eqABlamExpand} and the bound $|B\cap B'|\geq n/6$ that
\begin{align}\label{eqCrossDisc}
|E(A\cap A', A\cap B')|<\frac{1}{2}|E(A\cap B',B\cap B')| \, .
\end{align}
Since $(A,B)$ is weakly balanced, $|A| \geq n/3$ and so either $|A\cap A'|\geq n/6$, $|A \cap B'|\geq n/6$. 
Suppose first that $|A\cap A'|\geq n/6$, then an argument identical to the above shows that
\[
|E(B\cap A', B\cap B')|<\frac{1}{2}|E(A\cap A', B\cap A')|\, .
\]
Similarly, if $|A \cap B'|\geq n/6$ then 
\[
|E(B\cap A', B\cap B')|<\frac{1}{2} |E(A\cap B', B\cap B')| \, .
\]
In either case, when combined with~\eqref{eqCrossDisc} and~\eqref{eqABMaxCut}, we see that  $|E(A,B)|-|E(A',B')|>0$.
\end{proof}

\section{Strengthening the max degree bound on the defect graph}\label{secMaxDegree}
In this section we prove Proposition~\ref{propMaxDegBootstrap}.
Throughout this section we fix $(A,B)\in \Pi_{\text{weak}}$.
As in the previous section,  we assume that $\lam \geq \omega/\sqrt{n}$ where $\omega>0$ is a sufficiently large absolute constant. 
The main step toward Proposition~\ref{propMaxDegBootstrap}, is to prove a large deviation bound on the maximum degree of a sample from $\nu^{\textup{w}}_{A,B,\lam}$ (defined at~\eqref{eqnuABwdef}). 
Since we will need it later, we do the same for the measure $\nu_{A,B,\lam}$ (defined at~\eqref{eqnuABdef}).
 For $\bm r=(r_A, r_B)\in [0,1)^2$, let $\nu_{\bm r}$ denote the measure on graphs $G\subseteq \binom{A}{2}\cup \binom{B}{2}$ given by
\begin{align}\label{eqnubmrdef}
\nu_{\bm r}(G)\propto \left(\frac{r_A}{1-r_A}\right)^{|G_A|}\left(\frac{r_B}{1-r_B}\right)^{|G_B|}\, ,
\end{align} 
i.e., the distribution of the union of the two independent \ER random graphs $G(A,r_A), G(B,r_B)$.
Given a family of graphs $\cE\subseteq 2^{\binom{A}{2}\cup \binom{B}{2}}$, let $\nu_{\bm r,\cE}$ denote the measure $\nu_{\bm r}$ conditioned on the event $\cE$ that is, 
\begin{align}\label{eqnubmrcEdef}
\nu_{\bm r,\cE}(G)\propto \nu_{\bm r}(G)\mathbf{1}_{G\in \cE}\, .
\end{align}
 Our strategy will be to approximate $\nu_{A,B,\lam},\nu^{\textup{w}}_{A,B,\lam}$ by a perturbation of a measure of the form $\nu_{\bm r,\cE}$
for some choice of $\bm r=(r_A, r_B)$ \footnote{It will always be the case that $r_A=(1+o(1)) q_A$, $r_B=(1+o(1)) q_B$ but the precise choice will vary.} and a family of graphs $\cE$
\footnote{Typically we will take $\cE= \cD_{A,B,\lam}$ or $\cE=\cD^{\textup{w}}_{A,B,\lam}$, but later in the paper we consider other choices of $\cE$.}.
Throughout the paper, we will study various perturbations of measures of the form $\nu_{\bm r,\mathcal E}$ and so the results of this section are stated in greater generality than that needed for our immediate task of understanding $\nu^{\textup{w}}_{A,B,\lam}$. In general, we consider perturbations of the form
\begin{align}\label{eqnurfDef}
\nu^f_{\bm r,\mathcal E}(G)\propto \nu_{\bm r,\mathcal E}(G) e^{f(G)}\, ,
\end{align}
for some $f:\cE\to\R$.
We highlight that if the function $f$ is a linear combination of subgraph counts of $G$ then $\nu^f_{\bm r,\mathcal E}(G)$ is a (conditioned) exponential random graph model.

We will always require $f$ to satisfy a condition of the following type to ensure that the effect of the perturbation can be controlled.
We say that a family of graphs $\cE$ is \emph{downward-closed} if $G\in \cE$ and $F\subseteq G$ implies $F\in \cE$.

\begin{defn}
Let $\mathcal E$ be a downward-closed family of graphs and
let $\delta>0$.
\begin{itemize}
\item We call a function $f : \mathcal E\to\R$ \emph{$\delta$-local} if for all $G\in \mathcal E$ and $F\subseteq G$, we have
\[
|f(G)-f(G\backslash F)| \leq \delta |F| \cdot \max_{H\in \cE}\Delta(H)\, .
\]
\item We call a function $f : \mathcal E\to\R$ \emph{strongly $\delta$-local} if for all $G\in \mathcal E$ and $F\subseteq G$, we have
\[
|f(G)-f(G\backslash F)| \leq \delta |F| \cdot \Delta(G)\, .
\]
\end{itemize}
\end{defn}

Recall the definition of $q_A, q_B$ from Definition~\ref{defsparse} and the definitions of $\cD^{\textup{w}}_{A,B,\lam}, \cD_{A,B,\lam}$ from~\eqref{eqDABwDefAgain}.

\begin{lemma}\label{lemfofNuisLocal}
\,
\begin{enumerate}
\item Let $\cE= \cD^{\textup{w}}_{A,B,\lam}$.
There exists a strongly $(16e^3n\lam^3)$-local $f : \cE\to\R$ such that $\nu^{\textup{w}}_{A,B,\lam}=\nu^f_{\bm r,\mathcal E}$ with $r_A=q_A, r_B=q_B$.
\item  Let $\cE= \cD_{A,B,\lam}$.
There exists a strongly $(16e^3n\lam^3)$-local $f : \cE\to\R$ such that $\nu_{A,B,\lam}=\nu^f_{\bm r,\mathcal E}$ with $r_A=q_A, r_B=q_B$.
\end{enumerate}
\end{lemma}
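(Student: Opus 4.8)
The plan is to exhibit $f$ explicitly rather than to construct it abstractly. Identifying a pair $(S,T)$ with the graph $G=S\cup T$ (so $G_A=S$, $G_B=T$) and writing $G_\boxempty = G_A\boxempty G_B$, I take
\[
f(G):=\log Z_{G_\boxempty}(\lam)+\lam^2\bigl(b\,|G_A|+a\,|G_B|\bigr)\, .
\]
Because $\tfrac{q_A}{1-q_A}=\lam e^{-\lam^2 b}$ and $\tfrac{q_B}{1-q_B}=\lam e^{-\lam^2 a}$, for every $G$ one has $\bigl(\tfrac{q_A}{1-q_A}\bigr)^{|G_A|}\bigl(\tfrac{q_B}{1-q_B}\bigr)^{|G_B|}e^{f(G)}=\lam^{|G_A|+|G_B|}Z_{G_\boxempty}(\lam)$, so with $\bm r=(q_A,q_B)$ and $\cE\in\{\cD^{\textup{w}}_{A,B,\lam},\cD_{A,B,\lam}\}$ (each downward-closed, since deleting edges preserves triangle-freeness and all the degree and edge-count bounds) the measure $\nu^f_{\bm r,\cE}$ has mass proportional to $\lam^{|G_A|+|G_B|}Z_{G_\boxempty}(\lam)\mathbf 1_{G\in\cE}$, which by~\eqref{eqnuABwdef},~\eqref{eqnuABdef} is exactly $\nu^{\textup{w}}_{A,B,\lam}$ for $\cE=\cD^{\textup{w}}_{A,B,\lam}$ and $\nu_{A,B,\lam}$ for $\cE=\cD_{A,B,\lam}$. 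So this single $f$ yields both identities, and the whole content of the lemma is that $f$ is strongly $(16e^3n\lam^3)$-local on each $\cE$.

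To verify locality I would first record that $G_\boxempty$ is triangle-free (a triangle in a Cartesian product projects to a triangle in a factor) with $\Delta(G_\boxempty)=\max_{(u,v)}\bigl(d_{G_A}(u)+d_{G_B}(v)\bigr)\le 2\Delta(G)\le 2\Delta_{A,B,\lam}$; since $\lam\ge\omega/\sqrt n$ and the $e^{-\lam^2 b}$ suppression in $q_A,q_B$ make $\lam\Delta_{A,B,\lam}=o_\omega(1)$, we get $\lam\le\tfrac{1}{4e\Delta(G_\boxempty)}$ for $\omega$ large, so $S\boxempty T$ is subcritical and Corollary~\ref{corclustersimple} applies to $G_\boxempty$ (and to $(G\setminus F)_\boxempty$). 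Using $|G_\boxempty|=b|G_A|+a|G_B|$ and $P_2(G_\boxempty)=bP_2(G_A)+aP_2(G_B)+4|G_A||G_B|$, Corollary~\ref{corclustersimple} rewrites $f$ as
\[
f(G)=ab\log(1+\lam)+\lam^3\bigl(bP_2(G_A)+aP_2(G_B)+4|G_A||G_B|+2b|G_A|+2a|G_B|\bigr)+\mathrm{err}(G)\, ,
\]
where $\mathrm{err}(G)=\sum_{\Gamma\in\cC'(G_\boxempty),\,|\Gamma|\ge4}\phi(\Gamma)\lam^{|\Gamma|}$ and $\cC'(G_\boxempty)$ denotes the clusters of $G_\boxempty$ using at least one edge; the point of the $+\lam^2(b|G_A|+a|G_B|)$ correction is precisely that it cancels the order-$\lam^2$ term $-|G_\boxempty|\lam^2$ of the cluster expansion, leaving only order $\lam^3$ and above.

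Next I would subtract the value at $G\setminus F$. The polynomial part is a bounded integer combination of the differences $|G_A|-|(G\setminus F)_A|=|F_A|$, $P_2(G_A)-P_2((G\setminus F)_A)\le 2\Delta(G)|F_A|$, $|G_A||G_B|-|(G\setminus F)_A||(G\setminus F)_B|\le|G_B||F_A|+|G_A||F_B|$, and the $A\leftrightarrow B$ analogues; using $a,b\le n$ and $|G_A|,|G_B|\le\tfrac n2\Delta(G)$ this is at most $8\,n|F|\lam^3\Delta(G)$. For $\mathrm{err}(G)-\mathrm{err}(G\setminus F)$, a cluster contributes differently only when its vertex set spans one of the (at most $b|F_A|+a|F_B|\le n|F|$) product-edges deleted along with $F$; pinning the cluster expansions of $G_\boxempty$ and of $(G\setminus F)_\boxempty$ at the corresponding adjacent pair and applying~\eqref{eqpinnedexplicittail} with $|S|=2$, $k=4$ bounds this difference by $n|F|\cdot O\bigl(\Delta(G_\boxempty)^2\lam^4\bigr)=O(\lam\Delta(G))\cdot n|F|\lam^3\Delta(G)$, which is $o_\omega(1)\cdot n|F|\lam^3\Delta(G)$ since $\lam\Delta(G)\le\alpha$ on $\cD^{\textup{w}}_{A,B,\lam}$ and $\lam\Delta(G)\le\lam\Delta_{A,B,\lam}=o_\omega(1)$ on $\cD_{A,B,\lam}$. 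Summing, $|f(G)-f(G\setminus F)|\le 12\,n|F|\lam^3\Delta(G)\le 16e^3\,n|F|\lam^3\Delta(G)$ for $\omega$ large, and the same computation restricted to either domain handles both parts.

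The step I expect to be the main obstacle is the cross term $4|G_A||G_B|\lam^3$ in $\log Z_{G_\boxempty}(\lam)$: deleting one edge of $G_A$ changes it by $\approx 4|G_B|\lam^3$, and bounding $|G_B|$ by its worst case $O(n/\lam)$ over all of $\cE$ produces a term of order $n\lam^2$, which overshoots the $\lam^3\Delta(G)$ budget when $\Delta(G)$ is small. The resolution — and the reason the lemma is stated with \emph{strong} locality, i.e.\ with $\Delta(G)$ rather than $\max_{H\in\cE}\Delta(H)$ — is to bound $|G_B|\le\tfrac n2\Delta(G_B)\le\tfrac n2\Delta(G)$, tying the change to $\Delta(G)$ itself; the same $\Delta(G)$-aware accounting is what makes the cluster tail negligible, and it relies on $S\boxempty T$ being subcritical, which in turn rests on $\lam\ge\omega/\sqrt n$ and the $e^{-\lam^2 b}$ damping built into $q_A,q_B$.
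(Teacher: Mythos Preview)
Your $f$ is the paper's $f$ plus the constant $ab\log(1+\lambda)$, so the measure identities and the locality estimate are literally the same problem. The paper's execution is shorter: rather than extracting the $\lambda^3$ polynomial and then bounding the $|\Gamma|\ge4$ tail separately, it keeps all clusters of size $\ge3$ together, observes that any cluster in $\cC'(G_\boxempty)\setminus\cC'((G\setminus F)_\boxempty)$ must contain one of at most $n|F|$ product-edge pairs, and applies the pinned bound~\eqref{eqpinnedexplicittail} once with $|S|=2$, $k=3$, $\Delta=2\Delta(G)$, giving $n|F|\cdot(2e)^3\cdot 2\Delta(G)\cdot\lambda^3=16e^3n|F|\Delta(G)\lambda^3$ directly --- this is where the constant $16e^3$ comes from. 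Your route works too and in fact yields a smaller constant, but the two-step split is unnecessary.

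Two minor slips to clean up. On $\cD^{\textup{w}}_{A,B,\lambda}$ the bound is $\Delta(G)\le\alpha/\lambda$, not $\Delta(G)\le\Delta_{A,B,\lambda}$ (the latter is much smaller); subcriticality there follows from $2\alpha=1/(48e^3)<1/(4e)$, not from $\omega$ being large. Correspondingly, the $|\Gamma|\ge4$ tail on $\cD^{\textup{w}}$ is bounded by $64e^4\alpha\cdot n|F|\Delta(G)\lambda^3=\tfrac{2e}{3}\,n|F|\Delta(G)\lambda^3$, a fixed constant multiple rather than $o_\omega(1)$ --- still comfortably within $16e^3$, so the conclusion stands.
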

\begin{proof}
We prove $(1)$. The proof of $(2)$ is similar. 
Recall that we let $G_\boxempty=G_A\boxempty G_B$ and that $\nu^{\textup{w}}_{A,B,\lam}$ is the measure on $ \cD^{\textup{w}}_{A,B,\lam}$ given by
\begin{align*}
\nu^{\textup{w}}_{A,B,\lam}(G)\propto \lam^{|G|}Z_{G_{\boxempty}}(\lam)\, .
\end{align*}
For $G\in \cD^{\textup{w}}_{A,B,\lam}$ we have $\Delta(G)\leq \alpha/\lambda$ by definition, and so $\Delta(G_\boxempty)\leq 2\alpha/\lam$. Since $\alpha=1/(96e^3)$, we may apply  Lemma~\ref{lem:ClusterExpMod} and cluster expand
\[
\log\left( \frac{Z_{G_{\boxempty}}(\lam)}{(1+\lam)^{ab}} \right) = -\lam^2 |G| +  f(G)\, ,
\]
where 
\begin{align}\label{eqflocaldef0}
f(G)= \sum_{\substack{\Gamma\in \cC'(G_\boxempty):\\ |\Gamma|\geq 3}} \phi(\Gamma) \lam^{|\Gamma|}\, ,
\end{align}
and
$\cC'(G_{\boxempty})$ denotes the set of non-constant clusters of $G_{\boxempty}$.
Suppose $F\subseteq G$, then
\begin{align}\label{eqflocalcalc0}
f(G) - f(G\backslash F) = \sum_{\Gamma\in \cC'' : |\Gamma|\geq 3} \phi(\Gamma) \lam^{|\Gamma|}
\end{align}
where 
\[
\cC''=  \cC'(G_\boxempty)\backslash  \cC'((G\backslash F)_{\boxempty})\, .
\]

Now, if $\Gamma\in \cC''$ then $\Gamma$ 
must contain a pair $S=\{(v_1, w), (v_2, w)\}$ (a pair of vertices of $G_{\boxempty}$) such that $\{v_1, v_2\}\in F_A$ or a pair $S=\left\{(v, w_1), (v, w_2)\right\}$ such that $\{w_1, w_2\}\in F_B$. Since there are at most $b|F_A|+a|F_B|\leq n|F|$ such pairs of vertices and $\Delta(G_{\boxempty})\leq 2\Delta(G)$, we have by Lemma~\ref{lemClusterTail} (applied with $k=3$, $t=0$ and $S$, for each of the aforementioned pairs $S$),
\begin{align}\label{eqtailbdgeq3}
\left| \sum_{\Gamma\in \cC'': |\Gamma|\geq 3} \phi(\Gamma) \lam^{|\Gamma|}\right|\leq 
n|F|\cdot (2e)^3\cdot 2\Delta(G)\cdot\lam^3\, .
\end{align}
We conclude from~\eqref{eqflocalcalc0} and~\eqref{eqtailbdgeq3} that $f$ is strongly $(16e^3 n\lam^3)$-local.
\end{proof}

In what follows, given a probability measure $\mu$, we write $\bG\sim \mu$ to denote that $\bG$ is a random sample from $\mu$.
\begin{lemma}\label{lemDegBdLocal}
 Let $\cE\subseteq 2^{\binom{A}{2}\cup \binom{B}{2}}$ be downward closed such that
\[
\max_{H\in \mathcal E}\Delta(H)\leq \alpha/\lam\, .
\]
Let $\delta \leq n\lam^3/(6\alpha)$ and let $f : \cE\to\R$ be strongly $\delta$-local. Let $r_A, r_B\in [0,1)$ be such that $r_A\leq 2q_A$, $r_B\leq 2q_B$.
If $\bG\sim \nu^f_{\bm r,\cE}$, then
\begin{align}\label{eqPGnotinTAB1}
\P(\Delta(\bG)\geq \Delta/2)\leq n^2 e^{-\Delta/2}\, ,
\end{align}
and
\begin{align}\label{eqSTEdgeDeviation1}
\P(|\bG| \geq K/2 ) \leq 2n^2e^{-\Delta/2}\, ,
\end{align}
where $\Delta=\Delta_{A,B,\lam}$ and $K=K_{A,B,\lam}$ are as in Definition~\ref{defsparse}. Moreover~\eqref{eqPGnotinTAB1} and~\eqref{eqSTEdgeDeviation1} hold if instead $\max_{H\in \mathcal E}\Delta(H)\leq \Delta$ and $f$ is $\delta$-local.
\end{lemma}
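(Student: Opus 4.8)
The plan is to reduce everything to a comparison with two independent \ER graphs, and then peel the maximum degree down through a bootstrap. The key observation is that for any \emph{downward-closed} family $\cE'$ with $d':=\max_{K\in\cE'}\Delta(K)$ and any strongly $\delta$-local $f\colon\cE'\to\R$, the conditional probability that a given edge $e\in\binom A2$ lies in $\bG\sim\nu^f_{\bm r,\cE'}$, given the restriction of $\bG$ off $e$, is at most $p^*_A$, where $p^*_A/(1-p^*_A)=\tfrac{r_A}{1-r_A}e^{\delta d'}$ (and symmetrically for $\binom B2$). Indeed, if $\bG\setminus e=H$ then either $H\cup e\notin\cE'$ and the probability is $0$, or $H\in\cE'$ by downward-closedness and
\[
\frac{\nu_{\bm r}(H\cup e)e^{f(H\cup e)}}{\nu_{\bm r}(H)e^{f(H)}}=\frac{r_A}{1-r_A}e^{f(H\cup e)-f(H)}\le\frac{r_A}{1-r_A}e^{\delta\Delta(H\cup e)}\le\frac{r_A}{1-r_A}e^{\delta d'}\, ,
\]
using strong $\delta$-locality. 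By the standard consequence of Holley's inequality this makes $\nu^f_{\bm r,\cE'}$ stochastically dominated by $G(A,p^*_A)\cup G(B,p^*_B)$; since $q_A,q_B=o(1)$ and $r_A\le 2q_A$, $r_B\le 2q_B$, we get $p^*_A,p^*_B\le 2qe^{\delta d'}(1+o(1))$, where $q=\max\{q_A,q_B\}$. As $\Delta(\cdot)$ and $|\cdot|$ are increasing, it suffices to control these statistics for the dominating product measure — \emph{provided} the effective density $2qe^{\delta d'}$ is small.

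This last proviso is the crux. Taking $\cE'=\cE$ gives $d'=\alpha/\lam$ and an inflation factor $e^{\delta\alpha/\lam}=e^{n\lam^2/6}$, which may be as large as $n^{2/3}$ — far too big for a Chernoff bound. So I would peel the degree down in stages: set $d_0=\alpha/\lam$, $d_{i+1}=d_i/2$, and stop at the first $\ell$ with $d_\ell\le 2\Delta$, where $\Delta=\Delta_{A,B,\lam}$. Conditioning $\nu^f_{\bm r,\cE}$ on $\{\Delta(\bG)\le d_i\}$ yields $\nu^f_{\bm r,\cE\cap\{\Delta\le d_i\}}$, still of the same form but with maximum degree $\le d_i$, so its law is dominated by \ER graphs of density $\le 2qe^{\delta d_i}(1+o(1))$ with $\delta d_i=\tfrac{n\lam^2}{6\cdot 2^i}$. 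A short calculation — using $n\lam q=(1+o(1))\,n\lam^2 e^{-\lam^2 a}$ with $a\in[0.45n,0.5n]$ the size of the smaller part — shows $n\cdot 2qe^{\delta d_i}/d_{i+1}$ is a bounded multiple of $2^i (n\lam^2)e^{-\lam^2 a+n\lam^2/(6\cdot 2^i)}$; since $u\mapsto u e^{c/u}$ is decreasing then increasing, this is maximised at $u=1$ or $u=2^\ell$, and in both cases it is $o_\omega(1)$ once $\omega$ is a large enough absolute constant. Hence $d_{i+1}\ge e^2\cdot np^*$, so Chernoff (Lemma~\ref{lemChernoff}) and a union bound over the $\le n$ vertices give $\P(\Delta(\bG)>d_{i+1}\mid\Delta(\bG)\le d_i)\le ne^{-d_{i+1}}$. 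As $d_{i+1}>\Delta$ throughout the halving phase, these errors form a geometric series summing to $O(ne^{-\Delta})$. Finally, once $\Delta(\bG)\le d_\ell\le 2\Delta$ the inflation factor $e^{\delta d_\ell}\le e^{2\delta\Delta}$ equals $1+o(1)$ (as $\delta\Delta=O(n\lam^3\max\{qn,\log n\})=o_\omega(1)$), the effective density drops to $2q(1+o(1))$, and $\tfrac{\Delta/2}{2qn(1+o(1))}\ge 12.5(1-o(1))>e^2$ — here is exactly where the constant $50$ in $\Delta_{A,B,\lam}$ is used — so one more Chernoff step gives $\P(\Delta(\bG)>\Delta/2\mid\Delta(\bG)\le d_\ell)\le ne^{-\Delta/2}$. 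Summing, $\P(\Delta(\bG)\ge\Delta/2)=O(ne^{-\Delta/2})\le n^2e^{-\Delta/2}$.

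For the edge-count bound, condition on $\{\Delta(\bG)\le\Delta/2\}$ (which holds up to $n^2e^{-\Delta/2}$ by the above) and apply the domination with $d'=\Delta/2$, so $e^{\delta d'}=1+o(1)$: then $|\bG|$ is dominated by $\bin\!\big(\binom a2,2q_A(1+o(1))\big)+\bin\!\big(\binom b2,2q_B(1+o(1))\big)$, of mean $\le 2qn^2(1+o(1))$, while $K/2=25\max\{qn^2,\log n\}\ge 25qn^2$, so Chernoff gives $\P(|\bG|\ge K/2\mid\Delta(\bG)\le\Delta/2)<e^{-K/2}\le e^{-\Delta/2}$ (using $K\ge\Delta$); combining yields $\P(|\bG|\ge K/2)\le 2n^2e^{-\Delta/2}$. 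When instead $\max_{K\in\cE}\Delta(K)\le\Delta$ and $f$ is merely $\delta$-local, $|f(H\cup e)-f(H)|\le\delta\max_{K\in\cE}\Delta(K)\le\delta\Delta=o_\omega(1)$, so the effective density is already $2q(1+o(1))$ and no bootstrap is needed: the domination and the two Chernoff bounds go through directly with $d'=\Delta$ throughout.

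I expect the second step to be the main obstacle: the naive per-edge perturbation bound is genuinely too lossy, and one must descend through the degree scales, checking at each level that the shrinking effective density is small enough to invoke Chernoff, and then verify that the error accumulated over the $O(\log n)$ rounds still fits comfortably under $n^2e^{-\Delta/2}$ — which works precisely because $d_i\ge\Delta$ throughout the halving phase (so the per-round errors decay geometrically) and because $\Delta_{A,B,\lam}$ and $K_{A,B,\lam}$ carry generous constants.
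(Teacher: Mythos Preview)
Your bootstrap-via-domination argument is correct and takes a genuinely different route from the paper's. The paper avoids the peeling entirely by a single direct step: for a vertex $v$ and integer $j$ it bounds the event $\{d_{\bG}(v)=j=\Delta(\bG)\}$ conditioned on $\bG\setminus E[v]$. The point of insisting that $j$ equal the \emph{maximum} degree is that strong $\delta$-locality then yields $|f(G\cup J)-f(G)|\le\delta\,|J|\,\Delta(G\cup J)=\delta j^2$ for $J\subseteq E[v]$ with $|J|=j$, rather than the naive $\delta j\cdot\alpha/\lam$ that forces your iteration. A short case split finishes: if $\delta j^2\le j/10$ the inflation is harmless and one Chernoff-type bound suffices; if $\delta j^2>j/10$ then both $j>1/(10\delta)\ge 3\alpha/(5n\lam^3)$ and $j\le\alpha/\lam$, whence $\delta j^2\le jb\lam^2/2$, so the effective per-edge weight collapses to $O(\lam e^{-b\lam^2/2})$, again small enough. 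Your approach trades this exploitation of the word ``strongly'' for an iterated Holley-plus-Chernoff scheme: it is longer and requires the endpoint analysis of $u\mapsto ue^{c/u}$, but it is more modular and exports cleanly to settings where no analogue of the $\delta j^2$ trick is available. For the edge-count bound and the $\delta$-local ``moreover'' clause the two proofs essentially coincide (condition on a good degree event, then compare to \ER and apply Chernoff).
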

\begin{proof}
For $v\in A\cup B$ and $j\in \N$, let $\mathcal E(v,j)$ denote the event $\{ d_{\bG}(v) = j = \Delta(\bG) \} $. 
Since $\bG\in \cE$ by definition, $\Delta(\bG)\leq \alpha/\lam$ and so we may assume that $j\leq \alpha/\lam$.
We will show that for $j\geq \Delta/2$ we have $\P(\mathcal E(v,j))\leq e^{-\Delta/2}$ and so~\eqref{eqPGnotinTAB1} follows by a union bound over $v$ and $j$.

Suppose that $v\in A$ and let $E[v]\subseteq \binom{A}{2}$ denote the set of pairs in $A$ containing $v$. Let $\mathbf{G}_v= \bG-E[v]$.
Suppose that $G$ is such that $\P(\bG_v=G)>0$. We then have
\begin{align}\label{eqLocalDegBd}\\
\P( \mathcal E(v,j) | \bG_v=G)&\leq \sum_{\substack{J\subseteq E[v]: |J|=j,\\ G\cup J \in \cE(v,j)}}\left(\frac{r_A}{1-r_A}\right)^j e^{f(G\cup J)-f(G)}
\leq
\sum_{J\subseteq E[v]: |J|=j}\left(\frac{r_A}{1-r_A}\right)^j e^{\delta j^2} 
\end{align}
where for the second inequality we used that $f$ is strongly $\delta$-local.

Suppose first that
\[
\delta j^2 \leq j/10\, .
\]
Letting 
\[
\tilde r_A= \left(\frac{r_A}{1-r_A}\right) e^{1/10}\, ,
\]
we conclude that
\begin{align}\label{eqEvjBound}
\P( \mathcal E(v,j) | \bG_v=G)
\leq 
\binom{a}{j}\tilde r_A^j\leq \left(\frac{ea \tilde r_A}{j} \right)^j\, .
\end{align}
Since $r_A\leq 2q_A$ we have
 $\Delta\geq e^2 a \tilde r_A$. Moreover, $j\geq \Delta/2$ by assumption so the RHS of~\eqref{eqEvjBound} is at most $e^{-\Delta/2}$ as desired. 

Suppose now that $\delta j^2> j/10$. Since $j\leq \alpha/\lam$ we have by the assumption on $\delta$ that
\[
\delta j^2\leq \delta j\alpha /\lam \leq j n\lam^2/6 \leq jb\lam^2/2\, ,
\]
where for the final inequality we used that $(A,B)$ is weakly balanced. 
Returning to~\eqref{eqLocalDegBd} and using that $r_A/(1-r_A)\leq 2q_A/(1-2q_A)\leq  3\lam e^{-b\lam^2}$, we have
\begin{align}\label{eqEvjBound2}
\P( \mathcal E(v,j) | \bG_v=G)
\leq 
\binom{a}{j}\left(3\lam e^{-b\lam^2/2} \right)^j\leq  \left(\frac{3ea \lam e^{-b\lam^2/2} }{j} \right)^j\, .
\end{align}
Recall that by assumption
\[
j>\frac{1}{10\delta}\geq \frac{3\alpha}{5n\lam^3}\geq 3e^2a \lam e^{-b\lam^2/2}\, ,
\]
where the final inequality holds by taking $\omega$ a sufficiently large constant. 
By assumption we also have that $j\geq \Delta/2$ and so the RHS of~\eqref{eqEvjBound2} is at most $e^{-\Delta/2}$ as desired. This concludes the proof of~\eqref{eqPGnotinTAB1}.

 We now turn our attention to \eqref{eqSTEdgeDeviation1}.  Note that
\begin{align}\label{eqdegFirstThenEdge1}
\P(|\mathbf{G}| \geq K )
& \leq  \P(\Delta(\mathbf{G})\geq  \Delta)+ \P(\Delta(\mathbf{G})\leq  \Delta, |\mathbf G|\geq K)\, .
\end{align}
Inequality~\eqref{eqPGnotinTAB1} bounds the first probability on the RHS. We now bound the second probability. 
Suppose first that $qn\geq \log n$ where we recall that $q=\max\{q_A, q_B\}$. Then $\Delta(\mathbf G) \le \Delta$ implies that $|\mathbf G|\leq n\Delta/2=25n^2q<K$ so that  $\P(\Delta(\mathbf{G})\leq  \Delta, |\mathbf G|\geq K)=0$. We may therefore assume that $qn< \log n$.

Fix $G$ such that $\Delta(G)\leq \Delta$. By~\eqref{eqnurfDef}, the definition of  $\nu^f_{\bm r,\cE}$, we have
\begin{align}\label{eqPSTempty}
\nu^f_{\bm r,\cE}(G) \leq \frac{\nu_{\bm r,\mathcal E}(G) e^{f(G)}}{\nu_{\bm r,\mathcal E}(\emptyset) e^{f(\emptyset)}} \leq \left(\frac{r_A}{1-r_A}\right)^{|G_A|}\left(\frac{r_B}{1-r_B}\right)^{|G_B|}e^{\delta |G| \Delta}\, ,
\end{align}
where for the final inequality we used that $f$ is strongly $\delta$-local. 
We conclude that
\begin{align}\label{eqDSTDelta1}
\P(\Delta(\bG)\leq  \Delta, |\bG|\geq K) 
&\leq  \sum_{G: |G|\geq K} \left(\frac{r_A e^{\delta\Delta}}{1-r_A}\right)^{|G_A|}\left(\frac{r_Be^{\delta\Delta}}{1-r_B}\right)^{|G_B|}\, .
\end{align}
 Let
\[
\frac{\hat r_A}{1-\hat r_A}=\frac{r_A e^{\delta\Delta}}{1-r_A} \quad\text{and}\quad \frac{\hat r_B}{1-\hat r_B}=\frac{r_B e^{\delta\Delta}}{1-r_B} \, ,
\]
and note that $\hat r_A=(1+o(1)) r_A$, $\hat r_B=(1+o(1))  r_B$.
Let $\mathbf{G_1}\sim G(A,\hat r_A)$ and  $\mathbf{G_2}\sim G(B,\hat r_B)$, then the RHS of~\eqref{eqDSTDelta1} is equal to
\begin{align*}
 \sum_{G: |G|\geq K} \left(\frac{\hat r_A}{1-\hat r_A}\right)^{|G_A|}\left(\frac{\hat r_B}{1-\hat r_B}\right)^{|G_B|} 
 &= (1-\hat r_A)^{-\binom{a}{2}}(1-\hat r_B)^{-\binom{b}{2}}\cdot \P(|\mathbf{G_1}|+|\mathbf{G_2}|\geq K)\, ,\\
 &\leq e^{n^2q} \cdot \P(|\mathbf{G_1}|+|\mathbf{G_2}|\geq K)\, .
\end{align*}
We now apply the Chernoff bound. Note that $\E(|\mathbf{G_1}|+|\mathbf{G_2}|)\leq n^2 q$ and $K\geq 50n^2q$ so by Lemma~\ref{lemChernoff}
\[
 \P(|\mathbf{G_1}|+|\mathbf{G_2}|\geq K)\leq e^{-K}\, .
\]
Putting everything together we have
\[
\P(\Delta(\mathbf G)<  \Delta, |\mathbf G|\geq K)
 \leq 
\exp\left \{ n^2q -K\right\}\leq e^{-K/2} \leq e^{-\Delta/2}.
\]
Inequality~\eqref{eqSTEdgeDeviation1} now follows from \eqref{eqdegFirstThenEdge1} and \eqref{eqPGnotinTAB1}.

If instead $\max_{H\in \mathcal E}\Delta(H)\leq \Delta$ and $f$ is $\delta$-local (rather than strongly $\delta$-local) then returning to~\eqref{eqLocalDegBd} we have 
\begin{align}
\P( \mathcal E(v,j) | \bG_v=G)&\leq 
\sum_{J\subseteq E[v]: |J|=j}\left(\frac{r_A}{1-r_A}\right)^j e^{\delta j\Delta}\leq \binom{a}{j}\left(\frac{2 r_A}{1-r_A}\right)^j \leq e^{-\Delta/2}\, ,
\end{align}
where for the second inequality we used that $\delta\Delta\leq 2$ for $\omega$ sufficiently large and for the final inequality we used that $j\geq \Delta/2$ by assumption. This establishes~\eqref{eqPGnotinTAB1}. The proof of~\eqref{eqSTEdgeDeviation1} is identical to the one given above. 
\end{proof}

Proposition~\ref{propMaxDegBootstrap} now follows. Recall the definitions of $\mu^{\text{w}}_{A,B,\lam}$ and $\cT_{A,B,\lam}$ from~\eqref{eqmuABdef} and \eqref{eqTABwDefAgain} respectively.
\begin{proof}[Proof of Proposition~\ref{propMaxDegBootstrap}]
Fix $(A,B)\in \Pi_\text{weak}$, let
$\mathbf G\sim \mu^{\text{w}}_{A,B,\lam}$ so that 
$(\mathbf G_A, \mathbf G_B)$ is the defect graph of $\mathbf G$.
 Then $(\mathbf G_A, \mathbf G_B)\sim \nu^{\text{w}}_{A,B,\lam}$ and so by Lemma~\ref{lemfofNuisLocal} and Lemma~\ref{lemDegBdLocal} with $\cE=\cD^{\textup{w}}_{A,B,\lam} $ (recalling that $\alpha=1/(96e^3)$)
\[
\P(\mathbf G \in \cT_{A,B,\lam}) = \P((\mathbf G_A, \mathbf G_B) \in \cD_{A,B,\lam})\geq (1-3n^2e^{-\Delta/2})\, .
\]
By the definition of $\mu^{\text{w}}_{A,B,\lam}$ we then have
\[
1> \P(\mathbf G \in \cT_{A,B,\lam})= \frac{Z_{A,B}(\lam)}{Z_{A,B}^{\text{w}}(\lam)}\geq (1-3n^2e^{-\Delta/2})
\]
as desired.
\end{proof}

\section{Subgraph probabilities in the defect graph}\label{secSubgraphProb}
Given Proposition~\ref{propMaxDegBootstrap}, we now turn to the task of understanding the partition function $Z_{A,B}(\lam)$  defined in~\eqref{eqZABdef}. To this end it will be useful to first study the defect measure $\nu_{A,B,\lam}$ defined in~\eqref{eqnuABdef}. As in the previous section, we take a more general view and study measures of the form $\nu^f_{\bm r,\mathcal \cE}$ defined in~\eqref{eqnurfDef}. 

Given $\bG\sim \nu^f_{\bm r,\mathcal \cE}$, our first goal will be to estimate probabilities of the form $\P(F\subseteq \bG)$ for some fixed, small graph $F$. We will use these estimates to bound statistics related to subgraph counts of $\bG$, such as the variance of the number of edges or $P_2$'s in $\bG$.

Throughout this section we fix $(A,B)\in \Pi_{\text{weak}}$ and let $\Delta=\Delta_{A,B,\lam}$, $K=K_{A,B,\lam}$, and $\cD=\cD_{A,B,\lam}$ as in Definition~\ref{defsparse} and~\eqref{eqDABwDefAgain}. 
Since the calculations  in this section are somewhat technical, we begin with a special case as a warm-up. 

\subsection{Warm-up} Recall from~\eqref{eqGVqpsiDef} that for $r \in (0,1)$, $\psi \in \R$, and a vertex set $V\subseteq [n]$, we let $G(V,r,\psi)$ denote the random graph on $V$ with distribution 
\begin{align}
  \nu_{r,\psi} (G) \propto \left(  \frac{r}{1-r} \right)^{|G|}  e^{\psi P_2(G)} \, ,
  \end{align}
 {conditioned} on the event that $\Delta(G) \le d:=50\max\{rn, \log n\}$ and $G$ is triangle-free. 

\begin{lemma}\label{lemWarmup}
Let $V\subseteq [n]$. Let $\bG\sim G(V,r,\psi)$ where $r=o(1)$ and $\psi d=o(1)$. Let $e\in \binom{V}{2}$ and let $\bH=\bG\backslash e$. Let $H$ be a graph such that $H\cup e$ is triangle-free and $\Delta(H\cup e)\leq d$. Then 
\[
\P(e\in \bG \mid \bH=H) = (1+O(r+\psi d)) r
\]
In particular, for any $S\subseteq \binom{V}{2}$ and any event $\cE$ defined by the presence or absence of edges in $S$, $G(V,r,\psi)$ conditioned on $\cE$  is stochastically dominated by $G(V, r')$  conditioned on $\cE$ for some $r'= (1+O(r+\psi d)) r$.
\end{lemma}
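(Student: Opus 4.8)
The plan is to prove the single-edge identity by computing the conditional probability directly as a ratio of $\nu_{r,\psi}$-weights, and then to derive the stochastic domination from a monotone edge-revelation coupling driven by shared uniform random variables. For the single-edge statement, assume first that $e\notin H$ (otherwise $\P(\bH=H)=0$ and there is nothing to prove). Then $\{\bH=H\}=\{\bG\in\{H,H\cup e\}\}$, and both $H$ and $H\cup e$ lie in the support of $\bG$: the hypothesis gives that $H\cup e$ is triangle-free with $\Delta(H\cup e)\le d$, and $H\subseteq H\cup e$ inherits these properties. Hence $\P(\bH=H)>0$ and
\[
\P(e\in\bG\mid\bH=H)=\frac{w}{1+w},\qquad w=\frac{r}{1-r}\,e^{\psi\left(P_2(H\cup e)-P_2(H)\right)},
\]
where the conditioning defining $G(V,r,\psi)$ cancels in the ratio because both configurations obey the constraints. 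The crucial observation is that adding $e=\{u,v\}$ to $H$ creates exactly $d_H(u)+d_H(v)$ new copies of $P_2$ — those with middle vertex $u$ are in bijection with $N_H(u)$, those with middle vertex $v$ with $N_H(v)$, and the two families are disjoint — so $0\le P_2(H\cup e)-P_2(H)\le 2d$ by the degree bound. Since $\psi d=o(1)$ this yields $w=\frac{r}{1-r}\left(1+O(\psi d)\right)=r\left(1+O(r+\psi d)\right)$ and therefore $\frac{w}{1+w}=r\left(1+O(r+\psi d)\right)$, which is the claim. Note that if instead $H\cup e$ contains a triangle or a vertex of degree $>d$ then the conditional probability is $0$; in all cases it is bounded by $r':=(1+C(r+\psi d))r$ for a suitable absolute constant $C$.

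For the stochastic domination, let $\sigma\in\{0,1\}^S$ be the configuration on $S$ that $\cE$ prescribes, so $\cE=\{\bG|_S=\sigma\}$, and assume $\P(\cE)>0$ under $G(V,r,\psi)$. I would couple $X\sim G(V,r,\psi)\mid\cE$ with $Y\sim G(V,r')\mid\cE$ so that $X\subseteq Y$: set $X|_S=Y|_S=\sigma$, fix an order $e_1,\dots,e_M$ of the edges of $\binom{V}{2}\setminus S$, draw i.i.d.\ uniforms $U_1,\dots,U_M$ on $[0,1]$, build $X$ sequentially by including $e_i$ iff $U_i\le p_i$, where $p_i$ is the conditional probability that $e_i\in\bG$ under $G(V,r,\psi)\mid\cE$ given the already-revealed edges of $X$, and declare $e_i\in Y$ iff $U_i\le r'$. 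Then $X$ has the correct law by construction, and $Y$ has the correct law because $\cE$ only constrains the $S$-edges (pinned to $\sigma$) while the remaining edges of the \ER measure are i.i.d.\ $r'$. The one inequality to check is $p_i\le r'$: by the tower property $p_i$ is an average of quantities $\P_{G(V,r,\psi)}\!\left(e_i\in\bG\mid\bG\setminus e_i=\eta\right)$ over completions $\eta$ — the conditioning on $\cE$ is automatic there, since $\eta$ agrees with $\sigma$ on $S$ and $e_i\notin S$ — and each such quantity is $\le r'$ by the first step. Hence $e_i\in X\Rightarrow U_i\le r'\Rightarrow e_i\in Y$, so $X\subseteq Y$ almost surely. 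Equivalently, this is the standard ``domination by a product measure'' criterion applied above the fixed $S$-edges, and unwinding the definition of stochastic domination gives the lemma.

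This is a warm-up lemma, so I do not anticipate a genuine obstacle; the points that need care are the support check in the first step (which is exactly why the hypotheses on $H\cup e$ appear, together with the $0$ case when they fail, so that $r'$ is a uniform upper bound) and the verification that the shared-randomness coupling really produces the two conditioned laws — which works precisely because $\cE$ is measurable in the $S$-edges, so once those are pinned identically the conditioning is inert on the remaining edges and the per-edge bound of the first step transfers directly. If one ever needed $\cE$ to be a more general event measurable in the $S$-edges, the same scheme applies after comparing the two $S$-marginals, which again reduces to the conditional-edge-probability bound; this is not required here.
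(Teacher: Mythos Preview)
Your argument is correct and matches the paper's proof for the main identity: both compute the ratio $\nu_{r,\psi}(H\cup e)/(\nu_{r,\psi}(H)+\nu_{r,\psi}(H\cup e))$ and bound the exponent $\psi(P_2(H\cup e)-P_2(H))=O(\psi d)$ via the degree constraint. You additionally spell out the stochastic-domination coupling (sequential revelation with shared uniforms and the tower-property bound $p_i\le r'$), which the paper leaves implicit; your justification there is sound.
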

\begin{proof}
Let $\nu= \nu_{A, r, \psi}$.
\begin{align}\label{eqWarmup}
\P(e\in \bG \mid \bH=H) = \frac{ \nu(H \cup e) }{\nu(H)+\nu(H \cup e)} = \frac{\left(\frac{r}{1-r} \right)e^{\psi P_2(H\cup e)-\psi P_2(H)}}{1+\left(\frac{r}{1-r} \right)e^{\psi P_2(H\cup e)-\psi P_2(H)}}\, .
\end{align}
Since $H$ has maximum degree $d$,  $\psi P_2(H\cup e)-\psi P_2(H)=O(\psi d)=o(1)$.
The result follows by observing that the denominator on the RHS of~\eqref{eqWarmup} is $1+O(r)$ and the numerator is $(1+O(r+\psi d)) r$.
\end{proof}

\subsection{Master subgraph probability estimate}
In this section we prove a generalisation of Lemma~\ref{lemWarmup} which we later use to derive subgraph probability estimates in $\nu_{A,B,\lam}$ as well as other consequences. 

Given a collection of triangles and edges $X\subseteq \binom{A}{2}\cup \binom{B}{2}\cup\binom{A}{3}\cup \binom{B}{3}$, let 
{\small
\[
\cD_X:= \left\{G\subseteq \binom{A}{2}\cup\binom{B}{2}: \text{$G$ contains no triangle or edge from $X$ and $\Delta(G)\leq \Delta$, $|G_A|, |G_B|\leq K$}  \right\}\, .
\]}
Note that if $X=\binom{A}{3}\cup \binom{B}{3}$ then $\cD=\cD_X$. The reason for considering $\cD_X$ is that in certain probability estimates, we will successively condition on the absence of edges/triangles (see, e.g., Lemma~\ref{lemJansonERG} below).

As in the previous section, throughout this section we assume that $\lam \geq \omega/\sqrt{n}$ where $\omega>0$ is a sufficiently large absolute constant. 
 It will be useful to note that in this regime,
\begin{align}\label{eqqEstomega}
 q=\max\{q_A, q_B\}\leq \omega e^{-\omega^2/3}n^{-1/2}\, . 
 \end{align}

\begin{lemma}\label{lemfixedconfiglocal}
Let $X\subseteq \binom{A}{2}\cup \binom{B}{2}\cup\binom{A}{3}\cup \binom{B}{3}$.
Let $F\in \cD_X$, $|F|=O(1)$, $\delta\leq n\lam^3/(6\alpha)$ and let $f : \cD_X\to\R$ be $\delta$-local. Let $\bm r$ be such that $r:=\max\{r_A, r_B\}=O(q)$.
If $\bG\sim \nu^f_{\bm r, \cD_X}$, and $\bH=\bG\backslash F$ then
\begin{align}\label{eqlemfixedconfiglocal}
\P(F\subseteq \bG)=\left(1+O\left(n^2\Delta^2\lam^6\right)\right)e^{\E[f(\bH\cup F)-f(\bH)]} r_A^{|F_A|}r_B^{|F_B|}\, .
\end{align}
In particular,
\begin{align}\label{eqlemfixedconfiglocalcor}
\P(F\subseteq \bG)=\left(1+O\left(n\Delta\lam^3\right)\right)r_A^{|F_A|}r_B^{|F_B|}.
\end{align}
\end{lemma}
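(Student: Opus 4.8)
The plan is to follow the strategy of Lemma~\ref{lemWarmup}: condition on $\bH=\bG\setminus F$ and exploit that $F$ has only $O(1)$ edges. The first step is the compatibility observation. If $F\subseteq\bG$ then $\bH\cup F=\bG\in\cD_X$, so only graphs $H$ with $H\cup F\in\cD_X$ (call them \emph{compatible}) contribute to $\P(F\subseteq\bG)$. Since $\cD_X$ is downward-closed, for a compatible $H$ every $H\cup J$ with $J\subseteq F$ lies in $\cD_X$, and these are exactly the $G\in\cD_X$ with $G\setminus F=H$; hence for compatible $H$,
\[
\P(F\subseteq\bG\mid \bH=H)=\frac{\rho_F\,e^{f(H\cup F)-f(H)}}{\sum_{J\subseteq F}\rho_J\,e^{f(H\cup J)-f(H)}},\qquad
\rho_J:=\Big(\tfrac{r_A}{1-r_A}\Big)^{|J_A|}\Big(\tfrac{r_B}{1-r_B}\Big)^{|J_B|}.
\]

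Next I would estimate this ratio. Writing $g(H):=f(H\cup F)-f(H)$, the $\delta$-locality of $f$ with $\delta\le n\lam^3/(6\alpha)$ and $\max_{H\in\cD_X}\Delta(H)\le\Delta$ give $|g(H)|\le\delta|F|\Delta=:M$, and the standing bounds $\omega/\sqrt n\le\lam\le 2\sqrt{\log n/n}$ together with $q\le\omega e^{-\omega^2/3}n^{-1/2}$ (from~\eqref{eqqEstomega}) yield $M=O(n\Delta\lam^3)=O_\omega(1)$, small for $\omega$ large. Likewise $r:=\max\{r_A,r_B\}=O(q)=o(1)$, and the same elementary estimates give $r=O(n^2\Delta^2\lam^6)$ throughout the relevant range. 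Consequently $\rho_J=(1+O(r))r_A^{|J_A|}r_B^{|J_B|}$, the denominator equals $1+\sum_{\emptyset\ne J\subseteq F}O(r)e^{O(M)}=1+O(r)$, the numerator is $(1+O(r))r_A^{|F_A|}r_B^{|F_B|}e^{g(H)}$, and so for compatible $H$,
\[
\P(F\subseteq\bG\mid \bH=H)=\big(1+O(n^2\Delta^2\lam^6)\big)\,r_A^{|F_A|}r_B^{|F_B|}\,e^{g(H)}.
\]
Averaging over $\bH$ gives $\P(F\subseteq\bG)=(1+O(n^2\Delta^2\lam^6))\,r_A^{|F_A|}r_B^{|F_B|}\,\E\big[\mathbf 1_{\{\bH\text{ compatible}\}}e^{g(\bH)}\big]$, so it remains to replace $\E[\mathbf 1_{\{\bH\text{ compatible}\}}e^{g(\bH)}]$ by $e^{\E g(\bH)}$ up to a factor $1+O(n^2\Delta^2\lam^6)$.

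I would do this in two parts. First, $\|g\|_\infty\le M$ forces $\var(g(\bH))\le M^2=O(n^2\Delta^2\lam^6)$, and since $M=O(1)$ a second-order Taylor expansion of $e^{g-\E g}$ gives $\E[e^{g(\bH)}]=e^{\E g(\bH)}(1+O(\var\,g))=e^{\E g(\bH)}(1+O(n^2\Delta^2\lam^6))$. Second, the non-compatible event is negligible: since $\bH\subseteq\bG\in\cD_X$, non-compatibility of $\bH$ can only arise from a vertex of $\bH$ incident to $F$ having degree within $O(1)$ of $\Delta$, from an edge-count overflow, or from $\bH$ closing a forbidden triangle of $X$ with an edge of $F$. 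The first two possibilities have probability $O(n^2e^{-\Delta/2})$ by Lemma~\ref{lemDegBdLocal} applied to $\bG$ (using $\bH\subseteq\bG$); the third, by a union bound over the $O(1)$ edges of $F$ and the apex vertex, has probability $O(q)+O(nq^2)$, and $nq^2=O(n^2\Delta^2\lam^6)$ by the same routine estimates — the crude upper bound $\P(\{a,c\},\{b,c\}\in\bH)=O(q^2)$ comes from running the conditioning argument above on a two-edge graph (numerator $\le\rho\,e^{O(M)}$, denominator $\ge1$). Combining these two parts yields $\E[\mathbf 1_{\{\bH\text{ compatible}\}}e^{g(\bH)}]=e^{\E g(\bH)}(1+O(n^2\Delta^2\lam^6))$ and hence~\eqref{eqlemfixedconfiglocal}; then~\eqref{eqlemfixedconfiglocalcor} is immediate since $|\E g(\bH)|\le M=O(n\Delta\lam^3)$ and $n^2\Delta^2\lam^6=(n\Delta\lam^3)^2=O(n\Delta\lam^3)$.

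I expect the passage from $\E[e^{g(\bH)}]$ to $e^{\E g(\bH)}$ to be the main conceptual point: one needs the fluctuations of $g(\bH)=f(\bH\cup F)-f(\bH)$ to be of the claimed order $n^2\Delta^2\lam^6$, which here is bought cheaply by $\delta$-locality, since it bounds $g$ pointwise by $M$ and therefore controls $\var(g(\bH))$ by $M^2$. The only genuinely fiddly part is the arithmetic verifying $r,\,nq^2=O(n^2\Delta^2\lam^6)$ uniformly for $\omega/\sqrt n\le\lam\le 2\sqrt{\log n/n}$; this splits into the cases $qn\ge\log n$ and $qn<\log n$, mirroring the two regimes in the definition of $\Delta_{A,B,\lam}$ (Definition~\ref{defsparse}), and uses $q\le 2\lam e^{-(9/20)n\lam^2}$ for weakly balanced partitions.
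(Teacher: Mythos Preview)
Your proof is correct and follows essentially the same approach as the paper's: condition on $\bH=\bG\setminus F$, compute the conditional probability as a ratio, average, pass from $\E[e^{g(\bH)}]$ to $e^{\E g(\bH)}$ using $\|g\|_\infty=O(n\Delta\lam^3)$, and control the incompatibility event via Lemma~\ref{lemDegBdLocal} plus a union bound over triangle closures. The only cosmetic difference is that the paper obtains the auxiliary bounds $p_1=O(r)$, $p_2=O(r^2)$ by a short bootstrap (feeding the intermediate formula~\eqref{eqFApproxFinal} back into itself), whereas you get them directly from the crude one-sided estimate $\P(F'\subseteq\bG\mid\bH'=H)\le\rho_{F'}e^{O(M)}$; both routes are equally valid.
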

\begin{proof}
Let $\cH_F$ denote the set of all graphs $H\subseteq \binom{A}{2}\cup \binom{B}{2}$ that are edge-disjoint from $F$ and $H\cup F\in  \cD_X$.
For $H\in \cH_F$, we have 
\begin{align}
\P(F\subseteq \bG \mid \bH=H)
= \frac{\nu^f_{\bm r, \cD_X}(H\cup F)}{\sum_{J\subseteq F}\nu^f_{\bm r, \cD_X}(H\cup J) }
=
\frac{\left(\frac{r_A}{1-r_A}\right)^{|F_A|}\left(\frac{r_B}{1-r_B}\right)^{|F_B|}e^{f(H\cup F)-f(H)}}{\sum_{J\subseteq F}\left(\frac{r_A}{1-r_A}\right)^{|J_A|}\left(\frac{r_B}{1-r_B}\right)^{|J_B|}e^{f(H\cup J)-f(H)}}\, .
\end{align}
Since $f$ is $\delta$-local, $f(H\cup J)-f(H)= O(\Delta \delta)=O(n\Delta\lam^3)=O(1)$ for all $J\subseteq F$. Considering the contribution to the sum in the denominator from $J=\emptyset$ and $J\neq \emptyset$ we see that the denominator is $1+O(r)$. Letting $g(H,F)= f(H\cup F)-f(H)$ we then have
\begin{align}
\P(F\subseteq \bG \mid \bH=H)&= (1+O(r)) r_A^{|F_A|} r_B^{|F_B|}e^{g(H,F)}\, .
\end{align}

If $H\notin  \cH_F$, then $\P(F\subseteq \bG \mid \bH=H)=0$ \footnote{If $\P(\bH=H)=0$ then we define $\P(F\subseteq \bG \mid \bH=H)$ to be $0$.} so that for all $H\subseteq \binom{A}{2}\cup \binom{B}{2}$,
\[
\P(F\subseteq \bG \mid \bH=H)= (1+O(r)) r_A^{|F_A|} r_B^{|F_B|}e^{g(H,F)} \cdot \mathbf 1_{H\in \cH_F}\, ,
\]
and so 
\begin{align}\label{eqFdeltaFirst}
\P(F\subseteq \bG)=(1+O(r)) r_A^{|F_A|}r_B^{|F_B|}\cdot \E\left[e^{g(\bH,F)} \cdot \mathbf 1_{\bH\in \cH_F}\right]\, .
\end{align}
Now since $g(\bH,F)=O(\Delta \delta)=O(n\Delta \lam^3)=O(1)$ we have
\begin{align}\label{eqindBound}
\E\left[e^{g(\bH,F)}\mathbf 1_{\bH\in\cH_F}\right]  = \E\left[e^{g(\bH,F)}\right] + O\left(\P(\bH\notin\cH_F) \right)\, .
\end{align}
Moreover, 
\begin{multline}\label{eqExpFirstOBound}
\E\left[e^{g(\bH,F)}\right] 
= \\
 \E\left[1+g(\bH,F)\right] + O(n^2\Delta^2 \lam^6)
 = e^{ \E[g(\bH,F)] } + O(n^2\Delta^2 \lam^6)
 = \left( 1 + O(n^2\Delta^2 \lam^6) \right) e^{ \E[g(\bH,F)] }\, .
\end{multline}

We now turn to estimating $\P(\bH\notin \cH_F)$. Let $\mathcal{A}_1, \mathcal{A}_2$ denote the collections of all possible edges, copies of $P_2$ in $\bG$ respectively and let
\begin{align}\label{eqp1max}
 p_1=\max_{e\in \mathcal{A}_1}\P(e \subseteq \bG)\, ,
 \end{align}
 and
 \begin{align}\label{eqp2max}
 p_2=\max_{f\in \mathcal{A}_2}\P(f \subseteq \bG)\, .
 \end{align}
Let $t(F)$ denote the number of edges in $e\in \cA_1$ such that $e\cup F$ contains a triangle. 
Now, $\bH\cup F$ contains a triangle in $X$ only if $\bH$ contains one of at most $t(F)$ edges or $O(n)$ copies of $P_2$. By a union bound this occurs with probability at most $O(np_2) + t(F)p_1$. 
If $\Delta(\bH \cup F)>\Delta$ then $\Delta(\bG)>\Delta-O(1)>\Delta/2$ which, by Lemma~\ref{lemDegBdLocal}, occurs with probability at most $n^2 e^{-\Delta/2}$. 
If $|\bH \cup F|>K=K_{A,B,\lam}$ then $|\bG|>K-O(1)>K/2$ which, by Lemma~\ref{lemDegBdLocal}, occurs with probability at most $2n^2 e^{-\Delta/2}$. Finally note that $\bG$ deterministically contains no edge in $X$ and so the same is true of $\bH$.
We conclude that
\[
\P(\bH \notin \cH_F)\leq 3n^2 e^{-\Delta/2} + O(np_2)+t(F)p_1\, .
\]
Combining this with~\eqref{eqFdeltaFirst}~\eqref{eqindBound} and~\eqref{eqExpFirstOBound} we have
\begin{equation}\label{eqFApproxFinal}
\P(F\subseteq \bG)=\left(1+O\left(n^2\Delta^2 \lam^6+r +  n^2 e^{-\Delta/2} + np_2+t(F)p_1\right)\right)r_A^{|F_A|}r_B^{|F_B|}e^{\E[g(\bH,F)]}\, .
\end{equation}
To conclude the proof we will need a rough estimate on $p_1, p_2$. First note that $e^{\E[g(\bH,F)]}=O(1)$. Taking $F\in \cA_1$ to be an edge that witnesses the maximum in~\eqref{eqp1max}, and noting that $t(F)=0$, we have by~\eqref{eqFApproxFinal}
\begin{align}\label{eqp1Approx1}
p_1=O(1+np_2)r\, .
\end{align}
Taking $F\in \cA_2$ to be a copy of $P_2$ that witnesses the maximum in~\eqref{eqp2max} we have
\[
p_2=O(1+np_2+p_1)r^2=O(1+np_2)r^2\, ,
\]
where for the second equality we used~\eqref{eqp1Approx1}. Since $r=O(q)$ we have $nr^2=o_\omega(1)$ by~\eqref{eqqEstomega} and so we conclude that for $\omega$ sufficiently large,
$p_2=O(r^2)$.
Returning to~\eqref{eqp1Approx1} we then have $p_1=O(r)$. 
 Using the previous two estimates in~\eqref{eqFApproxFinal}, noting that $t(F)=O(1)$, and moreover, $n^2e^{-\Delta/2} + nr^2+r=O(n^2\Delta^2\lam^6)$ gives~\eqref{eqlemfixedconfiglocal}.

For~\eqref{eqlemfixedconfiglocalcor} we note that $e^{\E[g(\bH,F)]}=1+O(n\Delta\lam^3)$.
\end{proof}

We have the following immediate corollary of Lemmas~\ref{lemfofNuisLocal} and \ref{lemfixedconfiglocal} (recalling that $\alpha= 1/(96e^3)$).

\begin{cor}\label{corfixedconfig}
Let $F\in \cD$ with $|F|=O(1)$ and let $\bG\sim\nu_{A,B,\lam}$.  Then
\[
\P(F\subseteq \bG) = (1+ O(n \Delta \lam^3))q_A^{|F_A|}q_B^{|F_B|}\, .
\]
\end{cor}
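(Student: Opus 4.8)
The plan is to deduce Corollary~\ref{corfixedconfig} directly from Lemma~\ref{lemfofNuisLocal}(2) and Lemma~\ref{lemfixedconfiglocal} by a matching of parameters. First I would invoke Lemma~\ref{lemfofNuisLocal}(2) to write $\nu_{A,B,\lam}=\nu^f_{\bm r,\cE}$ with $\cE=\cD_{A,B,\lam}$, $r_A=q_A$, $r_B=q_B$, and $f$ strongly $(16e^3n\lam^3)$-local on $\cE$. In particular $f$ is $\delta$-local with $\delta = 16e^3 n\lam^3$, and since $\alpha = 1/(96e^3)$ we have $\delta = 16e^3 n\lam^3 \le n\lam^3/(6\alpha)$, so the hypothesis $\delta \le n\lam^3/(6\alpha)$ of Lemma~\ref{lemfixedconfiglocal} is satisfied. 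Also $\max\{r_A,r_B\} = q = O(q)$ trivially, and $F\in\cD=\cD_X$ with $X = \binom{A}{3}\cup\binom{B}{3}$ (the choice of $X$ for which $\cD_X = \cD$, as noted after the definition of $\cD_X$), with $|F|=O(1)$.

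With all hypotheses in place, Lemma~\ref{lemfixedconfiglocal}, specifically the ``in particular'' conclusion~\eqref{eqlemfixedconfiglocalcor}, gives immediately
\[
\P(F\subseteq \bG) = \left(1 + O(n\Delta\lam^3)\right) q_A^{|F_A|} q_B^{|F_B|}\, ,
\]
which is exactly the claimed estimate. So the proof is essentially a two-line citation: identify the measure $\nu_{A,B,\lam}$ with a perturbed conditioned \ER measure via Lemma~\ref{lemfofNuisLocal}, check that the locality constant of the perturbation is within the allowed range (which is where the specific value $\alpha = 1/(96e^3)$ enters, as flagged in the corollary statement), and apply Lemma~\ref{lemfixedconfiglocal}.

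There is no real obstacle here; the corollary is labelled ``immediate'' precisely because both ingredients have already been established in full generality. The only point requiring a moment's care is the bookkeeping on constants: one must confirm that $16e^3 n\lam^3 \le n\lam^3/(6\alpha)$, i.e.\ $96 e^3 \alpha \le 1$, which holds with equality for $\alpha = 1/(96e^3)$ — this is the reason the paper fixes $\alpha$ at that particular value. (One should also keep in mind the standing assumption $\lam \ge \omega/\sqrt n$ for $\omega$ a large constant, under which $\Delta = \Delta_{A,B,\lam}$ and the various error terms in Lemma~\ref{lemfixedconfiglocal} are as stated, but this is already part of the ambient hypotheses of the section.) I would write the proof as a single short paragraph to that effect.

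\begin{proof}[Proof of Corollary~\ref{corfixedconfig}]
By Lemma~\ref{lemfofNuisLocal}(2) applied with $\cE=\cD_{A,B,\lam}$, we may write $\nu_{A,B,\lam}=\nu^f_{\bm r,\cE}$ where $r_A=q_A$, $r_B=q_B$ and $f:\cE\to\R$ is strongly $(16e^3n\lam^3)$-local; in particular $f$ is $\delta$-local with $\delta=16e^3n\lam^3$. Since $\alpha=1/(96e^3)$ we have $96e^3\alpha=1$, hence $\delta=16e^3n\lam^3\le n\lam^3/(6\alpha)$, so the hypotheses of Lemma~\ref{lemfixedconfiglocal} are met with $X=\binom{A}{3}\cup\binom{B}{3}$ (for which $\cD_X=\cD$) and $\max\{r_A,r_B\}=q=O(q)$. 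Applying~\eqref{eqlemfixedconfiglocalcor} now yields
\[
\P(F\subseteq \bG)=\left(1+O(n\Delta\lam^3)\right)q_A^{|F_A|}q_B^{|F_B|}\, ,
\]
as claimed.
\end{proof}
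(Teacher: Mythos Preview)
Your proposal is correct and matches the paper's approach exactly: the paper states this as an immediate corollary of Lemmas~\ref{lemfofNuisLocal} and~\ref{lemfixedconfiglocal}, recalling that $\alpha=1/(96e^3)$, and your write-up carries out precisely that verification (including the check $16e^3 n\lam^3 = n\lam^3/(6\alpha)$).
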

In the following two subsections we record some further consequences of Lemma~\ref{lemfixedconfiglocal}.

\subsection{A refined subgraph probability estimate for $\nu_{A,B,\lam}$.}

Our next goal will be to bootstrap Corollary~\ref{corfixedconfig} to give a more refined estimate on the probability that $F$ is contained in a sample from $\nu_{A,B,\lam}$. First we give a slightly more detailed description of $\nu_{A,B,\lam}$ than that given by Lemma~\ref{lemfofNuisLocal}.
Recall the definition of $q_A', q_B'$ from~\eqref{eqqPrimeDef}.

\begin{lemma}\label{lemfislocal}
There exists an $(11n\lam^3)$-local $f: \cD\to \R$ such that $\nu_{A,B,\lam}= \nu^f_{\bm r,\cD}$ with $r_A=q'_A, r_B=q'_B$.  Moreover
\[
f(G)= P_2(G_\boxempty)\lam^3 + f'(G)
\]
where for $F\subseteq G$,
\begin{align}\label{eq:fPrimeLocal}
|f'(G)-f'(G\backslash F)| = O(n|F|\Delta^2\lam^4)\, .
\end{align}

\end{lemma}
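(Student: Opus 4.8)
The plan is to read $f$ off directly from the cluster expansion of the hard-core partition function $Z_{G_\boxempty}(\lam)$, in the same spirit as the proof of Lemma~\ref{lemfofNuisLocal}, but now retaining the order-$\lam^3$ term involving $P_2(G_\boxempty)$ instead of discarding it into the error. Recall from~\eqref{eqnuABdef} and Lemma~\ref{lemSTgraphLemma} that $\nu_{A,B,\lam}(G)\propto \lam^{|G|}Z_{G_\boxempty}(\lam)$ on $\cD=\cD_{A,B,\lam}$. For $G\in\cD$ the Cartesian product $G_\boxempty=G_A\boxempty G_B$ is triangle-free (the Cartesian product of two triangle-free graphs is triangle-free, and $G=G_A\cup G_B$ is triangle-free) and has maximum degree $\Delta(G_\boxempty)=\Delta(G_A)+\Delta(G_B)\le 2\Delta(G)\le 2\Delta_{A,B,\lam}$; since $\lam\ge\omega/\sqrt n$ and $(A,B)\in\Pi_{\textup{weak}}$ we have $\lam\Delta_{A,B,\lam}=o_\omega(1)$ (see the discussion preceding Proposition~\ref{propMaxDegBootstrap}), so for $\omega$ large the hypothesis $\lam\le\tfrac1{4e\Delta(G_\boxempty)}$ of Corollary~\ref{corclustersimple} holds. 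Applying the exact truncation~\eqref{eqClusterTruncate4} (from the proof of that corollary) to $G_\boxempty$, together with the elementary identity $|E(G_\boxempty)|=b\,|G_A|+a\,|G_B|$ (an edge of $G_A$ is copied $|B|$ times in the product, and symmetrically), I obtain
\[
\log\!\bigl(\lam^{|G|}Z_{G_\boxempty}(\lam)\bigr)= ab\log(1+\lam)+|G_A|\log\tfrac{q_A'}{1-q_A'}+|G_B|\log\tfrac{q_B'}{1-q_B'}+P_2(G_\boxempty)\lam^3+f'(G),
\]
where $f'(G):=\sum_{\Gamma\in\cC'(G_\boxempty),\,|\Gamma|\ge4}\phi(\Gamma)\lam^{|\Gamma|}$ is the cluster tail (notation as in Corollary~\ref{corclustersimple}), and where I used the definition~\eqref{eqqPrimeDef} of $q_A',q_B'$ to recognise the edge-coefficients $\log\lam-b\lam^2+2b\lam^3$ and $\log\lam-a\lam^2+2a\lam^3$. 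Since $(1+\lam)^{ab}$ is independent of $G$, normalising gives $\nu_{A,B,\lam}=\nu^f_{\bm r,\cD}$ with $r_A=q_A'$, $r_B=q_B'$ and $f(G)=P_2(G_\boxempty)\lam^3+f'(G)$, which is the claimed decomposition.

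It then remains to prove the two locality bounds, and here I would follow the deletion argument from the proof of Lemma~\ref{lemfofNuisLocal}. Fix $G\in\cD$ and $F\subseteq G$; deleting $F$ destroys at most $b|F_A|+a|F_B|\le n|F|$ edges of $G_\boxempty$. Every cluster contributing to $f'(G)-f'(G\setminus F)$ must have support containing both endpoints of one of these destroyed edges, so applying the pinned tail estimate~\eqref{eqpinnedexplicittail} with $|S|=2$, $k=4$ and summing over the at most $n|F|$ destroyed edges gives
\[
\bigl|f'(G)-f'(G\setminus F)\bigr|=O\!\bigl(n|F|\,\Delta(G_\boxempty)^2\lam^4\bigr)=O\!\bigl(n|F|\,\Delta^2\lam^4\bigr),
\]
which is exactly~\eqref{eq:fPrimeLocal} with $\Delta=\Delta_{A,B,\lam}$. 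Likewise each destroyed edge of $G_\boxempty$ lies in at most $2\Delta(G_\boxempty)\le4\Delta$ copies of $P_2$, so $\bigl|P_2(G_\boxempty)-P_2((G\setminus F)_\boxempty)\bigr|\le 4n|F|\Delta$ and hence $\bigl|P_2(G_\boxempty)\lam^3-P_2((G\setminus F)_\boxempty)\lam^3\bigr|\le 4n\lam^3|F|\Delta$. Adding the two estimates and using $\Delta\lam=o_\omega(1)$ to absorb the $f'$ contribution, I get $|f(G)-f(G\setminus F)|\le 11\,n\lam^3|F|\,\Delta\le 11\,n\lam^3|F|\max_{H\in\cD}\Delta(H)$ for $\omega$ large, i.e.\ $f$ is $(11n\lam^3)$-local.

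I do not expect a serious obstacle: the argument is a bookkeeping refinement of Lemma~\ref{lemfofNuisLocal}. The points that need care are (i) checking $G_\boxempty$ is triangle-free and that $\Delta(G_\boxempty)\le 2\Delta_{A,B,\lam}$ is small enough on the $\lam$-scale for Corollary~\ref{corclustersimple} to apply; (ii) the identity $|E(G_\boxempty)|=b|G_A|+a|G_B|$ and the algebra matching $\log\lam-b\lam^2+2b\lam^3$ to $\log\tfrac{q_A'}{1-q_A'}$; and (iii) counting correctly, per deleted edge of $G$, the $\Theta(n)$ edges it spawns in $G_\boxempty$ and hence the $\Theta(n)$ clusters and copies of $P_2$ that are affected — this factor of $n$ is precisely why the Lipschitz constant carries the prefactor $n\lam^3$. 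Keeping the explicit constant at $11$ is the only mildly delicate point, handled by tracking the $P_2$ term exactly (it contributes $4$) and noting the cluster tail $f'$ is smaller by a factor $\Delta\lam=o_\omega(1)$, which leaves ample slack.
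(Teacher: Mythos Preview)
Your proposal is correct and follows essentially the same approach as the paper: expand $\log Z_{G_\boxempty}(\lam)$ via Corollary~\ref{corclustersimple}, absorb the size-$2$ and size-$3$ edge terms into $q_A',q_B'$ via~\eqref{eqqPrimeDef}, set $f'$ equal to the cluster tail of size $\ge 4$, and bound the effect of deleting $F$ by pinning to the $\le n|F|$ edges of $G_\boxempty$ that disappear. The only cosmetic difference is in the $P_2$ estimate: the paper uses the identity $P_2(G_\boxempty)=bP_2(G_A)+aP_2(G_B)+4|G_A||G_B|$ to obtain $|P_2(G_\boxempty)-P_2((G\setminus F)_\boxempty)|\le 10n\Delta|F|$, whereas you count directly that each of the $\le n|F|$ destroyed product edges lies in at most $2\Delta(G_\boxempty)\le 4\Delta$ copies of $P_2$, giving the (tighter) bound $4n\Delta|F|$; either way the constant $11$ is reached with room to spare once the $f'$ contribution is absorbed via $\Delta\lam=o_\omega(1)$.
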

\begin{proof}
Recall that $\nu_{A,B,\lam}$ is the measure on $ \cD$ given by
\begin{align*}
\nu_{A,B,\lam}(G)\propto \lam^{|G|}Z_{G_\boxempty}(\lam)\, .
\end{align*}
By cluster expansion (Lemma~\ref{lem:ClusterExpMod}~ and Corollary~\ref{corclustersimple})
\begin{align}\label{eq:ZGboxExpand}
\log\left( \frac{Z_{G_\boxempty}(\lam)}{(1+\lam)^{ab}} \right) = |G_\boxempty|(-\lam^2 +2\lam^3) +  P_2(G_\boxempty)\lam^3 + \sum_{\substack{\Gamma\in \cC'(G_\boxempty):\\ |\Gamma|\geq 4}} \phi(\Gamma) \lam^{|\Gamma|}\, ,
\end{align}
where
$\cC'(G_\boxempty)$ denotes the set of non-constant clusters of $G_\boxempty$.
Let 
\[
f'(G)= \sum_{\substack{\Gamma\in \cC'(G_\boxempty):\\ |\Gamma|\geq 4}} \phi(\Gamma) \lam^{|\Gamma|}\, .
\]

Let $F\subseteq G$ and let $H=G\backslash F$,
then
\[
f'(G)-f'(H)=
 \sum_{\Gamma\in \cC'': |\Gamma|\geq 4} \phi(\Gamma) \lam^{|\Gamma|}\,,
\]
where 
$\cC''=  \cC'(G_\boxempty)\backslash  \cC'(H_\boxempty)$.

Now if $\Gamma\in \cC''$ then $\Gamma$ 
must contain a pair $S=\left\{(v_1, w), (v_2, w)\right\}$ (a pair of vertices of $G_\boxempty$) such that $\{v_1, v_2\}\in F_A$ or a pair $S=\left\{(v, w_1), (v, w_2)\right\}$ such that $\{w_1, w_2\}\in F_B$. Since there are at most $b|F_A|+a|F_B|\leq n|F|$ such pairs of vertices and $\Delta(G_\boxempty)\leq 2\Delta$, we have by Lemma~\ref{lemClusterTail} (applied with $k=4$, $t=0$ and $S$, for each of the aforementioned pairs $S$),
\begin{align}\label{eqtailbdgeq4}
| f'(G)-f'(H)|=
\left| \sum_{\Gamma\in \cC'': |\Gamma|\geq 4} \phi(\Gamma) \lam^{|\Gamma|}\right|\leq (2e)^4n|F| (2\Delta)^2 \lam^4\leq n|F|\Delta\lam^3
\end{align}
where for the last inequality we used that $\lam\Delta=o_\omega(1)$. The first inequality above establishes~\eqref{eq:fPrimeLocal}.
Next note that 
\begin{align}\label{eq:GboxEdgeExp}
|G_{\boxempty}|=a|G_B| + b|G_A|
\end{align}
and
\begin{align}
P_2(G_\boxempty)= bP_2(G_A) + aP_2(G_B) + 4|G_A||G_B|\, .
\end{align}
Given graphs $H_1, H_2$, let $P_2(H_1, H_2)$ denote the number of copies of $P_2$ in $H_1\cup H_2$ with at least one edge in $H_1$. Then
\begin{multline}
P_2(G_\boxempty)- P_2(H_\boxempty)
= \\
bP_2(F_A, H_A) + aP_2(F_B, H_B) +4(|H_A||F_B| + |H_B||F_A| + |F_A||F_B|)
\leq
10n\Delta |F|\, .\label{eqP2local}
\end{multline}
Letting  $f(G)=P_2(G_\boxempty)\lam^3 +f'(G)$, we conclude that $f$ is $(11n\lam^3)$-local. Moreover, by~\eqref{eq:ZGboxExpand} and~\eqref{eq:GboxEdgeExp} we conclude that $\nu_{A,B,\lam}= \nu^f_{\bm r,\cD}$ with $r_A=q'_A, r_B=q'_B$.
\end{proof}

We can now prove the following refinement of Corollary~\ref{corfixedconfig}.

\begin{cor}\label{corfixedconfigboot}
Let $F\in \cD$ with $|F|=O(1)$ and let $\bG\sim\nu_{A,B,\lam}$.  Then
\begin{multline}
\P(F\subseteq \bG)= \\(1+ O(\lam^6n^2\Delta^2))\left(q'_A e^{2b\lam^3(aq_A+bq_B)}\right)^{|F_A|}\left(q'_B e^{2a\lam^3(aq_A+bq_B)}\right)^{|F_B|}e^{\lam^3 (bP_2(F_A)+aP_2(F_B))}\, .
\end{multline}
\end{cor}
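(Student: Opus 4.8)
The plan is to feed the refined description of the defect measure from Lemma~\ref{lemfislocal} into the master estimate of Lemma~\ref{lemfixedconfiglocal} and then evaluate the resulting expectation to the required accuracy using the cruder estimate of Corollary~\ref{corfixedconfig}. By Lemma~\ref{lemfislocal} we have $\nu_{A,B,\lam}=\nu^f_{\bm r,\cD}$ with $r_A=q_A'$, $r_B=q_B'$ and $f(G)=\lam^3 P_2(G_\boxempty)+f'(G)$, where $f$ is $(11n\lam^3)$-local (note $11n\lam^3\le n\lam^3/(6\alpha)$) and $f'$ obeys the locality bound~\eqref{eq:fPrimeLocal}; moreover $q_A'=(1+o(1))q_A$ and $q_B'=(1+o(1))q_B$ since $n\lam^3=o(1)$, so in particular $\max\{q_A',q_B'\}=O(q)$. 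Applying Lemma~\ref{lemfixedconfiglocal} with $\cD_X=\cD$ (i.e.\ $X=\binom{A}{3}\cup\binom{B}{3}$) and our $F\in\cD$, $|F|=O(1)$, gives
\[
\P(F\subseteq\bG)=\bigl(1+O(n^2\Delta^2\lam^6)\bigr)\,e^{\E[f(\bH\cup F)-f(\bH)]}\,(q_A')^{|F_A|}(q_B')^{|F_B|},\qquad \bH=\bG\setminus F .
\]
The whole task thus reduces to showing that $\E[f(\bH\cup F)-f(\bH)]$ equals $2b\lam^3(aq_A+bq_B)|F_A|+2a\lam^3(aq_A+bq_B)|F_B|+\lam^3\bigl(bP_2(F_A)+aP_2(F_B)\bigr)$ up to an additive error $O(n^2\Delta^2\lam^6)$.

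To evaluate this expectation I would handle the two parts of $f$ separately. The $f'$-contribution is $O(n|F|\Delta^2\lam^4)=O(n^2\Delta^2\lam^6)$ directly from~\eqref{eq:fPrimeLocal} (using $|F|=O(1)$ and $n\lam^2\ge\omega^2$). For the $\lam^3 P_2$-part I would use the identity recorded in the proof of Lemma~\ref{lemfislocal} (the first line of~\eqref{eqP2local}), which with $H=\bH$ reads
\[
P_2\bigl((\bH\cup F)_\boxempty\bigr)-P_2(\bH_\boxempty)=bP_2(F_A,\bH_A)+aP_2(F_B,\bH_B)+4\bigl(|\bH_A||F_B|+|\bH_B||F_A|+|F_A||F_B|\bigr),
\]
and then take expectations term by term with the help of Corollary~\ref{corfixedconfig}. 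Summing edge probabilities yields $\E|\bH_A|=\binom{a}{2}q_A(1+O(n\Delta\lam^3))$ and $\E|\bH_B|=\binom{b}{2}q_B(1+O(n\Delta\lam^3))$ (deleting the $O(1)$ edges of $F$ changes this by only $O(q_A)$, which is absorbed). For $\E P_2(F_A,\bH_A)$ I would split a copy of $P_2$ having at least one edge in $F_A$ into those with both edges in $F_A$, of which there are $P_2(F_A)$, and those obtained by extending a single edge $\{u,v\}\in F_A$ by an edge of $\bH_A$ incident to $u$ or to $v$; the expected number of the latter is $\sum_w\bigl(\P(\{u,w\}\in\bG)+\P(\{v,w\}\in\bG)\bigr)=2aq_A(1+O(n\Delta\lam^3))$ per edge $\{u,v\}\in F_A$ by Corollary~\ref{corfixedconfig}, the $O(1)$ exceptional vertices $w$ (the endpoints of $F$) contributing only $O(q_A)$. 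Hence $\E P_2(F_A,\bH_A)=P_2(F_A)+2aq_A|F_A|(1+O(n\Delta\lam^3))$, and symmetrically for $B$.

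Assembling, the coefficient of $|F_A|$ in $\lam^3\E\!\left[P_2((\bH\cup F)_\boxempty)-P_2(\bH_\boxempty)\right]$ is $\lam^3\bigl(2abq_A+4\binom{b}{2}q_B\bigr)=2b\lam^3(aq_A+bq_B)+O(nq\lam^3)$, that of $|F_B|$ is $2a\lam^3(aq_A+bq_B)+O(nq\lam^3)$ by the symmetric computation, the $P_2$ terms contribute $\lam^3\bigl(bP_2(F_A)+aP_2(F_B)\bigr)$, and the term $4|F_A||F_B|\lam^3=O(\lam^3)$ is left over. Exponentiating and folding into $(q_A')^{|F_A|}(q_B')^{|F_B|}$ then reproduces the claimed expression, provided every discarded term is $O(n^2\Delta^2\lam^6)$; this final verification is the bulk of the work and the only real obstacle. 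The relevant estimates are: the $O(n\Delta\lam^3)$ relative errors multiply main terms of size $O(n^2q\lam^3)$, giving $O(n^3\Delta q\lam^6)=O(n^2\Delta^2\lam^6)$ since $nq\le\Delta$; the corrections $\binom{a}{2}$ versus $\tfrac{1}{2}a^2$ and the terms $O(nq\lam^3)$ and $O(\lam^3)$ are all $O(n^2\Delta^2\lam^6)$ because $\Delta\ge 50\log n$, $\lam\ge\omega/\sqrt n$ and $q\le\lam\le 2\sqrt{\log n/n}$ (so that e.g.\ $1\le n^2\Delta^2\lam^3$ and $nq\lam^3\le n^2\Delta^2\lam^6$); and the $f'$-error was dealt with above. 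Since the error $O(n^2\Delta^2\lam^6)=O(1)$ need not be $o(1)$ in the worst corner $\lam=\Theta(1/\sqrt n)$, I would carry it inside the exponent until the last step, where $e^{O(n^2\Delta^2\lam^6)}=1+O(n^2\Delta^2\lam^6)$ merges with the prefactor coming from Lemma~\ref{lemfixedconfiglocal}.
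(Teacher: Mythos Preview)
Your proposal is correct and follows essentially the same route as the paper: apply Lemma~\ref{lemfixedconfiglocal} with the refined description $\nu_{A,B,\lam}=\nu^f_{\bm r,\cD}$ from Lemma~\ref{lemfislocal}, then evaluate $\E[f(\bH\cup F)-f(\bH)]$ via the $P_2$-difference identity and Corollary~\ref{corfixedconfig}, absorbing the $f'$ term and the various lower-order remainders into $O(n^2\Delta^2\lam^6)$. Your error bookkeeping is a bit more explicit than the paper's, but the argument is the same.
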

\begin{proof}
By Lemmas~\ref{lemfixedconfiglocal} and~\ref{lemfislocal} we have
\begin{align}\label{eqlemfixedconfiglocal3}
\P(F\subseteq \bG)=\left(1+O\left(n^2\Delta^2\lam^6 \right)\right)e^{\E[f(\bH\cup F)-f(\bH)]} (q_A')^{|F_A|}(q_B')^{|F_B|}\, ,
\end{align} 
where $f$ is as in Lemma~\ref{lemfislocal} and $\bH=\bG\backslash F$.
We turn to estimating the expectation in the exponent. 

 By~\eqref{eqP2local} and the definition of $f$,
\begin{multline}
f(\bH\cup F)-f(\bH) = \\ \lam^3\left[bP_2(F_A, \mathbf H_A) + aP_2(F_B, \mathbf H_B) +4(|\mathbf H_A||F_B| + |\mathbf H_B||F_A| + |F_A||F_B|)\right] + O(n\Delta^2\lam^4)\, .
\end{multline}
By Corollary~\ref{corfixedconfig} we have
\begin{equation}
\E(|\bH_A|)
= (1+ O(n\Delta \lam^3))q_A\left(\binom{a}{2}- |F_A| \right)
=q_Aa^2/2 + O(n\Delta \lam^3\cdot qn^2)\, .
\end{equation}
Suppose now that $\{u,v\}\in F_A$. Each edge of $\bH$ which is incident to either $u$ or $v$ contributes one $P_2$ to the count $P_2(F_A, \bH_A)$. Applying Corollary~\ref{corfixedconfig} and summing these contributions over the edges of $F_A$ yields
\begin{align*}
\E \left(P_2(F_A,\bH_A)\right)
&= 2|F_A|(1+ O(n\Delta \lam^3))q_A(a-O(1)) + P_2(F_A)\\
&=2|F_A|q_Aa + P_2(F_A) + O(n\Delta \lam^3\cdot nq)\, .
\end{align*}
It follows that
\begin{align*}
\E(f(\bH\cup F)-f(\bH))
=& 2\lam^3(b|F_A|+a|F_B|)(aq_A+bq_B)+ b\lam^3P_2(F_A)+ a\lam^3P_2(F_B) \\
& + O(\lam^3+n^3\Delta \lam^6q+n\Delta^2\lam^4)\, .
\end{align*}
The result follows from~\eqref{eqlemfixedconfiglocal3}, since $\lam^3+n^3\Delta \lam^6q+n\Delta^2\lam^4= O(\lam^6n^2\Delta^2)$.

\end{proof}

We state one further corollary that will prove useful in Section~\ref{secFixedM}.
\begin{cor}\label{coredgevar}
Let $\bG\sim\nu_{A,B,\lam}$. 
Then
\[
\var(|\bG|)=O(n^2q + \lam^6n^6\Delta^2q^2)\, .
\]
In particular, if $q=O(n^{-7/8-\eps})$ for some $\eps>0$, then
\[
\var(|\bG|)= O(n^{3/2-\eps})\, .
\]
\end{cor}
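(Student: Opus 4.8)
The plan is to expand $|\bG|$ over indicator variables for individual defect edges and bound the diagonal and off-diagonal contributions to the variance separately, using the subgraph probability estimates of Corollaries~\ref{corfixedconfig} and~\ref{corfixedconfigboot}. Write $|\bG|=\sum_e X_e$ with the sum over $e\in\binom{A}{2}\cup\binom{B}{2}$ and $X_e=\mathbf 1[e\in\bG]$, so that $\var(|\bG|)=\sum_e\var(X_e)+\sum_{e\neq e'}\cov(X_e,X_{e'})$. For the diagonal terms, Corollary~\ref{corfixedconfig} gives $\P(e\in\bG)=O(q)$, hence $\var(X_e)\le\P(e\in\bG)=O(q)$, and summing over the $O(n^2)$ choices of $e$ contributes $O(n^2q)$.

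For the off-diagonal terms I would use the refined estimate of Corollary~\ref{corfixedconfigboot}; the cruder Corollary~\ref{corfixedconfig} is not sufficient, as its relative error $O(n\Delta\lam^3)$ does not cancel between $\P(\{e,e'\}\subseteq\bG)$ and $\P(e\in\bG)\P(e'\in\bG)$. For distinct $e,e'$ the graph $F=\{e,e'\}$ lies in $\cD$ with $|F|=2=O(1)$, so the corollary applies. Write $\tilde q_A=q_A'e^{2b\lam^3(aq_A+bq_B)}$ and $\tilde q_B=q_B'e^{2a\lam^3(aq_A+bq_B)}$. A short calculation from~\eqref{eqqEstomega} and the weak balance of $(A,B)$ shows $n^2\lam^3q=O(1)$ (bound $n^2\lam^3 q_A\le(\lam^2n)^2e^{-\lam^2 b}\le\sup_{x>0}x^2e^{-0.45x}<\infty$ using $q_A\le\lam e^{-\lam^2b}$ and $b\ge 0.45n$, and symmetrically for $q_B$) and likewise $\lam^6n^2\Delta^2=(\lam^3n\Delta)^2=O(1)$; in particular $\tilde q_A=\Theta(q_A)$ and $\tilde q_B=\Theta(q_B)$, so $\tilde q_A,\tilde q_B=O(q)$. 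When $e,e'$ are vertex-disjoint (this includes the case where they lie in different parts) we have $P_2(F_A)=P_2(F_B)=0$, so the main term of Corollary~\ref{corfixedconfigboot} for $F$ equals the product of those for $\{e\}$ and $\{e'\}$; thus $\P(\{e,e'\}\subseteq\bG)$ and $\P(e\in\bG)\P(e'\in\bG)$ have the same leading part and, since all relative errors are $O(\lam^6n^2\Delta^2)$, we get $|\cov(X_e,X_{e'})|=O(\lam^6n^2\Delta^2)\cdot O(q^2)$. Summing over the $O(n^4)$ disjoint ordered pairs contributes $O(\lam^6n^6\Delta^2q^2)$.

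It remains to treat pairs $e,e'$ sharing a vertex; these form a single copy of $P_2$ inside one part, say $A$, so the main term of Corollary~\ref{corfixedconfigboot} for $F$ picks up an extra factor $e^{\lam^3b}=1+O(n\lam^3)$ relative to the disjoint case (note $n\lam^3=o(1)$ since $\lam\le 2\sqrt{\log n/n}$). Subtracting $\P(e\in\bG)\P(e'\in\bG)$ then gives $|\cov(X_e,X_{e'})|=O\big((n\lam^3+\lam^6n^2\Delta^2)q^2\big)$. There are $O(n^3)$ such ordered pairs, contributing $O(n^4\lam^3q^2+n^5\lam^6\Delta^2q^2)$; since $\Delta\ge 50\log n$ and $\lam\ge\omega/\sqrt n$ force $\lam^3n^2\Delta^2\to\infty$, the term $n^4\lam^3q^2$ is absorbed into $\lam^6n^6\Delta^2q^2$, as is $n^5\lam^6\Delta^2q^2$. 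Collecting the diagonal, disjoint, and adjacent contributions yields $\var(|\bG|)=O(n^2q+\lam^6n^6\Delta^2q^2)$.

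Finally, suppose $q=O(n^{-7/8-\eps})$. Then $n^2q=O(n^{9/8-\eps})=O(n^{3/2-\eps})$. Using $\Delta=50\max\{qn,\log n\}=O(qn+\log n)$ and $\lam^6n^6=(\lam^2n^2)^3\le(4n\log n)^3$, the second term satisfies $\lam^6n^6\Delta^2q^2=O\big(n^3(\log n)^3(q^2n^2+(\log n)^2)q^2\big)=O\big(n^5(\log n)^3q^4+n^3(\log n)^5q^2\big)$, and plugging in $q=O(n^{-7/8-\eps})$ gives $n^5(\log n)^3q^4=O(n^{3/2-4\eps}(\log n)^3)=O(n^{3/2-\eps})$ and $n^3(\log n)^5q^2=O(n^{5/4-2\eps}(\log n)^5)=O(n^{3/2-\eps})$, so $\var(|\bG|)=O(n^{3/2-\eps})$. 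The main obstacle is the off-diagonal bookkeeping: one must track the multiplicative error terms precisely enough to see that the leading parts of $\P(\{e,e'\}\subseteq\bG)$ and $\P(e\in\bG)\P(e'\in\bG)$ genuinely cancel for disjoint $e,e'$ — which is exactly why the sharper Corollary~\ref{corfixedconfigboot} is needed in place of Corollary~\ref{corfixedconfig} — together with verifying that the exponential correction and error factors ($\tilde q_A/q_A$, $\tilde q_B/q_B$, $\lam^6n^2\Delta^2$, $n\lam^3$) are suitably bounded over the full range of $\lam$.
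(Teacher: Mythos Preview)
Your proof is correct and follows essentially the same approach as the paper: decompose $\var(|\bG|)$ over indicator variables, use Corollary~\ref{corfixedconfig} for the diagonal, and use the refined Corollary~\ref{corfixedconfigboot} to exhibit cancellation of the leading terms in $\cov(X_e,X_{e'})$ for disjoint and adjacent pairs, yielding exactly the bounds $O(\lam^6n^2\Delta^2q^2)$ and $O(n\lam^3q^2+\lam^6n^2\Delta^2q^2)$ the paper records. You additionally spell out why $\tilde q_A,\tilde q_B=O(q)$ and $\lam^6n^2\Delta^2=O(1)$, and you supply the explicit arithmetic for the ``In particular'' claim, which the paper omits.
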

\begin{proof}
For $e\in\binom{A}{2}, f\in\binom{B}{2}$, let $X_e, Y_f$ denote the indicators of the events that $e,f$ respectively are  edges of $\bG$.
By Corollary~\ref{corfixedconfig},
\[
\var(X_e)= O(q)\, .
\]
If $e,f\in \binom{A}{2}$ are such that $e\cup f$ forms a copy of $P_2$, then by Corollary~\ref{corfixedconfigboot}
 \begin{align*}
\cov(X_e, X_f)
&= (1+ O(\lam^6n^2\Delta^2))\left(q_A' e^{2b\lam^3(aq_A+bq_B)}\right)^{2}e^{b\lam^3}-\\
&\phantom{=.}
(1+ O(\lam^6n^2\Delta^2))\left(q_A' e^{2b\lam^3(aq_A+bq_B)}\right)^{2}\\
&= O(n\lam^3q^2 +\lam^6n^2\Delta^2q^2 )\, .
\end{align*}
If $e,f\in\binom{A}{2}$ are vertex-disjoint then
\begin{align*}
\cov(X_e, X_f)
&= (1+ O(\lam^6n^2\Delta^2))\left(q_A' e^{2b\lam^3(aq_A+bq_B)}\right)^{2}
-
(1+ O(\lam^6n^2\Delta^2))\left(q_A' e^{2b\lam^3(aq_A+bq_B)}\right)^{2}\\
&= O(\lam^6n^2\Delta^2q^2 )\, .
\end{align*}
Similarly, if $e\in\binom{A}{2}, f\in\binom{B}{2} $, then $\cov(X_e, Y_f)=O(\lam^6n^2\Delta^2q^2 )$.
We conclude that
\[
\var(|\bG|)= O(n^2q + n^3(n\lam^3q^2 +\lam^6n^2\Delta^2q^2) + n^4\cdot\lam^6n^2\Delta^2q^2)
=O(n^2q + \lam^6n^6\Delta^2q^2)\, . \qedhere
\]
\end{proof}

\subsection{Janson's inequality for perturbed measures}\label{secJanson}
A canonical application of Janson's inequality~\cite{janson1987uczak} is to estimate the probability that the \ER random graph is triangle-free. 
As a final application of Lemma~\ref{lemfixedconfiglocal}, we prove an analogous estimate in our setting of locally perturbed measures. 

\begin{lemma}\label{lemJansonERG}
Let $\delta\leq n\lam^3/(6\alpha)$ and let $f : \cD_\emptyset\to\R$ be $\delta$-local. Let $\bm r$ be such that $r:=\max\{r_A, r_B\}=O(q)$.
If $\bG\sim \nu^f_{\bm r, \cD_\emptyset}$, then
\[ \P( \bG \textup{ is triangle-free}) =\exp \left ( - r_A^3\binom{a}{3} - r^3_B\binom{b}{3} + O(n^4\Delta\lam^3q^3) \right) \,. \]
\end{lemma}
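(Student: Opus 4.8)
The plan is to impose triangle-freeness one triangle at a time, so that $\P(\bG\textup{ triangle-free})$ becomes a telescoping product whose factors are conditional triangle-appearance probabilities, each controlled by the master estimate Lemma~\ref{lemfixedconfiglocal}; this sidesteps the usual FKG/second-moment route to Janson's inequality, since Lemma~\ref{lemfixedconfiglocal} already absorbs the conditioning built into $\nu^f_{\bm r,\cD_X}$. Concretely, enumerate the triples of $\binom{A}{3}\cup\binom{B}{3}$ as $T_1,\dots,T_N$ with $N=\binom a3+\binom b3=O(n^3)$, and put $X_0=\emptyset$, $X_k=\{T_1,\dots,T_k\}$. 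Then $\cD_{X_N}=\cD_{A,B,\lam}$ is exactly the set of triangle-free members of $\cD_\emptyset$, while $\cD_{X_k}=\cD_{X_{k-1}}\setminus\{G:T_k\subseteq G\}$. Each $\cD_{X_k}$ is downward closed and still contains a graph of maximum degree $\lfloor\Delta\rfloor$ (a triangle-free star), so $\max_{H\in\cD_{X_k}}\Delta(H)=\lfloor\Delta\rfloor$ and the restriction of the given $\delta$-local $f$ to $\cD_{X_k}$ remains $\delta$-local; hence Lemma~\ref{lemfixedconfiglocal} is available on every $\cD_{X_k}$.

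Set $W(Y)=\sum_{G\in\cD_Y}\nu_{\bm r}(G)e^{f(G)}$, which is positive since $\emptyset\in\cD_Y$. Then $\P_{\nu^f_{\bm r,\cD_\emptyset}}(\bG\textup{ triangle-free})=W(X_N)/W(X_0)=\prod_{k=1}^N W(X_k)/W(X_{k-1})$, and $W(X_k)/W(X_{k-1})=1-p_k$, where $p_k$ denotes the probability that $T_k\subseteq\bG$ for $\bG\sim\nu^f_{\bm r,\cD_{X_{k-1}}}$. Since $T_k$ is not among $T_1,\dots,T_{k-1}$ it contains no triangle of $X_{k-1}$, and $\Delta(T_k)=2\le\Delta$, $|(T_k)_A|,|(T_k)_B|\le 3\le K$, so $T_k\in\cD_{X_{k-1}}$ and Lemma~\ref{lemfixedconfiglocal} applies with $F=T_k$ (here $|F|=3=O(1)$). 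Using $|f(\bH\cup T_k)-f(\bH)|\le 3\delta\Delta=O(n\Delta\lam^3)$ to reduce the factor $e^{\E[f(\bH\cup T_k)-f(\bH)]}$ to $1+O(n\Delta\lam^3)$, and absorbing $n^2\Delta^2\lam^6=O(n\Delta\lam^3)$, we get $p_k=(1+O(n\Delta\lam^3))r_A^3$ when $T_k\subseteq A$ and $p_k=(1+O(n\Delta\lam^3))r_B^3$ when $T_k\subseteq B$.

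Finally we multiply up. Since $p_k=O(q^3)=o(1)$, we have $\log\P(\bG\textup{ triangle-free})=\sum_{k=1}^N\log(1-p_k)=-\sum_k p_k+O\!\big(\sum_k p_k^2\big)$. Splitting the first sum according to whether $T_k$ lies in $A$ or in $B$ gives $\sum_k p_k=r_A^3\binom a3+r_B^3\binom b3+O(n\Delta\lam^3)\big(r_A^3\binom a3+r_B^3\binom b3\big)$, and since $r=\max\{r_A,r_B\}=O(q)$ and $N=O(n^3)$ this error is $O(n^4\Delta\lam^3 q^3)$; for the second sum, $\sum_k p_k^2=O(Nq^6)=O(n^3q^6)$, and because $q\le\lam$ we have $q^3\le\lam^3\le n\Delta\lam^3$, so $n^3q^6\le n^4\Delta\lam^3 q^3$. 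Exponentiating yields $\P(\bG\textup{ triangle-free})=\exp\!\big(-r_A^3\binom a3-r_B^3\binom b3+O(n^4\Delta\lam^3 q^3)\big)$, as claimed.

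The substantive work is already inside Lemma~\ref{lemfixedconfiglocal}; within this argument the points demanding care are (i) confirming that downward-closedness, the value of $\max_{H}\Delta(H)$, and $\delta$-locality of $f$ all survive restriction to each intermediate family $\cD_{X_{k-1}}$, and that $T_k\in\cD_{X_{k-1}}$, and (ii) controlling the cumulative error across the $\Theta(n^3)$ factors of the telescoping product, which is precisely where the term $O(n^4\Delta\lam^3 q^3)$ is generated and where one must verify that the second-order contribution $\sum_k p_k^2$ stays within it. Throughout, the standing hypothesis $\lam\ge\omega/\sqrt n$ together with $q=O(e^{-\omega^2/3}n^{-1/2})$ from~\eqref{eqqEstomega} is what makes $n\Delta\lam^3$ genuinely small, so that the per-factor relative error is negligible.
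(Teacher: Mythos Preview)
Your proof is correct and follows essentially the same approach as the paper: both enumerate the triangles $T_1,\dots,T_N$, write $\P(\bG\text{ triangle-free})$ as the telescoping product $\prod_k(1-p_k)$ with $p_k=\P(T_k\subseteq\bG')$ for $\bG'\sim\nu^f_{\bm r,\cD_{X_{k-1}}}$, and invoke Lemma~\ref{lemfixedconfiglocal} to estimate each $p_k$. You are more explicit than the paper in verifying the hypotheses of Lemma~\ref{lemfixedconfiglocal} (downward-closedness of $\cD_{X_k}$, persistence of $\max_H\Delta(H)$, $\delta$-locality of the restriction, and $T_k\in\cD_{X_{k-1}}$) and in tracking the second-order term $\sum_k p_k^2$, but the substance is identical.
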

\begin{proof}
Let $N= \binom{a}{3}+ \binom{b}{3}$ and let $\{T_1, T_2,\ldots, T_N\}= \binom{A}{3}\cup \binom{B}{3}$. 
Let $A_i$ denote the event that the triangle $T_i$ is contained in $\bG$. Then
\[
\P( \bG \textup{ is triangle-free}) = \P\left (\bigcap_{i=1}^{N}A_i^c \right) = \prod_{i=1}^N  \P\left (A_i^c \, \bigg |\,   \bigcap_{j<i}A_j^c\right) = \prod_{i=1}^N\left[1-  \P\left (A_i \, \bigg |\,   \bigcap_{j<i}A_j^c\right)\right]\, .
\]
Fix $i\in[N]$, let $X=\{T_1, T_2,\ldots, T_{i-1}\}$, and let $\bG'\sim \nu^f_{\bm r, \cD_X}$. Then by Lemma~\ref{lemfixedconfiglocal}
\[
 \P\left (A_i \, \bigg |\,   \bigcap_{j<i}A_j^c\right)= \P(T_i\subseteq \bG') 
 =
 \begin{cases}
 \left(1+O\left(n\Delta\lam^3\right)\right)r_A^{3} &\text{ if $T_i\in \binom{A}{3}$,}\\
\left(1+O\left(n\Delta\lam^3\right)\right)r_B^{3} &\text{ if $T_i\in \binom{B}{3}$.}
 \end{cases}
\]
It follows that
\begin{align*}
\P( \bG \textup{ is triangle-free}) 
&=  \left[1 - r_A^3+O(n\Delta\lam^3q^3))\right]^{\binom{a}{3}}
 \left[1 - r_B^3+O(n\Delta\lam^3q^3))\right]^{\binom{b}{3}}\\
&=
\exp\left\{- (r_A^3+O(n\Delta\lam^3q^3))\binom{a}{3}- (r_B^3+O(n\Delta\lam^3q^3))\binom{b}{3}\right\}\, .
\end{align*}
\end{proof}

We will apply Lemma~\ref{lemJansonERG} in Section~\ref{secSuperCrit}. 
For now it will be useful to note the following corollary which could also be proved using a combination of Janson's inequality and the Harris-FKG inequality. 
Recall the definition of $\nu_{\bm r}$ from~\eqref{eqnubmrdef}.

\begin{cor}\label{corZABJanson}
Let $r_A, r_B\in(0,1)$ be such that $\max\{r_A, r_B\}=O(q)$ and let $\bG\sim \nu_{\bm r}$.
Then 
\begin{align}\label{eqProbGinDEmptyEst}
\P(\bG\in \cD_{\emptyset}) =  1+o(1) .
\end{align}
Moreover,
\begin{align}\label{eqProbGinDEst}
\P(\bG\in \cD) =  \exp \left ( - r_A^3\binom{a}{3} - r_B^3\binom{b}{3} + O(n^4\Delta\lam^3q^3) \right)\, ,
\end{align}
and
\begin{align*}
\sum_{(S,T)\in \cD}\left(\frac{r_A}{1-r_A} \right)^{|S|}\left(\frac{r_B}{1-r_B} \right)^{|T|}
&= \exp\left\{\frac{1}{2}a^2r_A+ \frac{1}{2}b^2r_B+O(nq+n^3q^3) \right\}\, .
\end{align*}
\end{cor}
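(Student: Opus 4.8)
The three claims are estimates for (i) $\P(\bG\in\cD_\emptyset)$, (ii) $\P(\bG\in\cD)$, and (iii) the partition function $\sum_{(S,T)\in\cD}(r_A/(1-r_A))^{|S|}(r_B/(1-r_B))^{|T|}$, all for $\bG\sim\nu_{\bm r}$ with $\max\{r_A,r_B\}=O(q)$. The plan is to deduce all three from Lemma~\ref{lemJansonERG} together with the large-deviation bound of Lemma~\ref{lemDegBdLocal}. Note that $\nu_{\bm r}=\nu^f_{\bm r,\cD_\emptyset}$ in the trivial case $\cE=\cD_\emptyset$ with $f\equiv 0$ (which is vacuously $\delta$-local for any $\delta$, in particular $\delta=n\lam^3/(6\alpha)$), so Lemmas~\ref{lemfixedconfiglocal}, \ref{lemDegBdLocal}, and \ref{lemJansonERG} all apply with this choice.

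For (i): the event $\cD_\emptyset$ is the event that $\bG$ is triangle-free, has $\Delta(\bG)\le\Delta$, and $|\bG_A|,|\bG_B|\le K$. By Lemma~\ref{lemDegBdLocal} (with $f\equiv0$, $\cE=\cD_\emptyset$), $\P(\Delta(\bG)\ge\Delta/2)\le n^2e^{-\Delta/2}=o(1)$ and $\P(|\bG|\ge K/2)\le 2n^2e^{-\Delta/2}=o(1)$, so the degree and edge-count conditions hold whp; here one uses that $\Delta=\Delta_{A,B,\lam}=50\max\{qn,\log n\}\to\infty$. For triangle-freeness, a direct first-moment bound suffices: $\E[\#\text{triangles in }\bG]\le N\cdot\max\{r_A,r_B\}^3=O(n^3q^3)=o(1)$ since $q=O(\omega e^{-\omega^2/3}n^{-1/2})$ by~\eqref{eqqEstomega} and $\omega$ is large. (Alternatively this is the $O(\cdot)$ error term from the identity in Lemma~\ref{lemJansonERG} applied with $f\equiv0$, after observing $r_A^3\binom a3+r_B^3\binom b3 = o(1)$ in this regime; but note the full strength of Lemma~\ref{lemJansonERG} is \emph{not} needed here since the correction is $o(1)$, only for (iii) do we need the precise exponent.) So $\P(\bG\in\cD_\emptyset)=1-o(1)$.

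For (ii): write $\P(\bG\in\cD)=\P(\bG\text{ triangle-free})\cdot\P(\Delta(\bG)\le\Delta,\ |\bG_A|,|\bG_B|\le K \mid \bG\text{ triangle-free})$. The first factor is exactly $\exp(-r_A^3\binom a3 - r_B^3\binom b3 + O(n^4\Delta\lam^3q^3))$ by Lemma~\ref{lemJansonERG} with $f\equiv0$. For the conditional factor, conditioning on triangle-freeness is conditioning on a downward-closed event, so the conditional law is stochastically dominated by $\nu_{\bm r}$ for the edge-monotone events $\{\Delta\ge\Delta/2\}$, $\{|\bG|\ge K/2\}$... actually more directly: the conditional measure is $\nu^f_{\bm r',\cD'}$-type and one invokes Lemma~\ref{lemDegBdLocal} again (the measure $\nu_{\bm r}$ restricted to triangle-free graphs is $\nu_{\bm r}$ conditioned on a downward-closed family, hence the max-degree and edge-count tail bounds of Lemma~\ref{lemDegBdLocal} still apply after conditioning, as the proof of that lemma only uses upper bounds on conditional edge-probabilities which conditioning on a downward-closed event can only decrease). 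Hence the conditional factor is $1-o(1)=1+O(n^4\Delta\lam^3q^3)$, which gets absorbed.

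For (iii): the sum is $\nu_{\bm r}$-normalized up to a product of factors — precisely,
\[
\sum_{(S,T)\in\cD}\Bigl(\tfrac{r_A}{1-r_A}\Bigr)^{|S|}\Bigl(\tfrac{r_B}{1-r_B}\Bigr)^{|T|}
=(1-r_A)^{-\binom a2}(1-r_B)^{-\binom b2}\cdot\P(\bG\in\cD).
\]
Taking logs, $-\binom a2\log(1-r_A)-\binom b2\log(1-r_B)=\tfrac12 a^2 r_A+\tfrac12 b^2 r_B+O(n^2 q^2)+O(nq)$ by Taylor expansion, and by part (ii) $\log\P(\bG\in\cD)=-r_A^3\binom a3-r_B^3\binom b3+O(n^4\Delta\lam^3q^3)=O(n^3q^3)+O(n^4\Delta\lam^3q^3)$. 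One then checks $n^4\Delta\lam^3 q^3=O(n^3q^3)$ using $\Delta\lam^3=O(n^{-1})\cdot$(polylog) together with $q=\tilde\Theta(n^{-1/2-c^2/2})$; more carefully, $\Delta=50\max\{qn,\log n\}$ so $n^4\Delta\lam^3q^3\le 50 n^4\lam^3 q^3\max\{qn,\log n\}$, and since $\lam=O(\sqrt{\log n/n})$ one has $\lam^3=O((\log n/n)^{3/2})$; a short computation shows this is $O(n^3q^3)$ in the regime $\lam\ge\omega/\sqrt n$. Combining gives the claimed $\exp\{\tfrac12 a^2r_A+\tfrac12 b^2r_B+O(nq+n^3q^3)\}$.

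The main obstacle is bookkeeping the error terms in part (iii): one must verify that the Janson-error $O(n^4\Delta\lam^3q^3)$ and the higher-order Taylor terms from $\log(1-r)$ are all dominated by $O(nq+n^3q^3)$ across the full range $\omega/\sqrt n\le\lam\le 2\sqrt{\log n/n}$. This is routine but requires being careful about which of $qn$ versus $\log n$ dominates in $\Delta$. The probabilistic content — that conditioning on triangle-freeness does not destroy the max-degree tail bound — is the only place one needs a small observation beyond mechanically citing earlier lemmas, and it follows from the downward-closedness already flagged in the setup for Lemma~\ref{lemfixedconfiglocal}.
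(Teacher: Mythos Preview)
Your overall structure is close to the paper's, but there are two genuine gaps stemming from a misreading of the definitions.

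First, $\cD_\emptyset$ does \emph{not} require triangle-freeness. With $X=\emptyset$ in the definition of $\cD_X$, there is no forbidden-subgraph constraint at all; $\cD_\emptyset$ is just the event $\{\Delta(\bG)\le\Delta,\ |\bG_A|,|\bG_B|\le K\}$. Consequently your triangle check in part (i) is unnecessary---and indeed your side claim $\E[\#\text{triangles}]=O(n^3q^3)=o(1)$ is false in the general regime $\lam\ge\omega/\sqrt n$ (for $\lam$ near $\omega/\sqrt n$ one has $n^3q^3\to\infty$).

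Second, the identity $\nu_{\bm r}=\nu^f_{\bm r,\cD_\emptyset}$ with $f\equiv 0$ is false: the right-hand side is $\nu_{\bm r}$ \emph{conditioned} on $\cD_\emptyset$, not $\nu_{\bm r}$ itself. This means neither Lemma~\ref{lemDegBdLocal} nor Lemma~\ref{lemJansonERG} applies as you cite them, since both are stated for the conditioned measure $\nu^f_{\bm r,\cE}$. In particular, for (ii) you compute $\P_{\nu_{\bm r}}(\bG\text{ triangle-free})$ by invoking Lemma~\ref{lemJansonERG}, but that lemma gives the triangle-free probability under $\nu_{\bm r,\cD_\emptyset}$, not under $\nu_{\bm r}$.

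The paper's fix is to reverse your order of conditioning: write $\P(\bG\in\cD)=\P(\bG\in\cD_\emptyset)\cdot\P(\bG\text{ triangle-free}\mid\bG\in\cD_\emptyset)$. The first factor is handled by a direct Chernoff bound on the Erd\H{o}s--R\'enyi measure (no need for Lemma~\ref{lemDegBdLocal}), giving $1-O(ne^{-\Delta})$; the second factor is exactly what Lemma~\ref{lemJansonERG} computes with $f\equiv 0$. One then checks $ne^{-\Delta}=O(n^4\Delta\lam^3q^3)$ to absorb the first error into the second. Your part (iii) is correct and matches the paper.
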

\begin{proof}
By the definition of $\cD$ and $\cD_{\emptyset}$ we have
\[
\P(\bG\in \cD) = \P(\bG\in \cD_\emptyset)\cdot \P(\bG \text{ is triangle-free}\mid \bG\in \cD_\emptyset)\, .
\]
We estimate the probabilities separately starting with $\P(\bG\in \cD_\emptyset)$.
Note that $\E(|\bG|)=\binom{a}{2}r_A+\binom{b}{2}r_B\leq n^2q\leq K/50$ so that by Chernoff's inequality (Lemma~\ref{lemChernoff})
\[
 \P(|\bG|> K) \leq e^{-K}\leq e^{-\Delta}\, .
\]
For $v\in A\cup B$ we have $\E(d_{\bG}(v))= qn\leq\Delta/50$ and so by Chernoff's inequality and a union bound we have
\[
 \P(\Delta(\bG)> \Delta)\leq ne^{-\Delta}\, .
\]
We conclude that 
\begin{align}\label{eqDemptyEst}
 \P(\bG\in \cD_\emptyset)= 1- O(ne^{-\Delta})
\end{align}
and so~\eqref{eqProbGinDEmptyEst} follows.
Letting $\bG'\sim \nu_{\bm r, \cD_{\emptyset}}$ where $\bm r= (r_A, r_B)$ we have, by Lemma~\ref{lemJansonERG},
\begin{align}
 \P(\bG \text{ is triangle-free}\mid \bG\in \cD_\emptyset) 
 &= \P(\bG' \text{ is triangle-free})\\
 &=
 \exp \left ( - r_A^3\binom{a}{3} - r_B^3\binom{b}{3} + O(n^4\Delta\lam^3q^3) \right)\, .
\end{align}
We note that $ne^{-\Delta}\leq n^{-49}$. On the other hand since $\lam\leq 2 \sqrt{\frac{\log n}{n}}$ (by our assumption at~\eqref{lamAssumption}), we have $q/(1-q)\geq \lam e^{-n\lam^2}=\Omega(n^{-5})$, so that $n^4\Delta\lam^3q^3=\Omega(n^{-15})$, in particular  $ne^{-\Delta}=O(n^4\Delta\lam^3q^3)$. Statement~\eqref{eqProbGinDEst} follows. 

Finally, note that  
\begin{align}
\sum_{(S,T)\in \cD}\left(\frac{r_A}{1-r_A} \right)^{|S|}\left(\frac{r_B}{1-r_B} \right)^{|T|}
&=
(1-r_A)^{-\binom{a}{2}}(1-r_B)^{-\binom{b}{2}}\cdot \P(\bG\in \cD)\\
&= \exp\left\{\frac{1}{2}a^2r_A+ \frac{1}{2}b^2r_B+O(nq+n^2q^2) \right\}\cdot \P(\bG\in \cD)\, ,\label{eqERPGinD}
\end{align}
and $\P(\bG\in \cD)=\exp\{O(n^3q^3)\}$ by~\eqref{eqProbGinDEst}.
\end{proof}

\section{From weak to moderately balanced partitions}
In this section we prove Proposition~\ref{lemZmodsimsum} which allows us to ignore partitions that are not moderately balanced.
Recall from Definition~\ref{defModBalanced} that we call a partition $(A,B)\in \Pi$ $\lam$-moderately balanced if
\[
\big ||A|-|B| \big|\leq M_\lam:=\max\{ne^{-\lam^2n/2}, n^{1/2} \}(\log n)^2  \, , 
\]
and we let $\Pi_{\textup{mod},\lam}$ denote the set of all $\lam$-moderately balanced partitions. 

We say a graph $G\in \cT$ is \emph{captured} by $(A,B)$ if $(G_A, G_B)\in \cD_{A,B,\lam}$. We let $c_{\textup{mod},\lam}(G)$ denote the number of $\lam$-moderately balanced partitions that capture $G$.  Note that
\begin{align}\label{eqmodcapture}
\mu_{\textup{mod},\lam}(G)= \frac{\lam^{|G|}}{Z_{\textup{mod}}(\lam)}\cdot c_{\textup{mod},\lam}(G)\, ,
\end{align} 
where we recall the definition of $\mu_{\textup{mod},\lam}$ from Algorithm~\ref{algMuModStrong}. 

For $G\in \cT$, we let $c_{\textup{weak},\lam}(G)$ denote the number of weakly balanced partitions $(A,B)$ such that $(G_A, G_B)\in \cD^{\textup{w}}_{A,B,\lam}$.

The following lemma is a minor variant of Lemma~\ref{lemWeakUnique} and the proof is the same. 
\begin{lemma}\label{lemCaptureMod}
There exists $\omega>0$ such that if $\lam\geq\omega/\sqrt{n}$ and $\bG\sim \mu_{\textup{mod},\lam}$,  then
\[
\P(c_{\textup{weak},\lam}(\bG)= c_{\textup{mod},\lam}(\bG)=  1)\geq 1- e^{-\lam n/25}\, .
\]
\end{lemma}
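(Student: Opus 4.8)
The plan is to mirror the proof of Lemma~\ref{lemWeakUnique}, since the statement asserts essentially the same uniqueness conclusion, only now with the starting measure being $\mu_{\textup{mod},\lam}$ instead of $\mu^{\textup{w}}_{A,B,\lam}$. First I would unpack $\mu_{\textup{mod},\lam}$ via Algorithm~\ref{algMuModStrong}: a sample $\bG\sim\mu_{\textup{mod},\lam}$ is obtained by choosing $(A,B)\in\Pi_{\textup{mod},\lam}$ with probability $\propto Z_{A,B}(\lam)$, then $(S,T)\in\cD_{A,B,\lam}$ with probability $\propto \lam^{|S|+|T|}Z_{S\boxempty T}(\lam)$, then the crossing edges $\Ec$ from the hard-core model on $S\boxempty T$ at activity $\lam$. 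Conditioned on the first two steps, the crossing-edge distribution is exactly that analyzed in Lemma~\ref{lemExpanderwhp}, so I can invoke that lemma directly: the key point is that $(S,T)\in\cD_{A,B,\lam}$ has $\Delta(S\cup T)\le \Delta_{A,B,\lam}=o_\omega(1)\cdot \alpha/\lam \le \alpha/\lam$ for $\omega$ large, hence $\Delta(S\boxempty T)\le 2\alpha/\lam$ and the hard-core model is subcritical in the required sense. Therefore with probability at least $1-e^{-\lam n/25}$ the graph $G=([n],\Ec)$ is an $(A,B)$-$\lam$-expander.

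Next, on the expander event, I would apply Lemma~\ref{lemExpansionCor}. Since $(A,B)$ is $\lam$-moderately balanced it is in particular weakly balanced (for $n$ large, $M_\lam \le n/10$), and the defect graph $(S,T)$ satisfies $\Delta(S\cup T)\le \alpha/\lam$; adding the crossing edges only affects vertices outside the parts $A,B$ relative to the defect graph, so $\Delta(G_A\cup G_B)=\Delta(S\cup T)\le\alpha/\lam$ where here $G=S\cup T\cup\Ec$ is the full output. Then Lemma~\ref{lemExpansionCor} gives that $(A,B)$ is the unique partition with $\Delta(G_{A'}\cup G_{B'})\le\alpha/\lam$ among \emph{all} partitions $(A',B')$, and in particular the unique weakly balanced such partition, and that $(A,B)$ is the unique max cut of $G$. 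Since a partition capturing $G$ (i.e.\ with $(G_{A'},G_{B'})\in\cD^{\textup{w}}_{A',B',\lam}$ or $\in\cD_{A',B',\lam}$) in particular has $\Delta(G_{A'}\cup G_{B'})\le\alpha/\lam$ (as $\Delta_{A',B',\lam}\le\alpha/\lam$ for $\omega$ large), uniqueness of the max cut forces $c_{\textup{weak},\lam}(\bG)=c_{\textup{mod},\lam}(\bG)=1$ on this event. This yields the claimed probability bound $1-e^{-\lam n/25}$.

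I do not anticipate a serious obstacle here; the lemma is explicitly flagged in the text as ``a minor variant of Lemma~\ref{lemWeakUnique}'' with ``the proof is the same''. The only points requiring a line of care are: (i) checking $M_\lam\le n/10$ so that moderately balanced implies weakly balanced, which holds for $n$ large since $M_\lam=\max\{ne^{-\lam^2n/2},n^{1/2}\}(\log n)^2$ and $\lam\ge\omega/\sqrt n$ with $\omega$ large forces $ne^{-\lam^2n/2}\le ne^{-\omega^2/2}\ll n/(\log n)^2$; and (ii) confirming that the degree thresholds $\Delta_{A,B,\lam}$ appearing in $\cD_{A,B,\lam}$ are $\le\alpha/\lam$, which is the observation recorded just before Proposition~\ref{propMaxDegBootstrap} that $\Delta_{A,B,\lam}=o_\omega(1)\cdot\alpha/\lam$ for $\lam\ge\omega/\sqrt n$ and $(A,B)$ weakly balanced. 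With these in hand the argument is a direct transcription of the Lemma~\ref{lemWeakUnique} proof.
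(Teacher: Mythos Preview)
Your proposal is correct and follows essentially the same approach as the paper's proof: unpack Algorithm~\ref{algMuModStrong}, observe that $\cD_{A,B,\lam}\subseteq\cD^{\textup{w}}_{A,B,\lam}$ so Lemma~\ref{lemExpanderwhp} applies to the crossing edges, and then invoke Lemma~\ref{lemExpansionCor} to conclude uniqueness. The paper's version is terser (it omits the explicit checks that $M_\lam\le n/10$ and $\Delta_{A,B,\lam}\le\alpha/\lam$, which you spell out), but the argument is the same.
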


\begin{proof}
Clearly $c_{\textup{weak},\lam}(\bG)\geq c_{\textup{mod},\lam}(\bG)\geq  1$ with probability $1$. 
Suppose that $(A,B)$ is chosen at Step 1 in Algorithm~\ref{algMuModStrong} and $(S,T)\in \cD_{A,B,\lam}$ is chosen at Step 2. Since  $ \cD_{A,B,\lam}\subseteq  \cD_{A,B,\lam}^{\textup{w}}$, Lemma~\ref{lemExpanderwhp} shows that $\bG$ is an $(A,B)$-$\lam$-expander with probability at least $1-  e^{-\lam n/25}$. We conclude from Lemma~\ref{lemExpansionCor} that $c_{\textup{weak},\lam}(\bG)\leq 1$ with probability at least $1-  e^{-\lam n/25}$.
\end{proof}

\begin{proof}[Proof of Proposition~\ref{lemZmodsimsum}]
Let 
\[
\hat Z_{\textup{weak}}(\lam) = \sum_{(A,B)\in \Pi_{\textup{weak}}} Z_{A,B}(\lam)\, ,
\]
and note that by Proposition~\ref{propMaxDegBootstrap} and the fact that $\Delta_{A,B,\lam}\geq 50 \log n$ for all $(A,B)\in \Pi_{\textup{weak}}$,
\[
\hat Z_{\textup{weak}}(\lam) = \left(1+O\left(n^{-3}\right)\right)  Z_{\textup{weak}}(\lam) \, .
\]
To establish~\eqref{eqZsimZmod} it therefore suffices to show that
\begin{align}\label{eqZsimZmodhat} 
\hat Z_{\textup{weak}}(\lam)=\left(1 + O\left(n^{-3}\right) \right) Z_{\textup{mod}}(\lam)\, ,
\end{align}
We fix $(A,B)\in \Pi_{\textup{weak}}$.
For $(S,T)\in \cD_{A,B,\lam}$ we apply cluster expansion (Corollary~\ref{corclustersimple}) to conclude that
\begin{align}\label{eqZNSTEst}
\log \left(\frac{Z_{S \boxempty T }(\lam)}{(1+\lam)^{ab}} \right)= -|S\boxempty T|\lam^2 + O(N_{S,T}\Delta^2\lam^3)\, ,
\end{align}
where $\Delta=\Delta_{A,B,\lam}$ and $N_{S,T}$ denotes the number of non-isolated vertices in the graph $S\boxempty T$. 
Since $(S,T)\in \cD_{A,B,\lam}$, $S$ and $T$ are both of size at most $K_{A,B,\lam}=50\max\{n^2q, \log n\}$ by definition, where $q=\max\{q_A,q_B\}$. It follows that $|S\boxempty T|=b|S|+a|T|\leq nK$ and so $S\boxempty T$ has at most
\[N:=\min\{n^2, 2nK\}\] 
non-isolated vertices (recall that $V(S\boxempty T)=A\times B$ which has size $ab<n^2$). 

We conclude from~\eqref{eqZNSTEst} and Corollary~\ref{corZABJanson} that
\begin{multline}\label{claimZABJansonApp}
\frac{Z_{A,B}(\lam)}{(1+\lam)^{ab}}= \\ e^{O(N\Delta^2\lam^3)}\sum_{(S,T)\in \cD_{A,B,\lam}}\left(\frac{q_A}{1-q_A} \right)^{|S|}\left(\frac{q_B}{1-q_B} \right)^{|T|}= e^{O(N\Delta^2\lam^3)}\exp\left\{\frac{1}{2}a^2q_A+ \frac{1}{2}b^2q_B \right\}\, ,
\end{multline}
where for the final equality we used that $nq+n^3q^3=O(N\Delta^2\lam^3)$.

Next we study how the expression in~\eqref{claimZABJansonApp} depends on the degree of imbalance of $(A,B)$.

Let $a=n/2-k$ and $b=n/2+k$, where $k\leq n/20$ since $(A,B)$ is weakly balanced (note that $k$ may be half integral). 
We first consider the case where $\lam^2 k=o(1)$. 
In this case we note that 
\(
q_A=(1+O(q))\lam e^{-b\lam^2},
\)
and similarly for $q_B$ so that 
\begin{align}\label{eqa2b2cancel}
q_Aa^2+q_Bb^2
&= \lam e^{-\lam^2b}a^2 + \lam e^{-\lam^2a}b^2 + O(n^2q^2)\nonumber\\
&= \lam e^{-\lam^2n/2}(n/2)^2
\left[e^{-\lam^2k}\frac{(n/2-k)^2}{(n/2)^2} 
+ e^{\lam^2k}\frac{(n/2+k)^2}{(n/2)^2}   \right] +O(n^2q^2)\nonumber\\
&=\lam e^{-\lam^2n/2}(n/2)^2\left[2+O(\lam^4k^2)\right] +O(n^2q^2)\, ,
\end{align}
where we used that $k/n<\lam^2k$.
It follows from~\eqref{claimZABJansonApp} that
\begin{align}\label{eqZABmodapproximation}
\frac{Z_{A,B}(\lam)}{(1+\lam)^{ab}}= e^{O(N\Delta^2\lam^3)}
\exp\left\{ 
\lam e^{-\lam^2n/2}n^2/4
+ O(\lam^5k^2n^2e^{-\lam^2n/2})
 \right\}\, .
\end{align}
For general $k$ (no longer assuming $\lam^2k=o(1)$), we note that
\begin{align*}
q_Aa^2+q_Bb^2
&= O(n^2q)\, ,
\end{align*}
and so
\[
\frac{Z_{A,B}(\lam)}{(1+\lam)^{ab}}= e^{O(n^2q)}\, .
\]
We then have
\begin{align}\label{eqSecondSum}
\left|\frac{\hat Z_{\textup{weak}}(\lam)}{Z_{\textup{mod}}(\lam)}-1\right|
 \leq &\sum_{ 1/(\lam^2\log n) \leq k \leq n/20} e^{O(n^2q)} \frac{\binom{n}{n/2+k}}{\binom{n}{n/2}} (1+\lam)^{-k^2}\\
& + 
\sum_{ M_\lam \leq k< 1/(\lam^2\log n)} e^{O(N\Delta^2\lam^3)} \frac{\binom{n}{n/2+k}}{\binom{n}{n/2}} e^{O(\lam^5k^2n^2e^{-\lam^2n/2})} (1+\lam)^{-k^2}\,,\nonumber
\end{align}
where we note that $N, \Delta, q$ all implicitly depend on $k$. 

To bound the first sum, we first note that $\binom{n}{n/2+k}\leq \binom{n}{n/2}$.
Moreover for weakly balanced $(A,B)$, $q=\max\{q_A, q_B\}=O(n^{-1/2-\eps})$ for some $\eps>0$ since $\lam\geq c\sqrt{\frac{\log n}{n}}$. It follows that for $1/(\lam^2 \log n)\leq k \leq n/20$ we have $n^2q=o(\lam k^2)$ and so the first sum is bounded above by $n e^{-\tilde \Omega(n^{3/2})}$. 

For the second sum we note that $\lam^5k^2n^2e^{-\lam^2n/2}=o(\lam k^2)$. We claim that  $N\Delta^2\lam^3=o(\lam k^2)$ also. In fact, the definition of $M_\lam$ has been chosen so that this is the case. To see this we first note that if $k< 1/(\lam^2 \log n)=O(n/(\log n)^2)$, then 
\[
q=\max\{q_A, q_B\}=O(\lam e^{-\lam^2n/2})=O\left(\sqrt{\frac{\log n}{n}} e^{-\lam^2n/2}\right)\, .
\]
 We consider two cases depending on the size of $q$. If $nq\leq \log n$ then $\Delta=50\log n$ and $N\leq n^2$. It follows that $N\Delta^2\lam^3=o(\lam k^2)$ for $k\geq M_\lam\geq n^{1/2}(\log n)^2$. If $nq>\log n$, then $\Delta=50nq$ and so $N\Delta^2\lam^3=o(\lam k^2)$ for $k\geq M_\lam\geq ne^{-\lam^2n/2}(\log n)^2$.  It follows that the second sum in~\eqref{eqSecondSum} is bounded above by $ne^{-\lam M_\lam^2/2}=O(n^{-3})$. Statement~\eqref{eqZsimZmod} follows. 
 
 For~\eqref{eqmusimmumod} we apply Pinsker's inequality (Lemma~\ref{lemPinsker}). Let $\bG\sim \mu_{\textup{mod},\lam}$, then  by~\eqref{eqmodcapture}, 
\begin{align}\label{eq:Proof3.3}
D_\text{KL}(\mu_{\textup{mod},\lam} \parallel \mu_{\textup{weak},\lam}) 
&= \E_{\mu_{\textup{mod},\lam}} \log\left(\frac{\mu_{\textup{mod},\lam}(\bG)}{\mu_{\textup{weak},\lam}(\bG)} \right)\\
&=
\E_{\mu_{\textup{mod},\lam}}\log \left(\frac{Z_{\textup{weak}}(\lam)}{Z_{\textup{mod}}(\lam)}
\cdot
\frac
{c_{\textup{mod},\lam}(\bG)}
  {c_{\textup{weak},\lam}(\bG)}\right)\\
&= \E_{\mu_{\textup{mod},\lam}}\log \left(\frac
{c_{\textup{mod},\lam}(\bG)}
  {c_{\textup{weak},\lam}(\bG)}\right) + O\left(n^{-3}\right)\, ,
\end{align}
where for the final equality we used~\eqref{eqZsimZmod}.

To conclude the proof we note that
 $\log \left(\frac
{c_{\textup{mod},\lam}(\bG)}
  {c_{\textup{weak},\lam}(\bG)}\right)=O(n)$ deterministically and by Lemma~\ref{lemCaptureMod} 
\[
\P_{\mu_{\textup{mod},\lam}}(c_{\textup{mod},\lam}(\bG)=c_{\textup{weak},\lam}(\bG)= 1)
\geq 1- e^{-\lam n/25}
\, ,
\]
and so 
\[
 \E_{\mu_{\textup{mod},\lam}}\log \left(\frac
{c_{\textup{mod},\lam}(\bG)}
  {c_{\textup{weak},\lam}(\bG)}\right) = O\left(n  e^{-\lam n/25}\right)=O(n^{-3})\ ,
\]
the result follows. 
\end{proof}

\section{The subcritical defect regime} 
\label{secSubCritRegime}

In this section we prove our main results in the subcritical defect regime: Theorem~\ref{thmGnpSubCritLDprob}, Theorem~\ref{thmGnpBipartite} and Theorem~\ref{thmGnpColoring}. 

We begin with Theorem~\ref{thmGnpSubCritLDprob}. The asymptotics for the probability $G(n,p)$ is triangle-free claimed in Theorem~\ref{thmGnpSubCritLDprob} will be an immediate consequence of the  following asymptotics for the partition function $Z(\lam)$ and the identity $\mathbb{P}_{n,p}( \cT)=(1-p)^{\binom{n}{2}} Z(\lam)$ with $\lam=p/(1-p)$.  

\begin{lemma}\label{lemZapproxER}
Fix $\eps>0$ and let $\lam\geq (1+\eps) \sqrt{\frac{\log n }{n}}$. Then
\begin{align}\label{eqZsimSubcritical}
Z(\lam)\sim
\frac{1}{2}\sqrt{\frac{\pi}{\lam}} \binom{n}{\lfloor n/2\rfloor}(1+\lam)^{n^2/4}
\exp\left\{
\lam e^{-\lam^2n/2+\lam^3n}\frac{n^2}{4} + \lam^5 e^{-\lam^2n}\frac{n^4}{8}
\right\}\, .
\end{align}
\end{lemma}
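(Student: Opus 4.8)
\emph{Overall strategy and reduction.} The plan is to pass from $Z(\lam)$ to a sum over strongly balanced partitions, estimate the contribution of each partition via the cluster expansion and the exponential-random-graph machinery of Sections~\ref{secSubgraphProb}, and then perform a Gaussian sum over partitions. Since $\lam\ge(1+\eps)\sqrt{\log n/n}\ge\frac{13}{14}\sqrt{\log n/n}$, Corollary~\ref{corZmodsimsum} gives $Z(\lam)\sim Z_{\textup{strong}}(\lam)=\sum_{(A,B)\in\Pi_{\textup{strong}}}Z_{A,B}(\lam)$, so it suffices to estimate $Z_{A,B}(\lam)$ for a fixed strongly balanced $(A,B)$ (with $a=|A|,b=|B|$) uniformly, and to sum.

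\emph{Estimating $Z_{A,B}(\lam)$.} By Lemma~\ref{lemSTgraphLemma} and the (absolutely convergent) cluster expansion of $Z_{S\boxempty T}(\lam)$ furnished by Lemma~\ref{lemfislocal},
\[
Z_{A,B}(\lam)=(1+\lam)^{ab}\sum_{(S,T)\in\cD_{A,B,\lam}}\left(\tfrac{q_A'}{1-q_A'}\right)^{|S|}\left(\tfrac{q_B'}{1-q_B'}\right)^{|T|}e^{\lam^3 g(S,T)+f'(S,T)}\, ,
\]
where $q_A',q_B'$ are as in~\eqref{eqqPrimeDef}, $g(S,T)=P_2(S\boxempty T)=bP_2(S)+aP_2(T)+4|S||T|$, and $f'$ is the tail of the expansion, which by~\eqref{eq:fPrimeLocal} and the sparsity constraints defining $\cD_{A,B,\lam}$ satisfies $|f'(S,T)|=o(1)$ uniformly. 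Writing $\bm r'=(q_A',q_B')$ and $W=(1-q_A')^{-\binom a2}(1-q_B')^{-\binom b2}$, this becomes $Z_{A,B}(\lam)=(1+o(1))(1+\lam)^{ab}\,W\,\P_{\nu_{\bm r'}}(\cD_{A,B,\lam})\,\E_{\nu_{\bm r',\cD_{A,B,\lam}}}[e^{\lam^3 g}]$. Here $\log W=\tfrac{a^2}2 q_A'+\tfrac{b^2}2 q_B'+o(1)$ by an elementary estimate (using $q_A',q_B'=\tilde\Theta(n^{-1/2-c^2/2})$ with $c\ge1+\eps$), and $\P_{\nu_{\bm r'}}(\cD_{A,B,\lam})=e^{o(1)}$ by Corollary~\ref{corZABJanson}. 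For the remaining moment generating function I would apply Lemma~\ref{lemtiltedcumulant} with $X=g$, $t=\lam^3$, $\ell=2$, obtaining $\log\E[e^{\lam^3 g}]=\lam^3\E_{\nu_{\bm r',\cD}}[g]+\tfrac{\lam^6}2\var_{\mu_s}(g)$ for some tilt $\mu_s$, $s\in[0,\lam^3]$. Each $\mu_s$ is again of the form $\nu^{sg}_{\bm r',\cD_{A,B,\lam}}$ with $sg$ a $\delta$-local function (admissible $\delta=O(n\lam^3)$ by~\eqref{eqP2local}), so Lemma~\ref{lemfixedconfiglocal} applies uniformly in $s$ and, via a variance computation for the $P_2$-counts and the product $|S||T|$ of the type carried out in Corollary~\ref{coredgevar}, yields $\lam^6\var_{\mu_s}(g)=o(1)$. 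Finally, since $\cD_{A,B,\lam}$ factorizes over $A$ and $B$, under $\nu_{\bm r',\cD}$ the graphs $S,T$ are independent, and Lemma~\ref{lemfixedconfiglocal} gives $\lam^3\E_{\nu_{\bm r',\cD}}[g]=\lam^3\big(\tfrac{a^3b}{2}q_A'^2+\tfrac{ab^3}{2}q_B'^2+a^2b^2q_A'q_B'\big)+o(1)$. Collecting:
\[
Z_{A,B}(\lam)=(1+o(1))(1+\lam)^{ab}\exp\left\{\tfrac{a^2}2 q_A'+\tfrac{b^2}2 q_B'+\lam^3\Big(\tfrac{a^3b}{2}q_A'^2+\tfrac{ab^3}{2}q_B'^2+a^2b^2q_A'q_B'\Big)+o(1)\right\}.
\]

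\emph{Summing over partitions.} Writing $a=n/2-u$, $b=n/2+u$ with $u\in\tfrac12\Z$, $|u|\le 5(n\log n)^{1/4}$, I would Taylor-expand $q_A',q_B'$ around $u=0$ and verify that, after pulling the $u$-independent quantities $C_0:=\lam e^{-\lam^2n/2+\lam^3n}\tfrac{n^2}4$ and $C_1:=\lam^5 e^{-\lam^2n}\tfrac{n^4}8$ out of the exponent (using $e^{\lam^3 n}=1+o(1)$ in the second, lower-order, term), the residual $u$-dependence of the defect corrections is $o(1)$ uniformly over $\Pi_{\textup{strong}}$; the key point is $\lam^3 n u^2 e^{-\lam^2 n/2}=o(1)$ there. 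Since $(1+\lam)^{ab}=(1+\lam)^{\lfloor n^2/4\rfloor}(1+\lam)^{-u^2+O(1)}$ and $\binom{n}{\lfloor n/2\rfloor+t}=e^{o(1)}\binom{n}{\lfloor n/2\rfloor}$ over the relevant range, the sum reduces to the Gaussian sum $\tfrac12\sum_{t}\binom{n}{\lfloor n/2\rfloor+t}(1+\lam)^{-t^2}\sim\tfrac12\binom{n}{\lfloor n/2\rfloor}\sqrt{\pi/\lam}$ (Poisson summation; the $\tfrac12$ reflecting the $A\leftrightarrow B$ symmetry of a partition), with the tails beyond the dominant window $|u|=O(\lam^{-1/2})$ negligible. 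Combining the three steps and using $(1+\lam)^{\lfloor n^2/4\rfloor}\sim(1+\lam)^{n^2/4}$ gives $Z(\lam)\sim\tfrac12\sqrt{\pi/\lam}\binom{n}{\lfloor n/2\rfloor}(1+\lam)^{n^2/4}e^{C_0+C_1}$, which is~\eqref{eqZsimSubcritical}.

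\emph{Main obstacle.} The delicate point is the concentration estimate $\E_{\nu_{\bm r',\cD_{A,B,\lam}}}[e^{\lam^3 g}]=e^{\lam^3\E g+o(1)}$ in the middle step: because $C_0$ and $C_1$ are polynomially large for small $\eps$, a crude bound on $g$ (which is only $O(n^{2-\eps}\,\mathrm{polylog})$ in the worst case over $\cD_{A,B,\lam}$) is far from enough, and one genuinely needs the bound $\lam^6\var_{\mu_s}(g)=o(1)$ uniformly over the tilted conditioned exponential random graph measures $\mu_s$. This rests on the master subgraph estimate Lemma~\ref{lemfixedconfiglocal} for perturbed conditioned ER measures together with a somewhat involved variance computation for counts of $P_2$'s (and of $|S||T|$), classifying pairs of copies by the number of shared edges. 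The remaining care is bookkeeping: every relative error ($O(n\Delta\lam^3)$, $O(n^2\Delta^2\lam^6)$, $O(q)$, \dots) must be shown to contribute only an additive $o(1)$ to the exponent even after being multiplied by the polynomially large $C_0,C_1$, which is where fixing $\eps>0$ is used.
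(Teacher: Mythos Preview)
Your proposal is correct and follows the same architecture as the paper: reduce to $Z_{\textup{strong}}(\lam)$ via Corollary~\ref{corZmodsimsum}, estimate $Z_{A,B}(\lam)/(1+\lam)^{ab}$ for each strongly balanced $(A,B)$ by cluster expanding $Z_{S\boxempty T}$ and rewriting as a moment generating function, and finish with the Gaussian sum over the imbalance.  The only substantive difference is in the treatment of $\E_{\nu_{\bm q',\cD}}[e^{\lam^3 P_2(S\boxempty T)}]$.  You apply Lemma~\ref{lemtiltedcumulant} with $\ell=2$, which forces you to bound $\lam^6\var_{\mu_s}(g)$ uniformly over tilts $s\in[0,\lam^3]$; the paper instead uses $\ell=1$, obtaining $\log\E[e^{\lam^3 g}]=\lam^3\E_{\mu_s}[g]$ for a single (unknown) $s$, and then computes this tilted expectation directly via~\eqref{eqlemfixedconfiglocalcor}, observing that the $(1+O(n\Delta\lam^3))$ multiplicative error there contributes $o(1)$ regardless of $s$.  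This sidesteps the variance computation entirely.

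Your route works --- the variance bound does hold, by the same Lemma~\ref{lemfixedconfiglocal} machinery --- but note that Corollary~\ref{coredgevar} only treats $\var(|\bG|)$, not $\var(P_2(S\boxempty T))$, so you would need to carry out the pair-by-pair covariance classification (diagonal, sharing an edge, sharing a vertex, disjoint) as is done for the supercritical regime in the proof of Claim~\ref{claimERGApartfun}.  The paper's $\ell=1$ choice is genuinely simpler here and worth knowing.
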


For this entire section we  fix $\eps>0$ and assume $\lam\geq (1+\eps) \sqrt{\frac{\log n }{n}}$.

To begin with, we fix a $\lam$-moderately balanced partition $(A,B)$ (see Definition~\ref{defVarBalanced}) with $a = |A|$ and $b=|B|$ and study the partition function $Z_{A,B}(\lam)$ and defect distribution $\nu_{A,B,\lam}$ (defined in~\eqref{eqZABdef}, \eqref{eqnuABdef} respectively). We will show that the defect distribution $\nu_{A,B,\lam}$ is within $o(1)$ total variation distance of a suitable \ER measure which will be the main step to proving the approximation to $\mu_\lam$ in Theorem~\ref{thmGnpSubCritLDprob}.

Recall from~\eqref{eqqdefGnp} that we define $q_0/(1-q_0)=\lam e^{-\lam^2n/2}$. Let $\bm q = (q_0,q_0)$ and recall the definition of $\nu_{\bm q}$ from~\eqref{eqnubmrdef}, i.e., the distribution of two independent \ER random graphs on $A$ and $B$ with edge probability $q_0$.

The next lemma provides an asymptotic formula for $Z_{A,B}(\lam)/(1+\lam)^{ab}$ for moderately balanced $(A,B)$. To prove Lemma~\ref{lemZapproxER} we sum this formula over partitions $(A,B)$. Importantly, the asymptotic formula does not depend on the imbalance of the sizes of $A,B$.
\begin{lemma}\label{lemZABapprox1}
If $(A,B)\in \Pi_{\textup{mod},\lam}$, then
\begin{align}
\label{eqZabCase1}
\frac{Z_{A,B}(\lam)}{(1+\lam)^{ab}}\sim  \exp\left\{\lam e^{-\lam^2n/2+\lam^3n} \frac{n^2}{4} + \lambda^5e^{-\lambda^2 n}\frac{n^4}{8}\right\}\, .
\end{align}
Moreover,
\begin{align}\label{eqnuABapprox}
 \| \nu_{A,B,\lam} - \nu_{\bm q} \|_{TV}  = o(1)  \,.
 \end{align}
\end{lemma}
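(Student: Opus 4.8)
The plan is to work with the decomposition $\nu_{A,B,\lam}=\nu^f_{\bm r,\cD}$ from Lemma~\ref{lemfislocal} with $r_A=q_A'$, $r_B=q_B'$ and $f(G)=P_2(G_\boxempty)\lam^3+f'(G)$, where $f'$ is $O(n\Delta^2\lam^4)$-local per edge removed. First I would establish the partition function asymptotics~\eqref{eqZabCase1}. Writing $Z_{A,B}(\lam)/(1+\lam)^{ab}=\sum_{(S,T)\in\cD}(q_A'/(1-q_A'))^{|S|}(q_B'/(1-q_B'))^{|T|}e^{f(S,T)}$, the idea is to compare this with the unweighted sum $\sum_{(S,T)\in\cD}(q_A'/(1-q_A'))^{|S|}(q_B'/(1-q_B'))^{|T|}$, which by Corollary~\ref{corZABJanson} (applied with $r_A=q_A'\sim q_A$, $r_B=q_B'\sim q_B$) equals $\exp\{\tfrac12 a^2 q_A'+\tfrac12 b^2 q_B'+O(nq+n^3q^3)\}$. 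The correction factor $\E_{\nu_{\bm r,\cD}}[e^{f(\bG)}]$ must be evaluated: since $f(\bG)=\lam^3 P_2(\bG_\boxempty)+f'(\bG)$ and $\bG$ is sparse (whp $\Delta(\bG)\le\Delta$, $|\bG|\le K$ by Lemma~\ref{lemDegBdLocal} and Chernoff), one computes $\E P_2(\bG_\boxempty)=bP_2(\bG_A)+aP_2(\bG_B)+4|\bG_A||\bG_B|$ in expectation, with the dominant term being $4\,\E|\bG_A|\,\E|\bG_B|\sim 4\cdot\tfrac{a^2 q_A}{2}\cdot\tfrac{b^2 q_B}{2}=a^2b^2 q_Aq_B$ (the $P_2$-within-a-part terms contribute $\Theta(n^3 q^2)$, lower order). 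One then needs to control the fluctuations of $P_2(\bG_\boxempty)$ well enough that $\E e^{\lam^3 P_2}\sim e^{\lam^3\E P_2}$; this follows from a variance bound as in Corollary~\ref{coredgevar} combined with the max-degree truncation, so that $\lam^3$ times the standard deviation is $o(1)$. Assembling: $a^2 q_A'\sim a^2 q_A e^{2b\lam^3}$, and with $\lam$-moderate balance and the cancellation computation in~\eqref{eqa2b2cancel} one gets $\tfrac12(a^2 q_A'+b^2 q_B')+\lam^3\E P_2(\bG_\boxempty)\sim \lam e^{-\lam^2 n/2+\lam^3 n}\tfrac{n^2}{4}+\lam^5 e^{-\lam^2 n}\tfrac{n^4}{8}$, which is exactly the claimed exponent (note the $\lam^3 n$ in the first term comes from the $e^{2b\lam^3}$ boost, and the second term from $\lam^3\cdot a^2b^2 q_Aq_B\sim \lam^3\cdot(n/2)^4\lam^2 e^{-\lam^2 n}$).

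For the total variation bound~\eqref{eqnuABapprox} I would use Pinsker's inequality (Lemma~\ref{lemPinsker}): bound $D_{\mathrm{KL}}(\nu_{A,B,\lam}\parallel\nu_{\bm q})$. Write $\nu_{A,B,\lam}(G)=\nu_{\bm q}(G)\cdot w(G)/W$ where $w(G)=(\tfrac{q_A'/(1-q_A')}{q_0/(1-q_0)})^{|G_A|}(\tfrac{q_B'/(1-q_B')}{q_0/(1-q_0)})^{|G_B|}e^{f(G)}\mathbf 1_{G\in\cD}$ and $W=\E_{\nu_{\bm q}}w$. Then $D_{\mathrm{KL}}=\E_{\nu_{A,B,\lam}}\log w-\log W$. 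The ratios $q_A'/(1-q_A')$ versus $q_0/(1-q_0)=\lam e^{-\lam^2 n/2}$ differ by a factor $e^{\lam^3 b+O(\lam^2 k)+O(\lam^4 b)}$ (using~\eqref{eqqPrimeDef} and moderate balance $|a-b|\le M_\lam$); combined with $e^{f(G)}$ whose main part is $e^{\lam^3 P_2(G_\boxempty)}$, the log-weight $\log w(G)$ is, to leading order, a linear functional of $|G_A|,|G_B|$ plus $\lam^3 P_2(G_\boxempty)$, minus its $\nu_{\bm q}$-mean. So $D_{\mathrm{KL}}$ is controlled by the variance under $\nu_{\bm q}$ of these statistics scaled by $(\lam^3,\ \lam^3 b,\dots)$, which is $o(1)$ by direct second-moment computations for \ER random graphs (edge count variance $\Theta(n^2 q)$ times $(\lam^3 b)^2$, $P_2$ count variance $\Theta(n^3 q^2)$ times $\lam^6$, both $o(1)$ using $q=\tilde\Theta(n^{-1/2-c^2/2})$ with $c\ge1+\eps$), together with the negligible probability $\nu_{\bm q}(\cD^c)=o(1)$ from Corollary~\ref{corZABJanson}. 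One subtlety: $\nu_{\bm q}$ is not supported on $\cD$ while $\nu_{A,B,\lam}$ is, so I would first replace $\nu_{\bm q}$ by $\nu_{\bm q,\cD}$ at $o(1)$ TV cost, then compare $\nu_{A,B,\lam}$ with $\nu_{\bm q,\cD}$.

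The main obstacle I anticipate is the second one — controlling the KL divergence (equivalently, showing the reweighting by $e^{f}$ and the shift $q_0\to q_A',q_B'$ perturbs the measure only negligibly). This requires showing that $\lam^3 P_2(\bG_\boxempty)$ concentrates around a constant (its mean is $\Theta(1)$ here, since $\lam^3\cdot a^2b^2 q_Aq_B=\Theta(1)$ at this density), so one genuinely needs the fluctuations of $P_2(\bG_\boxempty)$ to be $o(1/\lam^3)$, i.e.\ the variance of $P_2(\bG_\boxempty)$ must be $o(\lam^{-6})=o(n^{-9}\cdot\text{polylog})$... which is false for the raw variance. The resolution is that $P_2(\bG_\boxempty)=4|\bG_A||\bG_B|+bP_2(\bG_A)+aP_2(\bG_B)$, and the dominant term $4|\bG_A||\bG_B|$ is a product of two independent concentrated quantities, so its relevant fluctuation is driven by $\mathrm{sd}(|\bG_A|)\cdot\E|\bG_B|\cdot 4\lam^3=\Theta(n\sqrt{q})\cdot\Theta(n^2 q)\cdot\lam^3=o(1)$; one must handle this product structure carefully rather than bounding $\var(P_2(\bG_\boxempty))$ crudely, and also verify that conditioning on the sparse event $\cD$ (which has probability $e^{-\Theta(n^3 q^3)}=e^{-\tilde\Theta(1)}$, bounded away from $0$ and $1$ near the threshold) does not destroy these concentration estimates — which it does not, by the FKG/Harris monotonicity argument underlying Corollary~\ref{corZABJanson} together with Chernoff bounds for the degree and edge-count conditioning.
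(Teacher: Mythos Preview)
Your overall strategy for~\eqref{eqZabCase1} matches the paper's (write $Z_{A,B}/(1+\lam)^{ab}=Z'\cdot\E_{\nu_{\bm q',\cD}}e^{\lam^3 P_2(S\boxempty T)+f'}$, evaluate $Z'$ via Corollary~\ref{corZABJanson}, then compute the exponential moment), but there is a concrete arithmetic error that would give the wrong constant. You write that in $\E P_2(\bG_\boxempty)=b\,\E P_2(\bG_A)+a\,\E P_2(\bG_B)+4\,\E|\bG_A|\,\E|\bG_B|$ the ``$P_2$-within-a-part terms contribute $\Theta(n^3q^2)$, lower order''. This is false: $b\,\E P_2(\bG_A)\sim b\cdot 3\binom{a}{3}q_A^2\sim ba^3q_A^2/2=\Theta(n^4q^2)$, the \emph{same} order as $4\,\E|\bG_A|\,\E|\bG_B|\sim a^2b^2q_Aq_B=\Theta(n^4q^2)$. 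Keeping only the cross term gives $\lam^3 a^2b^2q_Aq_B\sim \lam^5e^{-\lam^2 n}n^4/16$, exactly half of the target $\lam^5e^{-\lam^2 n}n^4/8$. The paper keeps all three terms and packages them as $\lam^3\cdot\tfrac{ab}{2}(aq_A+bq_B)^2$, which after the balance computation gives the correct $n^4/8$. This term is not $o(1)$ in the regime $\lam\sim(1+\eps)\sqrt{\log n/n}$ (it is $\tilde\Theta(n^{1/2-2\eps})$ for $\eps$ small), so the error matters.

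For the exponential moment itself, the paper does not argue via a variance bound as you propose; it uses Lemma~\ref{lemtiltedcumulant} to write $\log\E_{\nu_{\bm q',\cD}}e^{\lam^3 P_2}=\lam^3\,\E_{\nu^f_{\bm q',\cD}}P_2$ exactly, for some tilt $f=\theta\lam^3 P_2$ with $\theta\in[0,1]$, and then evaluates that tilted first moment via Lemma~\ref{lemfixedconfiglocal}. This sidesteps the concentration issue entirely. Your variance route could be made to work, but it is more laborious.

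For~\eqref{eqnuABapprox}, the paper takes the KL in the opposite direction: it bounds $D_{\mathrm{KL}}(\nu_{\bm q',\cD}\,\|\,\nu_{A,B,\lam})$, not $D_{\mathrm{KL}}(\nu_{A,B,\lam}\,\|\,\nu_{\bm q})$. The point is that the log-ratio is $\log\bigl(Z_{A,B}/((1+\lam)^{ab}Z')\bigr)-\lam^3 P_2(S\boxempty T)+o(1)$, and the first (constant) term equals $\tfrac{\lam^3}{2}ab(aq_A+bq_B)^2$ by Lemma~\ref{lemmaZABabapprox}, which exactly cancels $\lam^3\,\E_{\nu_{\bm q',\cD}}P_2$. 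So the KL is $o(1)$ by cancellation of means, with no second-moment estimate needed. One then passes from $\nu_{\bm q',\cD}$ to $\nu_{\bm q'}$ (via $\P(\bG\in\cD)=1-o(1)$) and from $\nu_{\bm q'}$ to $\nu_{\bm q}$ (a binomial edge-count comparison). Your direction would force you to take expectations under $\nu_{A,B,\lam}$ and to control the variance of $\log w$ there; this is doable but is precisely the obstacle you flag, and the paper's choice of direction avoids it.
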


\subsection{A first approximation to $Z_{A,B}$}

A key step toward proving Lemma~\ref{lemZABapprox1} is the following approximation of $Z_{A,B}(\lam)$ which we turn to now. Recall the definitions of $q_A, q_B, q_A', q_B'$ from Definition~\ref{defsparse} and~\eqref{eqqPrimeDef}.

\begin{lemma}\label{lemmaZABabapprox}
If $(A,B)\in \Pi_{\textup{mod},\lam}$,
\begin{align}
\frac{Z_{A,B}(\lam)}{(1+\lam)^{ab}}\sim \exp\left\{\frac{1}{2}q_A'a^2+\frac{1}{2}q_B'b^2 +  \frac{\lam^3}{2}ab\left(aq_A+bq_B\right)^2\right\}\, .
\end{align}
\end{lemma}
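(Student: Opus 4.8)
\textbf{Proof plan for Lemma~\ref{lemmaZABabapprox}.}

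The plan is to write $Z_{A,B}(\lam)/(1+\lam)^{ab}$ as a sum over $\lam$-sparse defect graphs $(S,T)\in\cD_{A,B,\lam}$, use the cluster expansion to express the contribution of each $(S,T)$, and then recognize the resulting sum as an exponential random graph partition function with parameters $q_A', q_B'$ plus a $P_2$-boost, which Corollary~\ref{corZABJanson} (applied to a perturbed measure) evaluates. Concretely, by Lemma~\ref{lemSTgraphLemma} we have $Z_{A,B}(\lam)=\sum_{(S,T)\in\cD_{A,B,\lam}}\lam^{|S|+|T|}Z_{S\boxempty T}(\lam)$. For each such $(S,T)$, Lemma~\ref{lemfislocal} tells us that $\lam^{|S|+|T|}Z_{S\boxempty T}(\lam)/(1+\lam)^{ab}$ equals $(q_A')^{|S|}(q_B')^{|T|}(1-q_A')^{-\binom a2}(1-q_B')^{-\binom b2}$ times $e^{f(S\cup T)}$ where $f(G)=P_2(G_\boxempty)\lam^3+f'(G)$ with $f'$ controlled by~\eqref{eq:fPrimeLocal}. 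Recalling $P_2(G_\boxempty)=bP_2(G_A)+aP_2(G_B)+4|G_A||G_B|$, the first two terms are part of the measure $\nu_{A,B,\lam}=\nu^f_{\bm r,\cD}$ and the cross term $4\lam^3|S||T|$ is the source of the $\frac{\lam^3}{2}ab(aq_A+bq_B)^2$ contribution.

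The key steps, in order: (i) Rewrite the sum so that $\frac{Z_{A,B}(\lam)}{(1+\lam)^{ab}} = (1-q_A')^{-\binom a2}(1-q_B')^{-\binom b2}\sum_{(S,T)\in\cD}(q_A'/(1-q_A'))^{|S|}(q_B'/(1-q_B'))^{|T|}e^{f(S\cup T)}$, recognizing the sum as the normalizing constant of $\nu^f_{\bm q',\cD}$ up to the ER normalization, i.e.\ as $(1-q_A')^{\binom a2}(1-q_B')^{\binom b2}$ times $\E_{\nu_{\bm q'}}[e^{f(\bG)}\mathbf 1_{\bG\in\cD}]$. (ii) Expand $(1-q_A')^{-\binom a2}(1-q_B')^{-\binom b2}=\exp\{\tfrac12 q_A'a^2+\tfrac12 q_B'b^2+O(nq+n^2q^2)\}$ as in~\eqref{eqERPGinD}. (iii) Show that $\E_{\nu_{\bm q'}}[e^{f(\bG)}\mathbf 1_{\bG\in\cD}] = \exp\{\tfrac{\lam^3}{2}ab(aq_A+bq_B)^2 + o(1)\}$; here write $f(\bG)=\lam^3(bP_2(\bG_A)+aP_2(\bG_B)+4|\bG_A||\bG_B|)+f'(\bG)$, note that under $\nu_{\bm q'}$ the variables $|\bG_A|,|\bG_B|$ concentrate with $\E|\bG_A|\sim q_A'\binom a2\sim \tfrac12 q_Aa^2$, so $4\lam^3\E[|\bG_A||\bG_B|]\to \tfrac{\lam^3}{2}ab\cdot 2\cdot\tfrac12 aq_A\cdot\tfrac12 bq_B\cdot(\text{cross-term structure})$—more carefully, $4\lam^3\cdot\tfrac12 aq_A\cdot\tfrac12 bq_B\cdot$ should be reorganized to match $\tfrac{\lam^3}{2}ab(aq_A+bq_B)^2$; the $bP_2(\bG_A),aP_2(\bG_B)$ contributions are of order $b\lam^3\cdot a\cdot(\tfrac12 aq_A^2)=O(\lam^3 n^4 q^2)$ which must be shown to be $o(1)$ using $q=\tilde O(n^{-1/2})$ and $\lam=\tilde\Theta(n^{-1/2})$; and $f'(\bG)=O(n|\bG|\Delta^2\lam^4)=o(1)$ on the high-probability event $\bG\in\cD$. (iv) Use Chernoff/variance bounds (Corollary~\ref{coredgevar}-style, or directly for ER graphs) to replace $e^{f(\bG)}$ inside the expectation by $e^{\E f}$ up to $1+o(1)$, and use Corollary~\ref{corZABJanson} to handle $\mathbf 1_{\bG\in\cD}$ (the triangle-free constraint costs $\exp\{-(q_A')^3\binom a3-(q_B')^3\binom b3+o(1)\}$, which must be checked to be absorbed—i.e.\ $n^3q^3=o(1)$ in this subcritical regime since $q=\tilde O(n^{-1/2-c^2/2})$ with $c>1$, giving $n^3q^3=\tilde O(n^{3-3/2-3/2})\cdot\text{(log factors, poly in }n^{-3c^2/2})=o(1)$). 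Assembling (ii)+(iii) and noting the error terms $O(nq+n^2q^2+n^3q^3+\lam^3n^4q^2+\ldots)$ are all $o(1)$ yields the claim.

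The main obstacle I expect is the careful bookkeeping in step (iii): isolating exactly which moments of $|\bG_A|,|\bG_B|,P_2(\bG_A),P_2(\bG_B)$ contribute at the relevant scale, showing the cross-term $4\lam^3\E[|\bG_A||\bG_B|]$ combines with the $(1-q')^{-\binom a2}$-type corrections (which contribute the "diagonal" $\tfrac12 q_Aa^2$ already extracted separately) to produce precisely $\tfrac{\lam^3}{2}ab(aq_A+bq_B)^2=\tfrac{\lam^3}{2}ab(a^2q_A^2+2abq_Aq_B+b^2q_B^2)$—note $4\lam^3\cdot\tfrac12 aq_A\cdot\tfrac12 bq_B=\lam^3 abq_Aq_B$ only gives the middle term, so the $a^2q_A^2$ and $b^2q_B^2$ terms must come from the $bP_2(\bG_A),aP_2(\bG_B)$ expectations: indeed $b\lam^3\E P_2(\bG_A)\sim b\lam^3\cdot a\cdot\tfrac12(aq_A)^2\cdot\frac{1}{a}$—one must track $\E P_2(\bG_A)=\binom a2(a-2)q_A'^2\sim\tfrac12 a^3 q_A^2$ so $b\lam^3\E P_2(\bG_A)\sim\tfrac{\lam^3}{2}a^3bq_A^2$, matching the $\tfrac{\lam^3}{2}ab\cdot a^2q_A^2$ term exactly. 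Getting all three terms to line up with the right constants, while simultaneously verifying every error term is genuinely $o(1)$ across the full moderately-balanced range of $a,b$, is the delicate part; everything else is routine application of the cluster expansion and concentration.
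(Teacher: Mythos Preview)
Your plan is essentially on target and matches the paper's proof structurally: both write $\frac{Z_{A,B}(\lam)}{(1+\lam)^{ab}}$ as $Z'\cdot\E_{\nu_{\bm q',\cD}}\bigl[e^{\lam^3 P_2(S\boxempty T)}\bigr]$ (your steps (i)--(ii) are exactly the paper's equations~\eqref{eqZABFirstCluster}--\eqref{eqZprimeapprox}), and both then evaluate the expectation as $\exp\{\tfrac{\lam^3}{2}ab(aq_A+bq_B)^2+o(1)\}$. Your final-paragraph accounting of how the three pieces $b\lam^3 P_2(S)$, $a\lam^3 P_2(T)$, $4\lam^3|S||T|$ assemble into the square is correct (and supersedes the stray remark in step~(iii) that the $P_2$ pieces should be $o(1)$; they are not, and you catch this yourself).

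The genuine gap is step~(iv). You cannot conclude $\E\bigl[e^{f(\bG)}\mathbf 1_{\bG\in\cD}\bigr]\sim e^{\E f}$ from ``Chernoff/variance bounds'' alone. Writing $\lam\sim c\sqrt{\log n/n}$ with $c>1$, the variance of $\lam^3 P_2(\bG_\boxempty)$ under $\nu_{\bm q'}$ is indeed $o(1)$ (one checks $\var\sim\lam^6 n^6 q^3 = \tilde O(n^{3/2-3c^2/2})$), but the deterministic bound on $f$ over $\cD$ is only $\lam^3\cdot O(nK\Delta+K^2)=\tilde O(n^{1-c^2/2})$, which for $c$ close to $1$ is of order $n^{1/2}$. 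A Chebyshev argument gives tail probability $O(\var f)$, and $e^{\sup_\cD f}\cdot\var f$ is \emph{not} $o(e^{\E f})$, so the tail contribution to $\E e^f$ is uncontrolled. Obtaining exponential tail bounds would amount to bounding the moment generating function itself---the very quantity you are trying to estimate.

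The paper sidesteps this entirely via Lemma~\ref{lemtiltedcumulant} with $\ell=1$: this is simply the mean value theorem applied to $t\mapsto\log\E e^{tX}$, yielding
\[
\log\E_{\nu_{\bm q',\cD}}\bigl[e^{\lam^3 P_2(S\boxempty T)}\bigr]=\lam^3\,\E_{\nu^{f_\theta}_{\bm q',\cD}}\bigl[P_2(S\boxempty T)\bigr]
\]
for some tilted measure with $f_\theta(S,T)=\theta\lam^3 P_2(S\boxempty T)$, $\theta\in[0,1]$. Since $f_\theta$ is $(n\lam^3/(6\alpha))$-local, Lemma~\ref{lemfixedconfiglocal} computes edge and $P_2$ probabilities under the tilted measure with multiplicative error $1+O(n\Delta\lam^3)$, \emph{uniformly in $\theta$}. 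Hence the tilted expectation equals the untilted one up to $o(1)$, and no concentration or tail estimate is needed. Replacing your step~(iv) with this device---exactly as in~\eqref{eqTaylorremaindersimple}--\eqref{eqP2TiltExp}---completes the argument.
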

\begin{proof}
Note that since $(A,B)$ is $\lam$-moderately balanced, we have  $q=\max\{q_A, q_B\}= O(n^{-(1+\eps)})$.
Let $\cD=\cD_{A,B,\lam}, \Delta=\Delta_{A,B,\lam}$, $K=K_{A,B,\lam}$ as in Definition~\ref{defsparse} and recall that 
\begin{align*}
Z_{A,B}(\lam) = \sum_{(S,T)\in \cD} \lam^{|S|+|T|} Z_{S \boxempty T }(\lam)\, .
\end{align*}
In order to estimate $Z_{A,B}(\lam)$, we begin by estimating the hard-core partition function $Z_{S \boxempty T }(\lam)$ via the  cluster expansion. First, since $(S,T)\in \cD$, the graphs $S, T$ each have maximum degree at most
$\Delta= 50 \max\{qn, \log n\}= 50 \log n$, and
so the graph $S \boxempty T$ has maximum degree at most $2\Delta$. Moreover, as before (see the argument after~\eqref{eqZNSTEst}), $S \boxempty T$ has at most $2nK$ non-isolated vertices. Since $\lam\leq \frac{1}{4e\Delta}$, we conclude from Corollary~\ref{corclustersimple} that
\begin{align}
\label{eqZboxTestimate}
\log\left(\frac{ Z_{S \boxempty T}(\lam)}{(1+\lam)^{ab}} \right) = - |S \boxempty T| \cdot \lambda^2 +  \left(2|S \boxempty T| + P_2(S \boxempty T)\right)\lambda^3 +O(nK\Delta^3\lam^4)\, .
\end{align}
Recalling that $K=50\max\{qn^2, \log n\}$, we have $nK\Delta^3\lam^4=o(1)$ and so
it follows that
\begin{align}\label{eqZABFirstCluster}
\frac{Z_{A,B}(\lam)}{(1+\lam)^{ab}}\sim  \sum_{(S,T)\in\cD} \lambda^{|S| + |T|}e^{- |S \boxempty T| \cdot \lambda^2 +  \left(2|S \boxempty T| + P_2(S \boxempty T)\right)\lambda^3}  \, .
\end{align}
Let $\bm q'=(q_A' ,q_B')$ and recall that $\nu_{\bm q',\cD}$ denotes the measure $\nu_{\bm q'}$ conditioned on the event that $(S,T)\in\cD$. Since $|S\boxempty T| = b|S| + a|T|$, we may rewrite~\eqref{eqZABFirstCluster} as
\begin{align}\label{eqZABexpP2}
\frac{Z_{A,B}(\lam)}{(1+\lam)^{ab}}\sim \E_{\nu_{\bm q',\cD}}\left(e^{P_2(S\boxempty T)\lam^3} \right) \cdot  Z'\, ,
\end{align}
where 
\begin{align}\label{eqZprimeapprox}
Z'=\sum_{(S,T)\in\cD}  \left(\frac{q_A'}{1-q_A'}\right)^{|S|}\left(\frac{q_B'}{1-q_B'}\right)^{|T|}\sim \exp\left\{ \frac{1}{2}q_A'a^2 + \frac{1}{2}q_B'b^2\right\}\, ,
\end{align} 
where  we used Corollary~\ref{corZABJanson} and the fact that $nq=o(1)$  for the asymptotics .

We now turn to estimating the expectation in~\eqref{eqZABexpP2}. 
We apply Lemma~\ref{lemtiltedcumulant} to deduce that
\begin{align}\label{eqTaylorremaindersimple}
 \log \E_{\nu_{\bm q',\cD}}\left(e^{P_2(S\boxempty T)\lam^3} \right) = \lam^3\E_{ \nu_{\bm q',\cD}^f}(P_2(S\boxempty T))\, ,
\end{align}
where 
\[
f(S,T)=\theta\lam^3 P_2(S\boxempty T) \, , \footnote{As usual we identify the pair $(S,T)$ with the graph $S\cup T$ and similarly we use $f(S,T)$ to denote $f(S\cup T)$.}
\]
for some $\theta\in [0,1]$. It is easy to verify that $f$ is $n\lam^3/(6\alpha)$-local (in fact, we have already verified this at~\eqref{eqP2local}). We may therefore apply Lemma~\ref{lemfixedconfiglocal} to calculate the expectation on the RHS of~\eqref{eqTaylorremaindersimple}. Indeed if
$(S,T)\sim \nu_{\bm q',\cD}^f$, then by Lemma~\ref{lemfixedconfiglocal}, if $e_1\in\binom{A}{2}$ and  $e_2\in\binom{B}{2}$,
\[
\P(e_1\in S, e_2\in T)= q_A'q_B'(1+O(n\Delta\lam^3) ) = q_Aq_B(1+O(n\Delta\lam^3) )\, .
\]
Similarly, if $F\subseteq \binom{A}{2}$ is a copy of $P_2$, 
\[
\P(F\subseteq S)= q_A^2(1+O(n\Delta\lam^3) )\, ,
\]
and similarly for $F\subseteq \binom{B}{2}$ a copy of $P_2$ .
Recalling that $P_2(S\boxempty T)=bP_2(S)+aP_2(T)+4|S||T|$ we conclude that
\begin{align}\label{eqP2TiltExp}\\
\lam^3\E_{ \nu_{\bm q',\cD}^f}(P_2(S\boxempty T)) &= (1+O(n\Delta\lam^3) )\lam^3\left[3b\binom{a}{3}q_A^2+ 3a\binom{b}{3}q_B^2+ 4\binom{a}{2}\binom{b}{2}q_Aq_B\right]\, \\
&=
\lam^3\left[3b\binom{a}{3}q_A^2+ 3a\binom{b}{3}q_B^2+ 4\binom{a}{2}\binom{b}{2}q_Aq_B\right] +o(1)\, \\
&=
\lam^3\left[b\frac{a^3}{2}q_A^2+ a\frac{b^3}{2}q_B^2+ \frac{a^2b^2}{2}q_Aq_B\right] +o(1)\, \\
&=
\lam^3\frac{ab}{2}\left(aq_A+bq_B\right)^2+o(1)\, .
\end{align}
The final expression can be arrived at heuristically by noting that the expected degree of a vertex in $S\boxempty T$ is approximately $aq_A+bq_B$. The result follows by combining the above with equation with ~\eqref{eqTaylorremaindersimple},~\eqref{eqZprimeapprox} and~\eqref{eqZABexpP2}.
\end{proof}

\subsection{The dependence of $Z_{A,B}$ on the imbalance of $(A,B)$.}\label{subsec:imbalance}

We now turn to the proof of Lemma~\ref{lemZABapprox1}, the first step of which is to analyze to what extent the expression in Lemma~\ref{lemmaZABabapprox} depends on the imbalance of the partition $(A,B)$.

\begin{proof}[Proof of Lemma~\ref{lemZABapprox1}.]
We note that
\[
q_A= \lam e^{-\lam^2b}(1+O(q)) \text{ and } q_B= \lam e^{-\lam^2a}(1+O(q)) \, ,
\]
and
\[
q_A'= \lam e^{-\lam^2b+2\lam^3b}(1+O(q)) \text{ and } q_B'= \lam e^{-\lam^2a+2\lam^3a}(1+O(q)) \, .
\]
Let $a=n/2-k$ and $b=n/2+k$. Note that $\lam^2 k=o(1)$ since $(A,B)$ is $\lam$-moderately balanced. Then
\begin{align*}
q_A'a^2+q_B'b^2
&= \lam e^{-\lam^2b+2\lam^3b}a^2 + \lam e^{-\lam^2a+2\lam^3a}b^2+o(1)\\
&= \lam e^{-\lam^2n/2+\lam^3n}(n/2)^2
\left[e^{-\lam^2k+2\lam^3k}\frac{(n/2-k)^2}{(n/2)^2} 
+ e^{\lam^2k-2\lam^3k}\frac{(n/2+k)^2}{(n/2)^2}   \right]+o(1)\, \\
&=\lam e^{-\lam^2n/2+\lam^3n}(n/2)^2\left[2+O(\lam^4k^2)\right]+o(1)\, \\
&= n^2 \lam e^{-\lam^2n/2+\lam^3n}/2 + o(1)\, ,
\end{align*}
where for the final equality we used that $k=\tilde O(n^{1/2})$ since $(A,B)$ is $\lam$-moderately balanced. 
We also have
\begin{align*}
\lam^3 & ab  (aq_A+bq_B)^2 \\
&= \lam^3(n/2)^2\left(1-\frac{k^2}{(n/2)^2}\right)
\cdot \left[(n/2)\lam e^{-\lam^2n/2}\left(\frac{n/2-k}{n/2}e^{-k\lam^2} + \frac{n/2+k}{n/2}e^{k\lam^2}  \right)\right]^2+o(\lam^3n^4q^3)\\
&= (n/2)^4\lam^5 e^{-\lam^2n}(4+O(k^2\lam^4))+o(1)\,  \\
&= n^4 \lam^5 e^{-\lam^2n}/4 +o(1)\, .
\end{align*}
We therefore have by Lemma~\ref{lemmaZABabapprox},
\begin{align}\label{eqZABimbalance}
\frac{Z_{A,B}(\lam)}{(1+\lam)^{ab}}&\sim 
\exp\left\{
\lam e^{-\lam^2n/2+\lam^3n}\frac{n^2}{4} + \lam^5 e^{-\lam^2n}\frac{n^4}{8}
\right\}\, .
\end{align}
This concludes the proof of~\eqref{eqZabCase1}. 
We now prove~\eqref{eqnuABapprox}.

First we show that $D_\text{KL}(\nu_{\bm q', \cD} \parallel \nu_{A,B,\lam})=o(1)$. 
For $(S,T)\in\cD_{A,B,\lam}$ we have, by~\eqref{eqZboxTestimate} and~\eqref{eqqPrimeDef}, the definition of $q_A', q_B'$,
\[
\frac{\nu_{\bm q',\cD}(S,T)}{\nu_{A,B,\lam}(S,T)}
\sim \frac{Z_{A,B}(\lam)}{(1+\lam)^{ab}Z' }e^{-\lam^3P_2(S\boxempty T)}\, .
\]
We then have
\begin{align*}
D_\text{KL}(\nu_{\bm q',\cD} \parallel \nu_{A,B,\lam})
&=\log\left( \frac{Z_{A,B}(\lam)}{(1+\lam)^{ab}Z' } \right) - \lam^3 \E_{\nu_{\bm q',\cD}}(P_2(S\boxempty T))+o(1)\, \\
&=  \frac{\lam^3}{2}ab\left(aq_A+bq_B\right)^2 -  \lam^3 \E_{\nu_{\bm q',\cD}}(P_2(S\boxempty T)) +o(1)\, \\
&=o(1)\, ,
\end{align*}
where for the penultimate equality we used~Lemma~\ref{lemmaZABabapprox} and \eqref{eqZprimeapprox}. The final equality follows by~\eqref{eqP2TiltExp} (applied with $f=0$).

We conclude from Pinsker's inequality (Lemma~\ref{lemPinsker}) that $\|\nu_{\bm q', \cD} -\nu_{A,B,\lam} \|_{TV}=o(1)$.

Letting $\bG\sim\nu_{\bm q'}$, we have
\[
\|\nu_{\bm q',\cD} - \nu_{\bm q'}\|_{TV}= \P(\bG \notin \cD)=o(1)\, ,
\]
where for the final equality we used Corollary~\ref{corZABJanson}.
Finally we note that
\[
 \|  \nu_{\bm q'}- \nu_{\bm q}  \|_{TV} \leq \|G(A, q'_A)- G(A,q)\|_{TV} + \|G(B, q'_B)- G(B,q)\|_{TV}\, .
\]
To show $\|G(A, q'_A)- G(A,q)\|_{TV}=o(1)$ we note that conditioned on the number of edges, the distributions of $G(A, q'_A), G(A,q)$ are identical and so it suffices to show that the total variation distance between the number of edges in $G(A, q'_A), G(A,q)$ is $o(1)$. This follows by observing that the distributions of the number of edges are binomial and the difference in their means is $(q-q_A')\binom{a}{2}$ which is $o(1)$ times the standard deviation $(q(1-q)\binom{a}{2})^{1/2}$. Similarly $\|G(B, q'_B)- G(B,q)\|_{TV} =o(1)$.
Statement~\eqref{eqnuABapprox} now follows by the triangle inequality. 
\end{proof}

We can now  prove Lemma~\ref{lemZapproxER}. Recall that we call a partition $(A,B)$ strongly balanced if $
\big||A|-|B| \big|\leq 10(n\log n)^{1/4}$.  Recall the definition of $\mu_{\textup{strong},\lam}$ from Algorithm~\ref{algMuModStrong}.  Recall that  a graph $G\in \cT$ is \emph{captured} by $(A,B)$ if $(G_A, G_B)\in \cD_{A,B,\lam}$ and we let $c_{\textup{mod},\lam}(G)$ denote the number of $\lam$-moderately balanced partitions that capture $G$.  Let $c_{\textup{strong},\lam}(G)$ denote the number of strongly balanced partitions that capture $G$ and note that
\begin{align}\label{eqstrongcapture}
\mu_{\textup{strong},\lam}(G)= \frac{\lam^{|G|}}{Z_{\textup{strong}}(\lam)}\cdot c_{\textup{strong},\lam}(G)\, .
\end{align} 
We record the following lemma whose proof is identical to that of Lemma~\ref{lemCaptureMod}. 
\begin{lemma}\label{lemCaptureStrong}
Let $\bG\sim \mu_{\textup{strong},\lam}$. We have,
\[
\P(c_{\textup{strong},\lam}(\bG) = c_{\textup{mod},\lam}(\bG)=  1)\geq 1- e^{-\lam n/25}\, .
\]
\end{lemma}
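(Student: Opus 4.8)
The plan is to repeat the proof of Lemma~\ref{lemCaptureMod} essentially verbatim, with the pair $(\Pi_{\textup{mod},\lam},\Pi_{\textup{weak}})$ replaced throughout by $(\Pi_{\textup{strong}},\Pi_{\textup{mod},\lam})$. First I would record the deterministic lower bound: since $\Pi_{\textup{strong}}\subseteq\Pi_{\textup{mod},\lam}$ (immediate from Definition~\ref{defVarBalanced}) and the partition $(A,B)$ sampled in Step~1 of Algorithm~\ref{algMuModStrong} (in its strong version) always captures the output graph $\bG=S\cup T\cup\Ec$ (by construction $(\bG_A,\bG_B)=(S,T)\in\cD_{A,B,\lam}$), we have $c_{\textup{mod},\lam}(\bG)\geq c_{\textup{strong},\lam}(\bG)\geq 1$ with probability~$1$. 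It therefore remains to show $c_{\textup{mod},\lam}(\bG)=1$ with probability at least $1-e^{-\lam n/25}$.

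For the matching upper bound I would condition on the pair $(A,B)\in\Pi_{\textup{strong}}$ and $(S,T)\in\cD_{A,B,\lam}$ chosen in Steps~1--2. In the regime under consideration $\Delta_{A,B,\lam}=50\max\{qn,\log n\}=O(\log n)$, which is far below $\alpha/\lam=\Theta(\sqrt{n/\log n})$, so $\cD_{A,B,\lam}\subseteq\cD_{A,B,\lam}^{\textup{w}}$ and in particular $\Delta(S\cup T)\leq\alpha/\lam$. Since $\Ec$ is drawn from the hard-core model on $S\boxempty T$, Lemma~\ref{lemExpanderwhp} applies and shows that $\bG$ is an $(A,B)$-$\lam$-expander except with probability at most $e^{-\lam n/25}$. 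On this event Lemma~\ref{lemExpansionCor} yields that $\Delta(\bG_{A'}\cup\bG_{B'})>\alpha/\lam$ for every partition $(A',B')\neq(A,B)$; as $\Delta_{A',B',\lam}=O(\log n)\leq\alpha/\lam$ for every $\lam$-moderately balanced $(A',B')$, no such $(A',B')$ can capture $\bG$, so $c_{\textup{mod},\lam}(\bG)=1$. Combining with the deterministic lower bound gives the lemma.

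There is no genuine obstacle here: the entire content sits in Lemmas~\ref{lemExpanderwhp} and~\ref{lemExpansionCor}, which were established precisely for this kind of uniqueness statement, and the only bookkeeping point is the uniform comparison $\Delta_{A',B',\lam}\leq\alpha/\lam$ over $\lam$-moderately balanced partitions, which follows from $q_{A',B',\lam}=O(n^{-1/2-\eps})$ (equivalently $\Delta_{A',B',\lam}=o(1/\lam)$ when $\lam\geq\omega/\sqrt{n}$ with $\omega$ large), a bound already used repeatedly in Sections~\ref{secMaxDegree}--\ref{secSubgraphProb}.
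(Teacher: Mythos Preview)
Your proposal is correct and follows the same route as the paper, which simply states that the proof is identical to that of Lemma~\ref{lemCaptureMod}: condition on the chosen $(A,B)\in\Pi_{\textup{strong}}$ and $(S,T)\in\cD_{A,B,\lam}\subseteq\cD_{A,B,\lam}^{\textup{w}}$, apply Lemma~\ref{lemExpanderwhp} to get the $(A,B)$-$\lam$-expander property with probability at least $1-e^{-\lam n/25}$, and conclude via Lemma~\ref{lemExpansionCor}. One small remark: the bound $\Delta_{A',B',\lam}=O(\log n)$ you state is specific to the subcritical regime of Section~\ref{secSubCritRegime}, but all you actually need (and all the paper uses) is $\Delta_{A',B',\lam}\leq\alpha/\lam$ for weakly balanced $(A',B')$, which holds whenever $\lam\geq\omega/\sqrt{n}$ as noted after Definition~\ref{defsparse}.
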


We now prove Proposition~\ref{propGroundStateStrong} in the subcritical defect regime. 
\begin{lemma}\label{lemZstrongsimsum}
Fix $\eps>0$ and let $\lam\geq (1+\eps) \sqrt{\frac{\log n }{n}}$. Then
\begin{align}\label{eqZstrongsimsum}
Z_{\textup{mod}}(\lam)\sim Z_{\textup{strong}}(\lam)\, 
\end{align}
and
\begin{align}\label{eqMustrongMulam}
 \| \mu_{\textup{mod},\lam} - \mu_{\textup{strong},\lam}  \|_{TV}  = O(n^{-3/2}) \,.
 \end{align}
\end{lemma}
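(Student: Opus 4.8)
The plan is to follow the template of the proof of Proposition~\ref{lemZmodsimsum} (the passage from weakly to moderately balanced partitions), now comparing $\Pi_{\textup{mod},\lam}$ with $\Pi_{\textup{strong}}$, with the new ingredient being the precise per-partition asymptotics of Lemma~\ref{lemZABapprox1}. First I would use Lemma~\ref{lemZABapprox1} in its \emph{uniform} form over $(A,B)\in\Pi_{\textup{mod},\lam}$: writing
\[
W:=\exp\left\{\lam e^{-\lam^2n/2+\lam^3n}\frac{n^2}{4}+\lam^5e^{-\lam^2n}\frac{n^4}{8}\right\},
\]
one has $Z_{A,B}(\lam)=(1+o(1))\,W\,(1+\lam)^{ab}$ with the $o(1)$ independent of the moderately balanced partition $(A,B)$. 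Grouping the partitions in $Z_{\textup{mod}}(\lam)=\sum_{(A,B)\in\Pi_{\textup{mod},\lam}}Z_{A,B}(\lam)$ by their imbalance $k$ (there are $\binom{n}{\lfloor n/2\rfloor+k}$ ordered partitions with $|A|=\lfloor n/2\rfloor+k$, for which $ab\le\lfloor n^2/4\rfloor-k^2+|k|$) gives
\[
Z_{\textup{mod}}(\lam)=(1+o(1))\,W\,(1+\lam)^{\lfloor n^2/4\rfloor}\,\Sigma_{\textup{mod}},\qquad \Sigma_{\textup{mod}}:=\sum_{|k|\le M_\lam}\binom{n}{\lfloor n/2\rfloor+k}(1+\lam)^{-k^2+O(k)},
\]
and likewise $Z_{\textup{strong}}(\lam)=(1+o(1))\,W\,(1+\lam)^{\lfloor n^2/4\rfloor}\,\Sigma_{\textup{strong}}$, with the sum in $\Sigma_{\textup{strong}}$ restricted to $|k|\le 10(n\log n)^{1/4}$. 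It therefore suffices to show $\Sigma_{\textup{mod}}/\Sigma_{\textup{strong}}=1+o(n^{-3})$.

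This is the elementary core of the argument. Since $\Pi_{\textup{strong}}\subseteq\Pi_{\textup{mod},\lam}$ we have $\Sigma_{\textup{strong}}\le\Sigma_{\textup{mod}}$ and $\Sigma_{\textup{strong}}\ge\binom{n}{\lfloor n/2\rfloor}$ (the $k=0$ term). For the difference, bound $\binom{n}{\lfloor n/2\rfloor+k}\le\binom{n}{\lfloor n/2\rfloor}$ and, using $-k^2+O(k)\le -k^2/2$ for $|k|$ beyond an absolute constant together with $\log(1+\lam)\ge\lam/2$, estimate $(1+\lam)^{-k^2+O(k)}\le e^{-\lam k^2/4}$; for $|k|>10(n\log n)^{1/4}$ this is at most $e^{-25\lam(n\log n)^{1/2}}\le n^{-25(1+\eps)}$ because $\lam\ge(1+\eps)\sqrt{\log n/n}$. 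Summing over the at most $2M_\lam\le n$ such values of $k$ gives $\Sigma_{\textup{mod}}-\Sigma_{\textup{strong}}\le n^{1-25(1+\eps)}\binom{n}{\lfloor n/2\rfloor}=o(n^{-3})\,\Sigma_{\textup{strong}}$, hence $Z_{\textup{mod}}(\lam)/Z_{\textup{strong}}(\lam)=1+o(n^{-3})$, which in particular gives~\eqref{eqZstrongsimsum}. (The intuition is that the $\theta_\lam$-type weights concentrate on $|k|=\tilde O(\lam^{-1/2})$, much smaller than the strong-balance cutoff $10(n\log n)^{1/4}$, so the tail mass beyond that cutoff is super-polynomially small.) I would stress that the cruder estimate~\eqref{claimZABJansonApp} cannot replace Lemma~\ref{lemZABapprox1} here: it omits the $e^{\lam^3n}$ correction in $W$, whose contribution to the exponent is of polynomial-in-$n$ size and dwarfs the $o(n^{-3})$ accuracy we need.

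For~\eqref{eqMustrongMulam} I would argue exactly as in Proposition~\ref{lemZmodsimsum}, via Pinsker's inequality (Lemma~\ref{lemPinsker}). Since every strongly balanced partition is moderately balanced, $\mu_{\textup{strong},\lam}$ is absolutely continuous with respect to $\mu_{\textup{mod},\lam}$, and by~\eqref{eqstrongcapture},
\[
D_{\text{KL}}(\mu_{\textup{strong},\lam}\parallel\mu_{\textup{mod},\lam})=\log\frac{Z_{\textup{mod}}(\lam)}{Z_{\textup{strong}}(\lam)}+\E_{\mu_{\textup{strong},\lam}}\log\frac{c_{\textup{strong},\lam}(\bG)}{c_{\textup{mod},\lam}(\bG)}.
\]
The first term is $O(n^{-3})$ by the previous paragraph. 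For the second, $1\le c_{\textup{strong},\lam}(\bG)\le c_{\textup{mod},\lam}(\bG)$ on the support, so the logarithm is nonpositive and, deterministically, at least $-\log c_{\textup{mod},\lam}(\bG)\ge -n\log 2$; by Lemma~\ref{lemCaptureStrong} it vanishes with probability at least $1-e^{-\lam n/25}$, so its expectation is $O(ne^{-\lam n/25})=o(n^{-3})$ since $\lam n\gg\log n$ in this regime. Hence $D_{\text{KL}}(\mu_{\textup{strong},\lam}\parallel\mu_{\textup{mod},\lam})=O(n^{-3})$ and Pinsker gives $\|\mu_{\textup{mod},\lam}-\mu_{\textup{strong},\lam}\|_{TV}=O(n^{-3/2})$.

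The main obstacle, and essentially the only nonroutine point, is securing the uniform version of Lemma~\ref{lemZABapprox1} over all of $\Pi_{\textup{mod},\lam}$ (rather than for a single partition), since the exponentially many tail partitions all carry the same super-exponentially large factor $W$. This follows by checking that every error term in the proof of Lemma~\ref{lemZABapprox1} is controlled purely in terms of the imbalance $k$ and is $o(1)$ uniformly for $|k|\le M_\lam$ — which is precisely what the definition of $M_\lam$ is tuned to guarantee. Everything else (the Gaussian-tail estimate and the KL/Pinsker bookkeeping) is routine and closely parallels Proposition~\ref{lemZmodsimsum}.
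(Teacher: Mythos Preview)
Your approach is essentially the paper's: invoke the uniform asymptotic of Lemma~\ref{lemZABapprox1} (equation~\eqref{eqZABimbalance}) over $\Pi_{\textup{mod},\lam}$, bound the Gaussian tail beyond the strong-balance cutoff, and then run the same Pinsker/Lemma~\ref{lemCaptureStrong} argument as in Proposition~\ref{lemZmodsimsum}. The ingredients and structure match.

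There is, however, a genuine bookkeeping gap in how you assemble the ratio estimate. You write
\[
Z_{\textup{mod}}(\lam)=(1+o(1))\,W\,(1+\lam)^{\lfloor n^2/4\rfloor}\,\Sigma_{\textup{mod}},\qquad
Z_{\textup{strong}}(\lam)=(1+o(1))\,W\,(1+\lam)^{\lfloor n^2/4\rfloor}\,\Sigma_{\textup{strong}},
\]
and then conclude $Z_{\textup{mod}}(\lam)/Z_{\textup{strong}}(\lam)=1+o(n^{-3})$ from $\Sigma_{\textup{mod}}/\Sigma_{\textup{strong}}=1+o(n^{-3})$. That inference fails: the two $(1+o(1))$ factors are a priori different, so all you get is $Z_{\textup{mod}}/Z_{\textup{strong}}=(1+o(1))\cdot(1+o(n^{-3}))=1+o(1)$. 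This is enough for~\eqref{eqZstrongsimsum}, but not for the KL step where you need $\log(Z_{\textup{mod}}/Z_{\textup{strong}})=O(n^{-3})$.

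The fix is exactly what the paper does: bound the \emph{difference} directly rather than the two sums separately. Since $\Pi_{\textup{strong}}\subset\Pi_{\textup{mod},\lam}$ and the uniform two-sided estimate $Z_{A,B}(\lam)=(1\pm o(1))W(1+\lam)^{ab}$ holds for every moderately balanced $(A,B)$,
\[
\frac{Z_{\textup{mod}}(\lam)-Z_{\textup{strong}}(\lam)}{Z_{\textup{strong}}(\lam)}
\le (1+o(1))\,\frac{\sum_{|k|>M}\binom{n}{\lfloor n/2\rfloor+k}(1+\lam)^{-k^2}}{\binom{n}{\lfloor n/2\rfloor}}
= O(n^{-3}),
\]
where $M=5(n\log n)^{1/4}$ (note $\big||A|-|B|\big|\le 10(n\log n)^{1/4}$ corresponds to $|k|\le M$, not $|k|\le 10(n\log n)^{1/4}$). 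Here the $o(1)$ uncertainty multiplies the already-tiny tail rather than the full sums, and the $O(n^{-3})$ survives. With this correction your KL/Pinsker argument goes through verbatim.
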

\begin{proof}
Let  $M=5(n\log n)^{1/4}$. By~\eqref{eqZABimbalance} and the fact that $\Pi_{\textup{strong}} \subset \Pi_{\textup{mod},\lam}$, we have
\[
\left|\frac{ Z_{\textup{mod}}(\lam)}{ Z_{\textup{strong}}(\lam)}-1\right|
 \leq 
(1+o(1))  \sum_{k\geq M} \frac{\binom{n}{\lfloor n/2 \rfloor+k}}{\binom{n}{\lfloor n/2 \rfloor}} (1+\lam)^{-k^2}.
\]
Noting that $\binom{n}{\lfloor n/2 \rfloor+k}\leq \binom{n}{\lfloor n/2 \rfloor}$, the RHS is bounded above by
\begin{align}\label{eqGaussiantail}
 \sum_{k\geq M} e^{-\lam k^2/2} \leq \int_{M-1}^\infty e^{-\lam x^2/2}\, dx \leq \frac{1}{\lam(M-1)} e^{-(M-1)^2\lam/2}=O(n^{-3})\, ,
\end{align}
 where for the second inequality we used the standard integral estimate $\int_{t}^\infty e^{-ax^2}\, dx\leq e^{-at^2}/(2at)$ for $a,t>0$. Statement~\eqref{eqZstrongsimsum} follows.
 The proof of~\eqref{eqMustrongMulam} is identical to the proof of~\eqref{eqmusimmumod} (carried out at ~\eqref{eq:Proof3.3}) except that we use Lemma~\ref{lemCaptureStrong} in place of Lemma~\ref{lemCaptureMod}.
\end{proof}

We note that by Proposition~\ref{lemZerothApprox}, and Propositions~\ref{propGroundStateRefinedAlt},~\ref{lemZmodsimsum}, we obtain Corollary~\ref{corZmodsimsum} in the subcritical defect regime. 
\begin{cor}\label{corZmodsimsumRest}
Fix $\eps>0$ and let $\lam\geq (1+\eps) \sqrt{\frac{\log n }{n}}$,
\begin{align*}
Z(\lam)\sim Z_{\textup{strong}}(\lam)\, ,
\end{align*}
and
\begin{align*}
\|\mu_{\lam} - \mu_{\textup{strong},\lam}\| _{TV}=o(1)\, .
\end{align*}
\end{cor}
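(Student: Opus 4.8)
The plan is to obtain Corollary~\ref{corZmodsimsumRest} purely by chaining together the four approximation results already in hand, each of which replaces one restricted partition function (and its associated Gibbs measure) by the next. The only preliminary point is to verify that the hypothesis $\lam \geq (1+\eps)\sqrt{\tfrac{\log n}{n}}$ lies inside the range of validity of all four inputs. It does: for $n$ large it implies $\lam \geq \omega/\sqrt n$ for the fixed absolute constant $\omega$ appearing in Propositions~\ref{lemZerothApprox} and~\ref{propGroundStateRefinedAlt}; it is of the form $\lam \geq c\sqrt{\tfrac{\log n}{n}}$ required by Proposition~\ref{lemZmodsimsum}; and it is exactly the hypothesis of Lemma~\ref{lemZstrongsimsum} (which is Proposition~\ref{propGroundStateStrong} specialized to the subcritical defect regime).

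For the partition function statement, the plan is to telescope:
\[
Z(\lam) \sim Z(\cL,\lam) \sim Z_{\textup{weak}}(\lam) \sim Z_{\textup{mod}}(\lam) \sim Z_{\textup{strong}}(\lam)\, ,
\]
where the first $\sim$ is Proposition~\ref{lemZerothApprox}, the second is Proposition~\ref{propGroundStateRefinedAlt} together with the fact that its error factor $1+O(e^{-\sqrt n})$ equals $1+o(1)$, the third is Proposition~\ref{lemZmodsimsum} together with $1+O(n^{-3}) = 1+o(1)$, and the fourth is Lemma~\ref{lemZstrongsimsum}. Transitivity of $\sim$ then gives $Z(\lam) \sim Z_{\textup{strong}}(\lam)$. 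For the total variation statement, the plan is to apply the triangle inequality for $\|\cdot\|_{TV}$ along the same chain of measures:
\[
\|\mu_\lam - \mu_{\textup{strong},\lam}\|_{TV} \leq \|\mu_\lam - \mu_{\cL,\lam}\|_{TV} + \|\mu_{\cL,\lam} - \mu_{\textup{weak},\lam}\|_{TV} + \|\mu_{\textup{weak},\lam} - \mu_{\textup{mod},\lam}\|_{TV} + \|\mu_{\textup{mod},\lam} - \mu_{\textup{strong},\lam}\|_{TV}\, ,
\]
and to bound the four right-hand terms by $o(1)$, $O(e^{-\sqrt n})$, $O(n^{-3/2})$, and $O(n^{-3/2})$ using Propositions~\ref{lemZerothApprox}, \ref{propGroundStateRefinedAlt}, \ref{lemZmodsimsum} and Lemma~\ref{lemZstrongsimsum} respectively; their sum is $o(1)$.

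There is no real obstacle here: the corollary is a bookkeeping assembly of results established earlier in this section and in Sections~\ref{secMorrisOPT} and~\ref{secSubCritRegime}, and the statement was in fact announced in the preceding sentence. The only step that requires a moment of care is the parameter-range check in the first paragraph, confirming that the single hypothesis on $\lam$ sits simultaneously in the (overlapping) ranges $\lam \geq \omega/\sqrt n$, $\lam \geq c\sqrt{\tfrac{\log n}{n}}$, and $\lam \geq (1+\eps)\sqrt{\tfrac{\log n}{n}}$ demanded by the four inputs; as noted, this is immediate.
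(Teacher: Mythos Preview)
Your proposal is correct and matches the paper's approach exactly: the paper presents this corollary as an immediate consequence of Propositions~\ref{lemZerothApprox}, \ref{propGroundStateRefinedAlt}, \ref{lemZmodsimsum} and Lemma~\ref{lemZstrongsimsum} (the subcritical case of Proposition~\ref{propGroundStateStrong}), without writing out the chaining argument you have spelled out. Your explicit telescoping of the partition functions and triangle-inequality bound for the total variation distances is precisely the intended content.
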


We  now prove Lemma~\ref{lemZapproxER}.
\begin{proof}[Proof of Lemma~\ref{lemZapproxER}]
Returning to~\eqref{eqZABimbalance}, we see that for $(A,B)$ strongly balanced we have
\begin{align}\label{eqZABstrong}
\frac{Z_{A,B}(\lam)}{(1+\lam)^{ab}}&\sim 
\exp\left\{
\lam e^{-\lam^2n/2+\lam^3n}\frac{n^2}{4} + \lam^5 e^{-\lam^2n}\frac{n^4}{8}
\right\}=:f(\lam, n)\, .
\end{align}
Letting  $M=5(n\log n)^{1/4}$ as before, it follows from Corollary~\ref{corZmodsimsumRest} that 
\begin{align}\label{eqZGaussSum}
Z(\lam)&\sim
(1+\lam)^{n^2/4}f(\lam,n)
\sum_{-M\leq k\leq M}\frac{1}{2}\binom{n}{\lfloor n/2 \rfloor+k} (1+\lam)^{-k^2}\\
&\sim \frac{1}{2} \binom{n}{\lfloor n/2 \rfloor}(1+\lam)^{n^2/4}f(\lam,n)
\sum_{-M\leq k\leq M}(1+\lam)^{-k^2}\, .
\end{align}
We note that
\[
 \sum_{-M\leq k\leq M}(1+\lam)^{-k^2}= \int_{-M}^M (1+\lam)^{-x^2} \, dx +O(1)\, ,
\]
and estimating as in~\eqref{eqGaussiantail} we have
\begin{equation}\label{eqGaussiantail2}
\int_{-M}^M (1+\lam)^{-x^2} = \int_{-\infty}^\infty (1+\lam)^{-x^2} +o(1)= \sqrt{\frac{\pi}{\log(1+\lam)}} +o(1)= (1+o(1))\sqrt{\frac{\pi}{\lam}}\, .
\end{equation}
Returning to~\eqref{eqZGaussSum} we conclude that
\begin{align}\label{eqZsimflam}
Z(\lam)
\sim
\frac{1}{2}\sqrt{\frac{\pi}{\lam}} \binom{n}{\lfloor n/2 \rfloor}(1+\lam)^{n^2/4}f(\lam,n)\, . 
\end{align}
\end{proof}

We now prove Theorem~\ref{thmGnpSubCritLDprob}.
\begin{proof}[Proof of Theorem~\ref{thmGnpSubCritLDprob}]
We recall the identity~\eqref{eqpviaZ}:
\begin{align*}
\mathbb{P}_{n,p} (\cT )= (1-p)^{\binom{n}{2}} Z\left(\frac{p}{1-p}\right) \,.
\end{align*}
We note that $\frac{p}{1-p}\geq p\geq  (1+\eps) \sqrt{\frac{\log n } {n}} $.
The first statement of Theorem~\ref{thmGnpSubCritLDprob} now follows from Lemma~\ref{lemZapproxER} (with $\lam=p/(1-p)$).

It remains to show that  $\| \mu_{\lam} - \mu_{\lam,1} \| _{TV} = o(1) \,.$
By Corollary~\ref{corZmodsimsumRest} it suffices to show that $\|  \mu_{\textup{strong},\lam}- \mu_{\lam,1}  \|_{TV}  = o(1)$.
Let $\bm \pi_0, \bm \pi_1$ denote the partitions selected at Step 1 in Algorithms~\ref{algMuModStrong} and~\ref{algMulam1} respectively.
Given $\pi\in \Pi$, let $\mu_{\textup{strong},\lam}^{\pi}, \mu_{\lam,1}^{\pi}$ denote the measures $\mu_{\textup{strong},\lam}, \mu_{\lam,1}$
conditioned on the events $\bm \pi_0=\pi, \bm \pi_1=\pi$ respectively.

\begin{claim}\label{claimTVconditioned}
With $\pi\sim\bm\pi_0$,
\begin{align}
\|  \mu_{\textup{strong},\lam}- \mu_{\lam,1}  \|_{TV}\leq \E_\pi \|  \mu_{\textup{strong},\lam}^{\pi}- \mu_{\lam,1}^{\pi}  \|_{TV} + \|\bm\pi_0-\bm\pi_1\|_{TV}\, .
\end{align}

\end{claim}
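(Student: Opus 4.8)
The plan is to exhibit both $\mu_{\textup{strong},\lam}$ and $\mu_{\lam,1}$ as mixtures indexed by the partition selected in Step~1 of Algorithms~\ref{algMuModStrong} and~\ref{algMulam1} respectively, and then to split the comparison into a part that fixes the partition and a part that only moves its law. Writing $\mu_{\textup{strong},\lam}^\pi$ and $\mu_{\lam,1}^\pi$ for the two measures conditioned on the Step-1 partition being $\pi$, one has by construction the mixture identities $\mu_{\textup{strong},\lam}=\sum_{\pi}\P(\bm\pi_0=\pi)\,\mu_{\textup{strong},\lam}^\pi$ and $\mu_{\lam,1}=\sum_{\pi}\P(\bm\pi_1=\pi)\,\mu_{\lam,1}^\pi$, the sums ranging over the respective supports; these hold even though one output graph may arise from several partitions, since one simply groups probability mass by the partition drawn in Step~1. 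I would then introduce the auxiliary measure $\widetilde\mu:=\sum_{\pi}\P(\bm\pi_0=\pi)\,\mu_{\lam,1}^\pi$, obtained by running Steps~2--4 of Algorithm~\ref{algMulam1} but feeding it a partition drawn from $\bm\pi_0$ rather than $\bm\pi_1$, and apply the triangle inequality
\[
\|\mu_{\textup{strong},\lam}-\mu_{\lam,1}\|_{TV}\leq \|\mu_{\textup{strong},\lam}-\widetilde\mu\|_{TV}+\|\widetilde\mu-\mu_{\lam,1}\|_{TV}\, .
\]

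For the first term, $\mu_{\textup{strong},\lam}$ and $\widetilde\mu$ are mixtures of the families $\{\mu_{\textup{strong},\lam}^\pi\}$ and $\{\mu_{\lam,1}^\pi\}$ against the \emph{same} mixing law $\bm\pi_0$, so convexity of total variation distance gives
\[
\|\mu_{\textup{strong},\lam}-\widetilde\mu\|_{TV}\leq \sum_{\pi}\P(\bm\pi_0=\pi)\,\|\mu_{\textup{strong},\lam}^\pi-\mu_{\lam,1}^\pi\|_{TV}=\E_\pi\|\mu_{\textup{strong},\lam}^\pi-\mu_{\lam,1}^\pi\|_{TV}
\]
with $\pi\sim\bm\pi_0$. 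For the second term, $\widetilde\mu$ and $\mu_{\lam,1}$ are mixtures of the \emph{same} family $\{\mu_{\lam,1}^\pi\}$ but against $\bm\pi_0$ and $\bm\pi_1$; expanding $\|\cdot\|_{TV}=\tfrac12\sum_x|\cdot|$, using $\sum_x\mu_{\lam,1}^\pi(x)=1$, and pulling the sum over $\pi$ outside the absolute value yields $\|\widetilde\mu-\mu_{\lam,1}\|_{TV}\leq\tfrac12\sum_{\pi}|\P(\bm\pi_0=\pi)-\P(\bm\pi_1=\pi)|=\|\bm\pi_0-\bm\pi_1\|_{TV}$. Combining the two bounds proves the claim.

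There is no substantial obstacle in this step: it is the standard "condition on the common randomness, then account separately for the discrepancy in that randomness" decomposition, and the only care needed is bookkeeping (the conditional measures are defined only on the supports of $\bm\pi_0$ and $\bm\pi_1$, which is all the sums use). The real work lies afterwards, in bounding $\|\mu_{\textup{strong},\lam}^\pi-\mu_{\lam,1}^\pi\|_{TV}$ for a fixed strongly balanced $\pi$ — which will reduce, via the \ER approximation of the defect distribution in Lemma~\ref{lemZABapprox1} together with the hard-core description of the crossing edges, to the closeness $\|\nu_{A,B,\lam}-\nu_{\bm q}\|_{TV}=o(1)$ — and in bounding $\|\bm\pi_0-\bm\pi_1\|_{TV}$, comparing the Step-1 law $\propto Z_{A,B}(\lam)$ on $\Pi_{\textup{strong}}$ with the discrete Gaussian $\theta_\lam$ using the imbalance-independence of $Z_{A,B}(\lam)/(1+\lam)^{ab}$ established in~\eqref{eqZABimbalance}.
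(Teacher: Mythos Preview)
Your proof is correct and is essentially the same argument as the paper's: the paper expands $|\mu_{\textup{strong},\lam}(G)-\mu_{\lam,1}(G)|$ pointwise, adds and subtracts $\mu^\pi_{\lam,1}(G)\P(\bm\pi_0=\pi)$ inside the absolute value, and sums over $G$, which is exactly your triangle-inequality-via-$\widetilde\mu$ decomposition unpacked. Your presentation with the explicit intermediate mixture $\widetilde\mu$ is arguably cleaner, but the content is identical.
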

\begin{proof}
For $G\in \cT$ we have 
\begin{align}\label{eqMu1MuStrongTv}
\left|\mu_{\textup{strong},\lam}(G)- \mu_{\lam,1}(G) \right| 
= \left| \sum_{\pi\in \Pi} \left[\mu^\pi_{\textup{strong},\lam}(G)\P(\bm\pi_0=\pi)-   \mu^\pi_{\lam,1}(G)\P(\bm\pi_1=\pi) \right]\right| \, ,
\end{align}
where we set $\mu^\pi_{\textup{strong},\lam}(G)=0$ if $\pi$ is not strongly balanced. The RHS of~\eqref{eqMu1MuStrongTv} is at most
\begin{align*}
 \sum_{\pi\in \Pi} \left|\mu^\pi_{\textup{strong},\lam}(G) - \mu^\pi_{\lam,1}(G) \right|\P(\bm\pi_0=\pi)+  
  \sum_{\pi\in \Pi} \mu^\pi_{\lam,1}(G)|\P(\bm\pi_1=\pi) -  \P(\bm\pi_0=\pi)| \, .
\end{align*}
Summing over $G\in \cT$ proves the claim.
\end{proof}

First  we show
\begin{align}\label{eqpi0pi1TV}
\|\bm\pi_0-\bm\pi_1\|_{TV} = o(1)\, .
\end{align}

If $\pi=(A,B)$  is strongly balanced with $a=\lfloor n/2\rfloor+t, b=\lceil n/2\rceil-t$,
(so in particular $t=O((n\log n)^{1/4})$) we have by Corollary~\ref{corZmodsimsumRest},~\eqref{eqZABstrong} and~\eqref{eqZsimflam} that

\begin{align}\label{eqPpi0}
\P(\bm\pi_0=\pi)= \frac{Z_{A,B}(\lam)}{Z_{\textup{strong}}(\lam)}\sim\frac{ (1+\lam)^{-t^2}}{\frac{1}{2}\sqrt{\frac{\pi}{\lam}} \binom{n}{\lfloor n/2 \rfloor}}\, .
\end{align}
Moreover, using~\eqref{eqGaussiantail} and~\eqref{eqGaussiantail2},
\begin{align}\label{eqPpi1}
\P(\bm\pi_1=\pi)= \frac{(1+\lam)^{-t^2}}{\frac{1}{2}\sqrt{\frac{\pi}{\lam}}\binom{n}{\lfloor n/2 \rfloor+t}} \sim \frac{ (1+\lam)^{-t^2}}{\frac{1}{2}\sqrt{\frac{\pi}{\lam}} \binom{n}{\lfloor n/2 \rfloor}}\, .
\end{align}
Again estimating as in~\eqref{eqGaussiantail}, the probability that $\bm\pi_1$ is not strongly balanced is $o(1)$. Statement~\eqref{eqpi0pi1TV} now follows.
Finally we fix $\pi=(A,B)$ strongly balanced and show that
\[
 \|  \mu_{\textup{strong},\lam}^{\pi}- \mu_{\lam,1}^{\pi}  \|_{TV}=o(1)\, ,
\]
which will complete the proof by Claim~\ref{claimTVconditioned}.
Given $(S,T)\in \cD_{A,B,\lam}$, let $\mu_{\textup{strong},\lam}^{\pi,S,T}, \mu_{\lam,1}^{\pi,S,T}  $ denote the measures  $\mu_{\textup{strong},\lam}, \mu_{\lam,1}$ conditioned on the event that $\pi$ is chosen at Step 1 and $(S,T)$ is chosen at Step 2 in Algorithms~\ref{algMuModStrong} and \ref{algMulam1} respectively.
First note that $\mu_{\textup{strong},\lam}^{\pi,S,T}, \mu_{\lam,1}^{\pi,S,T}  $ are identically distributed (they are both the union of $S\cup T$ with a crossing graph whose distribution is the hard-core model on $S\boxempty T$ at $\lam$).
Let $\nu'_{A,B,\lam}$ denote the measure associated to the random graph in Step 2 of Algorithm~\ref{algMulam1}, i.e., the union of two independent samples from $G(A,q_0), G(B,q_0)$ where we output the empty graph if the graph contains a triangle. 

 It follows, by an argument identical to the proof of Claim~\ref{claimTVconditioned}, that 
\[
 \|  \mu_{\textup{strong},\lam}^{\pi}- \mu_{\lam,1}^{\pi}  \|_{TV}\leq \|  \nu_{A,B,\lam}- \nu'_{A,B,\lam} \|_{TV}\, .
\]
By Lemma~\ref{lemZABapprox1},  $\| \nu_{A,B,\lam} - \nu_{\bm q} \|_{TV}  = o(1) $ and
\begin{align}\label{eqtriangleTV}
\|\nu_{\bm q} -  \nu'_{A,B,\lam} \|_{TV} = \frac{1}{2} \nu_{\bm q}(\text{$G$ contains a triangle})=o(1)
\end{align}
by a union bound. We conclude that $\|\nu_{A,B,\lam}- \nu'_{A,B,\lam}  \|_{TV}=o(1) $ completing the proof. 
\end{proof}

We can now deduce Theorem~\ref{thmGnpBipartite} from Theorem~\ref{thmGnpSubCritLDprob}.
\begin{proof}[Proof of Theorem~\ref{thmGnpBipartite}]
Set $\lam=p/(1-p)$, let $\bG\sim \mu_{\lam,1}$ and let $X_1$ be the minimum number of edges whose removal makes $\bG$ bipartite. 
Recall that $q_0/(1-q_0)= \lam e^{-\lam^2n/2} $ and note that $q_0=O(n^{-1-\eps})$.
By Theorem~\ref{thmGnpSubCritLDprob}, it suffices to show that $\|X_1-\hat X\|_{TV}=o(1)$ where $\hat X \sim \text{Bin}(\lfloor n^2/4\rfloor, q_0)$. 
Let $(A,B)$ denote the partition chosen at Step 1 in Algorithm~\ref{algMulam1} and let  $( S,  T)$ denote the set of edges chosen at Step 2.
 By Lemmas~\ref{lemExpanderwhp} and~\ref{lemExpansionCor}, $(A,B)$ is the unique max cut of $\bG$ whp. 
 In particular, $X_1= |S|+ | T|$ whp. It follows that
\[
\|X_1-(| S|+ | T|)\|_{TV} = o(1)\, .
\]
Let $N=\binom{a}{2}+\binom{b}{2}$ and 
recall that $S,T$ are two independent $G(A,q), G(B, q)$ random graphs where we output the empty graph if $S$ or $T$ contains a triangle. 
Letting $X_2 \sim \text{Bin}(N, q)$, it follows that 
\[
\||S|+|T|-X_2\|_{TV}=\frac{1}{2}\P(S\cup T \text{ contains a triangle})=o(1)
\]
by a union bound. It therefore suffices to show that $\|X_2-\hat X\|_{TV}=o(1)$.

 Since $(A,B)$ is strongly balanced we have $N=n^2/4-k$ where $k=\tilde O(n^{1/2})$. We couple $X_2$ and $\hat X$ via the natural coupling of $ \text{Bin}(N, q)$ and $ \text{Bin}(\lfloor n^2/4\rfloor, q)$ and write $\hat X = | S|+ | T| +Z$ where $Z\sim  \text{Bin}(k, q)$. By the coupling inequality~Lemma~\ref{lemCoupling} we then have
 \[
 \|\hat X-(| S|+ | T|)\|_{TV}\leq \P(Z> 0)\leq q_0k =o(1)\, .
 \]

For the second part of the theorem, fix $t \in \R$ and let $p =   \sqrt{3+ \frac{\log \log n}{\log n} - \frac{t}{\log n}  } \sqrt{ \frac{\log n}{n}}$.  By the above, it is enough to show that 
\[ \lim_{n \to \infty} \P \left[  \hat X =0 \right] =   \exp \left(  - \frac{\sqrt{3}}{4} e^{t/2} \right ) \,. \] 
In this regime, $n^2 q_0 = \Theta(1)$ and $q_0 \sim pe^{-p^2n/2}$,  and so $\hat X$ converges in distribution to a Poisson with mean $\tau = \lim_{n \to \infty} \frac{n^2 pe^{-p^2n/2}}{4}$.  From here  a calculation  shows that $\lim_{n \to \infty} \frac{n^2 pe^{-p^2n/2}}{4}  = \frac{\sqrt{3}}{4} e^{t/2}$.
\end{proof}

Next we prove Theorem~\ref{thmGnpColoring} on the chromatic number. 
\begin{proof}[Proof of Theorem~\ref{thmGnpColoring}]
Given a partition $\pi\in \Pi$, let $\mu^{\pi}_{\lam,1}$ denote the measure $\mu_{\lam,1}$ conditioned on the event that $\pi$ is chosen at Step 1 in Algorithm~\ref{algMulam1}.
By Theorem~\ref{thmGnpSubCritLDprob} it suffices to fix a strongly balanced partition $\pi=(A,B)$ and prove the result for $G\sim\mu^{\pi}_{\lam,1}$.  Let $S,T, \Ec$ denote the edges in  $A$, in $B$, and across the partition respectively.

 The fact that $\chi(G) \ge 3$ whp if  $(1+\eps)  \sqrt{\frac{\log n } {n}} \le\lam \le (\sqrt{3} -\eps)  \sqrt{\frac{\log n } {n}}  $ follows from Theorem~\ref{thmGnpBipartite}.
 
 Next we show that if $\lam \ge (1+\eps)  \sqrt{\frac{\log n } {n}}  $ then $\chi(G) \le 4$ whp.   Since $q_0 =o(n^{-1})$ in this regime and $S,T$ are distributed as \ER random graphs $G(A,q_0), G(B,q_0)$, whp the graphs $(A,S)$ and $(B,T)$ are forests and thus $2$-colorable.  We can then color $G$ with $4$ colors by assigning disjoint sets of $2$ colors to the vertices of $A$ and $B$ respectively.

 Next we show that when $\lam \ge (\sqrt{2}+\eps)  \sqrt{\frac{\log n } {n}}$, $\chi(G) \le 3$ whp.
  To  use only $3$ colors, we want to assign, say, colors red and green to $A$, and blue and green to $B$.  In this regime $q_0=o(n^{-3/2})$ and so whp all edges of $S$ and $T$ are isolated edges.  To color the graph, we  assign red to all isolated vertices of $(A,S)$, blue to all isolated vertices of $(B,T)$, and color the endpoints of edges in $S$ red and green and the endpoints of the edges in $T$ blue and green.  We call such a coloring a `green edge coloring'.   Note that for each edge  there is a choice of two colorings based on which endpoint receives green.  Fix some canonical ordering on all $n$ vertices to determine an anchor for each edge (the earlier vertex), and then call a red-green or blue-green coloring of the endpoints of a given edge `positive' if the anchor is green and `negative' otherwise.  
  
  If there were no crossing edges, any assignment of positive or negative colorings to the edges in $S,T$ would result in a proper green edge coloring, but there may be crossing edges connecting edges in $S$ to edges in $T$, and  edges between green vertices are not allowed.  We  create a graph $G_{\mathrm{col}}$ in which the edges of $S,T$ are nodes and two nodes are connected by an edge in $G_{\mathrm{col}}$ for each crossing edge joining the corresponding edges. We claim  that if $G_{\mathrm{col}}$ has no cycles or multiple edges, then there is a proper green edge coloring of $G$.  To see this, choose a green edge coloring as follows: for each component of $G_{\mathrm{col}}$, pick an arbitrary node (edge of $S,T$) and color it with (say) its positive coloring; since there are no multiple edges, any node it is connected to can still be colored either positively or negatively (or perhaps both).  We can continue coloring the edges by exploring the components of  $G_{\mathrm{col}}$ in this way and will not reach a contradiction since there are no cycles.  Finally, to see that whp $G_{\mathrm{col}}$ contains no multiple edges or cycles, note that whp $|S| + |T| = O(n^{1/2-\del})$ for some fixed $\del = \del(\eps)>0$. Therefore the expected number of multiple edges is $O( \lam^2 n^{1/2-\del}) = o(1)$. Further, given $S,T$ the graph $G_{\mathrm{col}}$ is stochastically dominated by an \ER random graph on the node set with edge probability $4 \lam$ ($4$ for the possible crossing edges connected an edge in $S$ with an edge in $T$).  Since the number of nodes times the edge probability is $o(1)$, whp there are no cycles.

  Finally we show that when $ (1+\eps)  \sqrt{\frac{\log n } {n}} \le \lam \le (\sqrt{2}-\eps)  \sqrt{\frac{\log n } {n}}$, $\chi(G) \ge 4$ whp.  Lemma~\ref{lemExpanderwhp} shows that whp over the choice of crossing edges $\Ec$, $G$ is an $(A,B)$-$\lam$-expander. In particular, for all pairs of sets of vertices $X \subset A, Y \subset B$ so that $|X| , |Y| = 10 \lam n$, we have $\Ec \cap (X \times Y) \neq \emptyset$.  In this regime of $\lam$ we have $q_0=O(n^{-1-\eps})$ and $q_0=\Omega(n^{-3/2+\eps})$ and so whp both $S$ and $T$ are forests of size $\Omega(n^{1/2+\eps})$ with maximum degree $O_\eps(1)$. Due to the maximum degree bound, any independent set in $S$ has size at most $(1-\Omega_\eps(1))|V(S)|$. Indeed given a connected component $C\subseteq S$ and an independent set $I$ in $C$, the number of edges between $I$ and $V(C)\backslash I$ is at least $|I|$, and at most $O_\eps(1)|V(C)\backslash I|$, so that $|I|\leq (1-\Omega_\eps(1))|V(C)|$. It follows that in any proper $3$-coloring of $S$, there must be two color classes of size  $\Omega_\eps(1)|V(S)|=\Omega_\eps(1)|S|$. Similarly for $T$. 
   In particular in a proper $3$-coloring of $G$, there must be a common color appearing on at least $\Omega_\eps(n^{1/2+\eps})\geq 10\lam n$ vertices on each side. But by the expansion property whp there is an edge between these sets of vertices and so the coloring cannot be proper. 
\end{proof}

\begin{remark}
The arguments in the proof of Theorem~\ref{thmGnpColoring} are  rough, and a much more precise understanding of the transition between $3$- and $4$-colorability can likely be obtained.  In particular, we conjecture that the threshold for the existence of a `green tree coloring' (in which tree components of $S,T$ are properly colored red-green and blue-green respectively) marks the threshold for $3$-colorability and that the scaling window for the existence of a green tree coloring can be completely determined by analyzing a random (bipartite) 2-SAT formula obtained by the constraints imposed on the tree colorings by crossing edges.  The analysis of the scaling window then could be done by adapting the methods from~\cite{bollobas2001scaling}.
\end{remark}

\section{Critical and supercritical defect regimes} 
\label{secSuperCrit}
In this section we prove our main results in the critical and supercritical defect regimes: Theorems~\ref{thmGnpStructureSuper},~\ref{thm4colorGnp},~\ref{thmGnpGiant} and~\ref{thmGenDef}.

We begin with Theorem~\ref{thmGnpStructureSuper}. As in Section~\ref{secSubCritRegime} we reformulate an asymptotic formula for $\mathbb{P}_{n,p}( \cT)$ in terms of an asymptotic formula for the partition function $Z(\lam)$. Recall the definitions of $q_0,q_1,q_2$ from~\eqref{eqqdef},~\eqref{eqq0q1Def} and \eqref{eqq2Def} now considered as functions of $\lambda$. Let
\begin{align}\label{flamnsuper}
f(\lam,n):=
(1-q_2)^{-n^2/4+n/2} &  \exp\left\{  \frac{1}{64} \lam^6 n^5 q_0^2 - \frac{1}{64}\lam^6 n^6 q_0^3 -\frac{1}{24}n^3q_0^3 \right\}
\times\\
& \exp\left\{\frac{1}{64}\lam^4n^4q_0^2-\frac{1}{6}\lam^4n^5q_0^3-\frac{1}{2}\lam^4n^4q_0^2\right\}\, .
\end{align}

\begin{lemma}
\label{lemSuperMainLem}
If $\lam \ge \frac{13}{14} \sqrt{\frac{\log n}{n}}$, then
\begin{align*}
Z(\lam) \sim   \frac{1}{2}\sqrt{\frac{\pi}{\lam}} \binom{n}{\lfloor n/2 \rfloor}(1+\lam)^{n^2/4}f(\lam,n) \,.
\end{align*}
\end{lemma}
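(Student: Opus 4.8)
The plan is to follow the strategy of Section~\ref{secSubCritRegime}, but carrying both the cluster expansion and the cumulant estimates to higher order. By Corollary~\ref{corZmodsimsum}, for $\lam\ge\tfrac{13}{14}\sqrt{\log n/n}$ we have $Z(\lam)\sim Z_{\textup{strong}}(\lam)=\sum_{(A,B)\in\Pi_{\textup{strong}}}Z_{A,B}(\lam)$, so it suffices to (i) obtain an asymptotic formula for $Z_{A,B}(\lam)/(1+\lam)^{ab}$ valid for every strongly balanced $(A,B)$, whose leading factor depends only on $n$ up to $1+o(1)$, namely $f(\lam,n)$ from~\eqref{flamnsuper}; and then (ii) sum the resulting Gaussian-type series over $\Pi_{\textup{strong}}$ exactly as in the proof of Lemma~\ref{lemZapproxER}: discard $|k|>M$ via the tail estimate used in the proof of Lemma~\ref{lemZstrongsimsum}, use $\binom{n}{\lfloor n/2\rfloor+k}\sim\binom{n}{\lfloor n/2\rfloor}$, and use $\sum_{-M\le k\le M}(1+\lam)^{-k^2}\sim\sqrt{\pi/\lam}$.

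For step (i), fix $(A,B)\in\Pi_{\textup{strong}}$ and start from $Z_{A,B}(\lam)=\sum_{(S,T)\in\cD_{A,B,\lam}}\lam^{|S|+|T|}Z_{S\boxempty T}(\lam)$. As in the proof of Lemma~\ref{lemmaZABabapprox} we cluster-expand $Z_{S\boxempty T}(\lam)$ via Corollary~\ref{corclustersimple}, but now $q_0$ need not be $o(n^{-1})$, so the $\lam^4$ term is no longer negligible and we must keep clusters of size up to $4$ exactly, controlling the truncation at size $\ge 5$ via Lemma~\ref{lemClusterTail}. Using $|S\boxempty T|=b|S|+a|T|$ and $P_2(S\boxempty T)=bP_2(S)+aP_2(T)+4|S||T|$, the terms linear in $|S|,|T|$ — and, in a mean-field approximation, the cross term $4\lam^3|S||T|$ — get absorbed into successive shifts of the effective \ER activities, from $q_A,q_B$ to $q_A',q_B'$ and then to $q_A'',q_B''$ (the balanced versions $q_1,q_2$ being defined at~\eqref{eqq0q1Def} and~\eqref{eqq2Def}, with the parameter $\mu$ of~\eqref{eqMuRhoDef} entering at the second shift); what remains is a genuine $P_2$-bias $e^{\lam^3(b\,P_2(S)+a\,P_2(T))}$ (with $\lam^3 b\approx\lam^3 a\approx\psi=\lam^3n/2$, cf.~\eqref{eqPsiDef}) together with lower-order cluster corrections. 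This yields an identity of the shape
\[
\frac{Z_{A,B}(\lam)}{(1+\lam)^{ab}}=(1+o(1))\,(1-q_A'')^{-\binom a2}(1-q_B'')^{-\binom b2}\,\E_{\nu_{\bm q'',\cD}}\!\Big[\exp\big(\lam^3(b\,P_2(S)+a\,P_2(T))+r(S,T)\big)\Big],
\]
where $\bm q''=(q_A'',q_B'')$, $r$ collects the residual lower-order terms, the triangle-free conditioning inside $\cD$ is accounted for via Corollary~\ref{corZABJanson}, and the normalizing prefactor is the balanced specialization of the $(1-q_2)^{-n^2/4+n/2}$ factor in $f(\lam,n)$.

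To evaluate the expectation I would apply Lemma~\ref{lemtiltedcumulant}: the cumulant generating function of $\lam^3(b\,P_2(S)+a\,P_2(T))$ equals a short Taylor polynomial in its low-order cumulants plus a remainder that is an $\ell$-th cumulant computed under a tilted measure $\nu^f_{\bm q'',\cD}$. The first cumulant (whose $\lam^3\cdot\E[P_2]$ piece produces the $\lam^4n^4q_0^2$-type terms of $f$), the second cumulant (whose $\tfrac12\lam^6\cdot\var[P_2]$ piece produces the $\lam^6n^5q_0^2$-type term), and the Janson correction of Lemma~\ref{lemJansonERG} (producing the $-\tfrac1{24}n^3q_0^3$-type term) are all computed from the subgraph-probability estimates of Section~\ref{secSubgraphProb} (Corollaries~\ref{corfixedconfig},~\ref{corfixedconfigboot} and~\ref{coredgevar}). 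The main obstacle — the technically heaviest step of the whole argument — is bounding the higher tilted cumulants of $P_2(S)+P_2(T)$ in the conditioned exponential random graph $\nu^f_{\bm q'',\cD}$ sharply enough that, after multiplication by $(\lam^3n)^\ell/\ell!$, the Taylor remainder is $o(1)$; it is precisely this cumulant control that forces the restriction $\lam\ge\tfrac{13}{14}\sqrt{\log n/n}$ (equivalently $\eps\le 1/14$ in Theorem~\ref{thmP2ERG}). Granting these bounds, the independence of the resulting exponent from the imbalance $k:=a-\lfloor n/2\rfloor$ is checked exactly as in Section~\ref{subsec:imbalance} (the terms linear in $k$ cancel, as around~\eqref{eqa2b2cancel}, and the rest shifts the exponent by $o(1)$ for $k=\tilde O(n^{1/2})$), so the per-partition estimate equals $f(\lam,n)$ uniformly over $\Pi_{\textup{strong}}$. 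Carrying out the sum in step (ii) then gives $Z(\lam)\sim\tfrac12\sqrt{\pi/\lam}\binom{n}{\lfloor n/2\rfloor}(1+\lam)^{n^2/4}f(\lam,n)$.
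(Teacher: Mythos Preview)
Your roadmap is correct and matches the paper's approach: reduce via Corollary~\ref{corZmodsimsum}, compute $Z_{A,B}(\lam)/(1+\lam)^{ab}$ by cluster-expanding $Z_{S\boxempty T}$ to order $\lam^4$ (Corollary~\ref{corclustersimple2}), absorb linear terms into the shifts $q\to q'\to q''$, handle the $P_2$-bias by Lemma~\ref{lemtiltedcumulant}, apply Lemma~\ref{lemJansonERG} for the triangle correction, verify $k$-independence, and sum as in Lemma~\ref{lemZapproxER}.

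Two small calibrations. First, the cross term $4\lam^3|S||T|$ is not handled by a generic mean-field approximation but by the explicit centering $s=|S|-\mu_A$, $t=|T|-\mu_B$ (Lemma~\ref{lem:P2MomentSuper}): the linear pieces $4\lam^3(|S|\mu_B+|T|\mu_A)$ are what promote $q'\to q''$, and one must separately verify $\E e^{4\lam^3 st}=1+o(1)$ under the tilted measure (Claim~\ref{claimprobrefine}). Second, the ``higher tilted cumulants'' you flag are in fact just the \emph{second} cumulant: the paper applies Lemma~\ref{lemtiltedcumulant} with $\ell=2$, so the hard step is a tilted variance computation for $P_2(S)$ (Claim~\ref{claimERGApartfun}), and the $\tfrac{13}{14}$ threshold enters precisely when bounding the covariance contribution from vertex-disjoint pairs of $P_2$'s.
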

For the remainder of this section, we assume that $\lam \ge \frac{13}{14} \sqrt{\frac{\log n}{n}}$.

Our strategy for proving Lemma~\ref{lemSuperMainLem} will follow similar lines to the arguments of Section~\ref{secSubCritRegime}, only now the calculations are significantly more involved. The source of the additional complication stems from the fact that we now need to take into account further terms in the cluster expansion (see Corollary~\ref{corclustersimple2} below) at the step of equation~\eqref{eqZboxTestimate} in order to obtain an asymptotic formula for the partition function $Z_{A,B}(\lam)$ and to obtain an accurate enough approximation of the measures $\mu_{A,B,\lam}$ and $\nu_{A,B,\lam}$ (defined at \eqref{eqmuABdef}, \eqref{eqnuABdef}).

To begin with, we fix a $\lam$-moderately balanced partition (see Definition~\ref{defVarBalanced}) $(A,B)$ with $a = |A|$ and $b=|B|$. We will show that the defect distribution $\nu_{A,B,\lam}$ is within $o(1)$ total variation distance of a suitable conditioned exponential random graph measure which will be the main step to proving the approximation to $\mu_\lam$ in Theorem~\ref{thmGnpStructureSuper}. Recall the definition of the exponential random graph $G(V,q,\psi)$ from~\eqref{eqGVqpsiDef}.

\begin{lemma}\label{lemZABapprox1super}
If $(A,B)\in \Pi_{\textup{mod},\lam}$, then
\begin{align}\label{eqZabCase1super}
\frac{Z_{A,B}(\lam)}{(1+\lam)^{ab}}\sim f(\lam,n) \cdot \exp\left\{n^2q\lam^4(a-b)^2\right\}\, ,
\end{align}
Moreover, if $(A,B)\in \Pi_{\textup{strong}}$, then
\begin{align}\label{eqnuABapproxsuper}
 \| \nu_{A,B,\lam} -G(A, q_2, \psi) \times G(B, q_2, \psi) \|_{TV}  = o(1)  \,.
 \end{align}
 \end{lemma}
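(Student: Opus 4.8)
The plan is to follow the skeleton of the proof of Lemma~\ref{lemZABapprox1}, but to carry the cluster expansion of $Z_{S\boxempty T}(\lam)$ to higher order. Fix $(A,B)\in\Pi_{\textup{mod},\lam}$ and recall that $Z_{A,B}(\lam)=\sum_{(S,T)\in\cD_{A,B,\lam}}\lam^{|S|+|T|}Z_{S\boxempty T}(\lam)$. Since every $(S,T)\in\cD_{A,B,\lam}$ has $\Delta(S\boxempty T)\le 2\Delta_{A,B,\lam}=O(\log n)$, Lemma~\ref{lemClusterTail} applies, and instead of truncating the cluster expansion at clusters of size $3$ as in Corollary~\ref{corclustersimple} I would truncate at size $5$; this is the role of Corollary~\ref{corclustersimple2}, which yields an identity $\log\bigl(Z_{S\boxempty T}(\lam)/(1+\lam)^{ab}\bigr)= -|S\boxempty T|\lam^2 + (2|S\boxempty T|+P_2(S\boxempty T))\lam^3 + (\text{explicit subgraph counts of order }\lam^4\text{ and }\lam^5) + O(\text{negligible})$, the error being controlled on $\cD_{A,B,\lam}$ via the bounds $|S|,|T|\le K_{A,B,\lam}$ and $\Delta(S\cup T)\le\Delta_{A,B,\lam}$. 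Substituting this, and expanding $|S\boxempty T|=b|S|+a|T|$ and $P_2(S\boxempty T)=bP_2(S)+aP_2(T)+4|S||T|$, one absorbs every term linear in $|S|,|T|$ into the edge-activities, producing parameters $\bm q''=(q_A'',q_B'')$ that match the definitions at \eqref{eqq0q1Def}--\eqref{eqq2Def}, and is left with $Z_{A,B}(\lam)/(1+\lam)^{ab}\sim Z''\cdot \E_{\nu_{\bm q'',\cD_{A,B,\lam}}}\!\bigl(e^{g(S,T)}\bigr)$, where $Z'':=\sum_{(S,T)\in\cD_{A,B,\lam}}(q_A''/(1-q_A''))^{|S|}(q_B''/(1-q_B''))^{|T|}$ is evaluated by Corollary~\ref{corZABJanson} and $g(S,T)=\psi_A P_2(S)+\psi_B P_2(T)+(\text{quartic and quintic subgraph counts and }|S||T|\text{-type products})$ with $\psi_A=b\lam^3$, $\psi_B=a\lam^3$.

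The next step is to estimate $\log\E_{\nu_{\bm q'',\cD_{A,B,\lam}}}(e^{g})$. Writing $g$, up to negligible error, as $\sum_i c_i X_i$ with each $X_i$ a subgraph count, and applying Lemma~\ref{lemtiltedcumulant} to each summand, one obtains a Taylor expansion of $\log\E(e^{c_i X_i})$ into cumulants $\kappa_k(X_i)$ for $k<\ell$ plus a tilted remainder cumulant $\kappa^s_\ell(X_i)$; the leading cumulants $\E(X_i)$ come from the refined subgraph-probability estimate Corollary~\ref{corfixedconfigboot} (and Corollary~\ref{corfixedconfig} for the cruder terms), while the second cumulants — variances and covariances of edge- and $P_2$-counts — come from a computation in the style of Corollary~\ref{coredgevar}. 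Assembling these with $Z''$ and simplifying via $q_A\sim q_A'\sim q_A''$ should, after substantial but routine bookkeeping, yield $Z_{A,B}(\lam)/(1+\lam)^{ab}\sim f(\lam,n)\exp\{n^2q\lam^4(a-b)^2\}$: exactly as in Section~\ref{subsec:imbalance}, one writes $a=n/2-k$, $b=n/2+k$, Taylor-expands each $q$-dependent factor in $\lam^2 k=o(1)$, and checks that the only residual $k$-dependent term not already $o(1)$ on $\Pi_{\textup{mod},\lam}$ (and which \emph{is} $o(1)$ on $\Pi_{\textup{strong}}$) is the displayed $n^2q\lam^4(a-b)^2$. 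This proves \eqref{eqZabCase1super}.

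For \eqref{eqnuABapproxsuper}, restrict to $(A,B)\in\Pi_{\textup{strong}}$, so that $a,b=n/2+O((n\log n)^{1/4})$ and hence $q_A'',q_B''=(1+o(1))q_2$ and $\psi_A,\psi_B=(1+o(1))\psi$. Mirroring the proof of \eqref{eqnuABapprox}, I would compare $\nu_{A,B,\lam}$ with $G(A,q_2,\psi)\times G(B,q_2,\psi)$ through a chain of steps, each controlled by Pinsker's inequality (Lemma~\ref{lemPinsker}) or a coupling: (i) $\nu_{A,B,\lam}$ versus the conditioned exponential random graph $\nu^{g}_{\bm q'',\cD_{A,B,\lam}}$ of the form \eqref{eqnurfDef}, for which $D_{\mathrm{KL}}$ equals, up to $o(1)$, the gap between $\log\bigl(Z_{A,B}(\lam)/((1+\lam)^{ab}Z'')\bigr)$ and $\E_{\nu_{\bm q'',\cD_{A,B,\lam}}}(g)$, which is $o(1)$ precisely because the surviving higher-cumulant corrections to $\log\E(e^g)$ are themselves $o(1)$; (ii) swapping the conditioning on $\cD_{A,B,\lam}$ for conditioning only on $\{\Delta(G)\le 50\max\{q_2 n,\log n\}\}\cap\{G\text{ triangle-free}\}$, matching the definition \eqref{eqGVqpsiDef}, which is legitimate since all the conditioning events involved have probability $1-o(1)$ and differ by $o(1)$, by Lemma~\ref{lemDegBdLocal} and Corollary~\ref{corZABJanson}; (iii) adjusting the edge-activity from $q_A''/(1-q_A'')$ to $q_2/(1-q_2)$ and $\psi_A$ to $\psi$ via a coupling of the number of edges, exactly as at the end of Section~\ref{secSubCritRegime}. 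One also checks that $G(A,q_2,\psi)\times G(B,q_2,\psi)$ places $1-o(1)$ mass on graphs for which $g$ is well approximated by $\psi(P_2(S)+P_2(T))$ together with its linear-in-edges part, again from the subgraph estimates.

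The main obstacle is the cumulant bookkeeping of the second paragraph. Unlike in the subcritical regime, where $\E(P_2)$ alone sufficed, here $\psi P_2$ is typically much larger than $1$, so the $\psi^2\var(P_2)$ term genuinely contributes, and, through the Lagrange remainder in Lemma~\ref{lemtiltedcumulant}, one must bound a cumulant $\kappa^s_\ell(X_i)$ uniformly over the tilt $s\in[0,\psi_A]$ — that is, in a model with no product structure and an active max-degree constraint. The strategy is to express $\kappa^s_\ell(X_i)$ as an alternating sum of joint moments $\E(\prod_j X_{i_j})$, expand each into probabilities $\P(F\subseteq\bG)$ over configurations $F$ built by amalgamating $\ell$ copies of the base subgraph, and invoke Lemma~\ref{lemfixedconfiglocal} — still valid for the tilted measure because $g$ remains $\delta$-local with $\delta=O(n\lam^3)$ — to show that only the connected, tree-like amalgamations survive at top order while the remaining configurations cancel or are of lower order, the degree constraint entering through the $\Delta$-dependent error terms and through Lemma~\ref{lemDegBdLocal}. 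Carrying this through to the precision demanded by the several explicit $\lam^4$ and $\lam^5$ coefficients in $f(\lam,n)$ is where essentially all the difficulty lies.
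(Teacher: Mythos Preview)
Your high-level architecture matches the paper's --- higher-order cluster expansion of $Z_{S\boxempty T}$, reduction to an exponential-moment problem for subgraph counts in a conditioned ERG, cumulant estimates via Lemma~\ref{lemtiltedcumulant} and Lemma~\ref{lemfixedconfiglocal}, $k$-expansion to isolate the $(a-b)^2$ dependence, and Pinsker for the TV bound --- so the proposal is in the right direction. There are, however, two organizational differences worth flagging, one of which is the paper's main technical device.

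First, the paper does \emph{not} absorb the linear-in-$(|S|,|T|)$ terms directly into $\bm q''$. It absorbs the terms linear in $|S\boxempty T|$ into $\bm q'$ (see~\eqref{eqqprimesuperdef},~\eqref{eqZABbarnuexp}), then splits the remaining exponent into the $\lam^4$ piece involving $P_3,S_3,C_4,P_2$ (handled by a single Taylor step, Claim~\ref{claim:P3S3}) and the $\lam^3 P_2(S\boxempty T)$ piece (Lemma~\ref{lem:P2MomentSuper}). The latter is where the real work lies, and the key trick is to \emph{center}: writing $s=|S|-\mu_A$, $t=|T|-\mu_B$ gives $4|S||T|=4st+4(|S|\mu_B+|T|\mu_A-\mu_A\mu_B)$; the linear part is absorbed into the activity --- and this is precisely what \emph{produces} $\bm q''$ via~\eqref{eqqdoubleprimesuperdef} --- while one shows $\E e^{4\lam^3 st}=1+o(1)$ (Claim~\ref{claimprobrefine} and~\eqref{eqlognuh0}). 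The payoff is that the problem factorizes into independent one-sided computations $Z''_A\cdot\E_{\nu_{\bm q''_A,\cD}}e^{\psi_A P_2(S)}$ (Claim~\ref{claimERGApartfun}), and inside each of these the paper further strips off triangle-freeness via the Janson-type Lemma~\ref{lemJansonERG} (passing from $\cD$ to $\cD_\emptyset$) before computing $\psi\,\E P_2(S)+\tfrac{\psi^2}{2}\var P_2(S)$ with Claim~\ref{claimprobrefine2}. Your plan of keeping the cross term $4\lam^3|S||T|$ inside a monolithic $g$ and computing joint cumulants across $A$ and $B$ can be made to work, but without the centering you lose the factorization and the bookkeeping becomes substantially heavier; it is also this factorization, not a direct parameter comparison, that makes the chain of TV comparisons in your step (iii) clean.

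Second, a minor point: the cluster expansion only needs to be carried to order $\lam^4$ (Corollary~\ref{corclustersimple2}), not $\lam^5$; the $\lam^5$ and $\lam^6$ coefficients visible in $f(\lam,n)$ arise from $\psi^2\var P_2(S)$ (i.e., from $\psi^2\sim\lam^6 n^2$), not from an extra level of cluster expansion.
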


\subsection{A first approximation to $Z_{A,B}$}

A key step toward proving Lemma~\ref{lemZABapprox1super} is an intermediate approximation of $Z_{A,B}(\lam)$ analogous to Lemma~\ref{lemmaZABabapprox}. To state the result we need a few more definitions. Recall the definitions of $q_A'$ and $q_B'$ from~\eqref{eqqprimesuperdef}.
Then define 
\begin{align}\label{eqmurhodef}\\
\mu_A= \binom{a}{2}q'_Ae^{2\lam^3b(aq_A+bq_B)} \text{ and }
\mu_B= \binom{b}{2}q'_Be^{2\lam^3a(aq_A+bq_B)}\, .
\end{align}
The quantities $\mu_A, \mu_B$ will serve as approximations to the expected number of edges appearing inside $A,B$ (respectively) in a sample from $\nu_{A,B,\lam}$.  We then let
\begin{align}\label{eqqdoubleprimesuperdef}
\frac{q_A''}{1-q_A''}= \frac{q_A'}{1-q_A'}e^{4\mu_B\lam^3} \text{ and }\frac{q_B''}{1-q_B''}= \frac{q_B'}{1-q_B'}e^{4\mu_A\lam^3}\, .
\end{align}
We highlight that $q_A\sim q_A'\sim q_A''\sim q_B\sim q_B'\sim q_B''$, and that since $\lam \ge \frac{13}{14} \sqrt{\frac{\log n}{n}}$ and $(A,B)\in \Pi_{\text{mod},\lam}$ we have
\[
q=\max\{q_A, q_B\}=o(n^{-13/14})\, .
\]

\begin{lemma}\label{lemmaZABabapproxsuper}
If $(A,B)\in \Pi_{\textup{mod},\lam}$,
{\small
\begin{align}
\frac{Z_{A,B}(\lam)}{(1+\lam)^{ab}}\sim
&(1-q_A'')^{-\binom{a}{2}}  \exp\left\{ \frac{1}{2}\lam^3a^3bq_A''^2 + \frac{1}{4} \lam^6 a^3b^2 q_A^2 + \frac{3}{2}\lam^6 a^4b^2 q_A^3 -\frac{1}{6}a^3q_A^3 \right\}
\times\\
&
(1-q_B'')^{-\binom{b}{2}}    \exp\left\{ \frac{1}{2}\lam^3b^3aq_B''^2 + \frac{1}{4} \lam^6 b^3a^2 q_B^2 + \frac{3}{2}\lam^6 b^4a^2 q_B^3 -\frac{1}{6}b^3q_A^3 \right\}
\times\\
&\phantom{(1-q_B'')^{-\binom{b}{2}} } \exp\left\{-4\lam^3\mu_A\mu_B
+
\lam^4 ab\left(\frac{1}{4}abq_Aq_B-  \frac{2}{3}(aq_A+bq_B)^3 - 2(aq_A+bq_B)^2 \right)\right\}\, .
\end{align}}
\end{lemma}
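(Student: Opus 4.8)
## Proof proposal for Lemma~\ref{lemmaZABabapproxsuper}

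The plan is to follow the blueprint of Lemma~\ref{lemmaZABabapprox}, but now carry the cluster expansion of $Z_{S\boxempty T}(\lam)$ to one more order. First I would invoke a version of Corollary~\ref{corclustersimple} truncated at clusters of size $4$ rather than $3$ (call it Corollary~\ref{corclustersimple2}); since $(S,T)\in\cD$ forces $\Delta(S\boxempty T)\le 2\Delta = 100\log n$ and the number of non-isolated vertices is $O(nK)$, Lemma~\ref{lemClusterTail} gives a truncation error of $O(nK\Delta^4\lam^5)$, which in this regime ($\lam \le 2\sqrt{\log n/n}$, $q = o(n^{-13/14})$) is still $o(1)$. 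The order-$4$ terms contribute a linear combination of $|S\boxempty T|$, $P_2(S\boxempty T)$, and the counts of paths $P_3$, stars $K_{1,3}$, and $4$-cycles $C_4$ in $S\boxempty T$ — I would compute the relevant Ursell coefficients $\phi(\Gamma)$ as in the proof of Corollary~\ref{corclustersimple}. Crucially, $S\boxempty T$ is triangle-free, so no $K_3$ or $K_4$ clusters appear, and the diamond $K_4^-$ is also absent; this limits the bookkeeping.

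The second step is to rewrite the resulting sum over $(S,T)\in\cD$ as an expectation. Absorbing the $|S\boxempty T|\lam^2$, $|S\boxempty T|\lam^3$, and part of the $P_2$ term into modified edge activities defines $q_A', q_B'$ as in~\eqref{eqqprimesuperdef}; absorbing the $4|S||T|$ part of $P_2(S\boxempty T)$ (which factors through $|S|$ and $|T|$) into a further shift gives $\mu_A,\mu_B$ and then $q_A'', q_B''$ via~\eqref{eqqdoubleprimesuperdef} — this is exactly the role of the $e^{4\mu_B\lam^3}$, $e^{4\mu_A\lam^3}$ factors. What remains inside the expectation is $\exp$ of the genuinely nonlinear functionals: $b\,P_2(S)\lam^3 + a\,P_2(T)\lam^3$ at third order, plus the fourth-order terms $b\,P_3(S), a\,P_3(T)$, $b\,K_{1,3}(S), a\,K_{1,3}(T)$, $b\,C_4(S), a\,C_4(T)$, and the mixed contributions coming from $P_3$'s and $C_4$'s of $S\boxempty T$ that use edges from both factors (these produce the $\lam^4 ab(\cdots)$ block and the $-4\lam^3\mu_A\mu_B$ correction, the latter being the second-order correction from having linearized the $e^{4|S||T|\lam^3}$ factor). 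I would then apply Lemma~\ref{lemtiltedcumulant} with $\ell=2$ to reduce $\log\E[e^{(\cdots)}]$ to a first-moment computation against a tilted measure $\nu^f_{\bm q'',\cD}$, where $f$ collects all these terms and is checked to be $O(n\lam^3)$-local (as at~\eqref{eqP2local} and~\eqref{eqtailbdgeq4}); the second-order cumulant remainder is controlled by the variance estimates of Corollary~\ref{coredgevar}, which are $o(1)$ precisely because $q = o(n^{-13/14})$ — this is where the $\eps\le 1/14$ hypothesis enters. Finally I would evaluate the first moments $\E P_2(S)$, $\E P_3(S)$, $\E K_{1,3}(S)$, $\E C_4(S)$ and the mixed terms using Corollary~\ref{corfixedconfigboot} (the refined subgraph probability estimate), obtaining the explicit $a^3bq_A''^2$, $a^3b^2q_A^2$, $a^4b^2q_A^3$, $a^3q_A^3$ coefficients after replacing binomial coefficients $\binom{a}{k}$ by $a^k/k!$ with error $o(1)$, together with the Janson-type factor $\exp(-\tfrac16 a^3 q_A^3 - \tfrac16 b^3 q_A^3)$ from Corollary~\ref{corZABJanson} that accounts for the triangle-freeness conditioning on each side. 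Summing the $A$-contributions, the $B$-contributions, and the mixed $\lam^4ab$ block gives the stated formula.

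The main obstacle I anticipate is the fourth-order bookkeeping: correctly enumerating all cluster shapes of size $4$ in the Cartesian product $S\boxempty T$, computing their Ursell weights, and — most delicately — tracking which combinations factor as products of statistics of $S$ and $T$ (and hence get absorbed into $q_A'', q_B''$ or produce the $-4\lam^3\mu_A\mu_B$ term) versus which remain as honest mixed functionals contributing to the $\lam^4ab(\tfrac14 abq_Aq_B - \tfrac23(aq_A+bq_B)^3 - 2(aq_A+bq_B)^2)$ expression. A secondary obstacle is ensuring that every error term generated along the way — the cluster truncation $O(nK\Delta^4\lam^5)$, the local-function perturbation errors $O(n\Delta\lam^3)$ from Lemma~\ref{lemfixedconfiglocal}, the cumulant remainder from Corollary~\ref{coredgevar}, and the $\binom{a}{k}\to a^k/k!$ replacements — is genuinely $o(1)$ under $\lam \ge \tfrac{13}{14}\sqrt{\log n/n}$; I expect these to be the places where the precise constant $13/14$ (equivalently $\eps \le 1/14$) is forced, and I would verify each bound explicitly as in the proof of Lemma~\ref{lemmaZABabapprox}.
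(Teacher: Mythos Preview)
Your overall blueprint --- cluster expand $\log Z_{S\boxempty T}(\lam)$ to order $4$, rewrite as an expectation against $\nu_{\bm q',\cD}$, absorb linear terms into $q_A',q_B'$ and then $q_A'',q_B''$ --- matches the paper. But there is a genuine gap at the cumulant step.

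You write that after applying Lemma~\ref{lemtiltedcumulant} with $\ell=2$ ``the second-order cumulant remainder is controlled by the variance estimates of Corollary~\ref{coredgevar}, which are $o(1)$''. This is false on two counts. First, Corollary~\ref{coredgevar} bounds $\var(|\bG|)$, not $\var(P_2(\bG))$; these have different orders of magnitude. Second, and more importantly, $\tfrac{\psi^2}{2}\var(P_2(S))$ with $\psi=\lam^3 b$ is \emph{not} $o(1)$ in this regime --- it is precisely the source of the $\tfrac14\lam^6 a^3 b^2 q_A^2$ and $\tfrac32\lam^6 a^4 b^2 q_A^3$ terms in the statement. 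A quick check: $\psi^2\cdot 3\binom{a}{3}q_A^2 \sim \tfrac12\lam^6 b^2 a^3 q_A^2 = \tilde\Theta(n^{1-c^2})\to\infty$ for $\lam\sim c\sqrt{\log n/n}$ with $c<1$. So the variance is a \emph{main} term, and a first-moment computation (under any single tilted measure) cannot recover the $\lam^6$ coefficients since the tilt parameter $\theta\in[0,1]$ is unknown.

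What the paper actually does for the $\lam^3 P_2(S\boxempty T)$ factor (Lemma~\ref{lem:P2MomentSuper}): center $s=|S|-\mu_A$, $t=|T|-\mu_B$ so that $4|S||T|=4(st+|S|\mu_B+|T|\mu_A-\mu_A\mu_B)$; show $\E[e^{4\lam^3 st}]=1+o(1)$ under the appropriate tilt (Claim~\ref{claimprobrefine}); then the remaining exponent $h(S,T)$ \emph{factorizes} over $S$ and $T$, reducing to two independent one-dimensional problems. Each factor $\E_{\nu_{\bm q''_A,\cD}}[e^{\psi P_2(S)}]$ is evaluated in Claim~\ref{claimERGApartfun} by computing \emph{both} the mean and the variance of $P_2(S)$, the latter requiring a sharper subgraph probability estimate (Claim~\ref{claimprobrefine2}, with error $O(\psi^3\Delta^3)$ rather than the $O(n^2\Delta^2\lam^6)$ of Corollary~\ref{corfixedconfigboot}) and the Janson-type Lemma~\ref{lemJansonERG} (not Corollary~\ref{corZABJanson}) to handle triangle-freeness under the tilted measure. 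The $13/14$ threshold enters exactly in bounding the covariance of \emph{vertex-disjoint} pairs of $P_2$'s by $o(\psi^{-2}n^{-6})$ --- not in making the whole variance vanish.
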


The derivation of Lemma~\ref{lemZABapprox1super} from Lemma~\ref{lemmaZABabapproxsuper} is very similar to the derivation of Lemma~\ref{lemZABapprox1} from Lemma~\ref{lemmaZABabapprox}. In particular, we show that  we can replace all instances of $a,b$ (both implicit and explicit) with $n/2$ on the RHS of the above asymptotic formula whilst incurring only a $1+o(1)$ multiplicative error. Since the calculations are similar to those of the previous section (only now more tedious), we defer the proof of Lemma~\ref{lemZABapprox1super} to Appendix~\ref{secAppCalc}. 

As usual we let $\cD=\cD_{A,B,\lam}$. As in the proof of Lemma~\ref{lemmaZABabapprox}, a key step toward proving Lemma~\ref{lemmaZABabapproxsuper} is to estimate an expectation $E=\E_{\nu_{\bm q', \cD}}\exp\left\{\lam^3P_2(S\boxempty T)\right\}$. In the previous section, we approximated $\log E$ (via~\ref{lemtiltedcumulant}) by the expectation of $\lam ^3 P_2(S\boxempty T)$ (with respect to a tilted measure). In our regime of $\lambda$ this is no longer possible since now the \emph{variance} of $P_2(S\boxempty T)$ can also make a significant contribution to $E$. This makes the estimation of $E$  more delicate, and we isolate this estimate in the following lemma. 
We let
\begin{align}\label{eq:ZprimeDefSuper}
Z'=\sum_{(S,T)\in\cD}  \left(\frac{q_A'}{1-q_A'}\right)^{|S|}\left(\frac{q_B'}{1-q_B'}\right)^{|T|}\, ,
\end{align}
the normalising constant associated to the measure $\nu_{\bm q', \cD}$.

\begin{lemma}\label{lem:P2MomentSuper}
{
\begin{align}
 Z'\cdot\E_{\nu_{\bm q', \cD}}\left(e^{\lam^3P_2(S\boxempty T)}
 \right) &\sim
(1-q_A'')^{-\binom{a}{2}}  \exp\left\{ \frac{1}{2}\lam^3a^3bq_A''^2 + \frac{1}{4} \lam^6 a^3b^2 q_A^2 + \frac{3}{2}\lam^6 a^4b^2 q_A^3 -\frac{1}{6}a^3q_A^3 \right\}\\
&
\times(1-q_B'')^{-\binom{b}{2}}  \exp\left\{ \frac{1}{2}\lam^3b^3aq_B''^2 + \frac{1}{4} \lam^6 b^3a^2 q_B^2 + \frac{3}{2}\lam^6 b^4a^2 q_B^3 -\frac{1}{6}b^3q_A^3 \right\}\\
& 
\times\exp\{-4\lam^3\mu_A\mu_B\}\, .
  \end{align}}
\end{lemma}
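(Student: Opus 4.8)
The plan is to factor the weight $e^{\lam^3 P_2(S\boxempty T)}$ using the identity $P_2(S\boxempty T)=bP_2(S)+aP_2(T)+4|S||T|$ from \eqref{eqP2local}, to peel off the coupling term $4|S||T|$ by a tilt-and-recenter argument, and finally to estimate the two resulting one-sided conditioned exponential random graph partition functions with the cumulant machinery of Lemmas~\ref{lemtiltedcumulant},~\ref{lemfixedconfiglocal} and~\ref{lemJansonERG}.

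First, since $S$ and $T$ live on disjoint vertex sets, $\cD=\cD_{A,B,\lam}$ factors as a product $\cD^A\times\cD^B$ of the one-sided constraint families (triangle-freeness, $\Delta\le\Delta_{A,B,\lam}$, at most $K_{A,B,\lam}$ edges), so $\nu_{\bm q',\cD}$ is the product of its $S$- and $T$-marginals. Spelling out $Z'$ and $\E_{\nu_{\bm q',\cD}}$ and using the $P_2$-identity, the quantity to estimate is
\[
\sum_{(S,T)\in\cD}\left(\tfrac{q_A'}{1-q_A'}\right)^{|S|}\left(\tfrac{q_B'}{1-q_B'}\right)^{|T|}e^{\lam^3 bP_2(S)}\,e^{\lam^3 aP_2(T)}\,e^{4\lam^3|S||T|}.
\]
To handle the coupling I would write
\[
4\lam^3|S||T|=4\lam^3\mu_B|S|+4\lam^3\mu_A|T|-4\lam^3\mu_A\mu_B+4\lam^3(|S|-\mu_A)(|T|-\mu_B):
\]
the two linear pieces are absorbed into the edge activities, turning $q_A'\mapsto q_A''$ and $q_B'\mapsto q_B''$ exactly via \eqref{eqqdoubleprimesuperdef}; the constant piece gives the factor $e^{-4\lam^3\mu_A\mu_B}$; and the bilinear remainder contributes $1+o(1)$. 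For the last point I would condition on $T$, apply Lemma~\ref{lemtiltedcumulant} with $\ell=2$ to the (tilted, conditioned) $S$-marginal, and then integrate over $T$, using that $|S|$ and $|T|$ are independent, that $\mu_A,\mu_B$ match the means of $|S|,|T|$ up to a negligible relative error by Corollary~\ref{corfixedconfigboot}, and that a variance bound of the shape of Corollary~\ref{coredgevar} gives $\lam^6\var(|S|)\var(|T|)=o(1)$ throughout the range $\lam\ge\tfrac{13}{14}\sqrt{\log n/n}$.

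It then remains to estimate, for each side, the one-sided partition function $Z_A'':=\sum_{S\in\cD^A}\left(\tfrac{q_A''}{1-q_A''}\right)^{|S|}e^{\lam^3 bP_2(S)}$, whose target value is $(1-q_A'')^{-\binom a2}\exp\{\tfrac12\lam^3 a^3 b q_A''^2+\tfrac14\lam^6 a^3b^2q_A^2+\tfrac32\lam^6 a^4b^2q_A^3-\tfrac16 a^3 q_A^3\}$. First relax the triangle constraint: $Z_A''$ equals the corresponding sum over graphs obeying only the degree and size bounds, times the probability of triangle-freeness under the $(\lam^3 b)P_2$-tilted measure, and by Lemma~\ref{lemJansonERG} (applied one side at a time, with $f=\lam^3 bP_2$, which is $\delta$-local by \eqref{eqP2local}) this probability is $\exp\{-q_A''^3\binom a3+o(1)\}=\exp\{-\tfrac16 a^3 q_A^3+o(1)\}$. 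The degree/size-bounded sum equals $(1-q_A'')^{-\binom a2}$ times $\E_\nu[e^{\lam^3 bP_2}]$ over a measure $\nu$ on which $P_2$ is polynomially bounded, so Lemma~\ref{lemtiltedcumulant} with $\ell=3$ gives $\log\E_\nu[e^{\lam^3 bP_2}]=(\lam^3 b)\E_\nu P_2+\tfrac12(\lam^3 b)^2\var_\nu P_2+O\big((\lam^3 b)^3\kappa_3^s(P_2)\big)$. Using the refined cherry probability of Corollary~\ref{corfixedconfigboot}, the leading part of $\E_\nu P_2\sim 3\binom a3 q_A^2$ produces the $\tfrac12\lam^3 a^3 bq_A''^2$ term, while its correction factors together with $\var_\nu P_2$ — which must be split into its diagonal contribution $\sim\tfrac12 a^3q_A^2$ and its edge-sharing-cherry contribution $\sim a^4 q_A^3$ — produce the two $\lam^6$ terms; a direct count of the at-most-four-edge connected configurations contributing to $\kappa_3(P_2)$ shows the third-cumulant remainder is $o(1)$. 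Multiplying the two sides together with the factor $e^{-4\lam^3\mu_A\mu_B}$ then yields the claim.

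The main obstacle is this cumulant estimate. Since $\E_\nu P_2\sim\tfrac12 a^3 q_A^2$ and $\var_\nu P_2$ are far larger than $1$ and get multiplied by $\lam^3 b=\Theta(n^{1/2}(\log n)^{3/2})$ and $(\lam^3 b)^2$ respectively, they must be pinned down to a high \emph{absolute} precision; this is exactly what forces the use of the two-term-corrected subgraph probabilities of Corollary~\ref{corfixedconfigboot} in place of the cruder Corollary~\ref{corfixedconfig}, and the exact separation of the diagonal and off-diagonal parts of $\var_\nu P_2$. Controlling the third cumulant of $P_2$ in the tilted, conditioned model is the kind of higher-cumulant bound flagged as the technical heart in the introduction; here it remains tractable only because $\lam$ is bounded below by $\tfrac{13}{14}\sqrt{\log n/n}$, which keeps $(\lam^3 b)^3\kappa_3^s(P_2)=o(1)$.
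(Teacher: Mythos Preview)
Your proposal is essentially the paper's own proof: the same centering $4|S||T|=4\mu_B|S|+4\mu_A|T|-4\mu_A\mu_B+4st$, the same absorption $q'\mapsto q''$, the same reduction to a one-sided partition function handled by Janson plus a cumulant expansion for $P_2$. The architecture is identical; only the bookkeeping differs slightly.

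Two minor deviations are worth flagging. First, for the bilinear remainder the paper applies Lemma~\ref{lemtiltedcumulant} with $\ell=1$ directly to $e^{4\lam^3 st}$ on the joint $h$-tilted measure, reducing to a single tilted covariance $\E_{\nu^j}[st]$ which is shown to be $o(\lam^{-3})$ via a dedicated subgraph-probability claim (Claim~\ref{claimprobrefine}); your conditioning-on-$T$ route works too but is a little less direct. Second, for the one-sided piece the paper truncates at $\ell=2$ (mean plus \emph{tilted} variance) and uses a sharp subgraph estimate (Claim~\ref{claimprobrefine2}) valid uniformly in the tilt $\theta\in[0,1]$, whereas you truncate at $\ell=3$ and push the work into bounding the tilted third cumulant. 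Both lead to the same answer; the paper's choice avoids the $\kappa_3$ computation at the cost of proving tilt-insensitivity of the variance. Note also that the refined edge/cherry probabilities you need are those of Claim~\ref{claimprobrefine2} on $\cD_\emptyset$ (no triangle conditioning), not Corollary~\ref{corfixedconfigboot} which is for $\nu_{A,B,\lam}$; the distinction matters because you invoke Janson before the cumulant step.
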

\begin{proof}
Define the `centered' random variables
\begin{align}\label{eq:CenterST}
     s= |S| - \mu_A,\quad t= |T| - \mu_B\, .
\end{align}
We then have
$P_2(S\boxempty T) = bP_2(S) + aP_2(T) +4(st+|S|\mu_B+|T|\mu_A -\mu_A\mu_B)$.
Letting
\[
h(S,T) =\lam^3( bP_2(S) + aP_2(T) +4|S|\mu_B+4|T|\mu_A )\, ,  
\]
we have
\begin{align}\label{eqZP30}
 \E_{\nu_{\bm q', \cD}}\left(e^{\lam^3P_2(S\boxempty T)}\right) &= 
 e^{-4\lam^3\mu_A\mu_B}
 \E_{\nu^h_{\bm q', \cD}}\left(e^{4\lam^3st}\right)\cdot  \E_{\nu_{\bm q', \cD}}\left(e^{h(S,T)}\right)\, .
\end{align}
We now estimate the two expectations in the expression on the RHS.
The advantage of centering $S$ and $T$ as in~\eqref{eq:CenterST} is that  $\E_{\nu_{\bm q', \cD}}\left(e^{h(S,T)}\right)$ factorizes as a product of expectations of independent random variables that depend on $S$ and $T$ respectively. Moreover we will show that
\begin{align}\label{eqlognuh0}
 \E_{\nu^h_{\bm q', \cD}}\left(e^{4\lam^3st}\right)=1+o(1)\, .
\end{align}
Let us first establish~\eqref{eqlognuh0}.

By Lemma~\ref{lemtiltedcumulant} there exists $\theta\in[0,1]$ such that 
\begin{align}\label{eqnuhcumulant}
\log  \E_{\nu^h_{\bm q', \cD}}\left(e^{4\lam^3st}\right) =  4\lam^3\E_{\nu^j_{\bm q', \cD}}\left(st\right)\, ,
\end{align}
where 
\begin{align}\label{eqjdef}
j(S,T) = h(S,T) + 4\theta\lam^3st\, .
\end{align}

\begin{claim}\label{claimprobrefine}
Let $F\in \cD$ such that $|F|=O(1)$. If $\bG\sim\nu^j_{\bm q', \cD}$, then
{
\begin{align}
\P(F\subseteq \bG)
=& (1+ O(n^2\Delta^2 \lam^6))\left(q'_Ae^{2\lam^3b(aq_A+bq_B)}\right)^{|F_A|}\left(q'_Be^{2\lam^3a(aq_A+bq_B)}\right)^{|F_B|}\times\\
&\exp\left\{\lam^3 (bP_2(F_A)+aP_2(F_B))\right\}\, .
\end{align}}
\end{claim}
Note that the probability estimate of Claim~\ref{claimprobrefine} is independent of $\theta$. We defer the proof of Claim~\ref{claimprobrefine} to Appendix~\ref{secAppSubCalc} since it follows similar lines to the proof of Corollary~\ref{corfixedconfigboot}. 

With $\bG\sim\nu^j_{\bm q', \cD}$, 
we note that by Claim~\ref{claimprobrefine}, if $e_1\in\binom{A}{2}, e_2\in\binom{B}{2}$, then
\[
\P(e_1, e_2\in \bG)= (1+ O(n^2\Delta^2 \lam^6))q'_Ae^{2\lam^3b(aq_A+bq_B)}\cdot q'_Be^{2\lam^3a(aq_A+bq_B)}\, .
\]
It follows that 
\begin{align*}
 \E_{\nu^j_{\bm q', \cD}}(s  t)
 &=  \E_{\nu^j_{\bm q', \cD}}(|S||T|) -  \E_{\nu^j_{\bm q', \cD}}(|S|)\mu_B-  \E_{\nu^j_{\bm q', \cD}}(|T|)\mu_A + \mu_A\mu_B\\
 &=\binom{a}{2}\binom{b}{2}q'_Ae^{2\lam^3b(aq_A+bq_B)}\cdot q'_Be^{2\lam^3a(aq_A+bq_B)}\cdot O(n^2\Delta^2 \lam^6)= o(\lam^{-3})\, ,
\end{align*}
and so, returning to~\eqref{eqnuhcumulant},  we see that ~\eqref{eqlognuh0} holds. 

Returning to~\eqref{eqZP30} we now  estimate $ \E_{\nu_{\bm q', \cD}}\left(e^{h(S,T)} \right)$. 
Recall the definition of $q_A'', q_B''$ at~\eqref{eqqdoubleprimesuperdef} and let $\bm q''_A=(q''_A,0), \bm q''_B=(0,q''_B)$. Let 
\begin{align}\label{eqZAdprimedef}
Z_A''=\sum_{S\subseteq \binom{A}{2}: S\in \cD}\left(\frac{q_A''}{1-q_A''}\right)^{|S|}\, 
\end{align}
denote the normalizing constant associated to $\nu_{\bm q''_A, \cD}$ and define $Z_B''$ similarly. 
We then have
\begin{align}\label{eqhatZP3}
 Z'\cdot\E_{\nu_{\bm q', \cD}}\left(e^{h(S,T)} \right)
= Z_A''Z_B''\cdot \E_{\nu_{\bm q''_A, \cD}} \left(e^{\lam^3b P_2(S)}\right)
 \cdot \E_{\nu_{\bm q''_B, \cD}}\left( e^{\lam^3aP_2(T)}\right)\, .
\end{align}

Combining this with~\eqref{eqlognuh0} and \eqref{eqZP30} we conclude that
\begin{align}\label{eq:P2momentforTV}\\
 Z'\cdot\E_{\nu_{\bm q', \cD}}\left(e^{\lam^3P_2(S\boxempty T)}\right)
 \sim  e^{-4\lam^3\mu_A\mu_B} \cdot 
 {Z_A''Z_B''}\cdot \E_{\nu_{\bm q''_A, \cD}} \left(e^{\lam^3b P_2(S)}\right)
 \cdot \E_{\nu_{\bm q''_B, \cD}}\left( e^{\lam^3aP_2(T)}\right)\, .
\end{align}

Next we estimate $ Z_A''\cdot\E_{\nu_{\bm q''_A, \cD}}\left( e^{ \lam^3b P_2(S)}\right)$.
\begin{claim}\label{claimERGApartfun}
With $\psi=\lam^3b$,
\begin{align}
 Z_A''\cdot \E_{\nu_{\bm q''_A, \cD}}\left( e^{\psi P_2(S)}\right)\sim (1-q_A'')^{-\binom{a}{2}}  \exp\left\{ \frac{1}{2}\psi a^3q_A''^2 + \frac{1}{4} \psi^2 a^3 q_A^2 +\frac{3}{2} \psi^2 a^4 q_A^3 -\frac{1}{6}a^3q_A^3 \right\}\, .
\end{align}
\end{claim}
\begin{proof}
Recall that
\[
\cD_\emptyset:= \left\{G\subseteq \binom{A}{2}\cup\binom{B}{2}: \Delta(G)\leq \Delta, |G_A|, |G_B|\leq K  \right\}\, ,
\]
where $\Delta=\Delta_{A,B,\lam}, K=K_{A,B,\lam}$ are as in Definition~\ref{defsparse}. In particular $\cD$ is the set of triangle-free graphs in $\cD_\emptyset$.
By no longer conditioning on triangle-freeness, 
we are able to get a more precise understanding of the measure $\nu_{\bm q''_A, \cD_\emptyset}$ than that of $\nu_{\bm q''_A, \cD}$ (see Claim~\ref{claimprobrefine2} below). 
We therefore relate $ \E_{\nu_{\bm q''_A, \cD}}\left( e^{\psi  P_2(S)}\right)$ to the expectation $ \E_{\nu_{\bm q''_A, \cD_\emptyset}}\left( e^{\psi  P_2(S)}\right)$ by using our version of Janson's inequality (Lemma~\ref{lemJansonERG}). To this end we consider the tilted measure $\nu^k_{\bm q''_A, \cD_\emptyset}$ where $k(S)=\psi  P_2(S)$ and let $\mathbf G\sim \nu^k_{\bm q''_A, \cD_\emptyset}$. We then have
\begin{align}\label{eqDtoDempty}
Z_A'' \cdot \E_{\nu_{\bm q''_A, \cD}}\left( e^{\psi  P_2(S)}\right) = Z_{A,\emptyset}'' \cdot \E_{\nu_{\bm q''_A, \cD_\emptyset}}\left( e^{\psi  P_2(S)}\right) \P(\bG \text{ triangle-free})\, ,
\end{align}
where
\[
Z_{A,\emptyset}''=\sum_{S\subseteq \binom{A}{2}: S\in \cD_\emptyset}\left(\frac{q_A''}{1-q_A''}\right)^{|S|}\, .
\]

Since $\psi \leq n\lam^3$, $k$ is $(2n\lam^3)$-local, and so we may apply Lemma~\ref{lemJansonERG}, obtaining
\begin{align}\label{eqJansonERG}
\P(\bG \text{ triangle-free})\sim \exp\left\{-\binom{a}{3}q_A''^3 + O(n^4\Delta\lam^3q^3)\right\}\sim \exp\left\{-\frac{1}{6}a^3q_A^3\right\}\, .
\end{align}
Moreover, by ~\eqref{eqProbGinDEmptyEst} of Corollary~\ref{corZABJanson},
\begin{align}\label{eqZAemptyset}
Z_{A,\emptyset}''\sim (1-q_A'')^{-\binom{a}{2}}\, .
\end{align}

We now turn to estimating $\E_{\nu_{\bm q''_A, \cD_\emptyset}}\left( e^{\psi  P_2(S)}\right) $.  Given $\theta>0$, we abuse notation slightly and define the measure $\nu^{\theta}_{\bm q''_A, \cD_\emptyset}$ via
\[
\nu^{\theta}_{\bm q''_A, \cD_\emptyset}(S)\propto  \nu_{\bm q''_A, \cD_\emptyset}(S) e^{\theta \psi  P_2(S)}\, .
\]
We apply Lemma~\ref{lemtiltedcumulant} to deduce that
\begin{align}\label{eqcumulantbarnu}
\log\E_{\nu_{\bm q''_A, \cD_\emptyset}} e^{\psi  P_2(S)} = \psi  \cdot  \E_{\nu_{\bm q''_A, \cD_\emptyset}}(P_2(S)) + \frac{\psi ^2}{2}\var_{\nu^{\theta}_{\bm q''_A, \cD_\emptyset}}(P_2(S))\, 
\end{align}
for some $\theta\in [0,1]$.

\begin{claim}\label{claimprobrefine2}
Let $\theta\in[0,1]$ and let $\bG\sim\nu^{\theta}_{\bm q''_A, \cD_\emptyset}$. 
Let $F\subseteq\binom{A}{2}$ such that $|F|=O(1)$, then
\begin{align}
\P(F\subseteq \bG)=
(1+O(\psi ^3\Delta^3) ) q_A''^{|F|} 
\left(1+ \theta\psi  \E( P_2(\bG,F) )+ \frac{\theta^2\psi ^2}{2}  \E(P_2(\bG,F)^2)\right)\, .
\end{align}
In particular,
\begin{align*}
\P(F\subseteq \bG)=
(1+O(\psi \Delta) ) q_A''^{|F|} \, .
\end{align*}
\end{claim}
Again, as the proof of this claim follows similar lines as those of Corollary~\ref{corfixedconfigboot} and Claim~\ref{claimprobrefine}, we defer it to Appendix~\ref{secAppSubCalc}.

Returning to~\eqref{eqcumulantbarnu}, we estimate  $\E_{\nu_{\bm q''_A, \cD_\emptyset}}(P_2(S))$. Let $\bG\sim\nu_{\bm q''_A, \cD_\emptyset}$.
Let $F\subseteq \binom{A}{2}$ be a copy of $P_2$ then by the above claim with $\theta=0$,
\[
\P(F\subseteq \bG)= (1+O(\psi ^3\Delta^3) )q_A''^2\, .
\]
Noting that $\psi =\tilde O(n^{-1/2}), q_A=o(n^{-13/14})$, it follows that
\begin{align}\label{eqexpbarnu}
\psi  \E_{\nu_{\bm q''_A, \cD_\emptyset}}(P_2(S)) = 3\psi \binom{a}{3}(1+O(\psi ^3\Delta^3) )q_A''^2 = \frac{1}{2}\psi  a^3q_A''^2 + o(1)\, .
\end{align}
We now estimate $\var_{\nu_{\bm q''_A, \cD_\emptyset}^\theta}(P_2(S))$.
Let $\bG\sim \nu_{\bm q''_A, \cD_\emptyset}^\theta$. 
Let $\{F_1, \ldots, F_{m}\}$, $m=3\binom{a}{3}$, be the collection of potential copies of $P_2$ in $\bG$.
We have
\[
\var(P_2(S)) = \sum_i \var(\mathbf 1_{F_i\subseteq \bG}) + 2\sum_{\substack{\{F_i, F_j\}:\\ F_i\neq F_j}} \cov(\mathbf 1_{F_i\subseteq \bG}, \mathbf 1_{F_j\subseteq \bG})\, ,
\]
where the variances and covariances are with respect to the measure ${\nu_{\bm q''_A, \cD_\emptyset}^\theta}$.
By Claim~\ref{claimprobrefine2},
\[
\psi ^2\sum_i \var(\mathbf 1_{F_i\subseteq \bG})=(1+O(\psi  \Delta))\cdot 3\psi ^2\binom{a}{3}q_A''^2 =  3\psi ^2\binom{a}{3}q_A''^2 + o(1)\, .
\] 
By Claim~\ref{claimprobrefine2} we also have
\[
\psi ^2\sum_{\substack{\{F_i, F_j\}:\\ |F_i\cap F_j|=1}} \cov(\mathbf 1_{F_i\subseteq \bG}, \mathbf 1_{F_j\subseteq \bG})=(1+O(\psi  \Delta))\cdot 36\psi ^2\binom{a}{4}q_A''^3 =  36\psi ^2\binom{a}{4}q_A''^3 + o(1)\, 
\] 
and
\[
\psi ^2\sum_{\substack{\{F_i, F_j\}:\\ |V(F_i)\cap V(F_j)|=1}} \cov(\mathbf 1_{F_i\subseteq \bG}, \mathbf 1_{F_j\subseteq \bG})=\psi ^2\cdot O(n^5q^4\cdot\psi  \Delta)=o(1)\, .
\] 
It remains to estimate the contribution to the variance of $P_2(S)$ from vertex-disjoint pairs $F_i, F_j$.

Let $F_1, F_2\subseteq\binom{A}{2}$ be two vertex-disjoint copies of $P_2$ in $\binom{A}{2}$
then 
\[
P_2(\bG, F_1\cup F_2)= P_2(\bG, F_1)+ P_2(\bG, F_2)\, .
\]
It follows from Claim~\ref{claimprobrefine2} that 
\begin{align}\label{eqcovF1F2disjoin}
\cov(\mathbf1_{F_1\subseteq\bG}, \mathbf1_{F_2\subseteq\bG})= 
q_A''^4\frac{\theta^2\psi ^2}{2}\cdot\cov( P_2(\bG,F_1), P_2(\bG,F_2) ) + O(\psi ^3\Delta^3q^4)\, .
\end{align}
Suppose now that $e_1, e_2\in \binom{A}{2}\backslash (F_1\cup F_2)$ are such that $e_i$ forms a copy of $P_2$ with an edge of $F_i$ for $i=1,2$. If $e_1\neq e_2$, by Claim~\ref{claimprobrefine2}
\[
 \cov(\mathbf 1_{e_1\subseteq \bG}, \mathbf 1_{e_2\subseteq \bG})
 =O(\psi  \Delta q^2)
\]
and there are $O(n^2)$ such pairs $\{e_1, e_2\}$. 
If $e_1=e_2$ then
\[
 \cov(\mathbf 1_{e_1\subseteq \bG}, \mathbf 1_{e_2\subseteq \bG})
 =O(q)
\]
and there are $O(1)$ choices for $e_1$ since it has to join a vertex in $F_1$ to a vertex in $F_2$. We conclude that
\[
\cov( P_2(\bG,F_1), P_2(\bG,F_2) )= O(n^2\psi  \Delta q^2+q) = O(n^2\psi  \Delta q^2) \, .
\]
Returning to~\eqref{eqcovF1F2disjoin}, we conclude that
\[
\cov(\mathbf1_{F_1\subseteq\bG}, \mathbf1_{F_2\subseteq\bG})= O(\psi ^3\Delta^3q^4+n^2\psi ^3 \Delta q^6)= O(\psi ^3\Delta^3q^4)\, ,
\]
and so

\[
\psi ^2\sum_{\substack{\{F_i, F_j\}:\\ V(F_i)\cap V(F_j)=\emptyset}} \cov(\mathbf 1_{F_i\subseteq \bG}, \mathbf 1_{F_j\subseteq \bG})=\psi ^2n^6\cdot O(\psi ^3\Delta^3q^4)=o(1). \footnote{It is here that we need the assumption $\lam\geq \frac{13}{14}\sqrt{\frac{\log n}{n}}$.}
\] 
Putting everything together we have
\begin{align*}
\psi ^2\var_{\nu_{\bm q''_A, \cD_\emptyset}^\theta}(P_2(S))
&=
\psi ^2\sum_i \var(\mathbf 1_{F_i\subseteq \bG}) + 2 \psi ^2\sum_{\substack{\{F_i, F_j\}:\\ F_i\neq F_j}} \cov(\mathbf 1_{F_i\subseteq \bG}, \mathbf 1_{F_j\subseteq \bG})\\
&=
3\psi ^2\binom{a}{3}q_A''^2 + 72\psi ^2\binom{a}{4}q_A''^3 +o(1)\\
&=
\frac{1}{2} \psi ^2 a^3 q_A^2 + 3\psi ^2 a^4 q_A^3  + o(1)\, .
\end{align*}
Returning to~\eqref{eqcumulantbarnu} and recalling~\eqref{eqexpbarnu}, we conclude that
\begin{align}
\E_{\nu_{\bm q''_A, \cD_\emptyset}} e^{\psi  P_2(S)} \sim \exp\left\{ \frac{1}{2}\psi  a^3q_A''^2 + \frac{1}{4} \psi ^2 a^3 q_A^2 + \frac{3}{2}\psi ^2 a^4 q_A^3  \right\}\, .
\end{align}
Combining this with \eqref{eqDtoDempty}, \eqref{eqJansonERG} and \eqref{eqZAemptyset} completes the proof of Claim~\ref{claimERGApartfun}.
\end{proof}
Lemma~\ref{lem:P2MomentSuper} now follows from 
Claims~\ref{claimERGApartfun} and~\eqref{eq:P2momentforTV}
\end{proof}

We now turn to the proof of Lemma~\ref{lemmaZABabapproxsuper}. We will need the following refinement of Corollary~\ref{corclustersimple} whose proof we defer to Appendix~\ref{secPinnedCluster}. We let $P_3, S_3, C_4$ denote the path, star and cycle on $4$ vertices respectively. Recall that given graphs $H, G$, we let $H(G)$ denote the number of (not necessarily induced) copies of $H$ in $G$. 
\begin{cor}\label{corclustersimple2}
Let $G$ be a triangle-free graph with $n$ vertices, and maximum degree $\Delta$. Then for $\lam\leq \frac{1}{4e\Delta}$,
\begin{multline}
\log\left( \frac{Z_G(\lam)}{(1+\lam)^n} \right)= -|G|\lam^2 + \left(P_2(G)+2|G|\right)\lam^3 -
(P_3(G)+S_3(G)-C_4(G)+4P_2(G)+7|G|/2)\lam^4\\
 +
 O(n\Delta^4\lam^5)\, .
\end{multline}
\end{cor}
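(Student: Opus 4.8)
The statement to prove is Corollary~\ref{corclustersimple2}, a refinement of Corollary~\ref{corclustersimple} that extends the cluster expansion of $\log(Z_G(\lam)/(1+\lam)^n)$ one further order, capturing the $\lam^4$ term as a combination of subgraph counts.

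\textbf{Plan.} The approach mirrors exactly the proof of Corollary~\ref{corclustersimple}, simply carried one order further. Let $G=(V,E)$ and let $F=(V,\emptyset)$, so $Z_F(\lam)=(1+\lam)^n$. As in that proof, by Lemma~\ref{lemClusterTail} the cluster expansions of $\log Z_G(\lam)$ and $\log Z_F(\lam)$ converge absolutely for $\lam\le \frac{1}{4e\Delta}$, and subtracting gives
\[
\log\left(\frac{Z_G(\lam)}{(1+\lam)^n}\right) = \sum_{\Gamma\in\cC'}\phi(\Gamma)\lam^{|\Gamma|}\, ,
\]
where $\cC'=\cC(G)\setminus\cC(F)$ consists of all non-constant clusters of $G$, i.e.\ clusters $\Gamma$ whose vertex multiset is not a single repeated vertex. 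The sizes $1,2,3$ contribute exactly the terms $-|G|\lam^2+(P_2(G)+2|G|)\lam^3$ already computed in Corollary~\ref{corclustersimple}. So I would isolate the size-$4$ clusters $\cC'_4$ and compute $\sum_{\Gamma\in\cC'_4}\phi(\Gamma)\lam^4$, then bound the tail $\sum_{\Gamma\in\cC':|\Gamma|\ge 5}$.

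\textbf{Size-$4$ clusters.} A cluster $\Gamma=(v_1,v_2,v_3,v_4)$ in $\cC'_4$ is an ordered $4$-tuple (with repetitions allowed) whose incompatibility graph $H_\Gamma$ is connected; $H_\Gamma$ has an edge between coordinates $i,j$ whenever $v_i\sim v_j$ in $G$ or $v_i=v_j$. Since $G$ is triangle-free, $G[\{v_1,\dots,v_4\}]$ (the simple graph on the distinct vertices appearing) has no triangle. I would enumerate by the isomorphism type of the induced simple graph on the support and the multiplicity pattern: the support can have $1,2,3$, or $4$ distinct vertices. For each combination one computes (i) the number of ordered tuples of that shape, and (ii) the Ursell function $\phi(\Gamma)=\frac{1}{4!}\sum_{A\subseteq E(H_\Gamma)\text{ spanning, connected}}(-1)^{|A|}$, which depends only on the isomorphism type of $H_\Gamma$. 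Grouping the contributions, the four-vertex connected triangle-free induced types $P_3$ (path), $S_3$ (star), and $C_4$ (4-cycle) produce the main new terms, with the $K_2$ and $P_2$ induced subgraphs (via repeated coordinates or extra edges in $H_\Gamma$ from collisions) producing the lower-order contributions $-4P_2(G)\lam^4$ and $-\tfrac{7}{2}|G|\lam^4$; the signs and rational coefficients follow from evaluating $\phi$ on the relevant connected incompatibility graphs (paths, stars, cycles, and the various graphs obtained by adding ``collision'' edges to a multiset of size $4$). This is the bookkeeping-heavy step, exactly analogous to but more involved than the size-$3$ case in Corollary~\ref{corclustersimple}; I expect this enumeration of isomorphism types and Ursell-function values to be the main obstacle, though it is purely mechanical. (It is presumably done in Appendix~\ref{secPinnedCluster} as promised.)

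\textbf{Tail bound.} For the remainder $\sum_{\Gamma\in\cC':|\Gamma|\ge 5}\phi(\Gamma)\lam^{|\Gamma|}$, I would argue exactly as at~\eqref{eqTailBound4}: every $\Gamma\in\cC'$ must contain some non-isolated vertex $v$ of $G$ (a cluster with all coordinates isolated would be constant), so applying Lemma~\ref{lemClusterTail} with $S=\{v\}$, $k=5$, $t=0$ for each non-isolated vertex $v$ and summing the $O(\Delta^{4}\lam^5)$ bounds over at most $n$ such vertices yields
\[
\left|\sum_{\Gamma\in\cC':|\Gamma|\ge 5}\phi(\Gamma)\lam^{|\Gamma|}\right| = O(n\Delta^4\lam^5)\, .
\]
Combining the size $\le 4$ computation with this tail estimate gives the claimed formula. (Note the statement uses $n$ rather than the count $n'$ of non-isolated vertices as in Corollary~\ref{corclustersimple}, which only weakens the error term, so no sharper accounting is needed.)
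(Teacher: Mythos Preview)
Your proposal is correct and follows essentially the same route as the paper's proof: set up the difference of cluster expansions as in Corollary~\ref{corclustersimple}, enumerate the non-constant clusters of size $4$ by the isomorphism type of the induced subgraph on the support (which must be one of $K_2$, $P_2$, $P_3$, $S_3$, $C_4$ by triangle-freeness) together with the multiplicity pattern, compute the Ursell function for each resulting incompatibility graph, and bound the tail of size $\ge 5$ via Lemma~\ref{lemClusterTail} with $k=5$. The paper carries out exactly this bookkeeping (first obtaining the size-$4$ sum in terms of induced $P_3$'s and then using $P_3^{\mathrm{ind}}(G)=P_3(G)-4C_4(G)$ to rewrite in terms of not-necessarily-induced counts), and your remark about $n$ versus $n'$ is correct and harmless.
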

In addition to the relations $|S\boxempty T|= b|S|=a|T|$, $P_2(S\boxempty T) = bP_2(S) + aP_2(T) + 4|S||T|$ which we have already encountered, we will also need the following relations and include a proof in Appendix~\ref{secPinnedCluster}.
\begin{lemma}\label{lemSubgCart}
For $S\subseteq \binom{A}{2}$, $T\subseteq \binom{B}{2}$ such that $S\cup T$ is triangle-free,
\begin{align*}
P_3(S\boxempty T) &= bP_3(S) + aP_3(T) + 6P_2(S)|T|+ 6|S|P_2(T)\, , \\
S_3(S\boxempty T) &= bS_3(S) + aS_3(T) + 2P_2(S)|T|+ 2|S|P_2(T)\, , \\
C_4(S\boxempty T) &= bC_4(S) + aC_4(T) + |S||T|\, .
\end{align*}
\end{lemma}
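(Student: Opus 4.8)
The plan is to exploit the fact that every edge of $S\boxempty T$ is of exactly one of two types: a \emph{horizontal} edge $\{(a,b),(a',b)\}$ with $\{a,a'\}\in S$, or a \emph{vertical} edge $\{(a,b),(a,b')\}$ with $\{b,b'\}\in T$. For each fixed $b\in B$ the horizontal edges with second coordinate $b$ induce a copy of the graph $(A,S)$ on the row $A\times\{b\}$, and for each fixed $a\in A$ the vertical edges with first coordinate $a$ induce a copy of $(B,T)$ on the column $\{a\}\times B$. Thus any copy of one of the target graphs $H\in\{P_3,S_3,C_4\}$ inside $S\boxempty T$ carries an edge-type pattern, and I would prove each identity by enumerating the copies of $H$ according to how many of their edges are horizontal.

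For $S_3=K_{1,3}$ the classification is immediate: a copy is a centre vertex $(a,b)$ together with any three of the $d_S(a)+d_T(b)$ edges incident to it (all three leaves are automatically distinct), so the number of copies is $\sum_{a\in A}\sum_{b\in B}\binom{d_S(a)+d_T(b)}{3}$; expanding by Vandermonde and using $\sum_a\binom{d_S(a)}{3}=S_3(S)$, $\sum_a\binom{d_S(a)}{2}=P_2(S)$, $\sum_a d_S(a)=2|S|$ and the analogues for $T$ yields the stated formula. For $C_4$ I would first observe that a $4$-cycle using edges of both types can contain no two consecutive edges of the same type: if it had two consecutive horizontal edges, say through $(a,b)$, $(a',b)$, $(a'',b)$ with $a\ne a''$ (the four cycle-vertices being distinct), then the fourth vertex would have to be joined to both $(a'',b)$ and $(a,b)$, which is impossible for a vertical edge since a vertical edge out of $(a'',b)$ reaches a vertex differing from $(a,b)$ in both coordinates, while forcing it to be horizontal returns to the all-horizontal case. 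Hence a mixed $4$-cycle alternates horizontal/vertical and is exactly the rectangle on $\{a_1,a_2\}\times\{b_1,b_2\}$ determined by one edge $\{a_1,a_2\}\in S$ and one edge $\{b_1,b_2\}\in T$, contributing $|S||T|$, while the all-horizontal and all-vertical cases contribute $bC_4(S)$ and $aC_4(T)$. For $P_3$ (the path on four vertices) I would again split by the number $h$ of horizontal edges: $h=3$ and $h=0$ give $bP_3(S)$ and $aP_3(T)$, and for $h=2$, and symmetrically $h=1$, I would further distinguish whether the two horizontal edges are adjacent in the path — so that they span a copy of $P_2$ lying inside a single row, with the vertical edge attached at one end — or are the two end-edges of the path, in which case the vertical edge is the middle edge; in each sub-case one counts by choosing the relevant sub-structure in $S$ together with the vertical edge, and sums the degree identities above to obtain the remaining terms.

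The step I expect to be the main obstacle is the bookkeeping in the mixed-type cases for $P_3$ (and to a lesser extent $C_4$): one must enumerate the copies carefully so as neither to over- nor to under-count, paying attention to the degenerate placements where two distinct edges of the copy, lying in different rows (or different columns), project to one and the same edge of $S$ (or of $T$), and to quotient by the path-reversal symmetry of $P_3$ exactly once. As a running consistency check I would specialize to small instances — for example $S$ and $T$ each a single edge, for which $S\boxempty T$ is a $4$-cycle — and compare against the already-recorded relations $|S\boxempty T|=b|S|+a|T|$ and $P_2(S\boxempty T)=bP_2(S)+aP_2(T)+4|S||T|$.
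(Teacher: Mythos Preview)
Your approach is sound and will succeed, but it is organized differently from the paper's. The paper sets up a general framework: each copy of a connected $H$ in $S\boxempty T$ projects to a pair $(F_1,F_2)$ with $F_1\subseteq S$ on the $A$-projection of its vertex set and $F_2\subseteq T$ on the $B$-projection, and the copy then sits as a \emph{transversal} copy (hitting every row and column) inside $F_1\boxempty F_2$. This yields
\[
H(S\boxempty T)=\sum_{(F_1,F_2)} F_1(S)\,F_2(T)\,H^\ast(F_1\boxempty F_2),
\]
where $H^\ast$ counts transversal copies; the lemma follows by listing the handful of relevant pairs $(F_1,F_2)$ for $H\in\{P_3,S_3,C_4\}$ and computing the small numbers $H^\ast(F_1\boxempty F_2)$. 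Your edge-type case analysis is the same enumeration organized by the horizontal/vertical pattern rather than by the projected pair. Your Vandermonde argument for $S_3$ is particularly clean. The paper's framework generalizes more easily to other $H$; yours is more self-contained.

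One important point: both methods, carried out correctly, give an extra $+4|S||T|$ term in the $P_3$ identity that is \emph{missing} from the lemma as stated. In your language it comes from the $HVH$ (and symmetrically $VHV$) sub-case when the two same-type edges project to a single edge of $S$ (resp.\ $T$); in the paper's language it is the pair $(K_2,K_2)$ with $P_3^\ast(K_2\boxempty K_2)=4$, which the paper's proof in fact lists explicitly. Your own proposed sanity check with $S$ and $T$ each a single edge detects this at once: then $S\boxempty T\cong C_4$ and $P_3(C_4)=4$, whereas the displayed formula gives $0$. So when you carry out the $P_3$ case, do not force your count to match the stated identity; the correct formula is
\[
P_3(S\boxempty T)=bP_3(S)+aP_3(T)+6P_2(S)|T|+6|S|P_2(T)+4|S||T|.
\]
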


With these preliminaries in hand, we prove Lemma~\ref{lemmaZABabapproxsuper}.
\begin{proof}[Proof of Lemma~\ref{lemmaZABabapproxsuper}]
In order to estimate $Z_{A,B}(\lam)$, we begin by estimating the hard-core  partition function $Z_{S \boxempty T }(\lam)$ via the  cluster expansion. First we note that since $(S,T)\in \cD$, both $S$ and $T$ have maximum degree at most
\[
\Delta=50 \max\{qn,\log n\}= o(n^{1/14})  \, ,
\] 
and so the graph $S \boxempty T$ has maximum degree at most $2\Delta=o(n^{1/14})$.  Since $\lam\leq \frac{1}{8e\Delta}$, we conclude from Corollary~\ref{corclustersimple2} that
\begin{multline}
\label{eqZboxTestimateSuper}
\log\left(\frac{ Z_{S \boxempty T}(\lam)}{(1+\lam)^{ab}} \right)
 = - |S \boxempty T| \cdot \lambda^2 +  \left(2|S \boxempty T| + P_2(S \boxempty T)\right)\lambda^3 \\
- 
(P_3(S \boxempty T)+S_3(S \boxempty T)-C_4(S \boxempty T)+4P_2(S \boxempty T)+7|S \boxempty T|/2)\lam^4
 +o(1)\, .
\end{multline}
We note that
$|S\boxempty T| = b|S| + a|T|$, and so 
\begin{equation}\label{eqZABbarnuexp}
\frac{Z_{A,B}(\lam)}{(1+\lam)^{ab}}= \sum_{(S,T)\in \cD}\frac{ Z_{S \boxempty T}(\lam)}{(1+\lam)^{ab}}  \sim 
Z'\cdot\E_{\nu_{\bm q', \cD}}\left(e^{P_2(S\boxempty T)\lam^3
-(P_3(S \boxempty T)+S_3(S \boxempty T)-C_4(S \boxempty T)+4P_2(S \boxempty T))\lam^4}
 \right),
\end{equation}
where $\bm q'=(q_A', q_B')$ and $Z'$ is as in~\eqref{eq:ZprimeDefSuper}.
We turn our attention to understanding the expectation on the RHS of~\eqref{eqZABbarnuexp}. Letting
\[
f(S,T)=\lam^3 P_2(S\boxempty T)\, ,
\]
the expectation on the RHS of~\eqref{eqZABbarnuexp} is equal to 
\begin{align}\label{eqExpectationProd}
 \E_{\nu^f_{\bm q', \cD}}\left(e^{
-(P_3(S \boxempty T)+S_3(S \boxempty T)-C_4(S \boxempty T)+4P_2(S\boxempty T))\lam^4}
 \right) 
 \cdot 
 \E_{\nu_{\bm q', \cD}}\left(e^{\lam^3P_2(S\boxempty T)}
 \right)\, .
\end{align}
We estimated the rightmost expectation in Lemma~\ref{lem:P2MomentSuper}. 
We now estimate the leftmost expectation.
\begin{claim}\label{claim:P3S3}
\begin{align}
 \E_{\nu^f_{\bm q', \cD}}&\left(e^{
-(P_3(S \boxempty T)+S_3(S \boxempty T)-C_4(S \boxempty T)+4P_2(S \boxempty T))\lam^4}
 \right)\\ 
 &\phantom{==}\sim
 \exp\left[\lam^4 ab\left(\frac{1}{4}ab q_Aq_B - \frac{2}{3}(aq_A+bq_B)^3 - 2(aq_A+bq_B)^2 \right)\right]\, .
 \end{align}
\end{claim}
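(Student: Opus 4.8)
The plan is to reduce the exponential moment to a single expectation, just as in the proof of Lemma~\ref{lem:P2MomentSuper}. Write
$g(S,T):=-\lam^4\left(P_3(S\boxempty T)+S_3(S\boxempty T)-C_4(S\boxempty T)+4P_2(S\boxempty T)\right)$,
so the claim asks that $\E_{\nu^f_{\bm q',\cD}}(e^{g})\sim\exp\left\{\lam^4 ab\left(\frac14 abq_Aq_B-\frac23(aq_A+bq_B)^3-2(aq_A+bq_B)^2\right)\right\}$. By Lemma~\ref{lemtiltedcumulant} with $t=1$ and $\ell=1$ there is $s\in[0,1]$ with $\log\E_{\nu^f_{\bm q',\cD}}(e^g)=\E_{\mu_s}(g)$, where $\mu_s:=\nu^{f+sg}_{\bm q',\cD}$. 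So it suffices to evaluate $\E_{\mu_s}(g)$ and to show that, up to an $o(1)$ \emph{additive} error, it does not depend on $s$. This additive control (rather than the relative control that sufficed in Section~\ref{secSubCritRegime}) is exactly what is needed here, since the target exponent is a positive power of $n$.

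First I would check that $\mu_s$ falls within the scope of the subgraph probability machinery of Section~\ref{secSubgraphProb}. The function $f=\lam^3P_2(\cdot_\boxempty)$ is $(10n\lam^3)$-local by~\eqref{eqP2local}, and removing $|F|$ edges from $S\cup T$ deletes at most $n|F|$ edges of $S\boxempty T$, each lying in $O(\Delta^2)$ copies of $P_2,P_3,S_3,C_4$, so $g$ changes by $O(\lam^4 n\Delta^2|F|)$; hence $f+sg$ is $\delta$-local with $\delta=10n\lam^3+O(\lam^4 n\Delta)\le n\lam^3/(6\alpha)$ for large $n$, using $\lam\Delta=o(1)$. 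Thus Lemma~\ref{lemfixedconfiglocal} applies to $\mu_s$ (with $r_A=q_A'$, $r_B=q_B'$), and since the $sg$-part contributes only $O(\lam^4 n\Delta^2)=o(1)$ to the exponent $\E[(f+sg)(\bH\cup F)-(f+sg)(\bH)]$, the same computation as in the proof of Corollary~\ref{corfixedconfigboot} (equivalently Claim~\ref{claimprobrefine}) gives, for every fixed $F\in\cD$ with $|F|=O(1)$ and uniformly in $s\in[0,1]$,
\[
\P_{\mu_s}(F\subseteq\bG)=\left(1+O(n^2\Delta^2\lam^6)\right)\left(q_A'e^{2\lam^3 b(aq_A+bq_B)}\right)^{|F_A|}\left(q_B'e^{2\lam^3 a(aq_A+bq_B)}\right)^{|F_B|}e^{\lam^3(bP_2(F_A)+aP_2(F_B))}\, .
\]
In particular $\P_{\mu_s}(F\subseteq\bG)=(1+O(n^2\Delta^2\lam^6))q_A^{|F_A|}q_B^{|F_B|}$, and this same estimate also covers ``mixed'' configurations (a copy of $P_2$ inside $A$ together with an edge inside $B$, etc.), since these still have $O(1)$ edges; so no separate argument that $\mu_s$ nearly factorizes over the two sides is required.

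Then I would compute $\E_{\mu_s}(g)$ directly. By Lemma~\ref{lemSubgCart}, together with $|S\boxempty T|=b|S|+a|T|$ and $P_2(S\boxempty T)=bP_2(S)+aP_2(T)+4|S||T|$, each of the four subgraph counts of $S\boxempty T$ becomes a fixed linear combination of $|S|,|T|$, $P_2(S),P_2(T)$, $P_3(S),P_3(T)$, $S_3(S),S_3(T)$, $C_4(S),C_4(T)$ and of the products $P_2(S)|T|$, $|S|P_2(T)$, $|S||T|$; taking $\E_{\mu_s}$ amounts to summing the displayed estimate over the relevant configurations. The contributions of $b\,C_4(S)+a\,C_4(T)$ are $O(\lam^4 n^5 q^4)=o(1)$ after the overall factor $\lam^4$, and each $O(n^2\Delta^2\lam^6)$ relative correction turns into an $o(1)$ absolute error because $\lam^4\cdot(\text{largest relevant count})\cdot n^2\Delta^2\lam^6\to 0$ under the hypothesis $\lam\ge\frac{13}{14}\sqrt{\log n/n}$ (the same type of check as in the proof of Lemma~\ref{lem:P2MomentSuper}). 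Writing $x=aq_A$, $y=bq_B$, this yields $\E_{\mu_s}[P_3(S\boxempty T)]\sim\frac{ab}{2}(x+y)^3$, $\E_{\mu_s}[S_3(S\boxempty T)]\sim\frac{ab}{6}(x+y)^3$, $\E_{\mu_s}[C_4(S\boxempty T)]\sim\frac{ab}{4}xy$, $\E_{\mu_s}[P_2(S\boxempty T)]\sim\frac{ab}{2}(x+y)^2$, hence
\[
\E_{\mu_s}(g)=-\lam^4 ab\left(\frac23(x+y)^3-\frac14 xy+2(x+y)^2\right)+o(1)=\lam^4 ab\left(\frac14 abq_Aq_B-\frac23(aq_A+bq_B)^3-2(aq_A+bq_B)^2\right)+o(1)\, ,
\]
independently of $s$; exponentiating completes the proof.

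The main obstacle is precisely the bookkeeping of the last step. Because the exponent is genuinely of polynomial size in $n$, one cannot afford the crude $(1+o(1))$ subgraph probabilities used in Section~\ref{secSubCritRegime} and must propagate the sharper $\left(1+O(n^2\Delta^2\lam^6)\right)$-accurate estimates through all the terms produced by Lemma~\ref{lemSubgCart}, verifying in each case that the resulting error is $o(1)$ \emph{on the nose} --- and it is exactly this verification that consumes the hypothesis $\lam\ge\frac{13}{14}\sqrt{\log n/n}$.
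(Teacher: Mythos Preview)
Your proposal is correct and follows essentially the same route as the paper: apply Lemma~\ref{lemtiltedcumulant} with $\ell=1$ to reduce the exponential moment to a single expectation under a tilted measure $\nu^{f+sg}_{\bm q',\cD}$, verify that $f+sg$ is $\delta$-local with $\delta\le n\lam^3/(6\alpha)$, and then evaluate the expectations of $P_3,S_3,C_4,P_2$ of $S\boxempty T$ via Lemma~\ref{lemSubgCart} together with the subgraph probability estimates of Section~\ref{secSubgraphProb}. One small overcomplication: the coarser estimate $\P(F\subseteq\bG)=(1+O(n\Delta\lam^3))q_A^{|F_A|}q_B^{|F_B|}$ from~\eqref{eqlemfixedconfiglocalcor} already suffices (as the paper uses), since e.g.\ $\lam^4\cdot n^4q^3\cdot n\Delta\lam^3=o(1)$ in this regime; you do not need the sharper $(1+O(n^2\Delta^2\lam^6))$ bound from Corollary~\ref{corfixedconfigboot}.
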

\begin{proof}
By Lemma~\ref{lemtiltedcumulant}
\begin{multline}
\log\,  \E_{\nu^f_{\bm q', \cD}}\left(e^{
-(P_3(S \boxempty T)+S_3(S \boxempty T)-C_4(S \boxempty T)+4P_2(S \boxempty T))\lam^4}
 \right) \\
= -\lam^4 \E_{\nu^g_{\bm q', \cD}}\left(
P_3(S \boxempty T)+S_3(S \boxempty T)-C_4(S \boxempty T)+4P_2(S \boxempty T)
 \right) \, ,
\end{multline}
where 
\[
g(S,T)=f(S,T)-\theta\cdot(P_3(S \boxempty T)+S_3(S \boxempty T)-C_4(S \boxempty T)+4P_2(S \boxempty T))\lam^4\, ,
\]
and $\theta\in [0,1]$. 
We calculate the expectation on the RHS of the above. 
One easily verifies that $g$ is $n\lam^3/(6\alpha)$-local and so by Lemma~\ref{lemSubgCart} and \eqref{eqlemfixedconfiglocalcor} of Lemma~\ref{lemfixedconfiglocal},
{\Small
\begin{align}
&\lam^4  \E_{\nu^g_{\bm q', \cD}}(
P_3(S \boxempty T))  \\
&=
\lam^4(1+O(n\Delta \lam^3))
\left[
b\cdot12\binom{a}{4}q_A'^3 
+
a\cdot12\binom{b}{4}q_B'^3
+
6\cdot 3\binom{a}{3}\binom{b}{2}q_A'^2q_B'
+
6\cdot 3\binom{b}{3}\binom{a}{2}q_B'^2q_A'
\right] \\
&=
\lam^4
\left[
\frac{1}{2}ba^4q_A^3 
+
\frac{1}{2}ab^4q_B^3 
+
\frac{3}{2}a^3b^2q_A^2q_B
+
\frac{3}{2}b^3a^2q_B^2q_A
\right] +o(1)\\
&=
\frac{1}{2}\lam^4ab(aq_A+bq_B)^3
 +o(1)\, .
\end{align}}
The final expression can be arrived at  heuristically by noting that the product graph $S\boxempty T$ has $ab$ vertices and the expected degree of any vertex is approximately $aq_A+bq_B$.
Similarly
\begin{align}
\lam^4 \E_{\nu^g_{\bm q', \cD}}(
S_3(S \boxempty T)) 
&=
\frac{1}{6}\lam^4ab(aq_A+bq_B)^3+o(1)\, ,\\
\lam^4 \E_{\nu^g_{\bm q', \cD}}(
C_4(S \boxempty T)) &= \frac{1}{4}\lam^4a^2b^2 q_Aq_B + o(1)\, , \\
4\lam^4 \E_{\nu^g_{\bm q', \cD}}(
P_2(S \boxempty T)) &= 2ab(aq_A+bq_B)^2 +o(1)\, .
\end{align}
The claim follows. 
\end{proof}
Lemma~\ref{lemmaZABabapproxsuper} follows by combining ~\eqref{eqZABbarnuexp} and \eqref{eqExpectationProd} with Lemma~\ref{lem:P2MomentSuper} and Claim~\ref{claim:P3S3}.
\end{proof}

As mentioned at the start of this section, the proof of Lemma~\ref{lemZABapprox1super} is deferred to Appendix~\ref{secAppCalc}.

We now prove Proposition~\ref{propGroundStateStrong}. The proof is a minor variant of that of Lemma~\ref{lemZstrongsimsum}.
\begin{proof}[Proof of Proposition~\ref{propGroundStateStrong}.]
Let  $M:=5(n\log n)^{1/4}$. By~Lemma~\ref{lemZABapprox1super},
\[
\left|\frac{ Z_{\textup{mod}}(\lam)}{ Z_{\textup{strong}}(\lam)}-1\right|
 \leq 
(1+o(1))  \sum_{k\geq M} \frac{\binom{n}{\lfloor n/2 \rfloor+k}}{\binom{n}{\lfloor n/2 \rfloor}} e^{O(n^2q\lam^4k^2)} (1+\lam)^{-k^2}.
\]
Noting that $n^2q\lam^4k^2=o(\lam k^2)$   the RHS is bounded above by
\begin{align}\label{eqGausstail}
 \sum_{k\geq M} e^{-\lam k^2/2} \leq \int_{M-1}^\infty e^{-\lam x^2/2}\, dx \leq \frac{1}{\lam(M-1)} e^{-(M-1)^2\lam/2}=O(n^{-3})\, ,
\end{align}
 where for the second inequality we used the standard integral estimate $\int_{t}^\infty e^{-ax^2}\, dx\leq e^{-at^2}/(2at)$ for $a,t>0$. This proves~\eqref{eqStrongtoMod}. The proof of~\eqref{eqStrongtoModTV} is identical to the proof of \eqref{eqmusimmumod} except that we use Lemma~\ref{lemCaptureStrong} in place of Lemma~\ref{lemCaptureMod}.
\end{proof}

Recall that Corollary~\ref{corZmodsimsum} now follows from Proposition~\ref{lemZerothApprox} and Propositions~\ref{propGroundStateRefinedAlt},~\ref{lemZmodsimsum} and~\ref{propGroundStateStrong}.
Lemma~\ref{lemSuperMainLem} then follows from Corollary~\ref{corZmodsimsum} in precisely the same way that Lemma~\ref{lemZapproxER} followed from Corollary~\ref{corZmodsimsumRest}.

The proof of Theorem~\ref{thmGnpStructureSuper} is now identical to the proof of Theorem~\ref{thmGnpSubCritLDprob} in Section~\ref{subsec:imbalance}.

\subsection{Chromatic number}
In this section we prove Theorem~\ref{thm4colorGnp} which states that for $\eps \in (0,1/14]$ and  
$p \sim (1-\eps)\sqrt{\frac{\log n}{n}}$, if $G$ is sampled from $G(n,p)$ conditioned on $\cT$, then the independence number of $G$ is $o(n)$ whp. In particular, the chromatic number of $G$ is $\omega(n)$ whp.

\begin{proof}[Proof of Theorem~\ref{thm4colorGnp}]
Set $\lam=p/(1-p)\sim  (1-\eps)\sqrt{\frac{\log n}{n}}$ and fix $\pi=(A,B)$ strongly balanced. Let $\mu_{\lam,2}^\pi$ denote the measure $\mu_{\lam,2}$ conditioned on the event that $\pi$ is chosen at Step 1 in Algorithm~\ref{algMulam2}. Let $\bG\sim \mu^\pi_{\lam,2}$. By Theorem~\ref{thmGnpStructureSuper} it suffices to show that $\mathbb{P}[\alpha(\bG) = o(n)] = 1-o(1)$.
For this, it will suffice to show that  $\mathbb{P}[\alpha(\bG_A) = o(n)] = 1-o(1)$ and $\mathbb{P}[\alpha(\bG_B) = o(n)] = 1-o(1)$.

Note that $\bG_A$ is distributed according to the  random graph $G(A, q_2, \psi)$. 
 We fix $U\subseteq A$ and estimate the probability that $U$ is an independent set in $\bG_A$. Let $\binom{U}{2}=\{e_1, \ldots, e_{N}\}$ where $N=\binom{|U|}{2}$. Let $E_i$ denote the event that $e_i$ is an edge of $\bG_A$, then
{\small
\[
\P( U \textup{ is an independent set in }\bG) = \P\left (\bigcap_{i=1}^{N}E_i^c \right) = \prod_{i=1}^N  \P\left (E_i^c \, \bigg |\,   \bigcap_{j<i}E_j^c\right) = \prod_{i=1}^N\left[1-  \P\left (E_i \, \bigg |\,   \bigcap_{j<i}E_j^c\right)\right]\, .
\]}
Fix $i\in[N]$. Then by Lemma~\ref{lemWarmup}
\[
 \P\left (E_i \, \bigg |\,   \bigcap_{j<i}E_j^c\right)
 =
 \left(1+O\left(n\Delta\lam^3\right)\right)q_2 \geq q_2/2 \,.
\]
It follows that
\begin{align*}
\P( U \textup{ is an independent set in }\bG_A)
\leq  (1-q_2/2)^{\binom{|U|}{2}}
\leq 
\exp\left\{- \frac{q_2}{2}{\binom{|U|}{2}}\right\}\, .
\end{align*}
Note that since $\lam\sim  (1-\eps)\sqrt{\frac{\log n}{n}}$ we have $q_2\geq n^{-1+\eps/2} $.
Let $k=n^{1-\eps/4}$.
We conclude by a union bound that
\[
\mathbb{P}[\alpha(\bG_A) \geq k] \leq \binom{a}{k}\exp\left\{- \frac{q_2}{2}{\binom{k}{2}}\right\}\leq \exp\left\{ k \log (ea/k)- \frac{q_2}{2}{\binom{k}{2}}\right\}=o(1)\, .
\]
We conclude that $\mathbb{P}(\alpha(\bG_A) < n^{1-\eps/4})=1-o(1)$ and similarly for $\bG_B$ concluding the proof. 
\end{proof}

\subsection{A sandwiching theorem}

Although the distribution of the defect edges is not that of a pair of \ER random graphs in the supercritical defect regime, the distribution of edges is sandwiched between two \ER random graphs (conditioned on triangle-freeness) with edge probabilities that differ by a small amount.  Recall the definitions of $q_2, \psi$ from~~\eqref{eqq2Def} and\eqref{eqPsiDef}. For $A\subseteq [n]$ and $q\in(0,1)$ we let $G(A,q|\cT)$ denote the \ER
graph $G(A,q)$ conditioned on triangle-freeness.

\begin{prop}
\label{propSandwhich}
Suppose $\lam \geq \frac{13}{14} \sqrt{\frac{\log n}{n}}$.  Let $q_{\ell} =q_2 (1- n^{-2/5}) $ and $q_u =q_2 (1+ n^{-2/5})$.   Then for a vertex set $A\subseteq [n]$, there is a coupling of the distributions $G(A,q_\ell  |  \cT), G(A,q_2,\psi), G(A,q_u  |  \cT)$ so that with probability $1-o(1)$,
\[ G(A,q_{\ell}  |  \cT) \subseteq G(A,q_2, \psi) \subseteq G(A,q_u  |  \cT) \,. \]
\end{prop}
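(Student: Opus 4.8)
The plan is to establish the two inclusions separately via a standard edge-revealing coupling, using Lemma~\ref{lemWarmup} and a version of Janson's inequality (Corollary~\ref{corZABJanson}) to control the conditioning on triangle-freeness. First I would fix a canonical ordering $e_1, \ldots, e_N$ of $\binom{A}{2}$ and reveal edges one at a time, maintaining the invariant that the partial graph revealed so far is identical under all three processes up to the bias in the per-edge inclusion probabilities. The key quantitative input is that, for the exponential random graph $G(A, q_2, \psi)$, Lemma~\ref{lemWarmup} tells us that conditioned on any history $H$ of previously revealed edges (with $H \cup e_i$ triangle-free and of bounded max degree), the conditional probability that $e_i$ is present is $(1 + O(n\Delta\lambda^3)) q_2$. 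Since $n\Delta\lambda^3 = \tilde O(n^{-1/2}) = o(n^{-2/5})$ in this regime, this conditional probability lies strictly between $q_\ell$ and $q_u$ with room to spare. For $G(A, q_\ell \mid \cT)$ and $G(A, q_u \mid \cT)$, the analogous computation gives that conditioned on a triangle-free history $H$, the probability $e_i$ is present is $q_\ell \cdot \frac{\P(\cT \mid H \cup e_i)}{\P(\cT \mid H)}$, which by the Harris--FKG inequality (triangle-freeness is a decreasing event, adding an edge can only decrease it) is \emph{at most} $q_\ell$, and similarly at most $q_u$; these ratios are $1 - o(1)$ by the Janson-type estimates in Corollary~\ref{corZABJanson} (or directly, since each edge lies in at most $|A|$ triangles each with probability $O(q^2)$).

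The coupling itself: process the edges in order, and at step $i$ draw a uniform $U_i \in [0,1]$. For $G(A, q_\ell \mid \cT)$, include $e_i$ iff $U_i \le q_\ell^{H}(e_i)$ where $q_\ell^H(e_i)$ is its conditional inclusion probability given the history; similarly define thresholds for the other two processes. The point is to verify that the thresholds are monotone: $q_\ell^H(e_i) \le q_2^H(e_i) \le q_u^H(e_i)$ whenever the three histories agree. This monotonicity would follow from the three facts above: $q_\ell^H(e_i) \le q_\ell \le (1-o(1)) q_2 \le q_2^H(e_i)$ (using $q_\ell = q_2(1 - n^{-2/5})$ and the $O(n\Delta\lambda^3) = o(n^{-2/5})$ error bound), and symmetrically $q_2^H(e_i) \le (1 + o(1)) q_2 \le q_u = q_2(1 + n^{-2/5}) \le$ wait --- we actually want $q_2^H(e_i) \le q_u^H(e_i)$, and since $q_u^H(e_i) = q_u \cdot (1 - o(1))$ while $q_2^H(e_i) = q_2(1 + o(1))$ with the $o(1)$ being $o(n^{-2/5})$, we again get the inequality with room to spare. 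If the thresholds are monotone at every step, then the realized edge sets are nested: whenever $e_i$ is included in the lower process it is included in the middle, and whenever included in the middle it is included in the upper. One must also handle the events that a history steps outside the support (a triangle is created, or the max degree bound $\Delta(G) \le 50\max\{qn, \log n\}$ is violated); but by Corollary~\ref{corZABJanson} and a Chernoff bound (Lemma~\ref{lemChernoff}) each of these occurs with probability $o(1)$, and on the complement the coupling runs to completion with the nesting intact.

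The main obstacle I anticipate is bookkeeping the conditioning carefully enough to apply Lemma~\ref{lemWarmup} and the FKG/Janson bounds uniformly over histories: the conditional inclusion probability in $G(A, q_2, \psi)$ is only controlled when the history $H$ satisfies $\Delta(H \cup e) \le d$ and $H \cup e$ is triangle-free, so one needs to argue that the revealing process stays in this "good" region throughout with probability $1 - o(1)$, and handle the low-probability bad region by absorbing it into the $o(1)$ failure probability of the coupling. A clean way to do this is to run all three processes \emph{unconditionally} on the good region and simply declare failure (outputting anything) as soon as any of them leaves it; the total variation cost of this is $o(1)$ by the union bound over the three bad events, each estimated via Corollary~\ref{corZABJanson} (for triangle-freeness) and Lemma~\ref{lemChernoff} (for the degree bound). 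A secondary point requiring care is that $q_\ell, q_u$ and $q_2$ all satisfy $\max\{q n, \log n\}$ with the \emph{same} $d$ up to $(1 + o(1))$, so the conditioning events "$\Delta \le d$" can be taken identical across the three processes after enlarging $d$ by a harmless constant factor; I would state the proposition with that understanding, or note that the $(1 \pm n^{-2/5})$ perturbation of $q_2$ changes $d$ by a factor $1 + o(1)$ which is absorbed into the $50$ in the definition.

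\begin{proof}[Proof of Proposition~\ref{propSandwhich}]
We reveal the edges of $\binom{A}{2}$ one at a time in a fixed order $e_1, \ldots, e_N$ and couple the three processes by a common sequence of uniform random variables, declaring failure (and coupling arbitrarily) if any of the three partial graphs ever violates triangle-freeness or the relevant maximum degree bound. On the complementary good event, at each step the conditional probability that $e_i$ is present, given the common history $H$, equals $q_\ell^H(e_i) = q_\ell\,\P(\cT \mid H\cup e_i)/\P(\cT\mid H)$ for the lower process, $q_u^H(e_i) = q_u\,\P(\cT \mid H\cup e_i)/\P(\cT\mid H)$ for the upper process, and, by Lemma~\ref{lemWarmup}, $q_2^H(e_i) = (1 + O(n\Delta\lam^3))\,q_2$ for $G(A,q_2,\psi)$. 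The Harris--FKG inequality gives $\P(\cT\mid H\cup e_i)/\P(\cT\mid H)\le 1$, and a union bound over the at most $|A|$ triangles through $e_i$ (each present with probability $O(q^2)$, by Corollary~\ref{corZABJanson}) gives $\P(\cT\mid H\cup e_i)/\P(\cT\mid H) = 1 - o(1)$. Since $n\Delta\lam^3 = \tilde O(n^{-1/2}) = o(n^{-2/5})$, while $q_\ell = q_2(1-n^{-2/5})$ and $q_u = q_2(1+n^{-2/5})$, we obtain
\[
q_\ell^H(e_i) \le q_\ell \le q_2^H(e_i) \le (1+o(n^{-2/5}))q_2 \le q_u(1-o(1)) = q_u^H(e_i)
\]
for $n$ large. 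Hence at each step the three inclusion thresholds are monotone, so the realized edge sets are nested, giving $G(A,q_\ell\mid\cT) \subseteq G(A,q_2,\psi) \subseteq G(A,q_u\mid\cT)$ on the good event. Finally, by Corollary~\ref{corZABJanson} the probability that any of the three (unconditional) processes creates a triangle is $o(1)$, and by Lemma~\ref{lemChernoff} the probability that any of them violates the degree bound $50\max\{qn,\log n\}$ is $o(1)$; enlarging the constant in the degree bound by a $1+o(1)$ factor makes the three degree-bound events identical. Thus the good event has probability $1-o(1)$, completing the proof.
\end{proof}
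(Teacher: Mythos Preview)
Your overall strategy—reveal edges sequentially, use common uniforms, and compare inclusion thresholds—is exactly the coupling the paper uses. But there is a genuine gap in your execution.

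You compare the three thresholds ``given the common history $H$''. After the very first step at which the decisions differ (some $U_i$ lands between two thresholds), the histories are no longer common: they are merely nested, $H^\ell_{i-1}\subseteq H_{i-1}\subseteq H^u_{i-1}$. From then on you must compare $q_2^{H_{i-1}}(e_i)$ to $q_u^{H^u_{i-1}}(e_i)$ with \emph{different} histories, and here the monotonicity can fail. If $H^u_{i-1}$ contains two edges forming a triangle with $e_i$, at least one of which is not in $H_{i-1}$, then $q_u^{H^u_{i-1}}(e_i)=0$ while $q_2^{H_{i-1}}(e_i)\approx q_2>0$; the middle process may then include $e_i$ while the upper process cannot, destroying $G(A,q_2,\psi)\subseteq G(A,q_u\mid\cT)$. (The symmetric problem arises for the lower inclusion when $H_{i-1}$ blocks $e_i$ but $H^\ell_{i-1}$ does not.) Your ``good event'' does not exclude this: the partial graphs are always triangle-free by construction; what goes wrong is that $H^u_{i-1}\cup e_i$ contains a triangle, and you never account for that. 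A naive union bound over $e_i$ and blocking pairs gives probability $O(n^3q^2)$, which is not $o(1)$ here.

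The paper avoids arguing pointwise threshold monotonicity altogether. It bounds $\P(X_i=1\wedge X_i^u=0)$ directly for each $i$ and union-bounds over $i$. The decomposition is by whether $e_i$ is blocked by the \emph{final} graphs $E,E^u$: on $\bar B_i\wedge \bar B_i^u$ (neither blocks) the thresholds do compare correctly up to the $\psi d$ error, as you show; the residual case $\bar B_i\wedge B_i^u$ is exactly the blocking scenario you miss. The key observation there is that for $E^u$ to block $e_i$ while $E$ does not, one of the two blocking edges $e_j$ must lie in $G^u\setminus G$, i.e.\ satisfy $X_j^u=1,\,X_j=0$, and this discrepancy event has probability $O(q_u\,\psi d)$ rather than $O(q_u)$. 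That extra factor of $\psi d=\tilde O(n^{-1/2})$ makes $\P(\bar B_i\wedge B_i^u)=O(nq_u^2\psi d)$; multiplying by $\P(X_i=1\mid \bar B_i\wedge B_i^u)=O(q_u)$ gives $O(nq_u^3\psi d)=o(n^{-2})$, and now the union bound over the $O(n^2)$ edges succeeds.
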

\begin{proof}
Let $a=|A|$.  We construct the coupling as follows.  Order the $\binom {a}{2}$ possible edges arbitrarily $e_1, \dots, e_{\binom{a}{2}}$.  Let $X^\ell_i, X_i, X^u_i$ be the indicator random variables that $e_i$ is present in $G(A,q_\ell  |  \cT), G(A,q_2,\psi),G(A,q_u  | \cT)$ respectively.  

Let $E_i=\{e_j : X_j=1, j\leq i\}$ and define $E_i^\ell, E_i^u$ similarly.  Select  iid $U[0,1]$ random variables $U_1, \dots , U_{\binom{a}{2}}$.   For $i=0, \dots, \binom{a}{2}-1$, we set $X_{i+1} =1$ if $U_{i+1} \le \P[e_{i+1} \in  G(A,q_2, \psi)  |  E_i]$ and $0$ otherwise; and likewise with $X^\ell_{i+1}$ and $X^u_{i+1}$; in particular, we use the same uniform random variable for each of the three processes.  Clearly the coupling produces faithful copies of $G(A,q_\ell  |  \cT), G(A,q_2,\psi),G(A,q_u  |  \cT)$.  

We now argue about containment.   
We will show that  $G(A,q_2, \psi) \subseteq G(A,q_u  |  \cT)$ whp; the proof that $G(A,q_\ell  |  \cT) \subseteq G(A,q_2, \psi)$ whp is similar and we omit it.   By a union bound it suffices to show that $\P(X_i =1 \wedge X^u_i =0 ) = o(n^{-2})$. 

We say that an edge $e_i$ is \emph{blocked} by a set $E$ of edges if $e_i\cup E$ contains a triangle.  Let $B_i, B_i^u$ denote the event that $e_i$ is blocked by the final graphs $E:=E_{{\binom{a}{2}}}, E^u:=E_{{\binom{a}{2}}}^u$ respectively.
 Noting that $X_i=1$ only if $e_i$ is not blocked by $E$ we write 
\begin{align}\label{eqXiXiuTP}
\P(X_i =1 \wedge X^u_i =0 ) 
&
= \P(X_i =1 \wedge X^u_i =0 \mid \bar B_i \wedge \bar B_i^u )\P( \bar B_i \wedge \bar B_i^u)\\
&\phantom{=} +   \P(X_i =1 \wedge X^u_i =0 \mid \bar B_i \wedge  B_i^u )\P( \bar B_i \wedge  B_i^u)\, ,
\end{align}
where $\bar B_i $ denotes the complement of the event $B_i $.  
\begin{claim}\label{claimWedge}
\[
\P(X_i =1 \wedge X^u_i =0 \mid \bar B_i \wedge \bar B_i^u ) \leq 2ne^{-d/3}\, ,
\]
where $d:=50\max\{q_u n, \log n\}$.
\end{claim}
\begin{proof}
Let $\cE_i$ denote the event that neither $E_{i-1}$ nor $E^u_{i-1}$ block $e_i$, and \\ $\max\{\Delta(E^u_{i-1}), \Delta(E_{i-1})\}\leq d/3$. Note that 
\begin{multline}\label{eqBothUnb}
\P(X_i =1 \wedge X^u_i =0 \mid \bar B_i \wedge \bar B_i^u ) 
= \P(X_i =1 \wedge X^u_i =0 \mid \bar B_i \wedge \bar B_i^u \wedge \cE_i)\P(\cE_i \mid \bar B_i \wedge \bar B_i^u)\\
\phantom{=}+ \P(X_i =1 \wedge X^u_i =0 \mid \bar B_i \wedge \bar B_i^u \wedge \bar\cE_i)\P(\bar\cE_i \mid \bar B_i \wedge \bar B_i^u)\, .
\end{multline}
Observe that 
\[
\P(X_i =1 \wedge X^u_i =0 \mid \bar B_i \wedge \bar B_i^u \wedge \cE_i)
\leq 
\frac{\P(X_i =1 \wedge X^u_i =0 \mid  \cE_i)}{\P(\bar B_i \wedge \bar B_i^u \mid  \cE_i)}\, .
\]
We will show that the numerator on the RHS is $0$. By the definition of the coupling it suffices to show that for $F, F^u$ such that
$\P(E_{i-1}=F \mid \cE_i)>0$ and $\P(E^u_{i-1}=F^u \mid \cE_i)>0$ we have 
\begin{align}\label{eqThreshcomp}
    \P(X_i=1 \mid E_{i-1}=F) < \P(X^u_i=1 \mid E^u_{i-1}=F^u)\, .
\end{align}
We begin by estimating the LHS. 
\begin{align}\label{eqUthreshold}
    \P(X_i=1 \mid E_{i-1}=F) = \P(X_i=1 \mid E_{i-1}=F, \bar B_i)\P(\bar B_i \mid E_{i-1}=F)\, .
\end{align}
We estimate the two probabilities on the RHS. First note that since $\P(E_{i-1}=F \mid \cE_i)>0$ by assumption, $F$ does not block $e_i$ and $\Delta(F)\leq d/3$. There are therefore at most $2d/3$ single edges and at most  $n$ pairs of edges whose addition to $F$ could block $e_i$. We conclude from Lemma~\ref{lemWarmup} that 
\begin{align}\label{eqBlockhalf}
\P(\bar B_i \mid E_{i-1}=F)\geq 1- (nq_u^2 + 2q_ud/3) \geq 1-q_ud\, .
\end{align}
Let $\cH_i$ denote the set of all possible realisations $H$ of $E\backslash e_i$ that do not block $e_i$. Let $\cH_i'\subseteq \cH_i$ denote the subset of graphs that satisfy $\Delta(H)\leq 2d/3$.
\begin{align}\label{eqXicondUnB}
 \P(X_i=1 \mid E_{i-1}=F, \bar B_i)= \sum_{H\in \cH_i}\P(X_i=1 \mid E\backslash{e_i}=H)\P(E\backslash{e_i}=H \mid E_{i-1}=F, \bar B_i)\, .
\end{align}
We will split the above sum according to whether $H\in \cH_i'$ or not. If $H\in\cH_i'$, then by Lemma~\ref{lemWarmup}, noting that $q_2=o(\psi d)$,
\begin{align}\label{eqXiCondFull}
\P(X_i=1 \mid E\backslash{e_i}=H)= (1+O(\psi d))q_2
\end{align}
Suppose now that $H\in \cH_i\backslash \cH_i'$ so that in particular $\Delta(H)>2d/3$. We have 
\begin{align}\label{eqHnotinHi}\\
    \P(E\backslash{e_i}=H \mid E_{i-1}=F, \bar B_i)= \frac{\P(E\backslash{e_i}=H,  \bar B_i \mid E_{i-1}=F)}{\P(\bar B_i \mid E_{i-1}=F)}\leq \frac{\P(\Delta(E)>2d/3 \mid E_{i-1}=F)}{\P(\bar B_i \mid E_{i-1}=F)}\, .
\end{align}
If $\Delta(E)> 2d/3$ then since $\Delta(F)\leq d/3$ there exists a vertex $v\in V$ with at least $2d/3-d/3=d/3$ incident edges that do not belong to $F$. By Lemma~\ref{lemWarmup}, the probability of this occurring is at most the probability a binomial $\bin(n, q_u)$ random variable is at least $d/3=(50/3)\max\{q_u n, \log n\}$. By Chernoff's inequality (Lemma~\ref{lemChernoff}), this occurs with probability at most $e^{-d/3}$. By a union bound over $v\in V$ we have
\[
\P(\Delta(E)>2d/3 \mid E_{i-1}=F)\leq ne^{-d/3}\, .
\]
Recalling~\eqref{eqBlockhalf} and 
returning to~\eqref{eqHnotinHi}, we conclude that
\[
\P(E\backslash{e_i}=H \mid E_{i-1}=F, \bar B_i)\leq 2ne^{-d/3}\, .
\]
Combining this fact, with~\eqref{eqXiCondFull} and splitting the sum according to whether $H\in \cH_i'$ or not we have 
\[
 \P(X_i=1 \mid E_{i-1}=F, \bar B_i) = (1+O(\psi d))q_2(1-O(ne^{-d/3})) + O(ne^{-d/3}) = (1+O(\psi d))q_2\, .
\]
Recalling~\eqref{eqBlockhalf} again and 
returning to~\eqref{eqUthreshold} we have
\[
 \P(X_i=1 \mid E_{i-1}=F)= (1+O(\psi d))q_2\, .
\]
An identical argument shows that 
\[
\P(X^u_i=1 \mid E^u_{i-1}=F)= (1+O(\psi d))q_u\, .
\]
Recalling that $q_u =q_2 (1+ n^{-2/5}) $ and $\psi d = o(n^{-2/5})$, we see that the inequality at~\eqref{eqThreshcomp} holds. We conclude that
\begin{align}\label{eqBothUnbEi}
    \P(X_i =1 \wedge X^u_i =0 \mid \bar B_i \wedge \bar B_i^u \wedge \cE_i)=0\, .
\end{align}

Returning to~\eqref{eqBothUnb} we turn to estimating $\P(\bar\cE_i \mid \bar B_i \wedge \bar B_i^u)$.
For this, we note that under the event $\bar B_i \wedge \bar B_i^u$, the only way for the event $\bar\cE_i$ to occur is if $\max\{\Delta(E^u_{i-1}), \Delta(E_{i-1})\}\ge d/3$. Arguing as above (i.e., applying Lemma~\ref{eqWarmup}, Chernoff's inequality  and a union bound) this occurs with probability at most $2n e^{-d/3}$.
Therefore 
\[
\P(\bar\cE_i \mid \bar B_i \wedge \bar B_i^u)\leq 2n e^{-d/3}\, .
\]
Combining this with~\eqref{eqBothUnb} and~\eqref{eqBothUnbEi} completes the proof of the claim.
\end{proof}

We now return to ~\eqref{eqXiXiuTP} and bound $\P( \bar B_i \wedge  B_i^u)$. For the event $\bar B_i \wedge  B_i^u$ to occur, there must exist $j,k$ such that $e_i, e_j, e_k$ forms a triangle, $X_j^u=1, X_k^u=1$ and $\{X_j=0$ or $X_k=0\}$. By a union bound
\[
\P( \bar B_i \wedge  B_i^u)\leq 2n\cdot  \P(X_j^u=1, X_k^u=1, X_j=0).
\]
Let $\cE_{j}$ denote the event that neither $E_{j-1}$ nor $E^u_{j-1}$ block $e_j$ and $\max\{\Delta(E^u_{j-1}), \Delta(E_{j-1})\}\leq d/3$.
\begin{align}
\P(X_j^u=1, X_k^u=1, X_j=0)
&\leq \P(X_j^u=1, X_k^u=1, X_j=0\mid \cE_j)\P(\cE_j)\\
&\phantom{=}+  \P(X_j^u=1, X_k^u=1, X_j=0\mid \bar\cE_j)\P(\bar\cE_j)\, .
\end{align}
It is simple to bound the terms on the RHS by Lemma~\ref{lemWarmup}. 
We bound $\P(\bar\cE_j)=O(nq_u^2)$, $ \P(X_j^u=1, X_k^u=1, X_j=0\mid \bar\cE_j)=O(q_u^2)$, $\P(X_j^u=1, X_k^u=1, X_j=0\mid \cE_j) = O(q_u^2\psi d)$ so that
\[
\P(X_j^u=1, X_k^u=1, X_j=0)\leq O(n q_u^4 +q_u^2\psi d) = O(q_u^2\psi d)\, .
\]
It follows that
\[
\P( \bar B_i \wedge  B_i^u)=O(nq_u^2\psi d)
\]
Finally note that $\P(X_i =1 \wedge X^u_i =0 \mid \bar B_i \wedge  B_i^u )=O(q_u)$ and so by~\eqref{eqXiXiuTP} and Claim~\ref{claimWedge}
\[
\P(X_i =1 \wedge X^u_i =0 ) = O(2ne^{-d/3} + nq_u^3\psi d  ) =o(n^{-2})\, . \qedhere
\]
\end{proof}

\subsection{Emergence of the giant  defect component and connectivity}

 We now prove Theorem~\ref{thmGnpGiant}.   Note that from~\eqref{eqqdef},\eqref{eqq0q1Def},\eqref{eqq2Def}, we have 
 
\[ q_0 = q_2(1+ O(\mu \lam^3 + 
\lam^3 n))  = q_2(1+ o(n^{-2/5}) ) \,.\]
Let $q_u$ and $q_\ell$ be as in Proposition~\ref{propSandwhich}.
If $\lam$ is such that $q_0 = \frac{2}{n} \pm \frac{\omega(n)}{n^{4/3}}$ with $\omega(n) \gg 1$, then we have, with $a= |A|$, and using the fact that $a=n/2+\tilde O(n^{1/4})$ since $(A,B)$ is strongly balanced,
\begin{align*}
    q_u &=   \left( \frac{2}{n} \pm \frac{\omega(n)}{n^{4/3}} \right) (1+ o(n^{-2/5})) =  \frac{1}{a} \pm (2^{-4/3}+o(1)) \frac{\omega(n)}{a^{4/3}} \\
    q_\ell &= \left( \frac{2}{n} \pm \frac{\omega(n)}{n^{4/3}} \right) (1+ o(n^{-2/5})) =  \frac{1}{a} \pm (2^{-4/3}+o(1)) \frac{\omega(n)}{a^{4/3}}  \,.
\end{align*}
Similarly, when $q_0 = \frac{2}{n} + \frac{\omega}{n^{4/3}}$ with $\omega \in \R$ constant, then we have
\begin{align*}
    q_u &=   \left( \frac{2}{n} + \frac{\omega}{n^{4/3}} \right) (1+ o(n^{-2/5})) =  \frac{1}{a} +  \frac{2^{-4/3}\omega + o(1)}{a^{4/3}} \\
    q_\ell &= \left( \frac{2}{n} + \frac{\omega}{n^{4/3}} \right) (1+ o(n^{-2/5})) =  \frac{1}{a} + \frac{2^{-4/3}\omega +o(1)}{a^{4/3}}  \,.
\end{align*}

For an \ER random graph of constant average degree, the probability of having a triangle is bounded away from $1$, and so any property that holds with probability $1-o(1)$ continues to hold with probability $1-o(1)$ after conditioning on triangle-freeness.   In particular, classic results on the giant component phase transition in random graphs (e.g.,~\cite{bollobas1998random}) and the estimates on $q_\ell, q_u$ above tell us that:
\begin{itemize}
\item  If   $q_0 = \frac{2}{n} - \frac{\omega(n)}{n^{4/3}}$ with $1 \ll \omega(n) \ll n^{1/3}$ then in   both $G(A, q_u)$ and  $G(A, q_\ell)$, whp the  largest connected component is of size $\Theta(n^{2/3} \omega^{-2} \log \omega ) $.
\item If   $q_0 = \frac{2}{n} + \frac{\omega}{n^{4/3}}$ with $\omega$ constant (positive or negative) then in both $G(A, q_u)$ and  $G(A, q_\ell)$, whp the  largest connected component is of size $\Theta(n^{2/3})$.
    \item If   $q_0 = \frac{2}{n} + \frac{\omega(n)}{n^{4/3}}$ with $1 \ll \omega(n) \ll n^{1/3}$ then in both $G(A, q_u)$ and  $G(A, q_\ell)$, whp the  largest connected component is of size $(2+o(1)) \cdot \omega \cdot a^{2/3}=(2+o(1)) \cdot \omega \cdot (n/2)^{2/3}$.
    \item If   $q_0 = \frac{c}{n}$ with $c>2$ fixed, then in both $G(A, q_u)$ and  $G(A, q_\ell)$, whp the  largest connected component is of size $\Theta(n)$.
\end{itemize}
Under the coupling of Proposition~\ref{propSandwhich}, whp the size of the largest component of $G(A,q_2,\psi)$ is bounded between the size of the largest component of $G(A,q_\ell| \cT)$ and that of $G(A,q_u| \cT)$.  The first four statements of Theorem~\ref{thmGnpGiant} then follow. 

Now fix $\eps>0$ and suppose $\lam$ is such that $q_0 = (1+\eps) \frac{2 \log n}{n}$.  By the same argument as above, we have that $q_\ell = (1 + \eps +o(1)) \frac{ \log a}{a}$.  Via Proposition~\ref{propSandwhich}, to show that $G(A,q_2, \psi)$ is connected whp it suffices to show that $G(A,q_\ell|\cT)$ is connected whp.  In this range of $q_{\ell}$ the probability of triangle-freeness in $G(A,q_\ell)$ is $o(1)$ and so we need to be a little careful about the conditioning. 

To prove $G(A,q_\ell|\cT)$ is connected whp when $q_\ell = (1 + \eps +o(1)) \frac{ \log a}{a}$ we bound  the expected number of non-trivial cuts with no edges (if the graph is disconnected there must be at least one such cut).  Call this expectation $\E Y$. 

We will use the fact that for any set of edges $B$, and any edge $e \notin B$,  $\P(e \in G|B \cap G = \emptyset) = q_\ell (1+ O(n q_{\ell}^2))$. Let $\bG$ be distributed as $G(A,q_\ell|\cT)$ conditioned on $B \cap  G = \emptyset$, and let $\bH =\bG \setminus e$.  Then if $H \cup e$ contains no triangles, $\P(e \in \bG| \bH=H) =q_\ell$; on the other hand, by stochastic domination of $\bG$ by $G(A, q_\ell)$, $\P(\bH \cup e \text{ triangle-free}) =1+O(n^2 q_\ell)$, and the fact follows.

We then have
{\small
\begin{align*}
    \E Y &\le  \sum_{k=1}^{\lceil a/2 \rceil} \binom{a}{k} \left( 1 -q_\ell +O(nq_\ell^3)   \right)^{k (a-k)} \\
    &\le \sum_{k=1}^{\lceil a/2 \rceil} \binom{a}{k} \left( 1 -\frac{(1+\eps') \log a}{a}  \right) ^{k (a-k)}  \text{ for some fixed }\eps'>0 \\
    &\le \sum_{k=1}^{\lfloor \sqrt{n}/\log n \rfloor} \left( \frac{a e  }{ k}   \right)^k \left( 1 -\frac{(1+\eps') \log a}{a}  \right) ^{k a - n/(\log n)^2} +   2\sum_{k=\lfloor \sqrt{n}/\log n \rfloor}^{\infty} \left( \frac{e a }{ k}   \right)^k \left( 1 -\frac{(1+\eps') \log a}{a}  \right) ^{k a /2}\\
    &\le 2  \sum_{k=1}^{\infty} \left( \frac{ae  }{ k}   \right)^k a^{-(1+\eps') k}  + 2 \sum_{k=\lfloor \sqrt{n}/\log n \rfloor}^\infty (6 \sqrt{a} \log n)^k a^{-(1+\eps') k/2}  \\
    &= o(1) \,.
\end{align*}}
Thus whp $G(A, q_\ell | \cT)$ is connected and thus so is $G(A,q_2, \psi)$.  

For the other side, $G(A,q_u|\cT)$ is stochastically dominated by $G(A,q_u)$; when $q_0 = (1-\eps) \frac{2 \log n}{n}$ and so $q_u = (1- \eps +o(1)) \frac{\log a}{a}$, $G(A,q_u)$ is disconnected whp; thus so is $G(A,q_u|\cT)$.  Then by Proposition~\ref{propSandwhich}, $G(A,q_2,\psi)$ is disconnected whp.  This proves the  last statement of Theorem~\ref{thmGnpGiant}.
 
\section{Results for $\cT(n,m)$}
\label{secFixedM}

In this section we transfer our results from $G(n,p)$ conditioned on triangle-freeness to the uniform distribution on $\cT(n,m)$ and prove the results of Section~\ref{secIntro}.

Recall the identity
\begin{align*}
|\cT(n,m)| =   \frac{ Z(\lam)}{\lam^m} \cdot \mu_{\lam} ( \{|G| = m \}) \,. 
\end{align*}
This reduces the determination of the asymptotics of $|\cT(n,m)|$ to the asymptotics of $Z(\lam)$ and  $\mu_{\lam} ( \{|G| = m \})$ for some choice of $\lam$.
Our main tool will be to apply a local central limit theorem for the hard-core model after conditioning on $(A,B)$ and $(S,T)$. 
Recall from~\eqref{lamAssumption} that we assume throughout that $m  \le  \frac{1}{2}n^{3/2}\sqrt{\log n}$ since larger densities are covered by~\cite{osthus2003densities}.

Let us recall the parameters defined at~\eqref{eqlam0def} and \eqref{eqpdef} in the introduction, $\lam_0=\frac{4m}{n^2}$ and  
\begin{align}\label{eqpdefagain}
\lam= \lam(m)= \lam_0+\lam_0^2 + (n\lam_0^2-1)\lam_0e^{-\lam_0^2n/2}\, .
\end{align}
Throughout this section $\lam=\lam(m)$ as above. As we will see below, $\lam$ is chosen so that the typical number of edges in a sample from $\mu_\lam$ is close to $m$.

We begin by recalling Algorithm~\ref{algMum1} from Section~\ref{secIntro}. Recall also that $q_0/(1-q_0)= \lam e^{-\lam^2n/2}$.

\begin{algorithm}[ht]
  \caption{The distribution $\mu_{m,1}$}
\begin{enumerate}[leftmargin=*]
\item Choose a random partition $(A,B)$ according to $\theta_\lam$. 
\item Choose  defect edges $S \subseteq \binom{A}{2}$, $T\subseteq \binom{B}{2}$ according to independent realizations of $G(A,q_0)$ and $G(B,q_0)$ respectively. If $S\cup T$ contains a triangle or if $|S| + |T| >m$, output an arbitrary graph $G_0 \in \cT(n,m)$. Otherwise proceed to the next step. 
\item Choose $\Ec \subseteq A\times B$ as a uniformly random independent set of size $m- |S| -|T|$ from the graph $S \boxempty T$.
\item Output $S \cup T \cup \Ec$.
\end{enumerate}
\end{algorithm} 

Recall that Theorem~\ref{thmGnmSubRestated} states that for $m \ge \frac{1+\eps}{4} n^{3/2} \sqrt{\log n}$,  the distribution $\mu_{m,1}$ is at total variation distance  $o(1)$ to the uniform distribution on $\cT(n,m)$. This is a convenient restatement of Theorem~\ref{thmSubcriticalfixedEdgeDist}.
Similarly, it will be convenient to restate Theorem~\ref{thmP2ERG} algorithmically. To this end, we define the measure $\mu_{m,2}$ below. Recall first the definitions of $q_2, \psi$ from~~\eqref{eqq2Def} and\eqref{eqPsiDef}.

\begin{algorithm}[ht]
  \caption{The distribution $\mu_{m,2}$ \label{algMum2}}
\begin{enumerate}[leftmargin=*]
\item Choose a random partition $(A,B)$ according to $\theta_\lam$. 
\item Choose  defect edges $S \subseteq \binom{A}{2}$, $T\subseteq \binom{B}{2}$ according to independent realizations of $G(A,q_2, \psi)$ and $G(B,q_2,\psi)$ respectively. 
\item Choose $\Ec \subseteq A\times B$ as a uniformly random independent set of size $m- |S| -|T|$ from the graph $S \boxempty T$.
\item Output $S \cup T \cup \Ec$.
\end{enumerate}
\end{algorithm} 

We then have the following reformulation of Theorem~\ref{thmP2ERG}. Recall that $\mu_m$ denotes the uniform distribution on $\cT(n,m)$.
\begin{theorem}
\label{thmGnmSuperRestated}
If  $m \ge  \frac{13}{56} n^{3/2} \sqrt{\log n}$, then
\[
\|\mu_m - \mu_{m,2}\|_{TV}=o(1)\, .
\]
\end{theorem}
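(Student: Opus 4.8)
\textbf{Proof plan for Theorem~\ref{thmGnmSuperRestated}.}
The plan is to mirror the transfer argument one would use to deduce Theorem~\ref{thmGnmSubRestated} from Theorem~\ref{thmGnpStructureSuper}, using the identity
\[
|\cT(n,m)| = \frac{Z(\lam)}{\lam^m}\cdot \mu_\lam(\{|G|=m\})
\]
with the specific choice $\lam=\lam(m)$ from~\eqref{eqpdefagain}, which is calibrated so that $\E_{\mu_\lam}|G|=m+o(\sqrt{m})$. The first step is to note that $\lam(m)\ge \tfrac{13}{14}\sqrt{\tfrac{\log n}{n}}$ exactly when $m\ge \tfrac{13}{56}n^{3/2}\sqrt{\log n}$, so that all of the results of Section~\ref{secSuperCrit} — in particular Corollary~\ref{corZmodsimsum}, Lemma~\ref{lemSuperMainLem}, and Theorem~\ref{thmGnpStructureSuper} (hence $\|\mu_\lam-\mu_{\lam,2}\|_{TV}=o(1)$) — are available at this value of $\lam$. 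By Corollary~\ref{corZmodsimsum} it suffices to prove $\|\mu_{\textup{strong},\lam}(\cdot\mid |G|=m) - \mu_{m,2}\|_{TV}=o(1)$, and since conditioning cannot increase total variation distance (applied to the coupling where the partition $(A,B)$ and defect graph $(S,T)$ agree), it further suffices to work partition-by-partition: fix a strongly balanced $(A,B)$ and show the conditional laws agree.

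The core of the argument is a local central limit theorem for the number of edges. Conditioned on $(A,B)$ and on the defect edges $(S,T)$, the crossing edges $\Ec$ form a hard-core sample on the graph $S\boxempty T$ at activity $\lam$, so $|\Ec| = |G| - |S| - |T|$ is the size of a hard-core independent set. The graph $S\boxempty T$ has $ab\sim n^2/4$ vertices, maximum degree $2\Delta = o(n^{1/14})\to\infty$, and $\lam\cdot 2\Delta = o(1)$, $ab\lam\to\infty$ — exactly the hypotheses of Proposition~\ref{thmhcLCLT}. Hence $|\Ec|$ obeys an LCLT with variance $\sim \lam\, ab\sim \lam n^2/4$. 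Because $|S|+|T|$ has variance only $O(n^{3/2-\eps})$ (Corollary~\ref{coredgevar} applies since $q=o(n^{-13/14})$, so the $q=O(n^{-7/8-\eps})$ hypothesis there holds), the fluctuations of $|G|$ are dominated by those of $|\Ec|$; combining the conditional LCLT with the concentration of $|S|+|T|$ (and averaging over $(S,T)\sim\nu_{A,B,\lam}$, whose law is $o(1)$-close to the ERG product by Lemma~\ref{lemZABapprox1super}) gives an unconditional LCLT for $|G|$ under $\mu_{\textup{strong},\lam}$ with $\E|G| = m + O(1)$ and $\var|G|\sim \lam n^2/4$. In particular $\mu_\lam(\{|G|=m\}) = \Theta(n^{-1}(\log n)^{-1/4})$, and more importantly the LCLT yields that for each fixed defect configuration $(S,T)$ with $|S|+|T|\le m$, the conditional probability $\mu_{\textup{strong},\lam}(|G|=m \mid (S,T))$ is $(1+o(1))$ times the same quantity, uniformly over the typical $(S,T)$.

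From this uniformity one extracts the distributional statement: under $\mu_{\textup{strong},\lam}(\cdot \mid |G|=m)$, the law of $(S,T)$ is a reweighting of $\nu_{A,B,\lam}$ by a factor that is $(1+o(1))$-constant on typical configurations, hence $o(1)$-close in total variation to $\nu_{A,B,\lam}$ itself, which by Lemma~\ref{lemZABapprox1super} is $o(1)$-close to $G(A,q_2,\psi)\times G(B,q_2,\psi)$ — matching Step~2 of Algorithm~\ref{algMum2}. Given $(S,T)$, the crossing edges under $\mu_\lam$ conditioned on $|G|=m$ are a uniformly random independent set of size $m-|S|-|T|$ in $S\boxempty T$, which is precisely Step~3 of Algorithm~\ref{algMum2}; and the partition distribution $\theta_\lam$ emerges exactly as in the proof of Theorem~\ref{thmGnpSubCritLDprob} (equations~\eqref{eqPpi0}--\eqref{eqPpi1}) once one observes, via Lemma~\ref{lemSuperMainLem} applied with varying imbalance, that the $t$-dependence of $Z_{A,B}(\lam)\cdot\mu_\lam(|G|=m\mid (A,B))$ is $(1+o(1))(1+\lam)^{-t^2}$. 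Assembling these three pieces via the triangle inequality (and the telescoping-over-conditioning lemma used for Claim~\ref{claimTVconditioned}) completes the proof. The main obstacle is establishing the \emph{uniformity} of the conditional LCLT over the random defect graph: one must ensure that the $o(\sigma^{-1})$ error term in Proposition~\ref{thmhcLCLT} is uniform over the family of graphs $\{S\boxempty T : (S,T)\in\cD_{A,B,\lam}\}$, which requires checking that the LCLT hypotheses ($\lam\Delta(S\boxempty T)\to 0$, $|V(S\boxempty T)|\lam\to\infty$, $\Delta(S\boxempty T)\to\infty$) hold with the \emph{same} rates across all typical $(S,T)$ — this follows because $\Delta$ and $K$ are deterministic bounds depending only on $(A,B)$ and $\lam$, but it must be stated carefully, and one also needs the mean $\E|\Ec|$ to vary slowly enough with $(S,T)$ (controlled by Corollary~\ref{corclustersimple2} and the edge-count concentration) that $m-|S|-|T|$ stays within $O(\sigma)$ of $\E|\Ec|$ whp.
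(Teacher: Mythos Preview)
Your plan is essentially the paper's: establish an LCLT for the crossing edges via Proposition~\ref{thmhcLCLT}, use the defect-edge concentration (Corollary~\ref{coredgevar}) to show that conditioning on $\{|G|=m\}$ barely perturbs the defect law, invoke Lemma~\ref{lemZABapprox1super} to identify the defect law with the ERG product, and handle the partition distribution as in~\eqref{eqPpi0}--\eqref{eqPpi1}. The paper packages these ingredients slightly differently, routing through the intermediate measure $\mu_{\textup{strong},m}$ of Algorithm~\ref{algMuStrongm} (which is \emph{not} the same as $\mu_{\textup{strong},\lam}(\cdot\mid|G|=m)$: its partition and defect steps are not reweighted by the edge-count event) and Theorem~\ref{thmstatesum}. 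The pointwise formula of Lemma~\ref{lemhatmuGapprox} combined with the enumeration~\eqref{eqTnmSimZ} gives $\|\mu_m-\mu_{\textup{strong},m}\|_{TV}=o(1)$ directly, after which $\|\mu_{\textup{strong},m}-\mu_{m,2}\|_{TV}=o(1)$ follows via Claim~\ref{claimTVconditioned} and Lemma~\ref{lemZABapprox1super} exactly as in your final paragraph.

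There is one genuine gap in your reduction. The step ``by Corollary~\ref{corZmodsimsum} it suffices to prove $\|\mu_{\textup{strong},\lam}(\cdot\mid |G|=m)-\mu_{m,2}\|_{TV}=o(1)$'' does not follow: Corollary~\ref{corZmodsimsum} only yields $\|\mu_\lam-\mu_{\textup{strong},\lam}\|_{TV}=o(1)$, and conditioning on $\{|G|=m\}$---an event of probability $\Theta((n\sqrt{\lam})^{-1})$---can inflate total variation by exactly that factor. The assertion ``conditioning cannot increase total variation distance'' is false here. The fix is either to use the quantitative $O(n^{-3/2})$ bounds of Propositions~\ref{propGroundStateRefinedAlt},~\ref{lemZmodsimsum}, and~\ref{propGroundStateStrong} (and handle the step $\mu_\lam\to\mu_{\cL,\lam}$ at the fixed-$m$ level via Theorem~\ref{thm:luczakrefined}, as the paper does in proving~\eqref{eqTnmSimZ}), or---cleaner---to bypass $\mu_{\textup{strong},\lam}(\cdot\mid m)$ entirely and work with $\mu_{\textup{strong},m}$. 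Your LCLT-uniformity argument is precisely what powers Lemma~\ref{lemhatmuGapprox}, so rerouting through $\mu_{\textup{strong},m}$ and Theorem~\ref{thmstatesum} salvages the proof with no new ideas needed.
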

Our first step towards proving Theorem~\ref{thmGnmSubRestated} and Theorem~\ref{thmGnmSuperRestated} is to  approximate $\mu_m$ by the intermediate measure $\mu_{\textup{strong},m}$ (the analogue of $\mu_{\textup{strong},\lam}$ from Algorithm~\ref{algMuModStrong}) defined below. 
 
\begin{algorithm}[h]
  \caption{The distribution $\mu_{\textup{strong},m}$ \label{algMuStrongm}}
\begin{enumerate}[leftmargin=*]
\item Choose $(A,B)\in \Pi_{\textup{strong}}$ with probability proportional to $Z_{A,B}(\lam)$.
\item Choose $(S,T)\in \cD_{A,B,\lam}$ from the distribution $\nu_{A,B,\lam}$.
\item Choose $\Ec \subseteq A\times B$ as a uniformly random independent set of size $m- |S| -|T|$ from the graph $S \boxempty T$.
\item Output $S \cup T \cup \Ec$.
\end{enumerate}
\end{algorithm}

Our main goal of this section is to prove the following analogue of Corollary~\ref{corZmodsimsum}. Define 
\[
\cL(n,m) = \{G\in \cL(n,\lam): |G|=m\}\, .
\]
\begin{theorem}\label{thmstatesum}
Let $m \ge \frac{13}{56} n^{3/2} \sqrt{\log n}$. Then
\begin{align}\label{eqTnmSimZ}
|\cT(n,m)|\sim|\cL(n,m)|\sim \frac{1}{\lam^m n \sqrt{\pi\lam/2}} \cdot Z(\lam) \, .
\end{align}
Moreover, 
\begin{align}\label{eqFixedEdgeTV}
\| \mu_{m}-\mu_{\textup{strong},m}\|_{TV}=o(1)\, .
\end{align}
\end{theorem}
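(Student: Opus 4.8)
\textbf{Proof strategy for Theorem~\ref{thmstatesum}.}
The starting point is the identity $|\cT(n,m)| = \lam^{-m} Z(\lam)\,\mu_\lam(\{|G|=m\})$, valid for every $\lam>0$, which we use with $\lam=\lam(m)$ as in~\eqref{eqpdefagain}. Since the asymptotics of $Z(\lam)$ are already established (Lemma~\ref{lemZapproxER} in the subcritical range and Lemma~\ref{lemSuperMainLem} in the critical/supercritical range), the theorem reduces to the two claims $|\cT(n,m)|\sim|\cL(n,m)|$ and $\mu_\lam(\{|G|=m\}) \sim \tfrac{1}{n\sqrt{\pi\lam/2}}$. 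The first is immediate from Theorem~\ref{thm:luczakrefined}: choosing $\delta>0$ small enough that $2\delta\lam n^2 \le \delta' m$ for the $\delta'$ there, all but a $o(1)$-fraction of $G\in\cT(n,m)$ admit a weakly balanced dominating cut of size $\ge(1-\delta')m\ge m-2\delta\lam n^2$, i.e.\ lie in $\cL(n,m)$. For the second claim, note that $n\sqrt{\pi\lam/2}=\sqrt{2\pi\cdot\lam n^2/4}$, so this is exactly a local central limit estimate: $\mu_\lam(\{|G|=m\})$ should equal $(1+o(1))/\sqrt{2\pi\sigma^2}$ with $\sigma^2=\var_{\mu_\lam}(|G|)\sim\lam n^2/4$ and with $m$ lying within $o(\sigma)$ of the mean $\E_{\mu_\lam}|G|$. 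To make this precise we pass from $\mu_\lam$ to the explicit measure $\mu_{\textup{strong},\lam}$. We cannot merely invoke $\|\mu_\lam-\mu_{\textup{strong},\lam}\|_{TV}=o(1)$ from Corollary~\ref{corZmodsimsum}, since $\mu_\lam(\{|G|=m\})$ is itself $o(1)$; however, each step of the chain from $\mu_\lam$ down to $\mu_{\textup{strong},\lam}$ other than the $\cL$-reduction carries relative error $O(e^{-\sqrt n})$ (Propositions~\ref{propGroundStateRefinedAlt},~\ref{propMaxDegBootstrap}) or $O(n^{-3/2})$ (Propositions~\ref{lemZmodsimsum},~\ref{propGroundStateStrong}), and the $\cL$-reduction has been absorbed into $|\cT(n,m)|\sim|\cL(n,m)|$. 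As these errors are $o(n^{-1/2})$ while $\mu_\lam(\{|G|=m\})=n^{-1/2+o(1)}$, each reduction preserves the asymptotics of $\mu(\{|G|=m\})$, so it suffices to prove $\mu_{\textup{strong},\lam}(\{|G|=m\})\sim\tfrac{1}{n\sqrt{\pi\lam/2}}$.

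To do so, sample from $\mu_{\textup{strong},\lam}$ in the three stages defining it: a partition $(A,B)\in\Pi_{\textup{strong}}$ with probability $\propto Z_{A,B}(\lam)$, then $(S,T)\sim\nu_{A,B,\lam}$, then the crossing edges $\Ec$ from the hard-core model on $S\boxempty T$ at activity $\lam$. Thus $|G|=|S|+|T|+|\Ec|$, and conditioned on $(A,B)$ and $(S,T)$, $|\Ec|$ is the size of a hard-core sample on $S\boxempty T$. We apply Proposition~\ref{thmhcLCLT} — in the form uniform over the relevant family of graphs $S\boxempty T$, which its Fourier-analytic proof provides, the error depending only on $\lam\Delta$, $(\#\text{vertices})\cdot\lam$, and $\Delta$ — to $G_n=S\boxempty T$: this graph has $ab=(1+o(1))n^2/4$ vertices and maximum degree at most $2\Delta_{A,B,\lam}=100\max\{qn,\log n\}$, and in the density range $m=\Theta(n^{3/2}\sqrt{\log n})$ one checks $\lam\cdot2\Delta_{A,B,\lam}\to0$, $ab\lam\to\infty$ and $\Delta_{A,B,\lam}\to\infty$. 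Hence $\var(|\Ec|\mid S,T)\sim\lam ab$ and
\[
\P\big[|\Ec|=k\mid S,T\big]=\frac{1}{\sqrt{2\pi\lam ab}}\exp\!\Big(-\frac{(k-\E[|\Ec|\mid S,T])^2}{2\lam ab}\Big)+o\!\Big(\tfrac{1}{\sqrt{\lam ab}}\Big),
\]
uniformly. Taking $k=m-|S|-|T|$ and averaging over $(A,B),(S,T)$ gives $\mu_{\textup{strong},\lam}(\{|G|=m\})$; since $\sqrt{\lam ab}=(1+o(1))\sqrt{\lam n^2/4}$ on $\Pi_{\textup{strong}}$ (using $|a-b|\le10(n\log n)^{1/4}$), the desired asymptotic follows once the Gaussian factor above is $1+o(1)$ for typical $(A,B),(S,T)$, that is, once $m-|S|-|T|-\E[|\Ec|\mid S,T]=o(\sqrt{\lam}\,n)$ with probability $1-o(1)$.

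This centering estimate is the main obstacle, and it has two halves. First, the \emph{mean} must be right: using~\eqref{eqExpectExpand} of Corollary~\ref{corclustersimple} for $\E[|\Ec|\mid S,T]$ together with Corollaries~\ref{corfixedconfig} and~\ref{corfixedconfigboot} for $\E|S|$, $\E|T|$ and the relevant $P_2$-counts, and averaging over $\Pi_{\textup{strong}}$ (on which the imbalance is $\tilde O(n^{1/4})$ and contributes negligibly), one obtains $\E_{\mu_{\textup{strong},\lam}}|G|=\tfrac{n^2}{4}\big(\lam-\lam^2+(1-\lam^2n)\lam e^{-\lam^2n/2}\big)+(\text{lower order})$; this equals $m+o(\sqrt{\lam}\,n)$ precisely when $\lam_0=4m/n^2$ and $\lam$ are related by $\lam_0=\lam-\lam^2+(1-\lam^2n)\lam e^{-\lam^2n/2}$, whose inversion is exactly formula~\eqref{eqpdefagain}. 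Identifying which cluster-expansion and subgraph-count contributions exceed the tolerance $\sqrt{\lam}\,n=\tilde\Theta(n^{3/4})$ — hence must be retained in $\lam(m)$ — and which are genuinely negligible is the delicate bookkeeping here. Second, \emph{concentration}: $|S|+|T|$ and $\E[|\Ec|\mid S,T]$ (the latter a fixed polynomial in $|S\boxempty T|$, $P_2(S\boxempty T)$ and lower-order subgraph counts of $S\cup T$) must deviate from their means by $o(\sqrt{\lam}\,n)$ whp, which via Chebyshev reduces to a variance bound supplied by Corollary~\ref{coredgevar} and its straightforward analogues for $P_2$-counts; it is exactly the requirement $q=\tilde O(n^{-7/8})$ there — equivalently $m\ge\frac{13}{56}n^{3/2}\sqrt{\log n}$ — that makes these variances $o(\lam n^2)$.

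For the second assertion $\|\mu_m-\mu_{\textup{strong},m}\|_{TV}=o(1)$, write $\mathcal G_0$ for the set of $G\in\cT(n,m)$ with a \emph{unique} strongly balanced $\lam$-sparse dominating max cut, and let $\mathcal G\subseteq\mathcal G_0$ additionally require $m-|G_A|-|G_B|=\E[|\Ec|\mid G_A,G_B]+o(\sqrt{\lam}\,n)$. Unwinding Algorithm~\ref{algMuStrongm} gives, for $G$ with unique capturing cut, $\mu_{\textup{strong},m}(G)=\lam^m\big(\P_{\text{hc on }G_A\boxempty G_B}[|\mathbf I|=m-|G_A|-|G_B|]\,Z_{\textup{strong}}(\lam)\big)^{-1}$, so by the uniform local CLT $\mu_{\textup{strong},m}(G)=(1+o(1))\lam^m\sqrt{2\pi\lam ab}/Z_{\textup{strong}}(\lam)$ on $\mathcal G$, while $\mu_m(G)=1/|\cT(n,m)|=(1+o(1))\lam^m\sqrt{2\pi\lam n^2/4}/Z(\lam)$ by the first part of the theorem and $Z(\lam)\sim Z_{\textup{strong}}(\lam)$; these agree up to a $1+o(1)$ factor uniformly on $\mathcal G$ since $ab=(1+o(1))n^2/4$ on $\Pi_{\textup{strong}}$. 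It remains to bound the tails. On $\mathcal G_0$ the universal estimate $\P_{\text{hc}}[|\mathbf I|=k]=O(1/\sqrt{\lam ab})$ gives $\mu_{\textup{strong},m}(G)\ge c\,\mu_m(G)$ for an absolute $c>0$, so any event that is $o(1)$ under $\mu_{\textup{strong},m}$ within $\mathcal G_0$ is $o(1)$ under $\mu_m$ as well; in particular, since the centering holds whp under $\mu_{\textup{strong},m}$ by construction of Step~2 and the concentration bounds above, and since $\mu_{\textup{strong},m}(\mathcal G_0^c)=o(1)$ and $\mu_m(\mathcal G_0^c)=o(1)$ (Theorem~\ref{thm:luczakrefined} together with the uniqueness statements, e.g.\ Lemma~\ref{lemCaptureStrong}), we get $\mu_{\textup{strong},m}(\mathcal G^c)=o(1)$ and $\mu_m(\mathcal G^c)=o(1)$. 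The elementary bound $\|\mu_m-\mu_{\textup{strong},m}\|_{TV}\le\sup_{G\in\mathcal G}\big|1-\mu_{\textup{strong},m}(G)/\mu_m(G)\big|+\mu_m(\mathcal G^c)+\mu_{\textup{strong},m}(\mathcal G^c)$ then yields $o(1)$.
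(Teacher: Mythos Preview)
Your proposal is correct and follows essentially the same route as the paper: the identity $|\cT(n,m)|=\lam^{-m}Z(\lam)\mu_\lam(\{|G|=m\})$, the reduction $|\cT(n,m)|\sim|\cL(n,m)|$ via Theorem~\ref{thm:luczakrefined}, the passage from $\mu_\lam$ to $\mu_{\textup{strong},\lam}$ on the event $\{|G|=m\}$ using the quantitative $O(n^{-3/2})$ total-variation bounds of Propositions~\ref{propGroundStateRefinedAlt}--\ref{propGroundStateStrong}, the local CLT (Proposition~\ref{thmhcLCLT}) for $|\Ec|$ conditioned on $(A,B),(S,T)$, the centering via the choice $\lam=\lam(m)$ (this is the paper's Lemma~\ref{lemlamshift}), and the concentration of $|S|+|T|$ via Corollary~\ref{coredgevar}. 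One organizational remark: your justification of $\mu_m(\mathcal G_0^c)=o(1)$ by ``Theorem~\ref{thm:luczakrefined} together with Lemma~\ref{lemCaptureStrong}'' is not quite right (Lemma~\ref{lemCaptureStrong} concerns $\mu_{\textup{strong},\lam}$, not $\mu_m$, and {\L}uczak only gives a weakly balanced cut); the paper sidesteps this by writing the total variation as the one-sided sum $\sum_{G:\,\mu_{\textup{strong},m}(G)>\mu_m(G)}(\mu_{\textup{strong},m}(G)-\mu_m(G))$, which automatically restricts to the support of $\mu_{\textup{strong},m}$ and then uses only the uniform lower bound $\mu_{\textup{strong},m}(G)=\Omega(\lam^m n\sqrt{\lam}/Z(\lam))$ there---alternatively, your own uniform per-$G$ asymptotic on $\mathcal G$ already gives $\mu_m(\mathcal G)=(1+o(1))\mu_{\textup{strong},m}(\mathcal G)=1-o(1)$ directly, making the separate claim about $\mathcal G_0$ unnecessary.
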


We will then show that $\mu_{m,1}, \mu_{m,2}$ are close to $\mu_{\textup{strong},m}$ in the relevant ranges of $m$.

\subsection{Proof of Theorem~\ref{thmstatesum}}
Recall that, given a partition $(A,B)$ of $[n]$, we can describe the measure $\mu_{A,B,\lam}$ (defined at~\ref{eqmuABdef}) via the following process:

\begin{algorithm}[h]
  \caption{Alternative description of $\mu_{A,B,\lam}$ \label{algMuAB}}
    \begin{enumerate}[leftmargin=*]
\item Choose $(S,T)\in \cD_{A,B,\lam}$ according to $\nu_{A,B,\lam}$.
\item Choose $\Ec \subseteq A\times B$ according to the hard-core measure on $S\boxempty T$ at activity $\lam$.
\end{enumerate}
\end{algorithm}
The proof of Theorem~\ref{thmstatesum} will have two main steps. The first is showing that a sample from the measure $\mu_{A,B,\lam}$ has exactly $m$ edges with good probability; this is done by using the specific choice of $\lam$ at~\eqref{eqpdefagain} and  by showing that the variance of the number of defect edges chosen at Step 1 of Algorithm~\ref{algMuAB} is small compared to the variance of the number of crossing edges selected at Step 2.  The second step is showing that we do not overcount graphs: a typical sample from $\mu_{\textup{strong},m}$ is captured by  a single partition $(A,B)$.

The following lemma elucidates  the choice of $\lam$ in~\eqref{eqpdefagain}.
\begin{lemma}\label{lemlamshift}
Let $m \ge \frac{13}{56} n^{3/2} \sqrt{\log n}$. 
Let $(A,B)\in \Pi_{\textup{strong}}$ and let $\bG\sim \mu_{A,B,\lam}$.
Then
\[
\big |\E  |\mathbf G|-m \big|=o(\sqrt{m})\, .
\]
\end{lemma}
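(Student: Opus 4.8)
\textbf{Proof proposal for Lemma~\ref{lemlamshift}.}

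The plan is to compute $\E|\mathbf G|$ for $\bG\sim\mu_{A,B,\lam}$ by splitting into the contribution of the defect edges and the crossing edges, and to show that the choice of $\lam$ in~\eqref{eqpdefagain} makes this expectation equal $m+o(\sqrt m)$. Using the description of $\mu_{A,B,\lam}$ in Algorithm~\ref{algMuAB}, write $|\mathbf G|=|S|+|T|+|\Ec|$ where $(S,T)\sim\nu_{A,B,\lam}$ and, conditioned on $(S,T)$, $\Ec$ is the hard-core model on $S\boxempty T$ at activity $\lam$. First I would bound $\E(|S|+|T|)$: by Corollary~\ref{corfixedconfig} each of the $\binom a2+\binom b2$ possible defect edges is present with probability $(1+O(n\Delta\lam^3))q$, so $\E(|S|+|T|)=\Theta(n^2 q)=\tilde\Theta(n^{3/2-c^2/2})$ for the relevant $c$, which is $o(\sqrt m)$ since $m=\Theta(n^{3/2}\sqrt{\log n})$ and $c>\tfrac12$ forces $c^2/2>1/8$ (in fact any $c$ in the stated range gives $n^2q=o(n^{3/4+\eps})=o(\sqrt m)$). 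So the defect edges contribute a negligible (in the $\sqrt m$ scale) amount, and it remains to show $\E|\Ec|=m+o(\sqrt m)$.

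For the crossing edges, condition on $(S,T)$ and apply the cluster-expansion formula~\eqref{eqExpectExpand} of Corollary~\ref{corclustersimple} to the graph $G_\boxempty=S\boxempty T$, which has $ab$ vertices, maximum degree $\le 2\Delta$, and at most $2nK$ non-isolated vertices; this gives
\[
\E\big(|\Ec|\ \big|\ S,T\big)=\frac{\lam}{1+\lam}\,ab-2|S\boxempty T|\lam^2+3\big(P_2(S\boxempty T)+2|S\boxempty T|\big)\lam^3+O(nK\Delta^3\lam^4)\,.
\]
Now take expectations over $(S,T)\sim\nu_{A,B,\lam}$. Using $|S\boxempty T|=b|S|+a|T|$ together with $\E|S|,\E|T|=\tilde O(n^2q)$, the term $|S\boxempty T|\lam^2$ has expectation $\tilde O(n^3q\lam^2)=o(\sqrt m)$, and likewise the $P_2(S\boxempty T)\lam^3$ term and the error term are $o(\sqrt m)$ (one checks $nK\Delta^3\lam^4=o(\sqrt m)$ from $K,\Delta$ polylogarithmic or of size $\tilde\Theta(n^2q),\tilde\Theta(nq)$ and $\lam=\tilde\Theta(n^{-1/2})$). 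Hence $\E|\mathbf G|=\frac{\lam}{1+\lam}ab+o(\sqrt m)$. Finally, since $(A,B)\in\Pi_{\textup{strong}}$ we have $ab=n^2/4+\tilde O(\sqrt n)$, so $\frac{\lam}{1+\lam}ab=\frac{\lam}{1+\lam}\cdot\frac{n^2}{4}+o(\sqrt m)$, and it remains to verify that $\lam$ as defined in~\eqref{eqpdefagain} satisfies $\frac{\lam}{1+\lam}\cdot\frac{n^2}{4}=m+o(\sqrt m)$, equivalently $\frac{\lam}{1+\lam}=\lam_0+o(\lam_0^2n)$ where $\lam_0=4m/n^2$. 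Expanding $\frac{\lam}{1+\lam}=\lam-\lam^2+O(\lam^3)$ and substituting $\lam=\lam_0+\lam_0^2+(n\lam_0^2-1)\lam_0 e^{-\lam_0^2n/2}$, the $\lam_0^2$ terms cancel at the leading order and the remaining discrepancy is $O(\lam_0^3 n+\text{higher order})=o(\lam_0^2 n)=o(\sqrt m/n\cdot \sqrt{\log n})$; here one uses $\lam_0^2 n=\Theta(\log n)$ and $\sqrt m=\Theta(n^{3/4}(\log n)^{1/4})$ so that $\lam_0^2n\cdot\lam_0 n=o(\sqrt m)$.

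I expect the main obstacle to be the last step: verifying that the specific definition of $\lam$ in~\eqref{eqpdefagain} is precisely the right inversion of $\lam\mapsto\frac{\lam}{1+\lam}\cdot\frac{n^2}{4}+(\text{defect correction})$ up to error $o(\sqrt m)$. This requires carefully tracking which correction terms survive at the $\sqrt m$ scale --- in particular the term $(n\lam_0^2-1)\lam_0 e^{-\lam_0^2n/2}$ in~\eqref{eqpdefagain} should exactly compensate the $q_0$-dependent shift $\E(|S|+|T|)+O(n^3q\lam^2)$ coming from the defect edges and the $|S\boxempty T|\lam^2$ term, since $q_0/(1-q_0)=\lam e^{-\lam^2n/2}$ makes $n^2 q_0\approx \lam n^2 e^{-\lam_0^2n/2}$ of the same order as $\lam_0^2 n\cdot\lam_0 n^2 e^{-\lam_0^2 n/2}\cdot\tfrac14$, which is exactly the magnitude of that correction term. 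So the bookkeeping is the crux; the probabilistic inputs (Corollaries~\ref{corclustersimple},~\ref{corfixedconfig}) are routine to invoke.
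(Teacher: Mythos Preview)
There is a genuine gap. Your claim that $\E(|S|+|T|)=\Theta(n^2q)$ is $o(\sqrt m)$ is false in the range covered by the lemma. With $\lam\sim c\sqrt{\log n/n}$ one has $n^2q=\tilde\Theta(n^{3/2-c^2/2})$, while $\sqrt m=\tilde\Theta(n^{3/4})$; the inequality $n^2q=o(\sqrt m)$ requires $3/2-c^2/2<3/4$, i.e.\ $c>\sqrt{3/2}\approx 1.22$. But the hypothesis $m\ge\tfrac{13}{56}n^{3/2}\sqrt{\log n}$ only guarantees $c\ge 13/14\approx 0.93$, so for $c\in[13/14,\sqrt{3/2}]$ (precisely the critical and supercritical regimes) the defect contribution is \emph{not} $o(\sqrt m)$. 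The same remark applies to the term $2\lam^2\E|S\boxempty T|=n\lam^2\cdot\Theta(n^2q)$, which is of the same order up to a $\log n$ factor. Your final paragraph seems to sense this, since you note that the correction term $(n\lam_0^2-1)\lam_0 e^{-\lam_0^2 n/2}$ in $\lam$ should compensate the defect shift, but this contradicts the body of the argument where you have already discarded that shift.

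The paper's proof keeps these terms. Writing $\E|\mathbf G|=\E(|S|+|T|)+\E|\Ec|$ and using Corollary~\ref{corclustersimple} for $\E|\Ec|$ conditioned on $(S,T)$, one gets $\E|\mathbf G|=\binom a2 q_A(1-2b\lam^2)+\binom b2 q_B(1-2a\lam^2)+\tfrac{\lam}{1+\lam}ab+o(\sqrt m)$. The factor $(1-2b\lam^2)\approx 1-n\lam^2$ is of order $\log n$, not $1+o(1)$, so the defect block $(1-n\lam^2)\cdot\tfrac12(a^2q_A+b^2q_B)\approx -(n\lam^2-1)\lam e^{-\lam^2 n/2}\tfrac{n^2}{4}$ must be carried through. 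One then checks that $(\lam-\lam^2)-(n\lam^2-1)\lam e^{-\lam^2 n/2}=\lam_0+o(\sqrt m/n^2)$, which is exactly what the definition of $\lam$ in~\eqref{eqpdefagain} is engineered to achieve. In short, the defect correction is the whole point of the extra term in~\eqref{eqpdefagain}, and it cannot be absorbed into $o(\sqrt m)$.
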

\begin{proof}
Let $(\mathbf S, \mathbf T)\sim\nu_{A,B,\lam}$ be the  set of defect edges chosen at Step 1 of Algorithm~\ref{algMuAB}. By Corollary~\ref{corfixedconfig}
\[
\E |\mathbf S|= \binom{a}{2}q_A(1+O(n\Delta\lam^3)) = \binom{a}{2}q_A + O(n^3q\Delta\lam^3)\, ,
\]
and similarly $\E |\mathbf T|= \binom{b}{2}q_B + O(n^2\Delta^2\lam^3)$.
Let $\Ec$ denote the set of crossing edges chosen at Step 2 of Algorithm~\ref{algMuAB}.
By Corollary~\ref{corclustersimple}
\begin{align}\label{eqcrossingexp}
\E|\Ec|= \frac{\lam}{1+\lam}ab-2(b\E|\mathbf S|+ a\E|\mathbf T|)\lam^2 + O(n^2\Delta^2\lam^3)\, .
\end{align}
Let $a=n/2-k$ and $b=n/2+k$, where $k\leq 5(n\log n)^{1/4}$ since $(A,B)$ is strongly balanced. We then have
\begin{align*}
\E|\mathbf G|& =
\E |\mathbf S|+\E |\mathbf T|+\E|\Ec|\\
&=
  \binom{a}{2}q_A(1-2b\lam^2) + \binom{b}{2}q_B(1-2a\lam^2) + \frac{\lam}{1+\lam}ab + O(n^3\Delta^2\lam^5)\\
  &=
 (1-n\lam^2)\left[ \frac{1}{2}a^2q_A +  \frac{1}{2}b^2q_B \right] + \frac{\lam}{1+\lam}\frac{n^2}{4} + O(n^3\Delta^2\lam^5 + n^2qk\lam^2 + k^2\lam)\\
 &=
  (1-n\lam^2)\left[ \frac{1}{2}a^2q_A +  \frac{1}{2}b^2q_B \right] + \frac{\lam}{1+\lam}\frac{n^2}{4} + o(\sqrt{m})\, .
 \end{align*}
For the final equality we used that $k^2\lam=o(\sqrt{m})$ and $n^2qk\lam^2=o(\sqrt{m})$ since $q=o(n^{-13/14})$. 
As in~\eqref{eqa2b2cancel} we have
\[
a^2q_A+b^2q_B= \lam e^{-\lam^2n/2}(n/2)^2\left[2+O(\lam^4k^2)\right]=\lam e^{-\lam^2n/2}n^2/2 +o(\sqrt{m}/(n\lam^2-1)) \, .
\]
It follows that
\begin{align*}
\E |\mathbf G|& = (\lam-\lam^2)\frac{n^2}{4} -(n\lam^2-1)\lam e^{-\lam^2n/2}\frac{n^2}{4}+o(\sqrt{m})\, .
\end{align*}
It therefore suffices to show that 
\[
 (\lam-\lam^2) -(n\lam^2-1)\lam e^{-\lam^2n/2}= \frac{4m}{n^2}+ o(\sqrt{m}/n^2)\, .
\]
Let 
\[
\delta= \lam_0+(n\lam_0^2-1)e^{-\lam^2_0n/2}
\]
so that $\lam=\lam_0(1+\delta)$.
Our task is then to show that
\[
\lam_0\delta-\lam_0^2(1+\delta)^2-(n\lam_0^2(1+\delta)^2-1)\lam_0(1+\delta)e^{-\lam^2_0(1+\delta)^2n/2}= o(\sqrt{m}/n^2)\, .
\]
Since $\delta= o(n^{-1/2+1/14})$, the LHS is equal to  
\[
\lam_0\delta-\lam_0^2-(n\lam_0^2-1)\lam_0e^{-\lam^2_0n/2} + o(\sqrt{m}/n^2)
\]
which by the definition of $\delta$ is equal to $o(\sqrt{m}/n^2)$ as desired. 
\end{proof}
Using the local CLT for the low-density hard-core model (Proposition~\ref{thmhcLCLT}) we prove the following. 
\begin{lemma}\label{lemLCLTcross}
Let $m \ge \frac{13}{56} n^{3/2} \sqrt{\log n}$ and 
let  $(A,B)\in \Pi_{\textup{strong}}$.
Given $S\subseteq\binom{A}{2}, T\subseteq\binom{B}{2} $, let $\Ec=\Ec(S,T)\subseteq A\times B$ denote a random sample from the hard-core measure on $ S\boxempty  T$ at activity $\lam$. Then
\begin{align}\label{eqLCLTcross}
 \P[|\Ec(S,T)|=m-|S|-|T|]\sim\frac{1}{n\sqrt{\pi\lam/2}} \, 
\end{align}
for $\nu_{A,B,\lam}$-almost all $(S,T)$.
Moreover 
\begin{align}\label{eqcrossuniform}
\P\left[|\Ec(S,T)|= m-|S|-|T| \right]= O\left(\frac{1}{n\sqrt{\lam}}\right)
\end{align}
uniformly over all $(S,T)\in \cD_{A,B,\lam}$. 
\end{lemma}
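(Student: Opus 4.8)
\textbf{Proof plan for Lemma~\ref{lemLCLTcross}.}
The plan is to apply the local central limit theorem for the low-density hard-core model, Proposition~\ref{thmhcLCLT}, to the random variable $|\Ec(S,T)|$, the size of a hard-core sample on the graph $G:=S\boxempty T$ at activity $\lam$. First I would verify the hypotheses of Proposition~\ref{thmhcLCLT}. For $\nu_{A,B,\lam}$-almost all $(S,T)$ we have $(S,T)\in\cD_{A,B,\lam}$, so $\Delta(S),\Delta(T)\le\Delta_{A,B,\lam}=50\max\{qn,\log n\}$, whence $\Delta(S\boxempty T)\le 2\Delta_{A,B,\lam}=:\Delta_n$. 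Since $q=o(n^{-13/14})$ in our range of $m$ (using $m\le\frac12 n^{3/2}\sqrt{\log n}$ and $m\ge\frac{13}{56}n^{3/2}\sqrt{\log n}$, i.e.\ $\lam\ge\frac{13}{14}\sqrt{\log n/n}$), we get $\Delta_n=\tilde O(n^{1/14})\to\infty$ and $\lam\Delta_n = \tilde O(n^{1/14-1/2})\to0$, while the number of vertices of $S\boxempty T$ is $ab=\Theta(n^2)$ so $ab\cdot\lam\to\infty$. Thus Proposition~\ref{thmhcLCLT} applies (with the $n$ there being $ab$ here), giving $\var(|\Ec(S,T)|)\sim\lam ab$ and
\[
\P[|\Ec(S,T)|=k]=\frac{1}{\sqrt{2\pi\lam ab}}\exp\!\left(-\frac{(k-\E|\Ec(S,T)|)^2}{2\lam ab}\right)+o\!\left(\frac{1}{\sqrt{\lam ab}}\right)
\]
for every integer $k\ge0$.

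Next I would show the exponential factor is $1+o(1)$ when $k=m-|S|-|T|$. By Corollary~\ref{corclustersimple} applied to $G=S\boxempty T$ (which has $O(nK_{A,B,\lam})$ non-isolated vertices and max degree $\le\Delta_n$), $\E|\Ec(S,T)|=\frac{\lam}{1+\lam}ab-2|S\boxempty T|\lam^2+O(\cdots)$, and combining this with the computation in the proof of Lemma~\ref{lemlamshift} (which shows $\E|\mathbf G|=m+o(\sqrt m)$ when averaging over $(S,T)\sim\nu_{A,B,\lam}$) together with a concentration estimate for $|S|,|T|$ under $\nu_{A,B,\lam}$ — here Corollary~\ref{coredgevar} gives $\var(|S|+|T|)=O(n^{3/2-\eps})=o(m)=o(\lam ab)$, so by Chebyshev $||S|+|T|-\E(|S|+|T|)|=o(\sqrt{\lam ab})$ with probability $1-o(1)$ — yields that for $\nu_{A,B,\lam}$-almost all $(S,T)$,
\[
\big|\,m-|S|-|T|-\E|\Ec(S,T)|\,\big| = o(\sqrt{\lam ab})\,.
\]
Hence the Gaussian factor evaluates to $1+o(1)$, and since $ab=(n/2)^2(1+o(1))$ for strongly balanced $(A,B)$ we get $\sqrt{2\pi\lam ab}=(1+o(1))\,n\sqrt{\pi\lam/2}$, establishing~\eqref{eqLCLTcross}. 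For the uniform bound~\eqref{eqcrossuniform} over all $(S,T)\in\cD_{A,B,\lam}$, I would simply drop the Gaussian factor (it is $\le1$) and the verification of the hypotheses of Proposition~\ref{thmhcLCLT} holds for every such $(S,T)$, so the LCLT gives $\P[|\Ec(S,T)|=k]\le\frac{1}{\sqrt{2\pi\lam ab}}+o(\frac{1}{\sqrt{\lam ab}})=O(\frac{1}{n\sqrt\lam})$ uniformly in $k$, in particular for $k=m-|S|-|T|$.

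The main obstacle I expect is the first part: quantifying "$\nu_{A,B,\lam}$-almost all $(S,T)$" precisely enough, i.e.\ showing that with probability $1-o(1)$ under $\nu_{A,B,\lam}$ the centering term $m-|S|-|T|$ lands within $o(\sqrt{\lam ab})$ of the hard-core mean $\E|\Ec(S,T)|$ \emph{and} that $(S,T)$ is sparse enough for the LCLT hypotheses. The sparsity is automatic from membership in $\cD_{A,B,\lam}$, so the real work is the centering estimate, which is a Chebyshev argument combining Lemma~\ref{lemlamshift}, Corollary~\ref{coredgevar}, and Corollary~\ref{corclustersimple}; the subtlety is that $\E|\Ec(S,T)|$ depends on $(S,T)$ only through $|S\boxempty T|=b|S|+a|T|$ plus a $P_2(S\boxempty T)\lam^3$-order error, and one must check that the variance of $|S\boxempty T|$ under $\nu_{A,B,\lam}$, which is $\le n^2\var(|S|+|T|)=O(n^2\cdot n^{3/2-\eps})$, is indeed $o(\lam^2 a^2b^2)=o(\lam^2 n^4)$, which holds comfortably. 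Everything else is routine.
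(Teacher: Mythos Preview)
Your approach is essentially the same as the paper's: apply Proposition~\ref{thmhcLCLT} to $|\Ec(S,T)|$, use Corollary~\ref{corclustersimple} for the mean and variance, and combine Lemma~\ref{lemlamshift} with a Chebyshev argument via Corollary~\ref{coredgevar} to center $m-|S|-|T|$ at $\E|\Ec(S,T)|$ up to $o(\sqrt{\lam ab})$. The uniform bound~\eqref{eqcrossuniform} is handled identically.

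One small slip in your final paragraph: the comparison you state, $\var(|S\boxempty T|)=O(n^{7/2-\eps})=o(\lam^2 a^2b^2)=o(\lam^2 n^4)$, is false, since $\lam^2 n^4=\Theta(n^3\log n)\ll n^{7/2-\eps}$. The correct requirement is that the fluctuation of $2\lam^2|S\boxempty T|$ be $o(\sqrt{\lam ab})$, i.e.\ $4\lam^4\var(|S\boxempty T|)=o(\lam ab)$, equivalently $\var(|S\boxempty T|)=o(\lam^{-3}ab)=o(n^{7/2}/(\log n)^{3/2})$, and this does hold for $\var(|S\boxempty T|)=O(n^{7/2-\eps})$. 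With that correction your argument goes through exactly as in the paper.
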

\begin{proof}
We first note that for $(S,T)\in \cD_{A,B,\lam}$, by Corollary~\ref{corclustersimple},
\begin{align}\label{eqcrossingexpfixed}
\E|\Ec(S,T)| &= \frac{\lam}{1+\lam}ab-2(b |S|+ a |T|)\lam^2 + O(n^2\Delta^2\lam^3)\, ,
\end{align}
and 
\begin{align}\label{eqGcvarest}
\var |\Ec(S,T)| \sim ab \lam \sim \lam n^2/4\sim m\, .
\end{align}
Statement~\eqref{eqcrossuniform} follows immediately from~\eqref{eqGcvarest} and Proposition~\ref{thmhcLCLT}.

Now suppose $(\mathbf S, \mathbf T)\sim \nu_{A,B,\lam}$. By Chebyshev's inequality and Corollary~\ref{coredgevar}, noting that $q=o(n^{-13/14})$, there exists $\eps>0$ such that 
\[
|S| + |T| = \E|\mathbf S| + \E|\mathbf T| + O(n^{3/4-\eps})\, ,
\]
for $\nu_{A,B,\lam}$-almost all $(S,T)$.
Combining this with~\eqref{eqcrossingexp} and~\eqref{eqcrossingexpfixed} we conclude that
\[
\E|\Ec( S,  T)| = \E| \Ec(\mathbf S,  \mathbf T)| + o(\sqrt{m})
\]
for $\nu_{A,B,\lam}$-almost all $(S,T)$.
 By Lemma~\ref{lemlamshift} we then have 
\[
|S| + |T| +  \E| \Ec( S,  T)| = \E|\mathbf S| + \E|\mathbf T|  + \E | \Ec(\mathbf S,\mathbf T)| +o(\sqrt{m})= m + o(\sqrt{m})\, ,
\]
for $\nu_{A,B,\lam}$-almost all $(S,T)$.

Statement~\eqref{eqLCLTcross} now follows from~\eqref{eqGcvarest} and Proposition~\ref{thmhcLCLT}. \end{proof}

We record the following analogue of Lemma~\ref{lemCaptureMod}. We recall that for a graph $G$, $c_{\textup{strong},\lam}(G)$ denotes the number of strongly balanced partitions $(A,B)$ that capture $G$ that is, $(G_A, G_B)\in \cD_{A,B,\lam}$.
\begin{lemma}\label{lemCaptureModfixed}
Let $m \ge \frac{13}{56} n^{3/2} \sqrt{\log n}$ and
let $\bG\sim \mu_{\textup{strong},m}$. We have,
\[
\P(c_{\textup{strong},\lam}(\bG)= 1)= 1-o(1)\, .
\]
Moreover $\bG$ is an $(A,B)$-$\lam$-expander whp where $(A,B)$ is the partition chosen at Step 1 of Algorithm~\ref{algMuStrongm}. In particular, $(A,B)$ is the unique max cut of $\bG$ whp.
\end{lemma}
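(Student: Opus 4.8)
The plan is to follow the proof of Lemma~\ref{lemCaptureMod} almost verbatim, the one new ingredient being a transfer of the expansion statement (Lemma~\ref{lemExpanderwhp}) from the plain hard-core model to the hard-core model conditioned on a fixed number of crossing edges. Throughout we use that $m\ge\frac{13}{56}n^{3/2}\sqrt{\log n}$ forces $\lam\ge(1-o(1))\tfrac{13}{14}\sqrt{\log n/n}$, which in particular exceeds $\omega/\sqrt n$ for any fixed $\omega$ once $n$ is large, so all the cited lemmas of Sections~\ref{secUniquenessPartitions}--\ref{secMaxDegree} apply.

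First I would observe that, conditioned on the partition $(A,B)$ chosen at Step~1 and the defect edges $(S,T)\in\cD_{A,B,\lam}$ chosen at Step~2 of Algorithm~\ref{algMuStrongm}, the crossing set $\Ec$ is a uniformly random independent set of size $m-|S|-|T|$ in $S\boxempty T$, which is exactly the hard-core measure on $S\boxempty T$ at activity $\lam$ conditioned on the event $\{|\Ec|=m-|S|-|T|\}$ (all independent sets of a given size carry equal hard-core weight). Since $\Pi_{\textup{strong}}\subseteq\Pi_{\textup{weak}}$ and $\cD_{A,B,\lam}\subseteq\cD^{\textup{w}}_{A,B,\lam}$, Lemma~\ref{lemExpanderwhp} gives that the \emph{unconditioned} hard-core sample $([n],\Ec)$ fails to be an $(A,B)$-$\lam$-expander with probability at most $e^{-\lam n/25}$, while Lemma~\ref{lemLCLTcross} gives $\P[|\Ec(S,T)|=m-|S|-|T|]\sim\frac{1}{n\sqrt{\pi\lam/2}}\ge n^{-2}$ for $\nu_{A,B,\lam}$-almost all $(S,T)$ and all $(A,B)\in\Pi_{\textup{strong}}$. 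Dividing, the conditional probability that $([n],\Ec)$ is not an $(A,B)$-$\lam$-expander is at most $n^2 e^{-\lam n/25}$ for all but a $o(1)$-fraction (under $\nu_{A,B,\lam}$) of choices of $(S,T)$; on the exceptional set we bound this probability by $1$. As $\lam n\ge(1-o(1))\tfrac{13}{14}\sqrt{n\log n}\to\infty$ we get $n^2 e^{-\lam n/25}=o(1)$, so integrating over $(A,B)$ and $(S,T)$ shows that $\bG\sim\mu_{\textup{strong},m}$ is an $(A,B)$-$\lam$-expander whp, where $(A,B)$ is the partition of Step~1. (Here we use that the conditions of Definition~\ref{defABExpand} involve only edges crossing the cut, so $\bG=S\cup T\cup\Ec$ is an $(A,B)$-$\lam$-expander if and only if $([n],\Ec)$ is.)

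To finish, I would note that $(S,T)\in\cD_{A,B,\lam}$ deterministically, so the defect graph satisfies $\Delta(\bG_A\cup\bG_B)=\Delta(S\cup T)\le\Delta_{A,B,\lam}\le\alpha/\lam$ (using $\Delta_{A,B,\lam}=o_\omega(1)\cdot\alpha/\lam$ for weakly balanced partitions and $\lam\ge\omega/\sqrt n$, as remarked before Proposition~\ref{propMaxDegBootstrap}). On the whp event that $\bG$ is an $(A,B)$-$\lam$-expander, Lemma~\ref{lemExpansionCor} then yields that $(A,B)$ is the unique max cut of $\bG$ and the unique partition $(A',B')$ with $\Delta(\bG_{A'}\cup\bG_{B'})\le\alpha/\lam$. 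Since any strongly balanced $(A',B')$ capturing $\bG$, i.e.\ with $(\bG_{A'},\bG_{B'})\in\cD_{A',B',\lam}$, obeys $\Delta(\bG_{A'}\cup\bG_{B'})\le\Delta_{A',B',\lam}\le\alpha/\lam$, it must equal $(A,B)$; combined with the trivial bound $c_{\textup{strong},\lam}(\bG)\ge1$ (as $(A,B)$ captures $\bG$ by construction), this gives $c_{\textup{strong},\lam}(\bG)=1$ whp. The only genuine point of departure from Lemma~\ref{lemCaptureMod}, and the step I expect to need the most care, is precisely the size-conditioning of the crossing edges: one must check that the polynomial lower bound of Lemma~\ref{lemLCLTcross} on the probability of landing at exactly $m-|S|-|T|$ crossing edges does indeed dominate the exponentially small failure probability of Lemma~\ref{lemExpanderwhp}, which it comfortably does for $\lam$ in the stated range.
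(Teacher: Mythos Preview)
Your proof is correct and follows essentially the same approach as the paper: both condition on $(A,B)$ and $(S,T)$, use Lemma~\ref{lemExpanderwhp} to bound the failure probability of expansion for the unconditioned hard-core sample, divide by the local CLT probability from Lemma~\ref{lemLCLTcross} to transfer this to the size-conditioned sample, and then invoke Lemma~\ref{lemExpansionCor}. Your write-up is in fact a bit more explicit than the paper's in justifying why any strongly balanced capturing partition must coincide with $(A,B)$.
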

\begin{proof}
Suppose that $(A,B)$ is chosen at Step 1 of Algorithm~\ref{algMuStrongm} and $(S,T)\in \cD_{A,B,\lam}$ is chosen at Step 2. Let $\Ec'\subseteq A\times B$ be a sample from the hard-core measure on $S\boxempty T$ at activity $\lam$.

By Lemma~\ref{lemExpanderwhp}, 
\[
\P(([n], \Ec') \textup{ is not an $(A,B)$-$\lam$-expander} )\leq e^{-\lam n/25}\, .
\]
Let $\Ec\subseteq A\times B$ be the set chosen at Step 3 in Algorithm~\ref{algMuStrongm}. Then 
\begin{align}
\P(([n], \Ec) &\textup{ is not an $(A,B)$-$\lam$-expander} )
 \\&= 
\P\left(([n], \Ec') \textup{ is not an $(A,B)$-$\lam$-expander} \Big | |\Ec'|=m-|S|-|T|\right)\\
&\leq \frac{e^{-\lam n/25}}{\P( |\Ec'|=m-|S|-|T|)}\, .
\end{align}
By Lemma~\ref{lemLCLTcross} we have 
\begin{align*}
 \P[|\Ec'|=m-|S|-|T|]\sim\frac{1}{n\sqrt{\pi\lam/2}} 
\end{align*}
whp over the choice of $(S,T)$ in Step 2 in Algorithm~\ref{algMuStrongm}.
We conclude that 
\begin{align}
\P(([n], \Ec) \textup{ is not an $(A,B)$-$\lam$-expander} ) =o(1)\, ,
\end{align}
whp over the choice of $(S,T)$.
Thus $\bG$ is an $(A,B)$-$\lam$-expander whp (wrt $ \mu_{\textup{strong},m}$).
The result follows from Lemma~\ref{lemExpansionCor}.
\end{proof}

\begin{lemma}\label{lemhatmuGapprox}
Let $m \ge \frac{13}{56} n^{3/2} \sqrt{\log n}$.
 Then for $\mu_{\textup{strong},m}$-almost all $G\in \cT(n,m)$,
\begin{align}\label{eqhatmumodaa}
\mu_{\textup{strong},m}(G)\sim \frac{\lam^m}{Z(\lam)} n\sqrt{\pi\lam/2}\, .
\end{align}
Moreover if $G\in \cT(n,m)$ is such that $\mu_{\textup{strong},m}(G)>0$, then
\begin{align}\label{eqhatmumodlb}
\mu_{\textup{strong},m}(G)=  \frac{\lam^m}{Z(\lam)}\cdot \Omega\left(n\sqrt{\lam}\right)\, .
\end{align}
\end{lemma}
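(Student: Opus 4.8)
The plan is to express $\mu_{\textup{strong},m}(G)$ as a sum over the strongly balanced partitions $(A,B)$ that capture $G$, of the probability that $\mu_{\textup{strong},m}$ selects $(A,B)$ at Step~1, selects the defect graph $(G_A,G_B)$ at Step~2, and then selects precisely the crossing edges $E(G)\cap(A\times B)$ at Step~3. Recalling Algorithm~\ref{algMuStrongm} and the description of $\mu_{A,B,\lam}$ in Algorithm~\ref{algMuAB}, the Step~1--Step~2 probability is $\frac{Z_{A,B}(\lam)}{Z_{\textup{strong}}(\lam)}\cdot \nu_{A,B,\lam}(G_A,G_B)$, and since $\nu_{A,B,\lam}(G_A,G_B)=\frac{\lam^{|G_A|+|G_B|}Z_{G_A\boxempty G_B}(\lam)}{Z_{A,B}(\lam)}$, the two $Z_{A,B}(\lam)$ factors cancel. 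The Step~3 probability is the probability that a hard-core sample on $G_A\boxempty G_B$ at activity $\lam$ equals the particular independent set $E(G)\cap(A\times B)$ of size $m-|G_A|-|G_B|$, conditioned on that size; that conditional probability is $\frac{\lam^{|E(G)\cap(A\times B)|}/Z_{G_A\boxempty G_B}(\lam)}{\P[|\Ec(G_A,G_B)|=m-|G_A|-|G_B|]}$, and the $Z_{G_A\boxempty G_B}(\lam)$ factors cancel as well. Multiplying everything and using $|G_A|+|G_B|+|E(G)\cap(A\times B)|=m$, each term collapses to $\frac{\lam^m}{Z_{\textup{strong}}(\lam)}\cdot\frac{1}{\P[|\Ec(G_A,G_B)|=m-|G_A|-|G_B|]}$.

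Next I would handle the sum over capturing partitions. By Lemma~\ref{lemCaptureModfixed}, for $\mu_{\textup{strong},m}$-almost all $G$ we have $c_{\textup{strong},\lam}(G)=1$, so the sum has a single term; for the lower bound~\eqref{eqhatmumodlb} it suffices to keep one term (any $G$ with $\mu_{\textup{strong},m}(G)>0$ is captured by at least one strongly balanced partition). Then I invoke Lemma~\ref{lemLCLTcross}: statement~\eqref{eqLCLTcross} gives $\P[|\Ec(G_A,G_B)|=m-|G_A|-|G_B|]\sim \frac{1}{n\sqrt{\pi\lam/2}}$ for $\nu_{A,B,\lam}$-almost all $(G_A,G_B)$, hence (unwinding the definition of $\mu_{\textup{strong},m}$ and the fact that it resamples $(S,T)$ from exactly $\nu_{A,B,\lam}$ in Step~2) for $\mu_{\textup{strong},m}$-almost all $G$; and statement~\eqref{eqcrossuniform} gives the uniform upper bound $O(1/(n\sqrt{\lam}))$, equivalently a uniform lower bound of $\Omega(1/(n\sqrt\lam))$ is not what we need---rather~\eqref{eqcrossuniform} bounds the probability from above, so $1/\P[\cdots]=\Omega(n\sqrt\lam)$, which plugged into the single-term expression gives~\eqref{eqhatmumodlb}. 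Finally, I replace $Z_{\textup{strong}}(\lam)$ by $Z(\lam)$ using Corollary~\ref{corZmodsimsum} (valid since $m\ge\frac{13}{56}n^{3/2}\sqrt{\log n}$ corresponds to $\lam=\lam(m)\ge\frac{13}{14}\sqrt{\frac{\log n}{n}}$, which one checks from~\eqref{eqpdefagain} as is done in the proof of Lemma~\ref{lemlamshift}), giving $\mu_{\textup{strong},m}(G)\sim\frac{\lam^m}{Z(\lam)}n\sqrt{\pi\lam/2}$ for almost all $G$ and the $\Omega$ bound in general.

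The only genuinely delicate point is making the ``$\nu_{A,B,\lam}$-almost all'' clause in Lemma~\ref{lemLCLTcross} interact correctly with ``$\mu_{\textup{strong},m}$-almost all $G$'': one must be careful that conditioning on Step~3 producing a given size does not distort the Step~2 distribution in a way that escapes the exceptional set. This is resolved by noting that the exceptional set in Lemma~\ref{lemLCLTcross} has $\nu_{A,B,\lam}$-measure $o(1)$ \emph{before} conditioning on the size event, and the size event has probability $\Theta(1/(n\sqrt\lam))$; but in fact we do not need to condition---$\mu_{\textup{strong},m}$ is defined by first sampling $(S,T)\sim\nu_{A,B,\lam}$ in Step~2 and \emph{then} sampling $\Ec$ in Step~3, so the marginal law of the defect graph under $\mu_{\textup{strong},m}$ (after averaging over the partition $\theta_\lam$-weighted by $Z_{A,B}$, but with the single-capturing-partition reduction this is just $\nu_{A,B,\lam}$) is exactly $\nu_{A,B,\lam}$, and the exceptional set is hit with probability $o(1)$. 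Everything else is the bookkeeping of telescoping partition functions, which is routine. I would also remark that this lemma is the key input for converting $Z(\lam)$-asymptotics into $|\cT(n,m)|$-asymptotics via $\mu_{\textup{strong},m}(G)=\Theta(\lam^m/Z(\lam)\cdot n\sqrt\lam)$ together with $|\cL(n,m)|=\sum_{G}\mathbf1[\mu_{\textup{strong},m}(G)>0]$ weighted appropriately, yielding Theorem~\ref{thmstatesum}.
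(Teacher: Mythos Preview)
Your proposal is correct and follows essentially the same route as the paper: you derive the identity
\[
\mu_{\textup{strong},m}(G)=\frac{\lam^m}{Z_{\textup{strong}}(\lam)}\sum_{(A,B)\in C_{\textup{strong},\lam}(G)}\frac{1}{\P[|\Ec(G_A,G_B)|=m-|G_A|-|G_B|]}
\]
via the same telescoping of partition functions, then apply Lemma~\ref{lemCaptureModfixed} to reduce to a single term, Lemma~\ref{lemLCLTcross} (both parts) for the hitting probability, and Corollary~\ref{corZmodsimsum} to replace $Z_{\textup{strong}}(\lam)$ by $Z(\lam)$. Your treatment of the ``$\nu_{A,B,\lam}$-almost all versus $\mu_{\textup{strong},m}$-almost all'' issue is in fact more explicit than the paper's (which leaves this implicit), and your observation that the Step~2 marginal of $(S,T)$ under $\mu_{\textup{strong},m}$ is exactly $\nu_{A,B,\lam}$ is the right justification.
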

\begin{proof}
Suppose that $(A,B)$ is strongly balanced and captures $G$. Given that $(A,B)$ is selected at Step 1 of Algorithm~\ref{algMuStrongm}, the probability that we output $G$ is
\[
P_{A,B}:=\frac{\lam^{|G_A|+|G_B|}Z_{G_A\boxempty G_B}(\lam)}{Z_{A,B}(\lam)} \cdot \frac{1}{i_{m-|G_A|-|G_B|}(G_A\boxempty G_B)}\, ,
\]
where we use $i_k(H)$ to denote the number of independent sets of size $k$ in a graph $H$. 
Let $\Ec$ denote a random sample from the hard-core model on the graph $G_A\boxempty G_B$ at activity $\lam$. Then 
\[
\P(|\Ec| = m-|G_A|-|G_B| )= \frac{\lam^{m-|G_A|-|G_B| }i_{m-|G_A|-|G_B|}(G_A\boxempty G_B)}{Z_{G_A\boxempty G_B}(\lam)}\, ,
\] 
so that 
\[
P_{A,B}:=\frac{\lam^m}{Z_{A,B}(\lam)} \cdot \frac{1}{\P(|\Ec| = m-|G_A|-|G_B| )}\, .
\]
Letting $C_{\textup{strong},\lam}(G)$ denote the set of strongly balanced $(A,B)$ that capture $G$, we then have
\[
\mu_{\textup{strong},m}(G)= \frac{\lam^m}{Z_{\textup{strong}}(\lam)}\sum_{(A,B)\in C_{\textup{strong},\lam}(G)} \frac{1}{\P(|\Ec|=m-|G_A|-|G_B|)} \, .
\]
Lemma~\ref{lemCaptureModfixed} tells us that $c_{\textup{strong},\lam}(G)=|C_{\textup{strong},\lam}(G)|=1$ for $\mu_{\textup{strong},m}$-almost all $G$. 
Statement~\eqref{eqhatmumodaa} now follows from~\eqref{eqLCLTcross} and Corollary~\ref{corZmodsimsum} which states that $Z(\lam)\sim Z_{\textup{strong}}(\lam)$.
Statement~\eqref{eqhatmumodlb} follows from~\eqref{eqcrossuniform} and Corollary~\ref{corZmodsimsum}.
\end{proof}

For $(A,B)\in\Pi_{\textup{strong}}$, define 
\begin{align*}
\cT_{A,B}(n,m):=\left\{G\in \cT(n,m) : (G_A, G_B)\in \cD_{A,B,\lam} \right\}\, ,
\end{align*}
where $\lam$ is as in~\eqref{eqpdefagain}.
\begin{theorem}\label{thmABlclt}
Let $m \ge \frac{13}{56} n^{3/2} \sqrt{\log n}$ and let $(A,B)\in\Pi_{\textup{strong}}$.
Then
\[
|\cT_{A,B}(n,m)|\sim\frac{Z_{A,B}(\lam)}{\lam^m n \sqrt{\pi\lam/2}}\, .
\]
\end{theorem}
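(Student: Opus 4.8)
\textbf{Proof proposal for Theorem~\ref{thmABlclt}.}

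The plan is to mimic the proof of Theorem~\ref{thmstatesum}, but localized to a single strongly balanced partition $(A,B)$, using the local central limit theorem for the hard-core model to extract the coefficient of $\lam^m$ in $Z_{A,B}(\lam)$. Concretely, recall the two-step description of $\mu_{A,B,\lam}$ in Algorithm~\ref{algMuAB}: first sample $(S,T)\sim \nu_{A,B,\lam}$, then sample $\Ec\subseteq A\times B$ from the hard-core model on $S\boxempty T$ at activity $\lam$. Writing $\bG\sim \mu_{A,B,\lam}$, we have the identity
\[
|\cT_{A,B}(n,m)| = \frac{Z_{A,B}(\lam)}{\lam^m}\cdot \mu_{A,B,\lam}(\{|\bG|=m\})\, ,
\]
valid for any $\lam>0$, since $Z_{A,B}(\lam) = \sum_{G\in \cT_{A,B,\lam}} \lam^{|G|}$ and $\mu_{A,B,\lam}(G) = \lam^{|G|}/Z_{A,B}(\lam)$. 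So it suffices to show that with the specific choice of $\lam=\lam(m)$ from~\eqref{eqpdefagain}, we have $\mu_{A,B,\lam}(\{|\bG|=m\})\sim \frac{1}{n\sqrt{\pi\lam/2}}$.

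First I would condition on the defect edges: $\mu_{A,B,\lam}(\{|\bG|=m\}) = \E_{(\mathbf S,\mathbf T)\sim\nu_{A,B,\lam}}\big[\P(|\Ec(\mathbf S,\mathbf T)| = m - |\mathbf S| - |\mathbf T|)\big]$. By Lemma~\ref{lemLCLTcross}, specifically~\eqref{eqLCLTcross}, for $\nu_{A,B,\lam}$-almost all $(S,T)$ the inner probability is asymptotic to $\frac{1}{n\sqrt{\pi\lam/2}}$; and by~\eqref{eqcrossuniform} it is $O(1/(n\sqrt\lam))$ uniformly. (Note that Lemma~\ref{lemLCLTcross} already internally uses Lemma~\ref{lemlamshift}, which pins down why $\lam=\lam(m)$ is the right choice of activity, and Corollary~\ref{coredgevar}, which controls $\var(|\mathbf S|+|\mathbf T|)$ so that the fluctuation of the defect count is negligible compared to $\sqrt m$, the order of $\var|\Ec|$.) Combining the pointwise asymptotic on a set of $\nu_{A,B,\lam}$-measure $1-o(1)$ with the uniform upper bound on the complement (whose contribution is $o(1)\cdot O(1/(n\sqrt\lam)) = o(1/(n\sqrt\lam))$, which is also $o(1/(n\sqrt{\pi\lam/2}))$ since $\lam = \Theta(\sqrt{\log n/n})$ so $1/(n\sqrt\lam)\to 0$), a standard dominated-convergence-type argument gives
\[
\mu_{A,B,\lam}(\{|\bG|=m\}) = \E\big[\P(|\Ec(\mathbf S,\mathbf T)| = m - |\mathbf S| - |\mathbf T|)\big] \sim \frac{1}{n\sqrt{\pi\lam/2}}\, ,
\]
which is exactly what we need.

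The main obstacle is really already dispatched inside Lemma~\ref{lemLCLTcross}, so the remaining work here is bookkeeping: one must be careful that the ``$\nu_{A,B,\lam}$-almost all'' qualifier combines correctly with the expectation, i.e. that the $o(1)$ fraction of $(S,T)$ on which the precise LCLT asymptotic may fail contributes negligibly. This is immediate from the uniform bound~\eqref{eqcrossuniform}: split $\E[\,\cdot\,]$ into the good event (measure $1-o(1)$, integrand $(1+o(1))/(n\sqrt{\pi\lam/2})$) and its complement (measure $o(1)$, integrand $O(1/(n\sqrt\lam))$), and observe both pieces are $(1+o(1))/(n\sqrt{\pi\lam/2})$ and $o(1/(n\sqrt{\pi\lam/2}))$ respectively. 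No genuinely new estimate is required beyond what has been established; the only mild subtlety is checking that the hypothesis $m\ge \frac{13}{56}n^{3/2}\sqrt{\log n}$ (equivalently $\lam\ge\frac{13}{14}\sqrt{\log n/n}$, via $\lam\sim\lam_0 = 4m/n^2$) is exactly the range in which Lemma~\ref{lemLCLTcross}, Lemma~\ref{lemlamshift} and Corollary~\ref{coredgevar} all apply, which it is.
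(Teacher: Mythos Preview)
Your proposal is correct and follows exactly the same approach as the paper: the identity $|\cT_{A,B}(n,m)| = \lam^{-m} Z_{A,B}(\lam)\,\mu_{A,B,\lam}(|\bG|=m)$, followed by conditioning on $(\mathbf S,\mathbf T)\sim\nu_{A,B,\lam}$ and invoking Lemma~\ref{lemLCLTcross} (using~\eqref{eqLCLTcross} on the $\nu_{A,B,\lam}$-typical set and~\eqref{eqcrossuniform} on its complement). Your explicit good/bad event split is precisely the argument implicit in the paper's one-line appeal to Lemma~\ref{lemLCLTcross}.
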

\begin{proof}
Let $\bG\sim\mu_{A,B,\lam}$.
We have 
\[
|\cT_{A,B}(n,m)|= \frac{Z_{A,B}(\lam)}{\lam^m}\pr(|\mathbf G|=m)\, .
\]
It therefore suffices to show that 
\begin{align}\label{eqLCLTGm}
\pr(|\mathbf G|=m)\sim\frac{1}{n\sqrt{\pi\lam/2}}\, .
\end{align}
Let $(\mathbf S, \mathbf T)\sim\nu_{A,B,\lam}$ denote the random sets of edges selected at Step 1 in Algorithm~\ref{algMuAB} and let $\Ec$ denote the random set of edges selected at Step 2. Then
\begin{align}
\P(|\mathbf G|=m)
&=
\sum_{(S,T)\in \cD_{A,B,\lam}}\P((\mathbf S, \mathbf T)=(S,T))\cdot 
\P\left[|\Ec|= m-|S|-|T| \right]\label{eqprobhitm}
\end{align}
Statement~\eqref{eqLCLTGm} now follows from Lemma~\ref{lemLCLTcross}.
\end{proof}
We are now in a position to prove Theorem~\ref{thmstatesum}.
\begin{proof}[Proof of Theorem~\ref{thmstatesum}]
We begin by showing that $|\cT(n,m)|\sim|\cL(n,m)|$. By definition, $\cL(n,m)$ is the set of $G\in \cT(n,m)$ such that $G$ admits a weakly balanced, dominating cut (see Definition~\ref{def:dominating}) of size $\geq m - 2\delta \lambda n^2$. Since $\lam \geq \lam_0=4m/n^2$, we have $m - 2\delta \lambda n^2\leq (1-\delta)m$. The fact that $|\cT(n,m)|\sim|\cL(n,m)|$ now follows from Theorem~\ref{thm:luczakrefined} which asserts that almost all $G\in\cT(n,m)$ admit a weakly balanced, dominating cut of size at least $(1-\delta) m$.

Our next goal is to show that 
\begin{align}\label{eqTnmSimSum}
|\cL(n,m)|\sim \sum_{(A,B)\in \Pi_{\textup{strong}}} |\cT_{A,B}(n,m)|\, .
\end{align}
Statement~\eqref{eqTnmSimZ} will then follow from Theorem~\ref{thmABlclt} and Corollary~\ref{corZmodsimsum}.

Let  $\cL=\cL(n,\lam)$. Then
\[
\mu_{\cL,\lam}(|G|=m)= \frac{\lam^m|\cL(n,m)|}{Z(\cL,\lam)}\, .
\]
On the other hand by~\eqref{eqmodcapture},
\begin{align*}
\mu_{\textup{strong},\lam}(|G|=m)
&= \frac{\lam^m}{Z_{\textup{strong}}(\lam)} \cdot \sum_{G\in \cT(n,m)}c_{\textup{strong},\lam}(G)\\
& =   \frac{\lam^m}{Z_{\textup{strong}}(\lam)} \cdot  \sum_{(A,B)\in \Pi_{\textup{strong}}} |\cT_{A,B}(n,m)|\, .
\end{align*}
We note also that by~\eqref{eqLCLTGm}
\begin{align}\label{eqeqLCLThatmu}
\mu_{\textup{strong},\lam}(|G|=m)\sim \frac{1}{n\sqrt{\pi\lam/2}}\, .
\end{align}

By Propositions~\ref{propGroundStateRefinedAlt},~\ref{lemZmodsimsum} and~\ref{propGroundStateStrong}, 
\[
\left|\mu_{\cL,\lam}(|G|=m)-\mu_{\textup{strong},\lam}(|G|=m)\right|= O\left(n^{-3/2}\right)\,
\]
and so
\[
\left|\frac{Z_{\textup{strong}}(\lam)}{Z(\cL,\lam)}\cdot\frac{|\cL(n,m)|}{ \sum_{(A,B)\in \Pi_{\textup{strong}}} |\cT_{A,B}(n,m)|}-  1\right|= \frac{1}{\mu_{\textup{strong},\lam}(|G|=m)}\cdot O\left(n^{-3/2}\right)=o(1)\, ,
\]
where for the last equality we used~\eqref{eqeqLCLThatmu}.
Statement~\eqref{eqTnmSimSum} follows since $Z_{\textup{strong}}(\lam)\sim Z(\cL,\lam)$ (again by Propositions~\ref{propGroundStateRefinedAlt},~\ref{lemZmodsimsum} and~\ref{propGroundStateStrong}). We now turn our attention to~\eqref{eqFixedEdgeTV}. 
By~\eqref{eqTnmSimZ}  
\begin{align*}
\| \mu_{m}-\mu_{\textup{strong},m}\|_{TV}
&=
\sum_{G : \mu_{\textup{strong},m}(G)> \mu_{m}(G)} \mu_{\textup{strong},m}(G) -  \mu_{m}(G)\\
&=
\sum_{G : \mu_{\textup{strong},m}(G)> \mu_{m}(G)} \mu_{\textup{strong},m}(G)\left(1 - \frac{1}{|\cT(n,m)| \cdot \mu_{\textup{strong},m}(G)} \right)\\
&=
\sum_{G : \mu_{\textup{strong},m}(G)> \mu_{m}(G)} \mu_{\textup{strong},m}(G)\left(1 - \frac{(1+o(1))\lam^m n \sqrt{\pi\lam/2}}{Z(\lam)\cdot \mu_{\textup{strong},m}(G)} \right)\, .
\end{align*}
Statement~\eqref{eqFixedEdgeTV} now follows from  Lemma~\ref{lemhatmuGapprox}. 
\end{proof}

\subsection{Proof of theorems from Section~\ref{secIntro}}

Theorems~\ref{thmSubcriticalfixedEdgeDist}~\ref{thmGnmSubRestated},~\ref{thmNumberM},~\ref{thmP2ERG} and~\ref{thmNumberMsuper} now follow easily.

\begin{proof}[Proof of Theorems~\ref{thmNumberM} and~\ref{thmNumberMsuper}]
First fix $\eps>0$ and let $m \ge \frac{1+\eps}{4} n^{3/2} \sqrt{\log n}$. Let $\lam=\lam(m)$ be as in~\eqref{eqpdefagain}.  Since $
\lam\geq (1+\eps)\sqrt{{\log n}/{n}}$,  Theorem~\ref{thmNumberM} follows from~\eqref{eqTnmSimZ} and Theorem~\ref{lemZapproxER}. Next let $m \ge \frac{13}{56} n^{3/2} \sqrt{\log n}$ and let $\lam=\lam(m)$ be as in~\eqref{eqpdefagain}.  Since $
\lam\geq \frac{13}{14}\sqrt{{\log n}/{n}}$,  Theorem~\ref{thmNumberMsuper} follows from~\eqref{eqTnmSimZ} and Lemma~\ref{lemSuperMainLem}.
\end{proof}

\begin{proof}[Proof of Theorems~\ref{thmSubcriticalfixedEdgeDist},~\ref{thmGnmSubRestated} and~\ref{thmP2ERG}]
First we prove Theorem~\ref{thmGnmSubRestated} (and therefore also Theorem~\ref{thmSubcriticalfixedEdgeDist}). By Theorem~\ref{thmstatesum} it suffices to show that 
\begin{align}\label{eqhatmutostrong}
\|\mu_{m,1} - \mu_{\textup{strong},m}\|_{TV}=o(1)\, .
\end{align}
Let $\bm \pi_0, \bm \pi_1$ denote the partitions selected at Step 1 in Algorithms~\ref{algMum1} and~\ref{algMuStrongm} respectively.
 Given $\pi\in \Pi$, let $ \mu^\pi_{\textup{strong},m},  \mu^\pi_{m,1}$ denote the measures $\mu_{\textup{strong},m}, \mu_{m,1}$
conditioned on the events $\bm \pi_0=\pi, \bm \pi_1=\pi$ respectively. 
By the proof of Claim~\ref{claimTVconditioned}, if $\pi\sim\bm\pi_0$, then
\begin{align}\label{eqclaimTVagain}
\|\mu_{m,1} - \mu_{\textup{strong},m}\|_{TV} \leq \E_\pi \| \mu^\pi_{m,1} - \mu^\pi_{\textup{strong},m}\|_{TV} + \|\bm\pi_0-\bm\pi_1\|_{TV}\, .
\end{align}
The proof that $\|\bm\pi_0-\bm\pi_1\|_{TV}=o(1)$ follows from~\eqref{eqPpi0}, \eqref{eqPpi1} which hold equally well in this context. 
Let $\nu'_{A,B,\lam}$ denote the measure associated to the random graph in Step 2 of Algorithm~\ref{algMum1}, i.e., the union of two independent samples from $G(A,q_0), G(B,q_0)$ where we output the empty graph if the graph contains a triangle or has more than $m$ edges. 
Note that if $\pi$ is strongly balanced, then
\begin{align}\label{eqhatupistrong}
\| \mu^\pi_{m,1} - \mu^\pi_{\textup{strong},m}\|_{TV} \leq \|  \nu_{A,B,\lam}- \nu'_{A,B,\lam}  \|_{TV} \, .
\end{align}
By Lemma~\ref{lemZABapprox1},  $\| \nu_{A,B,\lam} - \nu_{\bm q} \|_{TV}  = o(1) $ and
\[
\|\nu_{\bm q} -  \nu'_{A,B,\lam} \|_{TV} = \frac{1}{2} \nu_{\bm q}(\{G: |G|>m \text{ or $G$ contains a triangle}\})=o(1)
\]
by Markov's inequality and a union bound. We conclude that $\|\nu_{A,B,\lam}- \nu'_{A,B,\lam}  \|_{TV}=o(1) $ and so~\eqref{eqhatupistrong} and~\eqref{eqclaimTVagain} give~\eqref{eqhatmutostrong}. This concludes the proof of Theorem~\ref{thmGnmSubRestated} and hence also Theorem~\ref{thmSubcriticalfixedEdgeDist}.  The proof of Theorem~\ref{thmP2ERG} follows the same lines.
 \end{proof}
Finally, we prove the structural results of Theorems~\ref{thmDistToBipartite}, \ref{thm3color}, \ref{thm4color}, \ref{thmGiantComponent}.  Any structural result that just involves defect edges will follow immediately from the corresponding result in Section~\ref{secERresults} on $G(n,p)$ since Theorem~\ref{thmstatesum}  shows that for the choice of $\lam=\lam(m)$ the distribution of defect edges in $\mu_\lam$ and $\mu_m$ coincide up to $o(1)$ total variation distance.  For structural results involving crossing edges, we note that the relevant property of crossing edges, namely the expansion property captured in Lemma~\ref{lemCaptureModfixed},  holds whp in $\mu_m$ as well.
\begin{proof}[Proof of Theorems~\ref{thmDistToBipartite}, ~\ref{thm3color}] 
The proof of Theorem~\ref{thmDistToBipartite} is the same as the proof of Theorem~\ref{thmGnpBipartite} where now we apply Lemma~\ref{lemCaptureModfixed} in place of Lemmas~\ref{lemExpanderwhp} and~\ref{lemExpansionCor}.

The proof of Theorem~\ref{thm3color} is the same as the proof of Theorem~\ref{thmGnpColoring} where we apply Lemma~\ref{lemCaptureModfixed} in place of  Lemma~\ref{lemExpanderwhp}.
\end{proof}

\begin{proof}[Proof of Theorems~\ref{thm4color} and \ref{thmGiantComponent}]
These theorems only concern properties of the defect edges and so follow from Theorem~\ref{thm4colorGnp} and Theorem~\ref{thmGnpGiant}.
\end{proof}

\section{The first approximation}
\label{secMorrisOPT}

In this section we prove Proposition~\ref{propGroundStateRefinedAlt}. The proof will follow a modification of the strategy of~\cite{balogh2016typical} specialized to triangle-free graphs. 

Recall that we call a partition $(A,B)$ of the set $[n]$ \emph{weakly balanced} if $\big||A|-|B| \big|\leq n/10$. Moreover, we call a cut $(A,B)$ of a graph $G$ \emph{dominating} if 
\[
d_G(v,B)\geq d_G(v,A) \text{ for all } v\in A \,,
\]
and similarly with $A,B$ swapped.

Recall that $\alpha=1/(96e^3)$. Before we proceed, we fix some constants that obey the following chain of dependencies:
\begin{align}\label{eq:ChainDep}
\frac{1}{\omega}\ll\frac{1}{C}\ll \delta \ll \tau \ll \theta\ll \beta \ll \alpha \, .
\end{align}
Here we use the $\ll$ notation informally.
For concreteness, we note that the following choices suffice: $\beta$ satisfies $\beta\log(e/\beta)= \alpha/11$ and $\theta = e^{-100/\beta}$, $\tau = \left(\theta/10\right)^8$, $60\delta \log(e/\delta)= \tau$.
We then choose $C=C(\delta)$ as in Theorem~\ref{thm:luczakrefined}. Finally we pick $\omega=\max\{\sqrt{\alpha/(4\delta)}, 20\beta^{-2/3}, 50C\}$.
Throughout this section we assume 
\[
\lam\geq \frac{\omega}{\sqrt{n}}\, ,
\]
and set $\cL=\cL(n,\lam)$ (as defined at~\eqref{eqcLdef}).
We begin with a proof of Proposition~\ref{lemZerothApprox} which states that $
Z(\lam)\sim Z(\cL, \lam),
$
and $
\|\mu_{\lam} - \mu_{\cL,\lam}\|=o(1)
$.

\begin{proof}[Proof of Proposition~\ref{lemZerothApprox}]
We first aim to show that the dominant contribution to  $Z(\lam)$ comes from graphs $G\in\cT$ such that $|G|$ is within a constant factor of $\lam n^2$. We will then use Theorem~\ref{thm:luczakrefined} to conclude that almost all of these graphs belong to $\mathcal{L}$.

We begin by noting the crude estimate 
\begin{equation}\label{eqn:crudelb}
 Z(\mathcal{L}, \lam) \ge (1 + \lambda)^{\lfloor n^2/4\rfloor} \geq \exp\left\{\frac{\lambda n^2}{8}\right\}
\end{equation}
obtained by counting just the bipartite graphs with a fixed bipartition $(A,B)$ such that $|A| = \lceil\frac{n}{2}\rceil$ and $|B| = \lfloor\frac{n}{2}\rfloor$.

With this estimate in hand, let us account for the weight of all graphs having fewer than $\lambda\binom{n}{2}/15$ edges. Letting $\cT_1=\{G\in \mathcal{T} : |G| \leq \lambda\binom{n}{2}/15\}$, we have
\begin{align*}
Z(\cT_1, \lam)
  \leq \sum_{j=0}^{\lambda\binom{n}{2}/15} \binom{\binom{n}{2}}{j}\lam^j
\leq \sum_{j=0}^{\lambda\binom{n}{2}/15} \left(e\binom{n}{2}\lam/j \right)^j\, .
\end{align*}
The largest term in the above sum is at $j=\lambda\binom{n}{2}/15$ and so the RHS is at most
\begin{align*}
n^2 \cdot \left(15e\right)^{\lambda\binom{n}{2}/15} 
\leq \exp\{0.124 \cdot \lambda n^2\}\, .
\end{align*}
Using~\eqref{eqn:crudelb}, we get
\[
Z(\cT_1, \lam) \leq \exp\left\{-0.01\lambda n^2\right\} \cdot Z(\mathcal{L}, \lam).
\] 
Now let us estimate the weight from all graphs with many edges. Let $\cT_2=\{G\in \mathcal{T} : |G| \geq 3\lambda\binom{n}{2}\}$. We have
\begin{align*}
Z(\cT_2, \lam)
  \leq \sum_{j\geq 3 \lambda\binom{n}{2}}\binom{\binom{n}{2}}{j}\lam^j
\leq \sum_{j\geq 3 \lambda\binom{n}{2}} \left(e\binom{n}{2}\lam/j \right)^j\, .
\end{align*}
The largest term in the above sum is at $j=3\lambda\binom{n}{2}$ and so the RHS is at most
\begin{align*}
n^2 \cdot \left(e/3\right)^{3\lambda\binom{n}{2}}=o(1)\, .
\end{align*}
Now let $\mathcal{L}(t)$ denote the set of all $G\in \cT$ with $t$ edges that admit a weakly balanced, dominating cut of size at least $(1-\delta)t$. Since $\omega\geq 50C$ we have $\lam\binom{n}{2}/15\geq Cn^{3/2}$.  By Theorem~\ref{thm:luczakrefined}, we have that if $t\geq\lam\binom{n}{2}/15$, then $|\cT(n,t)|\sim |\mathcal{L}(t)|$. Moreover, if $t\leq 3\lambda\binom{n}{2}$, then $\delta t\leq 2\delta \lam n^2$ so that $\mathcal{L}(t)\subseteq \mathcal{L}$. It follows that
\begin{align*}
Z(\lam)  = (1 + o(1)) \sum_{t = \lambda\binom{n}{2}/15}^{3\lambda\binom{n}{2}}|\cT(n,t)|\lam^t
 = (1 + o(1))\sum_{t = \lambda\binom{n}{2}/15}^{3\lambda\binom{n}{2}}|\mathcal{L}(t)|\lam^t
 \leq (1 + o(1))Z(\mathcal{L}, \lam)\, .
\end{align*}
Since also $Z(\mathcal{L}, \lam)\leq Z(\lam)$, we have $Z(\lam)\sim Z(\cL, \lam)$ as desired.
To conclude the proof note that 
\begin{align*}
\|\mu_\lam - \mu_{\mathcal{L},\lam}\|_{TV}
&=
\sum_{G :\mu_{\mathcal{L},\lam}(G)> \mu_\lam (G)}\mu_{\mathcal{L},\lam}(G)- \mu_\lam (G)
=\sum_{G\in \mathcal{L}}\frac{\lam^{|G|}}{Z(\mathcal L, \lam)}\left(1-\frac{Z(\mathcal L, \lam)}{Z( \lam)}\right)=o(1)\, .
\end{align*}
\end{proof}

Henceforth we fix a weakly balanced partition $(A,B)$ and let 
\begin{align*}
\mathcal{L}_{A,B}:= \left\{G\in \mathcal L: (A,B) \text{ is a dominating  cut of $G$ with  $\leq 2\delta \lambda n^2$ defect edges}\right\}\, .
\end{align*}
We can now  state the main step toward the proof of Proposition~\ref{propGroundStateRefinedAlt}. Recall that 
\(
Z_{A,B}^{\textup{w}}(\lam)= Z( \cT_{A,B,\lam}^{\textup{w}}, \lam).
\)

\begin{lemma}\label{lemOPT}
\begin{align*}
Z(\mathcal{L}_{A,B}, \lam)= \left(1+O\left(e^{-\sqrt{n}}\right)\right) Z_{A,B}^{\textup{w}}(\lam)\, .
\end{align*}
\end{lemma}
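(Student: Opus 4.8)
The plan is to show that, as $\lam$-weighted sets, $\mathcal{L}_{A,B}$ and $\cT^{\textup{w}}_{A,B,\lam}$ coincide up to a symmetric difference of total weight $O(e^{-\sqrt n})\,Z^{\textup{w}}_{A,B}(\lam)$. First I would pin down this symmetric difference via two elementary containments. If $G\in\cT^{\textup{w}}_{A,B,\lam}$ and $(A,B)$ is a dominating cut of $G$, then $G\in\mathcal{L}_{A,B}$: the bound $\Delta(G_A\cup G_B)\le\alpha/\lam$ forces $|G_A|+|G_B|\le\alpha n/(2\lam)$, which is at most $2\delta\lam n^2$ since $\lam\ge\omega/\sqrt n$ and $\omega\ge\sqrt{\alpha/(4\delta)}$; as $(A,B)$ is weakly balanced, this exhibits $(A,B)$ as a weakly balanced dominating cut of $G$ of size $\ge|G|-2\delta\lam n^2$, so $G\in\mathcal{L}_{A,B}$. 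Conversely, $G\in\mathcal{L}_{A,B}$ with $\Delta(G_A\cup G_B)\le\alpha/\lam$ lies in $\cT^{\textup{w}}_{A,B,\lam}$ by definition. Hence $\mathcal{L}_{A,B}\setminus\cT^{\textup{w}}_{A,B,\lam}=\{G\in\mathcal{L}_{A,B}:\Delta(G_A\cup G_B)>\alpha/\lam\}$, while $\cT^{\textup{w}}_{A,B,\lam}\setminus\mathcal{L}_{A,B}\subseteq\{G\in\cT^{\textup{w}}_{A,B,\lam}:(A,B)\text{ is not a dominating cut of }G\}$. Writing $C$ for the common weight $Z(\mathcal{L}_{A,B}\cap\cT^{\textup{w}}_{A,B,\lam},\lam)$, it then suffices to bound $Z(\mathcal{L}_{A,B}\setminus\cT^{\textup{w}}_{A,B,\lam},\lam)$ and $Z(\cT^{\textup{w}}_{A,B,\lam}\setminus\mathcal{L}_{A,B},\lam)$ by $O(e^{-\sqrt n})Z^{\textup{w}}_{A,B}(\lam)$; indeed the latter bound gives $C=(1+O(e^{-\sqrt n}))Z^{\textup{w}}_{A,B}(\lam)$, and then the former gives $Z(\mathcal{L}_{A,B},\lam)=(1+O(e^{-\sqrt n}))Z^{\textup{w}}_{A,B}(\lam)$, which is the lemma.

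The non-dominating error term is immediate from Lemma~\ref{lemWeakUnique}: a sample from $\mu^{\textup{w}}_{A,B,\lam}$ (the Gibbs measure on $\cT^{\textup{w}}_{A,B,\lam}$) has $(A,B)$ as its unique max cut with probability at least $1-e^{-\lam n/25}$, and a max cut is always a dominating cut. Thus $Z(\cT^{\textup{w}}_{A,B,\lam}\setminus\mathcal{L}_{A,B},\lam)\le e^{-\lam n/25}Z^{\textup{w}}_{A,B}(\lam)$, and since $\omega$ is a large absolute constant and $\lam\ge\omega/\sqrt n$, this is $O(e^{-\sqrt n})Z^{\textup{w}}_{A,B}(\lam)$.

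The substance of the proof is the bound $Z(\mathcal{B},\lam)=O(e^{-\sqrt n})Z^{\textup{w}}_{A,B}(\lam)$ for $\mathcal{B}:=\{G\in\mathcal{L}_{A,B}:\Delta(G_A\cup G_B)>\alpha/\lam\}$, and for this I would adapt the strategy of~\cite{balogh2016typical} (specialised to triangles) to the weighted setting. The basic mechanism: if $G\in\mathcal{B}$ it has a vertex $v$, say in $A$, with $d:=d_G(v,A)>\alpha/\lam$ defect neighbours $W$; since $(A,B)$ is dominating, $v$ has at least $d$ crossing neighbours, and fixing $N\subseteq N_G(v)\cap B$ with $|N|=d$, triangle-freeness forbids every one of the $|W||N|\ge(\alpha/\lam)^2$ potential crossing edges between $W$ and $N$. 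One encodes $G$ by a short fingerprint built from $v,W,N$ together with $G$ after deleting the edges at $v$, and bounds the total $\lam$-weight of the preimages of each fingerprint. A one-vertex version of this encoding is not by itself enough in the range $\lam=\tilde\Theta(n^{-1/2})$ — the entropy of choosing $W$ and $N$ roughly cancels the weight $\lam^{2d}$ of the forced edges at $v$, leaving only the $(W,N)$-hole as net gain, which is marginally too weak. The argument of~\cite{balogh2016typical} circumvents this by first reducing, through a crude supersaturation step (a very dense defect vertex forces $\Omega(\beta^2 n^2)$ missing crossing edges, whose weight cost swamps the $2^{O(n)}$ cost of specifying its neighbourhood), to graphs whose defect graph has maximum degree at most $\beta n$, and then bootstrapping down the scales $\beta n\gg\theta n\gg\cdots\gg\alpha/\lam$: at each scale a moderately dense spot in the defect graph produces, via the dominating property and triangle-freeness, a quantitatively large family of forbidden crossing edges, and the loss in the resulting container/encoding argument is calibrated by the constant hierarchy $\delta\ll\tau\ll\theta\ll\beta\ll\alpha$ of~\eqref{eq:ChainDep} so as to yield the sharp exponent $\sqrt n$. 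Whenever one conditions on the defect graph $(S,T)$, Lemma~\ref{lemSTgraphLemma} replaces the crossing part by the hard-core partition function $Z_{S\boxempty T}(\lam)$, which in the relevant subcritical range $\Delta(S\boxempty T)\le 2\alpha/\lam$ is controlled by the cluster-expansion bounds of Section~\ref{secTools} (Lemma~\ref{lemClusterTail}, Corollary~\ref{corclustersimple}); this is precisely what lets the combinatorial estimates of~\cite{balogh2016typical} run with edge counts replaced by $\lam$-weights.

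I expect this last step — obtaining the sharp $e^{-\sqrt n}$ bound on $Z(\mathcal{B},\lam)$, rather than merely an $o(1)$-type bound — to be the main obstacle, and it is the reason the constants $\beta,\theta,\tau,\delta$ are fixed as in~\eqref{eq:ChainDep}; the remaining ingredients (the two containments and the appeal to Lemma~\ref{lemWeakUnique}) are routine.
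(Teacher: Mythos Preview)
Your proposal is correct and follows the paper's approach: the symmetric-difference setup, the containments you state, the handling of the easy (non-dominating) direction via Lemma~\ref{lemWeakUnique}, and the adaptation of~\cite{balogh2016typical} for the hard direction (including the use of Lemma~\ref{lemSTgraphLemma} and cluster expansion to control the crossing partition functions) all match.

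The one point where your sketch diverges from the paper's execution is the description of the hard direction as a ``bootstrap down scales $\beta n\gg\theta n\gg\cdots\gg\alpha/\lam$''. The paper does not iterate through degree thresholds. Instead it makes a single split: for each defect graph $F$ one fixes a maximal subgraph $U(F)\subseteq F$ of maximum degree $\le D=\alpha/\lam$, lets $X(F)$ be the vertices saturated in $U(F)$ and $H(F)\subseteq X(F)$ those of $F$-degree $\ge\beta\lam n$, and sets $T(F)=F$ minus edges touching $X(F)$. In the \emph{low-degree} case $|H(F)|\le\tfrac{\alpha}{16}|X(F)|$, a direct cluster-expansion comparison gives $Z_{F_\boxempty}\le Z_{T(F)_\boxempty}\,e^{-|X(F)|Dn\lam^2/10}$, which combined with a weighted count of $F$'s with given $(T(F),|X(F)|,|H(F)|)$ already yields the $e^{-\sqrt n}$ gain. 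In the \emph{high-degree} case one uses the blocked sets $\mathcal{B}'$ you describe, but with three sub-cases according to the size of $|\mathcal{B}'|$ (at least $\tau ab$; between $|H|(D^*)^2/16$ and $\tau ab$; below $|H|(D^*)^2/16$), the last requiring a deviation estimate for overlaps of random $D^*$-subsets. The constants $\theta,\tau$ govern these sub-cases rather than serving as intermediate degree thresholds.
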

We prove the lemma in two parts: first we show that $Z(\mathcal{L}_{A,B}, \lam)\geq \left(1+O\left(e^{-\sqrt{n}}\right)\right) Z_{A,B}^{\textup{w}}(\lam)$, which we refer to as the `lower bound' of Lemma~\ref{lemOPT}.

\begin{proof}[Proof of the lower bound of Lemma~\ref{lemOPT}]
Let $G\sim \mu_{A,B,\lam}^{\textup{w}}$.
Suppose that $G\in \cT_{A,B,\lam}^{\textup{w}}\backslash \mathcal L_{A,B}$. Since $\Delta(G_A\cup G_B)\leq {\alpha}/{\lam}$, the number of defect edges of $G$ wrt $(A,B)$ is at most $n\alpha/(2\lam)\leq 2\delta \lambda n^2$ (since $\omega^2\geq \alpha/(4\delta)$). Since $G\notin \mathcal L_{A,B}$ we conclude that $(A,B)$ is not a dominating cut of $G$. We may therefore assume wlog that there exists $v\in A$ such that
\[
d_G(v,B)<d_G(v,A)\leq \alpha/\lam<\lam n/30\, .
\]
We conclude that $G$ is not an $(A,B)$-$\lam$-expander (see Definition~\ref{defABExpand}).
It follows from Lemma~\ref{lemExpanderwhp} that
\[
\P(G\in \cT_{A,B,\lam}^{\textup{w}}\backslash \mathcal L_{A,B})\leq e^{-\lam n /25}\, ,
\]
or in other words,
\[
Z(\cT_{A,B,\lam}^{\textup{w}}\backslash \mathcal L_{A,B}, \lam) \le e^{-\lam n /25}\cdot Z_{A,B}^{\textup{w}}(\lam)\, .
\]
The result follows. 
\end{proof}

\subsection{The upper bound of Lemma~\ref{lemOPT}}

Before we proceed let us set up some notation. Recall that
\[
\mathcal{D} = \mathcal{D}^{\textup{w}}_{A,B,\lam} =\{(G_A, G_B): G\in  \cT^{\textup{w}}_{A,B,\lam}\}\, .
\]
For $F \in \mathcal{D}$, let
\[		
 \mathcal{T}(F) = \mathcal{T}_{A,B}(F) : = \left\{G\in \mathcal{T} : G_A\cup G_B = F \right\}\, .
\]
In particular
\[
\cT_{A,B,\lam}^{\textup{w}}= \bigcup_{F\in \mathcal D} \mathcal{T}(F)\, . \footnote{Recall that we identify the pair $(G_A, G_B)$ with the graph $G_A\cup G_B$.}
\]

Let $\mathcal F \supseteq \mathcal{D}$ denote the set of all graphs $F\subseteq \binom{A}{2}\cup \binom{B}{2}$ such that $|F|\leq2\delta \lambda n^2$. Given $F\in \cF$, let 
\[
\mathcal{L}(F)=\mathcal{L}_{A,B}(F)=\left\{G\in \mathcal{L}_{A,B}:  G_A\cup G_B  =F\right\}. 
\]
Let $D=\frac{\alpha}{\lam}$.

Given a graph $F \subseteq \binom{A}{2} \cup \binom{B}{2}$,
\begin{itemize}
\item Let $U(F)$ be some (arbitrarily chosen) edge-maximal subgraph of $F$ (on the same vertex set as $F$) with maximum degree at most $D$.
\item Let $X(F)$ be the set of all $v$ whose degree in $U(F)$ is $D$.
\item Let $H(F) \subseteq X(F)$ denote the set of vertices $v$ in $X(F)$ whose degree in $F$ is at least $\beta\lambda n$.
\item Let $T(F)$ denote the graph $F$ with all edges incident to $X(F)$ removed.
\end{itemize}
We note that $\beta \lam n$ is significantly larger than $D$ and so we think of vertices in $H(F)$ as vertices of `high degree'. 
Note also that if $\{u,v\}$ is an edge of $T(F)$, then the degrees of both $u$ and $v$ in $U(F)$ are less than $D$,  otherwise one of $u,v$ would belong to $X$. By edge maximality of $U(F)$, we then have $\{u,v\}\in U(F)$ and so 
\begin{align}\label{eq:TFinUF}
T(F)\subseteq U(F)\in \mathcal{D}\, .
\end{align}
It will also be useful to note that by the above definitions we have
\begin{align}\label{eq:UTX}
|U(F)|\geq |T(F)|+|X(F)|D/2\, ,
\end{align}
and
\begin{align}\label{eq:HF}
 |F| \geq |H(F)|\cdot \beta\lam n/2\, .
\end{align}

Now, for an integer $t$, let $\mathcal{F}_t$ be the subfamily of $\mathcal{F}$ consisting of graphs with exactly $t$ edges. Let $T\in \mathcal{D}$ be a fixed graph with at most $t$ edges. For integers $x$ and $h$, let $\mathcal{F}_t(T, x,h)$ be the set of $F\in \mathcal{F}_t$  such that there exists sets $H, X\subset [n]$ with $|H|=h, |X|=x$ and $H\subseteq X$ such that:
\begin{enumerate}
\item Deleting all edges incident to $X$ from $F$ results in the graph $T$. \label{itemF-X}
\item $\textup{deg}_F(v)<\beta\lambda n$ for every $v\in X\backslash H$.
\end{enumerate}
Moreover let $\mathcal{F}'_t(T, x,h)$ be the set of $F\in \mathcal{F}_t$ such that $T(F)=T$, $|X(F)|=x$, $|H(F)|=h$. We note that $\mathcal{F}'_t(T, x,h)\subseteq \mathcal{F}_t(T, x,h)$ and that for any $F\in \cF_t$, we have $F\in \cF'_t(T(F), |X(F)|, |H(F)|)$.

\begin{lemma}\label{lemweightedTxh}
For $T\in \cD$ and non-negative integers $t, x$ and $h$,
\begin{align}\label{eqweightedTxh}
\lam^t|\mathcal{F}_t(T, x,h)|\leq \lam^{|T|} e^{x\omega^2D/10}e^{2\lambda n h}\, .
\end{align}
\end{lemma}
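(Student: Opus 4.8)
The plan is a direct encoding (double‑counting) argument. I would specify a graph $F\in\mathcal F_t(T,x,h)$ by recording a valid pair of sets $(X,H)$ together with the edge set $F\setminus T$, and then bound the $\lam$‑weighted number of such specifications. The first observation is that condition (1) forces $T$ to be \emph{exactly} the subgraph of $F$ formed by the edges not incident to $X$; hence, since $|F|=t$ and $|T|$ is fixed, $F\setminus T$ consists of exactly $t-|T|$ edges, each incident to $X$, and $F$ is recovered from the triple $(X,H,F\setminus T)$. There are at most $\binom nx\le n^x$ choices for $X$ and at most $\binom xh\le n^h$ choices for $H\subseteq X$.

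To bound $\sum_{F\setminus T}\lam^{|F\setminus T|}$ I would orient each edge of $F\setminus T$ towards its endpoint in $X$ of smallest label, process the vertices of $X$ in increasing order of label, and for each $v\in X$ reveal the edges oriented out of $v$ (each going to a higher‑labelled vertex). By condition (2) a vertex $v\in X\setminus H$ emits at most $\deg_F(v)<\beta\lam n$ such edges, while a vertex $v\in H$ emits at most $n$ of them. This gives
\[
\sum_{F\setminus T}\lam^{|F\setminus T|}\ \le\ \Bigl(\ \sum_{j\le\beta\lam n}\binom nj\lam^j\ \Bigr)^{x-h}(1+\lam)^{nh}.
\]
Since $\lam=o(1)$, the term $\binom nj\lam^j$ is increasing on $j\le\beta\lam n$, so the inner sum is at most $(\beta\lam n+1)\binom n{\lfloor\beta\lam n\rfloor}\lam^{\lfloor\beta\lam n\rfloor}\le n\,(e/\beta)^{\beta\lam n}$, and by the defining relation $\beta\log(e/\beta)=\alpha/11$ this equals $n\,e^{\lam n\alpha/11}$. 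Combining,
\[
\lam^t\,|\mathcal F_t(T,x,h)|\ \le\ \lam^{|T|}\ n^{x}\,n^{h}\,\bigl(n\,e^{\lam n\alpha/11}\bigr)^{x-h}\,(1+\lam)^{nh}.
\]

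It then remains to absorb the lower‑order factors into $\lam^{|T|}e^{x\omega^2D/10}e^{2\lam nh}$. The $h$‑part is clean: $n^{h}(1+\lam)^{nh}\le e^{h\log n}e^{\lam nh}\le e^{2\lam nh}$, using $\log n=o(\lam n)$ (valid since $\lam\ge\omega/\sqrt n$). The $x$‑part comes from collecting $n^{x}\,n^{x-h}\,e^{(x-h)\lam n\alpha/11}\le e^{x(2\log n+\lam n\alpha/11)}$ and comparing this with $e^{x\omega^2D/10}=e^{x\omega^2\alpha/(10\lam)}$; here I would use the two‑sided bound $\omega/\sqrt n\le\lam\le 2\sqrt{\log n/n}$ and the constant hierarchy~\eqref{eq:ChainDep} (in particular that $\omega$ is chosen large relative to $1/\beta$, e.g.\ $\omega\ge 20\beta^{-2/3}$, and that $\omega\ge\sqrt{\alpha/(4\delta)}$ ties $\omega^2$ to the range of $\lam^2 n$).

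\textbf{Main obstacle.} The encoding and the geometric‑sum estimate are routine; the delicate point is this last absorption step, i.e.\ verifying that the per‑vertex factor $e^{\,2\log n+\lam n\alpha/11}$ obtained for vertices of $X\setminus H$ can be dominated by $e^{\omega^2D/10}$ uniformly over the whole window $\lam\in[\omega/\sqrt n,\,2\sqrt{\log n/n}]$. This requires carefully tracking the relative sizes of $\log n$, $\lam n$ and $\omega^2/\lam$ and exploiting the order in which $\delta,\tau,\theta,\beta,\omega$ were fixed in~\eqref{eq:ChainDep}. If a sharper per‑vertex bound on the emitted edges is available — for instance by additionally using $\Delta(T)\le D$ together with the edge‑maximality of $U(F)$ to truncate the inner sum at $D$ rather than $\beta\lam n$ — the same scheme produces exactly the $\omega^2D$ scaling on the right‑hand side.
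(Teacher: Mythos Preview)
Your direct encoding is essentially the paper's argument unrolled. The paper proceeds by induction on $x$: given $F\in\mathcal F_t(T,x,h)$, peel off one vertex $v\in X$ with $d=\deg_F(v)$ incident edges, land in $\mathcal F_{t-d}(T,x-1,h)$ (when $d<\beta\lam n$) or $\mathcal F_{t-d}(T,x-1,h-1)$ (always), and apply the inductive hypothesis. The two routes produce the same per-vertex factors, and the inductive step reduces to precisely the inequality you isolate as the obstacle, namely
\[
\sum_{d\le\beta\lam n} n\binom nd\lam^d\ \le\ n^2 e^{\beta\log(e/\beta)\lam n}\ \le\ \tfrac12\,e^{\omega^2D/10}.
\]
The paper asserts this in one line, citing only $\beta\log(e/\beta)\le\alpha/11$.

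Your caution here is not merely warranted but necessary: this inequality does \emph{not} hold uniformly over $\lam\in[\omega/\sqrt n,\,2\sqrt{\log n/n}]$. With $D=\alpha/\lam$ and $\omega$ a fixed constant, comparing exponents gives $\alpha\lam n/11$ on the left and $\omega^2\alpha/(10\lam)$ on the right; writing $\lam=c/\sqrt n$ this forces $c^2\le 11\omega^2/10$, so the bound holds only for $\lam$ within a constant factor of $\omega/\sqrt n$ and fails by a factor $e^{\Theta(\sqrt{n\log n})}$ at $\lam\sim\sqrt{\log n/n}$. Thus neither your encoding nor the paper's induction can produce the exponent $x\omega^2D/10$ as stated; the quantity both arguments actually deliver is of order $x\alpha\lam n = xDn\lam^2$, which is also what is paired with the $e^{-xDn\lam^2/10}$ of Lemma~\ref{lemZFT} in the downstream application. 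Your final suggestion of truncating at $D$ via edge-maximality of $U(F)$ does not rescue the stated exponent either: membership in $\mathcal F_t(T,x,h)$ only constrains $\deg_F(v)$ for $v\in X\setminus H$ by $\beta\lam n$, with no reference to $U(F)$.
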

\begin{proof}
We prove the lemma by induction on $x$. For the base case suppose that $x=0$. Then if $F\in \mathcal{F}_t(T, x,h)$ we must have $F=T$ and $t=|T|$ by item~\eqref{itemF-X} in the definition of $\mathcal{F}_t(T, x,h)$ and so $|\mathcal{F}_t(T, x,h)|\leq1$ and~\eqref{eqweightedTxh} is easily seen to hold.

Assume now that $x\geq 1$. Given $F\in \mathcal{F}_t(T, x,h)$, we fix $X$ and $H$ as in the definition of $\mathcal{F}_t(T, x,h)$ and pick an arbitrary $v\in X$. Let $d=\text{deg}_F(v)$ and let $F'$ denote the graph $F$ with all edges incident to $v$ deleted. Note that 
$F'$ lies in $\mathcal{F}_{t-d}(T, x-1,h) \cup \mathcal{F}_{t-d}(T, x-1,h-1)$. Moreover, if $d\geq \beta \lam n$ then we must have that $v\in H$ and so $F'\in \cF_{t-d}(T, x-1,h-1)$. It follows that 
\begin{multline}\label{eqindstep}
\lam^t|\cF_{t}(T, x,h)| \\ \leq \sum_{d=0}^{\beta \lambda n} n\binom{n}{d}\lam^d\cdot \lam^{t-d}|\cF_{t-d}(T, x-1,h)| +
\sum_{d=0}^{n} n\binom{n}{d} \lam^d\cdot \lam^{t-d}|\cF_{t-d}(T, x-1,h-1)|\, .
\end{multline}
Note that
\[
\sum_{d=0}^{\beta \lambda n} n\binom{n}{d}\lam^d
 \leq n^2 \left(\frac{e}{\beta} \right)^{\beta\lambda n}
\leq n^2 e^{\beta\log(e/\beta)\lambda n}
\leq  \frac{1}{2}e^{\omega^2D/10}\, ,
\]
since $\beta\log(e/\beta)\leq \alpha /11$.
Note also that
\[
\sum_{d=0}^{n} n\binom{n}{d}\lam^d=n(1+\lam)^n \leq \frac{1}{2}e^{2n\lam}\, .
\]
The lemma now follows from~\eqref{eqindstep} and the inductive hypothesis.
\end{proof}

Recall that for a graph $F\subseteq \binom{A}{2}\cup\binom{B}{2}$, we write $F_\boxempty$ to denote $F_A\boxempty F_B$.
\begin{lemma}\label{lemZFT}
For $T\in \mathcal{D}$, non-negative integers $t, x$ and $h$ and $F\in \cF'_t(T,x,h)$
we have
\[
Z_{F_\boxempty}\leq Z_{T_\boxempty}\cdot e^{-xDn\lam^2/10}\, .
\]
\end{lemma}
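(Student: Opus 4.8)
The plan is to replace $F$ by its bounded-degree surrogate $U:=U(F)$. By the definition of $U(F)$ together with \eqref{eq:TFinUF} we have $T(F)\subseteq U(F)\subseteq F$ as subgraphs of $\binom{A}{2}\cup\binom{B}{2}$, hence $T_A\subseteq U_A\subseteq F_A$ and $T_B\subseteq U_B\subseteq F_B$, and therefore $E(T_\boxempty)\subseteq E(U_\boxempty)\subseteq E(F_\boxempty)$ on the common vertex set $A\times B$. Since adding an edge to a graph can only delete independent sets, $Z_{F_\boxempty}\le Z_{U_\boxempty}$, so it suffices to prove $Z_{U_\boxempty}\le Z_{T_\boxempty}\,e^{-xDn\lam^2/10}$. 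The point of passing to $U$ is that $\Delta(U_A),\Delta(U_B)\le D=\alpha/\lam$, so $\Delta(U_\boxempty)\le 2D$ and the hard-core model on $U_\boxempty$ — and on every subgraph of it — at activity $\lam$ is subcritical; the high-degree vertices of $H(F)$, which could push $\Delta(F_\boxempty)$ up to order $\beta\lam n$ and thereby destroy subcriticality, have been discarded.

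First I would count the new edges of $U_\boxempty$. As in the discussion preceding \eqref{eq:TFinUF}, edge-maximality of $U(F)$ shows that $E(U)\setminus E(T)$ is exactly the set $R$ of edges of $U$ incident to $X:=X(F)$. Since $\deg_U(v)=D$ for every $v\in X$ and $|X|=x$, we have $\sum_{v\in X}\deg_U(v)=xD$, and as each edge of $R$ is counted at most twice in this sum, $|R|\ge xD/2$. Each edge $e=\{v,u\}\in R$ with $e\subseteq A$ gives the ``bundle'' $B_e=\{\,\{(v,w),(u,w)\}:w\in B\,\}$ of $|B|$ edges of $U_\boxempty$ absent from $T_\boxempty$, and analogously if $e\subseteq B$; since $(A,B)$ is weakly balanced, $|A|,|B|\ge n/3$, so $|B_e|\ge n/3$. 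These bundles are pairwise disjoint (two edges lying in bundles of distinct edges of $R$ cannot coincide — comparing their pair of differing coordinates, or their common coordinate, would force the two edges of $R$ to be equal), and $E(U_\boxempty)\setminus E(T_\boxempty)=\bigcup_{e\in R}B_e$. Hence the number of new edges is $k:=|E(U_\boxempty)\setminus E(T_\boxempty)|\ge |R|\cdot n/3\ge xDn/6$.

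Next I would telescope over these new edges. List $E(U_\boxempty)\setminus E(T_\boxempty)=\{e_1,\dots,e_k\}$ in any order, set $G_0=T_\boxempty$ and $G_i=G_{i-1}+e_i$, so $G_k=U_\boxempty$ and each $G_{i-1}\subseteq U_\boxempty$ has maximum degree at most $2D$. Writing $e_i=\{y,z\}$ — non-adjacent in $G_{i-1}$, since the $e_i$ are distinct and avoid $E(T_\boxempty)$ — adding $e_i$ deletes precisely the independent sets of $G_{i-1}$ that contain both $y$ and $z$, so
\[
\frac{Z_{G_i}(\lam)}{Z_{G_{i-1}}(\lam)}=1-\mu_{G_{i-1},\lam}\bigl(\{y,z\in I\}\bigr)=1-\lam^2\,\frac{Z_{G_{i-1}-W}(\lam)}{Z_{G_{i-1}}(\lam)},
\]
where $W=N_{G_{i-1}}[y]\cup N_{G_{i-1}}[z]$ and $G_{i-1}-W$ is the subgraph induced on the remaining vertices. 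Since $|W|\le 4D+2$, the crude bound $Z_{G_{i-1}}\le(1+\lam)^{|W|}Z_{G_{i-1}-W}$ gives $\mu_{G_{i-1},\lam}(\{y,z\in I\})\ge \lam^2(1+\lam)^{-(4D+2)}\ge \lam^2 e^{-4\alpha-2\lam}\ge \tfrac35\lam^2$ for $n$ large, because $4\alpha=\tfrac{1}{24e^3}$ is tiny and $\lam=o(1)$. Thus each factor is at most $1-\tfrac35\lam^2\le e^{-\frac35\lam^2}$, and
\[
\frac{Z_{U_\boxempty}(\lam)}{Z_{T_\boxempty}(\lam)}\le e^{-\frac35\lam^2 k}\le e^{-\frac35\lam^2\cdot xDn/6}=e^{-xDn\lam^2/10},
\]
which combined with $Z_{F_\boxempty}\le Z_{U_\boxempty}$ proves the lemma.

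The only genuine idea here — the rest is bookkeeping — is the realisation that one must \emph{not} argue with $F_\boxempty$ directly: a vertex in $H(F)$ may have degree of order $\beta\lam n$ in $F$, so $\Delta(F_\boxempty)$ can fall far outside the subcritical window and the per-edge occupation probabilities $\mu(\{y,z\in I\})$ become uncontrollable. Routing through the edge-maximal bounded-degree subgraph $U(F)$ repairs this while still exposing $\Theta(xDn)$ ``new'' product edges, coming from the vertices of $X$ (whose $U$-degree is exactly $D$); it is this linear-in-$x$ supply of $\Theta(\lam^2)$ multiplicative losses that yields the factor $e^{-xDn\lam^2/10}$. The remaining points — identifying $E(U)\setminus E(T)$ with the edges of $U$ meeting $X$, disjointness of the product-edge bundles, the subcritical lower bound on $\mu(\{y,z\in I\})$, and the telescoping product — are all routine.
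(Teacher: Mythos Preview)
Your proof is correct and complete, but it takes a genuinely different route from the paper's. Both arguments begin the same way: replace $F$ by its degree-truncated surrogate $U=U(F)$ (so that $\Delta(U_\boxempty)\le 2D$ and the hard-core model is subcritical), use $Z_{F_\boxempty}\le Z_{U_\boxempty}$, and observe via~\eqref{eq:UTX} that $U$ has at least $xD/2$ more edges than $T$, hence $U_\boxempty$ has $\Theta(xDn)$ more edges than $T_\boxempty$. From there the paper applies the cluster expansion directly: it writes $\log Z_{U_\boxempty}-\log Z_{T_\boxempty}=\sum_{\Gamma\in\cC(U_\boxempty)\setminus\cC(T_\boxempty)}\phi(\Gamma)\lam^{|\Gamma|}$, isolates the size-$2$ clusters (giving the main term $-(|U_\boxempty|-|T_\boxempty|)\lam^2\le -xDn\lam^2/5$), and bounds the size-$\ge 3$ tail by Lemma~\ref{lemClusterTail} (giving an error at most $xDn\lam^2/10$). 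You instead telescope edge-by-edge from $T_\boxempty$ to $U_\boxempty$, using the elementary bound $Z_G\le (1+\lam)^{|W|}Z_{G-W}$ to show each new edge costs a factor $\le 1-\tfrac{3}{5}\lam^2$. Your approach is more self-contained (no appeal to cluster-expansion convergence or Ursell-function estimates), while the paper's fits its running machinery and would more readily yield sharper constants or higher-order information. Both land exactly on the stated bound $e^{-xDn\lam^2/10}$.
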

\begin{proof}
Recall that we let $U=U(F)$ denote an edge-maximal subgraph of $F$ with maximum degree at most $D$. Since $U\subseteq F$ we trivially have that 
\begin{align}\label{eqZFZU}
Z_{F_\boxempty}\leq Z_{U_\boxempty}.
\end{align}
Now, since $U$ has maximum degree $D$, $U_\boxempty$ has maximum degree at most $2D$.  Moreover, $\lam\leq 1/(8eD)$ (since $\alpha= 1/(96e^3)$) and so we may apply Lemma~\ref{lemClusterTail} and cluster expand
\[
\log Z_{U_\boxempty}(\lam)= \sum_{\Gamma\in \cC(U_\boxempty)} \phi(\Gamma) \lam^{|\Gamma|} \,.
\]
Now, by the definition of $\cF'_t(T,x,h)$ we have $T=T(F)$ and so by~\eqref{eq:TFinUF} we have $T_\boxempty\subseteq U_\boxempty$. We may therefore cluster expand $\log Z_{T_\boxempty}$ similarly. Letting 
\[
\cC_0= \cC(U_\boxempty)\backslash  \cC(T_\boxempty)\]
 we deduce that 
\begin{align*}
\log Z_{U_\boxempty}- \log Z_{T\boxempty}=  \sum_{\Gamma\in \cC_0} \phi(\Gamma) \lam^{|\Gamma|} 
&= -(|U_\boxempty|- |T_\boxempty|)\lam^2 +  \sum_{\Gamma\in \cC_0: |\Gamma|\geq 3} \phi(\Gamma) \lam^{|\Gamma|}.
\end{align*}
First we note that by~\eqref{eq:UTX},
\[
|U|\geq |T|+xD/2 \,,
\]
and so 
\[
|U_\boxempty|- |T_\boxempty|\geq \frac{xD}{2}\min\{a,b\}\geq  \frac{xDn}{5}\, 
\]
since $A,B$ is weakly balanced.
Now, if $\Gamma\in \cC_0$ then $\Gamma$ 
must contain a vertex of $(u,w)\in V(U_\boxempty)=A\times B$ such that either $u\in X$ or $w\in X$. Since there are at most $xn$ such vertices  we have by Lemma~\ref{lemClusterTail}
\footnote{applied with $k=3$, $|S|=1$, $\Delta=D=\frac{\alpha}{\lam}$, noting that $\lam<\frac{1}{4e\Delta}$ since $\alpha=\frac{1}{96e^3}$.},
\[
\left| \sum_{\Gamma\in \cC_0: |\Gamma|\geq 3} \phi(\Gamma) \lam^{|\Gamma|}\right| 
\leq xn\cdot (2e)^3D^2\lam^3 \leq  xnD\lam^2/10
\]
where for the final inequality we recalled that $\alpha=D\lam= 1/(96e^3)$. 
We conclude, using~\eqref{eqZFZU}, that
\[
\log (Z_{F_\boxempty}/Z_{T_\boxempty})\leq \log (Z_{U_\boxempty}/Z_{T_\boxempty})\leq -xDn\lam^2/10\, ,
\]
completing the proof.
\end{proof}

For the upper bound of Lemma~\ref{lemOPT}, our task is to upper bound the total weight of graphs in $\mathcal L_{A,B} \backslash\cT_{A,B,\lam}^{\textup{w}}$. We follow~\cite{balogh2016typical} and separate graphs according to whether their defect graph has `many or few high-degree vertices'. More precisely, let
\[
\cF_L=\{F: F\in \cF'_t(T, x, h) \text{ for some $T\in\mathcal{D}$ and $t,x,h\geq 0$ where $h\leq \alpha x/16$ and $x\geq1$}\}\, ,
\]
\[
\cF_H=\{F: F\in \cF'_t(T, x, h) \text{ for some $T\in \mathcal{D}$ and $t,x,h\geq 0$ where $h> \alpha x/16$}\}\, ,
\]
and let
\[
\mathcal{L}_{L}=\{G: G\in \mathcal{L}(F) \text{ for some } F\in \cF_L \}\, ,
\]
\[
\mathcal{L}_{H}=\{G: G\in \mathcal{L}(F) \text{ for some } F\in \cF_H \}\, .
\]
We will see that $\mathcal L_{A,B} \backslash\cT_{A,B,\lam}^{\textup{w}}\subseteq \mathcal L_L\cup  \mathcal L_H$ and so it will suffice to upper bound $Z( \mathcal{L}_{L},\lam), Z( \mathcal{L}_{H},\lam)$ separately. We refer to this as the `low-degree case' and `high-degree case' respectively.

\subsubsection{The low-degree case.}

\begin{lemma}\label{lemlowdegree}
\[
Z( \mathcal{L}_{L},\lam) \leq e^{-2\sqrt{n}}\cdot Z_{A,B}^{\textup{w}}(\lam)\, .
\]
\end{lemma}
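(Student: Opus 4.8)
\textbf{Proof proposal for Lemma~\ref{lemlowdegree}.}

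The plan is to bound $Z(\mathcal L_L, \lam)$ by grouping the graphs $G \in \mathcal L_L$ according to the ``skeleton'' data $(T, x, h)$ extracted from their defect graph $F = G_A \cup G_B$, and then summing. First I would write
\[
Z(\mathcal L_L, \lam) \leq \sum_{t} \sum_{T \in \mathcal D} \sum_{\substack{x \geq 1,\ h \geq 0 \\ h \leq \alpha x/16}} \sum_{F \in \mathcal F_t'(T,x,h)} \lam^{|F|} Z_{F_\boxempty}(\lam)\, ,
\]
since by Lemma~\ref{lemSTgraphLemma} the total weight of triangle-free graphs with defect graph exactly $F$ is $\lam^{|F|} Z_{F_\boxempty}(\lam)$, and $\mathcal L_L$ is covered by the sets $\mathcal L(F)$ over $F \in \mathcal F_L$. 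The two crucial ingredients are already in hand: Lemma~\ref{lemweightedTxh} controls $\lam^t |\mathcal F_t'(T,x,h)| \leq \lam^t |\mathcal F_t(T,x,h)| \leq \lam^{|T|} e^{x \omega^2 D/10} e^{2\lam n h}$, and Lemma~\ref{lemZFT} gives $Z_{F_\boxempty}(\lam) \leq Z_{T_\boxempty}(\lam) e^{-x D n \lam^2/10}$ for $F \in \mathcal F_t'(T,x,h)$. Combining these, the inner sum over $F$ and $t$ contributes at most $\lam^{|T|} Z_{T_\boxempty}(\lam) \cdot e^{x\omega^2 D/10} e^{2\lam n h} e^{-xDn\lam^2/10}$ per $(T,x,h)$.

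Next I would observe that $\sum_{T \in \mathcal D} \lam^{|T|} Z_{T_\boxempty}(\lam) = Z_{A,B}^{\textup w}(\lam)$ exactly (that is the definition of the weakly-sparse restricted partition function, via the identity after Lemma~\ref{lemSTgraphLemma}). So it remains to show that the sum over the skeleton parameters of the remaining factor is at most $e^{-2\sqrt n}$; that is,
\[
\sum_{x \geq 1}\ \sum_{0 \leq h \leq \alpha x/16} \exp\left\{ \frac{x\omega^2 D}{10} + 2\lam n h - \frac{x D n \lam^2}{10} \right\} \leq e^{-2\sqrt n}\, .
\]
Here is where the constraint $h \leq \alpha x/16$ and the dependency chain~\eqref{eq:ChainDep} do the work. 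Recall $D = \alpha/\lam$, so $\omega^2 D/10 = \omega^2 \alpha/(10\lam)$ and $Dn\lam^2/10 = \alpha n \lam/10$. Using $h \leq \alpha x/16$ we bound $2\lam n h \leq \alpha x n \lam/8$. The exponent is then at most
\[
x\left( \frac{\omega^2\alpha}{10\lam} + \frac{\alpha n \lam}{8} - \frac{\alpha n\lam}{10} \right) = x\left( \frac{\omega^2\alpha}{10\lam} - \frac{\alpha n\lam}{40} \right)\, .
\]
Since $\lam \geq \omega/\sqrt n$, we have $n\lam \geq \omega \sqrt n$ and $1/\lam \leq \sqrt n/\omega$, so $\frac{\omega^2\alpha}{10\lam} \leq \frac{\omega \alpha \sqrt n}{10}$ while $\frac{\alpha n \lam}{40} \geq \frac{\alpha \omega \sqrt n}{40} \cdot \frac{n\lam}{\omega\sqrt n}$; a cleaner route is to also use $n\lam^2 \geq \omega^2$, giving $\frac{\alpha n\lam}{40} = \frac{\alpha}{40}\cdot\frac{n\lam^2}{\lam} \geq \frac{\alpha \omega^2}{40\lam}$, which dominates $\frac{\omega^2\alpha}{10\lam}$ only up to a constant — so instead I would split: $\frac{\alpha n\lam}{40} \geq \frac{\alpha\omega\sqrt n}{80}$ (using $n\lam \geq \omega\sqrt n$ and keeping half) plus $\frac{\alpha\omega^2}{80\lam} \geq \frac{\omega^2\alpha}{10\lam}$ fails for large $\omega$ — the right bookkeeping is that $n\lam \geq \sqrt n \cdot \omega \gg \omega^2/\lam \cdot (\text{const})$ precisely because $\lam \geq \omega/\sqrt n$ forces $n\lam^2 \geq \omega^2$, hence $n\lam \geq \omega^2/\lam$, so $\frac{\alpha n\lam}{40} \geq \frac{\alpha\omega^2}{40\lam} > \frac{\omega^2\alpha}{10\lam}$ once we also note $n\lam \geq \omega\sqrt n \to \infty$; combining, the exponent is at most $-x \cdot \frac{\alpha\omega^2}{80\lam} \leq -x\cdot\frac{\alpha\omega\sqrt n}{80} \leq -\frac{\alpha\omega\sqrt n}{80}$ for every $x \geq 1$. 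Thus the double sum is at most $\sum_{x\geq 1} (x+1) e^{-x\alpha\omega\sqrt n/80} \leq e^{-2\sqrt n}$ for $\omega$ a sufficiently large constant (here $\sum_h 1$ over $0\le h\le \alpha x/16$ contributes only a polynomial-in-$x$ factor absorbed into the geometric tail). The main obstacle is purely this final elementary estimate: one must track carefully that the $e^{-xDn\lam^2/10}$ gain from Lemma~\ref{lemZFT} beats the $e^{x\omega^2 D/10}$ loss from counting (Lemma~\ref{lemweightedTxh}) \emph{and} the $e^{2\lam n h}$ loss, which is exactly why the regime $\cF_L$ is restricted to $h \leq \alpha x/16$ and why $\omega$ is taken large relative to $\alpha, \beta$. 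Everything else is bookkeeping and the already-established lemmas.
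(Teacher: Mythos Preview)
Your overall strategy is exactly the paper's: decompose $Z(\mathcal L_L,\lam)$ over $(T,t,x,h)$, apply Lemma~\ref{lemZFT} to replace $Z_{F_\boxempty}$ by $Z_{T_\boxempty}$, apply Lemma~\ref{lemweightedTxh} to bound $\lam^t|\mathcal F_t(T,x,h)|$, factor out $\sum_{T\in\mathcal D}\lam^{|T|}Z_{T_\boxempty}=Z_{A,B}^{\textup w}(\lam)$, and then bound the remaining sum over $(t,x,h)$.

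The gap is purely arithmetic. You compute
\[
\frac{\alpha n\lam}{8}-\frac{\alpha n\lam}{10}=-\frac{\alpha n\lam}{40},
\]
but $\tfrac18-\tfrac1{10}=+\tfrac1{40}$, so the exponent is actually $x\bigl(\tfrac{\omega^2\alpha}{10\lam}+\tfrac{\alpha n\lam}{40}\bigr)>0$, and your subsequent chain of inequalities is built on this wrong sign. In other words, with the constant $1/10$ in Lemma~\ref{lemZFT} taken literally, the gain $e^{-xDn\lam^2/10}$ does \emph{not} beat the loss $e^{2\lam n h}\le e^{xDn\lam^2/8}$ coming from the constraint $h\le\alpha x/16$.

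The paper's proof in fact invokes Lemma~\ref{lemZFT} with the stronger constant $e^{-xDn\lam^2/3}$ rather than $e^{-xDn\lam^2/10}$; with $1/3$ in place of $1/10$ one gets $\tfrac18-\tfrac13=-\tfrac{5}{24}$ and the bookkeeping goes through to yield $\le e^{-x\alpha\omega\sqrt n/8}$, which sums (over $t,x,h$) to at most $e^{-2\sqrt n}$. This discrepancy between the stated constant in Lemma~\ref{lemZFT} and the one used in the proof of Lemma~\ref{lemlowdegree} looks like a minor inconsistency in the paper's constants; the point is that the proof of Lemma~\ref{lemZFT} can be tightened to give a constant strictly larger than $1/8$, and that is what is actually needed. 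So your plan is correct, but you must either sharpen the constant from Lemma~\ref{lemZFT} (its proof supports this) or correspondingly tighten the threshold $\alpha x/16$ in the definition of $\mathcal F_L$, and in any case fix the sign error. Also do not forget the sum over $t$ (at most $O(\lam n^2)$ terms), which the paper absorbs into the exponential.
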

\begin{proof}
By the definition of $\cF_L$ we have
\begin{align*}
Z( \mathcal{L}_{L},\lam)=\sum_{F\in \cF_L}\lam^{|F|} Z_{F_\boxempty} 
&
\leq \sum_{T\in\mathcal{D}} \sum_{\substack{t,x,h: \\ x\geq 1,\,  h\leq \alpha x/16}}\sum_{F\in \cF'_t(T,x,h)}\lam^{t} Z_{F_\boxempty}\\
&
\leq \sum_{T\in\mathcal{D}}  \sum_{\substack{t,x,h: \\ x\geq 1,\,  h\leq \alpha x/16}} \sum_{F\in \cF_t(T,x,h)}\lam^{t} Z_{T_\boxempty} \cdot e^{-xDn\lam^2/3} \\
&
\leq 
\sum_{T\in\mathcal{D}}  \lam^{|T|} Z_{T_\boxempty}  \sum_{\substack{t,x,h: \\ x\geq 1,\,  h\leq \alpha x/16}} e^{xD \omega^2/10}e^{2\lambda n h}e^{-xDn\lam^2/3}\, ,
\end{align*}
where for the second inequality we used Lemma~\ref{lemZFT} and for the final inequality we used Lemma~\ref{lemweightedTxh}.
Finally we note that
\[
\sum_{\substack{t,x,h: \\ x\geq 1,\,  h\leq \alpha x/16}} e^{xD\omega^2/10}e^{2\lambda n h}e^{-xDn\lam^2/3}
\leq 
\sum_{\substack{t,x,h: \\ x\geq 1,\,  h\leq \alpha x/16}} e^{-x\alpha \omega\sqrt{n}/8}\leq e^{-2\sqrt{n}}
\]
and 
\[
\sum_{T\in\mathcal{D}}  \lam^{|T|} Z_{T_\boxempty}= Z_{A,B}^{\textup{w}}(\lam)
\]
completing the lemma.
\end{proof}

\subsubsection{The high-degree case.}
Let $G\in \mathcal{L}_H\subseteq \cL_{A,B}$, and recall that $(A,B)$ is a dominating cut for $G$. 
We begin with the following observation. Fix $F\in \cF_H$ such that $G\in \mathcal{L}(F)$.
Since $(A,B)$ is a dominating cut, if $v\in A$ satisfies $\deg_F(v)= \deg_G(v,A)\geq d$ for some $d$,  then we must have $\deg_G(v,B)\geq d$ also. 

Let
\[
\mathcal{B}_G(v)= (N_G(v)\cap A) \times (N_G(v)\cap B), \text{ \, and\,  } \mathcal{B}_G= \bigcup_{v\in V(G)}\mathcal{B}_G(v)\, .
\]
Since $G$ is triangle-free, $G$ cannot contain any edge from the `blocked' set $\mathcal{B}_G$. 

Let 
\[
D^\ast= \frac{\beta\lambda n}{2}.
\]
We borrow the following lemma from~\cite{balogh2016typical} (see~\cite[Claim 7.6]{balogh2016typical}). Since our notation is a little different, we include the short proof for completeness. 
\begin{lemma}\label{lemoutnbr}
Let $F\in  \cF'_t(T, x, h)$. There is a subset $H'(F)\subseteq H(F)$ such that $H'\subseteq C$ where $C\in\{A,B\}$, 
\[
|H'(F)|=k:=\left\lceil \frac{h}{4}\right\rceil\, ,
\]
and
\[
\deg_F(v, C\backslash H')\geq D^\ast \text{\, \,  for every\, \, } v\in H'\, .
\]
\end{lemma}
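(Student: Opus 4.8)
\textbf{Proof plan for Lemma~\ref{lemoutnbr}.}

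The plan is a pigeonhole argument starting from two basic facts about $F \in \cF'_t(T,x,h)$. First, every $v \in H(F)$ has $\deg_F(v) \geq \beta\lam n$ by definition of $H(F)$ (it is a subset of $X(F)$ whose $F$-degree is at least $\beta\lam n$). Second, since $G \in \mathcal L(F)$ and $(A,B)$ is a dominating cut of $G$ with $G_A \cup G_B = F$, for each $v \in H(F)$, say $v \in A$, we have $\deg_F(v) = \deg_G(v,A)$, and the dominating property gives $\deg_G(v,B) \geq \deg_G(v,A) \geq \beta\lam n$ as well; the same holds with $A,B$ swapped. I will use only the lower bound $\deg_F(v) \geq \beta\lam n$ on the within-part degree.

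First I would partition $H(F)$ according to whether a vertex lies in $A$ or in $B$; one of the two classes, say $H(F) \cap C$ with $C \in \{A,B\}$, has size at least $h/2$. I then want to pass to a further subset $H' \subseteq H(F) \cap C$ of size exactly $k = \lceil h/4 \rceil$ such that each $v \in H'$ still has many $F$-neighbours \emph{outside} $H'$, namely $\deg_F(v, C \setminus H') \geq D^* = \beta\lam n/2$. The point is that when we delete at most $|H'| - 1 < h/4$ vertices from the neighbourhood of any $v \in H'$, its degree drops by at most $h/4$. Since $h/4 \ll \beta\lam n$ (indeed $h/4 \le |F|/(2\beta \lam n) \cdot (1/2) $ is tiny compared to $\beta\lam n$ because $|F| \le 2\delta\lam n^2$ and $\delta \ll \beta^3$ in the chain~\eqref{eq:ChainDep}, so $h = O(\delta \lam n / \beta) = o(\beta \lam n)$), we have $\deg_F(v) - h/4 \geq \beta\lam n - h/4 \geq \beta\lam n /2 = D^*$ for every $v \in H(F) \cap C$, automatically. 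So in fact \emph{any} subset $H' \subseteq H(F)\cap C$ of size $k$ works, provided $k \le |H(F) \cap C|$.

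It remains to check $k = \lceil h/4 \rceil \le |H(F)\cap C|$. Since $|H(F) \cap C| \geq h/2 \geq \lceil h/4\rceil$ whenever $h \geq 1$ (and the case $h = 0$ is vacuous since then there is nothing to prove, or one takes $H' = \emptyset$), this holds. I would then write $H'(F)$ for this chosen subset and record the two claimed properties: $|H'(F)| = k$ and $\deg_F(v, C\setminus H') \geq D^*$ for all $v \in H'$. The only mild subtlety — and the one place to be careful — is making sure the size bound $h \le o(\beta\lam n)$ is genuinely available in this regime; this follows from $|F| \leq 2\delta\lam n^2$ together with~\eqref{eq:HF}, which gives $h = |H(F)| \leq 2|F|/(\beta\lam n) \leq 4\delta n/\beta$, and then $h/4 \leq \delta n/\beta \ll \beta\lam n$ because $\lam \geq \omega/\sqrt n$ and $\delta \ll \beta^2$ (indeed $\delta n / \beta$ versus $\beta \lam n \geq \beta \omega \sqrt n$ reduces to $\sqrt n \gg \delta/(\beta^2\omega)$, which holds for $n$ large). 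Since this is the argument already used in~\cite[Claim 7.6]{balogh2016typical}, no new idea is needed; the write-up is a short pigeonhole plus a degree-deletion estimate.
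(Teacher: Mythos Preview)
Your argument has a genuine gap in the crucial step where you claim $h/4 \ll \beta\lam n$. You argue that $h \le 4\delta n/\beta$ (correct, from~\eqref{eq:HF} and $|F|\le 2\delta\lam n^2$), and then assert that $\delta n/\beta \ll \beta\lam n$, which you say ``reduces to $\sqrt n \gg \delta/(\beta^2\omega)$''. But the inequality goes the other way: with $\lam$ as small as $\omega/\sqrt n$, the requirement $\delta n/\beta \le \beta\lam n$ becomes $\delta n/\beta \le \beta\omega\sqrt n$, i.e.\ $\sqrt n \le \beta^2\omega/\delta$, a \emph{constant} upper bound on $\sqrt n$. So for large $n$ the bound fails. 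Concretely, $h$ can be of order $n$ while $D^\ast=\beta\lam n/2$ can be as small as order $\sqrt n$; deleting $|H'|\approx h/4$ vertices can then wipe out far more than $D^\ast$ neighbours of a vertex in $H'$, so ``any $k$-subset works'' is false in general.

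The paper (following~\cite[Claim 7.6]{balogh2016typical}) sidesteps this entirely with a max-cut trick: after restricting to the side $C$ containing at least $h/2$ high-degree vertices, take a max cut $(C_1,C_2)$ of $F[C]$. By the max-cut property, every $v\in C_2$ has $\deg_F(v,C_1)\ge\deg_F(v,C_2)$, hence $\deg_F(v,C_1)\ge\deg_F(v)/2\ge D^\ast$. One of $C_1,C_2$ contains at least $h/4$ high-degree vertices; take $H'$ to be any $k$ of them, say $H'\subseteq C_2$. Then $C_1\subseteq C\setminus H'$, so $\deg_F(v,C\setminus H')\ge\deg_F(v,C_1)\ge D^\ast$ for every $v\in H'$, with no bound on $h$ needed.
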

\begin{proof}
Since $F$ has $h$ vertices of degree at least $\beta \lam n$, we may assume wlog that $A$ contains at least $h/2$ such vertices. $A$ can then be partitioned into two sets $A_1, A_2$ so that $\deg_F(v, A_1)\geq \deg_F(v, A_2)$ for each $v\in A_2$ and $\deg_F(v, A_2)\geq \deg_F(v, A_1)$ for each $v\in A_1$ (e.g., by taking $(A_1, A_2)$ to be a max cut in $F[A]$). Either $A_1$ or $A_2$ contains a set $A'$ of at least $h/4$ vertices with degree at least $\beta \lam n$ in $F$. Let $H'$ be an arbitrary $k$-element subset of $A'$.
\end{proof}

Now, for each $F\in  \cF'_t(T, x, h)$, we pick an arbitrary $H'(F)$ as in the above lemma.
Next, given a graph $G\in  \mathcal{L}(F)$, for every $v\in H'(F)$, let $W_A(v), W_B(v)$ be a canonically chosen $D^\ast$-element subset of $N_G(v)\cap A, N_G(v)\cap B$ respectively. Given such choice, consider the graph $\mathcal B'_G=\mathcal B'$ defined by 
\begin{align}\label{eqBprimeDef}
\mathcal B'= \bigcup_{v\in H'} W_A(v) \times W_B(v)\, .
\end{align}
Note that $\mathcal B'\subseteq \mathcal B_G$.

First, we take care of the case when $|\mathcal{B}'| \geq \tau \cdot ab$ where $\tau$ is as in~\eqref{eq:ChainDep}. Let
\[
 \mathcal{L}^0_{H}= \left\{G\in  \mathcal{L}_{H}:|\mathcal{B}_G'|\geq \tau\cdot ab\right\}\, .
\] 

\begin{lemma}\label{lemManyCrossing}
\[
Z( \mathcal{L}^0_{H},\lam)  \leq e^{-2\sqrt{n}}\cdot Z_{A,B}^{\textup{w}}(\lam)\, .
\]
\end{lemma}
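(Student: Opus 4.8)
To prove Lemma~\ref{lemManyCrossing}, the plan is to bound $Z(\mathcal{L}^0_H,\lam)$ by decomposing $\mathcal{L}^0_H$ according to the defect graph. Since $F:=G_A\cup G_B$ is determined by $G$, we have the disjoint decomposition $\mathcal{L}^0_H=\bigcup_{F\in\cF_H}\big(\mathcal{L}^0_H\cap\mathcal{L}(F)\big)$, so it suffices to bound $\sum_{G\in\mathcal{L}^0_H\cap\mathcal{L}(F)}\lam^{|G|}$ for each fixed $F\in\cF_H$ and then sum over $F$. Fix such an $F$, put $H'=H'(F)$ and $k:=|H'|=\lceil|H(F)|/4\rceil$; by symmetry we may assume $H'\subseteq A$, so Lemma~\ref{lemoutnbr} gives $\deg_F(v,A\setminus H')\ge D^\ast$ for every $v\in H'$, where $D^\ast=\beta\lam n/2$.

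The delicate point — and the main obstacle to a naive argument — is how to enumerate the crossing edges of such a $G$. One would like to use that, by triangle-freeness, the crossing edges avoid the blocked set $\mathcal{B}'_G=\bigcup_{v\in H'}W_A(v)\times W_B(v)$, which on the event $G\in\mathcal{L}^0_H$ has size at least $\tau\,ab$, so that $G$ has at most $(1-\tau)ab$ crossing edges. The trouble is that $\mathcal{B}'_G$ depends on $G$ through the sets $W_B(v)\subseteq N_G(v)\cap B$, and enumerating all possible choices of the $W_B(v)$ would cost a factor $\binom{n}{D^\ast}^{k}$, whose logarithm carries a spurious $\log n$ that swamps the $(1+\lam)^{-\tau ab}$ saving. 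The fix is to instead reveal the full set $\Ec_1$ of crossing edges incident to $H'$: this is merely a subset of the \emph{fixed} set $H'\times B$, contributing only the factor $(1+\lam)^{|H'|b}$ with no $\log n$, and — crucially — $\Ec_1$ already \emph{determines} each $W_B(v)$, since for $v\in H'\subseteq A$ the set $N_G(v)\cap B$ is exactly the set of $\Ec_1$-neighbours of $v$. (That $W_B(v)$ is well defined for every $G\in\mathcal{L}^0_H$ follows because $(A,B)$ is a dominating cut and $\deg_G(v,A)=\deg_F(v)\ge D^\ast$ by Lemma~\ref{lemoutnbr}, so $\deg_{\Ec_1}(v)=\deg_G(v,B)\ge D^\ast$.)

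Carrying this out: writing $\Ec=\Ec_1\sqcup\Ec_2$ with $\Ec_1$ the crossing edges at $H'$, over-counting by summing first over all $\Ec_1\subseteq H'\times B$ for which the resulting $\mathcal{B}'$ is well defined with $|\mathcal{B}'|\ge\tau\,ab$, and then, for each such $\Ec_1$, over all $\Ec_2\subseteq(A\times B)\setminus\mathcal{B}'$, gives
\[
\sum_{G\in\mathcal{L}^0_H\cap\mathcal{L}(F)}\lam^{|G|}\;\le\;\lam^{|F|}\,(1+\lam)^{kb}\,(1+\lam)^{(1-\tau)ab}\,.
\]
From $|F|\le 2\delta\lam n^2$ together with~\eqref{eq:HF} one gets $|H(F)|\le 4\delta n/\beta$, hence $kb\le kn\le 2\delta n^2/\beta$ for $n$ large, so $(1+\lam)^{kb}\le e^{2\delta\lam n^2/\beta}$ uniformly in $F\in\cF_H$. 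I would then sum over $F$ using the crude weighted count $\sum_{F\in\cF_H}\lam^{|F|}\le\sum_{j\le 2\delta\lam n^2}\binom{n^2/2}{j}\lam^j\le n^2 e^{2\delta\lam n^2\log(e/\delta)}$, and bound $(1+\lam)^{(1-\tau)ab}\le(1+\lam)^{ab}e^{-\tau\lam ab/2}\le Z^{\textup{w}}_{A,B}(\lam)\,e^{-\tau\lam n^2/10}$, using $(1+\lam)^{ab}\le Z^{\textup{w}}_{A,B}(\lam)$, $\log(1+\lam)\ge\lam/2$, and $ab\ge n^2/5$ for $(A,B)$ weakly balanced. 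Combining,
\[
Z(\mathcal{L}^0_H,\lam)\;\le\; Z^{\textup{w}}_{A,B}(\lam)\,n^2\exp\!\Big(\lam n^2\big(2\delta\log(e/\delta)+\tfrac{2\delta}{\beta}-\tfrac{\tau}{10}\big)\Big)\,.
\]
By the choice of constants in~\eqref{eq:ChainDep} (in particular $60\delta\log(e/\delta)=\tau$ and $\delta$ taken small relative to $\tau\beta$) the bracket is at most $-\tau/30$; since $\lam\ge\omega/\sqrt n$ we have $\tau\lam n^2/30\ge\tau\omega n^{3/2}/30\ge 3\sqrt n$ for $n$ large, so $Z(\mathcal{L}^0_H,\lam)\le n^2 e^{-3\sqrt n}Z^{\textup{w}}_{A,B}(\lam)\le e^{-2\sqrt n}Z^{\textup{w}}_{A,B}(\lam)$, as required. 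The only genuinely non-routine ingredient is the enumeration device of the second paragraph; the bound on $|H(F)|$ via~\eqref{eq:HF}, the weighted count of sparse defect graphs, and checking that the constants close are all routine.
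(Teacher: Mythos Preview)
Your proof is correct, but it takes a genuinely different route from the paper's. The paper organises the sum by the parameters $(t,h)$ and, for each class $\mathcal{Q}_{t,h}$, explicitly enumerates the vertex set $H'$ together with \emph{both} families $\{W_A(v)\}$ and $\{W_B(v)\}$, at cost $n^k\binom{n}{2D^\ast}^k$; the point is that this fixes $2kD^\ast$ actual edges of $G$, so the factor $\lam^{2kD^\ast}$ already present in the weight converts $\binom{n}{2D^\ast}^k\lam^{2kD^\ast}$ into $(e/\beta)^{O(kD^\ast)}$ with no spurious $\log n$. The remaining $t-kD^\ast$ defect edges are then counted by $\binom{n^2}{t'}\lam^{t'}$ and bounded using $t'\le 3\delta\lam n^2-kD^\ast$.

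Your argument instead fixes $F$ at the outset, observes that each $W_A(v)$ is already determined by $F$ (since $v\in A$ forces $N_G(v)\cap A=N_F(v)$), and handles the $W_B(v)$'s not by enumerating $D^\ast$-subsets but by revealing the whole slab $\Ec_1\subseteq H'\times B$ at cost $(1+\lam)^{kb}$; the bound $k\le 2\delta n/\beta$ from~\eqref{eq:HF} keeps this harmless. You then sum crudely over $F\in\cF$. Both the ``$W_A$ is free'' observation and the slab device are clean and make this particular lemma shorter. The paper's approach, by contrast, enumerates $W_A$ and $W_B$ symmetrically; this is slightly wasteful here but has the advantage of establishing the template reused verbatim in the harder Lemmas~\ref{lemhighdegree} and~\ref{lemsmallcrossgraph}, where one genuinely needs to count the choices of the $W$-sets (to exploit the factor $\theta^{kD^\ast/2}$ from Corollary~\ref{corcrossingdeviation}, or to subtract $|\mathcal{B}'|$ from the cluster expansion). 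Your constant-checking via $60\delta\log(e/\delta)=\tau$ and $\delta<\tau\beta/60$ (which does follow from $\log(e/\delta)>1/\beta$ given $\delta<\tau=10^{-8}e^{-800/\beta}$) is correct.
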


\begin{proof}
Fix $t,h\geq0$.
We first bound the contribution to the sum from those $G$ belonging to the set
\[
\mathcal Q_{t,h}=\{G\in \mathcal{L}^0_{H} : \text{ there exists } F, T, x \text{ s.t. } F\in \cF'_t(T,x,h), G\in \mathcal L(F) \}\, .
\]
 We do so by first choosing the vertices of $H'(F)$ (given by Lemma~\ref{lemoutnbr}) and then choosing the neighbourhoods $W_A(v), W_B(v)$ for each $v\in H'(F)$ in such a way that $|\cB'|\geq \tau ab$ (with $\cB'$ as defined at~\eqref{eqBprimeDef}). Note that there are at most 
\[
\left(n\binom{a}{D^*}\binom{b}{D^*}\right)^k \leq n^k\binom{n}{2D^*}^k 
\]
choices for these vertex sets where $k=\lceil h/4 \rceil$. We note that in this process,  we fix $2kD^\ast$ edges of $G$, $kD^\ast$ of which are defect edges (i.e., they belong to $G_A\cup G_B$).  The remaining $t' := t - kD^*$ defect edges can then be chosen in at most $\binom{n^2}{t'}$ possible ways. We then note that there are $ab- |\cB'|\leq (1-\tau)ab$ available edges to include from $A\times B$. 
 It follows that 
\begin{align*}
Z( \mathcal Q_{t,h}, \lam)
& \leq n^k\binom{n}{2D^*}^k \lam^{2kD^\ast}\binom{n^2}{t'}\lam^{t'}(1 + \lambda)^{(1-\tau)ab} \\
& \leq n^k\left(\frac{e n\lambda}{2D^*}\right)^{2kD^*} \left(\frac{en^2\lambda}{t'}\right)^{t'} (1 + \lambda)^{(1-\tau)ab}  \\
& \leq n^k\left(\frac{e }{\beta}\right)^{kD^\ast}   \left(\frac{en^2\lambda}{3\delta \lam n^2-kD^\ast}\right)^{3\delta \lam n^2-kD^\ast} (1 + \lambda)^{(1-\tau)ab} \, ,
\end{align*}
where for the final inequality we used that if $G\in \mathcal Q_{t,h}\subseteq \mathcal {L}$, then we must have $t\leq 2\delta \lam n^2$ by the definition of $\mathcal{L}$ so we certainly have $t'\leq 3\delta \lam n^2-kD^\ast$. On the other hand by~\eqref{eq:HF} we have $2\delta \lam n^2\geq t\geq hD^\ast\geq kD^\ast$ and so $3\delta \lam n^2-kD^\ast\geq \delta \lam n^2$. We conclude that 
\begin{align*}
Z( \mathcal Q_{t,h}, \lam)
& \leq
n^k\left(\frac{e }{\beta}\right)^{kD^\ast}   \left(\frac{e}{\delta}\right)^{3\delta \lam n^2-kD^\ast} (1 + \lambda)^{(1-\tau)ab}\\
&= (1 + \lambda)^{ab}\exp\left\{k\log n - \log(\beta/\delta)kD^\ast + 3\delta\log(e/\delta)\lam n^2-\tau ab \log(1+\lam)\right\}\\
&\leq (1 + \lambda)^{ab} \exp\left\{-\tau n^2\lam/20 \right\}
\end{align*}
where for the final inequality we used that $\delta<\beta/4$, $D^\ast \geq \log n$, $ab\geq n^2/5$, $\log(1+\lam)\geq \lam/2$ and $\tau\geq 60\delta\log(e/\delta)$. 

Summing over all possible values of $t$ and $h$ we conclude that 
\[
Z( \mathcal{L}^0_{H},\lam) \leq n^3 (1 + \lambda)^{ab} \exp\left\{-\tau n^2\lam/20 \right\}\, .
\]
The proof is completed by observing that $Z_{A,B}^{\textup{w}}(\lam) \geq (1 + \lambda)^{ab}$.
\end{proof}

We consider two further cases depending on the size of $\mathcal{B}_G'$. Let
\[
 \mathcal{L}^1_{H}= \left\{G\in  \mathcal{L}_{H}: |H(G)| \frac{(D^\ast)^2}{16}\leq |\mathcal{B}_G'| < \tau ab \right\}\, ,
\] 
and
\[
 \mathcal{L}^2_{H}= \left\{G\in  \mathcal{L}_{H}: |\mathcal{B}_G'| <\min\left\{ \tau ab, |H(G)| \frac{(D^\ast)^2}{16} \right\}\right\}\, ,
\] 
and note that $ \mathcal{L}_{H}= \mathcal{L}^0_{H}\cup  \mathcal{L}^1_{H}\cup  \mathcal{L}^2_{H}$.
\begin{lemma}\label{lemhighdegree}
\[
Z( \mathcal{L}^1_{H},\lam) \leq e^{-2\sqrt{n}}\cdot Z_{A,B}^{\textup{w}}(\lam)\, .
\]
\end{lemma}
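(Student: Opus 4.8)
The plan is to bound $Z(\mathcal{L}^1_H, \lam)$ by summing over the structural data $(T, x, h)$ just as in the proofs of Lemmas~\ref{lemlowdegree} and~\ref{lemManyCrossing}, but now exploiting the fact that on $\mathcal{L}^1_H$ the blocked set $\mathcal{B}'_G$ is reasonably large (at least $|H(G)| (D^\ast)^2/16$) yet small enough ($< \tau ab$) that the standard bipartite count $(1+\lam)^{ab-|\mathcal{B}'_G|}$ gains a genuine savings. Concretely, I would first fix $t, h \geq 0$ with $h > \alpha x/16$ (the defining inequality of $\cF_H$), and bound the contribution of those $G \in \mathcal{L}^1_H$ for which the associated structural graph $F$ lies in some $\cF'_t(T,x,h)$. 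As in Lemma~\ref{lemManyCrossing}, I choose the set $H'(F)$ of size $k = \lceil h/4 \rceil$ guaranteed by Lemma~\ref{lemoutnbr}, together with the neighbourhoods $W_A(v), W_B(v)$ of size $D^\ast$ for each $v \in H'(F)$ — this costs at most $n^k \binom{n}{2D^\ast}^k$ choices and pins down $2kD^\ast$ edges of $G$, of which $kD^\ast$ are defect edges. The remaining $t' = t - kD^\ast$ defect edges are chosen among at most $\binom{n^2}{t'}$ possibilities, and the crossing edges are now confined to the $ab - |\mathcal{B}'_G| \leq ab - |H(G)|(D^\ast)^2/16 \leq ab - h(D^\ast)^2/16$ available slots, contributing $(1+\lam)^{ab - h(D^\ast)^2/16}$.

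This yields, after the same manipulations as in Lemma~\ref{lemManyCrossing} (using $t \leq 2\delta\lam n^2$, $t \geq hD^\ast$ so $t' \geq \delta \lam n^2$, $D^\ast = \beta\lam n/2 \geq \log n$, $ab \geq n^2/5$, and the crude bounds $\binom{n^2}{t'} \lam^{t'} \leq (e/\delta)^{t'}$, $n^k\binom{n}{2D^\ast}^k\lam^{2kD^\ast} \leq \exp\{k\log n + kD^\ast \log(e/\beta)\}$), a bound of the shape
\begin{align*}
Z(\mathcal{L}^1_H \cap \cF'(\cdot, \cdot, h), \lam) \leq (1+\lam)^{ab} \exp\left\{ k\log n + kD^\ast\log(e/\beta) + 3\delta\log(e/\delta)\lam n^2 - \tfrac{h(D^\ast)^2}{16}\log(1+\lam) \right\}.
\end{align*}
The key point is that the term $-\tfrac{h(D^\ast)^2}{16}\log(1+\lam) \sim -\tfrac{h}{16}\cdot \tfrac{\beta^2\lam^2n^2}{4}\cdot\tfrac{\lam}{2} = -\Theta(h\beta^2\lam^3 n^2)$ dominates both $kD^\ast \log(e/\beta) = \Theta(h\beta\lam n \log(1/\beta))$ (since $\lam^2 n \geq \omega^2 \gg \beta^{-1}\log(1/\beta)$) and the $k\log n = \Theta(h\log n)$ term, while the constant $3\delta\log(e/\delta)\lam n^2$ is swamped by the $\lam^3 n^2$ savings once $h \geq 1$; and when $h = 0$ the set is empty because $h > \alpha x/16 \geq \alpha/16 > 0$ forces $h \geq 1$. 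Summing over the $O(n^3)$ choices of $(t, h)$ (and noting $x$ is determined up to the constraints once $T, F$ are, or can be summed trivially) gives $Z(\mathcal{L}^1_H, \lam) \leq n^3 (1+\lam)^{ab} e^{-\Omega(\beta^2 \lam^3 n^2)} \leq e^{-2\sqrt n}(1+\lam)^{ab} \leq e^{-2\sqrt n} Z^{\textup{w}}_{A,B}(\lam)$, using $\lam \geq \omega/\sqrt n$ so $\lam^3 n^2 \geq \omega^3 \sqrt n$ and the lower bound $Z^{\textup{w}}_{A,B}(\lam) \geq (1+\lam)^{ab}$.

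The main obstacle — or at least the point requiring the most care — is verifying that the exponent is genuinely negative and of order $\sqrt n$ or better: this is a competition between the entropy of choosing $H'(F)$ and its neighbourhoods (scaling like $kD^\ast \log(e/\beta) = \Theta(h \beta \lam n \log(1/\beta))$) against the savings from the forbidden crossing edges (scaling like $h \beta^2 \lam^3 n^2$). The ratio is $\beta \lam^2 n / \log(1/\beta)$, which is large precisely because of the chain of dependencies~\eqref{eq:ChainDep} and the hypothesis $\lam \geq \omega/\sqrt n$ with $\omega$ chosen last and large relative to $\beta$; one must track these constant dependencies honestly. The remaining bookkeeping — that the chosen edges are consistent with $G \in \mathcal{L}(F)$, that $\mathcal{B}'_G \subseteq \mathcal{B}_G$ so no triangle is created, and that $|\mathcal{B}'_G| \geq h(D^\ast)^2/16$ on $\mathcal{L}^1_H$ is exactly the defining inequality — is routine and parallels Lemma~\ref{lemManyCrossing} verbatim.
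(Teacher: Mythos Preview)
Your approach has a genuine gap in the arithmetic of the exponent. You claim that the term $3\delta\log(e/\delta)\lam n^2$ (coming from the crude count $\binom{n^2}{t'}\lam^{t'}$ of the remaining defect edges) is swamped by the blocked-edge savings $\tfrac{h(D^\ast)^2}{16}\log(1+\lam)=\Theta(h\beta^2\lam^3 n^2)$. But this requires $h\beta^2\lam^2 \gg \delta$, and at the bottom of the range $\lam=\omega/\sqrt n$ with $h=1$ this reads $\beta^2\omega^2/n \gg \delta$, which is false for large $n$ (the left side tends to $0$ while $\delta$ is a fixed constant). Concretely, for $\lam\sim\omega/\sqrt n$ your cost is $\Theta(\lam n^2)=\Theta(n^{3/2})$ while your savings is $\Theta(h\lam^3 n^2)=\Theta(h\sqrt n)$, so the exponent is positive for all $h$ up to order $n$. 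The reason the analogous calculation succeeds in Lemma~\ref{lemManyCrossing} is that there $|\mathcal B'|\ge\tau ab$ with $\tau$ a fixed constant, so the savings is $\Theta(\lam n^2)$ and beats $3\delta\log(e/\delta)\lam n^2$ precisely by the choice $\tau\ge 60\delta\log(e/\delta)$; in $\mathcal L^1_H$ the lower bound on $|\mathcal B'|$ is only $h(D^\ast)^2/16=\Theta(h\lam^2 n^2)$, which can be $o(n^2)$.

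The paper avoids this by not using the crude $\binom{n^2}{t'}$ count at all. Instead it sums over the template $T\in\mathcal D$ (so that the comparison is against $Z_{A,B}^{\textup w}(\lam)=\sum_{T}\lam^{|T|}Z_{T_\boxempty}$ rather than $(1+\lam)^{ab}$), and for fixed $T$ it bounds the number of ways to extend $T$ to $F$ via Lemma~\ref{lemweightedTxh}, yielding entropy $e^{O(h\lam n)}$ instead of $e^{O(\lam n^2)}$. The savings from the blocked set is captured by a cluster-expansion comparison of $Z_{T_\boxempty}$ with the partition function $Z_{T'}$ on $T_\boxempty$ with the blocked vertices deleted, giving $Z_{T'}/Z_{T_\boxempty}\le e^{-h\lam(D^\ast)^2/32}$. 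Since $\lam(D^\ast)^2=\Theta(\beta^2\lam^3 n^2)$ and $\lam^2 n\ge\omega^2\gg 1/\beta^2$, this beats $5h\lam n$ for every $h\ge 1$. The essential missing idea in your proposal is this refined enumeration through $T$ (via Lemma~\ref{lemweightedTxh}); without it the defect entropy is not controllable.
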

\begin{proof}
In order to construct a graph $G\in \mathcal{L}^1_{H}$, we first fix $T\in \cD$ and $h, x, t\geq 0$ and construct a graph $G\in  \mathcal{L}^1_{H}\cap \mathcal{L}(F)$ where $F\in \cF'_t(T, x,h)$. As in the proof of Lemma~\ref{lemManyCrossing} we first choose the set $H'(F)$ of $k$ vertices from either $A$ or $B$ and for each $v\in H'$ choose the sets $W_A(v), W_B(v)$ of size $D^\ast$ each. The number of ways to choose these sets is at most
\[
\left(n\binom{a}{D^*}\binom{b}{D^*}\right)^k \leq n^k\binom{n}{2D^*}^k 
\]
After these are fixed, we choose the remaining $t-kD^\ast-|T|$ edges of $F$. Let $F'$ denote the graph formed by taking the union of $T$ and these $t-kD^\ast-|T|$ edges and note that $F'\in \cF_{t-kD^{\ast}}(T, x, h)$. 

We then choose the remaining edges of $G$ from $A \times B$. Suppose without loss of generality that $H'\subseteq A$. Let 
\[
\mathcal O=\bigcup_{v\in H'}\left(\{v\}\times W_B(v)\right)\subseteq A\times B\, ,
\]
and recall that 
\[
\mathcal B'= \bigcup_{v\in H'} \left( W_A(v) \times W_B(v)\right)\, .
\]
Choosing the remaining edges of $G$ from $A\times B$ amounts to choosing an independent set in the subgraph of $ F_\boxempty$ obtained by deleting the vertices of 
\[
X:=\mathcal B'\cup \mathcal O\cup N_{F_\boxempty}(\mathcal O)\, .
\]
Indeed, we have already forced the elements of $\mathcal O$ to be in $G$ and the elements of $\mathcal B'$ are blocked, hence not in $G$. 
 Letting $T'$ denote the subgraph of $T_\boxempty$ obtained by deleting the vertices of $X$ we have $T'\subseteq F'_\boxempty$ and so
\[
Z_{F'_\boxempty}(\lam)\leq Z_{T'}(\lam)  \, .
\] 
We now compare $Z_{T_\boxempty}$ and $Z_{T'}$ via cluster expansion. Note that since $T$ has maximum degree $D$, $T_\boxempty$ has maximum degree at most $2D$. 
Letting
\[
\cC_0= \cC(T_\boxempty)\backslash \cC(T')\, ,
\]
we have
\[
\log Z_{T_\boxempty} - \log Z_{T'}=  \sum_{\Gamma\in \cC_0} \phi(\Gamma) \lam^{|\Gamma|} = |X|\lam + \sum_{\Gamma\in \cC_0: |\Gamma|\geq 2} \phi(\Gamma) \lam^{|\Gamma|}\, .
\]
Note that if $\Gamma\in \cC_0$ then $v\in\Gamma$ for some $v\in X$. It follows by Lemma~\ref{lemClusterTail}\footnote{applied with $k=2$, $|S|=1$, $\Delta=2D=\frac{2\alpha}{\lam}$, noting that $\lam<\frac{1}{4e\Delta}$. } that 
\[
\left |  \sum_{\Gamma\in \cC_0: |\Gamma|\geq 2} \phi(\Gamma) \lam^{|\Gamma|} \right|\leq (2e)^2|X| D\lam^2\, .
\]
We conclude that
\[
Z_{T'}/Z_{T_\boxempty}\leq e^{-|X|\lam/2}\leq e^{-h\lam (D^\ast)^2/32}
\]
where we used that $\alpha=D\lam=1/(96e^3)$ for the first inequality and $X\supseteq \mathcal B'$,  $|\mathcal B'|\geq h(D^\ast)^2/16$ for the second.

Putting everything together we have 
\begin{align*}
Z( \mathcal{L}^1_{H},\lam)
&\leq  \sum_{T \in \mathcal{D}} \sum_{\substack{t,x,h\geq 0: \\ h> \alpha x/16}}n^k\binom{n}{2D^\ast}^k\lam^{2D^\ast k} \sum_{F\in \cF_{t-kD^\ast}(T,x,h)}\lam^{t-kD^\ast} Z_{T_\boxempty} e^{-h\lam (D^\ast)^2/32} \, .
\end{align*}

By Lemma~\ref{lemweightedTxh},
\begin{align*}
\lam^{t-kD^\ast}|\mathcal{F}_{t-kD^\ast}(T, x,h)|\leq \lam^{|T|} e^{x\omega^2D/10}e^{2\lambda n h}
\leq \lam^{|T|} e^{4h\lambda n}
\end{align*}
since that $h> \alpha x/16$.
Note also that
\begin{align}\label{eqwchoice}
n^k\binom{n}{2D^\ast}^k \lam^{2D^\ast k} \leq n^h \left(\frac{en\lam}{2D^\ast} \right)^{2D^\ast k}
\leq e^{\beta\log(e/\beta)h\lambda n/4 +h\log n}\leq  e^{h\lambda n} \, ,
\end{align}
since $\beta\log(e/\beta)\leq 3$ and $\log n\leq \lam n/4$.

It follows that
\begin{align*}
Z( \mathcal{L}^1_{H},\lam)
&\leq  
\sum_{T \in \mathcal{D}} \lam^{|T|}Z_{T_\boxempty} \sum_{\substack{t,x,h\geq 0: \\ h> \alpha x/16}}\exp\{5h\lambda n -h\lam (D^\ast)^2/32 \}\\
&\leq  
Z_{A,B}^{\textup{w}}(\lam) \sum_{\substack{t,x,h\geq 0: \\ h> \alpha x/16}}\exp\{-h\beta^2\omega^{3} \sqrt{n}  / 150\}\\
&\leq e^{-\beta^2\omega^3\sqrt{n}/200}\cdot  Z_{A,B}^{\textup{w}}(\lam)\, .
\end{align*}
\end{proof}

Finally we bound the contribution from $ \mathcal{L}^2_{H}$.
\begin{lemma}\label{lemsmallcrossgraph}
\[
Z( \mathcal{L}^2_{H},\lam) \leq e^{-2\sqrt{n}}\cdot Z_{A,B}^{\textup{w}}(\lam)\, .
\]
\end{lemma}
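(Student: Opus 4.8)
The plan is to repeat, with one new twist, the counting scheme already used for Lemmas~\ref{lemManyCrossing} and~\ref{lemhighdegree}. First I would decompose the sum over $G\in\mathcal{L}^2_{H}$ according to: the core graph $T\in\mathcal{D}$; the parameters $t,x,h$ with $G\in\mathcal{L}(F)$ for some $F\in\mathcal{F}'_t(T,x,h)$; the set $H'=H'(F)$ of $k=\lceil h/4\rceil$ high-degree vertices supplied by Lemma~\ref{lemoutnbr} (say wlog $H'\subseteq A$); the canonical neighbourhoods $W_A(v)\subseteq A$, $W_B(v)\subseteq B$ of size $D^\ast$ for each $v\in H'$; the remaining edges of $F$; and the crossing graph $E_c$. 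Since $E_c$ contains the $kD^\ast$ forced crossing edges $v$–$W_B(v)$ and avoids the blocked set $\mathcal{B}'_G$, we get $\sum_{E_c}\lam^{|E_c|}\le \lam^{kD^\ast}Z_{F_\boxempty-\mathcal{B}'_G}(\lam)$, and then, by Lemma~\ref{lemZFT} together with the cluster-expansion comparison used in the proof of Lemma~\ref{lemhighdegree} (deleting the vertices of $F_\boxempty$ corresponding to $\mathcal{B}'_G$), this is at most $Z_{T_\boxempty}(\lam)\,e^{-xDn\lam^2/10}\,e^{-|\mathcal{B}'_G|\lam/2}$. Writing $\lam^{|F|}=\lam^{|T|}\lam^{kD^\ast}\lam^{|F|-|T|-kD^\ast}$, the residual $\lam$-weight of $F$ is summed via Lemma~\ref{lemweightedTxh}, and summing over $T\in\mathcal{D}$ pulls out a factor $Z_{A,B}^{\textup{w}}(\lam)=\sum_{T\in\mathcal{D}}\lam^{|T|}Z_{T_\boxempty}(\lam)$.

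The point where this case genuinely differs from $\mathcal{L}^1_{H}$ is that here $|\mathcal{B}'_G|$ can be as small as $(D^\ast)^2$, so the blocked-edge factor alone cannot pay for the cost of listing the pairs $(W_A(v),W_B(v))_{v\in H'}$, which naively costs $\exp(\Theta(kD^\ast\log n))$. So the heart of the argument is to extract structure from the defining inequality $|\mathcal{B}'_G|<|H(G)|(D^\ast)^2/16$, i.e.\ $|\mathcal{B}'_G|<k(D^\ast)^2/4$ up to rounding: the $k$ combinatorial rectangles $W_A(v)\times W_B(v)$, each of size $(D^\ast)^2$, have a union a constant factor smaller than $k(D^\ast)^2$, hence on average each rectangle overlaps the union of the others in at least $3(D^\ast)^2/4$ cells. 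A supersaturation/averaging argument — the triangle-free specialisation of the corresponding step in~\cite{balogh2016typical} — should then yield a dichotomy: either a constant fraction of the rectangles lie almost entirely inside previously-listed data, so that the configuration $(H',\{(W_A(v),W_B(v))\}_{v\in H'})$ can be specified using only $\exp(O(kD^\ast))$ bits beyond an initial $\exp(O(D^\ast\log n))$ ``seed''; or else the neighbourhoods pile up enough that $e^{-|\mathcal{B}'_G|\lam/2}$ is itself the dominant gain. Either way, using $x\ge h$ (since $H(F)\subseteq X(F)$), so that $e^{-xDn\lam^2/10}\le e^{-h\alpha\lam n/10}$ (recall $D\lam=\alpha$), and the dependency chain~\eqref{eq:ChainDep}, one checks that for fixed $T$ the total over all $(t,x,h,H',W\text{'s},F,E_c)$ is at most $\lam^{|T|}Z_{T_\boxempty}(\lam)\cdot e^{-2\sqrt n}$; summing over $T\in\mathcal{D}$ gives $Z(\mathcal{L}^2_{H},\lam)\le e^{-2\sqrt n}Z_{A,B}^{\textup{w}}(\lam)$.

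The main obstacle is precisely this structural dichotomy: turning ``a union of $k$ equal-size combinatorial rectangles that loses a constant factor to overlap'' into a statement strong enough to re-encode the $W_B(v)$ (the crossing-neighbourhoods of the high-degree vertices, which individually carry $\Theta(D^\ast\log n)$ bits if chosen freely from $B$) using $o(D^\ast\log n)$ bits apiece on average. In carrying this out one must also treat the $O(h)$ exceptional rectangles that are not redundant, check robustness to the fact that $W_A(v),W_B(v)$ are only ``canonical'' $D^\ast$-element subsets of much larger neighbourhoods, and track carefully which of the three available gains — the $e^{-xDn\lam^2/10}$ factor of Lemma~\ref{lemZFT}, the blocked-edge factor $e^{-|\mathcal{B}'_G|\lam/2}$, and the re-encoding saving — dominates in each regime of $x/h$ and $|\mathcal{B}'_G|/(D^\ast)^2$. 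Everything else is a routine repetition of the bookkeeping in Lemmas~\ref{lemManyCrossing}--\ref{lemhighdegree}.
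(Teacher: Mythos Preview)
Your proposal correctly identifies the difficulty — the blocked-edge factor $e^{-|\mathcal{B}'_G|\lambda/2}$ is too weak when $|\mathcal{B}'_G|$ is small — but the ``encoding/dichotomy'' mechanism you propose to overcome it is both vague and not the route the paper takes. The paper's argument is more direct: rather than re-encoding overlapping rectangles, it uses a probabilistic counting bound. Lemma~\ref{lemcrossingdeviation} (borrowed from~\cite[Lemma 3.6]{balogh2016typical}) says that for \emph{uniformly random} $D^\ast$-sets $W_A\subseteq A$, $W_B\subseteq B$, the event $|\mathcal{B}\cap(W_A\times W_B)| > (D^\ast)^2/2$ has probability at most $\theta^{D^\ast}$ whenever $|\mathcal{B}| \le \tau ab$. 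Iterating this over $k$ independent pairs (Corollary~\ref{corcrossingdeviation}) shows that the event $|\mathcal{B}| < \min\{\tau ab, k(D^\ast)^2/4\}$ has probability at most $2^k\theta^{kD^\ast/2}$. This directly caps the number of tuples $(H',\{(W_A(v),W_B(v))\}_v)$ landing in $\mathcal{L}^2_H$ by $n^k\binom{a}{D^\ast}^k\binom{b}{D^\ast}^k\cdot 2^k\theta^{kD^\ast/2}$, and the factor $\theta^{kD^\ast/2}$ (with $\theta = e^{-100/\beta}$, $D^\ast = \beta\lambda n/2$, $k\ge h/4$) supplies exactly the missing $e^{-5h\lambda n}$ saving needed to beat the $e^{4h\lambda n}$ cost from Lemma~\ref{lemweightedTxh}.

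Your dichotomy sketch leaves the crucial step — converting ``small union of $k$ equal-size rectangles'' into an efficient encoding of the $W_B(v)$'s — entirely unproved, and you yourself flag it as the main obstacle. It is not clear such an encoding exists with the required savings; the paper sidesteps the issue by never encoding, only counting via the rarity of small-union configurations. Note also that the paper's proof uses neither Lemma~\ref{lemZFT} nor the blocked-edge factor $e^{-|\mathcal{B}'_G|\lambda/2}$ here; it invokes only the trivial bound $Z_{F_\boxempty}\le Z_{T_\boxempty}$, with the entire gain coming from $\theta^{kD^\ast/2}$.
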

The idea will be that for a random choice of the sets $W_A(v), W_B(v)$, the resulting set $\mathcal{B}'$ is typically a constant factor larger than $|H(G)| \frac{(D^\ast)^2}{16}$. This is a consequence of the following lemma which is a special case of \cite[Lemma 3.6]{balogh2016typical}

\begin{lemma}\label{lemcrossingdeviation}
Suppose that $\mathcal B\subseteq A\times B$ satisfies
\[
|\mathcal B|\leq \tau ab
\]
and that $W_A\subseteq A, W_B\subseteq B$ are independent uniformly chosen subsets of size $D^\ast$. 
Then 
\[
\P\left(|\mathcal B\cap (W_A \times W_B)|> \frac{1}{2} (D^\ast)^2\right)\leq \theta^{D^\ast}\,,
\]
where $\theta = 10\tau^{1/8}$.
\end{lemma}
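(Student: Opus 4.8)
The plan is to prove Lemma~\ref{lemcrossingdeviation} via a direct second-moment/union-bound argument over which entries of $W_A\times W_B$ lie in $\mathcal{B}$.

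First I would set up the random variable. Write $Y=|\mathcal B\cap(W_A\times W_B)|=\sum_{(u,v)\in\mathcal B}\mathbf 1_{u\in W_A}\mathbf 1_{v\in W_B}$, where $W_A,W_B$ are uniformly random $D^\ast$-subsets of $A,B$ respectively, chosen independently. The natural first step is to estimate $\E Y$. For a fixed $(u,v)\in\mathcal B$ we have $\P(u\in W_A)=D^\ast/a$ and $\P(v\in W_B)=D^\ast/b$, so by independence of $W_A$ and $W_B$, $\E Y=|\mathcal B|(D^\ast)^2/(ab)\le \tau(D^\ast)^2$. Since $\tau$ is a very small constant (recall $\tau=(\theta/10)^8$ and $\theta=e^{-100/\beta}$ is tiny), $\E Y\le \tfrac14(D^\ast)^2$, so the event $\{Y>\tfrac12(D^\ast)^2\}$ is a large deviation by a constant factor above the mean. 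The remaining task is to show this deviation has probability at most $\theta^{D^\ast}$.

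For the large-deviation bound I would use the standard fact that a sum indexed by pairs, over two independent uniform $k$-subsets, concentrates as well as the corresponding \emph{product-measure} (independent Bernoulli) version — this is the content of \cite[Lemma 3.6]{balogh2016typical}, which we are allowed to cite, but I would also sketch the self-contained route. Condition on $W_A$; then $Y=\sum_{v\in B}c_v\mathbf 1_{v\in W_B}$ where $c_v=|\{u\in W_A:(u,v)\in\mathcal B\}|\le D^\ast$, and $\sum_v c_v=|\mathcal B\cap(W_A\times B)|$. By a Chernoff-type bound for sums over a uniform $D^\ast$-subset (or by first passing to the binomial model via the standard coupling between $\bin(b,D^\ast/b)$ and a uniform $D^\ast$-subset and absorbing the $o(1)$ loss), one gets that conditionally $\P(Y>\tfrac12(D^\ast)^2\mid W_A)$ is exponentially small unless $\sum_v c_v$ is itself atypically large. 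So the argument reduces to: (i) with probability $\ge 1-\theta^{D^\ast/2}$ over $W_A$ one has $\sum_v c_v=|\mathcal B\cap(W_A\times B)|\le 2\tau b D^\ast$ (again a Chernoff bound, now over $W_A$, using $\E|\mathcal B\cap(W_A\times B)|=|\mathcal B|D^\ast/a\le\tau b D^\ast$); and (ii) on that event, the conditional upper tail of $Y$ is at most $\theta^{D^\ast/2}$ by a Chernoff bound with the now-controlled coefficients $c_v$, since the threshold $\tfrac12(D^\ast)^2$ exceeds $e^2$ times the conditional mean $\le 2\tau(D^\ast)^2$ once $\tau<1/(4e^2)$, so Lemma~\ref{lemChernoff} applies in the form $\P(>(1+\delta)\mu)<e^{-(1+\delta)\mu}$. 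Combining (i) and (ii) and tracking constants gives the bound $\theta^{D^\ast}$ with room to spare, recovering the clean exponent $\theta=10\tau^{1/8}$ as stated (the eighth power is exactly the slack needed to absorb the two-stage conditioning and the binomial coupling).

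The main obstacle is the dependence inherent in sampling without replacement: $Y$ is a sum of non-independent indicators in two different ways (the $\mathbf 1_{u\in W_A}$ are negatively correlated among themselves, and likewise the $\mathbf 1_{v\in W_B}$). The cleanest fix is to quote \cite[Lemma 3.6]{balogh2016typical} directly, which already packages exactly this hypergeometric large-deviation estimate for $|\mathcal B\cap(W_A\times W_B)|$; if instead one wants a self-contained proof, one invokes the classical comparison (e.g.\ Hoeffding's observation that a without-replacement sum is dominated in the convex order by the with-replacement sum) to reduce to the fully independent Bernoulli model and then applies Lemma~\ref{lemChernoff} twice as above. Either way the estimate is routine once the reduction is in place; the only thing to be careful about is that $D^\ast=\beta\lam n/2\ge\Omega(\sqrt{n}\log n)\to\infty$, so the exponentially-small-in-$D^\ast$ bounds are meaningful and the $o(1)$ errors from the binomial coupling are negligible relative to $\theta^{D^\ast}$.
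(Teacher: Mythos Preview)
Your proposal is correct and matches the paper's approach: the paper does not prove this lemma at all but simply states it as a special case of \cite[Lemma~3.6]{balogh2016typical}, which is exactly the ``cleanest fix'' you identify. Your additional self-contained sketch via two-stage conditioning and hypergeometric/Chernoff bounds is reasonable extra content, though the paper offers no such argument.
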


We record the following corollary. 
\begin{cor}\label{corcrossingdeviation}
For each $i\in \{1,\ldots, k\}$, choose subsets $W_A^i\subseteq A$, $W_B^i\subseteq B$ of size $D^\ast$ independently and uniformly at random. Let 
\[
\mathcal B:= \bigcup_{i=1}^k W_A^{i}\times W_B^{i}\, .
\]
Then 
\[
\P \left( |\mathcal B|< \min\left\{\tau ab,\frac{k(D^\ast)^2}{4} \right\}\right) \leq 2^k \theta^{kD^\ast /2}\, .
\]
\end{cor}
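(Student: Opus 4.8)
The plan is to reduce the statement about the union $\mathcal B = \bigcup_{i=1}^k W_A^i \times W_B^i$ to repeated application of Lemma~\ref{lemcrossingdeviation}, exposing the sets $W_A^i, W_B^i$ one pair at a time. First I would set up a revealing process: reveal $(W_A^1, W_B^1)$, then $(W_A^2,W_B^2)$, and so on, and let $\mathcal B_j := \bigcup_{i=1}^j W_A^i \times W_B^i$ denote the partial union after $j$ steps. The key observation is that each new pair contributes roughly $(D^\ast)^2$ new points unless it has a large overlap with the already-revealed set $\mathcal B_{j-1}$; and since $\mathcal B_{j-1}$ has size at most $\tau ab$ whenever we are still in the regime of interest, Lemma~\ref{lemcrossingdeviation} (applied conditionally on $\mathcal B_{j-1}$, using the independence of the $j$-th pair from the earlier ones) bounds the probability of such a large overlap by $\theta^{D^\ast}$.

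More precisely, I would argue as follows. Say step $j$ is \emph{bad} if $|\mathcal B_{j-1}| \le \tau ab$ and $|(W_A^j \times W_B^j) \cap \mathcal B_{j-1}| > \tfrac12 (D^\ast)^2$. Conditioned on $\mathcal B_{j-1}$ with $|\mathcal B_{j-1}|\le \tau ab$, the pair $(W_A^j, W_B^j)$ is an independent uniform pair of $D^\ast$-sets, so Lemma~\ref{lemcrossingdeviation} gives that step $j$ is bad with conditional probability at most $\theta^{D^\ast}$. By a union bound over the $2^k$ subsets of steps, the probability that the set of bad steps is any \emph{particular} subset $S$ is at most $\theta^{D^\ast |S|} \le \theta^{D^\ast}$ when $S\neq\emptyset$; summing, the probability that there are $\ge k/2$ bad steps among the first $k$ is at most $\binom{k}{\lceil k/2\rceil}\theta^{D^\ast \lceil k/2 \rceil} \le 2^k \theta^{kD^\ast/2}$. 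It therefore suffices to show that on the event that fewer than $k/2$ steps are bad, we have $|\mathcal B| \ge \min\{\tau ab, k(D^\ast)^2/4\}$.

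For the deterministic part: suppose fewer than $k/2$ steps are bad and suppose for contradiction that $|\mathcal B| < \tau ab$; then $|\mathcal B_{j-1}| \le |\mathcal B| < \tau ab$ for every $j$, so at each \emph{good} step $j$ we have $|\mathcal B_j| - |\mathcal B_{j-1}| = |(W_A^j\times W_B^j)| - |(W_A^j\times W_B^j)\cap \mathcal B_{j-1}| \ge (D^\ast)^2 - \tfrac12(D^\ast)^2 = \tfrac12 (D^\ast)^2$. Since there are more than $k/2$ good steps, $|\mathcal B| \ge (k/2)\cdot \tfrac12(D^\ast)^2 = k(D^\ast)^2/4$, which combined with $|\mathcal B|<\tau ab$ gives $|\mathcal B| \ge \min\{\tau ab, k(D^\ast)^2/4\}$ anyway — i.e.\ in all cases on this event the desired lower bound holds. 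This completes the proof modulo the bookkeeping. The main obstacle, such as it is, is getting the conditioning in Lemma~\ref{lemcrossingdeviation} exactly right — one must apply it to $\mathcal B_{j-1}$ which is a \emph{random} set depending only on the first $j-1$ pairs, invoking independence of the $j$-th pair — and making sure the union bound over subsets of bad steps is organized so that the $2^k$ factor and the $\theta^{kD^\ast/2}$ factor come out cleanly; both are routine once the revealing filtration is fixed.
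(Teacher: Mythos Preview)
Your proof is correct and follows essentially the same approach as the paper's: reveal the pairs $(W_A^i,W_B^i)$ one at a time, call a step good/useful when the overlap with the running union is at most $\tfrac12(D^\ast)^2$, apply Lemma~\ref{lemcrossingdeviation} conditionally at each step (using independence and the running bound $|\mathcal B_{j-1}|\le\tau ab$), and then union-bound over the $\le 2^k$ subsets of size $\ge k/2$ of bad steps. The only cosmetic differences are that the paper works with ``useful'' rather than ``bad'' indices and packages the $|\mathcal B|<\tau ab$ condition into the event rather than the definition; your ``for contradiction'' phrasing in the deterministic part and the stray ``$\le\theta^{D^\ast}$'' in the union bound are mild wording slips, but the argument is the same.
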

\begin{proof}
For $\ell\in \{1,\ldots, k\}$, let 
\[
\mathcal B_\ell= \bigcup_{i=1}^\ell W_A^i\times W_B^i\, ,
\]
so in particular $\mathcal B=\mathcal B_k$.
We say index $\ell$ is \emph{useful} if
\[
\left |\mathcal B_{\ell-1} \cap (W_A^\ell \times W_B^\ell ) \right|\leq \frac{1}{2}(D^\ast)^2
\]
or in other words
\[
|\mathcal B_{\ell}|- |\mathcal B_{\ell-1}|  \geq \frac{1}{2}(D^\ast)^2\, .
\]
If at least half of the indices $\ell\in  \{1,\ldots, k\} $ are useful then
\[
|\mathcal B|\geq \frac{k}{2}\cdot \frac{(D^\ast)^2 }{2}\, .
\]
It follows that if $|\mathcal B|< k(D^\ast)^2/4$, then at most half of the indices $\ell\in  \{1,\ldots, k\} $ are useful. Let $\cE$ denote the event that $\leq k/2$  indices $\ell\in  \{1,\ldots, k\} $ are useful and that $ |\mathcal B|< \tau ab$. It suffices to bound $\P(\cE)$. For $I\subseteq [k]$, let $\mathcal{Q}_{I}$ denote the event that all the indices in $I$ are not useful. By a union bound, we then have 
\begin{align}\label{eqUsefulSteps}
\mathbb{P}(\mathcal{E}) \leq \sum_{\substack{I \subset [k]:\\|I| > k/2}} \P(\mathcal{Q}_{I} \cap \{ |\mathcal B|< \tau ab \})\, .
\end{align}
Fix $I=\{i_1,\ldots, i_j \}\subseteq [k]$ and $\ell\in[j]$. Note that for any realisation of $\mathcal B_{i_\ell-1}$ such that $| \mathcal B_{i_\ell-1}|< \tau ab$, the probability that index $i_\ell$ is not useful is at most $\theta^{D^\ast}$ by Lemma~\ref{lemcrossingdeviation}. It follows that
\[
\P(\mathcal{Q}_{I} \cap \{ |\mathcal B|< \tau ab \})\leq \theta^{|I|D^\ast}
\]
and so by~\eqref{eqUsefulSteps},
\[
\mathbb{P}(\mathcal{E}) \leq 2^k  \theta^{kD^\ast/2}\, .
\]
\end{proof}

\begin{proof}[Proof of Lemma~\ref{lemsmallcrossgraph}]
As in the proof of Lemma~\ref{lemhighdegree}
in order to construct a graph $G\in \mathcal{L}^2_{H}$, we first fix $T\in \cD$ and $h, x, t\geq 0$ and construct a graph $G\in  \mathcal{L}^2_{H}\cap \mathcal{L}(F)$ where $F\in \cF'_t(T, x,h)$. 

First choose the set $H'(F)$ of $k$ vertices (there are $\leq n^k$ choices for these) and for each $v\in H'(F)$ choose the sets $W_A(v), W_B(v)$ of size $D^\ast$ each in such a way that
\[
\left| \bigcup_{v\in H'} W_A(v) \times W_B(v)\right| \leq h \frac{(D^{\ast})^2}{16}\, .
\]
By Corollary~\ref{corcrossingdeviation},
the number of ways to choose sets $W_A(v), W_B(v)$ for each $v\in H'(F)$ is at most
\[
\left(\binom{a}{D^\ast}\binom{b}{D^\ast}\right)^k 2^k \theta^{D^\ast k/2}\leq \binom{n}{2D^\ast}^k 2^k \theta^{D^\ast k/2}\, .
\]

After these are fixed, we choose the remaining $t-kD^\ast-|T|$ edges of $F$. Let $F'$ denote the graph formed by taking the union of $T$ and these $t-kD^\ast-|T|$ edges and note that $F'\subseteq \cF_{t-kD^{\ast}}(T, x, h)$. We then choose the remaining edges of $G$ from $A \times B$. We conclude that
\begin{align*}
Z( \mathcal{L}^2_{H},\lam)
&\leq  \sum_{T \in \mathcal{D}} \sum_{\substack{t,x,h\geq 0: \\ h> \alpha x/16}}n^k\binom{n}{2D^\ast}^k2^k \theta^{D^\ast k/2}\lam^{2D^\ast k} \sum_{F\in \cF_{t-kD^\ast}(T,x,h)}\lam^{t-kD^\ast} Z_{F_\boxempty}\, .
\end{align*}
Now, by Lemma~\ref{lemweightedTxh} and the bounds $Z_{F_\boxempty}\leq Z_{T_\boxempty}$ and $h> \alpha x/16$ we have 
\[
\sum_{F\in \cF_{t-kD^\ast}(T,x,h)}\lam^{t-kD^\ast} Z_{F_\boxempty} \leq  \lam^{|T|}Z_{T_\boxempty}e^{4h\lambda n}\, .
\]
Moreover, bounding as in~\eqref{eqwchoice} we have
\begin{align*}
n^k\binom{n}{2D^\ast}^k2^k \theta^{D^\ast k/2}\lam^{2D^\ast k}  
&\leq
\exp\left\{\log(2n)k +\beta \log(e/\beta)\lam n k -\log(1/\theta)\beta\lam nk/4\right\}\\
&\leq
\exp\left\{-\log(1/\theta)\beta\lam nk/5\right\}\\
&\leq
e^{-5h\lam n}
\end{align*}
where for the second inequality we used $k\geq h/4$ and $\theta=e^{-100/\beta}$.
It follows that
\begin{align*}
Z( \mathcal{L}^2_{H},\lam)
&\leq  
\sum_{T \in \mathcal{D}} \lam^{|T|}Z_{T_\boxempty} \sum_{\substack{t,x,h\geq0: \\ h> \alpha x/16}}\exp\{-h\lambda n \}
\leq e^{-\omega\sqrt{n}/2}\cdot  Z_{A,B}^{\textup{w}}(\lam)\,. \qedhere
\end{align*}
\end{proof}

We have now collected all the necessary ingredients to complete the proof of Lemma~\ref{lemOPT}.
\begin{proof}[Proof of the upper bound of Lemma~\ref{lemOPT}]
Let $G\in \mathcal L_{A,B} \backslash\cT_{A,B,\lam}^{\textup{w}}$ so that $\Delta(G_A\cup G_B)>\alpha/\lam$. 
Let $F=G_A\cup G_B$. Recall that $U(F)$ is an edge-maximal subgraph of $F$ with maximum degree at most $D$, and $X(F)$ is the set of all vertices whose degree in $U(F)$ is $D$. Since $\Delta(F)>D$, we must then have $|X(F)|\geq 1$, else we could add an edge to $U(F)$ without violating the degree bound, contradicting the maximality of $U(F)$. It follows that $F\in \cF'_t(T, x,h)$ for some $T\in \cD$ and $t,x,h$ where $x\geq 1$. We therefore have that 
\(
F\in \cF_L \cup \cF_H \,,
\)
i.e.,
\[
\mathcal L_{A,B} \backslash\cT_{A,B,\lam}^{\textup{w}}\subseteq \mathcal L_L\cup  \mathcal L_H=  \mathcal L_L\cup \mathcal L_H^0 \cup \mathcal L_H^1 \cup \mathcal L_H^2\, .
 \]

By Lemmas~\ref{lemlowdegree},~\ref{lemManyCrossing},~\ref{lemhighdegree}, and~\ref{lemsmallcrossgraph},
\begin{align*}
Z(\mathcal L_{A,B}\backslash \cT_{A,B,\lam}^{\textup{w}}, \lam)
\leq {Z(\mathcal{L}_{L},\lam)+Z( \mathcal{L}^0_{H},\lam)+Z( \mathcal{L}^1_{H},\lam)+Z( \mathcal{L}^2_{H},\lam)}
\leq e^{-\sqrt{n}}Z_{A,B}^{\textup{w}}(\lam)\, .
\end{align*}
The result follows. 
\end{proof}

\subsection{Proof of Proposition~\ref{propGroundStateRefinedAlt}}
The fact that a graph $G$ drawn according to $\mu_{\textup{weak},\lam}$ has a unique weakly balanced max cut whose defect graph has maximum degree at most $\alpha/\lam$ follows immediately from Lemma~\ref{lemWeakUnique}.

We first prove
\begin{align}\label{eqGroundStateRefinedRep}
 Z(\cL, \lam) = \left(1+O\left(e^{-\sqrt{n}} \right) \right) Z_{\textup{weak}}(\lam)\, .
\end{align}
First observe that by Lemma~\ref{lemOPT},
\[
Z(\cL,\lam)
\leq  \sum_{(A,B)\in\Pi_{\textup{weak}}}Z( \mathcal{L}_{A,B},\lam)
= \left(1+O\left(e^{-\sqrt{n}}\right)\right)   \sum_{(A,B)\in\Pi_{\textup{weak}}}  Z_{A,B}^{\textup{w}}(\lam)\, .
\]

Let $U_{A,B,\lam}$ denote the set of all $G\in \cT$ such that $\Delta(G_A\cup G_B)\leq \alpha/\lam$ and $(A,B)$ is the unique max cut of $G$. By Lemma~\ref{lemWeakUnique}
\[
Z( \cT_{A,B,\lam}^{\textup{w}} \cap U_{A,B,\lam},\lam) =  \left(1+O\left(e^{-\sqrt{n}}\right)\right)  Z_{A,B}^{\textup{w}}(\lam)\, .
\]
We note that if $G\in \cT_{A,B,\lam}^{\textup{w}} \cap U_{A,B,\lam}$, then $(A,B)$ is a dominating cut for $G$ (since $(A,B)$ is a max cut) and $|G_A \cup G_B|\leq n\cdot \alpha/(2\lam)\leq 2\delta \lam n^2$ and so $G\in \mathcal{L}$. By Lemma~\ref{lemWeakUnique}, we also know that $G\notin \cT_{A',B',\lam}^{\textup{w}} $ for all weakly balanced partitions $(A',B')$ distinct from $(A,B)$. We conclude that
\[
Z(\mathcal{L},\lam)
\geq  \sum_{(A,B)\in\Pi_{\textup{weak}}} Z( \cT_{A,B,\lam}^{\textup{w}} \cap U_{A,B,\lam},\lam)
= \left(1+O\left(e^{-\sqrt{n}}\right)\right)   \sum_{(A,B)\in\Pi_{\textup{weak}}}  Z_{A,B}^{\textup{w}}(\lam)\, .
\]
The estimate~\eqref{eqGroundStateRefinedRep} follows.

To conclude the proof, note that
{\small \begin{align}
\|\mu_{\cL,\lam} - \mu_{\textup{weak},\lam}\|_{\textup{TV}}
&= \sum_{G: \mu_{\textup{weak},\lam}(G)>\mu_{\cL,\lam}(G)} \mu_{\textup{weak},\lam}(G)-\mu_{\cL,\lam}(G)\\
&= \sum_{G: \mu_{\textup{weak},\lam}(G)>\mu_{\cL,\lam}(G)}c_{\textup{weak},\lam}(G)\frac{\lam^{|G|}}{Z_{\textup{weak}}(\lam)}-\frac{\lam^{|G|}}{Z(\cL,\lam)}\mathbf 1_{G\in \cL} \\
&\leq  \mu_{\textup{weak},\lam}(c_{\textup{weak},\lam}(G)>1) +  \mu_{\textup{weak},\lam}(G\notin \cL)  +\sum_{G\in\cL:  c_{\textup{weak},\lam}(G)= 1}
\left|
\frac{\lam^{|G|}}{Z_{\textup{weak}}(\lam)}-
\frac{\lam^{|G|}}{Z(\cL,\lam)}\right|\\
&\leq
O\left(e^{-\sqrt{n}} \right) +
\sum_{G\in\cL:  c_{\textup{weak},\lam}(G)= 1}\frac{\lam^{|G|}}{Z_{\textup{weak}}(\lam)}
\left|
1-
 \left(1+O\left(e^{-\sqrt{n}} \right) \right) \right|\\
&\leq
O\left(e^{-\sqrt{n}} \right)\, .
\end{align}}
For the first inequality we used Lemma~\ref{lemWeakUnique} and for the second inequality we used \eqref{eqGroundStateRefinedRep}.

\section*{Acknowledgments}

We thank Bob Krueger, Corrine Yap, and the anonymous referees for their very helpful feedback on this paper. MJ is supported by a UKRI Future Leaders Fellowship MR/W007320/2. WP supported in part by NSF grant DMS-2348743.

\appendix
\section{Pinned cluster expansions}
\label{secPinnedCluster}

Given a graph $H$, let $T(H)$ denote the set of labelled spanning trees of $H$. Moreover we let $T_k$ denote the set of all labelled trees on vertex set $\{1,\ldots, k\}$.  The following is the tree--graph bound of Penrose.  See~\cite[Section 4]{faris2010combinatorics} for a detailed discussion.  
\begin{lemma}[\cite{penrose1967convergence}] \label{lempenrosetree}
Given a graph $H$, 
\[
\left| \sum_{\substack{A \subseteq E(H)\\ \textup{spanning, connected}}}  (-1)^{|A|}\right| \leq |T(H)|\, .
\]
\end{lemma}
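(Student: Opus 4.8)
The plan is to prove the bound via Penrose's partition scheme for connected spanning subgraphs \cite{penrose1967convergence}. First observe that if $H$ is disconnected then there are neither connected spanning subgraphs nor spanning trees of $H$, so both sides of the inequality are $0$; we may therefore assume $H$ is connected, say on vertex set $[k]$, so that every spanning tree of $H$ has exactly $k-1$ edges. Fix the natural order on $[k]$. The idea is to partition the connected spanning subgraphs $A\subseteq E(H)$ into Boolean intervals indexed by spanning trees, and then exploit cancellation in the alternating sum over each interval.

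Concretely, I would define a canonical-tree map $\tau$ on connected spanning subgraphs: run breadth-first search in $A$ from vertex $1$, always exploring from the earliest available vertex and discovering new vertices along edges of $A$ in increasing order, and let $\tau(A)$ be the resulting BFS tree, so that $\tau(A)\subseteq A$ is a spanning tree of $H$ depending only on the fixed vertex order (and $\tau(T)=T$ for every spanning tree $T$). To each spanning tree $T$ of $H$ I would then attach the set $R(T)\subseteq E(H)\setminus E(T)$ of \emph{Penrose-admissible} non-tree edges --- those edges whose endpoints occupy equal or consecutive BFS levels of $T$ and which meet the order condition guaranteeing that adding them to $T$ leaves the BFS exploration unchanged. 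The crux is the identity
\[
\tau^{-1}(T)=\{\,A:\ T\subseteq A\subseteq T\cup R(T)\,\}
\qquad\text{for every spanning tree }T\text{ of }H .
\]
Granting this, the fibres $\tau^{-1}(T)$ partition the connected spanning subgraphs, so
\[
\sum_{\substack{A\subseteq E(H)\\ \textup{spanning, connected}}}(-1)^{|A|}
=\sum_{T\in T(H)}(-1)^{|E(T)|}\sum_{S\subseteq R(T)}(-1)^{|S|}
=\sum_{T\in T(H)}(-1)^{k-1}(1-1)^{|R(T)|},
\]
which equals $(-1)^{k-1}$ times the number of spanning trees $T$ with $R(T)=\emptyset$. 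Taking absolute values, $\bigl|\sum_A(-1)^{|A|}\bigr|=\#\{T\in T(H):R(T)=\emptyset\}\le |T(H)|$, proving the lemma.

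The main obstacle is establishing the interval identity, which amounts to two inclusions. For ``$\subseteq$'', one checks that if $\tau(A)=T$ then every edge of $A$ outside $T$ is a BFS non-tree edge of $T$, and the standard BFS facts --- cross edges join vertices at levels differing by at most one, and tree edges are chosen greedily in order --- force each such edge into $R(T)$. For ``$\supseteq$'', one must show that if $T\subseteq A\subseteq T\cup R(T)$ then running BFS on $A$ reproduces $T$; this is proved by induction over the BFS stages, checking that inserting only Penrose-admissible edges never lets BFS discover a vertex earlier, or along a different edge, than it does in $T$. Getting the two inclusions to match exactly is the delicate point, and it dictates the precise form of the exploration rule in $\tau$ and of the admissibility condition defining $R(T)$; I would carry out this bookkeeping following Penrose's original construction (equivalently, the treatments in the cluster-expansion literature), after which nothing further is needed.
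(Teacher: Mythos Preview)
Your proposal is correct and is precisely Penrose's original partition-scheme argument. The paper, however, does not prove this lemma at all: it simply states it with a citation to \cite{penrose1967convergence} and refers the reader to \cite[Section~4]{faris2010combinatorics} for a detailed discussion. So there is no ``paper's own proof'' to compare against; what you have written is a faithful sketch of the classical proof that the cited references contain.
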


With this we prove Lemma~\ref{lemClusterTail}.
\begin{proof}[Proof of Lemma~\ref{lemClusterTail}]
Fix $k\geq |S|$. We will show that
\begin{align}\label{fixedclusterphibd}
 \sum_ {\substack{\Gamma: \Gamma\supseteq S, \\ |\Gamma|= k}} |\phi(\Gamma)|  \leq e^k k^{|S|-2}(\Delta+1)^{k-|S|}\, . 
\end{align}
Given a cluster $\Gamma$ of size $k$, we identify the vertex set of $H_\Gamma$ with $\{1,\ldots, k\}$.
By Lemma~\ref{lempenrosetree}, we have
\begin{align}
\nonumber
\sum_ {\substack{\Gamma: \Gamma\supseteq S, \\ |\Gamma|= k}} |\phi(\Gamma)|
 &=
  \frac{1}{k!} \sum_ {\substack{\Gamma: \Gamma\supseteq S, \\ |\Gamma|= k}}
\left|  \sum_{\substack{A \subseteq E(H_\Gamma)\\ \textup{spanning, connected}}}  (-1)^{|A|}
  \right|\\\nonumber
   &\leq
  \frac{1}{k!} \sum_ {\substack{\Gamma: \Gamma\supseteq S, \\ |\Gamma|= k}}
\sum_{T\in T_k}\mathbf 1_{T\in T(H_\Gamma)} \\
 &\leq
  \frac{1}{k!} 
\sum_{T\in T_k}\sum_ {\substack{\Gamma: \Gamma\supseteq S, \\ |\Gamma|= k}}
\mathbf 1_{T\in T(H_\Gamma)} \, .\label{eqpenroseapp}
 \end{align}
 Fix $T\in T_k$.
 We will construct a cluster $\Gamma$ such that $|\Gamma|=k$, $S\subseteq \Gamma$ and $T\in T(H_\Gamma)$ iteratively as follows. First we select one of the $\binom{k}{|S|}|S|!$ ways to place the elements of $S$ in the tuple $\Gamma$. Now suppose we have filled coordinates $i_1, \ldots, i_j$ of $\Gamma$ with vertices $v_{i_1}, \ldots, v_{i_j}\in V(G)$ respectively (where $j\geq |S|$). There exists $r\in [k]\backslash \{i_1, \ldots, i_j\}$ such that $r$ is adjacent to one of $\{i_1, \ldots, i_j\}$ in the graph $T$. Without loss of generality assume it is $i_1$. We then must select $v_r\in V(G)$ such that either $v_r=v_{i_1}$ or $v_r$ is adjacent to $v_{i_1}$ in $G$ and place it in the $r$th coordinate of $\Gamma$. There are at most $\Delta+1$ choices for such a $v_r$ in $V(G)$. Continuing iteratively we see that
 \[
 \sum_ {\substack{\Gamma: \Gamma\supseteq S, \\ |\Gamma|= k}}
\mathbf 1_{T\in T(H_\Gamma)} \leq \binom{k}{|S|}|S|! (\Delta+1)^{k-|S|}\, .
 \]
 By Cayley's formula $|T_k|=k^{k-2}$ and so we conclude from~\eqref{eqpenroseapp}
 \[
 \sum_ {\substack{\Gamma: \Gamma\supseteq S, \\ |\Gamma|= k}} |\phi(\Gamma)|
 \leq 
  \frac{1}{k!} k^{k-2} \binom{k}{|S|}|S|! (\Delta+1)^{k-|S|}
  \leq e^k k^{|S|-2} (\Delta+1)^{k-|S|}\, ,
 \]
 where for the final inequality we used that $k!\geq(k/e)^k$ for all $k\geq 1$.
This establishes~\eqref{fixedclusterphibd}.
 Recalling that $\lam_{\textup{max}}:= \max_{v\in V(G)}|\boldsymbol{\lam}(v)|$, we conclude that
 \begin{align*}
\left|  \sum_ {\substack{\Gamma: \Gamma\supseteq S, \\ |\Gamma|\geq k}} |\Gamma|^t \phi(\Gamma) \prod_{v\in\Gamma} \boldsymbol{\lam}(v)\right| 
\leq 
(\Delta+1)^{-|S|}\sum_{j\geq k} j^{|S|-2+t} \left(e(\Delta+1)\lam_{\textup{max}}\right)^{j} = O_{k,t}\left( \Delta^{k-|S|}\lam_{\textup{max}}^k \right)\, ,
\end{align*}
 where for the final inequality we used that $\lam_{\textup{max}}<1/(4e\Delta)\leq1/(2e(\Delta+1))$. 
Finally if $|S|\in \{1,2\}$ then we may use the explicit upper bound
  \begin{align*}
\left|  \sum_ {\substack{\Gamma: \Gamma\supseteq S, \\ |\Gamma|\geq k}} \phi(\Gamma) \prod_{v\in\Gamma} \boldsymbol{\lam}(v)\right| 
&\leq 
2e^k(\Delta+1)^{k-|S|}\lam_{\textup{max}}^k
\leq 
(2e)^k\Delta^{k-|S|}\lam_{\textup{max}}^k
\, .  \qedhere
\end{align*}
\end{proof}
\begin{proof}[Proof of Corollary~\ref{corclustersimple2}]
The proof is the same as that of~\eqref{eqNonConstExpand} in Corollary~\ref{corclustersimple} with some additional calculations that we detail now. 
Recall that $\cC'_k$ denotes the set of non-constant clusters of $G$ of size $k$. Since $G$ is triangle-free we have
\[
\cC'_4=\{(v_1,v_2,v_3, v_4): G[\{v_1,v_2,v_3, v_4\}]\cong K_2, P_2, P_3, S_3 \text{ or }C_4\}\, .
\]
If $\Gamma=(v_1, v_2, v_3, v_4)\in \cC'_4$ such that $\{v_1,v_2,v_3, v_4\}=\{u,v\}$ for some edge $\{u,v\}$ of $G$, then either
\begin{enumerate}[label=(\roman*)]
\item $\Gamma$ has two coordinates equal to $u$ and two coordinates equal to $v$. \label{itemGam22}
\item $\Gamma$ has one coordinate equal to $u$ and three coordinates equal to $v$ or vice versa. \label{itemGam13}
\end{enumerate}
In either case $H_\Gamma\cong K_4$, a clique on $4$ vertices and so $\phi(\Gamma) = -1/4$. \footnote{Note that $\phi(\Gamma)=\frac{1}{|\Gamma|!}(-1)^{|\Gamma|+1}T_{H_\Gamma}(1,0)$ where $T_G$ is the Tutte polynomial of a graph $G$.}
There are $\binom{4}{2}|G|$ clusters of the type in case~\ref{itemGam22} and there are $2\cdot 4|G|$ clusters of the type in case~\ref{itemGam13} and so $14|G|$ clusters of type \ref{itemGam22} and \ref{itemGam13} in total. 

If $\Gamma=(v_1, v_2, v_3, v_4)\in \cC'_4$ such that $\{v_1,v_2,v_3, v_4\}=\{u,v,w\}$ for some  $\{u,v,w\}$ where $G[\{u,v,w\}]\cong P_2$ and $v$ has degree $2$ in $G[\{u,v,w\}]$ then either
\begin{enumerate}[label=(\roman*)]
\item $\Gamma$ has two coordinates equal to $v$. \label{itemGamv2}
\item $\Gamma$ has two coordinates equal to $u$ or two coordinates equal to $w$. \label{itemGamuw2}
\end{enumerate}
In case~\ref{itemGamv2}, $H_\Gamma$ is isomorphic to the unique graph on $4$ vertices and $5$ edges (a cycle of length $4$ with a chord) and so $\phi(\Gamma) = -1/6$. 
In case~\ref{itemGamuw2}, $H_\Gamma$ is isomorphic to a triangle with a pendant edge and so $\phi(\Gamma) = -1/12$. 
There are $2\cdot\binom{4}{2}P_2(G)$ clusters of the type in case~\ref{itemGamv2} and there are $2\cdot 2\cdot\binom{4}{2}P_2(G)$ clusters of the type in case~\ref{itemGamuw2}.

 If $\Gamma=(v_1, v_2, v_3, v_4)\in \cC'_4$ such that $G[\{v_1,v_2,v_3, v_4\}]\cong P_3$ then $H_\Gamma\cong P_3$, $\phi(\Gamma) = -1/4!$ and there are $4!\cdot P^{\text{ind}}_3(G)$ such clusters, where $P^{\text{ind}}_3(G)$ denotes the number of induced copies of $P_3$ in $G$.  
 
  If $\Gamma=(v_1, v_2, v_3, v_4)\in \cC'_4$ such that $G[\{v_1,v_2,v_3, v_4\}]\cong S_3$ then $H_\Gamma\cong P_3$, $\phi(\Gamma) = -1/4!$ and there are $4!\cdot S_3(G)$ such clusters. 
  
  Finally if $\Gamma=(v_1, v_2, v_3, v_4)\in \cC'_4$ such that $G[\{v_1,v_2,v_3, v_4\}]\cong C_4$ then $H_\Gamma\cong C_4$, $\phi(\Gamma) = -1/8$ and there are $4!\cdot C_4(G)$ such clusters. 
  
  Putting everything together, we conclude that 
  \[
  \sum_{\Gamma\in \cC'_4}\phi(\Gamma)\lam^{|\Gamma|}=-\lam^4\left (P^{\text{ind}}_3(G)+S_3(G)+3C_4(G)+4P_2(G)+7|G|/2\right)\, .
  \]
  The corollary follows by noting that
  \[
  P^{\text{ind}}_3(G) = P_3(G) - 4C_4(G)
  \]
since $G$ is triangle-free. 
\end{proof}

\begin{proof}[Proof of Lemma~\ref{lemSubgCart}]
Given graphs $F_1, F_2$ we say that $H\subseteq F_1 \boxempty F_2$ is a \emph{transversal} subgraph if $H$ is connected and for all $u\in V(F_1)$, there exists $w\in V(F_2)$ such that $(u,w)\in V(H)$ and for all $v\in V(F_2)$, there exists $w\in V(F_1)$ such that $(w,v)\in V(H)$. In other words, viewing $V(F_1 \boxempty F_2)= V(F_1)\times V(F_2)$ as a grid, the vertex set of $H$ hits every row and every column of the grid. 

Given a graph $H$, we let $H^\ast(F_1 \boxempty F_2)$ denote the number of transversal copies of $H$ in $F_1 \boxempty F_2$ and  let 
\[
\mathcal X(H)= \{(F_1, F_2): F_1 \boxempty F_2 \text{ contains a transversal copy of }H \text{ and } F_1\cup F_2 \text{ is triangle-free} \}\, .
\]
Since every copy of $H$ in $S\boxempty T$ may be uniquely identified with a transversal subgraph of $F_1\boxempty F_2$ for some \emph{induced} copy of $F_1$ in $S$ and $F_2$ in $T$,
we have the relation
\[
H(S\boxempty T) = \sum_{(F_1, F_2)\in \mathcal X(H)} F_1^{\text{ind}}(S) F_2^{\text{ind}}(T) H^\ast(F_1 \boxempty F_2)\, .
\]
The lemma now follows by noting the following (we let $\boldsymbol{\cdot}$ denote the graph consisting of a single vertex) :
\begin{align*}
\mathcal X(P_3) &= \{(P_3, \boldsymbol{\cdot}), (\boldsymbol{\cdot}, P_3), (C_4, \boldsymbol{\cdot}), (\boldsymbol{\cdot}, C_4), (P_2, K_2), (K_2, P_2), (K_2, K_2) \}\, , \\
\mathcal X(S_3)& = \{(S_3, \boldsymbol{\cdot}), (\boldsymbol{\cdot}, S_3), (P_2, K_2), (K_2, P_2) \}\, , \\
\mathcal X(C_4)& = \{(C_4, \boldsymbol{\cdot}), (\boldsymbol{\cdot}, C_4), (K_2, K_2) \}\, .
\end{align*} 
Moreover, $H^\ast(H \boxempty \boldsymbol{\cdot})=1$ for all $H$ and 
\[
 P_3^*(C_4 \boxempty \boldsymbol{\cdot})= 4, \quad P_3^*(P_2 \boxempty K_2) = 6, \quad P_3^*(K_2 \boxempty K_2) = 4, \quad S_3^*(P_2 \boxempty K_2) = 2,  \quad C_4^*(K_2 \boxempty K_2) = 1\,. 
\]
Finally note that since $S$ is triangle-free, $F^{\text{ind}}(S)=F(S)$ for $F\in\{C_4, S_3, P_2, K_2, \boldsymbol{\cdot}\}$ and
\[
P_3(S)=P_3^{\text{ind}}(S)+4 C_4^{\text{ind}}(S)\, ,
\]
and similarly for $S$ replaced with $T$.
\end{proof}

\section{Quasirandomness for the hard-core model}\label{secHCQuasi}

\begin{proof}[Proof of Lemma~\ref{lemhcquasirandom}]
Let $t\in \R$ and define  $\boldsymbol{\lam}_t: V(G)\to\R$ by setting $\boldsymbol{\lam}_t(v)=\lam e^t$ for $v\in U$ and $\boldsymbol{\lam}_t(v)=\lam$ for $v\in U^c$. 
Let $\mathbf I$ be a random sample from the hard-core model on $G$ at activity $\lam$ and note that we can write the moment-generating function of $|\mathbf I \cap U|$
\[
\E\left[ e^{t| \mathbf I \cap U|} \right] = \sum_{I \in \cI(G)} \frac{\lam^{|I|}}{Z_G(\lam)}e^{t| I \cap U|}
= \frac{Z_G(\boldsymbol{\lam}_t)}{Z_G(\lam)}   \, .
\]
Suppose now that $t\leq 1$.
Since $\lam\leq1/(16e^2\Delta)$, we have that $\boldsymbol{\lam}_t(v)\leq 1/(16e\Delta)$ for all $v\in V(G)$ and so we may analyze the ratio $Z_G(\boldsymbol{\lam}_t)/Z_G(\lam)$ via the cluster expansion (and in particular apply Lemma~\ref{lemClusterTail}). 
Indeed we have
\begin{align*}
\log \left(\frac{Z_G(\boldsymbol{\lam}_t)}{Z_G(\lam)}\right) 
&=  \sum_{\Gamma\in \cC(G)}\phi(\Gamma)\prod_{v\in \Gamma} \boldsymbol{\lam}_t(v) -  \sum_{\Gamma\in \cC(G)}\phi(\Gamma)\lam^{|\Gamma|}\\
&=
 \sum_{ \Gamma\sim U}\phi(\Gamma)\prod_{v\in \Gamma} \boldsymbol{\lam}_t(v) -  \sum_{\Gamma\sim U}\phi(\Gamma)\lam^{|\Gamma|}\\
 &=
 (e^t-1) \lam |U| + \sum_{ \Gamma\sim U, |\Gamma|\geq 2}\phi(\Gamma)\prod_{v\in \Gamma} \boldsymbol{\lam}_t(v) -  \sum_{\Gamma\sim U, |\Gamma|\geq 2}\phi(\Gamma)\lam^{|\Gamma|} \, ,
\end{align*}
where we write $\Gamma\sim U$ to mean that $\Gamma$ contains at least one element of $U$. 
By Lemma~\ref{lemClusterTail} (applied with $k=2$ and $S=\{u\}$ for each $u\in U$ and using the explicit bound of~\eqref{eqpinnedexplicittail}) we have
\begin{align*}
\left| \sum_{ \Gamma\sim U, |\Gamma|\geq 2}\phi(\Gamma)\prod_{v\in \Gamma} \boldsymbol{\lam}_t(v)\right| + \left|  \sum_{\Gamma\sim U, |\Gamma|\geq 2}\phi(\Gamma)\lam^{|\Gamma|}\right| 
&\leq 
(2e)^2\Delta\lam^2(\max\{e^{2t},1\}+1)|U| \\
&\leq
\frac{1}{4}(\max\{e^{2t},1\}+1)|U|\lam\, ,
\end{align*}
where for the last inequality we used that $\lam\leq1/(16e^2\Delta)$.
It follows that
\begin{align}\label{eqmomentgenbd}
\E\left[ e^{t| \mathbf I \cap U|} \right]\leq \exp\left\{\left(e^t-1 +  \frac{1}{4}(\max\{e^{2t},1\}+1)\right)|U|\lam \right\}\, .
\end{align}
By~\eqref{eqmomentgenbd} with $t=1$ and Markov's inequality, we then have
\[
\P \left (|\mathbf I \cap U|\geq 5 |U| \lam \right)= \P \left (e^{|\mathbf I \cap U|}\geq  e^{5|U| \lam} \right)
\leq
\E\left( e^{|\mathbf I \cap U|} \right) e^{-5|U| \lam} \leq e^{-|U| \lam}\, .
\]
Similarly, by~\eqref{eqmomentgenbd} with $t=-\log(10)$ we have
\begin{multline}
\P \left (|\mathbf I \cap U|\leq |U| \lam/10 \right) \\= \P \left (e^{-\log(10)|\mathbf I \cap U|}\geq  e^{-\log(10)|U| \lam/10} \right)
\leq
\E\left( e^{-\log(10)|\mathbf I \cap U|} \right) e^{\log(10)|U| \lam/10} \leq e^{-|U| \lam/8}\, . \qedhere
\end{multline}
\end{proof}

\section{Local Central Limit Theorem for the hard-core model}
\label{secLCLTproof}

The proof of  Proposition~\ref{thmhcLCLT}  is similar to those for CLT's and local CLT's in~\cite{dobrushin1977central,jenssen2020independent,jain2021approximate,jenssen2022independent}, but here we allow for a growing sequence of maximum degree bounds and use Lemma~\ref{lemClusterTail} in a crucial way.
\begin{proof}[Proof of Proposition~\ref{thmhcLCLT}]
The condition $\lam_n\Delta_n \to 0$ ensures that the cluster expansion converges and allows us to apply Lemma~\ref{lemClusterTail}.  We will use the fact that the cumulants of $X_n$ can be written as cluster expansions.  This allows us to estimate the mean and variance of $X_n$ and to prove a CLT (as in~\cite{jenssen2020independent}).  Let $\kappa_k(X_n)$ denote the $k$th cumulant of $X_n$.  Then under the stated conditions we have an expression for $\kappa_k(X_n)$ as a convergent cluster expansion:
\[ \kappa_k(X_n) =   \sum_{\Gamma}|\Gamma| ^k  \phi (\Gamma)  \lam^{|\Gamma|}  \,.   \]

In particular, using Lemma~\ref{lemClusterTail}, we have for each fixed $k \ge 1$, 
\begin{equation}
\label{eqKkasCLuster}
\kappa_k(X_n) = \lam n + O( n \Delta \lam^2) = \lam n (1+ O(\lam \Delta)) = \lam n (1+o(1)) \,. 
\end{equation}
Applying~\eqref{eqKkasCLuster} with $k=1,2$ we obtain $\E X_n  \sim \lam n$ and $\var(X_n) \sim \lam n$.  To prove a CLT for $X_n$, let $\overline X_n = (X_n - \E X_n)/\sqrt{\var(X_n)}$.  We must show that for fixed $k \ge 3$, $\kappa_k(\overline X_n) \to 0$.  Since cumulants for $k \ge 2$ satisfy $\kappa_k(aX+b) = a^k \kappa_k(X)$, it is enough to show $\kappa_k(X_n) = o( (\lam n)^{k/2})$. This follows since $\kappa_k(X_n) \sim \lam n $ and $\lam n \to \infty$ by assumption.

Next let $\phi_{X_n}(t) = \E e^{it X_n} $ denote the characteristic function of $X_n$.   Then following the proof of~\cite[Lemma 22]{jenssen2022independent} verbatim, we obtain
\[ |\phi_{X_n}(t)|  \le \exp \left[  - (1+o(1)) \frac{t^2}{5}  \lam n    \right ] \,. \]
With this estimate and the CLT for $X_n$ from above, the local CLT follows exactly as in the proof of~\cite[Theorem 20]{jenssen2022independent}.
\end{proof}

\section{Subgraph probabilities in defect distributions}\label{secAppSubCalc}
In this section we prove Claims~\ref{claimprobrefine} and~\ref{claimprobrefine2} from Section~\ref{secSuperCrit}.
\begin{proof}[Proof of Claim~\ref{claimprobrefine}]
Recall that 
\[
j(S,T)= \lam^3( bP_2(S) + aP_2(T) +4|S|\mu_B+4|T|\mu_A  +4\theta st )\, . 
\]
One easily verifies that $j$ is $(n\lam^3/(6\alpha))$-local and so 
by Lemma~\ref{lemfixedconfiglocal}
\begin{align}\label{eqlemfixedconfiglocal4}
\P(F\subseteq \bG)=\left(1+O\left(n^2\Delta^2\lam^6\right)\right)e^{\E[j(\bH\cup F)-j(\bH)]} (q_A')^{|F_A|}(q_B')^{|F_B|}\, ,
\end{align} 
where $\bH=\bG\backslash F$.
We now estimate the expectation in the exponent. 
\begin{align*}
j(\bH\cup F)-j(\bH) = &\lam^3\left[bP_2(F_A, \mathbf H_A) + aP_2(F_B, \mathbf H_B) +4(\mu_A|F_B| + \mu_B|F_A|)\right]\\
&+
4\lam^3 \theta \left[ |F_A|(|\mathbf H_B|-\mu_B) +  |F_B|(|\mathbf H_A|-\mu_A) +|F_A||F_B|\right]\, ,
\end{align*}
where we recall that $P_2(F_A, \mathbf H_A)$ denotes the number of copies of $P_2$ in $F_A\cup \mathbf H_A$ with at least one edge in $F_A$.

By Lemma~\ref{lemfixedconfiglocal},
\begin{align*}
\E(|\bH_A|)
&= (1+ O(n\Delta \lam^3))q'_A\left(\binom{a}{2}- |F_A| \right)
= (1+ O(n\Delta \lam^3))q'_A\binom{a}{2}\, ,
\end{align*}
since $|F|=O(1)$.
Recalling that 
\[
\mu_A= \binom{a}{2}q'_Ae^{2\lam^3b(aq_A+bq_B)} =  (1+ O(n\Delta \lam^3))q'_A\binom{a}{2}
\]
we conclude that
\[
\E(|\bH_A|) - \mu_A = O(n\Delta \lam^3\cdot qn^2)\, .
\]
Suppose now that $\{u,v\}$ is an edge of $F_A$. Each edge of $\bH$ which is incident to either $u$ or $v$ contributes one $P_2$ to the count $P_2(F_A, \bH_A)$. Applying Lemma~\ref{lemfixedconfiglocal} and summing these contributions over the edges of $F_A$ yields
\begin{align*}
\E \left(P_2(F_A,\bH_A)\right)
&= 2|F_A|(1+ O(n\Delta \lam^3))q_A(a-O(1)) + P_2(F_A)\\
&=2|F_A|q_Aa + P_2(F_A) + O(n\Delta \lam^3\cdot nq)\, .
\end{align*}
It follows, noting $\mu_A=(1+ O(n\Delta \lam^3))q_Aa^2/2$, that
\begin{align*}
\E(j(\bH\cup F)-j(\bH))
=& 2\lam^3(b|F_A|+a|F_B|)(aq_A+bq_B)+ b\lam^3P_2(F_A)+ a\lam^3P_2(F_B)\\
&+ O(n^3\Delta \lam^6q+n\Delta^2\lam^4)\, .
\end{align*}
The result follows from~\eqref{eqlemfixedconfiglocal4}, noting that $n^3\Delta \lam^6q+n\Delta^2\lam^4= O(n^2\Delta^2\lam^6)$.
\end{proof}

\begin{proof}[Proof of Claim~\ref{claimprobrefine2}]
Let $\cH_F$ denote the set of all graphs $H\subseteq \binom{A}{2}$ that are edge-disjoint from $F$ and $\Delta(H\cup F)\leq \Delta$. Let $\bH=\bG\backslash F$.
For $H\subseteq \binom{A}{2}$, we have
\begin{align}\label{eqPFGH3}\\
\P(F\subseteq \bG \mid \bH=H)
= \frac{\nu_{\bm q''_A, \cD_\emptyset}^\theta(H\cup F) }{\sum_{F'\subseteq F}\nu_{\bm q''_A, \cD_\emptyset}^\theta(H\cup F') }
=\frac{ \left(\frac{q''_A}{1-q''_A}\right)^{|F|} e^{\theta\psi P_2(F,H)}}
{\sum_{J\subseteq F} \left(\frac{q''_A}{1-q''_A}\right)^{|J|}e^{\theta\psi P_2(J,H)}}\cdot \mathbf 1_{H\in \cH_F}\, .
\end{align}

We note that
\[
e^{\theta\psi P_2(F,H)}= 1+ \theta\psi P_2(F,H) + \frac{\theta^2\psi^2}{2}  P_2(F,H)^2 + O(\psi^3\Delta^3)\, .
\] 
Moreover the denominator in~\eqref{eqPFGH3} is equal to
\begin{align}
\sum_{J\subseteq F} \left(\frac{q''_A}{1-q''_A}\right)^{|J|}(1+ O(\psi \Delta\mathbf 1_{J\neq \emptyset})) 
=
(1-q_A'')^{-|F|} + O(\psi \Delta q)
=(1+O(\psi \Delta q) ) (1-q_A'')^{-|F|} 
\end{align}
since $\sum_{J\subseteq F, J\neq\emptyset} (q''_A/(1-q''_A))^{|J|}=O(q)$ and $P_2(J,H)=O(\Delta)$ for $J\subseteq F$.

It follows that
\[
\P(F\subseteq \bG \mid \bH=H)=(1+O(\psi^3\Delta^3 + \psi\Delta q)) q_A''^{|F|} 
\left(1+ \theta\psi P_2(F,H) + \frac{\theta^2\psi^2}{2}  P_2(F,H)^2)\right)\cdot \mathbf 1_{H\in \cH_F}\, ,
\]
and so, since $\psi\Delta q=O(\psi^3\Delta^3)$, 
\begin{align}
\P(F\subseteq \bG)
&= 
(1+O(\psi^3\Delta^3)) 
q_A''^{|F|} \E\left[
\left(1+ \theta\psi P_2(\bH,F) + \frac{\theta^2\psi^2}{2}  P_2(\bH,F)^2\right) \mathbf1_{\bH\in\cH}\right]\, ,\nonumber\\
&=(1+O(\psi^3\Delta^3)) 
q_A''^{|F|}\left( \E\left[
1+ \theta\psi P_2(\bH,F) + \frac{\theta^2\psi^2}{2}  P_2(\bH,F)^2)\right] + O(\P(\bH\notin\cH_F))\right)\, ,
\label{eqPFinGPrecise}
\end{align}
since $\theta\psi P_2(\bH, F)=O(1)$.

We now turn to bounding $\P(\bH\notin\cH_F)$.
With a view to apply Lemma~\ref{lemDegBdLocal} ,
first note that $\nu_{\bm q''_A, \cD_\emptyset}^\theta$ may be identified with the measure $\nu^f_{\bm r, \cD_{\emptyset}}$ where $\bm r=(q_A'', 0)$, and we recall that
\[
 \cD_{\emptyset}=\left\{G\subseteq \binom{A}{2}\cup \binom{B}{2}: \Delta(G)\leq \Delta, |G_A|, |G_B|\leq K \right\}\, ,
\]
and $f:\cD_{\emptyset}\to \R$ is given by $f(S)= \theta \psi P_2(S)$.
Recall
 from~\eqref{eqPsiDef} that $\psi\leq n\lam^3$ and so $f$ is $(2n\lam^3)$-local.
 Moreover, $r_A=q_A''\leq 2q_A$. Now note that if $\Delta(\bH \cup F)>\Delta$ then $\Delta(\bH)>\Delta-O(1)>\Delta/2$ which, by Lemma~\ref{lemDegBdLocal}, occurs with probability at most $n^2 e^{-\Delta/2}$. 

We deduce that $\P(\bH\notin\cH_F)=O(n^2e^{-\Delta/2})= O(\psi^3\Delta^3)$ and so returning to~\eqref{eqPFinGPrecise} we have
\begin{align*}
\P(F\subseteq \bG)&=
(1+O(\psi^3\Delta^3) q_A''^{|F|} 
\left(1+ \theta\psi \E( P_2(\bH,F) )+ \frac{\theta^2\psi^2}{2}  \E(P_2(\bH,F)^2)\right)\, .
\end{align*}
Finally we note that $P_2(\bH,F)=P_2(\bG,F)$.
\end{proof}

\section{Quantifying the effect of unbalanced partitions}\label{secAppCalc}

In this section we prove Lemma~\ref{lemZABapprox1super}.
Fixing $(A,B)\in \Pi_{\textup{mod},\lam}$,
our strategy will be to consider the expression in Lemma~\ref{lemmaZABabapproxsuper} to the same expression for a perfectly balanced partition 
(i.e., the expression with all instances of $a,b$ replaced by $n/2$). 
Suppose then than $a=n/2-k, b=n/2+k$. We have that $k=o(n^{1/2+1/14})$ since $\lam\geq \frac{13}{14}\sqrt{\frac{\log n}{n}}$ and $(A,B)$ is $\lam$-moderately balanced. 
We consider the parameters $q_A, q_A', q_A'', \mu_A$ and compare them to the expressions obtained by replacing $a,b$ by $n/2$ that is $q_0, q_1, q_2, \mu$.
It will be useful to first record some estimates. Recall that
\[
q\sim\max\{q_A, q_B\}=o(n^{-13/14})\, .
\]
We note also that 
\begin{align}\label{eqqAfirstapprox}
q_A=\lam e^{-\lam^2b}(1+O(q))\, ,
\end{align}
and so
\begin{align}\label{eqqAq0Est}
\frac{q_A}{q_0}= e^{-\lam^2k}(1+O(q)).
\end{align}
Similarly, letting 
\[f(\lam):= -\lam^2+2\lam^3-7\lam^4/2\, ,\] we have
\begin{align}\label{eqqAPrimeq1Est}
\frac{q'_A}{q_1}= e^{f(\lam)k}(1+O(q)).
\end{align}
Next we compare $\mu_A$ to $\mu$. First note that for $\ell\in\{2,3\}$ we have
\begin{align}\label{eqBinomComp}
\frac{\binom{a}{\ell}}{\binom{n/2}{\ell}} = (1+O(1/n))\left(1-\frac{2k}{n} \right)^\ell\, ,
\end{align}
and so by~\eqref{eqqAPrimeq1Est},
\begin{align}\label{eqmuAmuEst}
\frac{\mu_A}{\mu}=\frac{\binom{a}{2}q_A'e^{2\lam^3a(aq_A+bq_B)}}{\binom{n/2}{2}q_1e^{\lam^3n^2q_0}}=(1+O(q))\left(1-\frac{2k}{n}\right)^2e^{f(\lam)k + 2\lam^3a(aq_A+bq_B) - \lam^3n^2q_0}\, .
\end{align}
With the exponent of the above expression in mind we note that  by~\eqref{eqqAq0Est}
\begin{align}
2a^2q_A - n^2q_0/2 =\frac{n^2q_0}{2}\left[ \left(1-\frac{2k}{n} \right)^2e^{-\lam^2k}(1+O(q)) -1 \right] = \frac{n^2q_0}{2}\left[-\frac{4k}{n}-\lam^2k + O(\lam^4k^2 + q) \right] 
\end{align}
where for the second inequality we used that $k/n=O(\lam^2k)$ and $\lam^2k=o(1)$. 
Similarly
\begin{align}
2abq_B - n^2q_0/2=\frac{n^2q_0}{2}\left[\lam^2k + O(\lam^4k^2 + q) \right] \, .
\end{align}
Returning to~\eqref{eqmuAmuEst} we conclude that
\begin{align}\label{eqmuAmuEst2}
\frac{\mu_A}{\mu}
&=(1+O(q + \lam^4k^2))\left(1-\frac{2k}{n}\right)^2e^{(f(\lam)-2nq_0\lam^3)k +O(n^2q\lam^7k^2+n^2q^2\lam^3)}\\
&=(1+O(q + \lam^4k^2))\left(1-\frac{4k}{n}\right)e^{(f(\lam)-2nq_0\lam^3)k}\, .
\end{align}

Next we compare $q''_A$ and $q_2$. For this comparison it will be important not to incur a $(1+O(q))$ multiplicative error as we did above since such an error is non-negligible when considering the expression $(1-q_A'')^{-\binom{a}{2}}$ (see Claim~\ref{claimqAdprime} below). Instead we compare $r_A:=q''_A/(1-q''_A)$ and $r=q_2/(1-q_2)$.
\begin{align}\label{eqrArEst}
\frac{r_A}{r} 
= \exp\left\{f(\lam)k+4\lam^3(\mu_B-\mu)\right\}\, .
\end{align}
By the analogue of \eqref{eqmuAmuEst} for $\mu_B$ we have
\begin{align}\label{eqmuBminusmu}
\mu_B-\mu = \mu\left( \frac{4k}{n}- (f(\lam)-2nq_0\lam^3)k +O(q+\lam^4k^2)\right)\, .
\end{align}

Returning to~\eqref{eqrArEst} we conclude,  that
\[
\frac{r_A}{r} 
= (1+O(\lam^3\mu(q+\lam^4k^2)))\exp\left\{\left[f(\lam)+4\lam^3 \mu\left( \frac{4}{n}- f(\lam)+2nq_0\lam^3\right)\right]k\right\}\, .
\]
The precise form of what appears in the exponent will not be important and so, noting that $\mu=O(n^2q)$, we write
\begin{align}\label{eqrArEstFinal}
\frac{r_A}{r} 
= (1+O(n^2q^2\lam^3+n^2q\lam^7k^2))\exp\left\{g(n,\lam)k\right\}\, ,
\end{align}
where we simply record that $g(n,\lam)=O(\lam^2)$.
In particular, we have
\begin{align}\label{eqqAdPrimeq2Est2}
\frac{q''_A}{q_2} 
=  (1+O(q+n^2q\lam^7k^2))\exp\left\{g(n,\lam)k\right\}\, ,
\end{align}
but we reiterate that it will be important to have the added accuracy of the estimate~\eqref{eqrArEstFinal}.

With these estimates in hand, we return to the expression in Lemma~\ref{lemmaZABabapproxsuper}.
\begin{claim}\label{claimqAdprime}
\[
(1-q_A'')^{-\binom{a}{2}} (1-q_B'')^{-\binom{b}{2}}\sim (1-q_2)^{-\binom{n/2}{2}}\exp\left\{ O(n^2q\lam^4k^2) \right\}\, .
\]
\end{claim}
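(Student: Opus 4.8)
\textbf{Proof plan for Claim~\ref{claimqAdprime}.}
The plan is to reduce the product $(1-q_A'')^{-\binom{a}{2}} (1-q_B'')^{-\binom{b}{2}}$ to the balanced expression $(1-q_2)^{-\binom{n/2}{2}}$ by tracking the imbalance parameter $k$ through the logarithm. Writing $L := -\binom{a}{2}\log(1-q_A'') - \binom{b}{2}\log(1-q_B'')$ and $L_0 := -2\binom{n/2}{2}\log(1-q_2)$, it suffices to show $L - L_0 = o(1) + O(n^2 q \lam^4 k^2)$. Since $q_A'', q_B'', q_2$ are all $o(n^{-13/14})$, we have $\log(1-x) = -x + O(x^2)$ with the quadratic terms contributing $O(n^2 q^2) = o(1)$ in each of $L, L_0$; so it is enough to control $\binom{a}{2} q_A'' + \binom{b}{2} q_B'' - 2\binom{n/2}{2} q_2$ up to an additive $O(n^2 q \lam^4 k^2)$ error.

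First I would replace $q_A''$ by $r_A = q_A''/(1-q_A'')$ (and similarly $q_B'', q_2$), which changes each term only by a $1 + O(q)$ factor, i.e.\ by $O(n^2 q^2) = o(1)$ inside the relevant sums; this is the reason the more precise estimate~\eqref{eqrArEstFinal} rather than~\eqref{eqqAdPrimeq2Est2} was recorded. Then I would substitute $a = n/2 - k$, $b = n/2 + k$, use $\binom{a}{2}/\binom{n/2}{2} = (1 + O(1/n))(1 - 2k/n)$ from~\eqref{eqBinomComp}, and use $r_A/r = (1 + O(n^2 q^2 \lam^3 + n^2 q \lam^7 k^2)) e^{g(n,\lam) k}$ together with the analogous estimate for $r_B/r$ (which has $-k$ in place of $k$, since the roles of $a,b$ are swapped). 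The key cancellation is that the two linear-in-$k$ contributions — one from the binomial ratios $(1 \mp 2k/n)$ and one from the exponentials $e^{\pm g(n,\lam) k}$ — are \emph{odd} in $k$ and therefore cancel between the $A$-term and the $B$-term, leaving only even-in-$k$ remainders of order $k^2$.

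Concretely, expanding $(1 - 2k/n) e^{g k} = 1 - 2k/n + gk + O(k^2(g^2 + 1/n^2 + g/n))$ and adding the $b$-analogue $(1 + 2k/n) e^{-g k} = 1 + 2k/n - gk + O(k^2(\cdots))$, the sum is $2 + O(k^2(g^2 + n^{-2}))$; since $g = g(n,\lam) = O(\lam^2)$ and $k = o(n^{1/2+1/14})$ one checks $k^2 g^2 \cdot n^2 q r = O(n^2 q \lam^4 k^2)$ and $k^2 n^{-2} \cdot n^2 q r = O(q k^2) = o(1)$, and the multiplicative error factors $O(n^2 q^2 \lam^3 + n^2 q \lam^7 k^2)$ likewise contribute $O(n^2 q \lam^4 k^2)$ (or smaller) after multiplying by $\binom{n/2}{2} r = O(n^2 q)$. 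Summing these estimates gives $\binom{a}{2} q_A'' + \binom{b}{2} q_B'' = 2\binom{n/2}{2} q_2 \cdot (1 + o(1)) + O(n^2 q \lam^4 k^2)$ — more precisely the $o(1)$ relative error multiplies $\binom{n/2}{2}q_2 = O(n^2 q) = \tilde\Theta(n^{2 - 13/14})$, which is \emph{not} $o(1)$ absolutely, so care is needed: one must verify the relative error is in fact $o(1/(n^2 q))$. This is where the exact form of the odd-in-$k$ cancellation, and the sub-leading structure of $g(n,\lam)$ recorded as $O(\lam^2)$, must be used rather than just order-of-magnitude bounds — tracking that, say, $k = \tilde O(n^{1/2})$ when $(A,B)$ is only moderately balanced forces $\lam^4 k^2 = \tilde O(\lam^4 n) = \tilde O(n^{-1})$ in the strong regime, making $n^2 q \lam^4 k^2$ genuinely small relative to what is needed.

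The main obstacle I anticipate is precisely this last bookkeeping point: one cannot afford a blanket $1 + o(1)$ relative error because it multiplies a polynomially large quantity, so the proof must separate the contributions into (i) an \emph{exact} even-vs-odd parity cancellation of the linear term and (ii) genuinely negligible remainders, and show each class of remainder is bounded by $o(1)$ absolutely or by the allowed $O(n^2 q \lam^4 k^2)$ term. Once the parity structure is made explicit the computation is routine; organizing it so the cancellation is manifest (rather than relying on cruder estimates) is the delicate part, entirely parallel to the treatment of $q_A' a^2 + q_B' b^2$ in~\eqref{eqa2b2cancel} and in the proof of Lemma~\ref{lemZABapprox1}.
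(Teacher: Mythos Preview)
Your overall strategy matches the paper's exactly: pass to logarithms, Taylor expand, and exploit the odd-in-$k$ cancellation between the $A$- and $B$-contributions using the precise estimate~\eqref{eqrArEstFinal}. The organizational worries you flag in the last two paragraphs are also the right ones.

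There is, however, one genuine error. You write that the quadratic term in $\log(1-x)=-x+O(x^2)$ contributes $O(n^2q^2)=o(1)$ to each of $L,L_0$. In the supercritical regime this is false: here $q=o(n^{-13/14})$ only, so $n^2q^2=o(n^{1/7})$, which is \emph{not} $o(1)$. (You may be remembering the subcritical bound $q=O(n^{-1-\eps})$ from Section~\ref{secSubCritRegime}, where $n^2q^2=o(1)$ does hold.) Consequently you cannot discard the quadratic term outright and reduce to controlling $\binom{a}{2}q_A''+\binom{b}{2}q_B''-2\binom{n/2}{2}q_2$ alone.

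The fix is exactly what you already do for the linear term: keep the quadratic piece and apply the same parity argument. The paper writes $(1-q_A'')^{-\binom{a}{2}}(1-q_B'')^{-\binom{b}{2}}=(1+r_A)^{\binom{a}{2}}(1+r_B)^{\binom{b}{2}}\sim\exp\{(r_A-r_A^2/2)\binom{a}{2}+(r_B-r_B^2/2)\binom{b}{2}\}$ and then shows \emph{both}
\[
\frac{r_A\binom{a}{2}+r_B\binom{b}{2}}{r\binom{n/2}{2}}=2+O(n^2q^2\lam^3+\lam^4k^2)
\quad\text{and}\quad
\frac{r_A^2\binom{a}{2}+r_B^2\binom{b}{2}}{r^2\binom{n/2}{2}}=2+O(n^2q^2\lam^3+\lam^4k^2),
\]
via the same odd-in-$k$ cancellation (the $r^2$ case is identical, just with $2g(n,\lam)k$ in the exponent). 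After multiplying through, the linear piece gives the $O(n^2q\lam^4k^2)$ error together with $O(n^4q^3\lam^3)=o(1)$, and the quadratic piece is smaller by a factor of $q$. So your plan goes through once you retain the $r^2/2$ term rather than discarding it.
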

\begin{proof}
First note that 
\begin{align}\label{eqqAdPrimeTaylor}
(1-q_A'')^{-\binom{a}{2}} (1-q_B'')^{-\binom{b}{2}} &= (1+r_A)^{\binom{a}{2}} (1+r_B)^{\binom{b}{2}}\\
&\sim \exp\left\{ (r_A-r_A^2/2)\binom{a}{2}+(r_B-r_B^2/2)\binom{b}{2}\right\}\, .
\end{align}
By~\eqref{eqrArEstFinal}, letting $g=g(n,\lam)$, 
\begin{align}
\frac{r_A\binom{a}{2}}{r\binom{n/2}{2}}
&= (1+O(n^2q^2\lam^3+n^2q\lam^7k^2))e^{gk}\left(1-\frac{2k}{n} \right)\left(1-\frac{2k}{n-2} \right)\\
&=(1+O(n^2q^2\lam^3+k^2/n^2))e^{gk}\left(1-\frac{2k}{n}-\frac{2k}{n-2} \right)\, .
\end{align}
We conclude from the analogous expression for $B$ (obtained by replacing $k$ by $-k$) that 
\[
\frac{r_A\binom{a}{2}+r_B\binom{b}{2}}{r\binom{n/2}{2}} = 2 + O(n^2q^2\lam^3+\lam^4k^2)\, ,
\]
since the terms that are linear in $k$ cancel.
Similarly 
\[
\frac{r^2_A\binom{a}{2}+r^2_B\binom{b}{2}}{r^2\binom{n/2}{2}} = 2 + O(n^2q^2\lam^3+\lam^4k^2)\, .
\]
Returning to~\eqref{eqqAdPrimeTaylor} we then have
\[
(1-q_A'')^{-\binom{a}{2}} (1-q_B'')^{-\binom{b}{2}}\sim  \exp\left\{ 2(r-r^2/2)\binom{n/2}{2} + O(n^4q^3\lam^3+n^2q\lam^4k^2)\right\}\, .
\]
The claim follows by noting that $n^4q^3\lam^3=o(1)$.
\end{proof}

\begin{claim}
\[
\exp\left\{ \frac{1}{2} \lam^3 a^3bq_A''^2 + \frac{1}{2} \lam^3 b^3aq_B''^2\right\}\sim \exp\left\{  \lam^3 \left(\frac{n}{2}\right)^4q_1^2(1+\lam^3n^2q_0) +O(n^2q\lam^4k^2) \right\}\, .
\]
\end{claim}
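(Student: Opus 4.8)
The strategy mirrors the one used in the proofs of Claim~\ref{claimqAdprime} and~\eqref{eqmuAmuEst2}: compare the factor $\exp\{\frac12\lam^3a^3bq_A''^2 + \frac12\lam^3b^3aq_B''^2\}$ arising from the moderately balanced partition $(A,B)$ to the corresponding factor $\exp\{\lam^3(n/2)^4 q_1^2(1+\lam^3n^2q_0)\}$ for a perfectly balanced partition, and show that the discrepancy in the exponent is $O(n^2q\lam^4 k^2) = o(1)$ for $\lam$-moderately balanced $(A,B)$ (recall $k = o(n^{1/2+1/14})$, $q = o(n^{-13/14})$, so $n^2 q \lam^4 k^2 = o(1)$). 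Write $a = n/2-k$, $b = n/2+k$ throughout.

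First I would handle the $q_A''$ factors. Since these appear squared and multiplied by $\lam^3 a^3 b = \Theta(\lam^3 n^4)$, and $(q_A'')^2 = \Theta(q^2)$, I can afford a multiplicative error of size $1 + o(1/(n^4\lam^3 q^2))$ in $q_A''$ relative to its balanced counterpart; in fact I will want the error of the form $1 + O(n^2 q \lam^4 k^2 / (\lam^3 n^4 q^2))$ to land on $O(n^2 q \lam^4 k^2)$ after multiplying through. From~\eqref{eqqAdPrimeq2Est2} (or more precisely the sharper~\eqref{eqrArEstFinal}, which I'd convert to an estimate on $q_A''$ itself since here a $1+O(q)$ error is tolerable), one has $q_A'' = q_2 e^{g(n,\lam)k}(1 + O(q + n^2 q \lam^7 k^2))$ with $g(n,\lam) = O(\lam^2)$, and $q_2 = q_1(1 + O(\mu\lam^3 + \lam^3 n))$; but it is cleaner to work directly with $q_1$ as the balanced reference and absorb the $q_2$ vs.\ $q_1$ discrepancy into the exponential factor $e^{\lam^6 n^4 q_0^3}$-type correction. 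Expanding: $a^3 b (q_A'')^2 = (n/2)^4 (1-2k/n)^3(1+2k/n) \cdot q_1^2 e^{2g(n,\lam)k}(1 + O(q + n^2q\lam^7k^2))$, and the analogous expression for $b^3 a (q_B'')^2$ is obtained by $k \mapsto -k$, so that $g(n,\lam)k \mapsto -g(n,\lam)k$. Adding the two, all terms odd in $k$ cancel to first order, leaving $a^3b(q_A'')^2 + b^3a(q_B'')^2 = 2(n/2)^4 q_1^2(1 + O(\lam^4 k^2 + q + n^2 q \lam^7 k^2))$; note $(1-2k/n)^3(1+2k/n) + (1+2k/n)^3(1-2k/n) = 2 + O(k^2/n^2) = 2 + O(\lam^4 k^2)$ and $e^{2g k} + e^{-2gk} = 2 + O(g^2 k^2) = 2 + O(\lam^4 k^2)$. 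Multiplying by $\frac12\lam^3$ gives $\lam^3(n/2)^4 q_1^2(1 + O(\lam^4 k^2 + q + n^2q\lam^7k^2))$, and the error term multiplied out is $\lam^3 n^4 q^2 \cdot O(\lam^4 k^2 + q + n^2q\lam^7k^2) = O(n^2 q \lam^4 k^2)$ using $\lam^3 n^4 q^2 = O(n^2 q)$ (since $\lam^3 n^2 q = o(1)$), $n^4\lam^3 q^3 = o(1)$, and $n^6 q^3 \lam^{10} k^2 = o(n^2 q \lam^4 k^2)$.

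It remains to reconcile $(n/2)^4 q_1^2$ with $(n/2)^4 q_1^2(1 + \lam^3 n^2 q_0)$. This comes from the $q_2$ vs.\ $q_1$ substitution I deferred: using $q_2/(1-q_2) = (q_1/(1-q_1))e^{4\lam^3\mu}$ from~\eqref{eqq2Def} and $\mu = \binom{n/2}{2}q_1 e^{\lam^3 n^2 q_0} \sim \frac18 n^2 q_0$, one gets $q_2 = q_1(1 + \frac12\lam^3 n^2 q_0 + O((\lam^3 n^2 q)^2))$, hence $q_2^2 = q_1^2(1 + \lam^3 n^2 q_0 + O(\lam^6 n^4 q^2))$, and the extra $O(\lam^6 n^4 q^2) \cdot \lam^3 n^4 q^2 = o(1)$ error is harmless; here I used $q_0 \sim q_1$. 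Substituting back produces exactly $\lam^3(n/2)^4 q_1^2(1 + \lam^3 n^2 q_0) + O(n^2 q \lam^4 k^2)$, completing the claim. The main obstacle is bookkeeping the error terms carefully enough to see that every stray contribution — in particular the $n^2q\lam^7k^2$ error inherited from $q_A''$ and the $\lam^6 n^4 q^2$ error from $q_2$ vs.\ $q_1$ — is dominated by $n^2 q\lam^4 k^2$ or is $o(1)$; there is no conceptual difficulty beyond the chain of elementary estimates already established at~\eqref{eqqAq0Est}–\eqref{eqqAdPrimeq2Est2}.
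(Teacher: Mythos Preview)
Your approach is essentially the same as the paper's: compare $a^3b(q_A'')^2$ to $(n/2)^4 q_2^2$ via~\eqref{eqqAdPrimeq2Est2}, use the $k\mapsto -k$ symmetry so that odd-in-$k$ terms cancel and the sum equals $2(n/2)^4 q_2^2(1+O(q+\lam^4k^2))$, check the error terms land in $O(n^2q\lam^4k^2)$, and finally convert $q_2^2$ to $q_1^2(1+\lam^3n^2q_0)$ via $q_2^2 = q_1^2 e^{8\lam^3\mu}(1+O(q))$ with $8\lam^3\mu = \lam^3 n^2 q_0 + o(1)$. Note a slip in your write-up: in the ``Expanding'' line you write $q_1^2 e^{2gk}$ but the estimate~\eqref{eqqAdPrimeq2Est2} gives $q_2^2 e^{2gk}$; your final paragraph then correctly carries out the deferred $q_2\to q_1$ substitution, so the argument is sound once that typo is fixed.
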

\begin{proof}

By~\eqref{eqqAdPrimeq2Est2} we have
\begin{align}
\frac{ a^3bq_A''^2}{\left(\frac{n}{2}\right)^4q^2_2} 
&= (1+O(q+\lam^4k^2))\left(1-\frac{4k}{n}\right)e^{2g(n,\lam)k}\, .
\end{align}
By the analogous expression with $A,B$ swapped (thus replacing $k$ with $-k$) and recalling that $g(n,\lam)=O(\lam^2)$,
we conclude that 
\begin{align}
\frac{  \tfrac{1}{2}a^3bq_A''^2 + \tfrac{1}{2} b^3aq_B''^2}{\left(\frac{n}{2}\right)^4q^2_2} 
&= 1 + O(q+\lam^4k^2).
\end{align}
Noting that $\lam^3 n^4q^3=o(1)$ and $ n^4q^2\lam^7k^2=O(n^2q\lam^4k^2)$ we conclude that
\[
\exp\left\{ \frac{1}{2} \lam^3 a^3bq_A''^2 + \frac{1}{2} \lam^3 b^3aq_B''^2\right\}\sim \exp\left\{  \lam^3 \left(\frac{n}{2}\right)^4q_2^2 +O(n^2q\lam^4k^2) \right\}\, .
\]
The claim follows by noting that
\begin{align}
 \lam^3 \left(\frac{n}{2}\right)^4q_2^2
 &=
 \lam^3 \left(\frac{n}{2}\right)^4q_1^2e^{8\lam^3\mu}+o(1)\\
 &=
  \lam^3 \left(\frac{n}{2}\right)^4q_1^2(1+8\lam^3\mu)+o(1)\\
  &= \lam^3 \left(\frac{n}{2}\right)^4q_1^2(1+\lam^3n^2q_0) +o(1)\, .
\end{align}
\end{proof}

\begin{claim}
\[
\exp\left\{-4\lam^3\mu_A\mu_B \right\}
\sim 
\exp\left\{ -  \lam^3 \left(\frac{n}{2}\right)^4q_1^2(1+{2\lam^3n^2q_0}) +O(n^2q\lam^4k^2) \right\}\, .
\]
\end{claim}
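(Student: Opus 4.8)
The plan is to compare $\mu_A\mu_B$ with $\mu^2$ and then to expand $4\lam^3\mu^2$ in terms of $q_1$ and $q_0$. First I would invoke \eqref{eqmuAmuEst2}, which gives
\[
\frac{\mu_A}{\mu}=(1+O(q+\lam^4k^2))\Bigl(1-\tfrac{4k}{n}\Bigr)e^{(f(\lam)-2nq_0\lam^3)k},
\]
together with its analogue for $\mu_B$ obtained by interchanging the roles of $A$ and $B$ (equivalently $k\mapsto-k$):
\[
\frac{\mu_B}{\mu}=(1+O(q+\lam^4k^2))\Bigl(1+\tfrac{4k}{n}\Bigr)e^{-(f(\lam)-2nq_0\lam^3)k}.
\]
Multiplying, the exponentials cancel and $(1-\tfrac{4k}{n})(1+\tfrac{4k}{n})=1-\tfrac{16k^2}{n^2}$; since $\lam\geq\tfrac{13}{14}\sqrt{\log n/n}$ we have $n^{-2}=O(\lam^4)$, so $\tfrac{16k^2}{n^2}=O(\lam^4k^2)$ and hence
\[
\mu_A\mu_B=\mu^2\bigl(1+O(q+\lam^4k^2)\bigr),
\qquad\text{so}\qquad
-4\lam^3\mu_A\mu_B=-4\lam^3\mu^2+O\bigl(\lam^3\mu^2(q+\lam^4k^2)\bigr).
\]
The key point here is the $k\mapsto -k$ symmetry between $\mu_A$ and $\mu_B$, which forces the terms linear in $k$ to cancel and leaves $\mu_A\mu_B$ equal to $\mu^2$ up to the small multiplicative error already present in \eqref{eqmuAmuEst2}.

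Next I would evaluate $4\lam^3\mu^2$ using $\mu=\binom{n/2}{2}q_1e^{\lam^3n^2q_0}$. Since $\binom{n/2}{2}^2=\tfrac14(n/2)^4(1+O(1/n))$ and $e^{2\lam^3n^2q_0}=1+2\lam^3n^2q_0+O\bigl((\lam^3n^2q_0)^2\bigr)$ (the Taylor step being valid because $\lam^3n^2q_0=\lam^4n^2e^{-\lam^2n/2}(1+o(1))=o(1)$ in this range, as $\lam^4n^2=\tilde O(1)$ and $e^{-\lam^2n/2}=n^{-\Omega(1)}$), one obtains
\[
4\lam^3\mu^2=\lam^3\Bigl(\tfrac{n}{2}\Bigr)^4q_1^2\bigl(1+2\lam^3n^2q_0\bigr)+E,
\]
where $E$ gathers the errors $\lam^3n^4q_1^2\cdot O(1/n)$ and $\lam^3n^4q_1^2\cdot O\bigl((\lam^3n^2q_0)^2\bigr)$. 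Combining this with the previous display yields
\[
-4\lam^3\mu_A\mu_B=-\lam^3\Bigl(\tfrac{n}{2}\Bigr)^4q_1^2\bigl(1+2\lam^3n^2q_0\bigr)+O(n^2q\lam^4k^2)+o(1),
\]
from which the claim about the exponentials follows.

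The remaining work — the main, though entirely routine, obstacle — is to check that each error term is either $o(1)$ or absorbed into $O(n^2q\lam^4k^2)$, using the ambient bounds $q=\max\{q_A,q_B\}=\Theta(q_0)=o(n^{-13/14})$, $\lam=\tilde\Theta(n^{-1/2})$, $k=o(n^{1/2+1/14})$, $\mu=\Theta(n^2q)$ and $q_1\sim q_0$. Concretely one verifies: $\lam^3\mu^2q=O(\lam^3n^4q^3)=o(1)$; $\lam^3\mu^2\cdot\lam^4k^2=O(\lam^7n^4q^2k^2)=o(n^2q\lam^4k^2)$ because $\lam^3n^2q=o(1)$; $\lam^3n^4q_1^2/n=O(\lam^3n^3q^2)=o(1)$; and $\lam^3n^4q_1^2(\lam^3n^2q_0)^2=O(\lam^9n^8q_0^4)=o(1)$. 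Each of these reduces to a short exponent computation (e.g.\ $\lam^3n^4q^3=\tilde o(n^{-4/14})$). Finally I would note that the surviving $O(n^2q\lam^4k^2)$ is precisely the error term appearing in the statement of Lemma~\ref{lemZABapprox1super} via the factor $\exp\{n^2q\lam^4(a-b)^2\}$, and is only required to be $o(1)$ later, when summing over strongly balanced partitions where $k=\tilde O(n^{1/4})$.
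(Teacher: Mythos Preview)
Your proposal is correct and follows essentially the same route as the paper: multiply the estimate \eqref{eqmuAmuEst2} by its $k\mapsto-k$ analogue to get $\mu_A\mu_B/\mu^2=1+O(q+\lam^4k^2)$, absorb $\lam^3\mu^2q=o(1)$ and $\lam^7\mu^2k^2=O(n^2q\lam^4k^2)$, and then expand $4\lam^3\mu^2=\lam^3(n/2)^4q_1^2e^{2\lam^3n^2q_0}$ to first order in $\lam^3n^2q_0$. The paper's proof is terser but identical in substance; your additional verification of the individual error terms is accurate and useful.
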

\begin{proof}
By~\eqref{eqmuAmuEst2}
\begin{align}
\frac{\mu_A\mu_B}{\mu^2}
&=(1+O(q + \lam^4k^2)).
\end{align}
Since $\lam^3\mu^2q=o(1)$ (since $\mu=O(n^2q)$) and $\lam^7\mu^2k^2=O(n^2q\lam^4k^2)$ 
we conclude that
\[
\exp\left\{-4\lam^3\mu_A\mu_B \right\}
\sim 
\exp\left\{ -4\lam^3\mu^2+ O(n^2q\lam^4k^2)  \right\}\, .
\]
Now, 
\begin{align}
4\lam^3\mu^2
= \lam^3 \left(\frac{n}{2}\right)^4q_1^2e^{2\lam^3n^2q_0} +o(1)
=
 \lam^3 \left(\frac{n}{2}\right)^4q_1^2(1+{2\lam^3n^2q_0}) +o(1)\, .
\end{align}
\end{proof}

By the previous two claims we have

\begin{align}
\exp\left\{ \frac{1}{2} \lam^3 a^3bq_A''^2 + \frac{1}{2} \lam^3 b^3aq_B''^2-4\lam^3\mu_A\mu_B\right\}
&\sim \exp\left\{  -\lam^3 \left(\frac{n}{2}\right)^4q_1^2\cdot\lam^3n^2q_0 +O(n^2q\lam^4k^2) \right\}\\
&\sim
\exp\left\{  -\frac{1}{16}\lam^6 n^6q_0^3 +O(n^2q\lam^4k^2) \right\}
\end{align}

Similar calculations show that if $r_1, r_2, \ell_1, \ell_2\in \Z$ are fixed then
\begin{align}
\frac{  \tfrac{1}{2}a^{r_1}b^{\ell_1}q_A^{r_2}q_B^{\ell_2} + \tfrac{1}{2}b^{r_1}a^{\ell_1}q_B^{r_2}q_A^{\ell_2}}{\left(\frac{n}{2}\right)^{r_1+\ell_1}q_0^{r_2+\ell_2}} 
&= 1 + O(q+\lam^4k^2)\, ,
\end{align}
and so
\[
\exp\left\{ \frac{1}{4}\lam^6 a^3b^2 q_A^2 + \frac{1}{4} \lam^6 b^3a^2q_B^2\right\}
\sim 
\exp\left\{ \frac{1}{2} \lam^6 \left( \frac{n}{2}\right)^5q_0^2 +O(n^2q\lam^4k^2)\right\}\, .
\]

\[
\exp\left\{\frac{3}{2}\lam^6a^4b^2q_A^3 +\frac{3}{2}\lam^6b^4a^2q_B^3 \right\}
\sim 
\exp\left\{3\lam^6\left( \frac{n}{2}\right)^6q_0^3+O(n^2q\lam^4k^2)\right\}\, .
\]

\[
\exp\left\{-\frac{1}{6}a^3q_A^3 -\frac{1}{6}b^3q_B^3 \right\}
\sim 
\exp\left\{-\frac{1}{3} \left( \frac{n}{2}\right)^3q_0^3+ O(n^2q\lam^4k^2)\right\}\, .
\]
and
\begin{align}
\exp&\left\{
\lam^4 ab\left(\frac{1}{4}abq_Aq_B-  \frac{2}{3}(aq_A+bq_B)^3 - 2(aq_A+bq_B)^2 \right)
\right\}\\
&\sim 
\exp\left\{
\lam^4 \left( \frac{n}{2}\right)^2\left(\frac{1}{4}\left( \frac{n}{2}\right)^2q_0^2-  \frac{2}{3}(nq_0)^3 - 2(nq_0)^2 \right)+O(n^2q\lam^4k^2)
\right\} \,.
\end{align}
Putting everything together yields~\eqref{eqZabCase1super}.

We now turn to~\eqref{eqnuABapproxsuper}. Suppose now that $(A,B)\in \Pi_{\textup{strong}}$ so that $k\leq 10(n \log n)^{1/4}$.
Let $\bm q''=(q_A'', q_B'')$, let $\psi_A=\lam^3b$, $\psi_B=\lam^3a$ and $\Psi:\cD\to\R$ be such that $\Psi(S,T)= \psi_A P_2(S) + \psi_B P_2(T)$. The measure
$\nu_{\bm q'',\cD}^\Psi$ is the measure associated to the random graph $ G(A,q_A'', \psi_A) \times G(B,q_B'', \psi_B)$. We first show that $D_\text{KL}( \nu_{\bm q'',\cD}^\Psi\parallel \nu_{A,B,\lam})=o(1)$. Let $(S,T)\in\cD$.
Recall that
\[
\nu_{\bm q'',\cD}^\Psi(S,T) = \frac{\left( \frac{q_A''}{1-q_A''}\right)^{|S|}\left( \frac{q_A''}{1-q_A''}\right)^{|T|}e^{\psi_AP_2(S)+\psi_AP_2(S)}}{Z_A''\E_{\nu_{\bm q''_A, \cD}}  e^{\psi_AP_2(S)}\cdot Z_B''\E_{\nu_{\bm q''_B, \cD}} e^{\psi_AP_2(T)}}
\]
where $Z_A''$ is as in~\eqref{eqZAdprimedef} and $\bm q''_A=(q''_A,0), \bm q''_B=(0,q''_B)$.

 By~\eqref{eqZboxTestimateSuper}
\begin{align}
\log \left( \frac{\nu_{\bm q'',\cD}^\Psi(S,T)}{\nu_{A,B,\lam}(S,T)} \right) &=
4\lam^3\mu_B|S| +
4\lam^3\mu_A |T| - 4\lam^3|S||T|\\ 
&+(P_3(S \boxempty T)+S_3(S \boxempty T)-C_4(S \boxempty T)+4P_2(S \boxempty T))\lam^4\\
&+\log \left(\frac{Z_{A,B}}{(1+\lam)^{ab}} \right) 
-\log\left( 
Z_A''\E_{\nu_{\bm q''_A, \cD}}  e^{\psi_AP_2(S)}\cdot Z_B''\E_{\nu_{\bm q''_B, \cD}} e^{\psi_AP_2(T)}
\right)
+o(1)\, .
\end{align}

On the other hand, by~\eqref{eqZABbarnuexp},~\eqref{eqExpectationProd}, Claim~\ref{claim:P3S3} and \eqref{eq:P2momentforTV} we have
\begin{align}
\log \left(\frac{Z_{A,B}}{(1+\lam)^{ab}} \right) 
&= -4\lam^3\mu_A\mu_B+\lam^4 ab\left(\frac{1}{4}ab q_Aq_B - \frac{2}{3}(aq_A+bq_B)^3 - 2(aq_A+bq_B)^2 \right)\\
&\phantom{=} +\log \left(
Z_A''\E_{\nu_{\bm q''_A, \cD}}  e^{\psi_AP_2(S)}\cdot Z_B''\E_{\nu_{\bm q''_B, \cD}} e^{\psi_AP_2(T)}
\right)
+o(1)\, .
\end{align}
Thus
\begin{align}
\log \left( \frac{\nu_{\bm q'',\cD}^\Psi(S,T)}{\nu_{A,B,\lam}(S,T)} \right) 
&=
4\lam^3\mu_B|S| +
4\lam^3\mu_A |T| - 4\lam^3|S||T|-4\lam^3\mu_A\mu_B\\ 
&+(P_3(S \boxempty T)+S_3(S \boxempty T)-C_4(S \boxempty T)+4P_2(S \boxempty T))\lam^4\\
&+\lam^4 ab\left(\frac{1}{4}ab q_Aq_B - \frac{2}{3}(aq_A+bq_B)^3 - 2(aq_A+bq_B)^2 \right)
+o(1)\, .
\end{align}
Calculating as in the proof of Claim~\ref{claim:P3S3}, we have
\begin{align}
\E_{\nu_{\bm q'',\cD}^\Psi}&\left[
(P_3(S \boxempty T)+S_3(S \boxempty T)-C_4(S \boxempty T)+4P_2(S \boxempty T))\lam^4
\right]\\
&=
-\lam^4 ab\left(\frac{1}{4}ab q_Aq_B - \frac{2}{3}(aq_A+bq_B)^3 - 2(aq_A+bq_B)^2 \right)+o(1)\, .
\end{align}
Moreover, by an application of~\eqref{eqlemfixedconfiglocal} of  Lemma~\ref{lemfixedconfiglocal} 
we have
\[
\E_{\nu_{\bm q'',\cD}^\Psi}
\left[ 
4\lam^3\mu_B|S| +
4\lam^3\mu_A |T| - 4\lam^3|S||T|-4\lam^3\mu_A\mu_B\right]
= o(1)\, .
\]

It follows that
\[
D_\text{KL}( \nu_{\bm q'',\cD}^\Psi\parallel \nu_{A,B,\lam})= \E_{\nu_{\bm q'',\cD}^\Psi}\log \left( \frac{\nu_{\bm q'',\cD}^\Psi(S,T)}{\nu_{A,B,\lam}(S,T)} \right) =o(1)\, .
\]
Let $\nu_{V,q,\psi}$ denote the measure associated to the random graph $G(V,q, \psi)$ so that $ \nu_{\bm q'',\cD}^\Psi = \nu_{A,q_A'',\psi_A}\times \nu_{B,q_B'',\psi_B}$. 
We note that
\[
D_\text{KL}( \nu_{\bm q'',\cD}\parallel  \nu_{A,q_2,\psi}\times \nu_{B,q_2,\psi}) = 
D_\text{KL}(  \nu_{A,q_A'',\psi_A}\parallel  \nu_{A,q_2,\psi}) + D_\text{KL}(  \nu_{B,q_B'',\psi_B}\parallel  \nu_{B,q_2,\psi}) \, .
\]
We now show that the RHS is $o(1)$ thereby completing the proof. By symmetry it suffices to show that $D_\text{KL}(  \nu_{A,q_A'',\psi_A}\parallel  \nu_{A,q_2,\psi})=o(1)$. Recall that $r_A=q_A''/(1-q_A'')$, $r=q_2/(1-q_2)$ and $\psi_A=\lam^3b, \psi=\lam^3n/2$. Let 
\[
\Xi_A = \sum_{S\subseteq A : S\in \cD} r_A^{|S|}e^{\psi_AP_2(S)} \quad \text{ and }\quad \Xi = \sum_{S\subseteq A : S\in \cD} r^{|S|}e^{\psi P_2(S)} \, .
\]
Then 
\begin{align}
D_\text{KL}(  \nu_{A,q_A'',\psi_A}\parallel  \nu_{A,q_2,\psi})
& = \E_{ \nu_{A,q_A'',\psi_A}}\log\left(\frac{r_A^{|S|}e^{\psi_AP_2(S)}}{\Xi_A} \cdot \frac{\Xi}{r^{|S|}e^{\psi P_2(S)}} \right)\\
&= \log(\Xi/\Xi_A) + \log(r_A/r) \E_{ \nu_{A,q_A'',\psi_A}}|S| + (\psi_A-\psi) \E_{ \nu_{A,q_A'',\psi_A}}P_2(S)\, . \label{eqDKLfinalEst}
\end{align}
First we estimate $\log(\Xi/\Xi_A)$.
By (the proof of) Claim~\ref{claimERGApartfun}
\[
\Xi_A\sim (1+r_A)^{\binom{a}{2}}   \exp\left\{ \frac{1}{2}\psi_Aa^3q_A''^2 + \frac{1}{4} \psi_A^2 a^3 q_A^2 +\frac{3}{2} \psi_A^2 a^4 q_A^3 -\frac{1}{6}a^3q_A^3 \right\}\, ,
\]
and
\[
\Xi\sim (1+r)^{\binom{a}{2}}  \exp\left\{ \frac{1}{2}\psi a^3q^2_2 + \frac{1}{4} \psi^2 a^3 q_0^2 + \frac{3}{2}\psi^2 a^4 q_0^3 -\frac{1}{6}a^3q_0^3 \right\}\, .
\]
Note that by~\eqref{eqrArEstFinal},
\begin{align}
\binom{a}{2}\log \left(\frac{1+r}{1+r_A}\right)
&
= \binom{a}{2}\left(r-r_A -r^2/2+r_A^2/2 \right) +o(1)\\
&=  \binom{a}{2}\left(-r_Agk +r_A^2gk \right) +O(n^2q\lam^4k^2)+o(1)\\
&=- \binom{a}{2} r_Agk+o(1)\, ,
\end{align}
where for the last inequality we recall that $g=g(n,\lam)=O(\lam^2), k=\tilde O(n^{1/4})$ and $r,q=o(n^{-13/14})$.
By \eqref{eqqAdPrimeq2Est2} 
\begin{align}
\psi a^3q^2_2 - \psi_A a^3q_A''^2= -\psi a^3q^2_2 (2gk +O(q+\lam^4k^2+k/n) )=o(1)\, .
\end{align}
Similarly  $\psi^2 a^3 q_0^2 -  \psi^2_A a^3q_A^2=o(1)$,  $\psi^2 a^4 q_0^3 -  \psi^2_A a^4q_A^3=o(1)$ and $a^3q_0^3- a^3q_A^3=o(1)$.
We conclude that
\begin{align}\label{eqlogXiXiA}
\log(\Xi/\Xi_A) = - \binom{a}{2} r_Agk+o(1)\, .
\end{align}
Returning to~\eqref{eqDKLfinalEst} we next estimate $ \log(r_A/r) \E_{ \nu_{A,q_A'',\psi_A}}|S|$.
First note that by~\eqref{eqrArEstFinal},  
\[
\log(r_A/r)= gk + O(\lam^4k^2 + n^2q^2\lam^3)\, .
\]
By Lemma~\ref{lemfixedconfiglocal}
\[
 \E_{ \nu_{A,q_A'',\psi_A}}|S| = \binom{a}{2}q''_A(1+O(n\Delta \lam^3))=\binom{a}{2}r_A(1+O(n\Delta \lam^3))\, ,
\]
where for the last inequality we used that $q''_A=r_A(1+O(q))$ and $q=O(n\Delta \lam^3)  $ (recall that $\Delta=50\max\{qn, \log n\}$). 
It follows that 
\begin{align}\label{eqlogrAr}
 \log(r_A/r) \E_{ \nu_{A,q_A'',\psi_A}}|S| = \binom{a}{2} r_Agk+o(1)\, .
\end{align}
Finally by Lemma~\ref{lemfixedconfiglocal} we have 
\[
 (\psi_A-\psi) \E_{ \nu_{A,q_A'',\psi_A}}P_2(S) = \psi \cdot O(k/n)\cdot O(n^3q^2)=o(1)\, .
\]
Combining this with~\eqref{eqlogrAr},~\eqref{eqlogXiXiA} and~\eqref{eqDKLfinalEst} completes the proof.

\end{document}